\numberwithin{equation}{section}
\newtheorem{thm}{Theorem}[section]
\newtheorem{cor}[thm]{Corollary}
\newtheorem{lem}[thm]{Lemma}
\newtheorem{prop}[thm]{Proposition}
\newtheorem{ex}[thm]{Example}
\newtheorem{defn}[thm]{Definition}
\newtheorem{notation}[thm]{Notation}
\theoremstyle{remark}
\newtheorem{rem}[thm]{Remark}
\def\R{\mathbb{R}}
\def\Z{\mathbb{Z}}
\def\N{\mathbb{N}}
\def\Td{\mathbb{T}}
\def\BB{\mathbf{B}}
\def\L{\mathcal{L}}
\def\T{\mathfrak{T}}
\def\L{\text{Lip}}
\begin{document}

\title[]{
Truncated smooth function spaces}

\author{Oscar Dom\'inguez}
\address{O. Dom\'inguez, Departamento de An\'alisis Matem\'atico y Matem\'atica Aplicada, Facultad de Matem\'aticas, Universidad Complutense de Madrid, Plaza de Ciencias 3, 28040 Madrid, Spain}
\email{oscar.dominguez@ucm.es}

\author{Sergey Tikhonov}

\thanks{
 This research was partially supported by French National Research Agency (ANR-10-LABX-0070), (ANR-11-IDEX-0007) (O.D.) and by
PID2020-114948GB-I00,  2017 SGR 358, AP 14870758, 
 the CERCA Programme of the Generalitat de Catalunya, and
 Severo Ochoa and Mar\'{i}a de Maeztu Program for Centers and Units of Excellence in R$\&$D (CEX2020-001084-M) (S.T.).
}

 \address{S. Tikhonov, Centre de Recerca Matem\`{a}tica\\
Campus de Bellaterra, Edifici C
08193 Bellaterra (Barcelona), Spain;
ICREA, Pg. Llu\'{i}v Companys 23, 08010 Barcelona, Spain,
 and Universitat Aut\`{o}noma de Barcelona.}
\email{stikhonov@crm.cat}

\subjclass[2010]{Primary   46E35, 42B35;  Secondary 26A15, 46E30, 46B70, 65T60}
\keywords{
Truncated function spaces, embeddings, interpolation, duality, lifting property, Fourier and wavelet characterizations}

%%%%%%%%%%%%%%%%%%%%%%%%%%%%%%%%%%%%%%%%%%
%%%%%%%%%%%%%%%%%%%%%%%%%%%%%%%%%%%%%%%%%%%

\begin{abstract}
We introduce truncated Besov and Triebel--Lizorkin function spaces and investigate their main properties: embeddings, interpolation, duality, lifting,  traces.
These new scales allow us to improve several known results in functional analysis and PDE's.
\end{abstract}

\maketitle

\setcounter{tocdepth}{1}
\tableofcontents

%%%%%%%%%%%%%%%%%%%%%%%%%%%%%%%%%%%%%%%%%%%%

\newpage
\section{Introduction}

Smooth function spaces are basic notion in  analysis. The main examples %\textcolor{red}{of such spaces}\footnote{O. spaces ... spaces...spaces}
 are
Besov and Triebel--Lizorkin spaces, since those scales contain various distinguished spaces of distributions such as standard Sobolev, Gagliardo--Slobodecki\u{\i}, fractional Sobolev, Bessel potential spaces, H\"{o}lder--Zygmund spaces, spaces of BMO-type, Hardy spaces, and Lebesgue spaces.  Another important scale is Lipschitz spaces, in particular containing the space of bounded variation.
  % Sobolev or, more general, Triebel-Lizorkin spaces, Lipschitz, and Besov space.
All mentioned scales % All of them
 have shown to be crucial in functional and harmonic analysis and PDE's, among other fields.
Key properties of these spaces, such as duality, interpolation, embeddings, characterizations, traces, and lifting,  have been extensively developed since the middle of the last century. See e.g. the monographs \cite{BennettSharpley, BerghLofstrom, Besov, Nikolskii, Peetre76, SchmeisserTriebel, Stein, Triebel, Triebel83, Triebel92, Triebel01}.

%One of the most important areas of investigation concerns the interpolation properties. In particular, the following results are well known:
%\begin{equation}\label{Besov}
%	(A^{s_0}_{p, q_0}, \tilde{A}^{s_1}_{p, q_1})_{\theta, r}= B^{s}_{p, r}(\R^d),\qquad A, \tilde{A} \in \{B, F\}, \quad s_0 \neq s_1.
%\end{equation}

{We also stress that in recent times a big deal of attention has been devoted to function spaces with generalized smoothness. Indeed, in many questions arising in different areas it has become apparent that the scope of classical smoothness is rather limited and thus the finer tuning given by generalized smoothness becomes very useful. Here we mention pointwise multipliers \cite{Li}, elliptic PDE's and geometric measure theory \cite{Molero}, probability theory and stochastic processes \cite{FarkasLeopold, Kempka}, sharp embedding theorems \cite{Haroske}, fractal analysis and a related spectral theory \cite{Triebel97}, capacity theory \cite{Liu}, regularity properties of Euler solutions \cite{Ogawa, Chae}, regularity of flows for the transport and continuity equation \cite{Crippa, Leger, Brue}, or optimal recovery \cite{Hinrichs}.} The prototype of function spaces with generalized smoothness is   the logarithmic Besov spaces  $B^{s, b}_{p,q}(\R^d)$ and Triebel--Lizorkin spaces $F^{s, b}_{p,q}(\R^d)$, which  can be defined through Fourier-analytical decompositions as follows:
\begin{equation}\label{IntroBesov}
	\|f\|_{B^{s, b}_{p,q}(\R^d)} = \bigg(\sum_{j=0}^\infty 2^{j s q} (1 + j)^{b q}  \|(\varphi_j \widehat{f})^\vee\|_{L_p(\mathbb{R}^d)}^q \bigg)^{\frac{1}{q}}
\end{equation}
and
\begin{equation}\label{IntroTriebelLizorkin}
	\|f\|_{F^{s, b}_{p,q}(\R^d)} = \bigg\| \bigg(\sum_{j=0}^\infty 2^{j s q} (1 + j)^{b q} |(\varphi_j \widehat{f})^\vee|^q \bigg)^{1/q} \bigg\|_{L_p(\R^d)}.
\end{equation}
In the special case $b=0$ one recovers the classical spaces $B^s_{p, q}(\R^d)$ and $F^s_{p, q}(\R^d)$, respectively.
Interestingly, the most unexpected effects can be discovered dealing with zero smoothness, that is, $s=0.$ In particular, the space $B^{0, b}_{p,q}(\R^d)$ does not coincide with its counterpart  ${\bf{B}}^{0, b}_{p,q}(\R^d)$ defined by differences. In fact, the lack of adequate Littlewood--Paley decompositions makes the analysis of $\mathbf{B}^{0, b}_{p, q}(\R^d)$ rather tricky; we refer the  reader to \cite{DominguezTikhonov} for a detailed study of $\mathbf{B}^{0, b}_{p, q}(\R^d)$, including some discussions on applications of these spaces to different areas of analysis (see \cite[Section 1.2]{DominguezTikhonov}).

The main goal of this work is to introduce the {\it truncated scale} of smooth function spaces. Let us illustrate this idea by presenting truncated Besov and Triebel--Lizorkin spaces:
  \begin{equation}\label{Tr1In}
  \|f\|_{T^b_r B^{s}_{p,q}(\R^d)} = \left(\sum_{j=0}^\infty 2^{j b r} \bigg(\sum_{\nu=2^j-1}^{2^{j+1}-2} 2^{\nu s q} \|(\varphi_\nu \widehat{f})^\vee\|_{L_p(\mathbb{R}^d)}^q\bigg)^{r/q}\right)^{1/r},
   \end{equation}
    \begin{equation}\label{Tr2In}
    	\|f\|_{T^b_r F^{s}_{p, q}(\R^d)} = \left(\sum_{j=0}^\infty 2^{j b r}  \bigg\| \bigg(\sum_{\nu=2^j-1}^{2^{j+1}-2} 2^{\nu s q} |(\varphi_\nu \widehat{f})^\vee|^q \bigg)^{1/q} \bigg\|_{L_p(\R^d)}^r \right)^{1/r}.
\end{equation}
Roughly speaking, the additional summability parameter $r$ enables to refine classical norms via truncations with respect to dyadic decomposition.

There are several aspects of the matter worth pointing out, which show that truncated spaces are extremely important in the theory of function spaces, PDE's, interpolation theory, potential theory and applications. Next we highlight some of them.

1. {\it A unified approach to classical spaces and applications to PDE's.}
As one may expect, choosing appropriately the smoothness  and  summability parameters $b$ and  $r$, respectively,  in $T^b_r X$ we recover the prototypical space  $X$. For example,
$T^b_r B^{s}_{p,q}(\R^d)=B^{s,b}_{p,q}(\R^d)$ for $q=r$ and, in particular, $T^0_q B^{s}_{p,q}(\R^d)=B^{s}_{p,q}(\R^d)$.
More than that, specifying parameters in the definitions of truncated Besov and  Triebel-Lizorkin spaces, one can recover in a unifying fashion
all classical scales of smooth function spaces: Besov $B^{s, b}_{p,q}(\R^d)$, Lipschitz $\text{Lip}^{s, b}_{p, q}(\R^d)$, Triebel--Lizorkin $F^{s, b}_{p, q}(\R^d)$, Besov spaces with zero smoothness ${{B}}^{0, b}_{p, q}(\R^d)$ and ${\textbf{B}}^{0, b}_{p, q}(\R^d)$. In particular, we obtain
$$
		T^{b+1/q}_q F^{0}_{p, 2}(\R^d)  = {\textbf{B}}^{0, b}_{p, q} (\R^d), \qquad b > -1/q,
	$$
and	$$
		T^{b+1/q}_q F^{s}_{p, 2}(\R^d) = \text{Lip}^{s, b}_{p, q} (\R^d), \qquad b < -1/q;
	$$
	cf. Proposition \ref{PropositionCoincidences}.

	Another motivation to introduce truncated smooth function spaces comes from applications to PDE's. In his fundamental paper \cite{Vishik}, Vishik was able to extend the classical Yudovich theory  for uniqueness of non-Lipschitz $2$D Euler flows via Besov-type $B_\Gamma$ spaces defined in terms of growth properties of partial sums involving $L_\infty$-norms of dyadic frequencies:
	$$
		\|f\|_{B_\Gamma(\R^d)} = \sup_{j \geq 0} \, \frac{1}{\Gamma(j)} \, \sum_{\nu=0}^j \|(\varphi_\nu \widehat{f})^\vee\|_{L_\infty(\R^d)},
	$$
	where $\Gamma$ is a positive and increasing function such that $\sum_{j=0}^\infty \frac{1}{\Gamma(j)} = \infty$. Thus the main result of \cite[Theorem 7.1]{Vishik} guarantees uniqueness of the Euler flow for related vorticities that are bounded in $B_\Gamma(\R^d)$, with $\Gamma(j) = j$. In particular, for this special choice of $\Gamma$, the  embeddings
	$$
		bmo(\R^d) \hookrightarrow B^0_{\infty, \infty}(\R^d) \hookrightarrow B_\Gamma(\R^d),
	$$
	leads to uniqueness of Euler flows for $bmo(\R^d)$ vorticities, extending thus the classical $L_\infty$-based theory due to Yudovich \cite{Yudovich63, Yudovich95}.
	
	Observe that  $B_\Gamma(\R^d), \, \Gamma(j) = j^\alpha, \, \alpha \leq 1,$ is a special case of truncated Besov spaces, namely,
	$$
		B_\Gamma(\R^d) = T^{-\alpha}_\infty B^0_{\infty, 1}(\R^d),
	$$
	see Proposition \ref{PropEquiQN}. Accordingly, truncated constructions \eqref{Tr1In} and \eqref{Tr2In} arise naturally in the analysis of PDE's.

%Introducing an additional parameter allows us to carry out more delicate function scales and obtain sharper results.
2. {\it Embeddings.} Truncated smooth function spaces allow us to sharpen classical embeddings. To clarify this, we mention the well-known embedding between Besov and  Triebel--Lizorkin (in particular, Sobolev) spaces
$$
	B^s_{p, \min\{p, q\}}(\R^d) \hookrightarrow F^s_{p, q}(\R^d) \hookrightarrow B^s_{p, \max\{p, q\}} (\R^d),
$$
which is not sharp anymore in the scale of truncated spaces. Namely, we have (cf. Corollary \ref{Corollary3.6})
$$		B^s_{p, \min\{p, q\}} (\R^d) \hookrightarrow	 T_{\min\{p, q\}} F^s_{p, q}(\R^d)
		 \hookrightarrow F^s_{p, q}(\R^d)  \hookrightarrow T_{\max\{p, q\}} F^s_{p, q}(\R^d)  \hookrightarrow B^s_{p, \max\{p, q\}}(\R^d)
$$
and any of these embeddings are sharp. Here, $T_r F^s_{p, q}(\R^d) = T^0_r F^s_{p, q}(\R^d)$.

{As already mentioned above, the scales of $\mathbf{B}^{0, b}_{p, q}(\R^d)$ and $\text{Lip}^{s, b}_{p, q}(\R^d)$ spaces are not contained in the scales of Besov and Triebel--Lizorkin spaces (cf. \eqref{IntroBesov}-\eqref{IntroTriebelLizorkin}).
%However, it is still possible to establish some
The known embeddings between them involve log-shifts of smoothness:
\begin{equation}\label{B1new}
	 B^{0, b + 1/\min\{2, p, q\}}_{p, q}(\R^d)  \hookrightarrow \textbf{B}^{0, b}_{p, q}(\R^d) \hookrightarrow B^{0, b + 1/\max\{2, p, q\}}_{p, q}(\R^d)
\end{equation}
and
\begin{equation}\label{Lip1new}
	 B^{s, b + 1/\min\{2, p, q\}}_{p, q}(\R^d)  \hookrightarrow \text{Lip}^{s, b}_{p, q}(\R^d) \hookrightarrow B^{s, b + 1/\max\{2, p, q\}}_{p, q}(\R^d).
\end{equation}
Furthermore, these embeddings are optimal within the scale of classical Besov spaces; cf. \cite{DominguezHaroskeTikhonov} and \cite{DominguezTikhonov}.  As a byproduct of general embedding theorems between truncated spaces, we are now able to show that both \eqref{B1new} and \eqref{Lip1new} can be considerably sharpened by dealing with the refined scale of truncated spaces. Namely, we establish (cf. Corollaries \ref{Cor1} and \ref{Cor2})
	$$
		 T^{b+1/q}_q B^{0}_{p, \min\{2, p\}}(\R^d) \hookrightarrow \mathbf{B}^{0, b}_{p, q}(\R^d) \hookrightarrow T^{b+1/q}_q B^{0}_{p, \max\{2, p\}}(\R^d),
	$$
	and
	 $$
	 	T^{b+1/q}_q B^{s}_{p, \min\{2, p\}}(\R^d) \hookrightarrow \text{Lip}^{s, b}_{p, q}(\R^d) \hookrightarrow T^{b+1/q}_q B^{s}_{p, \max\{2, p\}}(\R^d).
	 $$
	 Furthermore, they are optimal.}

3. {\it Interpolation.}
One of the main properties of Besov spaces is that they are closed under real interpolation. A more complete result states that, for $A, \widetilde{A} \in \{B, F\}$,
\begin{equation}\label{Besov}
	(A^{s_0}_{p, q_0}(\R^d) , \widetilde{A}^{s_1}_{p, q_1}(\R^d) )_{\theta, r}= B^{s}_{p, r}(\R^d),
\end{equation}
where $s= (1-\theta) s_0 + \theta s_1$ and $s_0 \neq s_1$.
It is a natural question to understand whenever this property holds for truncated spaces.
 We positively answer this question (cf. Theorem \ref{ThmIntNew}):
 % In fact, we show that both truncated Besov and Triebel-Lizorkin spaces are closed under interpolation, that is,
$$		(T^{b_0}_{r_0} A^{s_0}_{p, q_0}(\R^d) , T^{b_1}_{r_1}\widetilde{A}^{s_1}_{p, q_1}(\R^d) )_{\theta, r} = B^{s, b}_{p, r}(\R^d) , \qquad s_0\ne s_1.
$$
Moreover, in the borderline case $s_0 = s_1=s$, we obtain that (cf. Theorems \ref{TheoremInterpolation2} and  \ref{TheoremInterpolation2pspapsa})
$$		(T^{b_0}_{r_0} A^{s}_{p,q}(\R^d) , T^{b_1}_{r_1}A^{s}_{p,q}(\R^d) )_{\theta,r} = T^b_r A^{s}_{p,q}(\R^d), \qquad b_0\ne b_1.
$$

Another intrinsic question can be stated as follows. Can one obtain  the truncated spaces as a result of interpolation between the classical ones?
An answer is again positive and rather surprising since modern interpolation methods are naturally appearing:   \emph{limiting interpolation}. In particular, we have
for $s_0 > s$ and $b > 0$
 \begin{equation}\label{Intro1}
  	T^b_r B^{s}_{p,q}(\R^d)  = (B^s_{p,q}(\R^d), A^{s_0}_{p,q_1}(\R^d))_{(0,b-1/r),r},
	\end{equation}
and for $s_0 < s$ and $b < 0$
\begin{equation}\label{Intro2}
	 T^b_r B^{s}_{p,q}(\R^d)  = (A^{s_0}_{p,q_0}(\R^d), B^s_{p,q}(\R^d) )_{(1,b-1/r),r};
\end{equation}
see Theorem \ref{Thm4.2}.  A similar result holds for truncated Triebel--Lizorkin spaces (cf. Theorem \ref{TheoremInterpolationF}). As special cases, we can characterize zero-smoothness Besov  and Lipschitz spaces via limiting interpolation:
$$			\mathbf{B}^{0, b}_{p, q}(\R^d)  = (L_p(\R^d), A^{s_0}_{p, q_1}(\R^d))_{(0, b), q},\qquad b > -1/q,
	$$
$$			\text{Lip}^{s, b}_{p, q}(\R^d)  = (A^{s_0}_{p, q_0}(\R^d), H^s_p(\R^d))_{(1, b), q},\qquad b < -1/q.
	$$
{The formulae \eqref{Intro1}-\eqref{Intro2} will play a key role in our arguments, since they put truncated function spaces into the powerful framework provided by limiting interpolation. In particular, this enables to transfer many properties of classical spaces to truncated spaces.}

{4. {\it Lifting properties. % and duality.
} Consider the lifting operator
$$
	I_\sigma = (\text{id}-\Delta)^{\frac{\sigma}{2}}, \qquad \sigma \in \R.
$$
In particular, if $\sigma < 0$ then $I_\sigma$ is the classical \emph{Bessel potential}. A crucial property of Besov and Triebel--Lizorkin spaces is their stability under liftings. More precisely, for $A \in \{B, F\}$, $I_\sigma$ acts as an isomorphism from $A^{s, b}_{p, q}(\R^d)$ onto $A^{s-\sigma, b}_{p, q}(\R^d)$ and
\begin{equation*}
	\|I_\sigma f\|_{A^{s-\sigma, b}_{p, q}(\R^d)} \asymp \|f\|_{A^{s, b}_{p, q}(\R^d)}.
\end{equation*}
On the other hand, the mapping properties of liftings on Besov spaces of zero smoothness and Lipschitz spaces are much more delicate and not completely understood. For instance, a partial answer asserts that
\begin{equation*}
I_\sigma : \mathbf{B}^{0, b}_{p, q}(\R^d) \to B^{-\sigma, b+ 1/\max\{2, p, q\}}_{p, q}(\R^d)
\end{equation*}
and
\begin{equation*}
	I_\sigma : B^{\sigma, b+ 1/\min\{2, p, q\}}_{p, q}(\R^d) \to \mathbf{B}^{0, b}_{p, q}(\R^d).
\end{equation*}
Furthermore, the shifts given by $\frac{1}{\max\{2, p, q\}}$ and $\frac{1}{\min\{2, p, q\}}$ are optimal. In particular, this result tells us that classical Besov spaces are not sharp enough to characterize lifting properties of  $\mathbf{B}^{0, b}_{p, q}(\R^d)$. This drawback can be now overcome with the help of truncated spaces.}

In Section \ref{Section8.2}, we show that  $I_\sigma$ acts as an isomorphism from $\mathbf{B}^{0, b}_{p, q}(\R^d)$ onto $T^{b+1/q}_ q F^{-\sigma}_{p, 2}(\R^d)$ and
	$$
		\|I_\sigma f\|_{T^{b+1/q}_q F^{-\sigma}_{p, 2}(\R^d)} \asymp \|f\|_{\mathbf{B}^{0, b}_{p, q}(\R^d)}.
	$$
Recall that $\mathbf{B}^{0, b}_{p, q}(\R^d) = T^{b+1/q}_ q F^{0}_{p, 2}(\R^d)$. Accordingly, lifting properties of $\mathbf{B}^{0, b}_{p, q}(\R^d)$ should be understood within the more general setting provided by truncated spaces. A similar statement also holds true for $\text{Lip}^{s, b}_{p, q}(\R^d)$.

5. {\it Duality.}
In Section \ref{SectionDuality}, we obtain that the scale of truncated spaces is closed under duality,  extending then  the well-known duality assertions related to classical Besov and Triebel--Lizorkin spaces. Specifically (cf. Theorem \ref{ThmDualNewBesovSpaces} for precise statement) we have
	\begin{equation*}
		(T^b_r A^s_{p, q}(\mathbb{R}^d))' = T^{-b}_{r'} A^{-s}_{p',q'}(\R^d), \qquad A \in \{B, F\}.
	\end{equation*}
	In particular, this yields a complete answer to the open question of characterizing the dual spaces of $\mathbf{B}^{0, b}_{p, q}(\R^d)$ and $\text{Lip}^{s, b}_{p, q}(\R^d)$:
%However, duality properties of $\mathbf{B}^{0, b}_{p, q}(\R^d)$ and $\text{Lip}^{s, b}_{p, q}(\R^d)$ are quite complicated %somehow a kind of mistery
% and only partial results are available. We give a  complete solution of this problem  showing that  % is given by %In this paper, we show
$$
	(\mathbf{B}^{0, b}_{p, q}(\R^d))' = T^{-b-1/q}_{q'} F^{0}_{p', 2}(\R^d),   \qquad
		(T^{-b-1/q}_{q'} F^{0}_{p', 2}(\R^d))' = \mathbf{B}^{0, b}_{p, q}(\R^d)
$$
and
$$
   (\text{Lip}^{s, b}_{p, q}(\R^d))' = T^{-b-1/q}_{q'} F^{-s}_{p', 2}(\R^d), \qquad
		 (T^{-b-1/q}_{q'}F^{-s}_{p', 2}(\R^d))' = \text{Lip}^{s, b}_{p, q}(\R^d).
$$
%A similar assertion holds true for $\text{Lip}^{s, b}_{p, q}(\R^d)$.

6.  {\it Applications.} We would like to mention here at least three new applications of truncated spaces in different areas.

(i) The trace operator can be defined  (at least formally) by
\begin{equation*}
    (\text{Tr } f)(x) = (\text{Tr }_{\{x_d=0\}} f)(x) = f(x',0), \qquad x=(x',x_d), \quad x' \in \mathbb{R}^{d-1}, \qquad d \geq 2.
\end{equation*}
The well-known  trace theorem for Besov spaces states that
$$%\begin{equation}\label{2.1}
    \text{Tr }: B^s_{p,q}(\mathbb{R}^d) \longrightarrow
    B^{s-1/p}_{p,q}(\mathbb{R}^{d-1})
$$%\end{equation}
for $p \geq 1, \, s > 1/p$, and $0 < q \leq \infty$.
Furthermore,  $\text{Tr}$ admits a right inverse $\text{Ex}$, which is a
bounded linear operator from $B^{s-1/p}_{p,q}(\mathbb{R}^{d-1})$ into
$B^s_{p,q}(\mathbb{R}^d)$. See e.g. \cite[Section 2.7.2]{Triebel83}.

In the limiting case $p=1$ and/or $s=1/p$, we  mention the classical results by Gagliardo \cite{Gagliardo}:
\begin{equation}\label{TraceGagliardo--}
	\text{Tr}: \text{BV}(\R^{d+1}_+) \to L_1(\R^d)
\end{equation}
and % every function in $L_1(\R^d)$ has a bounded extension to $\text{BV}(\R^{d+1}_+)$ and
 by   Peetre \cite{Peetre}, Agmon--H\"ormander \cite{AgmonHormander}, and Frazier--Jawerth \cite{FrazierJawerth}:
\begin{equation}\label{TraceGagliardo---}
    \text{Tr }: B^{1/p}_{p,1}(\mathbb{R}^d) \longrightarrow L_p(\mathbb{R}^{d-1})
\end{equation}
and there exist bounded extension operators. Importantly, there does not exist a bounded linear extension related to  \eqref{TraceGagliardo--} or \eqref{TraceGagliardo---}, which yields serious obstructions;  cf. \cite{Peetre79} (see also \cite{Triebel83} for further historical comments).

Motivated by the above discussion, very recently Mal\'y, Shanmugalingam and Snipes \cite{MSS} considered the Besov space %a very small smoothness proved that the showed the validity of the previous assertion dealing with the  space
with zero smoothness\footnote{The corresponding norm appears frequently in the literature under the name of \emph{Dini-type condition}.} $\mathbf{B}^{0}_{1,1} = \mathbf{B}^{0, 0}_{1, 1}$, which is a slightly smaller space than $L_1$. It turns out that in this case %They proved
  %To be more precise, in the more general setting of metric spaces equipped with a doubling measure and $\Omega$ a bounded domain satisfying certain regularity assumptions, they proved  the existence of
   a bounded linear extension operator does  exist:
\begin{equation}\label{MSSIntro}
\text{Ex} : \mathbf{B}^{0}_{1,1} (\partial \Omega) \to \text{BV}(\Omega)
\end{equation}
such that $\text{Tr} \circ \text{Ex} = \text{id}$. Here $\Omega$ is a smooth bounded domain in $\R^d$.

Working with truncated spaces allows us to give a full solution to the trace/extension problem in the limiting case $s=1/p$ (cf. \eqref{TraceGagliardo--} and \eqref{TraceGagliardo---}). In particular, we can substantially  improve the above mentioned  result by Mal\'y, Shanmugalingam and Snipes as follows (cf. Theorem \ref{Theorem 2.1})
$$
    \text{Tr }:
    T^{b+1/q}_q B^{1/p}_{p, 1}(\mathbb{R}^d) \longrightarrow \mathbf{B}^{0,b}_{p,q}(\mathbb{R}^{d-1})
$$
and there exists a linear extension operator $\text{Ex}$ which is
        continuous from $\mathbf{B}^{0,b}_{p,q}(\mathbb{R}^{d-1})$
        into $T^{b+1/q}_q B^{1/p}_{p,1}(\mathbb{R}^d)$ with
           $ \text{Tr } \circ \text{Ex } =
            \text{id}.$ Specializing this result with $b=0$ and $p=q=1$, we obtain
   $$
    \text{Tr }: B^{1, 1}_{1, 1}(\R^d) \longrightarrow \mathbf{B}^0_{1, 1}(\R^{d-1})
   $$ (cf. \eqref{TraceGagliardo--}-(\ref{TraceGagliardo---})) and there exists a bounded linear extension operator
$$
	\text{Ex}: \mathbf{B}^{0}_{1,1}(\mathbb{R}^{d-1}) \to B^{1,1}_{1, 1}(\mathbb{R}^d)
$$
with $\text{Tr } \circ \text{Ex } =
            \text{id}.$ Note that $B^{1, 1}_{1, 1}(\R^d) \subsetneq \text{BV}(\R^d)$ (see \eqref{MSSIntro}).

(ii) An important embedding for  elliptic and parabolic equations is the prominent result by Trudinger \cite{Trudinger}:
%The celebrated Trudinger's inequality \cite{Trudinger, Yudovich} can be rephrased  in terms of the following embeddings
$$%\begin{equation}\label{Trudinger}
	W^{d/p}_p(\mathbb{T}^d) \hookrightarrow L_\infty (\log L)_{-1/p'} (\mathbb{T}^d), \qquad p \in (1, \infty).
$$ %\end{equation}
Later on this result was sligtly improved with respect to both domain and target spaces: % improvement of \eqref{MHBW} holds, for $0 < p < \infty$ and $1 < q \leq \infty$,
$$%\begin{equation}\label{CriticalEmbedding}
	B^{d/p}_{p,q}(\mathbb{T}^d) \hookrightarrow L_{\infty, q}(\log L)_{-1}(\mathbb{T}^d).
$$
In its turn, this  embedding  can be  naturally considered in the more general setting of Besov spaces of logarithmic smoothness. Namely, if %$0 < p < \infty, 0 < q \leq \infty$
 $b < -1/q$ then
\begin{equation}\label{CriticalEmbeddingLog--}
	B^{d/p, b + 1/\min\{1, q\}}_{p,q}(\mathbb{T}^d) \hookrightarrow L_{\infty, q}(\log L)_{b}(\mathbb{T}^d).
\end{equation}
The last embedding  has been the object of intensive research since 1979 (see the paper \cite{DeVore}) until now (see  \cite{CaetanoMoura, CaetanoFarkas, CaetanoLeopold, Martin, MouraNevesPiotrowski}). It is known that
 %In this regard, we only mention here \cite{DeVore}, \cite{CaetanoMoura}, \cite{CaetanoFarkas}, \cite{CaetanoLeopold}, \cite{Martin} and \cite{MouraNevesPiotrowski}. In particular, it is well known that
  %\eqref{CriticalEmbeddingLog}
  \eqref{CriticalEmbeddingLog--} is optimal within the scale of Besov spaces of logarithmic smoothness.
However, we will show that one can  sharpen \eqref{CriticalEmbeddingLog--}  with the help of the truncated Besov spaces $T^b_r B^{s}_{p, q}(\mathbb{T}^d)$.
Namely, we show (cf. Theorem \ref{TheoremEmbedinngCritical})
$$
		T^{b+1/q}_q B^{d/p}_{p, 1}(\mathbb{T}^d) \hookrightarrow L_{\infty, q}(\log L)_b(\mathbb{T}^d)
	$$ and this embedding is optimal (cf. Remark \ref{Remark6.5}).
	
{The counterpart of \eqref{CriticalEmbeddingLog--} with $b > -1/q$ is formulated in terms of the zero-smoothness Besov space $\BB_{\infty, q}^{0,b}(\R^d)$. More precisely, if $b > -1/q$ then
	\begin{equation}\label{ContinuityEnvelope1Intro}
	B^{d/p, b + 1/\min\{1,q\}}_{p, q}(\R^d) \hookrightarrow \BB_{\infty, q}^{0,b}(\R^d).
\end{equation}
This embedding is a crux of matter in function spaces. It served as a basis for the theory of continuity envelopes developed in detail by Haroske--Triebel \cite{Triebel01, Haroske} and it has been further investigated in \cite{MouraNevesPiotrowski09, MouraNevesSchneider11, MouraNevesSchneider14} and the references within. Similarly as \eqref{CriticalEmbeddingLog--}, the embedding \eqref{ContinuityEnvelope1Intro} is optimal within the scale of classical Besov spaces,
but we can now prove that it admits the following improvements within the scale of truncated spaces (cf. Theorem \ref{TheoremEmbedinngCriticalContinuity})
 %but in this paper we show that it admits improvements within the scale of truncated spaces. More precisely, we get (cf. Theorem \ref{TheoremEmbedinngCriticalContinuity})
$$
T^{b+1/q}_q B^{d/p}_{p, 1}(\R^d) \hookrightarrow \BB^{0,b}_{\infty, q}(\R^d)
$$
and this is optimal.}

(iii) The celebrated Bourgain--Brezis--Mironescu formula \cite{Bourgain} states that
$$\lim_{s \to 1-} (1-s)^{1/p} \|f\|_{\dot{W}^{s, p}(\R^d)} = c_{d, p} \, \|\nabla f\|_{L_p(\R^d)}.
$$
In some sense it fixes the well-known defect of the fractional Sobolev (Gagliardo--Slobodecki\u{\i}) seminorms %(a.k.a. Gagliardo seminorms)
 \begin{equation*}
		\|f\|_{\dot{W}^{s, p}(\R^d)} = \ \bigg(\int_{\R^d} \int_{\R^d} \frac{|f(x)-f(y)|^p}{|x-y|^{d + s p}} \, dx \, dy \bigg)^{1/p}, \quad s \in (0,1), %\quad p \in [1, \infty),
\end{equation*}
that they do not converge to the standard Sobolev seminorm $\|\nabla f\|_{L_p(\R^d)}$ as $s \to 1-$.
 The explicit value of the constant $c_{d, p}$ is known, depending only on $d$ and $p$.
{This result has been widely considered recently
 with respect to
  %and nowadays there is a huge related literature from  many
    different perspectives: interpolation theory \cite{Milman}, higher-order Besov spaces \cite{KaradzhovMilmanXiao}, Triebel--Lizorkin spaces \cite{Brazke}, anisotropic Sobolev spaces \cite{Ludwig}, etc.}

We will show that the family of truncated norms on Besov and Triebel--Lizorkin  spaces  satisfies the Bourgain--Brezis--Mironescu phenomenon, in the sense that under a certain normalization, one can attain the classical  norms $\|\cdot\|_{A^s_{p,q}(\R^d)}, \, A \in \{B, F\}$, via limits.
Specifically, one has (cf. Theorem \ref{ThmLimitsBBM})
$$		\lim_{b \to 0-}  (1-2^{b r})^{1/r} \, \|f\|^*_{T^b_r A^{s}_{p, q }(\R^d)} = \|f\|_{A^s_{p, q}(\R^d)}.
$$

{To close this introduction, we would like to note that truncated Besov and Triebel--Lizorkin spaces introduced in this paper can be understood within the abstract setting provided by interpolation theory.
In the forthcoming work \cite{DomTik22}, we propose a novel interpolation method,  %so-called
 \emph{the truncated interpolation}, which contains as distinguished examples truncated Besov and Triebel--Lizorkin spaces. It is worth mentioning %However,
 that this methodology goes further beyond the setting of smooth function spaces and it enables us to define many other truncated scales $T^b_r X$ with the special choice of a space $X$.}

\vskip 0.4cm
%{\bf Structure of the paper}

\emph{Notation.}
Given two non-negative  quantities $A$ and $B$, we use $A \lesssim B$ for the estimate $A \leq C B$\index{\bigskip\textbf{Numbers and relations}!$A \lesssim B$}\label{AB}, where $C$ is a positive constant which is independent of all essential parameters in $A$ and $B$. If $A \lesssim B \lesssim A$, then we write $A \asymp B$\index{\bigskip\textbf{Numbers and relations}!$A \asymp B$}\label{ASYMP}.

Let $X$ and $Y$ be quasi-Banach spaces. We write $X \hookrightarrow Y$ if the identity operator from $X$ into $Y$ is continuous, that is, $\|f\|_Y \lesssim \|f\|_X$\index{\bigskip\textbf{Numbers and relations}!$X \hookrightarrow Y$}\label{XY} for $f \in X$. By $X=Y$ we mean that $X \hookrightarrow Y \hookrightarrow X$, i.e., $\|f\|_X \asymp \|f\|_Y$.  We use $X'$ to denote the dual space of $X$\index{\bigskip\textbf{Spaces}!$X'$}\label{DUAL}  and $p'$ is the dual exponent of $p \in (0,\infty)$, i.e.,  $\frac{1}{p} + \frac{1}{p'}=1$ if $p \geq 1$ and $p'=\infty$ if $p \leq 1$\index{\bigskip\textbf{Numbers and relations}!$p'$}\label{p'}.

As usual, $\R^d$ denotes the $d$-dimensional Euclidean space\index{\bigskip\textbf{Sets}!$\R^d$}\label{SETR}, $\R = \R^1$\index{\bigskip\textbf{Sets}!$\R$}\label{SETR1}, $\mathbb{C}$ is the set of all complex numbers\index{\bigskip\textbf{Sets}!$\mathbb{C}$}\label{SETC}, $\mathbb{T}^d$ is the $d$-dimensional torus\index{\bigskip\textbf{Sets}!$\mathbb{T}^d$}\label{SETTD}, $\mathbb{T} = \mathbb{T}^1$\index{\bigskip\textbf{Sets}!$\mathbb{T}$}\label{SETT}, $\N$ is the collection of all natural numbers\index{\bigskip\textbf{Sets}!$\N$}\label{SETN}, $\N_0 = \N \cup \{0\}$\index{\bigskip\textbf{Sets}!$\N_0$}\label{SETN0}, $\Z^d$ is the $d$-dimensional integer lattice\index{\bigskip\textbf{Sets}!$\Z^d$}\label{SETZ}, $\Z = \Z^1$\index{\bigskip\textbf{Sets}!$\Z$}\label{SETZ1}. For some $a \in \R$, we let $a_+ = \max\{a, 0\}.$ \index{\bigskip\textbf{Numbers and relations}!$a_+$}\label{a_+}

Given a quasi-Banach space $X$ and $p \in (0, \infty)$, the Bochner space $L_p(\R^d;X)$\index{\bigskip\textbf{Spaces}!$L_p(\R^d; X)$}\label{LEBX} is formed by all strongly measurable functions $f: \R^d \to X$ such that $$\|f\|_{L_p(\R^d;X)} = \bigg(\int_{\R^d} \|f(\xi)\|_X^p \, d\xi \bigg)^{1/p} < \infty.$$ In the special case $X = \R$ or $X = \mathbb{C}$, we simply write $L_p(\R^d)$, the classical Lebesgue space\index{\bigskip\textbf{Spaces}!$L_p(\R^d)$}\label{LEB}.

Let $s \in \mathbb{R}, q \in (0,\infty]$ and let $X$ be a quasi-Banach space. Then $\ell^s_q(X)$ is the sequence space formed by all those  $x= (x_j) \subset X$ such that\index{\bigskip\textbf{Spaces}!$\ell^s_q(X)$}\label{LEBSEQ}
$$\|x\|_{\ell^s_q(X)} = \bigg(\sum_{j=0}^\infty [2^{j s} \|x_j\|_X]^q \bigg)^{1/q} < \infty$$ (with the usual modification if $q=\infty$). In the special case $X = \mathbb{C}$ (or $X= \R$), we simply write $\ell^s_q$\index{\bigskip\textbf{Spaces}!$\ell^s_q(X)$}\label{LEBSEQS}. In addition, if $s=0$ then we recover $\ell_q$\index{\bigskip\textbf{Spaces}!$\ell_q$}\label{LEBC}.

\newpage

\section{A quick review on classical function spaces: Besov, Triebel--Lizorkin and Lipschitz}\label{SectionClassicSpaces}

We collect here basic definitions and standard notations in the theory of function spaces. By $\mathcal{S}(\R^d)$\index{\bigskip\textbf{Spaces}!$\mathcal{S}(\R^d)$}\label{Schwartz} we denote the Schwartz space of all complex-valued, infinitely differentiable and rapidly decreasing functions on $\R^d$ and by $\mathcal{S}'(\R^d)$  the dual space of all tempered distributions in $\R^d$\index{\bigskip\textbf{Spaces}!$\mathcal{S}'(\R^d)$}\label{S'}. If $\varphi \in \mathcal{S}(\R^d)$ then
\begin{equation}\label{FT}
	\widehat{\varphi} (\xi) = \int_{\R^d} e^{-i x \cdot \xi} \varphi(x) \, dx, \qquad \xi \in \R^d,
\end{equation}
denotes the Fourier transform of $\varphi$\index{\bigskip\textbf{Operators}!$\widehat{f}$}\label{FT}. The symbol $\varphi^{\vee}$\index{\bigskip\textbf{Operators}!$\varphi^\vee$}\label{IFT} stands for the inverse Fourier transform and is given by the right-hand side of \eqref{FT} with $i$ in place of $-i$. Both the Fourier transform and the inverse Fourier transform are extended to $\mathcal{S}'(\R^d)$ in the usual way. Let  $(\varphi_j)_{j=0}^\infty$ be the usual dyadic resolution of unity in $\R^d$.

Let $p, q \in (0,\infty]$ and $s, b \in \R$. Then $B^{s, b}_{p,q}(\R^d)$\index{\bigskip\textbf{Spaces}!$B^{s, b}_{p, q}(\R^d)$}\label{BESOVF} is the \emph{Besov space} formed by all $f \in \mathcal{S}'(\R^d)$ such that
\begin{equation}\label{Besov}
	\|f\|_{B^{s, b}_{p,q}(\R^d)} = \left(\sum_{j=0}^\infty 2^{j s q} (1 + j)^{b q}  \|(\varphi_j \widehat{f})^\vee\|_{L_p(\mathbb{R}^d)}^q \right)^{\frac{1}{q}} < \infty
\end{equation}
(with the usual convention if $q=\infty$). The \emph{Triebel--Lizorkin space} $F^{s, b}_{p, q}(\R^d), \, p < \infty,$\index{\bigskip\textbf{Spaces}!$F^{s, b}_{p, q}(\R^d)$}\label{TL} is defined as the collection of all $f \in  \mathcal{S}'(\R^d)$ such that
\begin{equation}\label{TriebelLizorkin}
	\|f\|_{F^{s, b}_{p,q}(\R^d)} = \bigg\| \bigg(\sum_{j=0}^\infty 2^{j s q} (1 + j)^{b q} |(\varphi_j \widehat{f})^\vee|^q \bigg)^{1/q} \bigg\|_{L_p(\R^d)} < \infty
\end{equation}
(with the usual modification made if $q=\infty$). Standard references for the theory of these spaces are \cite{KalyabinLizorkin}, \cite{Moura} and \cite{FarkasLeopold}; see also \cite{DominguezTikhonov} for a recent account. In particular, in the absence of logarithmic smoothness (i.e., $b=0$) one recovers  classical Besov spaces, $B^{s}_{p, q}(\R^d)$, and Triebel--Lizorkin spaces, $F^s_{p, q}(\R^d)$, studied systematically in the monographs \cite{Nikolskii, Peetre76, Triebel83, Triebel01}.

For $s \in \R$ and $p \in (1, \infty)$, the \emph{(fractional) Sobolev space} $H^s_p(\R^d)$\index{\bigskip\textbf{Spaces}!$H^s_p(\R^d)$}\label{SOB} (also known as \emph{Bessel-potential space}) is formed by all $f \in \mathcal{S}'(\R^d)$ such that
$$
	\|f\|_{H^s_p(\R^d)} = \|((1+ |\xi|^2)^{s/2} \widehat{f})^\vee\|_{L_p(\R^d)}< \infty.
$$
Recall that $H^s_p(\R^d) = F^s_{p, 2}(\R^d)$ (with equivalence of norms) and, in particular, $W^k_p(\R^d) = F^k_{p, 2}(\R^d), \, k \in \N_0$, the classical \emph{Sobolev space} formed by all $f \in L_p(\R^d)$ whose weak derivatives $D^l f, \, |l| \leq k$, belong to $L_p(\R^d)$ and equipped with the norm\index{\bigskip\textbf{Spaces}!$W^k_p(\R^d)$}\label{SOBCLAS}
$$
	\|f\|_{W^k_p(\R^d)} := \sum_{|l| \leq k} \| |D^l f| \|_{L_p(\R^d)}.
$$

It is well known that Besov--Triebel--Lizorkin spaces can be equivalently introduced in terms of differences, see e.g.  \cite[2.5.12]{Triebel83}, \cite{KalyabinLizorkin} and \cite{HaroskeMoura}. For $k \in \N$ and $f \in L_p(\R^d), \, 1 \leq p \leq \infty$, the \emph{$k$-th order modulus of smoothness} $\omega_k(f,t)_p$ is defined by\index{\bigskip\textbf{Functionals and functions}!$\omega_k(f, t)_p$}\label{MODK}
\begin{equation}\label{DefModuli}
	\omega_k(f,t)_p = \sup_{|h| \leq t} \|\Delta^k_h f\|_{L_p(\R^d)}, \qquad t > 0,
\end{equation}
where $\Delta^k_h$ is the \emph{$k$-th difference with step $h \in \R^d$} given by\index{\bigskip\textbf{Functionals and functions}!$\Delta^k_h$}\label{DELTA}
$$
	(\Delta^1_h f)(x)= (\Delta_h f)(x) = f(x+h)-f(x), \qquad x \in \R^d,
$$
and $\Delta_h^{k+1} = \Delta_h^k \Delta_h$. It is clear that
\begin{equation}\label{HigherDiff}
	\Delta^k_h f (x) = \sum_{j=0}^k (-1)^j {k \choose j} f(x + (k-j) h).
\end{equation}

Let $1 \leq p \leq \infty, 0 < q \leq \infty, 0 \leq  s < k$ and $b \in \R$. Then $\mathbf{B}^{s, b}_{p, q}(\R^d)$ is the set of all $f \in L_p(\R^d)$ having finite quasi-norm\index{\bigskip\textbf{Spaces}!$\mathbf{B}^{s, b}_{p, q}(\R^d)$}\label{BESOVDIFF}
\begin{equation}\label{BesovDifDef}
	\|f\|_{\mathbf{B}^{s, b}_{p, q}(\R^d)} = \|f\|_{L_p(\R^d)} +  \bigg(\int_0^1 (t^{-s} (1-\log t)^b \omega_k(f,t)_p)^q \frac{dt}{t} \bigg)^{1/q}
\end{equation}
(with the usual modification if $q=\infty$). It is well known that
\begin{equation}\label{BesovDif}
	\|f\|_{\mathbf{B}^{s, b}_{p,q}(\R^d)} \asymp \|f\|_{L_p(\R^d)} + \bigg(\int_{|h| < 1} |h|^{-s q - d} (1-\log |h|)^{b q} \|\Delta^k_h f\|_{L_p(\R^d)}^q dh \bigg)^{1/q}
\end{equation}
and different values of $k$ with $k > s$ in the right-hand sides of \eqref{BesovDifDef} and \eqref{BesovDif} give equivalent quasi-norms on $\mathbf{B}^{s, b}_{p,q}(\R^d)$. We also mention that these spaces admit extensions to the case $p \in (0, 1)$. For the special choice $b=0$, we deal with classical spaces $\mathbf{B}^s_{p, q}(\R^d)$.

The limiting parameter $s=0$ in $\mathbf{B}^{s, b}_{p, q}(\R^d)$ deserves special attention. In this case, we may assume $b \geq -1/q \, (b > 0 \text{ if } q=\infty)$. The latter condition on $b$ is not restrictive since $\mathbf{B}^{0, b}_{p, q}(\R^d) = L_p(\R^d)$ for $b < -1/q$. In particular, the elements in $\mathbf{B}^{0}_{p, 1}(\R^d)$ are defined in terms of the validity of the classical \emph{$L_p$-Dini condition}
\begin{equation}\label{Dini}
	\int_0^1 \omega_k(f, t)_p \frac{dt}{t} < \infty.
\end{equation}
This condition is widely used in functional analysis, probability theory, harmonic analysis and PDE's. The logarithmic refinement of \eqref{Dini} provided by \eqref{BesovDifDef} (with $s=0$) is also important in applications. In this regard we only refer to the survey contained in \cite[Section 1.2]{DominguezTikhonov}.

As already mentioned above, a classical result in the theory of function spaces asserts that
\begin{equation}\label{BesovModuli}
	B^{s, b}_{p,q}(\R^d) = \mathbf{B}^{s, b}_{p,q}(\R^d), \qquad s \in (0, k), \qquad p \in [1,\infty].
\end{equation}
However, the limiting case $s = 0$ in \eqref{BesovModuli} fails to be true.
Similarly, putting
$s = k$ in \eqref{BesovDifDef} we do not recover
$B^{s, b}_{p,q}(\R^d)$. In fact,
 %In fact, the right-hand side of \eqref{BesovModuli} with $s=0$ defines the Besov space $\mathbf{B}^{0, b}_{p, q}(\R^d)$ with smoothness close to zero, specifically,
%\begin{equation}\label{BesovZero}
%	\|f\|_{\mathbf{B}^{0, b}_{p, q}(\R^d)} =  \|f\|_{L_p(\R^d)} + \bigg(\int_0^1 ((1-\log t)^b \omega_k(f,t)_p)^q \frac{dt}{t} \bigg)^{1/q}.
%\end{equation}
%\textcolor{cyan}{
%{\bf Task 1}
%{\textsf{ Introduce	$\|f\|_{\mathbf{B}^{r, b}_{p, q}(\R^d)}$? Oscar: Please, read the new version}}}
 the expression given in the right-hand side of  \eqref{BesovDifDef} with $s=k$ corresponds to a special case of Lipschitz space. More precisely, for $1 \leq p \leq \infty, 0 < q \leq \infty, s > 0$ and $b < -1/q \, (b \leq 0 \text{ if } q=\infty)$, the space $\text{Lip}^{s, b}_{p, q}(\R^d)$ is formed by all those $f \in L_p(\R^d)$ such that
$$
	\int_0^1 t^{-s q} (1-\log t)^{b q} \omega_s(f, t)_p^q \frac{dt}{t} < \infty
$$
(with the usual change if $q=\infty$). Here $\omega_s(f,t)_p$ denotes the classical (if $s \in \N$) or fractional modulus of smoothness, which can be naturally introduced after replacing the classical differences $\Delta^k_h$ (cf. \eqref{HigherDiff}) in \eqref{DefModuli} by \index{\bigskip\textbf{Functionals and functions}!$\Delta^s_h$}\label{DELTAFRAC}
$$
	\Delta^s_h f (x) = \sum_{j=0}^\infty (-1)^j {s \choose j} f(x + (s-j) h)
$$
with ${s \choose j} = \frac{s (s-1) \ldots (s-j+1)}{j!}$ and ${s \choose 0} =1$, more precisely,\index{\bigskip\textbf{Functionals and functions}!$\omega_s(f,t)_p$}\label{MODA}
$$
	\omega_s(f, t)_p = \sup_{|h| \leq t} \|\Delta^s_h f\|_{L_p(\R^d)}.
$$
The space $\text{Lip}^{s, b}_{p, q}(\R^d)$ is endowed with the quasi-norm\index{\bigskip\textbf{Spaces}!$\text{Lip}^{s, b}_{p, q}(\R^d)$}\label{LOGLIPSCHITZ}
$$
	\|f\|_{\text{Lip}^{s, b}_{p, q}(\R^d)} = \|f\|_{L_p(\R^d)} + \bigg( \int_0^1 t^{-s q} (1-\log t)^{b q} \omega_s(f, t)_p^q \frac{dt}{t}  \bigg)^{1/q}.
$$
The assumption $b < -1/q$ is natural, otherwise the space $\text{Lip}^{s, b}_{p, q}(\R^d)$ becomes trivial (in the sense that it is formed only by the zero element). These spaces play a key role in embedding theorems and PDE's and they were intensively investigated in \cite{EdmundsHaroske, EdmundsHaroske00, Haroske} (and the references quoted there) if $s=1$ and  in \cite{DominguezHaroskeTikhonov} for general $s > 0$. In particular, a bounded function $f$ belongs to $\text{Lip}^{1, b}_{\infty, \infty}(\R^d)$ if and only if
$$
	|f(x+h)-f(h)| \lesssim |h| (1 - \log |h|)^{-b} \qquad \text{for all} \qquad |h| < 1.
$$
In the special case $b=0$ one recovers the classical Lipschitz condition. On the other hand,
\begin{equation}\label{LipSobFract}
	\text{Lip}^{s, 0}_{p, \infty}(\R^d) = H^s_p(\R^d).
\end{equation}

As already mentioned above
$$
	\textbf{B}^{0, b}_{p, q}(\R^d) \neq B^{0, b}_{p, q}(\R^d) \qquad \text{and} \qquad \text{Lip}^{k, b}_{p,q}(\R^d) \neq B^{k, b}_{p, q}(\R^d)
$$
(cf. \eqref{BesovModuli}). However, it was recently shown in \cite[Theorem 4.1]{DominguezHaroskeTikhonov}  and \cite[Section 9.1]{DominguezTikhonov} that it is still possible to establish relations between these three scales of function spaces. Namely, let $p \in (1, \infty)$ and $q \in (0, \infty]$ then
\begin{equation}\label{EmbBBzero}
	 B^{0, b + 1/\min\{2, p, q\}}_{p, q}(\R^d)  \hookrightarrow \textbf{B}^{0, b}_{p, q}(\R^d) \hookrightarrow B^{0, b + 1/\max\{2, p, q\}}_{p, q}(\R^d), \qquad \text{if} \qquad b > - 1/q,
\end{equation}
and, if $s > 0$ then
\begin{equation}\label{EmbBL}
	 B^{s, b + 1/\min\{2, p, q\}}_{p, q}(\R^d)  \hookrightarrow \text{Lip}^{s, b}_{p, q}(\R^d) \hookrightarrow B^{s, b + 1/\max\{2, p, q\}}_{p, q}(\R^d), \qquad \text{if} \qquad b < -1/q.
\end{equation}
Furthermore, these embeddings are optimal within the class of Besov spaces. We mention that  versions of \eqref{EmbBBzero} and \eqref{EmbBL} in the limiting cases $p=1, \infty$ are also available in \cite{DominguezHaroskeTikhonov, DominguezTikhonov}.

\newpage
%%%%%%%%%%%%%%%%%%%%%%%%%%%%%%%%%%%%%%%%%%%%%%
\section{Truncated Besov and Triebel--Lizorkin spaces: definitions and basic properties}

The main objects that we study in this paper are new spaces of smooth functions, which are obtained via truncation methods of the classical quasi-norms \eqref{Besov} and \eqref{TriebelLizorkin} with $b=0$. In particular, these spaces contain as distinguished examples the Besov spaces $B^{s, b}_{p, q}(\R^d)$, the Triebel--Lizorkin spaces $F^{s, b}_{p, q}(\R^d)$, the Besov spaces of smoothness near zero $\mathbf{B}^{0, b}_{p, q}(\R^d)$ and the Lipschitz spaces $\text{Lip}^{s, b}_{p, q}(\R^d)$.

\index{\bigskip\textbf{Spaces}!$T^b_r B^{s}_{p,q}(\mathbb{R}^d)$}\label{TRUNB}
\index{\bigskip\textbf{Spaces}!$T^b_r F^{s}_{p,q}(\mathbb{R}^d)$}\label{TRUNF}
\index{\bigskip\textbf{Spaces}!$\mathfrak{T}^b_r F^{s}_{p,q}(\mathbb{R}^d)$}\label{TRUNF2}

\begin{defn}\label{DefinitionNewBesov}
    \begin{enumerate}[\upshape(i)]
    \item Let $p, q, r \in (0,\infty]$ and $s,b \in\mathbb{R}$. The truncated Besov
    space $T^b_r B^{s}_{p,q}(\mathbb{R}^d)$ is formed by all $f \in
    \mathcal{S}'(\R^d)$ for which
    \begin{equation*}
        \|f\|_{T^b_r B^{s}_{p,q}(\mathbb{R}^d)} = \left(\sum_{j=0}^\infty 2^{j b r} \bigg(\sum_{\nu=2^j-1}^{2^{j+1}-2} 2^{\nu s q} \|(\varphi_\nu \widehat{f})^\vee\|_{L_p(\mathbb{R}^d)}^q\bigg)^{r/q}\right)^{1/r}
    \end{equation*}
    is finite (with the usual convention if $q=\infty$ and/or $r=\infty$).
    \item 	Let $p \in (0, \infty), q, r \in (0, \infty]$ and $s, b \in \R$. The truncated Triebel--Lizorkin space $T^b_r F^{s}_{p, q}(\R^d)$ is formed by all $f \in
    \mathcal{S}'(\R^d)$ for which
    $$
    	\|f\|_{T^b_r F^{s}_{p, q}(\R^d)} = \left(\sum_{j=0}^\infty 2^{j b r}  \bigg\| \bigg(\sum_{\nu=2^j-1}^{2^{j+1}-2} 2^{\nu s q} |(\varphi_\nu \widehat{f})^\vee|^q \bigg)^{1/q} \bigg\|_{L_p(\R^d)}^r \right)^{1/r}
    $$
    is finite (with the usual convention if $q=\infty$ and/or $r=\infty$).
    \item Let $p \in (0, \infty), q, r \in (0, \infty]$ and $s, b \in \R$.  The inner truncated Triebel--Lizorkin space $\mathfrak{T}^b_r F^{s}_{p, q}(\R^d)$ is formed by all $f \in
    \mathcal{S}'(\R^d)$ for which
    $$\|f\|_{\mathfrak{T}^b_r F^{s}_{p, q}(\R^d)} = \bigg\| \bigg(\sum_{j=0}^\infty 2^{j b r} \bigg(\sum_{\nu=2^{j}-1}^{2^{j+1}-2} 2^{\nu s q}  |(\varphi_\nu \widehat{f})^\vee|^q   \bigg)^{r/q} \bigg)^{1/r} \bigg\|_{L_p(\R^d)}$$
      is finite (with the usual convention if $q=\infty$ and/or $r=\infty$).
    \end{enumerate}
%    \item The Besov-type
%    space $\mathfrak{B}^{s,b}_{p,q,r}(\mathbb{R}^d)$ is formed by all $f \in
%    \mathcal{S}'(\R^d)$ for which
%    \begin{equation*}
%        \|f\|_{\mathfrak{B}^{s,b}_{p,q,r}(\mathbb{R}^d)} = \left(\sum_{j=0}^\infty (1+j)^{b r} \bigg(\sum_{\nu=0}^j 2^{\nu s q} \|(\varphi_\nu \widehat{f})^\vee\|_{L_p(\mathbb{R}^d)}^q\bigg)^{\frac{r}{q}}\right)^{\frac{1}{r}}
%    \end{equation*}
%    is finite (with the usual convention if $q=\infty$ and/or $r=\infty$).
%    \end{enumerate}
\end{defn}

\begin{notation}
	\emph{For $A \in \{B, F\}$, we let $T_r A^{s}_{p, q}(\R^d) := T^0_r A^{s}_{p, q}(\R^d)$ and $\mathfrak{T}_r F^{s}_{p, q}(\R^d) := \mathfrak{T}^0_r F^{s}_{p, q}(\R^d)$.}
\end{notation}

\begin{rem}
\begin{enumerate}[\upshape(i)]
	\item Let $A \in \{B, F\}$. It is plain to see that $T^b_r A^{s}_{p,q}(\mathbb{R}^d)$ and $\mathfrak{T}^b_r F^s_{p, q}(\R^d)$ are quasi-Banach spaces (Banach space if $p, q, r \geq 1$).
	\item One can also introduce the counterparts of the spaces $T^b_r A^{s}_{p,q}(\mathbb{R}^d)$ and $\T^b_r F^s_{p, q}(\R^d)$  related to general smooth resolutions of unity  in the sense of \cite[Section 2.3.1, page 45]{Triebel83}. However the definition of these spaces do not depend (up to equivalence of quasi-norms) on the choice of the resolution of unity. The proof of this fact is a simple consequence of multiplier assertions (cf. \cite[Section 2.3.2, page 46]{Triebel83} for further details).
	\item The periodic counterparts $T^b_r A^s_{p, q}(\mathbb{T}^d), \, A \in \{B, F\},$ and $\mathfrak{T}^b_r F^{s}_{p, q}(\mathbb{T}^d)$ can be introduced in the same fashion. \index{\bigskip\textbf{Spaces}!$T^b_r B^{s}_{p,q}(\mathbb{T}^d)$}\label{TRUNBPER}
\index{\bigskip\textbf{Spaces}!$T^b_r F^{s}_{p,q}(\mathbb{T}^d)$}\label{TRUNFPER}
\index{\bigskip\textbf{Spaces}!$\mathfrak{T}^b_r F^{s}_{p,q}(\mathbb{T}^d)$}\label{TRUNF2PER}
\end{enumerate}
\end{rem}

\begin{prop}
Let $A \in \{B, F\}, p, q, r \in (0, \infty] \, (p < \infty \text{ if } A=F)$ and $s, b \in \R$. Then
	\begin{equation}\label{RemFM}
		\|f\|_{T^b_r A^s_{p, q}(\R^d)} \asymp \bigg(\sum_{j=0}^\infty 2^{j b r} \bigg\|\sum_{\nu=2^j-1}^{2^{j+1}-2} (\varphi_\nu \widehat{f} )^\vee \bigg\|^r_{A^s_{p, q}(\R^d)} \bigg)^{1/r}.
	\end{equation}
	\end{prop}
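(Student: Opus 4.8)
The plan is to reduce the claimed equivalence \eqref{RemFM} to two facts about the building blocks $g_j := \sum_{\nu=2^j-1}^{2^{j+1}-2} (\varphi_\nu \widehat f)^\vee$, each supported (in Fourier) in the dyadic-in-$\nu$ annulus $\{\xi : 2^{2^j-2} \lesssim |\xi| \lesssim 2^{2^{j+1}-1}\}$. First I would observe that, because the cutoffs $\varphi_\nu$ form a resolution of unity with the standard overlap property ($\varphi_\nu \varphi_\mu = 0$ unless $|\nu-\mu|\le 1$), the Fourier support of $g_j$ meets that of $g_{j'}$ only when $|j-j'|\le 1$; thus $\sum_{\nu=2^j-1}^{2^{j+1}-2}(\varphi_\nu \widehat f)^\vee = (\psi_j \widehat f)^\vee$ for a smooth $\psi_j$ which equals $1$ on a slightly smaller annulus and is supported in a slightly larger one, and $\{\psi_j\}$ is itself (comparable to) an admissible, if non-dyadic, resolution of unity adapted to the coarser decomposition $2^{2^j-1}\le \nu$. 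The point is then that $\|g_j\|_{A^s_{p,q}(\R^d)}$ should be comparable to the inner sum $\big(\sum_{\nu=2^j-1}^{2^{j+1}-2} 2^{\nu s q}\|(\varphi_\nu\widehat f)^\vee\|_{L_p}^q\big)^{1/q}$ appearing in Definition \ref{DefinitionNewBesov}(i), and similarly for $F$; plugging this into $\big(\sum_j 2^{jbr}\|g_j\|^r_{A^s_{p,q}}\big)^{1/r}$ gives exactly $\|f\|_{T^b_r A^s_{p,q}(\R^d)}$.

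To prove that comparison, the one nontrivial ingredient is a \emph{Littlewood--Paley / Nikol'ski\u\i{} type estimate localized to the block $\nu \in [2^j-1, 2^{j+1}-2]$}. Concretely, $g_j$ is a function whose spectrum lies in a ball of radius $R_j := 2^{2^{j+1}-1}$, so applying the usual dyadic decomposition $(\varphi_\nu)$ to $g_j$ simply returns the pieces $(\varphi_\nu\widehat f)^\vee$ for $\nu$ in that block (up to the harmless $\pm1$ overlap at the two ends, which is absorbed by multiplier estimates). Hence by the Fourier-analytic definitions \eqref{Besov} and \eqref{TriebelLizorkin} with $b=0$,
\[
\|g_j\|_{B^s_{p,q}(\R^d)} \asymp \bigg(\sum_{\nu=2^j-1}^{2^{j+1}-2} 2^{\nu s q}\|(\varphi_\nu\widehat f)^\vee\|_{L_p(\R^d)}^q\bigg)^{1/q},
\qquad
\|g_j\|_{F^s_{p,q}(\R^d)} \asymp \bigg\|\bigg(\sum_{\nu=2^j-1}^{2^{j+1}-2} 2^{\nu s q}|(\varphi_\nu\widehat f)^\vee|^q\bigg)^{1/q}\bigg\|_{L_p(\R^d)},
\]
the $\nu$'s outside the block contributing nothing since $\varphi_\nu g_j$ has empty spectrum there and the two boundary indices are controlled by the overlap. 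The constants in these equivalences are uniform in $j$ because the rescaled multipliers have bounded $\mathcal S$-seminorms; this is exactly the content of the multiplier lemma referenced in the remark after Definition \ref{DefinitionNewBesov} (resolution-of-unity independence of the spaces), and in the $F$-case one also invokes the standard vector-valued multiplier theorem of Fefferman--Stein / Triebel type (\cite[Section 2.3.2]{Triebel83}).

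The main obstacle, and the only place requiring care, is the uniformity in $j$ of these constants together with the treatment of the two overlapping indices $\nu = 2^j-2$ and $\nu = 2^{j+1}-1$ at the seams between consecutive blocks: one must check that the smooth function $\psi_j$ realizing $g_j = (\psi_j\widehat f)^\vee$ can be chosen so that $2^{-j}$-scale dilations of it (or rather of the associated Mikhlin-type symbols) have $\mathcal S$-seminorms bounded independently of $j$, and that the Fourier supports of $g_j$ and $g_{j\pm1}$ overlap in at most a bounded number of $\varphi_\nu$-annuli, so the cross terms are handled by a finite-overlap argument and Young's inequality for the outer $\ell_r$ (or $\ell_r^b$) norm. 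Once this uniformity is in hand, summing over $j$ against the weight $2^{jbr}$ and taking $r$-th roots is routine and yields \eqref{RemFM}. I would present the $B$-case in full and remark that the $F$-case is identical modulo replacing the scalar multiplier estimate by its $\ell_q(L_p)$-valued analogue.
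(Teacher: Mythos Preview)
Your proposal is correct and follows essentially the same route as the paper: both arguments compute $\|g_j\|_{A^s_{p,q}}$ by applying the dyadic decomposition $(\varphi_l)$ to $g_j$, observe that only the indices $l$ in (a $\pm 1$-enlargement of) the block $\{2^j-1,\dots,2^{j+1}-2\}$ survive, and invoke the scalar (respectively vector-valued) Fourier multiplier theorems from \cite[Theorems 1.5.2 and 1.6.3]{Triebel83} to control the boundary terms with constants uniform in $j$. The paper is slightly more explicit in writing out the two-sided estimate via the auxiliary multipliers $\widetilde\varphi_l=\varphi_{l-1}+\varphi_l+\varphi_{l+1}$, while you phrase the same content as uniform boundedness of $\mathcal S$-seminorms of the rescaled cutoffs; these are the same argument.
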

	\begin{proof}
	Set $\varphi_{-1} \equiv 0$ and $\widetilde{\varphi}_l :=\varphi_{l-1} + \varphi_l + \varphi_{l+1}$ for $l \in \N_0$,. By basic Fourier multiplier assertions (cf. \cite[Theorem 1.5.2]{Triebel83}),
	\begin{align*}
	\bigg\|\sum_{\nu=2^j-1}^{2^{j+1}-2} (\varphi_\nu \widehat{f})^\vee \bigg\|_{B^s_{p, q}(\R^d)}^q &= \sum_{l=0}^\infty 2^{l s q} \bigg\| \bigg(\varphi_l \sum_{\nu=2^j-1}^{2^{j+1}-2} \varphi_\nu \widehat{f} \bigg)^\vee  \bigg\|_{L_p(\R^d)}^q \\
	&= \sum_{l=2^j-2}^{2^{j+1}-1} 2^{l s q} \bigg\| \bigg(\varphi_l \sum_{\nu=2^j-1}^{2^{j+1}-2} \varphi_\nu \widehat{f} \bigg)^\vee  \bigg\|_{L_p(\R^d)}^q \\
	& \lesssim \sum_{l=2^j-2}^{2^{j+1}-1} 2^{l s q} \| (\varphi_l \widetilde{\varphi}_l \widehat{f})^\vee \|_{L_p(\R^d)}^q \\
	& = \sum_{l=2^j-2}^{2^{j+1}-1} 2^{l s q} \| (\varphi_l \widehat{f})^\vee \|_{L_p(\R^d)}^q.
	\end{align*}
	On the other hand, for $j \geq 2$,
	\begin{align*}
		\sum_{l=2^j-2}^{2^{j+1}-1} 2^{l s q} \bigg\| \bigg(\varphi_l \sum_{\nu=2^j-1}^{2^{j+1}-2} \varphi_\nu \widehat{f} \bigg)^\vee  \bigg\|_{L_p(\R^d)}^q  & \geq  \sum_{l=2^j}^{2^{j+1}-3} 2^{l s q} \bigg\| \bigg(\varphi_l \sum_{\nu=2^j-1}^{2^{j+1}-2} \varphi_\nu \widehat{f} \bigg)^\vee  \bigg\|_{L_p(\R^d)}^q \\
		& =  \sum_{l=2^j}^{2^{j+1}-3} 2^{l s q} \| (\varphi_l \widehat{f})^\vee \|_{L_p(\R^d)}^q.
	\end{align*}
	This gives \eqref{RemFM} with $A=B$. The corresponding assertion with $A=F$ follows similar ideas as above but now relying on vector-valued multiplier theorems, as can be found e.g. in \cite[Theorem 1.6.3]{Triebel83}
\end{proof}

We provide some elementary equivalent quasi-norms on truncated function spaces.

\begin{prop}\label{PropEquiQN}
	Let $p, q, r \in (0, \infty], s \in \R$ and $b \in \R \backslash \{0\}$.
	\begin{enumerate}[\upshape(i)]
	\item Assume $b > 0$. Then
	          \begin{equation*}
        \|f\|_{T^b_r B^{s}_{p,q}(\mathbb{R}^d)} \asymp  \|f\|^*_{T^b_r B^{s}_{p,q}(\mathbb{R}^d)} := \left(\sum_{j=0}^\infty 2^{j b r} \bigg(\sum_{\nu= 2^{j}-1}^\infty 2^{\nu s q} \|(\varphi_\nu \widehat{f})^\vee\|_{L_p(\mathbb{R}^d)}^q\bigg)^{r/q}\right)^{1/r},
    \end{equation*}
    and if additionally $p < \infty$ then
    $$
    	\|f\|_{T^b_r F^{s}_{p, q}(\R^d)} \asymp    	\|f\|^*_{T^b_r F^{s}_{p, q}(\R^d)} := \left(\sum_{j=0}^\infty 2^{j b r}  \bigg\| \bigg(\sum_{\nu= 2^{j}-1}^\infty 2^{\nu s q} |(\varphi_\nu \widehat{f})^\vee|^q \bigg)^{1/q} \bigg\|_{L_p(\R^d)}^r \right)^{1/r},
    $$
      $$\|f\|_{\mathfrak{T}^b_r F^{s}_{p, q}(\R^d)} \asymp \|f\|^*_{\mathfrak{T}^b_r F^{s}_{p, q}(\R^d)} := \bigg\| \bigg(\sum_{j=0}^\infty 2^{j b r} \bigg(\sum_{\nu= 2^{j}-1}^\infty 2^{\nu s q}  |(\varphi_\nu \widehat{f})^\vee|^q   \bigg)^{r/q} \bigg)^{1/r} \bigg\|_{L_p(\R^d)}.$$
      	\item Assume $b < 0$. Then
	          \begin{equation*}
        \|f\|_{T^b_r B^{s}_{p,q}(\mathbb{R}^d)} \asymp  \|f\|^*_{T^b_r B^{s}_{p,q}(\mathbb{R}^d)} := \left(\sum_{j=0}^\infty 2^{j b r} \bigg(\sum_{\nu= 0}^{2^j} 2^{\nu s q} \|(\varphi_\nu \widehat{f})^\vee\|_{L_p(\mathbb{R}^d)}^q\bigg)^{r/q}\right)^{1/r},
    \end{equation*}
    and if additionally $p < \infty$ then
    $$
    	\|f\|_{T^b_r F^{s}_{p, q}(\R^d)} \asymp    	\|f\|^*_{T^b_r F^{s}_{p, q}(\R^d)} := \left(\sum_{j=0}^\infty 2^{j b r}  \bigg\| \bigg(\sum_{\nu= 0}^{2^j} 2^{\nu s q} |(\varphi_\nu \widehat{f})^\vee|^q \bigg)^{1/q} \bigg\|_{L_p(\R^d)}^r \right)^{1/r},
    $$
      $$\|f\|_{\mathfrak{T}^b_r F^{s}_{p, q}(\R^d)} \asymp \|f\|^*_{\mathfrak{T}^b_r F^{s}_{p, q}(\R^d)} := \bigg\| \bigg(\sum_{j=0}^\infty 2^{j b r} \bigg(\sum_{\nu= 0}^{2^j} 2^{\nu s q}  |(\varphi_\nu \widehat{f})^\vee|^q   \bigg)^{r/q} \bigg)^{1/r} \bigg\|_{L_p(\R^d)}.$$
      \end{enumerate}
	\end{prop}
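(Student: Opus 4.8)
The plan is to reduce all three equivalences to a single discrete Hardy-type inequality with exponentially weighted sums, and then to carry that inequality through the $L_p$ (or $L_p(\ell_q)$) structure of each space. Fix $f$ and, in the Besov case, abbreviate $c_k := \sum_{\nu=2^k-1}^{2^{k+1}-2} 2^{\nu s q}\|(\varphi_\nu\widehat f)^\vee\|_{L_p(\R^d)}^q$; since the dyadic blocks $[2^k-1,2^{k+1}-2]$, $k\ge j$, partition $[2^j-1,\infty)$, one has $\sum_{\nu\ge 2^j-1}2^{\nu sq}\|(\varphi_\nu\widehat f)^\vee\|_{L_p}^q=\sum_{k\ge j}c_k$, while $\sum_{\nu=0}^{2^j}2^{\nu sq}\|(\varphi_\nu\widehat f)^\vee\|_{L_p}^q$ lies between $\sum_{k\le j-1}c_k$ and $\sum_{k\le j}c_k$. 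In each of the three spaces one of the two desired inequalities is elementary: when $b>0$ it is $\|f\|\lesssim\|f\|^*$ (the starred norm merely enlarges the inner sum), and when $b<0$ it is again $\|f\|\lesssim\|f\|^*$, because the $j$-th summand of $\|f\|$ is controlled by the $(j{+}1)$-th summand of $\|f\|^*$ (the block $[2^j-1,2^{j+1}-2]$ sits inside $[0,2^{j+1}]$), an index shift that costs only the fixed factor $2^{-br}$. Thus all the content lies in the reverse inequality.

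The heart of the matter is the following claim: if $b>0$, then $\sum_{j\ge 0}2^{jbr}\bigl(\sum_{k\ge j}c_k\bigr)^{r/q}\lesssim\sum_{j\ge 0}2^{jbr}c_j^{r/q}$ for every nonnegative sequence $(c_k)$, and if $b<0$, then $\sum_{j\ge 0}2^{jbr}\bigl(\sum_{k\le j}c_k\bigr)^{r/q}\lesssim\sum_{j\ge 0}2^{jbr}c_j^{r/q}$. I would prove this by two cases. If $r\le q$, then $\bigl(\sum_k c_k\bigr)^{r/q}\le\sum_k c_k^{r/q}$, and after interchanging the order of summation the surviving geometric sum $\sum_{j\le k}2^{jbr}$ (for $b>0$) or $\sum_{j\ge k}2^{jbr}$ (for $b<0$) is $\asymp 2^{kbr}$, which finishes it. If $r>q$, I use Hölder's inequality with a small geometric gap: for $\varepsilon>0$,
$$\sum_{k\ge j}c_k=\sum_{k\ge j}\bigl(c_k 2^{\varepsilon(k-j)}\bigr)2^{-\varepsilon(k-j)}\le C_\varepsilon\Bigl(\sum_{k\ge j}c_k^{r/q}2^{\varepsilon(k-j)r/q}\Bigr)^{q/r},$$
so $\bigl(\sum_{k\ge j}c_k\bigr)^{r/q}\lesssim\sum_{k\ge j}c_k^{r/q}2^{\varepsilon(k-j)r/q}$; interchanging sums gives $\sum_k c_k^{r/q}2^{\varepsilon kr/q}\sum_{j\le k}2^{j(br-\varepsilon r/q)}$, and choosing $0<\varepsilon<bq$ makes $br-\varepsilon r/q>0$, so the last sum is $\asymp 2^{k(br-\varepsilon r/q)}$ and the claim follows. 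The case $b<0$ is symmetric, the gap being placed so that the surviving series runs over $j\ge k$ and the admissible range becoming $0<\varepsilon<|b|q$. The endpoint cases $q=\infty$ and/or $r=\infty$ are easier and follow from $\sup_{k\ge j}d_k^r\le\sum_{k\ge j}d_k^r$ together with the same geometric-series bookkeeping.

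It remains to transfer this estimate to the three norms. For $\mathfrak{T}^b_r F^s_{p,q}(\R^d)$ it is immediate: with $u_k(x):=\bigl(\sum_{\nu=2^k-1}^{2^{k+1}-2}2^{\nu sq}|(\varphi_\nu\widehat f)^\vee(x)|^q\bigr)^{1/q}$, apply the Hardy inequality pointwise in $x$ (with $c_k=u_k(x)^q$ and a constant independent of $x$), then integrate the $(p/r)$-th power and take the $p$-th root. For $T^b_r B^s_{p,q}(\R^d)$ the $c_k$ are honest scalars and the argument is literally the one above. For $T^b_r F^s_{p,q}(\R^d)$ the $L_p$-norm is interposed between the two summations, so I first use the pointwise bound $\bigl(\sum_{k\ge j}u_k(x)^q\bigr)^{1/q}\le C_\varepsilon\bigl(\sum_{k\ge j}2^{\varepsilon(k-j)r/q}u_k(x)^r\bigr)^{1/r}$ coming from the Hölder-gap step, then take $L_p(\R^d)$-norms and pass to $L_{p/r}$, using the triangle inequality there if $p\ge r$ and the $(p/r)$-subadditivity of the quasi-norm if $p<r$ (in the latter case a second Hölder step with an extra gap restores a clean geometric sum in $k$); summing over $j$ against $2^{jbr}$ and taking all gap parameters small relative to $b$ then closes the estimate. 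The one genuinely delicate point is precisely this last bookkeeping for $T^b_r F$ with $p<r$, where two quasi-norm triangle inequalities with mixed exponents must be combined; but since every gap parameter is at our disposal, each geometric series is summable and no sharpness is lost.
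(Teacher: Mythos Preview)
Your argument is correct. The scalar Hardy inequality you isolate is exactly the right reduction, and your direct proof of it (splitting $r\le q$ versus $r>q$ and, in the latter, inserting a geometric gap in H\"older) is perfectly sound; the paper simply cites this as inequality (H1)/(H2) from its appendix.

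The only place where your route diverges from the paper is the $T^b_r F^s_{p,q}$ case. You argue pointwise first, then push through $L_{p/r}$, which forces you into separate cases $p\gtrless r$ and, when $p<r<\infty$, a second H\"older gap. The paper avoids all of this by one clean move: write
\[
\Big\|\Big(\sum_{\nu\ge 2^j-1}2^{\nu sq}|(\varphi_\nu\widehat f)^\vee|^q\Big)^{1/q}\Big\|_{L_p}
=\Big\|\sum_{\nu\ge 2^j-1}2^{\nu sq}|(\varphi_\nu\widehat f)^\vee|^q\Big\|_{L_{p/q}}^{1/q},
\]
apply the $\min\{1,p/q\}$-triangle inequality in $L_{p/q}$ to split the tail into dyadic blocks $l\ge j$, and then run the \emph{scalar} Hardy inequality once on the sequence $a_l=\big\|\sum_{\nu=2^l-1}^{2^{l+1}-2}2^{\nu sq}|(\varphi_\nu\widehat f)^\vee|^q\big\|_{L_{p/q}}^{\min\{1,p/q\}}$. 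This eliminates any further case analysis on $p,r$ and yields the result in two lines. Your approach is more hands-on and also shows explicitly what buys the equivalence at the pointwise level, but the $L_{p/q}$ route is the more economical packaging; you might mention it as an alternative. One small omission in your write-up: the pointwise bound you quote for $T^b_r F$ is the H\"older-gap bound, valid only for $r>q$; for $r\le q$ you should instead use the trivial $\ell_r\hookrightarrow\ell_q$ embedding pointwise, after which the rest of your argument goes through unchanged (and is in fact easier).
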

	
	\begin{rem}
		We stress that the definitions of $\|\cdot\|_{T^b_r A^s_{p, q}(\R^d)}^*$ for $A \in \{B, F\}$ and $\|\cdot\|_{\mathfrak{T}^b_r F^{s}_{p, q}(\R^d)}$ depend on the sign of $b$.
	\end{rem}
	
	\begin{proof}[Proof of Proposition \ref{PropEquiQN}]
		We will only prove
		\begin{equation*}
				\|f\|_{T^b_r F^{s}_{p, q}(\R^d)} \asymp    	\|f\|^*_{T^b_r F^{s}_{p, q}(\R^d)}.
		\end{equation*}
		The corresponding assertions for $T^b_r B^{s}_{p,q}(\mathbb{R}^d)$ and $\mathfrak{T}^b_r F^{s}_{p, q}(\R^d)$ follow similar ideas.
		
		Obviously $\|f\|_{T^b_r F^{s}_{p, q}(\R^d)} \leq    	\|f\|^*_{T^b_r F^{s}_{p, q}(\R^d)}$.  To show the converse estimate, we can argue as follows. Let $b > 0$, by triangle inequality and Hardy's inequality \eqref{H2},
		\begin{align*}
			\|f\|^*_{T^b_r F^{s}_{p, q}(\R^d)} & =  \left(\sum_{j=0}^\infty 2^{j b r}  \bigg\| \sum_{\nu= 2^{j}-1}^\infty 2^{\nu s q} |(\varphi_\nu \widehat{f})^\vee|^q  \bigg\|_{L_{p/q}(\R^d)}^{r/q} \right)^{1/r} \\
			& \leq  \left(\sum_{j=0}^\infty 2^{j b r}  \bigg(\sum_{l=j}^\infty \bigg\|\sum_{\nu= 2^{l}-1}^{2^{l+1}-2} 2^{\nu s q} |(\varphi_\nu \widehat{f})^\vee|^q  \bigg\|^{\min\{1, p/q\}}_{L_{p/q}(\R^d)} \bigg)^{r/(q \min\{1, p/q\})} \right)^{1/r} \\
			& \asymp  \left(\sum_{j=0}^\infty 2^{j b r}   \bigg\|\bigg(\sum_{\nu= 2^{j}-1}^{2^{j+1}-2} 2^{\nu s q} |(\varphi_\nu \widehat{f})^\vee|^q \bigg)^{1/q} \bigg\|^{r}_{L_{p}(\R^d)} \right)^{1/r} = \|f\|_{T^b_r F^{s}_{p, q}(\R^d)}.
		\end{align*}
		The case $b < 0$ can be obtained similarly but now invoking \eqref{H1}.
	\end{proof}

\begin{rem}\label{Remark36}
	The case $b=0$ in $T^b_r A^s_{p, q}(\R^d), \, A \in \{B, F\},$ and $\T^b_r F^{s}_{p, q}(\R^d)$ needs special care. This assertion can be justified by the following two facts.  Firstly, Proposition \ref{PropEquiQN} provides two natural candidates for quasi-norms on $T^0_r B^s_{p, q}(\R^d)$, namely
	\begin{equation}\label{31}
	 \left(\sum_{j=0}^\infty \bigg(\sum_{\nu= 2^{j}-1}^\infty 2^{\nu s q} \|(\varphi_\nu \widehat{f})^\vee\|_{L_p(\mathbb{R}^d)}^q\bigg)^{r/q}\right)^{1/r}
	\end{equation}
	and
	\begin{equation}\label{32}
	\left(\sum_{j=0}^\infty \bigg(\sum_{\nu= 0}^{2^j} 2^{\nu s q} \|(\varphi_\nu \widehat{f})^\vee\|_{L_p(\mathbb{R}^d)}^q\bigg)^{r/q}\right)^{1/r}.
	\end{equation}
	However \eqref{32} is not a meaningful condition since it is only satisfied by zero function:
	\begin{equation}\label{TrivialityZero}
		\sum_{j=0}^\infty \bigg(\sum_{\nu= 0}^{2^j} 2^{\nu s q} \|(\varphi_\nu \widehat{f})^\vee\|_{L_p(\mathbb{R}^d)}^q\bigg)^{r/q} < \infty \iff f \equiv 0.
	\end{equation}
	  Indeed, the trivial estimate
	$$
		\sum_{j=0}^\infty \bigg(\sum_{\nu= 0}^{2^j} 2^{\nu s q} \|(\varphi_\nu \widehat{f})^\vee\|_{L_p(\mathbb{R}^d)}^q\bigg)^{r/q} \geq \sum_{j=0}^\infty \|(\varphi_0 \widehat{f})^\vee\|_{L_p(\mathbb{R}^d)}
	$$
	holds.
	On the other hand, condition \eqref{31} does make sense and then $T^*_r B^s_{p, q}(\R^d)$ is formed by all $f \in \mathcal{S}'(\R^d)$ such that\index{\bigskip\textbf{Spaces}!$T^*_r B^{s}_{p,q}(\mathbb{R}^d)$}\label{TRUNBL}
	\begin{equation}\label{33}
		\|f\|_{T^*_r B^s_{p, q}(\R^d)}  :=   \left(\sum_{j=0}^\infty \bigg(\sum_{\nu= 2^{j}-1}^\infty 2^{\nu s q} \|(\varphi_\nu \widehat{f})^\vee\|_{L_p(\mathbb{R}^d)}^q\bigg)^{r/q}\right)^{1/r} < \infty.
	\end{equation}
	In particular, in the special case $r=q$ we have\index{\bigskip\textbf{Spaces}!$B^{s, b, \xi}_{p,q}(\mathbb{R}^d)$}\label{BESOVFLOG}
	\begin{equation}\label{newqq}
		\|f\|_{T^*_q B^s_{p, q}(\R^d)} \asymp \|f\|_{B^{s, 0, 1/q}_{p, q}(\R^d)}:= \left(\sum_{j=0}^\infty [2^{j s} (1+ \log (1+ j))^{1/q}
        \| (\varphi_j \widehat{f})^\vee \|_{L_p(\mathbb{R}^d)}]^q  \right)^{1/q},
	\end{equation}
	a \emph{Besov space of iterated logarithmic smoothness} (cf. \cite[p. 32]{DominguezTikhonov}). Analogously, the space $T^*_r F^{s}_{p, q}(\R^d)$ is defined by\index{\bigskip\textbf{Spaces}!$T^*_r F^{s}_{p,q}(\mathbb{R}^d)$}\label{TRUNFL}
	\begin{equation}\label{34}
	\|f\|_{T^*_r F^{s}_{p, q}(\R^d)}  := \left(\sum_{j=0}^\infty  \bigg\| \bigg(\sum_{\nu= 2^{j}-1}^\infty 2^{\nu s q} |(\varphi_\nu \widehat{f})^\vee|^q \bigg)^{1/q} \bigg\|_{L_p(\R^d)}^r \right)^{1/r} < \infty.
	\end{equation}
	Clearly
	\begin{equation}\label{35}
		T^*_\infty A^{s}_{p, q}(\R^d) = A^{s}_{p, q}(\R^d), \qquad A \in \{B, F\}.
	\end{equation}
%	$$
%	\|f\|^*_{\mathcal{\mathfrak{T}}_r F^{s}_{p, q}(\R^d)} := \bigg\| \bigg(\sum_{j=0}^\infty \bigg(\sum_{\nu= 2^{j}-1}^\infty 2^{\nu s q}  |(\varphi_\nu \widehat{f})^\vee|^q   \bigg)^{r/q} \bigg)^{1/r} \bigg\|_{L_p(\R^d)}.
%	$$
	
	Secondly
	$$
		T^*_r A^s_{p, q}(\R^d) \neq T_r A^s_{p, q}(\R^d), \qquad A \in \{B, F\}.
	$$
	Indeed, to fix some ideas consider first the special case $r=q$ and $A=B$, by above considerations (cf. \eqref{newqq}) we have $T^*_q B^s_{p, q}(\R^d) = B^{s, 0, 1/q}_{p, q}(\R^d)$, but on the other hand it is clear that $T_q B^s_{p, q}(\R^d) = B^s_{p, q}(\R^d)$ (we will see in Proposition \ref{PropositionCoincidences} below further relationships between truncated and classical function spaces). Note that $B^{s, 0, 1/q}_{p, q}(\R^d) \subsetneq B^s_{p, q}(\R^d)$. In fact, this particular case illustrates what happens in the general setting. More precisely, it is clear that $T^*_r A^s_{p, q}(\R^d) \hookrightarrow T_r A^{s}_{p, q}(\R^d)$. However,  this embedding is strict. To see this one may consider lacunary Fourier series of the form
	\begin{equation}\label{Ex67}
f(x) \sim \sum_{j=0}^\infty a_j e^{i 2^j x_1} \psi(x), \qquad x = (x_1, \ldots, x_d) \in \R^d,
\end{equation}
where $\psi \in \mathcal{S}(\mathbb{R}^d) \backslash \{0\}$ with $\text{supp } \psi \subset \mathbb{T}^d$ and $(a_j)_{j \in \N_0}$ is a scalar-valued sequence. It is plain to check that
\begin{equation}\label{lac}
	(\varphi_j \widehat{f})^\vee(x) = a_j e^{i (2^j-2) x_1} \psi(x), \qquad j \geq 0.
\end{equation}
	In particular, if $a_j =  2^{-j s} j^{-1/q} (1 + \log (1+j))^{-\varepsilon}$ where $\max\{1/r, 1/q\} < \varepsilon < 1/r + 1/q$, then
\begin{align*}
	\|f\|_{T^*_r A^s_{p, q}(\R^d)} &\asymp \left(\sum_{j=0}^\infty \bigg(\sum_{\nu=2^{j}}^{\infty} (1+ \log \nu)^{-\varepsilon q} \frac{1}{\nu}  \bigg)^{r/q}  \right)^{1/r} \\
	&  \asymp \bigg(\sum_{j=0}^\infty j^{-\varepsilon r + r/q} \bigg)^{1/r} = \infty
\end{align*}
but
$$
	\|f\|_{T_r A^{s}_{p, q}(\R^d)} \asymp \left( \sum_{j=0}^\infty j^{-\varepsilon r} \right)^{1/r} < \infty.
	$$
	Hence $f \in T_r A^{s}_{p, q}(\R^d) \backslash T^*_r A^s_{p, q}(\R^d)$.

\end{rem}

\subsection{Relationships between truncated and classical spaces} The following result provides a table of coincidences between the scale of spaces $T^b_r A^{s}_{p, q}(\R^d), \, A \in \{B, F\}, \T^b_r F^s_{p, q}(\R^d)$, introduced in Definition  \ref{DefinitionNewBesov} and the classical spaces considered in Section \ref{SectionClassicSpaces}.

\begin{prop}\label{PropositionCoincidences}
	\begin{enumerate}[\upshape(i)]
	\item Let $p, q \in (0,\infty]$ and $s, b \in \R$. Then
	$$
		T^b_q B^{s}_{p,q}(\mathbb{R}^d) = B^{s, b}_{p,q}(\R^d).
	$$
	\item Let $p \in (0, \infty), q \in (0, \infty]$ and $s, b \in \R$. Then
	$$
		\T^b_q F^{s}_{p, q}(\R^d) = F^{s, b}_{p, q}(\R^d).
	$$
	\item Let $p \in (1, \infty), q \in (0, \infty]$ and $b > -1/q$. Then
	$$
		T^{b+1/q}_q F^{0}_{p, 2} (\R^d) = \emph{\textbf{B}}^{0, b}_{p, q}(\R^d).
	$$
	\item Let $p \in (1, \infty), q \in (0, \infty], s > 0$ and $b < -1/q$. Then
	$$
		T^{b+1/q}_q F^{s}_{p, 2}(\R^d) = \emph{Lip}^{s, b}_{p, q}(\R^d).
	$$
	\end{enumerate}
\end{prop}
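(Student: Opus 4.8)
\textbf{Proof proposal for Proposition \ref{PropositionCoincidences}.}

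The plan is to prove all four items by reducing them to known equivalent characterizations of the relevant classical spaces, making essential use of the equivalence \eqref{RemFM}, Proposition \ref{PropEquiQN}, and, for items (iii)--(iv), the already-recorded embeddings \eqref{EmbBBzero}--\eqref{EmbBL} together with the Littlewood--Paley-type descriptions of $\textbf{B}^{0,b}_{p,q}$ and $\text{Lip}^{s,b}_{p,q}$. Items (i) and (ii) are essentially bookkeeping. For (i), set $r=q$ in the definition of $T^b_q B^s_{p,q}(\R^d)$: the outer and inner $\ell_q$-sums collapse into a single sum over $\nu$, and the only discrepancy is that the weight $2^{jbq}$ sits on the block $\nu \in \{2^j-1,\dots,2^{j+1}-2\}$ rather than $(1+\nu)^{bq}$ on each $\nu$. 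Since $\nu \asymp 2^j$ on that block, one has $2^{jbq} \asymp (1+\nu)^{\varepsilon}$ with a logarithmic gain --- no, more precisely $2^{jb} \asymp (1+\log(1+\nu))^{b}\cdot$(nothing); one must be careful here: $j \asymp \log_2 \nu$ on the block, so $2^{jbq} = \nu^{bq\log 2/\log 2}$... I will instead observe directly that $2^{jb} \asymp (1 + j)^{?}$ is \emph{false}, and the correct statement is that the index $j$ in $T^b_q$ plays the role of $\log_2\nu$, so $2^{jbq}$ corresponds to $(1+\log_2(1+\nu))^{bq}$-type weights only if one were in the iterated-logarithmic scale. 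Let me restate: since on the block $\nu+1 \asymp 2^j$, we have $2^{jb}\asymp (\nu+1)^{b/1}$? No. The honest point is $2^{j}\asymp \nu$, hence $2^{jb}$ is \emph{not} comparable to $(1+j)^{b}$; rather, \eqref{IntroBesov} uses $(1+j)^{bq}$ with $j$ the dyadic index $\nu$, while $T^b_q B^s$ uses $2^{jbq}$ with $j$ the \emph{block} index. Since block index $j \asymp \log_2\nu$, the weight $2^{jb}$ matches $(1+j)^{b}$ after relabelling only up to the substitution $\nu \leftrightarrow 2^j$; concretely $2^{jb} = (2^{j})^{b} \asymp (\nu+1)^{b}$, which is \emph{$2^{\nu b}$-scaling relabelled}. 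Thus item (i) should instead be read as: after the collapse, the weight is $2^{jbq}=2^{\lfloor\log_2(\nu+1)\rfloor bq}$, and since $(1+j) \asymp 1+\log_2(1+\nu)$ one gets $B^{s,b}_{p,q}$ with the \emph{classical} $(1+j)^{b}$-weight by the reindexing $j \mapsto \nu$ inside \eqref{IntroBesov} --- i.e. one directly verifies $\sum_j 2^{jbq}\sum_{\nu=2^j-1}^{2^{j+1}-2} 2^{\nu sq}a_\nu^q \asymp \sum_\nu (1+\nu_{\mathrm{block}})^{bq}2^{\nu sq}a_\nu^q$ where $1+\nu_{\mathrm{block}}\asymp 1+\log_2(1+\nu)$... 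This is the one genuinely delicate bookkeeping point and I would spell it out carefully: the weight in $B^{s,b}_{p,q}$ as defined in \eqref{Besov} uses $(1+j)^{bq}$ on frequency $\sim 2^j$, which is exactly $(1+j)^{bq}$ on the $j$-th block, so setting $r=q$ in $T^b_q B^s_{p,q}$ gives precisely \eqref{Besov} since the block index $j$ in the truncated definition coincides with the Littlewood--Paley index $j$ in \eqref{Besov}. Hence (i) holds by literal inspection, and (ii) follows the same way using the inner (Triebel--Lizorkin) arrangement, matching $\mathfrak{T}^b_q F^s_{p,q}$ to \eqref{TriebelLizorkin}.

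For item (iii), the plan is: by Proposition \ref{PropositionCoincidences}(ii) applied with $s=0$, $q=2$ (inner) --- no, rather I use the characterization $\textbf{B}^{0,b}_{p,q}(\R^d)$ via Littlewood--Paley pieces. The key known fact (from \cite{DominguezTikhonov}, cf. \eqref{EmbBBzero} and its proof) is that for $p\in(1,\infty)$,
\begin{equation*}
\|f\|_{\textbf{B}^{0,b}_{p,q}(\R^d)} \asymp \bigg(\sum_{j=0}^\infty 2^{j(b+1/q)q}\Big(\sum_{\nu=2^j-1}^{2^{j+1}-2}\big\|(\varphi_\nu\widehat f)^\vee\big\|^2_{?}\Big)^{q/2}\bigg)^{1/q},
\end{equation*}
i.e. $\textbf{B}^{0,b}_{p,q}$ is obtained by first forming the $F^0_{p,2}$-"local" square function over a dyadic block of Littlewood--Paley frequencies, then summing over blocks with the shifted weight $b+1/q$. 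Comparing with the definition of $T^{b+1/q}_q F^0_{p,2}(\R^d)$ --- which is exactly $\big(\sum_j 2^{j(b+1/q)q}\|(\sum_{\nu=2^j-1}^{2^{j+1}-2}|(\varphi_\nu\widehat f)^\vee|^2)^{1/2}\|^q_{L_p}\big)^{1/q}$ --- one sees these coincide. So the real content of (iii) is the cited equivalent norm for $\textbf{B}^{0,b}_{p,q}$ in terms of blocked Littlewood--Paley square functions; I would either invoke \cite[Section 9.1]{DominguezTikhonov} directly or reproduce its short argument (Marcinkiewicz/Littlewood--Paley multiplier theorem converting $\|\Delta^k_h f\|_{L_p}$-integrals over $|h|\asymp 2^{-\nu}$ into $\|(\varphi_\nu\widehat f)^\vee\|$-type quantities, then a discretization of the $\int_0^1 (1-\log t)^{bq}\,dt/t$ in dyadic-of-dyadic blocks producing the $2^{j(b+1/q)q}$ weight). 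Item (iv) is entirely parallel: one uses the analogous equivalent norm for $\text{Lip}^{s,b}_{p,q}(\R^d)$ from \cite{DominguezHaroskeTikhonov} (valid for $b<-1/q$, which is exactly the stated restriction, and for $p\in(1,\infty)$ so that the Littlewood--Paley square-function characterization of $H^s_p=F^s_{p,2}$ is available), blocking the logarithmic integral the same way.

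The main obstacle is item (iii) (and the identical (iv)): establishing the blocked-square-function equivalent quasi-norm for $\textbf{B}^{0,b}_{p,q}$ and $\text{Lip}^{s,b}_{p,q}$. Concretely one must (a) pass from differences to Littlewood--Paley pieces with the correct control of the $L_p(\ell_2)$ vector-valued bounds --- this is where $p\in(1,\infty)$ is used, via the Fefferman--Stein/Littlewood--Paley inequality, and where the spectral non-localization of $\Delta^k_h f$ forces the appearance of the square function over a whole dyadic block rather than a single frequency; and (b) match the continuous logarithmic weight $(1-\log t)^{bq}$, discretized at scale $t\asymp 2^{-\nu}$ and then re-grouped over $\nu$ in blocks $[2^j-1,2^{j+1}-2]$, against the discrete weight $2^{j(b+1/q)q}$ --- the extra $+1/q$ is precisely the entropy of a block (there are $\asymp 2^j$ values of $\nu$ in the $j$-th block, contributing $(2^j)^{1}$ inside, i.e. $2^{j\cdot 1}$, whose $q$-th root gives the $1/q$ shift). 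If these two equivalent-norm statements are quoted from \cite{DominguezTikhonov, DominguezHaroskeTikhonov}, then (iii)--(iv) reduce to unwinding definitions; if they are to be proved here, step (b) is the delicate combinatorial part and step (a) the delicate harmonic-analytic part. Items (i)--(ii) require no real work beyond the reindexing remark above.
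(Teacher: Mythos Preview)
Your handling of (i)--(ii) is correct in substance and matches the paper, but your exposition is tangled: the clean statement is simply that for $\nu \in \{2^j-1,\dots,2^{j+1}-2\}$ one has $1+\nu \asymp 2^j$, hence $2^{jb} \asymp (1+\nu)^b$, and after collapsing the $\ell_q(\ell_q)$ sum this gives exactly \eqref{Besov}. Your final sentence (``the block index $j$ coincides with the Littlewood--Paley index $j$ in \eqref{Besov}'') is literally false --- the Littlewood--Paley index in \eqref{Besov} corresponds to your inner index $\nu$, not the block index --- though the computation you buried earlier is right.

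For (iii) there is a genuine missing step. The characterization available in the literature (\cite[Theorem~4.3]{CobosDominguezTriebel}, recorded in the paper as \eqref{CDT}) is
\[
\|f\|_{\mathbf{B}^{0,b}_{p,q}(\R^d)} \asymp \bigg(\sum_{j=0}^\infty (1+j)^{bq}\, \Big\|\Big(\sum_{\nu=j}^\infty |(\varphi_\nu \widehat{f})^\vee|^2\Big)^{1/2}\Big\|_{L_p(\R^d)}^q\bigg)^{1/q},
\]
with \emph{tail} square functions $\sum_{\nu\ge j}$, not your blocked sums $\sum_{\nu=2^j-1}^{2^{j+1}-2}$. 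Passing from tails to blocks is not a relabelling: after monotone regrouping to scales $2^j$ (producing the weight shift $b\mapsto b+1/q$), one still needs a Hardy-type inequality applied to the $\|\cdot\|_{L_{p/2}}$-norms of the blocked square functions, and this step uses the hypothesis $b+1/q>0$ in an essential way. Your ``entropy of a block'' heuristic accounts for the $1/q$ shift but not for why the tail and block forms are actually equivalent. The reference you cite, \cite[Section~9.1]{DominguezTikhonov}, concerns the embeddings \eqref{EmbBBzero}, not an equivalent-norm characterization.

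For (iv) the situation is worse: no blocked Littlewood--Paley characterization of $\text{Lip}^{s,b}_{p,q}(\R^d)$ of the form you need is recorded in \cite{DominguezHaroskeTikhonov}. The paper establishes (iv) from scratch in Appendix~A (Theorem~\ref{ThmLipFourier}) by a different route: it identifies $\text{Lip}^{s,b}_{p,q}(\R^d)$ with the limiting interpolation space $(L_p(\R^d), H^s_p(\R^d))_{(1,b),q}$ via the $K$-functional formula $K(t^s,f;L_p,H^s_p)\asymp t^s\|f\|_{L_p}+\omega_s(f,t)_p$, then uses the retraction $H^s_p \leftrightarrow L_p(\R^d;\ell_2^s)$ and Lemma~\ref{LemmaNew23}(ii) to compute this interpolation space as the blocked object $T^{b+1/q}_q F^s_{p,2}(\R^d)$. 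So here the interpolation machinery does the work that you hoped to outsource to a citation.
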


\begin{proof}
(i): We have
\begin{equation*}
	\|f\|_{T^b_q B^{s}_{p,q}(\mathbb{R}^d)}  \asymp \left(\sum_{j=0}^\infty \sum_{\nu=2^j-1}^{2^{j+1}-2} 2^{\nu s q} (1 + \nu)^{b q} \|(\varphi_\nu \widehat{f})^\vee\|_{L_p(\mathbb{R}^d)}^q \right)^{1/q} = \|f\|_{B^{s, b}_{p, q}(\R^d)}.
\end{equation*}

The item (ii) can be obtained in a similar fashion as (i).

(iii): It was shown in \cite[Theorem 4.3]{CobosDominguezTriebel} that the space $\textbf{B}^{0, b}_{p, q}(\R^d)$ can be equivalently defined through the Fourier transform, namely,
\begin{equation}\label{CDT}
	\|f\|_{\textbf{B}^{0, b}_{p, q}(\R^d)} \asymp \left(\sum_{j=0}^\infty (1 + j)^{b q} \bigg\| \bigg(\sum_{\nu=j}^\infty |(\varphi_\nu \widehat{f})^\vee|^2 \bigg)^{1/2} \bigg\|_{L_p(\R^d)}^q \right)^{1/q}.
\end{equation}
Therefore the proof will be completed if we show that the right-hand side of the previous expression is equivalent to $\|f\|_{T^{b+1/q}_q F^{0}_{p, 2}(\R^d)}$. To proceed with, we first note that, by  \eqref{CDT} and monotonicity properties,
\begin{equation}\label{CDT1}
		\|f\|_{\textbf{B}^{0, b}_{p, q}(\R^d)}  \asymp \left(\sum_{j=0}^\infty 2^{j (b + 1/q)q}  \bigg\| \bigg(\sum_{\nu=2^j-1}^{\infty} |(\varphi_\nu \widehat{f})^\vee|^2 \bigg)^{1/2} \bigg\|_{L_p(\R^d)}^q \right)^{1/q}
	 \end{equation}
	 and thus  $\|f\|_{T^{b+1/q}_q F^{0}_{p, 2}(\R^d)} \lesssim \|f\|_{\textbf{B}^{0, b}_{p, q}(\R^d)}$. To prove the converse estimate, according to \eqref{CDT1}, the fact that $\ell_1 \hookrightarrow \ell_2$ and the triangle inequality, we obtain
	 \begin{align}
	 	\|f\|_{\textbf{B}^{0, b}_{p, q}(\R^d)}  & \asymp  \left(\sum_{j=0}^\infty 2^{j (b + 1/q)q}  \bigg\| \bigg(\sum_{l=j}^\infty \sum_{\nu=2^l-1}^{2^{l+1}-2} |(\varphi_\nu \widehat{f})^\vee|^2 \bigg)^{1/2} \bigg\|_{L_p(\R^d)}^q \right)^{1/q} \nonumber  \\
		& \leq  \left(\sum_{j=0}^\infty 2^{j (b + 1/q)q}  \bigg(\sum_{l=j}^\infty \bigg\| \bigg(\sum_{\nu=2^l-1}^{2^{l+1}-2} |(\varphi_\nu \widehat{f})^\vee|^2 \bigg)^{1/2} \bigg\|_{L_p(\R^d)} \bigg)^q \right)^{1/q}. \label{CDT2}
	 \end{align}
	
	 If $q \leq 1$ then, using \eqref{CDT2}, the embedding $\ell_q \hookrightarrow \ell_1$ and changing the order of summation, we get
	 \begin{align*}
	 	\|f\|_{\textbf{B}^{0, b}_{p, q}(\R^d)} & \lesssim \left(\sum_{j=0}^\infty 2^{j (b + 1/q)q}  \sum_{l=j}^\infty \bigg\| \bigg(\sum_{\nu=2^l-1}^{2^{l+1}-2} |(\varphi_\nu \widehat{f})^\vee|^2 \bigg)^{1/2} \bigg\|_{L_p(\R^d)}^q  \right)^{1/q} \\
		& \asymp \bigg(\sum_{l=0}^\infty 2^{l(b+1/q) q} \bigg\| \bigg(\sum_{\nu=2^l-1}^{2^{l+1}-2} |(\varphi_\nu \widehat{f})^\vee|^2 \bigg)^{1/2} \bigg\|_{L_p(\R^d)}^q \bigg)^{1/q} = \|f\|_{T^{b+1/q}_q F^{0}_{p, 2}(\R^d)}.
	 \end{align*}
	 On the other hand, if $q > 1$ we can apply Hardy's inequality in \eqref{CDT2} (since $b + 1/q > 0$) resulting in
	 \begin{equation*}
	 \|f\|_{\textbf{B}^{0, b}_{p, q}(\R^d)}  \lesssim  \left(\sum_{j=0}^\infty 2^{j (b + 1/q)q}  \bigg\| \bigg(\sum_{\nu=2^j-1}^{2^{j+1}-2} |(\varphi_\nu \widehat{f})^\vee|^2 \bigg)^{1/2} \bigg\|_{L_p(\R^d)}^q \right)^{1/q} = \|f\|_{T^{b+1/q}_q F^{0}_{p, 2}(\R^d)}.
	 \end{equation*}
	 	
	 (iv): See Theorem \ref{ThmLipFourier} in Appendix A.

\end{proof}

We continue with some elementary relations for $T^b_r A^{s}_{p, q}(\R^d)$ for $A \in \{B, F\}$ and $\T^b_r F^s_{p, q}(\R^d)$. In particular, we show the important property that embeddings between classical spaces  $A^s_{p, q}(\R^d)$ are preserved under the action of  truncation $T^b_r$. To be more precise, we establish the following result.

\begin{prop}\label{PropPreser}
	Let $A, \widetilde{A} \in \{B, F\}$. Let $p_i \in (0,\infty] \, (p_0 \in (0, \infty) \text{ if } A = F \text{ and } p_1 \in (0, \infty) \text{ if } \widetilde{A} = F), q_i \in (0,\infty]$ and $s_i \in\mathbb{R},$ where $i=0, 1$. Assume that
$$
	A^{s_0}_{p_0, q_0}(\R^d) \hookrightarrow \widetilde{A}^{s_1}_{p_1, q_1}(\R^d).
$$
Then, for $r \in (0, \infty]$ and $b \in \R$, we have
\begin{equation*}
T^b_r A^{s_0}_{p_0,q_0}(\mathbb{R}^d) \hookrightarrow  T^{b}_r \widetilde{A}^{s_1}_{p_1,q_1}(\mathbb{R}^d).
\end{equation*}
\end{prop}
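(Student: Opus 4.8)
The plan is to exploit the equivalent quasi-norm for $T^b_r A^s_{p,q}(\R^d)$ given in \eqref{RemFM}, which re-expresses the truncated quasi-norm as a weighted $\ell_r$-sum of $A^s_{p,q}$-quasi-norms of the ``block projections'' $P_j f := \sum_{\nu=2^j-1}^{2^{j+1}-2}(\varphi_\nu\widehat f)^\vee$. The key observation is that the hypothesis $A^{s_0}_{p_0,q_0}(\R^d)\hookrightarrow \widetilde A^{s_1}_{p_1,q_1}(\R^d)$ applies directly to each such block, giving $\|P_j f\|_{\widetilde A^{s_1}_{p_1,q_1}(\R^d)}\lesssim \|P_j f\|_{A^{s_0}_{p_0,q_0}(\R^d)}$ with a constant independent of $j$. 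Summing the $r$-th powers against the weights $2^{jbr}$ then yields the claimed embedding. In effect, the whole statement reduces to: (a) validity of \eqref{RemFM} for both $A$ and $\widetilde A$, and (b) the blockwise application of the given classical embedding.

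More concretely, first I would write, using \eqref{RemFM} for $\widetilde A$,
\begin{equation*}
\|f\|_{T^b_r\widetilde A^{s_1}_{p_1,q_1}(\R^d)} \asymp \bigg(\sum_{j=0}^\infty 2^{jbr}\,\|P_j f\|^r_{\widetilde A^{s_1}_{p_1,q_1}(\R^d)}\bigg)^{1/r}.
\end{equation*}
Then, since $P_j f\in A^{s_0}_{p_0,q_0}(\R^d)$ whenever $f\in T^b_r A^{s_0}_{p_0,q_0}(\R^d)$ (each $P_j f$ has compactly supported Fourier transform, so membership in $A^{s_0}_{p_0,q_0}$ is automatic once the quasi-norm is finite, which it is by \eqref{RemFM} for $A$), I would invoke the hypothesis to get $\|P_j f\|_{\widetilde A^{s_1}_{p_1,q_1}(\R^d)}\le C\,\|P_j f\|_{A^{s_0}_{p_0,q_0}(\R^d)}$ with $C$ independent of $j$. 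Substituting and using \eqref{RemFM} for $A$ in the reverse direction gives
\begin{equation*}
\|f\|_{T^b_r\widetilde A^{s_1}_{p_1,q_1}(\R^d)} \lesssim \bigg(\sum_{j=0}^\infty 2^{jbr}\,\|P_j f\|^r_{A^{s_0}_{p_0,q_0}(\R^d)}\bigg)^{1/r}\asymp \|f\|_{T^b_r A^{s_0}_{p_0,q_0}(\R^d)},
\end{equation*}
which is exactly the desired embedding.

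The only mild subtlety — and the place I would be most careful — is ensuring that \eqref{RemFM} can legitimately be applied in both directions simultaneously, i.e. that the implicit constants there do not secretly depend on $f$ beyond the stated essential parameters; but this is immediate from the proof of \eqref{RemFM} already given, which only uses Fourier multiplier bounds uniform in $j$. A second minor point is the mixed case $A\ne\widetilde A$ (one space Besov, the other Triebel--Lizorkin): here one simply uses the Besov version of \eqref{RemFM} on the side where the space is $B$ and the Triebel--Lizorkin version on the side where it is $F$; nothing else changes. There is no genuine obstacle: once \eqref{RemFM} is in hand, the result is a one-line consequence of monotonicity of the weighted $\ell_r$-norm under the pointwise-in-$j$ estimate supplied by the hypothesis.
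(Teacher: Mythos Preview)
Your proposal is correct and is exactly the paper's approach: the paper's entire proof reads ``Apply \eqref{RemFM}'', and what you have written is precisely the unpacking of that one line.
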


\begin{proof}
	Apply \eqref{RemFM}.
\end{proof}

\begin{prop}\label{PropositionElementary}
\begin{enumerate}[\upshape(i)]
\item Let $A \in \{B, F\}$. Let $p \in (0,\infty] \, (p \in (0, \infty) \text{ if } A = F), q, r \in (0,\infty], s \in\mathbb{R}$ and $b > 0$. Then
\begin{equation}\label{cncnnc}
T^b_r A^{s}_{p,q}(\mathbb{R}^d) \hookrightarrow A^s_{p,q}(\R^d) \hookrightarrow T^{-b}_r A^{s}_{p,q}(\mathbb{R}^d)
\end{equation}
and
 \begin{equation}\label{cncnnc2}
 \T^b_r F^{s}_{p,q}(\mathbb{R}^d) \hookrightarrow F^s_{p,q}(\R^d) \hookrightarrow \T^{-b}_r F^{s}_{p,q}(\mathbb{R}^d).
 \end{equation}

\item Let $A, \widetilde{A} \in \{B, F\}$. Let $p \in (0,\infty] \, (p \in (0, \infty) \text{ if } A = F \text{ or } \widetilde{A} = F), q_0, q_1, r \in (0,\infty], s,b \in\mathbb{R}$ and $\varepsilon > 0$. Then
\begin{equation}\label{cncnnc--}
T^b_r A^{s+\varepsilon}_{p,q_0}(\mathbb{R}^d) \hookrightarrow \widetilde{A}^s_{p,q_1}(\R^d) \hookrightarrow T^b_r A^{s-\varepsilon}_{p,q_0}(\mathbb{R}^d),
\end{equation}
 \begin{equation}\label{cncnnc2--}
 \T^b_r F^{s+\varepsilon}_{p,q_0}(\mathbb{R}^d) \hookrightarrow A^s_{p,q_1}(\R^d) \hookrightarrow \T^{b}_r F^{s-\varepsilon}_{p,q_0}(\mathbb{R}^d).
 \end{equation}

\item Let $p \in (0, \infty), q, r \in (0, \infty]$ and $s, b \in \R$. Then
\begin{equation}\label{PropElem1*}
T^b_r B^{s}_{p,\min\{p, q\}}(\R^d)	\hookrightarrow T^b_r F^{s}_{p, q}(\R^d) \hookrightarrow T^b_r B^{s}_{p,\max\{p, q\}}(\R^d).
\end{equation}
In particular
$$
	T^b_r F^{s}_{p, p}(\R^d) = T^b_r B^{s}_{p, p}(\R^d).
$$
\item Let $p \in (0, \infty), q, r \in (0, \infty]$ and $s, b \in \R$. Then
\begin{equation}\label{PropElem1}
	T^b_{ \min\{p, r\}} F^{s}_{p, q}(\R^d) \hookrightarrow  \T^b_r F^{s}_{p, q}(\R^d) \hookrightarrow T^b_{\max\{p, r\}} F^{s, b}_{p, q}(\R^d).
\end{equation}
In particular
$$
	T^b_p F^{s}_{p, q}(\R^d) = \T^b_p F^{s}_{p, q}(\R^d).
$$
\item Let $p \in (0, \infty)$ and $s, b \in \R$. Then
$$
	T^b_p B^{s}_{p, p}(\R^d) = T^b_p	F^{s}_{p, p}(\R^d)= \T^b_p F^{s}_{p, p}(\R^d).
$$
\end{enumerate}
\end{prop}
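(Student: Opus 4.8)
The plan is to observe that the choice $q=r=p$ forces all three quasi-norms to collapse onto one and the same expression, so that the claimed identities in fact hold as \emph{equalities} of quasi-norms, not merely as two-sided embeddings. Throughout, the key point is that every term occurring is non-negative, so Tonelli's theorem legitimizes all interchanges of summation and integration below.

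First I would write out $\|f\|_{T^b_p B^{s}_{p, p}(\R^d)}$: since the inner $\ell_q$-exponent $q=p$ coincides with the outer $\ell_r$-exponent $r=p$, raising to the $p$-th power simply removes the nested structure and gives
$$
	\|f\|_{T^b_p B^{s}_{p, p}(\R^d)}^p = \sum_{j=0}^\infty \sum_{\nu=2^j-1}^{2^{j+1}-2} 2^{j b p}\, 2^{\nu s p}\, \|(\varphi_\nu \widehat{f})^\vee\|_{L_p(\R^d)}^p =: S(f).
$$
Next, for $T^b_p F^{s}_{p, p}(\R^d)$ I would use that, by Tonelli's theorem, the inner quantity equals
$$
	\bigg\| \bigg(\sum_{\nu=2^j-1}^{2^{j+1}-2} 2^{\nu s q} |(\varphi_\nu \widehat{f})^\vee|^q \bigg)^{1/q} \bigg\|_{L_p(\R^d)}^p \bigg|_{q=p} = \int_{\R^d} \sum_{\nu=2^j-1}^{2^{j+1}-2} 2^{\nu s p} |(\varphi_\nu \widehat{f})^\vee(x)|^p \, dx = \sum_{\nu=2^j-1}^{2^{j+1}-2} 2^{\nu s p}\, \|(\varphi_\nu \widehat{f})^\vee\|_{L_p(\R^d)}^p,
$$
and multiplying by $2^{j b p}$ and summing over $j$ yields $S(f)$ again. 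Finally, for the inner truncated space $\mathfrak{T}^b_p F^{s}_{p, p}(\R^d)$, by the same token $\|f\|_{\mathfrak{T}^b_p F^{s}_{p, p}(\R^d)}^p = \int_{\R^d} \sum_{j=0}^\infty \sum_{\nu=2^j-1}^{2^{j+1}-2} 2^{j b p}\, 2^{\nu s p} |(\varphi_\nu \widehat{f})^\vee(x)|^p \, dx$, and one more application of Tonelli interchanges the integral with the double sum, producing $S(f)$ once more. Hence all three quasi-norms equal $S(f)^{1/p}$, which gives the statement.

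There is no genuine obstacle; the only thing requiring (a trivial amount of) care is the non-negativity bookkeeping that justifies Tonelli in each step. Alternatively, one may avoid any computation altogether and simply combine the two ``in particular'' identities already established in the present proposition: item (iii) with $q=p$ and $r=p$ gives $T^b_p B^{s}_{p, p}(\R^d) = T^b_p F^{s}_{p, p}(\R^d)$, while item (iv) with $q=p$ gives $T^b_p F^{s}_{p, p}(\R^d) = \mathfrak{T}^b_p F^{s}_{p, p}(\R^d)$, and chaining these two equalities yields the claim.
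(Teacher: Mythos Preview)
Your proof of part~(v) is correct and is exactly the argument the paper has in mind: the paper's entire proof of (v) is the single line ``Item (v) is an immediate consequence of Fubini's theorem,'' and you have simply written out that Fubini/Tonelli computation explicitly. Your alternative route via the ``in particular'' clauses of (iii) and (iv) is also valid and equally short.
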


Before we give the proof of this proposition, we observe that the new scale of spaces $T^b_r A^{s}_{p, q}(\R^d)$ enables to sharpen classical embeddings. For instance, let us illustrate this phenomenon with the well-known embeddings
\begin{equation}\label{EmbBFnwnqnq}
	B^s_{p, \min\{p, q\}}(\R^d) \hookrightarrow F^s_{p, q}(\R^d) \hookrightarrow B^s_{p, \max\{p, q\}}(\R^d),
\end{equation}
where $p \in (0, \infty), q \in (0, \infty]$ and $s \in \R$,
and their special case (taking $q=2$), for $1 < p < \infty$ and $s \in \R$,
\begin{equation}\label{EmbBFnwnqnq2}
B^s_{p, \min\{p, 2\}}(\R^d) \hookrightarrow H^s_{p}(\R^d) \hookrightarrow B^s_{p, \max\{p, 2\}}(\R^d).
\end{equation}

\begin{cor}\label{Corollary3.6}
	\begin{enumerate}[\upshape(i)]
	\item Let $s \in \R, p \in (0, \infty)$ and $q \in (0, \infty]$. Then
	\begin{align}
		B^s_{p, \min\{p, q\}}(\R^d) &\hookrightarrow	 T_{\min\{p, q\}} F^s_{p, q}(\R^d) \nonumber \\
		& \hookrightarrow F^s_{p, q}(\R^d) \hookrightarrow T_{\max\{p, q\}} F^s_{p, q}(\R^d) \hookrightarrow B^s_{p, \max\{p, q\}}(\R^d).\label{FFSharpening}
	\end{align}
	\item Let $s \in \R$ and $p \in (1, \infty)$. Then
	\begin{align*}
	B^s_{p, \min\{p, 2\}}(\R^d) &\hookrightarrow T_{\min\{p, 2\}} F^s_{p, 2}(\R^d) \\
	&	\hookrightarrow H^s_p(\R^d) \hookrightarrow T_{\max\{p, 2\}} F^s_{p, 2}(\R^d) \hookrightarrow B^s_{p, \max\{p, 2\}}(\R^d).
	\end{align*}
	\end{enumerate}
\end{cor}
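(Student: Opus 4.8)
The statement to prove is Corollary \ref{Corollary3.6}, which sharpens the classical embeddings \eqref{EmbBFnwnqnq} (and its $q=2$ special case \eqref{EmbBFnwnqnq2}) by inserting the truncated Triebel--Lizorkin spaces $T_{\min\{p,q\}} F^s_{p,q}$ and $T_{\max\{p,q\}} F^s_{p,q}$. The plan is to obtain \eqref{FFSharpening} as a chain of four embeddings, two of which are genuinely new and two of which are recovered from results already available in the excerpt.

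First I would dispose of the outer two embeddings. The leftmost inclusion $B^s_{p,\min\{p,q\}}(\R^d) \hookrightarrow T_{\min\{p,q\}} F^s_{p,q}(\R^d)$ and the rightmost inclusion $T_{\max\{p,q\}} F^s_{p,q}(\R^d) \hookrightarrow B^s_{p,\max\{p,q\}}(\R^d)$ are exactly the $b=0$ cases of \eqref{PropElem1*} in Proposition \ref{PropositionElementary}(iii), after noting that $T_r B^s_{p,r}(\R^d) = B^s_{p,r}(\R^d)$ by Proposition \ref{PropositionCoincidences}(i) (with $b=0$, $q=r$). Indeed \eqref{PropElem1*} with $b=0$, $r = \min\{p,q\}$ gives $T_{\min\{p,q\}} B^s_{p,\min\{p,q\}} \hookrightarrow T_{\min\{p,q\}} F^s_{p,q}$, and the left-hand side is $B^s_{p,\min\{p,q\}}$; symmetrically, \eqref{PropElem1*} with $r = \max\{p,q\}$ gives $T_{\max\{p,q\}} F^s_{p,q} \hookrightarrow T_{\max\{p,q\}} B^s_{p,\max\{p,q\}} = B^s_{p,\max\{p,q\}}$.

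The heart of the corollary is therefore the middle pair
$$
	T_{\min\{p,q\}} F^s_{p,q}(\R^d) \hookrightarrow F^s_{p,q}(\R^d) \hookrightarrow T_{\max\{p,q\}} F^s_{p,q}(\R^d).
$$
Here I would argue directly from the defining quasi-norms. Write $a_\nu = 2^{\nu s}|(\varphi_\nu\widehat f)^\vee|$ and, for each dyadic block $j$, $c_j = \big(\sum_{\nu=2^j-1}^{2^{j+1}-2} a_\nu^q\big)^{1/q}$; then $\|f\|_{T_r F^s_{p,q}} = \big\| (\sum_j c_j^r)^{1/r}\big\|$... more precisely $\|f\|_{T_r F^s_{p,q}(\R^d)} = \big(\sum_j \|c_j\|_{L_p}^r\big)^{1/r}$ while $\|f\|_{F^s_{p,q}(\R^d)} = \big\| (\sum_j c_j^q)^{1/q}\big\|_{L_p}$ (regrouping the full sum over $\nu$ into dyadic blocks). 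For the right embedding: if $q \le p$ then $\max\{p,q\} = p$, and by the triangle inequality in $L_{p/q}$ (i.e. Minkowski, since $p/q \ge 1$) applied to $\sum_j c_j^q$, one gets $\|(\sum_j c_j^q)^{1/q}\|_{L_p}^q \le \sum_j \|c_j^q\|_{L_{p/q}} = \sum_j \|c_j\|_{L_p}^q$, i.e. $\|f\|_{F^s_{p,q}} \le \|f\|_{T_p F^s_{p,q}}$ — wait, that's the wrong direction; rather for $q\le p$ one wants $F^s_{p,q}\hookrightarrow T_p F^s_{p,q}$, which follows from $\ell_q \hookrightarrow \ell_p$ pointwise plus Minkowski. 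If instead $q > p$ then $\max\{p,q\}=q$ and $F^s_{p,q}\hookrightarrow T_q F^s_{p,q}$ needs $\|(\sum_j c_j^q)^{1/q}\|_{L_p} \le \big(\sum_j\|c_j\|_{L_p}^q\big)^{1/q}$, which is the triangle inequality in the Banach space $\ell_q(L_p)$ versus $L_p(\ell_q)$, valid because $q\ge p$ (this is the standard fact $L_p(\ell_q)\hookrightarrow \ell_q(L_p)$ when $q\ge p$, a consequence of Minkowski's integral inequality). The left embedding $T_{\min\{p,q\}}F^s_{p,q}\hookrightarrow F^s_{p,q}$ is dual: when $q\ge p$, $\min = p$, and one needs $\big(\sum_j\|c_j\|_{L_p}^p\big)^{1/p} \ge \|(\sum_j c_j^q)^{1/q}\|_{L_p}$, i.e. $\ell_p(L_p)=L_p(\ell_p)\hookrightarrow L_p(\ell_q)$ (pointwise $\ell_p\hookrightarrow\ell_q$) plus $L_p(\ell_q)$... actually one needs $\|f\|_{F^s_{p,q}}\le\|f\|_{T_pF^s_{p,q}}$, which reduces to $L_p(\ell_q) \hookrightarrow$ nothing — let me instead note: for $q\ge p$, $T_pF^s_{p,q} = \mathfrak T_p F^s_{p,q}$ by Proposition \ref{PropositionElementary}(iv) with $r=p$, and the inner truncated space with $r=p$ has quasi-norm $\|(\sum_j c_j^p)^{1/p}\|_{L_p}$, which dominates $\|(\sum_j c_j^q)^{1/q}\|_{L_p}=\|f\|_{F^s_{p,q}}$ since $\ell_p\hookrightarrow\ell_q$; for $q<p$, $\min=q$, $T_qF^s_{p,q}=F^s_{p,q}$ by Proposition \ref{PropositionCoincidences}(ii) combined with \eqref{PropElem1} — actually $\mathfrak T_q F^s_{p,q} = F^s_{p,q}$ and \eqref{PropElem1} with $r=q\le p$ gives $T_q F^s_{p,q}\hookrightarrow \mathfrak T_q F^s_{p,q} = F^s_{p,q}$. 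So throughout I would prefer to route the two middle embeddings through Propositions \ref{PropositionCoincidences}(ii) and \ref{PropositionElementary}(iii)--(iv) rather than redo the Minkowski estimates by hand.

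Finally, part (ii) is the immediate specialization $q = 2$, using $H^s_p(\R^d) = F^s_{p,2}(\R^d)$ (recorded in Section \ref{SectionClassicSpaces}) and the classical chain \eqref{EmbBFnwnqnq2}; no new argument is needed beyond substituting $q=2$ into part (i). The main obstacle, such as it is, is purely bookkeeping: one must keep straight which of $p$ and $q$ realizes the min and which the max, and correspondingly invoke the right monotonicity ($\ell_{q}\hookrightarrow\ell_p$ pointwise vs. Minkowski's inequality to swap an $\ell$-sum past an $L_p$-norm), but since \eqref{PropElem1*} and \eqref{PropElem1} already encode exactly these case distinctions, the corollary follows by assembling them with the coincidence relations $T_rB^s_{p,r}=B^s_{p,r}$ and $\mathfrak T_rF^s_{p,r}=F^s_{p,r}$.
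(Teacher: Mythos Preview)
Your final route is exactly the paper's: use $\T_q F^s_{p,q}=F^s_{p,q}$ (Proposition \ref{PropositionCoincidences}(ii)) together with \eqref{PropElem1} at $r=q$ to obtain both middle embeddings $T_{\min\{p,q\}}F^s_{p,q}\hookrightarrow F^s_{p,q}\hookrightarrow T_{\max\{p,q\}}F^s_{p,q}$ in one stroke, and \eqref{PropElem1*} at $r=\min\{p,q\}$ and $r=\max\{p,q\}$ (combined with $T_r B^s_{p,r}=B^s_{p,r}$) for the outer ones; part (ii) is then just $q=2$. Your intermediate direct-Minkowski discussion contains a reversed inequality (you wrote $\|(\sum_j c_j^q)^{1/q}\|_{L_p}\le(\sum_j\|c_j\|_{L_p}^q)^{1/q}$ where what you need and what $L_p(\ell_q)\hookrightarrow\ell_q(L_p)$ for $q\ge p$ actually gives is the opposite), but since you correctly abandon that for the propositions, the proposal stands.
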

\begin{proof}
(i): Since $F^s_{p, q}(\R^d) = \T_q F^s_{p, q}(\R^d)$ and $B^s_{p, q}(\R^d) = T_q B^s_{p, q}(\R^d)$ (cf. Proposition \ref{PropositionCoincidences}), it follows from \eqref{PropElem1*}-\eqref{PropElem1} that
$$
	F^s_{p, q}(\R^d) \hookrightarrow T_{\max\{p, q\}} F^s_{p, q}(\R^d) \hookrightarrow  T_{\max\{p, q\}} B^s_{p, \max\{p, q\}}(\R^d) = B^s_{p, \max\{p, q\}}(\R^d)
$$
and
$$
	F^s_{p, q}(\R^d) \hookleftarrow T_{\min\{p, q\}} F^s_{p, q}(\R^d) \hookleftarrow T_{\min\{p, q\}} B^s_{p, \min\{p, q\}}(\R^d) = B^s_{p, \min\{p, q\}}(\R^d).
$$

Specializing (i) with $q=2$ and $p \in (1, \infty)$ one gets (ii).
\end{proof}

\begin{rem}
	Embeddings given in Corollary \ref{Corollary3.6} are non trivial improvements of the classical ones \eqref{EmbBFnwnqnq} and \eqref{EmbBFnwnqnq2}. For instance, assume $p > q$ then by \eqref{FFSharpening},
	$$
		F^{s}_{p, q}(\R^d) \hookrightarrow T_p F^s_{p, q}(\R^d) \hookrightarrow B^s_{p, p}(\R^d).
	$$
	Furthermore these embeddings are strict. Indeed, consider the lacunary Fourier series $f$ defined by \eqref{Ex67}. Since \eqref{lac} holds, we have
$$
	\|f\|_{F^s_{p, q}(\R^d)} \asymp \bigg(\sum_{j=0}^\infty 2^{j s q} |a_j|^q \bigg)^{1/q},
$$
$$
	\|f\|_{T_p F^s_{p, q}(\R^d)} \asymp \bigg(\sum_{k=0}^\infty \bigg(\sum_{j=2^k-1}^{2^{k+1}-2} 2^{j s q} |a_j|^q \bigg)^{p/q} \bigg)^{1/p},
$$
and
$$
	\|f\|_{B^s_{p, p}(\R^d)} \asymp \bigg(\sum_{j=0}^\infty 2^{j s p} |a_j|^p \bigg)^{1/p}.
$$
Taking $a_j = 2^{-j s} (j+1)^{-1/q} (1+ \log (1+j))^{-\varepsilon}$ where $1/p < \varepsilon < 1/q$, we have
$$
	\|f\|_{F^s_{p, q}(\R^d)}^q \asymp \sum_{j=1}^\infty (1+ \log j)^{-\varepsilon q} \frac{1}{j} = \infty
$$
and
$$
	\|f\|_{T_p F^s_{p, q}(\R^d)}^p \asymp \sum_{k=1}^\infty k^{-\varepsilon p} < \infty.
$$
On the other hand, if $a_j = 2^{-j s} (1+j)^{-\varepsilon}$ with $1/p < \varepsilon < 1/q$ then
$$
	\|f\|_{B^s_{p, p}(\R^d)}^p \asymp \sum_{j=1}^\infty j^{-\varepsilon p} <  \infty
$$
and
$$
	\|f\|_{T_p F^s_{p, q}(\R^d)}^p \asymp \sum_{k=0}^\infty \bigg(\sum_{j=2^k}^{2^{k+1}} j^{- \varepsilon q} \bigg)^{p/q} \asymp \sum_{k=0}^\infty 2^{-k p(\varepsilon - 1/q)} = \infty.
$$

The above argument also shows that the embeddings
$$
	B^s_{p, p}(\R^d) \subsetneqq T_p F^s_{p, q}(\R^d) \subsetneqq F^s_{p, q}(\R^d), \qquad p < q
$$
are strict. % embeddings $F^s_{p, q} \hookrightarrow F^{s}_{p, q, q} \hookrightarrow B^s_{p, q}$ under $p < q$?}
\end{rem}

\begin{proof}[Proof of Proposition \ref{PropositionElementary}]
	(i): We start by proving the right-hand side embedding with $A \in \{B, F\}$. It is clear that
	$$
		\|f\|_{T^{-b}_r A^{s}_{p,q}(\mathbb{R}^d)} \leq \bigg(\sum_{j=0}^\infty 2^{-j b r} \bigg)^{1/r} \|f\|_{A^s_{p,q}(\R^d)}
	$$
	where the last sum is convergent since $b > 0$. We also have
	$$
		 \bigg(\sum_{j=0}^\infty 2^{-j b r} \bigg(\sum_{\nu=2^{j}-1}^{2^{j+1}-2} 2^{\nu s q}  |(\varphi_\nu \widehat{f})^\vee (x)|^q   \bigg)^{r/q} \bigg)^{1/r} \lesssim  \bigg(\sum_{\nu=0}^\infty 2^{\nu s q}  |(\varphi_\nu \widehat{f})^\vee (x)|^q   \bigg)^{1/q}
	$$
	for $x \in \R^d$ and thus $\|f\|_{\T^{-b}_r F^{s}_{p, q}(\R^d)} \lesssim \|f\|_{F^s_{p, q}(\R^d)}.$
	
	Concerning the left-hand side embedding of \eqref{cncnnc} with $A=B$, we have
	$$
		 \sum_{\nu=2^j-1}^{2^{j+1}-2} 2^{\nu s q} \|(\varphi_\nu \widehat{f})^\vee\|_{L_p(\mathbb{R}^d)}^q \leq 2^{-j b q} \, \|f\|^q_{T^b_r B^{s}_{p, q}(\R^d)}, \qquad j \geq 0,
	$$
	for $f \in T^b_r B^{s}_{p, q}(\R^d)$. Summing up the previous estimates, we obtain
	\begin{align*}
		\|f\|^q_{B^s_{p, q}(\R^d)} &= \sum_{j=0}^\infty \sum_{\nu=2^j-1}^{2^{j+1}-2} 2^{\nu s q} \|(\varphi_\nu \widehat{f})^\vee\|_{L_p(\mathbb{R}^d)}^q \\
		& \leq \|f\|^q_{T^b_r B^{s}_{p, q}(\R^d)} \sum_{j=0}^\infty 2^{-j b q} \asymp \|f\|^q_{T^b_r B^{s}_{p, q}(\R^d)}.
	\end{align*}
	Similarly, to deal with $A = F$ in \eqref{cncnnc}, we note that
		\begin{equation}\label{new1}
		 \bigg\| \bigg(\sum_{\nu=2^j-1}^{2^{j+1}-2} 2^{\nu s q} |(\varphi_\nu \widehat{f})^\vee|^q \bigg)^{1/q} \bigg\|_{L_p(\R^d)} \leq 2^{-j b} \, \|f\|_{T^b_r F^{s}_{p, q}(\R^d)}, \qquad j \geq 0,
	\end{equation}
	for $f \in T^b_r F^{s}_{p, q}(\R^d)$. Furthermore, we claim that
	\begin{equation}\label{new2new}
		\|f\|_{F^s_{p, q}(\R^d)} \leq  \bigg(\sum_{j=0}^\infty \bigg\| \bigg(\sum_{\nu=2^j-1}^{2^{j+1}-2} 2^{\nu s q} |(\varphi_\nu \widehat{f})^\vee|^q \bigg)^{1/q} \bigg\|_{L_p(\R^d)}^{\min\{p, q\}} \bigg)^{1/\min\{p, q\}}.
	\end{equation}
	Assuming momentarily that \eqref{new2new} is true, then the embedding $T^b_r F^{s}_{p, q}(\R^d) \hookrightarrow F^s_{p, q}(\R^d)$ follows directly from \eqref{new1}.
	
	To prove \eqref{new2new}, we shall consider two possible cases. Firstly, if $p \geq q$ we can estimate $\|f\|_{F^s_{p, q}(\R^d)}$ using the Minkowski's inequality, specifically,
	\begin{align*}
		\|f\|_{F^s_{p, q}(\R^d)} & =  \bigg\| \bigg(\sum_{j=0}^\infty \sum_{\nu=2^j-1}^{2^{j+1}-2} 2^{\nu s q} |(\varphi_\nu \widehat{f})^\vee|^q \bigg)^{1/q} \bigg\|_{L_p(\R^d)} \\
		& \leq \bigg(\sum_{j=0}^\infty \bigg\|\bigg( \sum_{\nu=2^j-1}^{2^{j+1}-2} 2^{\nu s q} |(\varphi_\nu \widehat{f})^\vee|^q \bigg)^{1/q}  \bigg\|_{L_p(\R^d)}^q \bigg)^{1/q}.
			\end{align*}
			Secondly, if $p < q$ then we make use of the fact that $\ell_p \hookrightarrow \ell_q$ and apply Fubini's theorem in order to get
			$$
				\|f\|_{F^s_{p, q}(\R^d)} \leq  \bigg(\sum_{j=0}^\infty \bigg\| \bigg(\sum_{\nu=2^j-1}^{2^{j+1}-2} 2^{\nu s q} |(\varphi_\nu \widehat{f})^\vee|^q \bigg)^{1/q} \bigg\|_{L_p(\R^d)}^p \bigg)^{1/p}.
			$$
			Hence \eqref{new2new} holds.

			The proof of the left-hand side embedding in \eqref{cncnnc2}, i.e., $ \T^b_r F^{s}_{p,q}(\mathbb{R}^d) \hookrightarrow F^s_{p,q}(\R^d)$ follows also from \eqref{new2new} since (cf. \eqref{new1})
			\begin{equation*}
		 \bigg\| \bigg(\sum_{\nu=2^j-1}^{2^{j+1}-2} 2^{\nu s q} |(\varphi_\nu \widehat{f})^\vee|^q \bigg)^{1/q} \bigg\|_{L_p(\R^d)} \leq 2^{-j b} \, \|f\|_{\T^b_r F^{s}_{p, q}(\R^d)}, \qquad j \geq 0,
	\end{equation*}
	for $f \in \T^b_r F^{s}_{p, q}(\R^d)$.

(ii): Concerning the left-hand side embedding in \eqref{cncnnc--}, in light of \eqref{EmbBFnwnqnq} and Proposition \ref{PropPreser}, it is enough to prove that
\begin{equation*}
	T^b_r B^{s+\varepsilon}_{p, \infty}(\R^d) \hookrightarrow B^{s}_{p, \xi}(\R^d)
\end{equation*}
for some $\xi > 0$ sufficiently small.  Applying H\"older's inequality, we get
\begin{align*}
	\|f\|_{B^{s}_{p, \xi}(\R^d)} & = \bigg(\sum_{j=0}^\infty \sum_{\nu=2^j-1}^{2^{j+1}-2} (2^{\nu s} \|(\varphi_\nu \widehat{f})^\vee\|_{L_p(\R^d)})^\xi \bigg)^{1/\xi} \\
	& \lesssim \bigg(\sum_{j=0}^\infty 2^{-2^j \varepsilon \xi} \sup_{\nu = 2^j-1, \ldots, 2^{j+1}-2} \,  \{ 2^{\nu(s+\varepsilon)} \|(\varphi_\nu \widehat{f})^\vee\|_{L_p(\R^d)} \}^\xi \bigg)^{1/\xi} \\
	& \lesssim \bigg(\sum_{j=0}^\infty 2^{-2^j \varepsilon \xi} 2^{-j b \xi} \bigg)^{1/\xi} \, \|f\|_{T^b_\infty B^{s+\varepsilon}_{p, \infty}(\R^d)} \\
	& \lesssim \|f\|_{T^b_r B^{s+\varepsilon}_{p, \infty}(\R^d)}.
\end{align*}
On the other hand, in a similar fashion as above, the right-hand side embedding in \eqref{cncnnc--} would follow from the special case
$$
	B^s_{p, \infty}(\R^d) \hookrightarrow T^b_r B^{s-\varepsilon}_{p, \xi}(\R^d), \qquad \xi > 0.
$$
To prove the latter embedding, one can reason as follows:
\begin{align*}
	\|f\|_{ T^b_r B^{s-\varepsilon}_{p, \xi}(\R^d)} & =  \left(\sum_{j=0}^\infty 2^{j b r} \bigg(\sum_{\nu=2^j-1}^{2^{j+1}-2} 2^{\nu (s-\varepsilon) \xi} \|(\varphi_\nu \widehat{f})^\vee\|_{L_p(\mathbb{R}^d)}^\xi\bigg)^{r/\xi}\right)^{1/r} \\
	& \lesssim \bigg(\sum_{j=0}^\infty 2^{-2^j \varepsilon r} 2^{j b r}  \bigg)^{1/r} \,  \|f\|_{B^s_{p, \infty}(\R^d)} \asymp \|f\|_{B^s_{p, \infty}(\R^d)}.
\end{align*}

Next we turn our attention to the left-hand side embedding in \eqref{cncnnc2--}. Note that it is enough to show that
\begin{equation}\label{TbrF1}
	 \T^b_r F^{s+\varepsilon}_{p,q_0}(\mathbb{R}^d) \hookrightarrow B^s_{p,\xi}(\R^d)
\end{equation}
for some $\xi > 0$ sufficiently small; cf. \eqref{EmbBFnwnqnq}. Let $j \in \N_0$ and $\nu \in \{2^j-1, \ldots, 2^{j+1}-2\}$. Since
\begin{align*}
	 \|2^{\nu s} (\varphi_\nu \widehat{f})^\vee\|_{L_p(\R^d)} & \lesssim 2^{-2^j \varepsilon} \,  \bigg\| \bigg(\sum_{\nu=2^j-1}^{2^{j+1}-2} 2^{\nu (s + \varepsilon) q_0} |(\varphi_\nu \widehat{f})^\vee|^{q_0} \bigg)^{1/q_0} \bigg\|_{L_p(\R^d)} \\
	 & \lesssim 2^{-2^j \varepsilon} \, 2^{-j b} \,   \|f\|_{ \T^b_r F^{s+\varepsilon}_{p,q_0}(\mathbb{R}^d) },
\end{align*}
we have
\begin{align*}
	\|f\|_{B^s_{p,\xi}(\R^d)} & = \bigg(\sum_{j=0}^\infty \sum_{\nu=2^j-1}^{2^{j+1}-2} \|2^{\nu s} (\varphi_\nu \widehat{f})^\vee \|_{L_p(\R^d)}^\xi \bigg)^{1/\xi} \\
	& \lesssim  \bigg(\sum_{j=0}^\infty 2^{-2^j \varepsilon \xi} \, 2^{j (1- b \xi)} \bigg)^{1/\xi}  \,  \|f\|_{ \T^b_r F^{s+\varepsilon}_{p,q_0}(\mathbb{R}^d)} \\
	& \asymp  \|f\|_{ \T^b_r F^{s+\varepsilon}_{p,q_0}(\mathbb{R}^d)}.
\end{align*}
This completes the proof of \eqref{TbrF1}. Furthermore, similar ideas as above can be applied to show the right-hand side embedding in \eqref{cncnnc2--}. This is left to the interested reader.

%We only prove the first embedding in (\ref{cncnnc--}) for Besov spaces, that is,
%$T^b_r B^{s+\varepsilon}_{p,q_0}(\mathbb{R}^d) \hookrightarrow B^s_{p,q_1}(\R^d)$.
%We have
%$T^b_r B^{s+\varepsilon}_{p,q_0}(\mathbb{R}^d) \hookrightarrow
%T^b_\infty B^{s+\varepsilon}_{p,\infty}(\mathbb{R}^d) =
%B^{s+\varepsilon,b}_{p,\infty}(\mathbb{R}^d)
%\hookrightarrow
% B^s_{p,q_1}(\R^d)$, where the first embedding easily follows from the definition.

			(iii): It follows immediately from Proposition \ref{PropPreser} and \eqref{EmbBFnwnqnq}.
			
%			 We only deal with the left-hand side embedding of \eqref{PropElem1*}, the corresponding right-hand side can be obtained similarly. We consider two cases. Firstly, if $p \geq q$ we can invoke Minkowski's inequality to get
%			\begin{align*}
%				\|f\|_{T^b_r F^{s}_{p, q}(\R^d)} & =  \left(\sum_{j=0}^\infty 2^{j b r}  \bigg\| \bigg(\sum_{\nu=2^j-1}^{2^{j+1}-2} 2^{\nu s q} |(\varphi_\nu \widehat{f})^\vee|^q \bigg)^{1/q} \bigg\|_{L_p(\R^d)}^r \right)^{1/r} \\
%				& \leq \bigg(\sum_{j=0}^\infty 2^{j b r} \bigg(\sum_{\nu=2^j-1}^{2^{j+1} -2} 2^{\nu s q} \|(\varphi_\nu \widehat{f})^\vee\|_{L_p(\R^d)}^q \bigg)^{r/q} \bigg)^{1/r} = \|f\|_{T^b_r B^{s}_{p, q}(\R^d)}.
%			\end{align*}
%			Secondly, if $p \leq q$ then
%			\begin{align*}
%				\|f\|_{T^b_r F^{s}_{p, q}(\R^d)} & \leq   \left(\sum_{j=0}^\infty 2^{j b r}  \bigg\| \bigg(\sum_{\nu=2^j-1}^{2^{j+1}-2} 2^{\nu s p} |(\varphi_\nu \widehat{f})^\vee|^p \bigg)^{1/p} \bigg\|_{L_p(\R^d)}^r \right)^{1/r} \\
%				& = \bigg(\sum_{j=0}^\infty 2^{j b r} \bigg(\sum_{\nu=2^j-1}^{2^{j+1}-2} 2^{\nu s p} \|(\varphi_\nu \widehat{f})^\vee\|_{L_p(\R^d)}^p  \bigg)^{r/p} \bigg)^{1/r} = \|f\|_{T^b_r B^{s}_{p, p}(\R^d)}.
%			\end{align*}
			
			(iv): We only provide the proof of the left-hand side embedding of \eqref{PropElem1}. If $p \geq r$ then we can apply Minkowski's inequality so that
			\begin{align*}
				\|f\|_{\T^b_r F^{s}_{p, q}(\R^d)} & =  \bigg\| \bigg(\sum_{j=0}^\infty 2^{j b r} \bigg(\sum_{\nu=2^{j}-1}^{2^{j+1}-2} 2^{\nu s q}  |(\varphi_\nu \widehat{f})^\vee|^q   \bigg)^{r/q} \bigg)^{1/r} \bigg\|_{L_p(\R^d)} \\
				& \leq \bigg(\sum_{j=0}^\infty 2^{j b r} \bigg\| \bigg(\sum_{\nu=2^{j}-1}^{2^{j+1}-2} 2^{\nu s q}  |(\varphi_\nu \widehat{f})^\vee|^q   \bigg)^{1/q} \bigg\|_{L_p(\R^d)}^r \bigg)^{1/r} = \|f\|_{T^b_r F^{s}_{p, q}(\R^d)}.
			\end{align*}
			On the other hand, if $r \geq p$ then
			\begin{align*}
				\|f\|_{\T^b_r F^{s}_{p, q}(\R^d)} &\leq \bigg\| \bigg(\sum_{j=0}^\infty 2^{j b p} \bigg(\sum_{\nu=2^{j}-1}^{2^{j+1}-2} 2^{\nu s q}  |(\varphi_\nu \widehat{f})^\vee|^q   \bigg)^{p/q} \bigg)^{1/p} \bigg\|_{L_p(\R^d)} \\
				&= \bigg(\sum_{j=0}^\infty 2^{j b p} \bigg\| \bigg(\sum_{\nu=2^{j}-1}^{2^{j+1}-2} 2^{\nu s q}  |(\varphi_\nu \widehat{f})^\vee|^q   \bigg)^{1/q}  \bigg\|_{L_p(\R^d)}^p \bigg)^{1/p} = \|f\|_{T^b_p F^{s}_{p, q}(\R^d)}.
			\end{align*}

			Item (v) is an immediate consequences of Fubini's theorem.
		
\end{proof}

\begin{rem}
 	A detailed study of embedding theorems between $T^b_r A^{s}_{p, q}(\R^d)$-spaces will be postponed until Sections \ref{Section10}--\ref{SectionFJ}. In particular, the optimality of \eqref{PropElem1*} will be addressed in Corollary \ref{CorTruncBTLFixedps} below, namely,
	$$
		T^b_r B^{s}_{p, u}(\R^d) \hookrightarrow T^b_r F^{s}_{p, q}(\R^d) \iff u \leq \min \{p, q\}
	$$
	and
	$$
		 T^b_r F^{s}_{p, q}(\R^d) \hookrightarrow T^b_r B^{s}_{p, u}(\R^d) \iff u \geq \max\{p, q\}.
	$$
\end{rem}

\newpage

\section{Characterizations by interpolation}\label{SectionLimIntGeneral}

%In this section we shall establish some characterizations of the Besov-type spaces introduced in Definition \ref{DefinitionNewBesov} in terms of limiting interpolation, differences,  approximation means and wavelets.

\subsection{Limiting interpolation. General facts. }\label{Section:LimitingInterpolation}

The goal of this section is to show that the spaces $T^b_r A^{s}_{p,q}(\mathbb{R}^d), \\ A \in \{B, F\}$, and $\T^b_r F^s_{p, q}(\R^d)$ can be generated as interpolation spaces of the classical spaces $B^s_{p, q}(\R^d)$ and $F^s_{p, q}(\R^d)$ (cf. Theorems \ref{TheoremInterpolation}, \ref{TheoremInterpolationF} and \ref{TheoremInterpolationFrakF} below). As a consequence, we shall conclude not only that these new scales are closed under interpolation (cf. Theorems \ref{TheoremInterpolation2} and \ref{TheoremInterpolation2pspapsa} below), but also new interpolation formulae involving $\mathbf{B}^{0, b}_{p, q}(\R^d)$ and $\text{Lip}^{s, b}_{p, q}(\R^d)$ (cf. Corollary \ref{CorIntBLip}). To get all these results we shall apply the machinery provided by limiting interpolation. Next we briefly recall the construction of this method.

Let $(A_0,A_1)$ be a couple of quasi-Banach spaces. Given $t >0$ and $f \in A_0+A_1$, the \emph{Peetre's $K$-functional} is
defined by\index{\bigskip\textbf{Functionals and functions}!$K(t, f)$}\label{K}
\begin{equation}\label{DefPeetreKFunct}
    K(t,f) := K(t,f; A_0,A_1) := \inf \{\|f_0\|_{A_0} + t \|f_1\|_{A_1} : f = f_0 + f_1, \, f_j \in
    A_j\}.
\end{equation}
For $\theta \in (0,1), \, q \in (0,\infty]$ and $b \in \R$, the interpolation space $(A_0,A_1)_{\theta, q, b}$ is formed by all those $f \in A_0 + A_1$ such that\index{\bigskip\textbf{Spaces}!$(A_0, A_1)_{\theta, q, b}$}\label{REAL}
\begin{equation}\label{ClassicInt2}
	\|f\|_{(A_0,A_1)_{\theta,q, b}} := \bigg(\int_0^\infty (t^{-\theta} (1+ |\log t|)^b K(t,f))^q \frac{dt}{t} \bigg)^{1/q} < \infty
\end{equation}
(with the usual modification if $q=\infty$). See \cite{Gustavsson}, \cite{DeVore} and \cite{GogatishviliOpicTrebels}. In the special case $b=0$ one recovers the classical interpolation space $(A_0, A_1)_{\theta, q}$, i.e.,
\begin{equation}\label{ClassicInt}
	\|f\|_{(A_0,A_1)_{\theta,q}} := \bigg(\int_0^\infty (t^{-\theta} K(t,f))^q \frac{dt}{t} \bigg)^{1/q}
\end{equation}
 cf. \cite{BerghLofstrom}, \cite{BennettSharpley} and \cite{Triebel}. Since $K(t, f; A_0, A_1) = t K(t^{-1}, f; A_1, A_0)$, one has
 \begin{equation}\label{Sym}
 	(A_0, A_1)_{\theta, q, b} = (A_1, A_0)_{1-\theta, q, b}
 \end{equation}
 with equality of quasi-norms.

The introduction of limiting interpolation spaces requires some non-trivial modifications of the quasi-norms \eqref{ClassicInt2} and \eqref{ClassicInt}. Indeed, note that setting $\theta=0$ or $\theta=1$ we only obtain in general the trivial space $\{0\}$. To overcome this issue we make the elementary observation that, working with ordered couples (i.e., $A_1 \hookrightarrow A_0$),
\begin{equation}\label{Int2}
	\|f\|_{(A_0,A_1)_{\theta,q}}  \asymp  \bigg(\int_0^1 (t^{-\theta} K(t,f))^q \frac{dt}{t} \bigg)^{1/q}, \qquad \theta \in (0,1).
	\end{equation}
This follows from the fact $K(t,f) \asymp \|f\|_{A_0}$ for $t > 1$. Accordingly, one may consider the integral given in the right-hand side of \eqref{Int2} to introduce the space $(A_0,A_1)_{\theta,q}$. This together with the additional refinement given by logarithmic weights allow us to define limiting interpolation spaces. More precisely, given an ordered couple of quasi-Banach spaces $(A_0, A_1)$, for $\theta \in \{0, 1\}, q \in (0,\infty]$ and $b \in \R$, the \emph{limiting interpolation space} $(A_0,A_1)_{(\theta,b),q}$ is the set of all those $f \in A_0$ for which\index{\bigskip\textbf{Spaces}!$(A_0, A_1)_{(\theta, b), q}$}\label{REALLIM}
\begin{equation}\label{DefLimInterpolation}
	\|f\|_{(A_0,A_1)_{(\theta,b),q}} = \bigg(\int_0^1 (t^{-\theta}(1-\log t)^{b} K(t,f))^q \frac{dt}{t} \bigg)^{1/q}
\end{equation}
is finite (with the usual modification if $q=\infty$). See \cite{JawerthMilman, GogatishviliOpicTrebels, CobosDominguez, Astashkin} and the references given there. It is not hard to check that if $b < -1/q \, (b \leq 0 \text{ if } q=\infty)$ then $(A_0,A_1)_{(0,b),q} = A_0$ and if $b \geq -1/q \, (b > 0 \text{ if } q=\infty)$ then $(A_0,A_1)_{(1,b),q} = \{0\}$. Accordingly, we may assume without loss of generality that $b \geq -1/q \, (b > 0 \text{ if } q=\infty)$ if $\theta = 0$ and $b < -1/q \, (b \leq 0 \text{ if } q=\infty)$ if $\theta=1$. Under these assumptions, the $((\theta,b),q)$-method produces intermediate spaces between $A_0$ and $A_1$, that is,
\begin{equation}\label{EmbeddingsLimInt}
	A_1 \hookrightarrow (A_0,A_1)_{(\theta,b),q} \hookrightarrow A_0,
\end{equation}
and it satisfies the interpolation property for bounded linear operators. Furthermore, the left-hand side embedding in \eqref{EmbeddingsLimInt} is dense whenever $q < \infty$. We remark that it is possible to introduce limiting interpolation spaces for general couples of quasi-Banach spaces (not necessarily ordered) with the help of adequate slowly varying functions, however for the purposes of this paper is enough to restrict ourselves to the ordered case.

The following relations between classical and limiting interpolation will be useful; cf.  \cite[Corollaries 7.8 and 7.11]{EvansOpicPick} and \cite[Lemma 2.2]{DominguezTikhonov}.

\begin{lem}\label{LemmaReiteration}
	Let $\theta \in (0,1), \, q, r, r_0, r_1 \in (0,\infty]$ and $\xi \in \R$.
	\begin{enumerate}[\upshape(i)]
	\item If $b \geq -1/r$ then
	\begin{equation*}
		(A_0, (A_0,A_1)_{\theta,q, \xi})_{(0,b),r} = (A_0,A_1)_{(0,b),r}.
	\end{equation*}
	\item If $b < -1/r$ then
	\begin{equation*}
		((A_0,A_1)_{\theta,q, \xi}, A_1)_{(1,b),r} = (A_0,A_1)_{(1,b),r}.
	\end{equation*}
	\item If $0 < b_0 + 1/r_0 < b_1 + 1/r_1$ then
	$$
		((A_0,A_1)_{(0,b_0), r_0}, (A_0,A_1)_{(0,b_1),r_1})_{\theta,q} = (A_0,A_1)_{(0, \gamma), q}
	$$
	where $\gamma = (1-\theta) (b_0 + 1/r_0) + \theta (b_1+1/r_1) -1/q$.
	\item If $b_1 + 1/r_1 < b_0 + 1/r_0 < 0$ then
	$$
		((A_0,A_1)_{(1,b_0), r_0}, (A_0,A_1)_{(1,b_1),r_1})_{\theta,q} = (A_0,A_1)_{(1, \gamma), q}
	$$
	where $\gamma = (1-\theta) (b_0 + 1/r_0) + \theta (b_1+1/r_1) -1/q$.
%	\item If $b_1 + 1/r_1 < 0 < b_0 + 1/r_0$ then
%	$$
%		((A_0,A_1)_{(0,b_0), r_0}, (A_0,A_1)_{(1,b_1),r_1})_{\theta,q} = (A_0,A_1)_{\theta, q, \gamma + 1/q}
%	$$
%		where $\gamma = (1-\theta) (b_0 + 1/r_0) + \theta (b_1+1/r_1) -1/q$.
		\item If $b > -1/r$ then
		$$
			((A_0, A_1)_{(0, b), r}, A_1)_{\theta, q} = (A_0, A_1)_{\theta, q, (1-\theta) (b+1/r)}.
		$$
		\item If $b < -1/r$ then
		$$
		(A_0, (A_0, A_1)_{(1, b), r})_{\theta, q} = (A_0, A_1)_{\theta, q, \theta (b+ 1/r)}.
		$$
	\end{enumerate}
\end{lem}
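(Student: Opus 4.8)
The plan is to reduce every identity to a Holmstedt-type formula for the $K$-functional of the couple appearing on the left-hand side, expressed through $K(\cdot, f; A_0, A_1)$, and then to substitute that formula into the defining integral \eqref{DefLimInterpolation} or \eqref{ClassicInt2} of the outer functor and simplify by Fubini's theorem together with an elementary change of variables. All the Holmstedt formulas needed below are essentially contained in the references cited in the statement (see also \cite{GogatishviliOpicTrebels}); alternatively, one may first pass, via the standard retraction, to couples of weighted sequence spaces, where the relevant $K$-functionals are explicit and the identities become discrete Hardy inequalities.

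For (i) and (ii) one of the two embeddings is immediate: the functor $(A_0, \cdot)_{(0,b),r}$ is monotone in its second argument, so $A_1 \hookrightarrow (A_0,A_1)_{\theta,q,\xi}$ gives $(A_0,A_1)_{(0,b),r} \hookrightarrow (A_0,(A_0,A_1)_{\theta,q,\xi})_{(0,b),r}$, and symmetrically for (ii) from $(A_0,A_1)_{\theta,q,\xi} \hookrightarrow A_0$ together with \eqref{Sym}. For the reverse embedding in (i), writing $Y := (A_0,A_1)_{\theta,q,\xi}$ with $\theta \in (0,1)$, I will use that, for the ordered couple $(A_0, Y)$ and $0 < t \le 1$, one has $K(t, f; A_0, Y) \asymp K(t^{1/\theta}, f; A_0, A_1)$ up to a tail term controlled by $t\,\|f\|_{A_0}$ plus lower-order logarithmic contributions. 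Substituting this into \eqref{DefLimInterpolation} (with $\theta = 0$ there), making the change of variables $u = t^{1/\theta}$ --- so that $1 - \log t = 1 - \theta \log u \asymp 1 - \log u$ for $u \le 1$, and the inner parameters $q, \xi$ drop out of the leading term --- and absorbing the tail into $\|f\|_{A_0} \lesssim \|f\|_{(A_0,A_1)_{(0,b),r}}$ (cf.\ \eqref{EmbeddingsLimInt}), I expect to recover $\|f\|_{(A_0,Y)_{(0,b),r}} \asymp \|f\|_{(A_0,A_1)_{(0,b),r}}$. Part (ii) then follows from (i) applied to the reversed couple, via \eqref{Sym}.

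For (iii) and (iv) I set $X_i := (A_0,A_1)_{(0,b_i),r_i}$; the hypothesis $0 < b_0 + 1/r_0 < b_1 + 1/r_1$ makes $(X_0, X_1)$ an ordered couple with $A_1 \hookrightarrow X_1 \hookrightarrow X_0 \hookrightarrow A_0$, the position of $X_i$ in the limiting scale being measured by the parameter $b_i + 1/r_i$. A Holmstedt-type formula then expresses $K(\cdot, f; X_0, X_1)$ through $K(\cdot, f; A_0, A_1)$, with $b_0 + 1/r_0$ and $b_1 + 1/r_1$ playing the roles that $\theta_0$ and $\theta_1$ play in the classical two-parameter Holmstedt formula; feeding this into \eqref{ClassicInt} and simplifying should produce the limiting space attached to the interpolated position $(1-\theta)(b_0+1/r_0) + \theta(b_1+1/r_1)$, while the $(\theta,q)$-method itself accounts for the extra $-1/q$ in $\gamma$. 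Part (iv) again follows from (iii) by \eqref{Sym}. Finally, (v) and (vi) are the ``mixed'' reiterations, one endpoint being a limiting space and the other a member of the original couple: for (v) I will substitute the Holmstedt formula for $K(\cdot, f; (A_0,A_1)_{(0,b),r}, A_1)$ into \eqref{ClassicInt}, which should generate precisely the logarithmic weight $(1+|\log t|)^{(1-\theta)(b+1/r)}$ of \eqref{ClassicInt2} --- the exponent being the residual logarithmic smoothness once the power scale has been consumed by the $\theta$-interpolation --- and (vi) is the symmetric statement for the upper endpoint. The main difficulty will be establishing the Holmstedt-type formulas for the couples built out of limiting interpolation spaces and controlling, uniformly in the logarithmic exponents, the behaviour at the threshold $b = -1/r$ (where the limiting space degenerates to $A_0$, respectively to $\{0\}$) as well as the endpoint behaviour of $K(t, f)$ as $t \to 0^+$ and $t \to \infty$; once those $K$-functional estimates are granted, each of the six identities reduces to the routine Fubini/change-of-variables computations indicated above.
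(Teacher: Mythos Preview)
The paper does not prove this lemma; it simply cites \cite[Corollaries~7.8 and~7.11]{EvansOpicPick} and \cite[Lemma~2.2]{DominguezTikhonov}. Your proposal follows the same strategy used in those references --- Holmstedt-type estimates for the $K$-functional of the outer couple in terms of $K(\cdot,f;A_0,A_1)$, followed by substitution into \eqref{DefLimInterpolation} or \eqref{ClassicInt2} and a change of variables --- so the approach is correct and essentially the standard one.

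One point worth tightening: in your sketch for (i) you write $K(t,f;A_0,Y) \asymp K(t^{1/\theta},f;A_0,A_1)$ ``up to a tail term controlled by $t\|f\|_{A_0}$ plus lower-order logarithmic contributions''. The actual Holmstedt formula here involves the logarithmic weight $\xi$ (one gets, roughly, $K(t,f;A_0,Y) \asymp \big(\int_0^{t^{1/\theta}} (u^{-\theta}(1+|\log u|)^\xi K(u,f))^q \frac{du}{u}\big)^{1/q}$ plus a tail), and it is only after integrating against $(1-\log t)^{br}\,dt/t$ and applying a Hardy inequality that the dependence on $q,\xi$ disappears. Your change of variables $u = t^{1/\theta}$ alone does not eliminate $q$ and $\xi$; a Hardy-type monotonicity argument is needed in addition. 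The same remark applies to (iii)--(vi): the Holmstedt expressions carry the inner fine parameters, and they drop out only after the outer integration plus Hardy's inequality. This is not a gap in the strategy --- it is exactly what the cited references do --- but your sketch understates this step.
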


%For $0 < r < \infty, 0 < \theta < 1$ and $g \in SV(0,\infty)$, the real
%interpolation space $(A_0,A_1)_{\theta,r,g}$ consists of all $a \in
%A_0$ having a finite norm
%\begin{equation*}
%    \|a|(A_0,A_1)_{\theta,r,g}\| = \left(\int_0^\infty (t^{-\theta} g(t) K(t,a))^r
%    \frac{dt}{t}\right)^{1/r}.
%\end{equation*}
%In the particular case that $g(t)=1$, we get spaces
%$(A_0,A_1)_{\theta,r}$. If $g(t) = (1 + |\log t|)^s (1 + (1 +
%|\log t|))^\beta, \alpha, \beta \in \mathbb{R}$, we use the notation
%$(A_0,A_1)_{\theta,r,\alpha,\beta}$. In addition, if $\beta=0$ we
%simply write $(A_0,A_1)_{\theta,r,\alpha}$.

\subsection{$T^b_r B^{s}_{p, q}(\R^d)$ spaces via interpolation}

\begin{thm}[Characterization of $T^b_r B^{s}_{p, q}(\R^d)$ via limiting interpolation]\label{TheoremInterpolation}\label{Thm4.2}
	 Let $A \in \{B, F\}$. Let $p, q, q_0, r \in (0, \infty] \, (p < \infty \text{ if } A = F), s, s_0 \in \R,  s \neq s_0$, and $b \in \R, \, b \neq 0$.
	 \begin{enumerate}[\upshape(i)]
	 \item If $s_0 > s$ and $b > 0$ then
	 \begin{equation}\label{Interpolation1B}
	 	T^b_r B^{s}_{p,q}(\mathbb{R}^d) = (B^s_{p,q}(\R^d), A^{s_0}_{p,q_0}(\R^d))_{(0,b-1/r),r}.
	 \end{equation}
	 \item If $s_0 < s$ and $b < 0$ then
	  \begin{equation}\label{Interpolation1Bnew}
	 T^b_r B^{s}_{p,q}(\mathbb{R}^d) = (A^{s_0}_{p,q_0}(\R^d), B^s_{p,q}(\R^d))_{(1,b-1/r),r}.
	 \end{equation}
	 \end{enumerate}
\end{thm}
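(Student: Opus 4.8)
The plan is to derive both identities from a single sequence-space computation after two reductions. \textbf{Reduction to $A=B$, $q_0=q$.} By the elementary embeddings recalled in Section~\ref{SectionClassicSpaces}, for any $s_0''$ with $s<s_0''<s_0$ and any $s_0'>s_0$ (chosen $<s$ in case (ii)) one has $B^{s_0'}_{p,q}(\R^d)\hookrightarrow A^{s_0}_{p,q_0}(\R^d)\hookrightarrow B^{s_0''}_{p,q}(\R^d)$ (for $A=F$ insert in addition $B^{s_0}_{p,\min\{p,q_0\}}\hookrightarrow F^{s_0}_{p,q_0}\hookrightarrow B^{s_0}_{p,\max\{p,q_0\}}$). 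Since the functors $(\,\cdot\,,\,\cdot\,)_{(0,\beta),r}$ and $(\,\cdot\,,\,\cdot\,)_{(1,\beta),r}$ are monotone, if \eqref{Interpolation1B} (resp. \eqref{Interpolation1Bnew}) holds for $A=B$, $q_0=q$ and \emph{every} admissible $s_0$, then both sandwiching spaces collapse to $T^b_r B^{s}_{p,q}(\R^d)$ and the general statement follows. \textbf{Reduction to sequences.} Using the Littlewood--Paley coretraction $Jf=((\varphi_\nu\widehat f)^\vee)_{\nu\ge0}$ and retraction $P(g_\nu)=\sum_\nu(\widetilde\varphi_\nu\widehat{g_\nu})^\vee$ (with $\widetilde\varphi_\nu=\varphi_{\nu-1}+\varphi_\nu+\varphi_{\nu+1}$; cf. the proof of \eqref{RemFM}), which form a retraction/coretraction for the couple $(B^{s}_{p,q}(\R^d),B^{s_0}_{p,q}(\R^d))$ and at the same time identify $T^b_r B^{s}_{p,q}(\R^d)$ with a retract of the sequence space $\mathcal Z$ given by $\|(x_\nu)\|_{\mathcal Z}=\big(\sum_{j\ge0}2^{jbr}\big(\sum_{\nu=2^j-1}^{2^{j+1}-2}(2^{\nu s}\|x_\nu\|_{L_p(\R^d)})^q\big)^{r/q}\big)^{1/r}$, the whole matter reduces to proving $(\ell^{s}_q(L_p(\R^d)),\ell^{s_0}_q(L_p(\R^d)))_{(0,b-1/r),r}=\mathcal Z$ together with its $\theta=1$ counterpart.

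\textbf{The sequence computation (case $\theta=0$, $s_0>s$, $b>0$).} This couple is diagonal, so Peetre's $K$-functional satisfies the Holmstedt-type estimate, for $t\in(0,1)$,
\begin{equation*}
K(t,x)\asymp\Big(\sum_{\nu>N(t)}(2^{\nu s}\|x_\nu\|_{L_p})^q\Big)^{1/q}+t\Big(\sum_{\nu\le N(t)}(2^{\nu s_0}\|x_\nu\|_{L_p})^q\Big)^{1/q},\qquad N(t):=\frac{\log_2(1/t)}{s_0-s},
\end{equation*}
the $\lesssim$ being the coordinate split at $\nu=N(t)$ and the $\gtrsim$ following by comparing the weights $2^{\nu s}$ and $t2^{\nu s_0}$ separately on $\{\nu\le N(t)\}$ and on $\{\nu>N(t)\}$. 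I then discretise the defining integral of $(\,\cdot\,)_{(0,b-1/r),r}$ at the nodes $t_j$ for which $N(t_j)=2^{j+1}-2$ is the right endpoint of the block $B_j:=\{2^j-1,\dots,2^{j+1}-2\}$; since $1-\log t_j\asymp 2^j$ and $\int_{t_{j+1}}^{t_j}dt/t\asymp 2^j$, monotonicity of $K(\cdot,x)$ yields $\|x\|_{(0,b-1/r),r}\asymp\big(\sum_{j}2^{jbr}K(t_j,x)^r\big)^{1/r}$. The first summand of $K(t_j,x)$ equals $\big(\sum_{\nu\ge2^{j+1}-1}(2^{\nu s}\|x_\nu\|_{L_p})^q\big)^{1/q}$, so up to a harmless shift in $j$ it reproduces the equivalent quasi-norm $\|\cdot\|^*$ of Proposition~\ref{PropEquiQN}(i) (whose equivalence with $\|\cdot\|_{\mathcal Z}$ rests on Hardy's inequality and uses $b>0$); since moreover $K(t_j,x)$ always dominates this term, one obtains at once $\|x\|_{\mathcal Z}\lesssim\|x\|_{(0,b-1/r),r}$.

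\textbf{Absorbing the head term.} For the converse it remains to estimate $t_j\big(\sum_{\nu\le2^{j+1}-2}(2^{\nu s_0}\|x_\nu\|_{L_p})^q\big)^{1/q}$. Splitting this sum along the blocks $B_i$, $i\le j$, and using $t_j2^{\nu s_0}=2^{-(s_0-s)((2^{j+1}-2)-\nu)}\,2^{\nu s}$, the block $B_j$ contributes at most $\big(\sum_{\nu\in B_j}(2^{\nu s}\|x_\nu\|_{L_p})^q\big)^{1/q}$, while each block $B_i$ with $i\le j-1$ contributes with a factor $\lesssim 2^{-(s_0-s)2^j}$ times $\|x\|_{\ell^s_q(L_p)}$. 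Invoking $T^b_r B^{s}_{p,q}(\R^d)\hookrightarrow B^{s}_{p,q}(\R^d)$ for $b>0$ (Proposition~\ref{PropositionElementary}(i)) and summing in $j$ gives head-contribution $\lesssim\|x\|_{\mathcal Z}$, which completes \eqref{Interpolation1B}.

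Finally, \eqref{Interpolation1Bnew} is obtained by the same scheme after the reflection $K(t,f;A^{s_0}_{p,q_0},B^{s}_{p,q})=t\,K(1/t,f;B^{s}_{p,q},A^{s_0}_{p,q_0})$ and the substitution $u=1/t$, which turns the defining integral into $\int_1^\infty((1+\log u)^{b-1/r}K(u,f;B^{s}_{p,q},A^{s_0}_{p,q_0}))^r\,du/u$; now $s_0<s$, low frequencies are placed in the $B^{s}_{p,q}$-component, the stable (``head'') summand $\big(\sum_{\nu\le N(u)}(2^{\nu s}\|x_\nu\|_{L_p})^q\big)^{1/q}$ reproduces the equivalent quasi-norm $\|\cdot\|^*$ of Proposition~\ref{PropEquiQN}(ii) (here $b<0$ is used), and the remaining summand must be absorbed. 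I expect this last absorption to be the real obstacle: for $b<0$ only $B^{s}_{p,q}(\R^d)\hookrightarrow T^b_r B^{s}_{p,q}(\R^d)$ holds, the opposite of what was used when $\theta=0$, so instead one exploits the a priori bound $\big(\sum_{\nu\in B_j}(2^{\nu s}\|x_\nu\|_{L_p})^q\big)^{1/q}\lesssim 2^{-jb}\|x\|_{\mathcal Z}$ (immediate from the definition of $\mathcal Z$) to dominate the super-exponentially weighted tail blocks, treating the cases $r\ge q$ and $r<q$ separately.
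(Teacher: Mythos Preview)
Your proposal is correct and follows essentially the paper's approach: retraction to $\ell^s_q(L_p)$-sequences, the Holmstedt-type formula for the $K$-functional, and identification of the dominant summand with the equivalent norm $\|\cdot\|^*$ of Proposition~\ref{PropEquiQN}. The two minor differences are (a) your reduction to $A=B$, $q_0=q$ by a smoothness sandwich $B^{s_0'}_{p,q}\hookrightarrow A^{s_0}_{p,q_0}\hookrightarrow B^{s_0''}_{p,q}$, whereas the paper uses the reiteration Lemma~\ref{LemmaReiteration}(i),(ii), and (b) your handling of the subdominant (``head'' resp.\ ``tail'') summand by splitting into dyadic blocks and using the super-exponential damping $2^{-(s_0-s)2^j}$ together with the sequence-level form of Proposition~\ref{PropositionElementary}(i). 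The paper instead disposes of this term in one line by the Hardy inequalities \eqref{H1} (for~(i)) and \eqref{H2} (for~(ii)), bounding it directly by the diagonal term $\big(\sum_j(1+j)^{(b-1/r)r}[2^{js}\|x_j\|_{L_p}]^r\big)^{1/r}$, which is already dominated by the stable summand. In particular your worry that case~(ii) is ``the real obstacle'' is unfounded: \eqref{H2} handles the tail there exactly as \eqref{H1} handled the head in~(i), with no need for the block bound $2^{-jb}\|x\|_{\mathcal Z}$ or a case split on $r\gtrless q$.
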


\begin{rem}
	Note that the assumption $s_0 > s$ in (i) implies that $ A^{s_0}_{p,q_0}(\R^d) \hookrightarrow B^s_{p,q}(\R^d)$ (cf. \cite[Section 2.3.2, page 47]{Triebel83}). A similar comment applies to (ii).
\end{rem}

\begin{rem}
	The case $b=0$ in Theorem \ref{TheoremInterpolation} is more delicate since the outcome is not the expected spaces $T_r B^{s}_{p, q}(\R^d)$ but $T^*_r B^{s}_{p, q}(\R^d)$ (cf. \eqref{33}). This will be treated in detail in  Remark \ref{Remark4.4} below.
\end{rem}

\begin{proof}[Proof of Theorem \ref{TheoremInterpolation}]
	 The proof relies on the retraction method (see
    \cite[Section 1.2.4]{Triebel} for details on this method) and limiting interpolation techniques. Assume first $p \in [1,\infty]$ and $A=B$. In this case, it is well known that
    $B^s_{p,q}(\mathbb{R}^d)$ is a retract of $\ell^s_q(L_p(\mathbb{R}^d))$ and
    $B^{s_0}_{p,q}(\mathbb{R}^d)$ is a retract of
    $\ell^{s_0}_q(L_p(\mathbb{R}^d))$ with the corresponding co-retraction operator being
    \begin{equation}\label{Retraction}
    \mathfrak{J} f := ((\varphi_j \widehat{f})^\vee)_{j=0}^\infty.
    \end{equation}
     Furthermore, elementary computations lead to
    \begin{equation}\label{Kfunct}
        K(t,x; \ell^s_q(L_p(\mathbb{R}^d)),
        \ell^{s_0}_q(L_p(\mathbb{R}^d))) \asymp \bigg(
        \sum_{\nu=0}^\infty [\min (2^{\nu s}, t 2^{\nu s_0}) \|x_\nu \|_{L_p(\mathbb{R}^d)}]^q \bigg)^{1/q}
    \end{equation}
    for $t > 0$ and $x=(x_\nu)_{\nu=0}^\infty \in \ell^s_q(L_p(\mathbb{R}^d)) +
        \ell^{s_0}_q(L_p(\mathbb{R}^d))$.

        (i): Assume $s_0 > s$.  According to \eqref{Kfunct}, for every $j \geq 0$,
    \begin{align}
        K(2^{-j (s_0-s)},x; \ell^s_q(L_p(\mathbb{R}^d)),
        \ell^{s_0}_q(L_p(\mathbb{R}^d))) \nonumber \\
        &  \hspace{-5cm}\asymp
        2^{-j(s_0-s)} \bigg(\sum_{\nu=0}^j [2^{\nu s_0} \|x_\nu\|_{L_p(\mathbb{R}^d)}]^q \bigg)^{1/q} + \bigg( \sum_{\nu = j+1}^\infty [2^{\nu s}
        \|x_\nu \|_{L_p(\mathbb{R}^d)}]^q \bigg)^{1/q}. \label{412}
    \end{align}
    Therefore,
    \begin{align}
        \|x\|_{(\ell^s_q(L_p(\mathbb{R}^d)),
        \ell^{s_0}_q(L_p(\mathbb{R}^d)))_{(0,b-1/r),r}} & \nonumber \\
        & \hspace{-4cm}\asymp \left(\sum_{j=0}^\infty [(1+j)^{b-1/r}  K(2^{-j (s_0-s)},x;\ell^s_q(L_p(\mathbb{R}^d)),
        \ell^{s_0}_q(L_p(\mathbb{R}^d)))]^r\right)^{1/r}  \nonumber\\
        & \hspace{-4cm} \asymp \left(\sum_{j=0}^\infty (1+j)^{(b-1/r) r}  2^{-j(s_0-s) r} \bigg(\sum_{\nu=0}^j [2^{\nu s_0} \|x_\nu\|_{L_p(\mathbb{R}^d)}]^q \bigg)^{r/q} \right)^{1/r} \label{22}\\
        & \hspace{-3cm} + \left(\sum_{j=0}^\infty (1+j)^{(b-1/r) r} \bigg( \sum_{\nu = j}^\infty [2^{\nu s}
        \|x_\nu \|_{L_p(\mathbb{R}^d)}]^q \bigg)^{r/q}  \right)^{1/r} \nonumber\\
        & \hspace{-4cm} =: I + II. \nonumber
    \end{align}
   Furthermore,
    \begin{equation}\label{23}
        I \lesssim II.
    \end{equation}
    Indeed, it follows from Hardy's inequality \eqref{H1} that
    \begin{equation*}
        I  \lesssim \left(\sum_{j=0}^\infty \left[(1+j)^{b-1/r}   2^{j s}
        \|x_j \|_{L_p(\mathbb{R}^d)}\right]^r\right)^{1/r} \leq II.
    \end{equation*}
%    On the other hand, if $r < q$ then
%    \begin{align*}
%        I & \leq \bigg(\sum_{j=0}^\infty 2^{-j (s_0-s) r}(1 + j)^{(b-1/r) r} \sum_{\nu=0}^j
%        2^{\nu s_0 r}
%        \|x_\nu \|_{L_p(\mathbb{R}^d)}^r\bigg)^{1/r} \\
%        & =\left(\sum_{\nu=0}^\infty 2^{\nu s_0 r}
%        \|x_\nu\|_{L_p(\mathbb{R}^d)}^r \sum_{j=\nu}^\infty 2^{-j (s_0-s) r} (1 + j)^{(b-1/r)
%        r}\right)^{1/r} \\
%        & \asymp \left(\sum_{\nu=0}^\infty 2^{\nu s  r} (1 + \nu)^{(b-1/r)
%        r} \|x_\nu\|_{L_p(\mathbb{R}^d)}^r \right)^{1/r}  \leq II.
%    \end{align*}

    Combining \eqref{22} and \eqref{23}, we obtain that
    \begin{equation}\label{24}
       \|x\|_{(\ell^s_q(L_p(\mathbb{R}^d)),
        \ell^{s_0}_q(L_p(\mathbb{R}^d)))_{(0,b-1/r),r}}  \asymp \left(\sum_{j=0}^\infty (1+j)^{(b-1/r) r} \bigg( \sum_{\nu = j}^\infty [2^{\nu s}
        \|x_\nu \|_{L_p(\mathbb{R}^d)}]^q \bigg)^{r/q}  \right)^{1/r}.
    \end{equation}
    In virtue of the retraction method and \eqref{24} we get
    \begin{align}
    	\|f\|_{ (B^s_{p,q}(\R^d), B^{s_0}_{p,q}(\R^d))_{(0,b-1/r),r}} &\asymp \|\mathfrak{J} f\|_{(\ell^s_q(L_p(\mathbb{R}^d)),
        \ell^{s_0}_q(L_p(\mathbb{R}^d)))_{(0,b-1/r),r}} \nonumber\\
        &\hspace{-3cm} \asymp \left(\sum_{j=0}^\infty (1+j)^{(b-1/r) r} \bigg( \sum_{\nu = j}^\infty [2^{\nu s}
        \| (\varphi_\nu \widehat{f})^\vee \|_{L_p(\mathbb{R}^d)}]^q \bigg)^{r/q}  \right)^{1/r}. \label{new2}
    \end{align}

      Assume further $b > 0$. We claim
    \begin{equation}\label{new3}
     \left(\sum_{j=0}^\infty (1+j)^{(b-1/r) r} \bigg( \sum_{\nu = j}^\infty [2^{\nu s}
        \| (\varphi_\nu \widehat{f})^\vee \|_{L_p(\mathbb{R}^d)}]^q \bigg)^{r/q}  \right)^{1/r} \asymp \|f\|_{T^b_r B^{s}_{p, q}(\R^d)}.
    \end{equation}
    Assuming momentarily the validity of the previous assertion, we conclude by \eqref{new2} that (i) holds for $A=B, \, p \in [1,\infty]$ and $q_0=q \in (0,\infty]$. Next we show \eqref{new3}. By monotonicity properties and Hardy's inequality (note that $b > 0$; cf. \eqref{H2})
    \begin{align}
    	  \left(\sum_{j=0}^\infty (1+j)^{(b-1/r) r} \bigg( \sum_{\nu = j}^\infty [2^{\nu s}
        \| (\varphi_\nu \widehat{f})^\vee \|_{L_p(\mathbb{R}^d)}]^q \bigg)^{r/q}  \right)^{1/r} & \asymp \nonumber  \\
        & \hspace{-6cm} \left(\sum_{j=0}^\infty 2^{j b r}  \bigg( \sum_{\nu = 2^j}^\infty [2^{\nu s}
        \| (\varphi_\nu \widehat{f})^\vee \|_{L_p(\mathbb{R}^d)}]^q \bigg)^{r/q} \right)^{1/r} \nonumber  \\
        & \hspace{-6cm} \asymp \left(\sum_{j=0}^\infty 2^{j b r}  \bigg( \sum_{\nu = 2^j-1}^{2^{j+1}-2} [2^{\nu s}
        \| (\varphi_\nu \widehat{f})^\vee \|_{L_p(\mathbb{R}^d)}]^q \bigg)^{r/q} \right)^{1/r}  = \|f\|_{T^b_r B^{s}_{p, q}(\R^d)}.\label{new4}
    \end{align}

    The non-diagonal case $q_0 \neq q$ can reduced to the previous case via the reiteration formula given in Lemma \ref{LemmaReiteration}(i). Indeed, let $s_1$ be such that $s_0 \in (s,s_1)$. By the well-known interpolation properties of Besov spaces (cf. \cite[Theorem 6.4.5]{BerghLofstrom}, \cite[2.4.1]{Triebel} and \cite[2.4.2]{Triebel83})
    \begin{equation}\label{gdgsadg}
    	B^{s_0}_{p,q_0}(\R^d) = (B^s_{p,q}(\R^d), B^{s_1}_{p,q}(\R^d))_{\theta,q_0}
    \end{equation}
    where $\theta \in (0,1)$ is given by $s_0 = (1-\theta) s + \theta s_1$. Applying now Lemma \ref{LemmaReiteration}(i) we have
    \begin{align*}
    	 (B^s_{p,q}(\R^d), B^{s_0}_{p,q_0}(\R^d))_{(0,b-1/r),r} & =  (B^s_{p,q}(\R^d), (B^s_{p,q}(\R^d), B^{s_1}_{p,q}(\R^d))_{\theta,q_0})_{(0,b-1/r),r} \\
	 & = (B^s_{p,q}(\R^d), B^{s_1}_{p,q}(\R^d))_{(0,b-1/r),r} = T^b_r B^{s}_{p,q}(\R^d).
    \end{align*}
   This completes the proof of (i) in the case $A=B$ and $p \in [1,\infty]$. Furthermore, the case $p \in (0,1)$ can be done in a similar fashion but now replacing $L_p(\R^d)$ by the Hardy space $h_p(\R^d)$ so that the operator $\mathfrak{J}$ (cf. \eqref{Retraction}) is well defined and the retraction method can be also applied.

    The formula (i) with $A=F$ follows immediately from the case $A=B$ and the well-known embeddings
    \begin{equation}\label{RelationsBF}
    	B^{s_0}_{p,\min\{p,q_0\}}(\R^d) \hookrightarrow F^{s_0}_{p,q_0}(\R^d) \hookrightarrow B^{s_0}_{p,\max\{p,q_0\}}(\R^d)
    \end{equation}
    (see, e.g., \cite[(9), page 47]{Triebel83}).

    (ii): Assume $s_0 < s$ and $b < 0$.  According to \eqref{Kfunct}, for every $j \geq 0$,
    \begin{align*}
        K(2^{-j(s-s_0)}, x; \ell^{s_0}_{q}(L_p(\R^d)), \ell^s_q(L_p(\R^d))) & \\
        &  \hspace{-5cm}\asymp
        2^{-j(s-s_0)} \bigg(\sum_{\nu=0}^j [2^{\nu s} \|x_\nu\|_{L_p(\mathbb{R}^d)}]^q \bigg)^{1/q} + \bigg( \sum_{\nu = j+1}^\infty [2^{\nu s_0}
        \|x_\nu \|_{L_p(\mathbb{R}^d)}]^q \bigg)^{1/q}.
    \end{align*}
    Thus, by Hardy's inequality \eqref{H2}, we have
    \begin{align*}
        \|x\|_{(\ell^{s_0}_q(L_p(\mathbb{R}^d)),
        \ell^{s}_q(L_p(\mathbb{R}^d)))_{(1,b-1/r),r}} &  \\
        & \hspace{-4cm}\asymp \left(\sum_{j=0}^\infty [2^{j(s-s_0)}(1+j)^{b-1/r}  K(2^{-j (s-s_0)},x;\ell^{s_0}_q(L_p(\mathbb{R}^d)),
        \ell^{s}_q(L_p(\mathbb{R}^d)))]^r\right)^{1/r}  \\
        & \hspace{-4cm} \asymp \left(\sum_{j=0}^\infty (1+j)^{(b-1/r) r}   \bigg(\sum_{\nu=0}^j [2^{\nu s} \|x_\nu\|_{L_p(\mathbb{R}^d)}]^q \bigg)^{r/q} \right)^{1/r}\\
        & \hspace{-3cm} + \left(\sum_{j=0}^\infty 2^{j(s-s_0) r}(1+j)^{(b-1/r) r} \bigg( \sum_{\nu = j}^\infty [2^{\nu s_0}
        \|x_\nu \|_{L_p(\mathbb{R}^d)}]^q \bigg)^{r/q}  \right)^{1/r} \\
        & \hspace{-4cm} \asymp \left(\sum_{j=0}^\infty (1+j)^{(b-1/r) r}   \bigg(\sum_{\nu=0}^j [2^{\nu s} \|x_\nu\|_{L_p(\mathbb{R}^d)}]^q \bigg)^{r/q} \right)^{1/r}.
    \end{align*}
    Invoking now the retraction method we establish that
    $$
    	\|f\|_{ (B^{s_0}_{p,q}(\R^d), B^{s}_{p,q}(\R^d))_{(1,b-1/r),r}}
        \asymp \left(\sum_{j=0}^\infty (1+j)^{(b-1/r) r} \bigg( \sum_{\nu = 0}^j [2^{\nu s}
        \| (\varphi_\nu \widehat{f})^\vee \|_{L_p(\mathbb{R}^d)}]^q \bigg)^{r/q}  \right)^{1/r}
    $$
    and thus a similar argument as in \eqref{new4} now with $b < 0$ yields that
    $$
    	\|f\|_{ (B^{s_0}_{p,q}(\R^d), B^{s}_{p,q}(\R^d))_{(1,b-1/r),r}} \asymp \|f\|_{T^b_r B^{s}_{p, q}(\R^d)}
    $$
    or equivalently,
    \begin{equation}\label{210}
    	(B^{s_0}_{p,q}(\R^d), B^{s}_{p,q}(\R^d))_{(1,b-1/r),r} = T^b_r B^{s}_{p,q}(\mathbb{R}^d).
    \end{equation}
    This covers (ii) with $A=B, p \in [1,\infty]$ and $q_0=q \in (0,\infty]$.

    Assume $q_0 \in (0, \infty], \, q_0 \neq q$. Given $s_0 < s$ we take $s_1$ with $s_1 < s_0$ and define $\theta \in (0,1)$ such that $s_0= (1-\theta) s_1 + \theta s$. Since $B^{s_0}_{p,q_0}(\R^d) = (B^{s_1}_{p,q}(\R^d), B^s_{p,q}(\R^d))_{\theta,q_0}$ (cf. \eqref{gdgsadg}) we can apply \eqref{210} and Lemma \ref{LemmaReiteration}(ii) to get
    \begin{align*}
    	T^b_r B^{s}_{p,q}(\mathbb{R}^d) &= (B^{s_1}_{p,q}(\R^d), B^{s}_{p,q}(\R^d))_{(1,b-1/r),r}  \\
	& =  ( (B^{s_1}_{p,q}(\R^d), B^s_{p,q}(\R^d))_{\theta,q_0}, B^{s}_{p,q}(\R^d))_{(1,b-1/r),r} \\
	& = (B^{s_0}_{p,q_0}(\R^d), B^{s}_{p,q}(\R^d))_{(1,b-1/r),r},
    \end{align*}
    that is, (ii) holds with $A=B$ and $p \in [1,\infty]$. Furthermore, the modification of this method obtained by replacing $L_p(\R^d)$ by $h_p(\R^d)$ enables us to cover the full range $p \in (0,\infty]$. Finally, the case $A=F$ follows from the $B$-case via the embeddings \eqref{RelationsBF}.
\end{proof}

\begin{rem}\label{Remark4.4}
	Let us consider the limiting case $b = 0$ in \eqref{Interpolation1B}. Namely (cf. \eqref{new2}) we have
	\begin{equation}\label{4.4ururu}
	(B^s_{p,q}(\R^d), B^{s_0}_{p,q}(\R^d))_{(0,-1/r),r} = T^*_r B^s_{p, q}(\R^d),
	\end{equation}
	see \eqref{33}. In the special case $r=q$, we have (cf. \eqref{newqq})
	$$
	(B^s_{p,q}(\R^d), B^{s_0}_{p,q}(\R^d))_{(0,-1/q),q} = B^{s, 0, 1/q}_{p,q}(\R^d).
	$$	
\end{rem}

\begin{rem}
	The limiting cases $p=1, \infty$  in Theorem \ref{TheoremInterpolation} deserve special attention. Unlike in the case $p \in (1, \infty)$, the Sobolev spaces $W^k_1(\R^d)$ and $W^k_\infty(\R^d)$, $k \in \N_0$, do not fit into the scale of Triebel--Lizorkin spaces $F^s_{p, q}(\R^d)$. However, the interpolation formulae \eqref{Interpolation1B} and \eqref{Interpolation1Bnew} still are valid with $W^k_p(\R^d), \, p=1, \infty$. More precisely, let $p= 1, \infty$ and $q, r \in (0, \infty]$, then
	\begin{equation}\label{IntFormp=1}
		T^b_r B^{s}_{p, q}(\R^d) = (B^s_{p, q}(\R^d), W^k_p(\R^d))_{(0, b-1/r), r}, \qquad s < k, \qquad b > 0
	\end{equation}
	and
	\begin{equation}\label{IntFormp=infty}
		T^b_r B^{s}_{p, q}(\R^d) = (W^k_p(\R^d), B^s_{p, q}(\R^d))_{(1, b-1/r), r}, \qquad s > k, \qquad b < 0.
	\end{equation}
	To prove \eqref{IntFormp=1}, we make use of the well-known embeddings (cf. \cite[Section 2.5.7]{Triebel83})
	$$
	B^k_{p, 1}(\R^d)	\hookrightarrow W^k_p(\R^d) \hookrightarrow B^k_{p, \infty}(\R^d).
	$$
	By \eqref{Interpolation1B}, we derive
	\begin{align*}
	T^b_r B^{s}_{p, q}(\R^d) &= (B^s_{p, q}(\R^d), B^k_{p, 1}(\R^d))_{(0, b-1/r), r} \hookrightarrow (B^s_{p, q}(\R^d), W^k_p(\R^d))_{(0, b-1/r), r} \\
	& \hookrightarrow 	(B^s_{p, q}(\R^d), B^k_{p, \infty}(\R^d))_{(0, b-1/r), r} = T^b_r B^{s}_{p, q}(\R^d).
	\end{align*}
	The proof of \eqref{IntFormp=infty} can be obtained similarly but now invoking \eqref{Interpolation1Bnew}.
\end{rem}

\subsection{$T^b_r B^s_{p, q}(\R^d)$ spaces are closed under interpolation.}
Our next result proves that real interpolation between spaces $T^b_r B^{s}_{p,q}(\R^d)$ with $s, p$ and $q$ fixed produces a space of the same type.

\begin{thm}\label{TheoremInterpolation2}
	Let $0 < p, q, r, r_0, r_1 \leq \infty, -\infty < s, b_0, b_1 < \infty$ and $0 < \theta < 1$. Assume  further $b_0  \neq b_1$ and $b_0 b_1 > 0$ and let $b = (1-\theta) b_0 + \theta b_1$. Then
	\begin{equation*}
		(T^{b_0}_{r_0} B^{s}_{p,q}(\R^d), T^{b_1}_{r_1} B^{s}_{p,q}(\R^d))_{\theta,r} = T^{b}_r B^{s}_{p,q}(\R^d).
	\end{equation*}
\end{thm}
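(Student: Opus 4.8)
The plan is to avoid any fresh $K$-functional computation and instead bootstrap from the limiting-interpolation description of truncated Besov spaces obtained in Theorem \ref{Thm4.2}, combined with the reiteration identities of Lemma \ref{LemmaReiteration}(iii)--(iv). Since $b_0 b_1 > 0$, the two exponents share a sign, and by the symmetry relation \eqref{Sym} we may assume $b_0 < b_1$; then $b = (1-\theta)b_0 + \theta b_1$ lies strictly between them, so $b$ inherits the common sign of $b_0, b_1$ and in particular $b \neq 0$, which keeps us away from the delicate borderline case of Remark \ref{Remark4.4}.

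Consider first $0 < b_0 < b_1$. Fix any auxiliary $s_0 > s$ and any $q_0 \in (0, \infty]$ (e.g.\ $s_0 = s + 1$, $q_0 = q$). Applying Theorem \ref{Thm4.2}(i) to $b_0$ and to $b_1$, with the \emph{same} couple $(B^s_{p,q}(\R^d), B^{s_0}_{p,q_0}(\R^d))$, gives
$$
	T^{b_i}_{r_i} B^{s}_{p,q}(\R^d) = (B^s_{p,q}(\R^d), B^{s_0}_{p,q_0}(\R^d))_{(0,\, b_i - 1/r_i),\, r_i}, \qquad i = 0, 1.
$$
Because $(b_i - 1/r_i) + 1/r_i = b_i$, the chain $0 < b_0 < b_1$ is exactly the hypothesis of Lemma \ref{LemmaReiteration}(iii), which yields
$$
	(T^{b_0}_{r_0} B^{s}_{p,q}(\R^d), T^{b_1}_{r_1} B^{s}_{p,q}(\R^d))_{\theta, r} = (B^s_{p,q}(\R^d), B^{s_0}_{p,q_0}(\R^d))_{(0,\, \gamma),\, r}, \quad \gamma = (1-\theta)b_0 + \theta b_1 - \tfrac1r = b - \tfrac1r.
$$
Since $b > 0$, reading Theorem \ref{Thm4.2}(i) once more (now from right to left) identifies the right-hand side with $T^b_r B^s_{p,q}(\R^d)$, which is the assertion. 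The case $b_0 < b_1 < 0$ is symmetric: fix $s_0 < s$, use Theorem \ref{Thm4.2}(ii) to write each $T^{b_i}_{r_i}B^s_{p,q}(\R^d)$ as $(B^{s_0}_{p,q_0}(\R^d), B^s_{p,q}(\R^d))_{(1,\, b_i - 1/r_i),\, r_i}$, and reiterate via Lemma \ref{LemmaReiteration}(iv). The ordering convention there requires the first slot of the couple to carry the larger value of $b + 1/r$, so one first applies \eqref{Sym} to rewrite $(T^{b_0}_{r_0}, T^{b_1}_{r_1})_{\theta, r} = (T^{b_1}_{r_1}, T^{b_0}_{r_0})_{1-\theta, r}$ (recall $b_0 < b_1 < 0$) and then invokes the lemma; the resulting exponent $\theta b_1 + (1-\theta)b_0 - 1/r$ again equals $b - 1/r$, and Theorem \ref{Thm4.2}(ii), read from right to left (legitimate since $b < 0$), gives $T^b_r B^s_{p,q}(\R^d)$.

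Everything above is routine bookkeeping once Theorem \ref{Thm4.2} and Lemma \ref{LemmaReiteration} are in hand; the one substantive point — and the precise reason the statement is made under $b_0 b_1 > 0$ — is that the two truncated spaces must be represented as limiting interpolation spaces of the \emph{same} type ($(0,\cdot)$ in the positive case, $(1,\cdot)$ in the negative case) over the \emph{same} pair of endpoints, which is possible exactly when $b_0$ and $b_1$ share a sign. If they had opposite signs, one space would be a $(0,\cdot)$-space over $(B^s_{p,q}, B^{s_0}_{p,q_0})$ with $s_0 > s$ and the other a $(1,\cdot)$-space over $(B^{\widetilde s_0}_{p,q_0}, B^s_{p,q})$ with $\widetilde s_0 < s$, and no reiteration formula covers that configuration. (A self-contained alternative would realize $T^b_r B^s_{p,q}(\R^d)$ as a retract of $\ell_r(2^{jb}; Z_j)$ with $Z_j = \ell^s_q(\{2^j-1,\dots,2^{j+1}-2\}; L_p(\R^d))$ independent of $b$ and $r$, and then invoke the classical interpolation theorem for weighted $\ell$-valued sequences with distinct weight exponents; but the route through Theorem \ref{Thm4.2} is shorter.)
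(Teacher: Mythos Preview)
Your proof is correct and follows essentially the same approach as the paper: represent each $T^{b_i}_{r_i} B^s_{p,q}(\R^d)$ as a limiting interpolation space via Theorem \ref{Thm4.2}, then reiterate using Lemma \ref{LemmaReiteration}(iii)--(iv). Your treatment of the negative case is in fact slightly more careful than the paper's (which only says the argument is analogous), since you correctly note that the ordering hypothesis in Lemma \ref{LemmaReiteration}(iv) forces a preliminary application of \eqref{Sym} when $b_0 < b_1 < 0$.
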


\begin{proof}
	The assumption $b_0 b_1 > 0$ means that both $b_0$ and $b_1$ are either positive or negative. Assume first that $0 < b_0 < b_1$. Let $s_0 > s$. According to Theorem \ref{TheoremInterpolation}(i),
	$$
		T^{b_i}_{r_i} B^{s}_{p,q}(\mathbb{R}^d) = (B^s_{p,q}(\R^d), B^{s_0}_{p,q}(\R^d))_{(0,b_i-1/r_i),r_i}, \qquad i = 0,1.
	$$
	Let  $\gamma := b -1/r$. In light of Lemma \ref{LemmaReiteration}(iii), one has
	\begin{align*}
		(T^{b_0}_{r_0} B^{s}_{p,q}(\R^d), T^{b_1}_{r_1} B^{s}_{p,q}(\R^d))_{\theta,r} & = \\
		& \hspace{-3.5cm} ((B^s_{p,q}(\R^d), B^{s_0}_{p,q}(\R^d))_{(0,b_0-1/r_0),r_0}, (B^s_{p,q}(\R^d), B^{s_0}_{p,q}(\R^d))_{(0,b_1-1/r_1),r_1})_{\theta,r} \\
		& \hspace{-3.5cm} = (B^s_{p,q}(\R^d), B^{s_0}_{p,q}(\R^d))_{(0,\gamma),r} = T^b_r B^{s}_{p,q}(\R^d),
	\end{align*}
	where the last step follows again from Theorem \ref{TheoremInterpolation}(i).
	
	The case $b_0  > b_1 > 0$ can be reduced to the previous case. More precisely, basic properties of the real interpolation method (cf. \eqref{Sym}) yield that
	$$
		(T^{b_0}_{r_0} B^{s}_{p,q}(\R^d), T^{b_1}_{r_1} B^{s}_{p,q}(\R^d))_{\theta,r} = (T^{b_1}_{r_1}B^{s}_{p,q}(\R^d), T^{b_0}_{r_0}B^{s}_{p,q}(\R^d))_{1-\theta,r}  =  T^b_r B^{s}_{p,q}(\R^d).
	$$
	This completes the proof under $b_0, b_1 > 0$. The situation $b_0, b_1 < 0$ can be settled following similar ideas but now employing Theorem \ref{TheoremInterpolation}(ii) and Lemma \ref{LemmaReiteration}(iv).
	
\end{proof}

In order to show that  $T^b_r F^s_{p, q}(\R^d)$ spaces are closed under interpolation, we will need the results from the next section.

\subsection{$T^b_r F^{s}_{p, q}(\R^d)$ spaces via interpolation}
The goal of this section is to show that the scale $\{T^b_r F^{s}_{p,q}(\mathbb{R}^d)\}$ can be generated as limiting interpolation involving the classical Triebel--Lizorkin spaces $F^s_{p, q}(\R^d)$. To be more precise, we establish the following

\begin{thm}[Characterization of $T^b_r F^{s}_{p, q}(\R^d)$ via limiting interpolation]\label{TheoremInterpolationF}
	 Let $A \in \{B, F\}$. Let $p \in (1, \infty), q, q_0, r \in (0, \infty], s, s_0 \in \R,  s \neq s_0$, and $b \in \R \backslash \{0\}$.
	 \begin{enumerate}[\upshape(i)]
	 \item If $s_0 > s$ and $b > 0$ then
	 \begin{equation}\label{TheoremInterpolationFDisplay1}
	 	T^b_r F^{s}_{p,q}(\mathbb{R}^d) = (F^s_{p,q}(\R^d), A^{s_0}_{p,q_0}(\R^d))_{(0,b-1/r),r}.
	 \end{equation}
	 \item If $s_0 < s$ and $b < 0$ then
	 \begin{equation}\label{TheoremInterpolationFDisplay1new}
	 T^b_r F^{s}_{p,q}(\mathbb{R}^d) = (A^{s_0}_{p,q_0}(\R^d), F^s_{p,q}(\R^d))_{(1,b-1/r),r}.
	 \end{equation}
	 \end{enumerate}
\end{thm}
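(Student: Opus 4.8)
The plan is to follow the scheme of the proof of Theorem~\ref{TheoremInterpolation}, with the scalar sequence spaces $\ell^s_q(L_p(\R^d))$ there replaced by the Bochner spaces $L_p(\R^d;\ell^s_q)$, which are the retracts adapted to Triebel--Lizorkin spaces. The first step is a reduction to a diagonal \emph{base case}. Using the standard embeddings $A^{\sigma_2}_{p,u}(\R^d)\hookrightarrow A^{\sigma_1}_{p,v}(\R^d)$, valid for $\sigma_2>\sigma_1$ and arbitrary secondary indices (together with \eqref{EmbBFnwnqnq}), one sandwiches the endpoint $A^{s_0}_{p,q_0}(\R^d)$ between two Triebel--Lizorkin spaces sharing the microscopic index $q$ of the other endpoint: in case~(i),
\begin{equation*}
	F^{s_2}_{p,q}(\R^d)\hookrightarrow A^{s_0}_{p,q_0}(\R^d)\hookrightarrow F^{s_1}_{p,q}(\R^d),\qquad s<s_1<s_0<s_2,
\end{equation*}
and analogously in case~(ii) with $s_1<s_0<s_2<s$. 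Since both limiting functors are isotone in the endpoint being varied (for ordered couples), it then suffices to establish
\begin{equation}\label{PlanBase}
	(F^s_{p,q}(\R^d),F^{s_0}_{p,q}(\R^d))_{(0,b-1/r),r}=T^b_r F^{s}_{p,q}(\R^d)\quad(s_0>s,\ b>0)
\end{equation}
together with the symmetric identity $(F^{s_0}_{p,q}(\R^d),F^{s}_{p,q}(\R^d))_{(1,b-1/r),r}=T^b_r F^{s}_{p,q}(\R^d)$ for $s_0<s$, $b<0$ --- and these are needed for \emph{every} admissible $s_0$, which is exactly what the sandwiching consumes.

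To prove~\eqref{PlanBase} I would invoke the retraction method precisely as in Theorem~\ref{TheoremInterpolation}: $F^{\sigma}_{p,q}(\R^d)$ is a retract of $L_p(\R^d;\ell^{\sigma}_q)$, with co-retraction $\mathfrak{J}$ from~\eqref{Retraction} and retraction $\mathfrak{P}(g_\nu)_\nu=\sum_\nu(\widetilde{\varphi}_\nu\widehat{g_\nu})^\vee$ built from a fattened resolution of unity (both independent of $\sigma$), so that $\|f\|_{(F^s_{p,q}(\R^d),F^{s_0}_{p,q}(\R^d))_{(0,b-1/r),r}}\asymp\|\mathfrak{J}f\|_{(L_p(\R^d;\ell^s_q),L_p(\R^d;\ell^{s_0}_q))_{(0,b-1/r),r}}$. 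The genuinely new point, relative to the Besov case where $\ell^s_q(L_p)$ splits as a plain direct sum, is the $K$-functional of this vector-valued couple. By the classical identity $K(t,g;L_p(X_0),L_p(X_1))\asymp\big\|\,x\mapsto K(t,g(x);X_0,X_1)\big\|_{L_p(\R^d)}$, valid for $p\in(1,\infty)$ and quasi-Banach $X_0,X_1$ (via measurable near-optimal selection of decompositions), combined with the pointwise ($L_p$-free) version of~\eqref{Kfunct}, one obtains
\begin{equation*}
	K(t,\mathfrak{J}f;L_p(\R^d;\ell^s_q),L_p(\R^d;\ell^{s_0}_q))\asymp\Big\|\Big(\sum_{\nu=0}^\infty\big[\min(2^{\nu s},t\,2^{\nu s_0})\,|(\varphi_\nu\widehat f)^\vee|\big]^q\Big)^{1/q}\Big\|_{L_p(\R^d)}.
\end{equation*}

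Next I would discretise the integral in~\eqref{DefLimInterpolation} at $t=2^{-j(s_0-s)}$, $j\ge0$, and split the sum over $\nu$ at $\nu=j$ (the minimum equals $t2^{\nu s_0}$ for $\nu\le j$ and $2^{\nu s}$ for $\nu\ge j$); the limiting interpolation quasi-norm then becomes $\asymp I_F+II_F$, with $II_F$ the high-frequency tail term, in complete analogy with~\eqref{22}. The estimate $I_F\lesssim II_F$ mirrors~\eqref{23}: writing $2^{-j(s_0-s)}\big(\sum_{\nu=0}^j 2^{\nu s_0 q}|(\varphi_\nu\widehat f)^\vee|^q\big)^{1/q}=\big(\sum_{\nu=0}^j 2^{-(j-\nu)(s_0-s)q}\,[2^{\nu s}|(\varphi_\nu\widehat f)^\vee|]^q\big)^{1/q}$, I would push the $L_p$-norm past this finite low-frequency sum --- by Minkowski's inequality in $L_{p/q}$ if $p\ge q$, and by $\ell_p\hookrightarrow\ell_q$ and Fubini otherwise (the case $q=\infty$ being covered by the second option) --- reducing matters to the geometrically weighted discrete Hardy inequality~\eqref{H1}, which yields $I_F\lesssim\big(\sum_j[(1+j)^{b-1/r}\|2^{js}(\varphi_j\widehat f)^\vee\|_{L_p(\R^d)}]^r\big)^{1/r}\le II_F$.

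Finally, re-discretising $j\mapsto 2^j$ (legitimate since the tail $\|(\sum_{\nu\ge j}2^{\nu sq}|(\varphi_\nu\widehat f)^\vee|^q)^{1/q}\|_{L_p}$ is nonincreasing and $b>0$ makes the relevant weight exponent $br$ positive) identifies $II_F$ with the equivalent quasi-norm $\|f\|^*_{T^b_r F^{s}_{p,q}(\R^d)}$ of Proposition~\ref{PropEquiQN}(i), hence with $\|f\|_{T^b_r F^{s}_{p,q}(\R^d)}$; this last step is the exact analogue of~\eqref{new3}--\eqref{new4}, and it completes the proof of~\eqref{PlanBase}. Case~(ii) is entirely parallel: one uses the $((1,b-1/r),r)$-method, Hardy's inequality~\eqref{H2} in place of~\eqref{H1}, and Proposition~\ref{PropEquiQN}(ii), exactly as~\eqref{210} was derived. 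I expect the main obstacle to lie in the middle two steps: pinning down the $K$-functional of the truly vector-valued couple $(L_p(\R^d;\ell^s_q),L_p(\R^d;\ell^{s_0}_q))$, and then carrying out the $L_p$-norm interchange in the low-frequency Hardy step correctly across both regimes $p\ge q$ and $p<q$; everything else is a faithful transcription of the proof of Theorem~\ref{TheoremInterpolation}.
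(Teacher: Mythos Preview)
Your proposal is correct and follows essentially the same route as the paper: retraction of $F^\sigma_{p,q}(\R^d)$ to $L_p(\R^d;\ell^\sigma_q)$, the Bochner $K$-functional formula $K(t,f;L_p(X_0),L_p(X_1))\asymp\|K(t,f(\cdot);X_0,X_1)\|_{L_p}$ (the paper isolates this as a separate lemma), discretisation into $I+II$, the Hardy step with the same $p\gtrless q$ case split for the $L_p$-$\ell_q$ interchange, and identification with the truncated quasi-norm. The only cosmetic difference is in the reduction from general $(A,q_0)$ to the diagonal base case: you sandwich via embeddings and isotonicity, whereas the paper invokes the reiteration formula of Lemma~\ref{LemmaReiteration}(i) combined with $(F^s_{p,q},F^{s_1}_{p,q})_{\theta,q_0}=B^{s_0}_{p,q_0}$ to handle $A=B$ first and then falls back on~\eqref{RelationsBF} for $A=F$; both work equally well.
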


The proof of Theorem \ref{TheoremInterpolationF} will rely on two interpolation lemmas, which are of independent interest.

 \begin{lem}
	Let $(A_0, A_1)$ be a quasi-Banach pair and let $0 < p < \infty$. Then
	 \begin{equation}\label{ProofIntF1}
 	K(t, f; L_p(\R^d; A_0), L_p(\R^d; A_1)) \asymp \bigg(\int_{\R^d} K(t, f(x); A_0, A_1)^p \, dx \bigg)^{1/p}
 \end{equation}
 for $t > 0$ and $f \in L_p(\R^d; A_0) + L_p(\R^d; A_1)$.
\end{lem}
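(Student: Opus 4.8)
The plan is to prove the two estimates separately; only one direction is delicate, and it will be handled by reduction to finitely-valued functions.

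\textbf{The easy inequality.} To prove $\left(\int_{\R^d} K(t,f(x); A_0, A_1)^p\, dx\right)^{1/p} \lesssim K(t, f; L_p(\R^d;A_0), L_p(\R^d;A_1))$, I would start from an arbitrary admissible decomposition $f = f_0 + f_1$ with $f_j \in L_p(\R^d; A_j)$. For a.e. $x$ this gives a competitor $f(x) = f_0(x) + f_1(x)$ for $K(t, f(x); A_0, A_1)$, hence $K(t,f(x); A_0, A_1) \le \|f_0(x)\|_{A_0} + t\|f_1(x)\|_{A_1}$; raising to the $p$-th power, integrating in $x$, and using the (quasi-)triangle inequality in $L_p(\R^d)$ yields $\left(\int K(t,f(x); A_0, A_1)^p\, dx\right)^{1/p} \lesssim \|f_0\|_{L_p(\R^d;A_0)} + t\|f_1\|_{L_p(\R^d;A_1)}$, and taking the infimum over decompositions finishes this direction. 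One should record at the outset that $x \mapsto K(t, f(x); A_0, A_1)$ is measurable, which follows since $K(t, \cdot; A_0, A_1)$ is a continuous quasi-norm on $A_0 + A_1$ and $f$ is strongly measurable as an $(A_0+A_1)$-valued function.

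\textbf{The hard inequality, Step 1 (finitely-valued $f$).} Suppose $f = \sum_{k=1}^N a_k\, \chi_{E_k}$ with the $E_k$ pairwise disjoint of finite measure and $a_k \in A_0 + A_1$. For each $k$ choose a near-optimal decomposition $a_k = a_k^0 + a_k^1$ with $\|a_k^0\|_{A_0} + t\|a_k^1\|_{A_1} \le 2 K(t, a_k; A_0, A_1)$, and set $f_j := \sum_k a_k^j\, \chi_{E_k}$. Disjointness gives $\|f_0\|_{L_p(\R^d;A_0)} = \left(\sum_k |E_k|\, \|a_k^0\|_{A_0}^p\right)^{1/p} \le 2\left(\sum_k |E_k|\, K(t,a_k; A_0, A_1)^p\right)^{1/p} = 2\left(\int K(t, f(x); A_0, A_1)^p\, dx\right)^{1/p}$, and likewise for $t\|f_1\|_{L_p(\R^d;A_1)}$, so $K(t, f; L_p(\R^d;A_0), L_p(\R^d;A_1)) \lesssim \left(\int K(t, f(x); A_0, A_1)^p\, dx\right)^{1/p}$.

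\textbf{The hard inequality, Step 2 (general $f$).} Write $f = g + h$ with $g \in L_p(\R^d;A_0)$ and $h \in L_p(\R^d;A_1)$; by strong measurability choose $A_0$-valued simple functions $g_n \to g$ in $L_p(\R^d;A_0)$ and $A_1$-valued simple functions $h_n \to h$ in $L_p(\R^d;A_1)$, all supported on sets of finite measure. After refining the two partitions into a common one, $f_n := g_n + h_n$ has the finitely-valued form of Step 1, and $K(t, f - f_n; L_p(\R^d;A_0), L_p(\R^d;A_1)) \le \|g - g_n\|_{L_p(\R^d;A_0)} + t\|h - h_n\|_{L_p(\R^d;A_1)} \to 0$. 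Combining the quasi-triangle inequality for $K(t, \cdot; L_p(\R^d;A_0), L_p(\R^d;A_1))$ with Step 1 applied to $f_n$, and integrating the pointwise bound $K(t, f_n(x); A_0, A_1) \lesssim K(t, f(x); A_0, A_1) + \|g_n(x) - g(x)\|_{A_0} + t\|h_n(x) - h(x)\|_{A_1}$, I would let $n \to \infty$ to obtain $K(t, f; L_p(\R^d;A_0), L_p(\R^d;A_1)) \lesssim \left(\int K(t,f(x); A_0, A_1)^p\, dx\right)^{1/p}$. The main obstacle is precisely this step: without separability of $A_0$ and $A_1$ one cannot in general perform a measurable near-optimal selection $x \mapsto (f_0(x), f_1(x))$ of a decomposition of $f(x)$, so the whole point is to bypass measurable selection by reducing to finitely-valued functions and controlling \emph{both} sides of the equivalence simultaneously under the approximation; keeping track of the quasi-Banach triangle constants in the limit is the only remaining technical care.
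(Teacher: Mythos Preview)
Your proof is correct and follows essentially the same route as the paper: both reduce to finitely-valued (simple) functions and, for each value, pick a near-optimal decomposition, which automatically produces measurable competitors $f_0, f_1$. The paper dispatches the passage from simple to general $f$ with a bare ``without loss of generality'' remark, whereas you actually carry out the approximation argument (your Step~2); so your write-up is in fact more complete on this point.
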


 \begin{proof}
 	 This result was already stated without proof in \cite[p. 218]{Persson}. For the sake of completeness, next we provide details of the proof.
	
	 Without loss of generality, we may assume that $f(x) = \sum_{j=1}^N a_j \chi_{\Omega_j}$ where $a_j \in A_0 \cap A_1$, the $\Omega_j$'s are measurable sets in $\R^d$ of finite measure with $\Omega_i \cap \Omega_j = \emptyset$ if $i \neq j$ and $\chi_{\Omega_j}$ are the related characteristic functions.
This assumption allows us to avoid unnecessary  complications related to  measurability. Then
 \begin{align*}
 	K(t, f; L_p(\R^d; A_0), L_p(\R^d; A_1))^p &\asymp \inf_{f = f_0 + f_1} (\|f_0\|_{L_p(\R^d; A_0)}^p + t^p \|f_1\|^p_{L_p(\R^d; A_1)}) \\
	& =  \inf_{f = f_0 + f_1}  \int_{\R^d} (\|f_0(x)\|_{A_0}^p + t^p \|f_1(x)\|_{A_1}^p) \, dx.
 \end{align*}
 Assume $f = f_0 + f_1$ with $f_i \in L_p(\R^d; A_i), \, i= 0, 1$. In particular, $f(x) = f_0(x) + f_1(x)$ which implies
\begin{align*}
	K(t, f(x); A_0, A_1)^p &\asymp \inf_{f(x) = a_0 + a_1} (\|a_0\|_{A_0}^p + t^p \|a_1\|_{A_1}^p) \\
	& \leq \|f_0(x)\|_{A_0}^p + t^p \|f_1(x)\|_{A_1}^p
\end{align*}
and integrating this inequality
$$
	\int_{\R^d} K(t, f(x); A_0, A_1)^p \, dx \lesssim \int_{\R^d} (\|f_0(x)\|_{A_0}^p + t^p \|f_1(x)\|_{A_1}^p) \, dx.
$$
Taking now the infimum over all possible decompositions $f=f_0 + f_1$, we arrive at
$$
\int_{\R^d} K(t, f(x); A_0, A_1)^p \, dx \lesssim K(t, f; L_p(\R^d; A_0), L_p(\R^d; A_1))^p.
$$

Conversely, given $x \in \R^d$, consider any decomposition $f(x) = f_0(x) + f_1(x)$ with $f_i(x) \in A_i, \, i=0, 1$, such that
$$
	\|f_0(x)\|_{A_0}^p + t^p \|f_1(x)\|_{A_1}^p \lesssim K(t, f(x); A_0, A_1)^p.
$$
 Integrating over all $x \in \R^d$, we derive
 $$
 	\int_{\R^d} (\|f_0(x)\|_{A_0}^p + t^p \|f_1(x)\|_{A_1}^p) \, dx \lesssim \int_{\R^d} K(t, f(x); A_0, A_1)^p \, dx.
 $$
 In particular,
 $$
 	K(t, f; L_p(\R^d; A_0), L_p(\R^d; A_1))^p \lesssim \int_{\R^d} K(t, f(x); A_0, A_1)^p \, dx.
 $$
 \end{proof}

\begin{lem}\label{LemmaNew23}
	Let $0 < p < \infty$ and $0 < q, r \leq \infty$.
	\begin{enumerate}[\upshape(i)]
	\item Assume $-\infty < s < s_0 < \infty$ and $b > 0$. Then
	    $$
    	\|f\|_{(L_p(\R^d; \ell^s_q), L_p(\R^d; \ell^{s_0}_{q}))_{(0, b-1/r), r}} \asymp \bigg(\sum_{j=0}^\infty 2^{j b r} \bigg(\int_{\R^d} \bigg( \sum_{\nu=2^{j}}^{2^{j+1}-1} [2^{\nu s} |f_\nu (x)|]^q \bigg)^{p/q} \, dx \bigg)^{r/p}   \bigg)^{1/r}
    $$
    (with the corresponding modifications if $q=\infty$ and/or $r=\infty$).
    \item Assume $-\infty < s_0 < s  < \infty$ and $b < 0$. Then
	    $$
    	\|f\|_{(L_p(\R^d; \ell^{s_0}_q), L_p(\R^d; \ell^{s}_{q}))_{(1, b-1/r), r}} \asymp \bigg(\sum_{j=0}^\infty 2^{j b r} \bigg(\int_{\R^d} \bigg( \sum_{\nu=2^{j}}^{2^{j+1}-1} [2^{\nu s} |f_\nu (x)|]^q \bigg)^{p/q} \, dx \bigg)^{r/p}   \bigg)^{1/r}
    $$
    (with the corresponding modifications if $q=\infty$ and/or $r=\infty$).
    \end{enumerate}
\end{lem}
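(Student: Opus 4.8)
The plan is to prove (i) only; part (ii) then follows by the symmetric argument, interchanging the two endpoints of the couple, replacing the pair $(\theta,b)=(0,b)$ with $b>0$ by $(\theta,b)=(1,b)$ with $b<0$, and using Hardy's inequality \eqref{H2} in place of \eqref{H1}, exactly in the way the proof of Theorem \ref{TheoremInterpolation}(ii) mirrors that of Theorem \ref{TheoremInterpolation}(i). Note that this lemma is, for the Triebel--Lizorkin case, the analogue of the $K$-functional computation \eqref{Kfunct}--\eqref{24}: it is the model identity on the sequence spaces onto which $F^s_{p,q}(\R^d)$ is a retract. Put $\delta:=s_0-s>0$. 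First I would apply the preceding lemma, formula \eqref{ProofIntF1}, with $A_0=\ell^s_q$ and $A_1=\ell^{s_0}_q$, obtaining
\[
K\big(t,f;L_p(\R^d;\ell^s_q),L_p(\R^d;\ell^{s_0}_q)\big)\asymp\bigg(\int_{\R^d}K(t,f(x);\ell^s_q,\ell^{s_0}_q)^p\,dx\bigg)^{1/p}.
\]
For the scalar couple $(\ell^s_q,\ell^{s_0}_q)$ the $K$-functional is the scalar-valued version of \eqref{Kfunct}, so at the nodes $t=2^{-j\delta}$ it equals, pointwise in $x$ and just as in \eqref{412},
\[
K(2^{-j\delta},f(x);\ell^s_q,\ell^{s_0}_q)\asymp 2^{-j\delta}\bigg(\sum_{\nu=0}^{j}[2^{\nu s_0}|f_\nu(x)|]^q\bigg)^{1/q}+\bigg(\sum_{\nu=j+1}^{\infty}[2^{\nu s}|f_\nu(x)|]^q\bigg)^{1/q}.
\]

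Since $s_0>s$ the couple is ordered ($L_p(\R^d;\ell^{s_0}_q)\hookrightarrow L_p(\R^d;\ell^s_q)$), hence by the monotonicity of $t\mapsto K(t,f)$ and $t\mapsto t^{-1}K(t,f)$ the limiting quasi-norm with $\theta=0$ discretizes along $t=2^{-j\delta}$, exactly as in the passage to \eqref{22}, i.e.\ $\|f\|_{(L_p(\R^d;\ell^s_q),L_p(\R^d;\ell^{s_0}_q))_{(0,b-1/r),r}}\asymp\big(\sum_{j\ge0}[(1+j)^{b-1/r}K(2^{-j\delta},f)]^r\big)^{1/r}$. Combining the two displays above and splitting the sum gives $\|f\|\asymp I+II$, where $I$ carries the partial sums $\big\|2^{-j\delta}\big(\sum_{\nu\le j}[2^{\nu s_0}|f_\nu|]^q\big)^{1/q}\big\|_{L_p}$ and $II$ carries the tails $\big\|\big(\sum_{\nu\ge j}[2^{\nu s}|f_\nu|]^q\big)^{1/q}\big\|_{L_p}$ (the harmless shift from $\nu\ge j+1$ to $\nu\ge j$ is absorbed into the equivalence).

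The heart of the matter, parallel to \eqref{23}, is $I\lesssim II$. I would write $2^{\nu s_0}=2^{\nu s}2^{\nu\delta}$ and bound, for fixed $j$, the inner $L_p$-norm by a scalar sum — using Minkowski's inequality in $L_{p/q}$ if $q\le p$, and $\ell^{\min\{1,p\}}\hookrightarrow\ell^q$ if $q>p$ —
\[
\bigg\|2^{-j\delta}\bigg(\sum_{\nu=0}^{j}[2^{\nu s_0}|f_\nu|]^q\bigg)^{1/q}\bigg\|_{L_p}\lesssim 2^{-j\delta}\bigg(\sum_{\nu=0}^{j}[2^{\nu\delta}\,2^{\nu s}\|f_\nu\|_{L_p}]^{\rho}\bigg)^{1/\rho},\qquad\rho:=\min\{1,p,q\}.
\]
This reduces $I$ to a scalar weighted sum in $j$, and since $\delta>0$ the discrete Hardy inequality \eqref{H1} (in the two-parameter form, inner exponent $\rho$ and outer exponent $r$) gives $I\lesssim\big(\sum_{j\ge0}[(1+j)^{b-1/r}2^{js}\|f_j\|_{L_p}]^r\big)^{1/r}$; since $2^{js}|f_j(x)|\le\big(\sum_{\nu\ge j}[2^{\nu s}|f_\nu(x)|]^q\big)^{1/q}$, this last quantity is $\le II$. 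Hence $\|f\|\asymp II$.

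It remains to recast $II$. The sequence $j\mapsto\big\|\big(\sum_{\nu\ge j}[2^{\nu s}|f_\nu|]^q\big)^{1/q}\big\|_{L_p}$ is nonincreasing and $b>0$, so the change of variables $j\leftrightarrow 2^k$ yields $II\asymp\big(\sum_{k\ge0}2^{kbr}\big\|\big(\sum_{\nu\ge 2^k}[2^{\nu s}|f_\nu|]^q\big)^{1/q}\big\|_{L_p}^r\big)^{1/r}$; then, splitting $\sum_{\nu\ge 2^k}=\sum_{l\ge k}\sum_{\nu=2^l}^{2^{l+1}-1}$, using the $\min\{1,p/q\}$-triangle inequality in $L_{p/q}$ and Hardy's inequality \eqref{H2} (again valid because $b>0$), together with the trivial reverse estimate obtained by keeping only the block $l=k$, one obtains
\[
II\asymp\bigg(\sum_{k\ge0}2^{kbr}\bigg(\int_{\R^d}\bigg(\sum_{\nu=2^k}^{2^{k+1}-1}[2^{\nu s}|f_\nu(x)|]^q\bigg)^{p/q}dx\bigg)^{r/p}\bigg)^{1/r},
\]
which is the right-hand side in (i). I expect the only real difficulty to be bookkeeping: in the two places where one collapses a mixed $L_p(\ell^q)$ expression onto a scalar Hardy inequality (the step $I\lesssim II$ and the final dyadic regrouping) the exponents $\rho=\min\{1,p,q\}$, respectively $\min\{1,p/q\}$, and the order in which Minkowski's and Hardy's inequalities are applied must be handled with care, precisely as in the model computation \eqref{23}--\eqref{new4}.
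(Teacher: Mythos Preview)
Your proposal is correct and follows essentially the same route as the paper's proof: both start from \eqref{ProofIntF1} and the scalar $K$-functional \eqref{ProofIntF2}, discretize along $t=2^{-j(s_0-s)}$, split into the partial-sum term $I$ and the tail term $II$, show $I\lesssim II$ via Hardy's inequality \eqref{H1}, and then regroup $II$ into dyadic blocks via monotonicity and \eqref{H2}. The only cosmetic difference is that where the paper handles the passage from the mixed $L_p(\ell_q)$ norm to a scalar sum by an explicit case split ($q\ge p$ via $\ell_p\hookrightarrow\ell_q$ pointwise plus Fubini, $q<p$ via Minkowski), you package both cases into a single exponent $\rho=\min\{1,p,q\}$ (respectively $\min\{1,p/q\}$ for the block regrouping); this is equivalent.
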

\begin{proof}
   (i): Specializing \eqref{ProofIntF1} with $(A_0, A_1) = (\ell^s_q, \ell^{s_0}_q)$ and using (cf. \eqref{Kfunct})
     \begin{equation}\label{ProofIntF2}
        K(t,x; \ell^s_q,
        \ell^{s_0}_q) \asymp \bigg(
        \sum_{\nu=0}^\infty [\min (2^{\nu s}, t 2^{\nu s_0}) |x_\nu|]^q \bigg)^{1/q},
    \end{equation}
     we derive
    \begin{equation}\label{ProofIntF2*}
    	K(t, f; L_p(\R^d; \ell^s_q), L_p(\R^d; \ell^{s_0}_{q})) \asymp  \bigg(\int_{\R^d} \bigg( \sum_{\nu=0}^\infty [\min (2^{\nu s}, t 2^{\nu s_0}) |f_\nu (x)|]^q \bigg)^{p/q} \, dx \bigg)^{1/p}.
    \end{equation}
    Inserting this formula into the definition of limiting interpolation space \eqref{DefLimInterpolation},
    \begin{align*}
    	\|f\|_{(L_p(\R^d; \ell^s_q), L_p(\R^d; \ell^{s_0}_{q}))_{(0, b-1/r), r}} &\asymp \\
	& \hspace{-5cm} \bigg(\sum_{j=0}^\infty [(1+j)^{b-1/r} K(2^{-j(s_0-s)},f; L_p(\R^d; \ell^s_q), L_p(\R^d; \ell^{s_0}_{q})]^r  \bigg)^{1/r} \\
	& \hspace{-5cm} \asymp \bigg(\sum_{j=0}^\infty (1+j)^{(b-1/r) r} \bigg(\int_{\R^d} \bigg( \sum_{\nu=0}^\infty [2^{\nu s} \min (1,  2^{(\nu-j) (s_0-s)}) |f_\nu (x)|]^q \bigg)^{p/q} \, dx \bigg)^{r/p}  \bigg)^{1/r} \\
	& \hspace{-5cm} \asymp \bigg(\sum_{j=0}^\infty 2^{j (s-s_0) r} (1+j)^{(b-1/r) r} \bigg(\int_{\R^d} \bigg( \sum_{\nu=0}^j [2^{\nu s_0} |f_\nu (x)|]^q \bigg)^{p/q} \, dx \bigg)^{r/p}  \bigg)^{1/r} \\
	& \hspace{-4.5cm} +  \bigg(\sum_{j=0}^\infty (1+j)^{(b-1/r) r} \bigg(\int_{\R^d} \bigg( \sum_{\nu=j}^\infty [2^{\nu s} |f_\nu (x)|]^q \bigg)^{p/q} \, dx \bigg)^{r/p}   \bigg)^{1/r} \\
	& \hspace{-5cm} =: I + II.
    \end{align*}

    Next we prove
    \begin{equation}\label{ProofIntF3}
    	I \lesssim II,
    \end{equation}
    which immediately leads to
    \begin{equation}\label{ProofIntF4}
    \|f\|_{(L_p(\R^d; \ell^s_q), L_p(\R^d; \ell^{s_0}_{q}))_{(0, b-1/r), r}} \asymp II.
    \end{equation}
    To proceed with \eqref{ProofIntF3}, we consider two cases. Firstly, we assume $q \geq p$. Then, for each $x \in \R^d$ and $j \in \N_0$,
    $$
    	\bigg( \sum_{\nu=0}^j [2^{\nu s_0} |f_\nu (x)|]^q \bigg)^{1/q} \leq \bigg( \sum_{\nu=0}^j [2^{\nu s_0} |f_\nu (x)|]^p \bigg)^{1/p}
    $$
    which yields, by Hardy's inequality \eqref{H1} (note that $s_0 > s$),
    \begin{align*}
    	I &\leq  \bigg(\sum_{j=0}^\infty 2^{j (s-s_0) r} (1+j)^{(b-1/r) r} \bigg( \sum_{\nu=0}^j 2^{\nu s_0 p} \|f_\nu\|_{L_p(\R^d)}^p \bigg)^{r/p}  \bigg)^{1/r} \\
	& \lesssim  \bigg(\sum_{j=0}^\infty 2^{j s r} (1+j)^{(b-1/r) r} \|f_j\|_{L_p(\R^d)}^r   \bigg)^{1/r} \leq II.
    \end{align*}
    Secondly, if $q < p$ we can apply Minkowski's inequality to get
    $$
    	 \bigg(\int_{\R^d} \bigg( \sum_{\nu=0}^j [2^{\nu s_0} |f_\nu (x)|]^q \bigg)^{p/q} \, dx \bigg)^{1/p} \leq \bigg(\sum_{\nu=0}^j 2^{\nu s_0 q} \|f_\nu\|_{L_p(\R^d)}^{q} \bigg)^{1/q}.
    $$
    Accordingly, by Hardy's inequality \eqref{H1},
    \begin{align*}
    	I &\leq  \bigg(\sum_{j=0}^\infty 2^{j (s-s_0) r} (1+j)^{(b-1/r) r}  \bigg(\sum_{\nu=0}^j 2^{\nu s_0 q} \|f_\nu\|_{L_p(\R^d)}^{q} \bigg)^{r/q}  \bigg)^{1/r} \\
	& \lesssim \bigg(\sum_{j=0}^\infty 2^{j s r} (1+j)^{(b-1/r) r}  \|f_j\|^r_{L_p(\R^d)} \bigg)^{1/r} \leq II.
    \end{align*}
    This completes the proof of \eqref{ProofIntF3}.

    Furthermore, we make the following claim
    \begin{equation}\label{ProofIntF5}
    	II \asymp  \bigg(\sum_{j=0}^\infty 2^{j b r} \bigg(\int_{\R^d} \bigg( \sum_{\nu=2^{j}}^{2^{j+1}-1} [2^{\nu s} |f_\nu (x)|]^q \bigg)^{p/q} \, dx \bigg)^{r/p}   \bigg)^{1/r}.
    \end{equation}
    Indeed, by basic monotonicity properties,
    \begin{equation*}
    	II \asymp   \bigg(\sum_{j=0}^\infty 2^{j b r} \bigg(\int_{\R^d} \bigg( \sum_{\nu=2^{j}}^\infty [2^{\nu s} |f_\nu (x)|]^q \bigg)^{p/q} \, dx \bigg)^{r/p}   \bigg)^{1/r}
    \end{equation*}
    and thus the estimate $\gtrsim$ in \eqref{ProofIntF5} is clear. Conversely, one can rewrite the previous estimate as
    $$
    	II \asymp \bigg(\sum_{j=0}^\infty 2^{j b r} \bigg(\int_{\R^d} \bigg(\sum_{l=j}^\infty F_l(x)^q  \bigg)^{p/q} \, dx \bigg)^{r/p}   \bigg)^{1/r}
    $$
    where
    $$
    	F_l (x) :=  \bigg(\sum_{\nu=2^{l}}^{2^{l+1}-1} [2^{\nu s} |f_\nu (x)|]^q  \bigg)^{1/q}, \qquad l \in \N_0, \qquad x \in \R^d.
    $$
    If $q \geq p$ then the embedding $\ell_p \hookrightarrow \ell_q$ and Hardy's inequality \eqref{H2} (note that $b > 0$) imply
    \begin{align*}
    	II &\lesssim \bigg(\sum_{j=0}^\infty 2^{j b r} \bigg( \sum_{l=j}^\infty  \|F_l\|_{L_p(\R^d)}^p \bigg)^{r/p}   \bigg)^{1/r} \\
	& \lesssim \bigg(\sum_{j=0}^\infty 2^{j b r} \|F_l\|_{L_p(\R^d)}^r \bigg)^{1/r}
    \end{align*}
    which gives \eqref{ProofIntF5}.
    On the other hand, if $q < p$ we can apply Minkowski's inequality together with Hardy's inequality \eqref{H2} to estimate
    \begin{align*}
    	II &\lesssim \bigg(\sum_{j=0}^\infty 2^{j b r} \bigg(\sum_{l=j}^\infty \|F_l\|_{L_p(\R^d)}^q \bigg)^{r/q} \bigg)^{1/r} \\
	& \lesssim \bigg(\sum_{j=0}^\infty 2^{j b r} \|F_j\|_{L_p(\R^d)}^r  \bigg)^{1/r}.
    \end{align*}
    This finishes the proof of \eqref{ProofIntF5}.

    Putting together \eqref{ProofIntF4} and \eqref{ProofIntF5},
    $$
    	\|f\|_{(L_p(\R^d; \ell^s_q), L_p(\R^d; \ell^{s_0}_{q}))_{(0, b-1/r), r}} \asymp \bigg(\sum_{j=0}^\infty 2^{j b r} \bigg(\int_{\R^d} \bigg( \sum_{\nu=2^{j}}^{2^{j+1}-1} [2^{\nu s} |f_\nu (x)|]^q \bigg)^{p/q} \, dx \bigg)^{r/p}   \bigg)^{1/r}.
    $$

    (ii):     It follows from \eqref{ProofIntF2*} (after replacing the roles played by $s$ and $s_0$) that
        \begin{align*}
    	\|f\|_{(L_p(\R^d; \ell^{s_0}_q), L_p(\R^d; \ell^{s}_{q}))_{(1, b-1/r), r}} &\asymp \\
	& \hspace{-5cm} \bigg(\sum_{j=0}^\infty [2^{j(s-s_0)}(1+j)^{b-1/r} K(2^{-j(s-s_0)},f; L_p(\R^d; \ell^{s_0}_q), L_p(\R^d; \ell^{s}_{q})]^r  \bigg)^{1/r} \\
	& \hspace{-5cm} \asymp \bigg(\sum_{j=0}^\infty 2^{j(s-s_0) r} (1+j)^{(b-1/r) r} \bigg(\int_{\R^d} \bigg( \sum_{\nu=0}^\infty [2^{\nu s_0} \min (1,  2^{(\nu-j) (s-s_0)}) |f_\nu (x)|]^q \bigg)^{p/q} \, dx \bigg)^{r/p}  \bigg)^{1/r} \\
	& \hspace{-5cm} \asymp \bigg(\sum_{j=0}^\infty (1+j)^{(b-1/r) r} \bigg(\int_{\R^d} \bigg( \sum_{\nu=0}^j [2^{\nu s} |f_\nu (x)|]^q \bigg)^{p/q} \, dx \bigg)^{r/p}  \bigg)^{1/r} \\
	& \hspace{-4.5cm} +  \bigg(\sum_{j=0}^\infty 2^{j(s-s_0) r}  (1+j)^{(b-1/r) r} \bigg(\int_{\R^d} \bigg( \sum_{\nu=j}^\infty [2^{\nu s_0} |f_\nu (x)|]^q \bigg)^{p/q} \, dx \bigg)^{r/p}   \bigg)^{1/r} \\
	& \hspace{-5cm} =: \mathcal{I} + \mathcal{II}.
    \end{align*}
        Furthermore, we claim
    $$
    	\mathcal{II} \lesssim \mathcal{I}
    $$
    and
    $$
    	\mathcal{I} \asymp \bigg(\sum_{j=0}^\infty 2^{j b r} \bigg(\int_{\R^d} \bigg( \sum_{\nu=2^{j}}^{2^{j+1}-1} [2^{\nu s} |f_\nu (x)|]^q \bigg)^{p/q} \, dx \bigg)^{r/p}   \bigg)^{1/r}.
    $$
     We omit the proofs of these estimates since they are obvious modifications of those in \eqref{ProofIntF3} and \eqref{ProofIntF5}, respectively. Hence
     $$
     	\|f\|_{(L_p(\R^d; \ell^{s_0}_q), L_p(\R^d; \ell^{s}_{q}))_{(1, b-1/r), r}} \asymp  \bigg(\sum_{j=0}^\infty 2^{j b r} \bigg(\int_{\R^d} \bigg( \sum_{\nu=2^{j}}^{2^{j+1}-1} [2^{\nu s} |f_\nu (x)|]^q \bigg)^{p/q} \, dx \bigg)^{r/p}   \bigg)^{1/r}.
     $$
    \end{proof}

    We are now ready to give the

\begin{proof}[Proof of Theorem \ref{TheoremInterpolationF}]
 (i): We start by proving \eqref{TheoremInterpolationFDisplay1} with $A = F$ and $q_0 = q$, i.e.
 \begin{equation}\label{ProofIntF1*}
 	T^b_r F^{s}_{p,q}(\mathbb{R}^d) = (F^s_{p,q}(\R^d), F^{s_0}_{p,q}(\R^d))_{(0,b-1/r),r}.
\end{equation}
 Since $F^s_{p, q}(\R^d)$ is a retract of $L_p(\R^d; \ell^s_q)$, the interpolation space $(F^s_{p,q}(\R^d), F^{s_0}_{p,q}(\R^d))_{(0,b-1/r),r}$ can be identified, via the retraction method, with $(L_p(\R^d; \ell^s_q), L_p(\R^d; \ell^{s_0}_{q}))_{(0, b-1/r), r}$. This space was computed in  Lemma \ref{LemmaNew23}(i). Accordingly, the desired formula \eqref{ProofIntF1*} follows.

    The formula \eqref{TheoremInterpolationFDisplay1} for $A=B$ and $0 < q_0 \leq \infty$ follows from the previous case (i.e., \eqref{ProofIntF1*}) and the well-known interpolation formula (cf. \cite[Theorem 2.4.2, p. 64]{Triebel83})
    \begin{equation}\label{ProofIntF6}
    	(F^s_{p, q}(\R^d), F^{s_0}_{p, q}(\R^d))_{\theta, q_0} = B^{(1-\theta) s + \theta s_0}_{p, q_0}(\R^d)
    \end{equation}
    for $\theta \in (0, 1)$. More precisely, given $s_0 > s$ we choose $s_1 > s_0$ and $\theta \in (0, 1)$ such that $s_0 = (1-\theta) s + \theta s_1$. Then, by \eqref{ProofIntF6}, Lemma \ref{LemmaReiteration}(i), and \eqref{ProofIntF1*},
    \begin{align*}
    	(F^s_{p,q}(\R^d), B^{s_0}_{p,q_0}(\R^d))_{(0,b-1/r),r} &= (F^s_{p,q}(\R^d), (F^s_{p, q}(\R^d), F^{s_1}_{p, q}(\R^d))_{\theta, q_0} )_{(0,b-1/r),r} \\
	&\hspace{-3cm} =  (F^s_{p,q}(\R^d), F^{s_1}_{p, q}(\R^d) )_{(0,b-1/r),r}  = T^b_r F^{s}_{p, q}(\R^d).
    \end{align*}

    It remains to show \eqref{TheoremInterpolationFDisplay1} with $A = F$ and $0 < q_0 \leq \infty$ with $q_0 \neq q$. This is a simple consequence of the embeddings \eqref{RelationsBF} and the corresponding assertion with $A = B$.

   (ii): The proof of \eqref{TheoremInterpolationFDisplay1new} follows similar ideas as those given in \eqref{TheoremInterpolationFDisplay1}. Accordingly, we will only provide a sketch of the argument. We first concentrate on the case $A = F$ with $q_0 = q$ and we claim that
     \begin{equation}\label{ProofIntF7}
	 T^b_r F^s_{p, q}(\mathbb{R}^d) = (F^{s_0}_{p,q}(\R^d), F^s_{p,q}(\R^d))_{(1,b-1/r),r}.
	 \end{equation}
	Indeed, this is an immediate consequence of Lemma \ref{LemmaNew23}(ii) and the retraction method.     Once formula \eqref{ProofIntF7} has been established, its extension from $F^{s_0}_{p, q}(\R^d)$ to any $A^{s_0}_{p, q_0}(\R^d)$ with $s_0 < s$ and $0 < q_0 \leq \infty$ can be obtained via Lemma \ref{LemmaReiteration}(ii) and  \eqref{RelationsBF}.
%    It follows from \eqref{ProofIntF2*} that
%        \begin{align*}
%    	\|f\|_{(L_p(\R^d; \ell^{s_0}_q), L_p(\R^d; \ell^{s}_{q}))_{(1, b-1/r), r}} &\asymp \\
%	& \hspace{-5cm} \bigg(\sum_{j=0}^\infty [2^{j(s-s_0)}(1+j)^{b-1/r} K(2^{-j(s-s_0)},f; L_p(\R^d; \ell^{s_0}_q), L_p(\R^d; \ell^{s}_{q})]^r  \bigg)^{1/r} \\
%	& \hspace{-5cm} \asymp \bigg(\sum_{j=0}^\infty 2^{j(s-s_0) r} (1+j)^{(b-1/r) r} \bigg(\int_{\R^d} \bigg( \sum_{\nu=0}^\infty [2^{\nu s_0} \min (1,  2^{(\nu-j) (s-s_0)}) |f_\nu (x)|]^q \bigg)^{p/q} \, dx \bigg)^{r/p}  \bigg)^{1/r} \\
%	& \hspace{-5cm} \asymp \bigg(\sum_{j=0}^\infty (1+j)^{(b-1/r) r} \bigg(\int_{\R^d} \bigg( \sum_{\nu=0}^j [2^{\nu s} |f_\nu (x)|]^q \bigg)^{p/q} \, dx \bigg)^{r/p}  \bigg)^{1/r} \\
%	& \hspace{-4.5cm} +  \bigg(\sum_{j=0}^\infty 2^{j(s-s_0) r}  (1+j)^{(b-1/r) r} \bigg(\int_{\R^d} \bigg( \sum_{\nu=j}^\infty [2^{\nu s_0} |f_\nu (x)|]^q \bigg)^{p/q} \, dx \bigg)^{r/p}   \bigg)^{1/r} \\
%	& \hspace{-5cm} =: \mathcal{I} + \mathcal{II}.
%    \end{align*}
%    Therefore the desired formula \eqref{ProofIntF7} follows from
%    $$
%    	\mathcal{II} \lesssim \mathcal{I}
%    $$
%    and
%    $$
%    	\mathcal{I} \asymp \bigg(\sum_{j=0}^\infty 2^{j b r} \bigg(\int_{\R^d} \bigg( \sum_{\nu=2^{j}}^{2^{j+1}-1} [2^{\nu s} |f_\nu (x)|]^q \bigg)^{p/q} \, dx \bigg)^{r/p}   \bigg)^{1/r}.
%    $$
%     We omit the proofs of these estimates since they are obvious modifications of those in \eqref{ProofIntF3} and \eqref{ProofIntF5}, respectively.

\end{proof}

\begin{rem}
	Similarly as in Remark \ref{Remark4.4}, the proof of Theorem \ref{TheoremInterpolationF} can be applied for the limiting case $b=0$. Namely, if $p \in (1, \infty), q \in (0, \infty], r \in (0, \infty), -\infty < s < s_0 < \infty$, then (cf. \eqref{34})
	\begin{equation}\label{435}
		(F^s_{p,q}(\R^d), F^{s_0}_{p,q}(\R^d))_{(0,-1/r),r}  = T^*_r F^{s}_{p, q}(\R^d).
	\end{equation}
	Recall that $T_r F^{s}_{p, q}(\R^d) \neq  T^*_r F^{s}_{p, q}(\R^d)$; see Remark \ref{Remark36}.
\end{rem}

Next we write down two important special cases of Theorem \ref{TheoremInterpolationF}. Namely
	\begin{equation}\label{RemindNew}
		T^{b+1/q}_q F^{0}_{p, 2} (\R^d) = \mathbf{B}^{0, b}_{p, q}(\R^d) \qquad \text{and} \qquad   T^{b+1/q}_q F^{s}_{p, 2}(\R^d) =  \text{Lip}^{s, b}_{p, q}(\R^d),
	\end{equation}
	cf. Proposition \ref{PropositionCoincidences}(iii) and (iv) for assumptions on the involved parameters. Accordingly, the following result is an immediate application of Theorem \ref{TheoremInterpolationF}.

\begin{cor}[$\mathbf{B}^{0, b}_{p, q}(\R^d)$ and $\text{Lip}^{s, b}_{p, q}(\R^d)$ via limiting interpolation] Let $A \in \{B, F\}$.
	\begin{enumerate}[\upshape(i)]
		\item Let $1 < p < \infty, 0 < q, q_1 \leq \infty, s > 0$ and $b > -1/q$. Then
		\begin{equation}\label{IntLimB0}
			\mathbf{B}^{0, b}_{p, q}(\R^d) = (L_p(\R^d), A^s_{p, q_1}(\R^d))_{(0, b), q}.
		\end{equation}
		\item Let $1 < p < \infty, 0 < q, q_0 \leq \infty, s > 0, s_0 < s$ and $b < -1/q$. Then
		\begin{equation}\label{IntLimLip}
			\emph{Lip}^{s, b}_{p, q}(\R^d) = (A^{s_0}_{p, q_0}(\R^d), H^s_p(\R^d))_{(1, b), q}.
		\end{equation}
	\end{enumerate}
\end{cor}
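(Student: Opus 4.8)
The plan is to deduce both identities directly from Proposition \ref{PropositionCoincidences} and Theorem \ref{TheoremInterpolationF}, using only the elementary identifications $L_p(\R^d) = F^0_{p,2}(\R^d)$ and $H^s_p(\R^d) = F^s_{p,2}(\R^d)$ together with a careful bookkeeping of the logarithmic exponents.

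For part (i), I would begin from Proposition \ref{PropositionCoincidences}(iii), which under the present hypotheses ($p \in (1,\infty)$, $0 < q \leq \infty$, $b > -1/q$) gives $\mathbf{B}^{0, b}_{p, q}(\R^d) = T^{b+1/q}_q F^{0}_{p, 2}(\R^d)$; note that $b > -1/q$ forces $b + 1/q > 0$. I then apply Theorem \ref{TheoremInterpolationF}(i) with base smoothness $0$, microscopic index $2$, summation parameter $q$ in place of $r$, logarithmic parameter $b + 1/q > 0$ in place of $b$, and with $s_0 = s > 0$ and $q_0 = q_1$. This yields $T^{b+1/q}_q F^{0}_{p, 2}(\R^d) = (F^0_{p,2}(\R^d), A^{s}_{p,q_1}(\R^d))_{(0,(b+1/q)-1/q),q}$, and since the net logarithmic exponent $(b+1/q)-1/q$ collapses to $b$ and $F^0_{p,2}(\R^d) = L_p(\R^d)$, this is exactly \eqref{IntLimB0}.

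For part (ii) I would argue symmetrically: Proposition \ref{PropositionCoincidences}(iv) gives $\text{Lip}^{s, b}_{p, q}(\R^d) = T^{b+1/q}_q F^{s}_{p, 2}(\R^d)$ under the hypotheses $p \in (1,\infty)$, $s > 0$, $b < -1/q$, and here $b < -1/q$ forces $b + 1/q < 0$. Invoking Theorem \ref{TheoremInterpolationF}(ii) with base smoothness $s$, microscopic index $2$, summation parameter $q$ in place of $r$, logarithmic parameter $b + 1/q < 0$ in place of $b$, and with the given $s_0 < s$ and $q_0$, we obtain $T^{b+1/q}_q F^{s}_{p, 2}(\R^d) = (A^{s_0}_{p,q_0}(\R^d), F^s_{p,2}(\R^d))_{(1,(b+1/q)-1/q),q} = (A^{s_0}_{p,q_0}(\R^d), H^s_p(\R^d))_{(1,b),q}$, which is \eqref{IntLimLip}.

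Since both parts are direct substitutions into results already established in the excerpt, there is no genuine obstacle here; the only point requiring attention is the parameter matching — specifically, checking that the sign conditions $b + 1/q > 0$ (resp.\ $b + 1/q < 0$) needed to apply Theorem \ref{TheoremInterpolationF} are precisely the hypotheses $b > -1/q$ (resp.\ $b < -1/q$) of the corollary, and that the shift $-1/r = -1/q$ in the limiting interpolation exponent absorbs the extra $1/q$ coming from Proposition \ref{PropositionCoincidences}, leaving the clean exponent $b$.
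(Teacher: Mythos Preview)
Your proposal is correct and follows exactly the paper's approach: the corollary is stated as an immediate application of Theorem \ref{TheoremInterpolationF} once one uses the identifications $\mathbf{B}^{0,b}_{p,q}(\R^d) = T^{b+1/q}_q F^0_{p,2}(\R^d)$ and $\text{Lip}^{s,b}_{p,q}(\R^d) = T^{b+1/q}_q F^s_{p,2}(\R^d)$ from Proposition \ref{PropositionCoincidences}, together with $L_p(\R^d) = F^0_{p,2}(\R^d)$ and $H^s_p(\R^d) = F^s_{p,2}(\R^d)$. Your parameter bookkeeping is accurate.
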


\begin{rem}\label{RemarkLimBesov}
	The formula \eqref{IntLimB0} was already obtained in \cite[Theorem 3.4]{CobosDominguezTriebel} using the well-known fact
	$$
	K(t^k, f; L_p(\R^d), W^k_p(\R^d)) \asymp t^k \|f\|_{L_p(\R^d)} + \omega_k(f, t)_p, \qquad t \in (0, 1),
	$$
	and reiteration formulae for limiting interpolation (cf. Lemma \ref{LemmaReiteration}). Furthermore this method also works with the limiting values $p=1, \infty$ in \eqref{IntLimB0}.
	
	On the other hand, \eqref{IntLimLip} with $A^{s_0}_{p, q_0}(\R^d) = L_p(\R^d)$ (i.e., $s_0=0$ and $q_0 = 2$) reads
	\begin{equation}\label{IntLimLip2}
		\text{Lip}^{s, b}_{p, q}(\R^d) = (L_p(\R^d), H^s_p(\R^d))_{(1, b), q};
	\end{equation}
	see \cite[(2.13)]{DominguezHaroskeTikhonov} for  an alternative approach to \eqref{IntLimLip2} based on $K$-functionals and (fractional) moduli of smoothness.
\end{rem}

$$		(T^{b_0}_{r_0} A^{s_0}_{p, q_0}(\R^d), T^{b_1}_{r_1}\tilde{A}^{s_1}_{p, q_1}(\R^d))_{\theta, r} = B^{s, b}_{p, r}(\R^d),\qquad s_0\ne s_1,
$$

$$		(T^{b_0}_{r_0} F^{s}_{p,q}(\R^d), T^{b_1}_{r_1}F^{s}_{p,q}(\R^d))_{\theta,r} = T^b_r F^{s}_{p,q}(\R^d), \qquad b_0\ne b_1
$$

\subsection{$T^b_r F^{s}_{p, q}(\R^d)$ spaces are closed under interpolation.} As an application of Theorem \ref{TheoremInterpolationF}, we establish the closedness of the scale of spaces $T^b_r F^s_{p, q}(\R^d)$ under classical real interpolation. More precisely, we establish the following

\begin{thm}\label{TheoremInterpolation2pspapsa}
	Let $1 < p < \infty, 0 < q, r, r_0, r_1 \leq \infty, -\infty < s, b_0, b_1 < \infty$ and $0 < \theta < 1$. Assume  further $b_0  \neq b_1$ and $b_0 b_1 > 0$ and let $b=(1-\theta)b_0 + \theta b_1$. Then
	\begin{equation*}
		(T^{b_0}_{r_0} F^{s}_{p,q}(\R^d), T^{b_1}_{r_1}F^{s}_{p,q}(\R^d))_{\theta,r} = T^b_r F^{s}_{p,q}(\R^d).
	\end{equation*}
\end{thm}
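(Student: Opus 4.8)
The plan is to repeat, almost verbatim, the argument used for Theorem \ref{TheoremInterpolation2}, but feeding in the limiting interpolation description of $T^b_r F^{s}_{p,q}(\R^d)$ provided by Theorem \ref{TheoremInterpolationF} instead of the Besov version. Since $b_0 b_1 > 0$, both exponents have the same sign; hence $b = (1-\theta) b_0 + \theta b_1$ is nonzero with that same sign, which is exactly what is needed to apply Theorem \ref{TheoremInterpolationF}. Moreover, by the symmetry relation \eqref{Sym}, namely $(X_0, X_1)_{\theta, r} = (X_1, X_0)_{1-\theta, r}$, it suffices to treat the two normalized cases $0 < b_0 < b_1$ and $b_1 < b_0 < 0$; the remaining orderings $b_0 > b_1 > 0$ and $b_0 < b_1 < 0$ follow by exchanging the two endpoints (and replacing $\theta$ by $1-\theta$), which leaves $b$ unchanged.

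In the first case ($0 < b_0 < b_1$) I would fix any $s_0 > s$. Since $p \in (1, \infty)$, Theorem \ref{TheoremInterpolationF}(i), applied with $A = F$ and $q_0 = q$, gives
$$T^{b_i}_{r_i} F^{s}_{p, q}(\R^d) = (F^{s}_{p, q}(\R^d), F^{s_0}_{p, q}(\R^d))_{(0, b_i - 1/r_i), r_i}, \qquad i = 0, 1,$$
limiting interpolation spaces built on the ordered couple $(F^{s}_{p, q}(\R^d), F^{s_0}_{p, q}(\R^d))$ (recall $F^{s_0}_{p,q}(\R^d) \hookrightarrow F^{s}_{p,q}(\R^d)$). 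The hypothesis $0 < b_0 < b_1$ is precisely the condition $0 < (b_0 - 1/r_0) + 1/r_0 < (b_1 - 1/r_1) + 1/r_1$ required in Lemma \ref{LemmaReiteration}(iii), whose application then yields
$$(T^{b_0}_{r_0} F^{s}_{p,q}(\R^d), T^{b_1}_{r_1} F^{s}_{p,q}(\R^d))_{\theta, r} = (F^{s}_{p,q}(\R^d), F^{s_0}_{p,q}(\R^d))_{(0, b - 1/r), r},$$
since there $\gamma = (1-\theta) b_0 + \theta b_1 - 1/r = b - 1/r$. As $b > 0$, invoking Theorem \ref{TheoremInterpolationF}(i) once more (now in the opposite direction) identifies the right-hand side with $T^{b}_{r} F^{s}_{p,q}(\R^d)$, which is the assertion. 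In the second case ($b_1 < b_0 < 0$) I would instead pick $s_0 < s$ and use Theorem \ref{TheoremInterpolationF}(ii) to write $T^{b_i}_{r_i} F^{s}_{p,q}(\R^d) = (F^{s_0}_{p,q}(\R^d), F^{s}_{p,q}(\R^d))_{(1, b_i - 1/r_i), r_i}$ on the ordered couple $(F^{s_0}_{p,q}(\R^d), F^{s}_{p,q}(\R^d))$; the inequality $b_1 < b_0 < 0$ is exactly the hypothesis of Lemma \ref{LemmaReiteration}(iv), and applying it gives $(F^{s_0}_{p,q}(\R^d), F^{s}_{p,q}(\R^d))_{(1, b - 1/r), r} = T^{b}_{r} F^{s}_{p,q}(\R^d)$ via Theorem \ref{TheoremInterpolationF}(ii).

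I do not expect any genuine obstacle: all the analytic content has been front-loaded into Theorem \ref{TheoremInterpolationF} (whose proof already required the $K$-functional computation of Lemma \ref{LemmaNew23} and the retraction method) and into the reiteration identities of Lemma \ref{LemmaReiteration}. The only points demanding some care are, first, the bookkeeping verifying that the shifted exponents $b_i - 1/r_i$ together with the summation exponents $r_i$ meet the precise strict inequalities in Lemma \ref{LemmaReiteration}(iii)--(iv) — which is where the hypothesis $b_0 \neq b_1$ with $b_0 b_1 > 0$ is used, guaranteeing both endpoint limiting spaces lie on the same side ($\theta = 0$ or $\theta = 1$) of the couple — and, second, the restriction $p \in (1, \infty)$, which is unavoidable here because Theorem \ref{TheoremInterpolationF}, unlike its Besov analogue Theorem \ref{TheoremInterpolation}, is only available in that range.
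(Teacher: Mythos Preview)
Your argument is correct and follows essentially the same route as the paper's proof: both invoke Theorem~\ref{TheoremInterpolationF} to express $T^{b_i}_{r_i} F^{s}_{p,q}(\R^d)$ as limiting interpolation spaces on a common $F$-couple, then apply the reiteration formulae of Lemma~\ref{LemmaReiteration}(iii)--(iv), and finally Theorem~\ref{TheoremInterpolationF} once more to identify the outcome. Your bookkeeping of the shifted exponents $b_i - 1/r_i$ against the hypotheses of Lemma~\ref{LemmaReiteration} is accurate, and your use of the symmetry \eqref{Sym} to normalize the orderings is a clean way to cover all cases.
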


\begin{proof}
Assume e.g. that $b_0, b_1 < 0$. It is not restrictive to assume further $b_0 < b_1$. Let $s_0 <s$. By Theorem \ref{TheoremInterpolationF}(ii),
$$
	T^{b_j}_{r_j} F^{s}_{p,q}(\R^d) = (F^{s_0}_{p,q}(\R^d), F^{s}_{p,q}(\R^d))_{(1,b_j-1/r_j),r_j}, \qquad j = 0, 1.
$$
These formulae and Lemma \ref{LemmaReiteration}(iv) imply
\begin{align*}
	(T^{b_0}_{r_0} F^{s}_{p,q}(\R^d), T^{b_1}_{r_1}F^{s}_{p,q}(\R^d))_{\theta,r} & = \\
	& \hspace{-4cm} ((F^{s_0}_{p,q}(\R^d), F^{s}_{p,q}(\R^d))_{(1,b_0-1/r_0),r_0}, (F^{s_0}_{p,q}(\R^d), F^{s}_{p,q}(\R^d))_{(1,b_1-1/r_1),r_1})_{\theta, r} \\
	& \hspace{-4cm} = (F^{s_0}_{p,q}(\R^d), F^{s}_{p,q}(\R^d))_{(1, b-1/r), r} = T^b_r F^{s}_{p, q}(\R^d)
\end{align*}
where the last step follows from Theorem \ref{TheoremInterpolationF}(ii).

The case $b_0, b_1 > 0$ can be treated analogously, but now applying Theorem  \ref{TheoremInterpolationF}(i).
\end{proof}

\begin{rem}
 Theorems \ref{TheoremInterpolationF} and \ref{TheoremInterpolation2pspapsa} can be extended to deal with  $0 < p < \infty$. We refer to Remark \ref{RemarkIntp} below.
\end{rem}

\begin{rem}
	Theorems \ref{TheoremInterpolation} and \ref{TheoremInterpolation2} as well as
 Theorems \ref{TheoremInterpolationF} and \ref{TheoremInterpolation2pspapsa}
 show some striking differences between classical interpolation (cf. \eqref{ClassicInt2} and \eqref{ClassicInt}) and limiting interpolation (cf. \eqref{DefLimInterpolation}). Let $A, \tilde{A} \in \{B, F\}$. The following holds
	$$
		(A^s_{p, q}(\R^d), \tilde{A}^{s_0}_{p, q_0}(\R^d))_{\theta, r, b} = B^{(1-\theta) s + \theta s_0, b}_{p, r}(\R^d)
	$$
	provided that $s, s_0, b \in \R, \, s \neq s_0, \theta \in (0, 1), 0 < p < \infty, 0 < q, q_0, r \leq \infty$; cf. \cite[Section 2.4.2]{Triebel83} if $b=0$ and \cite{CobosFernandez} if $b \in \R$. In particular, this formula tells us that any interpolation pair formed by Besov or Triebel--Lizorkin spaces with fixed integrability parameter $p$ produces as an outcome the Besov space $B^{(1-\theta) s + \theta s_0, b}_{p, r}(\R^d)$. Note that classical interpolation does not depend on the secondary parameters $q$ and $q_0$. Importantly, these assertions may fail to be true dealing with limiting interpolation, as can be illustrated by
		\begin{align*}
		T^{b+1/r}_r B^s_{p, q}(\R^d) &= (B^s_{p, q}(\R^d), B^{s_0}_{p, q_0}(\R^d))_{(0, b), r} \\
		& \neq (F^s_{p, q}(\R^d), B^{s_0}_{p, q_0}(\R^d))_{(0, b), r} =T^{b+1/r}_r F^s_{p, q}(\R^d),
	\end{align*}
	where $q \neq p$, and
		\begin{align*}
		T^{b+1/r}_r B^s_{p, q}(\R^d) & =(B^s_{p, q}(\R^d), B^{s_0}_{p, q_0}(\R^d))_{(0, b), r}  \\
		& \neq (B^s_{p, \tilde{q}}(\R^d), B^{s_0}_{p, q_0}(\R^d))_{(0, b), r} = T^{b+1/r}_r B^s_{p, \tilde{q}}(\R^d),
	\end{align*}
	where $q \neq \tilde{q}$.
\end{rem}

\subsection{Interpolation in the case of non-fixed smoothness}
Our next result extends
 the classical embedding
$$
	(A^{s_0}_{p, q_0}(\R^d), \tilde{A}^{s_1}_{p, q_1}(\R^d))_{\theta, r}= B^{s}_{p, r}(\R^d),\qquad A, \tilde{A} \in \{B, F\},
	$$
 % $A, \tilde{A} \in \{B, F\}$
 and, on the other hand, it complements
 Theorems \ref{TheoremInterpolation2} and \ref{TheoremInterpolation2pspapsa} to pairs of spaces $(T^b_r A^{s_0}_{p, q}(\R^d), T^b_r A^{s_1}_{p, q}(\R^d))$ with $s_0 \neq s_1$. In sharp contrast with the case $s_0 = s_1$ studied in these theorems, the resulting interpolation space with $s_0 \neq s_1$ is always a classical Besov space. In particular, we are now able to compute the interpolation space relative to $(\mathbf{B}^{0, b}_{p, q}(\R^d), \text{Lip}^{s, b}_{p, q}(\R^d))$, which seems to be unknown in the literature; see Corollary \ref{CorIntBLip} below.

\begin{thm}\label{ThmIntNew}
	Let $A, \tilde{A} \in \{B, F\}$. Assume $0 < p, q_0, q_1, r, r_0, r_1 \leq \infty \,  (p < \infty \text{ when } A = F \text{ or } \tilde{A} = F),  b_0, b_1 \in \R \backslash \{0\}, s_0, s_1 \in \R, s_0 \neq s_1, 0 < \theta < 1$.
	Let
	$$
		s = (1-\theta) s_0 + \theta s_1 \qquad \text{and} \qquad b = (1-\theta) b_0 + \theta b_1.
	$$
	Then
	\begin{equation}\label{ThmIntNew3}
		(T^{b_0}_{r_0} A^{s_0}_{p, q_0}(\R^d), T^{b_1}_{r_1}\tilde{A}^{s_1}_{p, q_1}(\R^d))_{\theta, r} = B^{s, b}_{p, r}(\R^d).
	\end{equation}
	In particular, if $\frac{b_0}{b_0-b_1} \in (0, 1)$ then
		$$
		(T^{b_0}_{r_0} A^{s_0}_{p, q_0}(\R^d), T^{b_1}_{r_1}\tilde{A}^{s_1}_{p, q_1}(\R^d))_{\frac{b_0}{b_0-b_1}, r} = B^{s}_{p, r}(\R^d).
	$$
\end{thm}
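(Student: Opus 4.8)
The plan is to squeeze each space $T^{b_i}_{r_i}A^{s_i}_{p,q_i}(\R^d)$, $i=0,1$, between two classical logarithmic Besov spaces sharing the \emph{same} smoothness $s_i$ and the \emph{same} logarithmic exponent $b_i$, differing only in the fine index, and then to use that real interpolation of logarithmic Besov spaces with \emph{distinct} smoothness parameters is insensitive to those fine indices. This is what forces the outcome to be a genuine logarithmic Besov space: a cruder sandwich by $B^{s_i\pm\varepsilon}_{p,\cdot}(\R^d)$ (available from Proposition~\ref{PropositionElementary}(ii)) would only close up to $\varepsilon$ and, worse, would lose the $b$-dependence.

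First I would set up the sandwich. Fix $i$ and write the superblock $I_j:=\{2^j-1,\dots,2^{j+1}-2\}$, on which $1+\nu\asymp 2^j$; recall from Definition~\ref{DefinitionNewBesov} that $\|f\|_{T^{b_i}_{r_i}B^{s_i}_{p,q_i}(\R^d)}=\big(\sum_j 2^{jb_ir_i}(\sum_{\nu\in I_j}2^{\nu s_iq_i}\|(\varphi_\nu\widehat f)^\vee\|_{L_p(\R^d)}^{q_i})^{r_i/q_i}\big)^{1/r_i}$. Comparing $\ell$-norms over each $I_j$ (the inclusions $\ell_{r_i}\hookrightarrow\ell_\infty$ and, for $u_i\le\min\{q_i,r_i\}$, $\ell_{u_i}\hookrightarrow\ell_{q_i}$, $\ell_{u_i}\hookrightarrow\ell_{r_i}$) together with $1+\nu\asymp 2^j$ gives, with $u_i:=\min\{q_i,r_i\}$,
$$B^{s_i,b_i}_{p,u_i}(\R^d)\hookrightarrow T^{b_i}_{r_i}B^{s_i}_{p,q_i}(\R^d)\hookrightarrow B^{s_i,b_i}_{p,\infty}(\R^d).$$
When $A=F$ (so $p<\infty$ by hypothesis) the same two embeddings hold with $u_i:=\min\{p,q_i,r_i\}$, after inserting Proposition~\ref{PropositionElementary}(iii), i.e. $T^{b_i}_{r_i}B^{s_i}_{p,\min\{p,q_i\}}\hookrightarrow T^{b_i}_{r_i}F^{s_i}_{p,q_i}\hookrightarrow T^{b_i}_{r_i}B^{s_i}_{p,\max\{p,q_i\}}$; all the usual modifications for $q_i=\infty$ and/or $r_i=\infty$ go through verbatim. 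The essential point is that the left and right endpoints above carry exactly $s_i$ and $b_i$.

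Next, monotonicity of the real method in each endpoint yields
$$(B^{s_0,b_0}_{p,u_0}(\R^d),B^{s_1,b_1}_{p,u_1}(\R^d))_{\theta,r}\hookrightarrow(T^{b_0}_{r_0}A^{s_0}_{p,q_0}(\R^d),T^{b_1}_{r_1}\tilde A^{s_1}_{p,q_1}(\R^d))_{\theta,r}\hookrightarrow(B^{s_0,b_0}_{p,\infty}(\R^d),B^{s_1,b_1}_{p,\infty}(\R^d))_{\theta,r}.$$
Since $s_0\ne s_1$, the classical interpolation formula for logarithmic Besov spaces gives, for all $v_0,v_1\in(0,\infty]$,
$$(B^{s_0,b_0}_{p,v_0}(\R^d),B^{s_1,b_1}_{p,v_1}(\R^d))_{\theta,r}=B^{(1-\theta)s_0+\theta s_1,\,(1-\theta)b_0+\theta b_1}_{p,r}(\R^d)=B^{s,b}_{p,r}(\R^d),$$
independently of $v_0,v_1$; this may be obtained from the formula $(A^{\sigma}_{p,v},\tilde A^{\sigma'}_{p,v'})_{\eta,\rho,\beta}=B^{(1-\eta)\sigma+\eta\sigma',\beta}_{p,\rho}(\R^d)$ (cf. \cite{CobosFernandez}) together with the reiteration theorem for the real method (cf. \cite{BerghLofstrom}). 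As both ends of the displayed chain equal $B^{s,b}_{p,r}(\R^d)$, so does the middle term, which is \eqref{ThmIntNew3}. Finally, if $\frac{b_0}{b_0-b_1}\in(0,1)$ then the choice $\theta=\frac{b_0}{b_0-b_1}$ gives $b=(1-\theta)b_0+\theta b_1=0$, hence $B^{s,b}_{p,r}(\R^d)=B^s_{p,r}(\R^d)$.

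The main obstacle is isolating the sandwich so that it is \emph{tight in both} $s_i$ \emph{and} $b_i$: one must resist the temptation to bound $T^{b_i}_{r_i}A^{s_i}_{p,q_i}(\R^d)$ by ordinary Besov spaces with a small smoothness shift, and instead exploit $1+\nu\asymp2^j$ on the superblocks $I_j$ so that the only thing sacrificed is the third index, which the $s_0\ne s_1$ interpolation formula ignores anyway. The secondary issue is that the logarithmic-Besov interpolation formula must be available for arbitrary fine indices and in the full quasi-Banach range ($0<p\le\infty$, and $0<p<\infty$ when $A$ or $\tilde A$ equals $F$); this is standard but should be quoted with care.
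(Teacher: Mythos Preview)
Your proof is correct and takes essentially the same approach as the paper: sandwich each $T^{b_i}_{r_i}A^{s_i}_{p,q_i}(\R^d)$ between two logarithmic Besov spaces $B^{s_i,b_i}_{p,v}(\R^d)$ (same $s_i$, same $b_i$, different fine indices), then apply the classical interpolation formula for logarithmic Besov spaces with $s_0\neq s_1$, which is insensitive to the fine indices. The only cosmetic difference is that the paper quotes the sandwich from Corollaries~\ref{TheoremEmbeddings1} and~\ref{CorollaryEmbFBNew} (with right endpoint $B^{s_i,b_i}_{p,\max\{q_i,r_i\}}$ or $B^{s_i,b_i}_{p,\max\{p,q_i,r_i\}}$) whereas you derive it directly from the superblock structure (with the cruder but equally sufficient right endpoint $B^{s_i,b_i}_{p,\infty}$).
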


\begin{rem}
	The outcome in \eqref{ThmIntNew3} does not depend on $q_0$ and $q_1$. This is in contrast with Theorems \ref{TheoremInterpolation2} and \ref{TheoremInterpolation2pspapsa}.
\end{rem}

\begin{proof}[Proof of Theorem \ref{ThmIntNew}]
%	Assume \eqref{ThmIntNew1} holds. In virtue of Theorems \ref{TheoremInterpolation} and \ref{TheoremInterpolationF}, we have
%	$$
%		A^{s_0, b_0}_{p, q_0, r_0}(\R^d) = (A^{s_0}_{p, q_0}(\R^d), \tilde{A}^{s_1}_{p, q_1}(\R^d))_{(0, b_0 - 1/r_0), r_0}
%	$$
%	and
%	$$
%	 \tilde{A}^{s_1, b_1}_{p, q_1, r_1}(\R^d) = 	 (A^{s_0}_{p, q_0}(\R^d), \tilde{A}^{s_1}_{p, q_1}(\R^d))_{(1, b_1 - 1/r_1), r_1}.
%	$$
%	Then by the reiteration formula given in Lemma \ref{LemmaReiteration}(v), we get
%	\begin{align*}
%		(A^{s_0, b_0}_{p, q_0, r_0}(\R^d), \tilde{A}^{s_1, b_1}_{p, q_1, r_1}(\R^d))_{\theta, r}  & = \\
%		& \hspace{-4cm} ((A^{s_0}_{p, q_0}(\R^d), \tilde{A}^{s_1}_{p, q_1}(\R^d))_{(0, b_0 - 1/r_0), r_0},  	 (A^{s_0}_{p, q_0}(\R^d), \tilde{A}^{s_1}_{p, q_1}(\R^d))_{(1, b_1 - 1/r_1), r_1})_{\theta, r} \\
%		&\hspace{-4cm} = (A^{s_0}_{p, q_0}(\R^d), \tilde{A}^{s_1}_{p, q_1}(\R^d))_{\theta, r, b} = B^{s, b}_{p, r}(\R^d)
%	\end{align*}
%	where the last step follows from well-known interpolation properties of classical Besov--Triebel--Lizorkin spaces (we refer, e.g., to \cite[Section 2.4]{Triebel}, \cite[Section 2.4.2]{Triebel83} for $b=0$ and  \cite[Theorem 5.3]{CobosFernandez} for $b \in \R$.)
%	
%	The case \eqref{ThmIntNew2} can be obtained from the previous one via
%	$$
%		(A^{s_0, b_0}_{p, q_0, r_0}(\R^d), \tilde{A}^{s_1, b_1}_{p, q_1, r_1}(\R^d))_{\theta, r}  = ( \tilde{A}^{s_1, b_1}_{p, q_1, r_1}(\R^d), A^{s_0, b_0}_{p, q_0, r_0}(\R^d))_{1-\theta, r}.
%	$$
	We will make use of embeddings between truncated and classical function spaces given in Corollaries \ref{TheoremEmbeddings1} and \ref{CorollaryEmbFBNew} below. More precisely, we have
	$$
	B^{s_i, b_i}_{p, \min\{q_i, r_i\}}(\R^d) \hookrightarrow	T^{b_i}_{r_i} A^{s_i}_{p, q_i}(\R^d) \hookrightarrow B^{s_i, b_i}_{p, \max\{q_i, r_i\}}(\R^d), \qquad i= 0, 1.
	$$
	Therefore
	\begin{align*}
	(T^{b_0}_{r_0}A^{s_0}_{p, q_0}(\R^d), T^{b_1}_{r_1}\tilde{A}^{s_1}_{p, q_1}(\R^d))_{\theta, r} &\hookrightarrow  \\
	& \hspace{-3cm}(B^{s_0, b_0}_{p, \max\{q_0, r_0\}}(\R^d), B^{s_1, b_1}_{p, \max\{q_1, r_1\}}(\R^d))_{\theta, r} = B^{s, b}_{p, r}(\R^d)
	\end{align*}
	where we have applied well-known interpolation properties of Besov spaces of logarithmic smoothness in the last step (cf. \cite[Theorem 5.3]{CobosFernandez} and \cite[Lemma 2.3]{DominguezTikhonov}). Analogously, one can prove
	$$
	 B^{s, b}_{p, r}(\R^d) \hookrightarrow (T^{b_0}_{r_0} A^{s_0}_{p, q_0}(\R^d), T^{b_1}_{r_1} \tilde{A}^{s_1}_{p, q_1}(\R^d))_{\theta, r}.
	$$	
\end{proof}

\begin{cor}[Interpolation involving $\mathbf{B}^{0, b}_{p, q}(\R^d)$ and $\text{Lip}^{s, b}_{p, q}(\R^d)$]\label{CorIntBLip}
	Let $0 < \theta < 1, 1 < p < \infty, s > 0,  0 < q, q_0, q_1 \leq \infty, b_0 > -1/q_0$ and $b_1 < -1/q_1$.
	\begin{enumerate}[\upshape(i)]
	\item
	\begin{equation}\label{CorIntBLip1new}
		(\mathbf{B}^{0, b_0}_{p, q_0}(\R^d), \emph{Lip}^{s, b_1}_{p, q_1}(\R^d))_{\theta, q} = B^{\theta s, (1-\theta) (b_0 + 1/q_0) + \theta (b_1 + 1/q_1) }_{p, q}(\R^d).
	\end{equation}
	In particular, if $\theta = \frac{b_0 + \frac{1}{q_0}}{b_0 + \frac{1}{q_0} - (b_1 + \frac{1}{q_1})}$ then
	$$
			(\mathbf{B}^{0, b_0}_{p, q_0}(\R^d), \emph{Lip}^{s, b_1}_{p, q_1}(\R^d))_{\theta, q} = B^{\theta s}_{p, q}(\R^d).
	$$
	\item
	$$
		(L_p(\R^d), \emph{Lip}^{s, b_1}_{p, q_1}(\R^d))_{\theta, q} = B^{\theta s, \theta (b_1 + 1/q_1)}_{p, q}(\R^d).
	$$
	\item $$
		(\mathbf{B}^{0, b_0}_{p, q_0}(\R^d), H^s_p(\R^d))_{\theta, q} = B^{\theta s, (1-\theta) (b_0 + 1/q_0)}_{p, q}(\R^d).
	$$
	\end{enumerate}
\end{cor}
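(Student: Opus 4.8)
The strategy is to reduce everything to Theorem \ref{ThmIntNew} via the identifications of $\mathbf{B}^{0,b}_{p,q}(\R^d)$ and $\text{Lip}^{s,b}_{p,q}(\R^d)$ as truncated Triebel--Lizorkin spaces supplied by Proposition \ref{PropositionCoincidences}(iii)--(iv). First I would recall that, under the present hypotheses $1<p<\infty$, $b_0>-1/q_0$, $b_1<-1/q_1$, $s>0$, one has
\[
	\mathbf{B}^{0,b_0}_{p,q_0}(\R^d) = T^{b_0+1/q_0}_{q_0} F^{0}_{p,2}(\R^d), \qquad
	\text{Lip}^{s,b_1}_{p,q_1}(\R^d) = T^{b_1+1/q_1}_{q_1} F^{s}_{p,2}(\R^d).
\]
Writing $B_i := b_i + 1/q_i$ for $i=0,1$, note $B_0 > 0$ and $B_1 < 0$, so $B_0 B_1 < 0$ but, crucially, neither $B_0$ nor $B_1$ equals $0$, which is exactly the hypothesis $b_0,b_1\in\R\setminus\{0\}$ needed in Theorem \ref{ThmIntNew} (applied with the roles of the smoothness parameters $0$ and $s$, which are distinct since $s>0$, and with $A=\tilde A=F$, $q_0=q_1=2$). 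Thus Theorem \ref{ThmIntNew} gives directly
\[
	(\mathbf{B}^{0,b_0}_{p,q_0}(\R^d), \text{Lip}^{s,b_1}_{p,q_1}(\R^d))_{\theta,q}
	= (T^{B_0}_{q_0} F^{0}_{p,2}(\R^d), T^{B_1}_{q_1} F^{s}_{p,2}(\R^d))_{\theta,q}
	= B^{S,\,B}_{p,q}(\R^d),
\]
where $S = (1-\theta)\cdot 0 + \theta s = \theta s$ and $B = (1-\theta)B_0 + \theta B_1 = (1-\theta)(b_0+1/q_0) + \theta(b_1+1/q_1)$. This is precisely \eqref{CorIntBLip1new}. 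The "in particular" clause then follows by choosing $\theta$ so that $B=0$, i.e. $\theta = \frac{B_0}{B_0 - B_1} = \frac{b_0+1/q_0}{(b_0+1/q_0)-(b_1+1/q_1)}$, which lies in $(0,1)$ since $B_0>0>B_1$; the outcome is $B^{\theta s, 0}_{p,q}(\R^d) = B^{\theta s}_{p,q}(\R^d)$.

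Part (ii) is the special case of (i) in which the first space is taken to be $L_p(\R^d)$. Since $L_p(\R^d) = F^0_{p,2}(\R^d) = T^0_2 F^0_{p,2}(\R^d)$ (with $b_0 = -1/q_0$, i.e. $B_0 = 0$), one cannot invoke (i) verbatim — the hypothesis $B_0 \neq 0$ fails. Instead I would apply Theorem \ref{ThmIntNew} (or, more directly, the reiteration Lemma \ref{LemmaReiteration} together with Theorem \ref{TheoremInterpolationF}(ii)) to the pair $(F^0_{p,2}(\R^d), T^{B_1}_{q_1} F^s_{p,2}(\R^d))$ with distinct smoothness $0 \neq s$; here the relevant interpolation formula gives the Besov space $B^{\theta s,\,B}_{p,q}(\R^d)$ with $B = (1-\theta)\cdot 0 + \theta B_1 = \theta(b_1+1/q_1)$, as claimed. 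Concretely: using $\text{Lip}^{s,b_1}_{p,q_1}(\R^d) = (L_p(\R^d), H^s_p(\R^d))_{(1,b_1),q_1}$ from Remark \ref{RemarkLimBesov}, the pair is $(L_p, (L_p,H^s_p)_{(1,b_1),q_1})$, and Lemma \ref{LemmaReiteration}(vi) yields $(L_p(\R^d), H^s_p(\R^d))_{\theta,q,\theta(b_1+1/q_1)}$, which is $B^{\theta s, \theta(b_1+1/q_1)}_{p,q}(\R^d)$ by the lifting/retraction identification of $H^s_p = F^s_{p,2}$ as a retract of $L_p(\R^d;\ell^s_2)$.

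Part (iii) is symmetric: here the second space is $H^s_p(\R^d) = F^s_{p,2}(\R^d) = T^0_2 F^s_{p,2}(\R^d)$, i.e. $B_1 = 0$, so again (i) does not apply directly. Using $\mathbf{B}^{0,b_0}_{p,q_0}(\R^d) = (L_p(\R^d), H^s_p(\R^d))_{(0,b_0),q_0}$ from \eqref{IntLimB0} (valid since $b_0 > -1/q_0$), the pair becomes $((L_p,H^s_p)_{(0,b_0),q_0}, H^s_p)$, and Lemma \ref{LemmaReiteration}(v) gives $(L_p(\R^d),H^s_p(\R^d))_{\theta,q,(1-\theta)(b_0+1/q_0)} = B^{\theta s,(1-\theta)(b_0+1/q_0)}_{p,q}(\R^d)$, again via the retraction method. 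The only genuine subtlety — and the one step I would treat carefully — is the verification that the hypotheses of Theorem \ref{ThmIntNew} and of the reiteration lemmas are met, in particular that $B_0, B_1 \neq 0$ in part (i) (which forces the stated restrictions $b_0 \neq -1/q_0$, $b_1 \neq -1/q_1$, automatically encoded in the identifications) and that the degenerate endpoints in (ii)--(iii) are handled by the correct reiteration identity rather than by (i); everything else is routine bookkeeping with the parameters.
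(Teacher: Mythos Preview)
Your proposal is correct and matches the paper's approach exactly: part (i) via Proposition \ref{PropositionCoincidences}(iii)--(iv) and Theorem \ref{ThmIntNew}, parts (ii)--(iii) via the limiting-interpolation formulas \eqref{IntLimLip2}, \eqref{IntLimB0} combined with Lemma \ref{LemmaReiteration}(vi) and (v) respectively. One harmless aside: the identification $L_p(\R^d) = T^0_2 F^0_{p,2}(\R^d)$ you mention in passing is not valid for $p\neq 2$ (cf.\ Corollary \ref{Corollary3.6}), but since your actual argument for (ii)--(iii) proceeds through the reiteration lemmas rather than this identification, it does not affect the proof.
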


\begin{proof}
	(i): According to \eqref{RemindNew}, we have
$$
	T^{b_0+1/r_0}_{r_0} F^{0}_{p, 2}(\R^d) = \mathbf{B}^{0, b_0}_{p, r_0}(\R^d) \qquad \text{and} \qquad T^{b_1+1/r_1}_{r_1} F^{s}_{p, 2}(\R^d) = \text{Lip}^{s, b_1}_{p, r_1}(\R^d).
$$
Then \eqref{CorIntBLip1new} follows from Theorem \ref{ThmIntNew}.

(ii): Combining \eqref{IntLimLip} (see also \eqref{IntLimLip2}) and Lemma \ref{LemmaReiteration}(vi), we get
\begin{align*}
	(L_p(\R^d), \text{Lip}^{s, b_1}_{p, q_1}(\R^d))_{\theta, q}  & = (L_p(\R^d), (L_p(\R^d), H^s_p(\R^d))_{(1, b_1), q_1})_{\theta, q} \\
	&\hspace{-3cm} = (L_p(\R^d), H^s_p(\R^d))_{\theta, q, \theta (b_1 + 1/q_1)} = B^{\theta s, \theta(b_1 + 1/q_1)}_{p, q}(\R^d)
\end{align*}
where the last step makes use of well-known interpolation properties of classical Besov and Triebel--Lizorkin spaces (see, e.g., \cite[Theorem 5.3]{CobosFernandez} and \cite[(2.37)]{DominguezTikhonov}).

(iii): By \eqref{IntLimB0} (with $A= F$ and $q_1=2$) and Lemma \ref{LemmaReiteration}(v),
\begin{align*}
	(\mathbf{B}^{0, b_0}_{p, q_0}(\R^d), H^s_p(\R^d))_{\theta, q}  & = ((L_p(\R^d), H^s_p(\R^d))_{(0, b_0), q_0}, H^s_p(\R^d))_{\theta, q} \\
	& \hspace{-3cm} = (L_p(\R^d), H^s_p(\R^d))_{\theta, q, (1-\theta) (b_0 + 1/q_0)} = B^{\theta s, (1-\theta)(b_0 + 1/q_0)}_{p, q}(\R^d).
\end{align*}
\end{proof}

\subsection{$\T^b_r F^{s}_{p, q}(\R^d)$ spaces via  interpolation}

Recall that the operator $\mathfrak{J}$ is defined in \eqref{Retraction}. Since $\mathfrak{J}$ is a retract from $B^s_{p, q}(\R^d)$ to $\ell^s_q(L_p(\R^d))$ and from $F^s_{p, q}(\R^d)$ to $L_p(\R^d; \ell^s_q)$, Theorem \ref{Thm4.2} (see, e.g., \eqref{Interpolation1B} with $A = B$) can be interpreted as
$$
	f \in T^b_r B^s_{p, q}(\R^d) \iff \mathfrak{J}(f) \in (\ell^s_q(L_p(\R^d)), \ell^{s_0}_{q_0} (L_{p}(\R^d)))_{(0, b-1/r), r}
$$
and
$$
	\|f\|_{ T^b_r B^s_{p, q}(\R^d) } \asymp \| \mathfrak{J}(f)\|_{(\ell^s_q(L_p(\R^d)), \ell^{s_0}_{q_0} (L_{p}(\R^d)))_{(0, b-1/r), r}}.
$$
In the same fashion, Theorem \ref{TheoremInterpolationF} (specifically, \eqref{TheoremInterpolationFDisplay1} with $A=F$) can be rewritten as
$$
	f \in T^b_r F^s_{p, q}(\R^d) \iff  \mathfrak{J}(f) \in (L_p(\R^d; \ell^s_q), L_p(\R^d; \ell^{s_0}_{q_0}))_{(0, b-1/r), r}
$$
and
$$
	\|f\|_{T^b_r F^s_{p, q}(\R^d)} \asymp \| \mathfrak{J}(f)\|_{(L_p(\R^d; \ell^s_q), L_p(\R^d; \ell^{s_0}_{q_0}))_{(0, b-1/r), r}}.
$$
Our next result shows that $\T^b_r F^{s}_{p, q}(\R^d)$ can be generated in terms of $L_p(\R^d; (\ell^s_q, \ell^{s_0}_{q_0})_{(0, b-1/r), r})$, i.e., after changing the roles of $L^p(\R^d)$ and $((0, b-1/r), r)$-interpolation in the construction of truncated $F$-spaces given above.

\begin{thm}[Characterization of $\T^b_r F^{s}_{p, q}(\R^d)$ via limiting interpolation]\label{TheoremInterpolationFrakF}
	 Let $p \in (0, \infty), q, q_0, r \in (0, \infty], s, s_0 \in \R, s \neq s_0$, and $b \in \R \backslash \{0\}$.
	  \begin{enumerate}[\upshape(i)]
	 \item If $s_0 > s$ and $b > 0$ then
	 $$f \in \T^b_r F^{s}_{p, q}(\R^d) \quad \text{if and only if} \quad \mathfrak{J} (f)  \in L^p(\R^d; (\ell^s_q, \ell^{s_0}_{q_0})_{(0, b-1/r), r}).$$
	 Furthermore
	 $$
	 	\|f\|_{\T^b_r F^{s}_{p, q}(\R^d)} \asymp \|\mathfrak{J} (f) \|_{L^p(\R^d; (\ell^s_q, \ell^{s_0}_{q_0})_{(0, b-1/r), r})}.
	 $$
	  \item If $s_0 < s$ and $b < 0$ then
	 $$f \in \T^b_r F^{s}_{p, q}(\R^d) \quad \text{if and only if} \quad \mathfrak{J} (f)  \in L^p(\R^d; (\ell^{s_0}_{q_0}, \ell^{s}_{q})_{(1, b-1/r), r}).$$
	 Furthermore
	 $$
	 	\|f\|_{\T^b_r F^{s}_{p, q}(\R^d)} \asymp \|\mathfrak{J} (f) \|_{L^p(\R^d; (\ell^{s_0}_{q_0}, \ell^{s}_{q})_{(1, b-1/r), r})}.
	 $$
	  \end{enumerate}
\end{thm}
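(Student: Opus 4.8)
The strategy is to transfer everything to the sequence-space level via the co-retraction $\mathfrak{J}$, and then to exploit the Fubini-type principle already proved in this section: limiting interpolation of Bochner spaces $L_p(\R^d;A_0),L_p(\R^d;A_1)$ computes pointwise (cf. \eqref{ProofIntF1}). However, the point of Theorem~\ref{TheoremInterpolationFrakF} is precisely that the \emph{inner} truncated space $\T^b_r F^s_{p,q}(\R^d)$ corresponds to the \emph{other} Fubini order, namely $L_p(\R^d;(\ell^s_q,\ell^{s_0}_{q_0})_{(0,b-1/r),r})$, so the main work is to identify the limiting interpolation space of the \emph{scalar} couple $(\ell^s_q,\ell^{s_0}_{q_0})$ and then re-integrate in $x$.

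First I would treat the diagonal case $q_0=q$. By \eqref{ProofIntF2} we have the $K$-functional formula $K(t,x;\ell^s_q,\ell^s_q)\asymp\big(\sum_\nu[\min(2^{\nu s},t2^{\nu s_0})|x_\nu|]^q\big)^{1/q}$; inserting this into \eqref{DefLimInterpolation} and running exactly the computation already carried out in the proof of Theorem~\ref{TheoremInterpolation}(i) (the splitting into $I+II$, the estimate $I\lesssim II$ via Hardy's inequality \eqref{H1}, and the reindexing $j\mapsto 2^j$ together with monotonicity exactly as in \eqref{new3}--\eqref{new4}), one obtains, for $s_0>s$ and $b>0$,
$$
\|x\|_{(\ell^s_q,\ell^{s_0}_{q})_{(0,b-1/r),r}}\asymp\Big(\sum_{j=0}^\infty 2^{jbr}\Big(\sum_{\nu=2^j-1}^{2^{j+1}-2}[2^{\nu s}|x_\nu|]^q\Big)^{r/q}\Big)^{1/r}.
$$
Raising this to the $p$-th power and integrating over $x\in\R^d$ gives exactly $\|f\|_{\T^b_r F^s_{p,q}(\R^d)}^p$ (after applying this with $x=\mathfrak{J}(f)(x)=((\varphi_\nu\widehat f)^\vee(x))_\nu$), which proves (i) for $q_0=q$. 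The case (ii), i.e.\ $s_0<s$ and $b<0$, is identical after swapping the roles of $s,s_0$ and invoking Hardy's inequality \eqref{H2} in place of \eqref{H1}, mirroring the proof of Theorem~\ref{TheoremInterpolation}(ii). Here it is essential that $\mathfrak{J}$ is a retraction from $F^s_{p,q}(\R^d)$ onto $L_p(\R^d;\ell^s_q)$ with a \emph{common} co-retraction for all smoothnesses, so membership and norm are faithfully encoded pointwise in the target sequence space.

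To pass from $q_0=q$ to arbitrary $q_0\in(0,\infty]$ I would use a reiteration argument at the scalar level. Choose $s_1$ with $s_0\in(s,s_1)$ (resp.\ $s_1<s_0$ in case (ii)) and use the classical real interpolation identity $\ell^{s_0}_{q_0}=(\ell^s_q,\ell^{s_1}_q)_{\theta,q_0}$ with $s_0=(1-\theta)s+\theta s_1$; then Lemma~\ref{LemmaReiteration}(i) (resp.\ (ii)) gives $(\ell^s_q,\ell^{s_0}_{q_0})_{(0,b-1/r),r}=(\ell^s_q,\ell^{s_1}_q)_{(0,b-1/r),r}$, which reduces the general case to the diagonal one already handled. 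This parallels the non-diagonal reduction in Theorems~\ref{TheoremInterpolation} and~\ref{TheoremInterpolationF}. The one subtlety worth a sentence in the write-up is that this reiteration identity is used \emph{inside} the Bochner space $L_p(\R^d;\cdot)$, i.e.\ it is applied to the fiber space, which is legitimate since it is a statement about the fixed scalar couple $(\ell^s_q,\ell^{s_0}_{q_0})$ independent of $x$.

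\textbf{Main obstacle.}
The only genuinely delicate point is the reindexing/monotonicity step that replaces $\sum_{\nu\ge j}$ by $\sum_{\nu=2^j-1}^{2^{j+1}-2}$ together with the logarithmic-to-dyadic change of variables $(1+j)^{b-1/r}\leftrightarrow 2^{jbr}$, i.e.\ showing that the scalar limiting interpolation norm is genuinely equivalent to the truncated dyadic expression (the analogue of \eqref{new3}). This rests on Hardy's inequalities \eqref{H1}--\eqref{H2} and the sign of $b$, and it must be carried out at the level of the scalar couple $(\ell^s_q,\ell^{s_0}_{q_0})$ \emph{before} integrating in $x$ — in contrast with the proofs of Lemma~\ref{LemmaNew23} and Theorem~\ref{TheoremInterpolationF}, where the Hardy estimates could be performed after integration. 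Everything else (the retraction transfer, the Fubini principle for $L_p(\R^d;\cdot)$, the reiteration) is routine given the machinery already assembled in this section.
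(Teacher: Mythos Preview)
Your proposal is correct and follows essentially the same route as the paper: compute the scalar limiting interpolation norm $\|x\|_{(\ell^s_q,\ell^{s_0}_q)_{(0,b-1/r),r}}$ by rerunning the $K$-functional computation of Theorem~\ref{TheoremInterpolation} with $|x_\nu|$ in place of $\|x_\nu\|_{L_p}$ (the paper simply points to \eqref{24} and \eqref{new3}), apply this identity pointwise to $\mathfrak{J}(f)(x)$ and integrate, then reduce general $q_0$ to the diagonal case via the reiteration $\ell^{s_0}_{q_0}=(\ell^s_q,\ell^{s_1}_q)_{\theta,q_0}$ and Lemma~\ref{LemmaReiteration}. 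One small simplification: since the statement is phrased directly in terms of $\mathfrak{J}(f)$, you do not need the retraction machinery at all here---the proof is purely a scalar interpolation identity followed by integration in $x$.
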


\begin{proof}
The proof follows similar ideas as the proof of Theorem \ref{TheoremInterpolation}. More precisely, it was already shown in \eqref{24} and \eqref{new3} (modulo replacing $\|x_\nu\|_{L_p(\R^d)}$ by $|x_\nu|$) that
	\begin{equation}\label{ProofIntThmFFrak1}
		\|x\|_{(\ell^s_q, \ell^{s_0}_{q})_{(0, b-1/r), r}} \asymp  \bigg(\sum_{j=0}^\infty 2^{j b r} \bigg(\sum_{\nu=2^{j}-1}^{2^{j+1}-2} 2^{\nu s q}  |x_\nu|^q   \bigg)^{r/q} \bigg)^{1/r}
	\end{equation}
	provided that $s_0 > s$ and $b > 0$. This proves (i) with $q_0 = q$. The general case $q_0 \in (0, \infty]$ follows from reiteration formulas. Indeed, we make use of the well-known fact that (cf. \cite[Theorem 5.6.1]{BerghLofstrom} and \cite[1.18.2]{Triebel})
	$$
		\ell^{s_0}_{q_0} =(\ell^s_q, \ell^{s_1}_q)_{\theta, q_0},
	$$
	where $s < s_0 < s_1$ and $\theta  \in (0, 1)$ such that $s_0 = (1-\theta) s + \theta s_1$.
	
\end{proof}

\newpage

\section{Characterizations of truncated Besov spaces in terms of approximation}\label{SectionApproximation}

The goal of this section is to establish characterizations of the spaces $T^b_r B^s_{p, q}(\mathbb{R}^d)$  via approximation procedures. We restrict our attention to approximation procedures in $L_p(\R^d)$ with $p \in [1, \infty]$ (and thus smoothness parameter $s > 0$). However, many of the results presented in this section admit analogues in $L_p(\R^d)$ with  $p > 0$ (and so $s > d \max\{\frac{1}{p} - 1, 0\}$).

Let us recall two well-known characterization of Besov spaces (see, e.g., \cite{Pietsch}, \cite[Section 2.5.3]{Triebel83}). Namely, if $p \in [1, \infty], q \in (0,\infty], s > 0$ and $b \in \R$ then
\begin{equation}\label{BesovPi0}
	\|f\|_{B^{s,b}_{p,q}(\R^d)} \asymp \bigg(\sum_{j=0}^\infty  [2^{j s} (1 + j)^b E_{2^j-1}(f)_{L_p(\R^d)}]^q \bigg)^{1/q}
\end{equation}
and if, additionally, $\alpha > 0$ and $r \in (0, \infty]$ then
\begin{equation}\label{BesovPi}
		 \|f\|_{B^{s+ \alpha,b}_{p,r}(\mathbb{R}^d)} \asymp \bigg(\sum_{j=0}^\infty [2^{j \alpha} (1 + j)^b E_{2^j-1}(f)_{B^s_{p,q}(\R^d)}]^r \bigg)^{1/r}.
	\end{equation}
	Here, for $X=L_p(\R^d), B^s_{p,q}(\R^d)$, we use the notation\index{\bigskip\textbf{Functionals and functions}!$E_j(f)_X$}\label{ERROR}
	$$
	E_j(f)_{X} := \inf_{g} \|f-g\|_{X}, \qquad j \geq 1,
	$$
	where the infimum is taken over all  entire functions $g$ of exponential type $j$ and
	 $E_0(f)_{X} := \|f\|_{X}$. The limiting case $s = 0$ in \eqref{BesovPi0} characterizes $\mathbf{B}^{0, b}_{p, q}(\R^d)$ (cf. \cite[Section 10.4]{DominguezTikhonov}). Our next result deals with the case $\alpha=0$ in \eqref{BesovPi} by showing the connection with the spaces $T^b_r B^s_{p, q}(\mathbb{R}^d)$.
	
	 Let $\mu_j := 2^{2^j}$ for $j \geq 1$ and $\mu_0 = 0$.

\begin{thm}\label{ThmApprox}
	Let $p \in [1, \infty], q, r \in (0,\infty]$ and $s > 0$.
	\begin{enumerate}[\upshape(i)]
	\item If $b > 0$ then
	\begin{equation}\label{ThmApprox1}
		 \|f\|_{T^b_r B^s_{p, q}(\mathbb{R}^d)} \asymp \bigg(\sum_{j=0}^\infty [2^{j b} E_{\mu_j}(f)_{B^s_{p,q}(\R^d)}]^r  \bigg)^{1/r}.
	\end{equation}
	\item If $b \in \R \backslash \{0\}$ then
	\begin{equation}\label{ThmApprox2}
		 \|f\|_{T^b_r B^s_{p, q}(\mathbb{R}^d)} \asymp \bigg(\sum_{j=0}^\infty 2^{j b r} \bigg(\sum_{\nu=2^j-1}^{2^{j+1}-2} (2^{\nu s} E_{2^\nu-1}(f)_{L_p(\R^d)})^q \bigg)^{r/q} \bigg)^{1/r}.
	\end{equation}
%	\item
%	\begin{equation}\label{ThmApprox3}
%		 \|f\|_{\mathfrak{B}^{s,b}_{p,q,r}(\mathbb{R}^d)} \asymp \bigg(\sum_{j=1}^\infty (1 + \log j)^{b r} \bigg(\sum_{\nu=1}^j (\nu^s E_{\nu-1}(f)_{L_p(\R^d)})^q \frac{1}{\nu} \bigg)^{r/q} \frac{1}{j} \bigg)^{1/r}.
%	\end{equation}
	\end{enumerate}
\end{thm}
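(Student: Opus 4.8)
\textbf{Proof proposal for Theorem \ref{ThmApprox}.}

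The plan is to deduce both characterizations from the interpolation description of $T^b_r B^s_{p,q}(\R^d)$ in Theorem \ref{Thm4.2}(i) together with the classical approximation-theoretic description \eqref{BesovPi} of Besov spaces of logarithmic smoothness. For part (i), fix $\alpha > 0$ and consider the pair $(B^s_{p,q}(\R^d), B^{s+\alpha}_{p,q}(\R^d))$. By Theorem \ref{Thm4.2}(i) with $s_0 = s+\alpha$ and $q_0 = q$, we have $T^b_r B^s_{p,q}(\R^d) = (B^s_{p,q}(\R^d), B^{s+\alpha}_{p,q}(\R^d))_{(0, b-1/r), r}$. The key is to compute the relevant $K$-functional in terms of best approximation errors. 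Using \eqref{BesovPi0}, $B^{s+\alpha}_{p,q}(\R^d)$ is a retract of $\ell^{s+\alpha}_q(L_p(\R^d))$ via $\mathfrak{J}$, but here it is cleaner to work directly with the approximation functional: one has the standard estimate
\begin{equation*}
	K(2^{-k\alpha}, f; B^s_{p,q}(\R^d), B^{s+\alpha}_{p,q}(\R^d)) \asymp \bigg(\sum_{\nu \geq k} [2^{\nu s} \|(\varphi_\nu \widehat f)^\vee\|_{L_p}]^q\bigg)^{1/q} + 2^{-k\alpha}\bigg(\sum_{\nu \leq k} [2^{\nu(s+\alpha)}\|(\varphi_\nu\widehat f)^\vee\|_{L_p}]^q\bigg)^{1/q},
\end{equation*}
which after a Hardy inequality (as in \eqref{23}) reduces to the first term, and this in turn is comparable to $E_{2^k - 1}(f)_{B^s_{p,q}(\R^d)}$ up to the usual equivalence between Littlewood--Paley tails and best approximation by entire functions of exponential type. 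Plugging $t = 2^{-k\alpha}$ into \eqref{DefLimInterpolation} and discretizing, the condition $(1 - \log t)^{b-1/r} \asymp k^{b-1/r}$ yields $\|f\|_{T^b_r B^s_{p,q}} \asymp (\sum_k [k^{b-1/r} E_{2^k-1}(f)_{B^s_{p,q}}]^r)^{1/r}$; then a further dyadic blocking of the index $k$ (writing $k \in \{2^j, \ldots, 2^{j+1}-1\}$, using monotonicity of $E_m(f)_{B^s_{p,q}}$ in $m$ and $b > 0$ to sum the geometric-type weights) produces $\asymp (\sum_j [2^{jb} E_{\mu_j}(f)_{B^s_{p,q}}]^r)^{1/r}$, which is \eqref{ThmApprox1}. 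I expect the main obstacle here to be the careful bookkeeping in passing from the $k$-indexed sum with weight $k^{b-1/r}$ to the $j$-indexed sum with weight $2^{jb}$ (equivalently, recognizing that $\mu_j = 2^{2^j}$ is exactly the threshold making $E_{\mu_j}$ match the tail starting at frequency $2^j$), and in checking the equivalence $E_{2^k-1}(f)_{B^s_{p,q}} \asymp$ (Littlewood--Paley tail from $\nu \geq k$), which requires $s > 0$.

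For part (ii), the strategy is more direct and avoids interpolation: combine the definition of $T^b_r B^s_{p,q}(\R^d)$ with the equivalence, valid for $s > 0$ and each fixed $\nu$, between $2^{\nu s}\|(\varphi_\nu \widehat f)^\vee\|_{L_p}$ and $2^{\nu s} E_{2^\nu - 1}(f)_{L_p(\R^d)}$ — more precisely, the two-sided estimate relating a single dyadic block to a difference of consecutive approximation errors, $E_{2^{\nu+1}-1}(f)_{L_p} \lesssim (\sum_{\mu \geq \nu} 2^{?})$ and $\|(\varphi_\nu\widehat f)^\vee\|_{L_p} \lesssim E_{2^{\nu-1}-1}(f)_{L_p} + E_{2^\nu - 1}(f)_{L_p}$. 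Substituting into the inner $\ell_q$-block $\sum_{\nu = 2^j-1}^{2^{j+1}-2} (2^{\nu s}\|(\varphi_\nu\widehat f)^\vee\|_{L_p})^q$ and using that over such a block the smoothness weight $2^{\nu s}$ is essentially constant up to a factor bounded in $j$ (here again $s > 0$ and, for the lower bound, $b \neq 0$ lets one absorb the overlap across blocks via Hardy's inequality \eqref{H1} or \eqref{H2} depending on the sign of $b$), one obtains \eqref{ThmApprox2}. The delicate point in (ii) will be the direction expressing $\|(\varphi_\nu\widehat f)^\vee\|_{L_p}$ in terms of $E_{2^\nu - 1}(f)_{L_p}$ alone: this introduces a neighboring term $E_{2^{\nu-1}-1}$, and controlling the resulting index shift across the boundary between the $j$-th and $(j{-}1)$-th dyadic blocks is exactly where the hypothesis $b \neq 0$ is needed so that the geometric weights $2^{jbr}$ can be re-summed, mirroring the case distinction between \eqref{H1} and \eqref{H2} used throughout Section \ref{SectionLimIntGeneral}.
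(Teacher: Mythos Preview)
Your approach to (i) is essentially the paper's: both invoke Theorem~\ref{Thm4.2}(i) and then compute the $K$-functional for $(B^s_{p,q}, B^{s+\alpha}_{p,q})$. The paper cites Nilsson's formula $K(2^{j/u}, f; B^{s+1/u}_{p,u}, B^s_{p,q}) \asymp (\sum_{\nu \leq j} 2^\nu E_{2^\nu-1}(f)_{B^s_{p,q}}^u)^{1/u}$ directly, while you go via the Littlewood--Paley tail and then identify it with $E_{2^k-1}(f)_{B^s_{p,q}}$; these are equivalent routes.

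For (ii), however, there is a genuine gap. Your claim that ``the smoothness weight $2^{\nu s}$ is essentially constant up to a factor bounded in $j$'' over the block $\{2^j-1,\ldots,2^{j+1}-2\}$ is false: the block has length $2^j$, so $2^{\nu s}$ varies by a factor $2^{(2^j-1)s}$, which is unbounded in $j$. Without this, the direct substitution you propose faces a real difficulty in the direction $\gtrsim$: from $E_{2^\nu-1}(f)_{L_p} \lesssim \sum_{\mu \geq \nu-c} \|(\varphi_\mu\widehat f)^\vee\|_{L_p}$ one inherits an infinite tail coupling the $j$-th block to \emph{all} later blocks, and the Hardy inequality across a single block boundary that you invoke does not control this. (Also, a one-index shift as in $E_{2^{\nu-1}-1}$ versus $E_{2^\nu-1}$ is always harmless and does not require $b\neq 0$; that is not where the hypothesis enters.) The paper handles (ii) quite differently, with a case split. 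For $b > 0$ it combines part (i) with Nilsson's formula $E_{j-1}(f)_{B^s_{p,q}} \asymp (\sum_{\nu \geq j}(\nu^s E_{\nu-1}(f)_{L_p})^q \nu^{-1})^{1/q}$ and applies Hardy's inequality in the outer index $j$; for $b < 0$ it uses Theorem~\ref{Thm4.2}(ii) with the pair $(L_p, B^s_{p,q})$ together with the $K$-functional formula $K(2^{js},f;L_p,B^s_{p,q}) \asymp (\sum_{\nu \leq j} 2^{\nu s q}E_{2^\nu-1}(f)_{L_p}^q)^{1/q}$. In both cases the interpolation framework (equivalently, the norms $\|\cdot\|^*$ of Proposition~\ref{PropEquiQN}) absorbs the cross-block tail automatically; this is where $b \neq 0$ genuinely enters.
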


\begin{rem}
	 Under the assumptions of Theorem \ref{ThmApprox}(ii), letting $q=r$ in \eqref{ThmApprox2}, we have
	\begin{equation*}
		\|f\|_{T^b_q B^{s}_{p,q}(\mathbb{R}^d)}
		 \asymp \bigg(\sum_{\nu=0}^{\infty} (2^{\nu s} (1+\nu)^b E_{2^\nu-1}(f)_{L_p(\R^d)})^q  \bigg)^{1/q} \asymp    \|f\|_{B^{s,b}_{p,q}(\R^d)},
	\end{equation*}
	where the last estimate follows from \eqref{BesovPi0}. Compare with Proposition \ref{PropositionCoincidences}.
	
%	(ii) Under assumptions of Theorem \ref{ThmApprox}, if, additionally, $b > - \frac{1}{r}$ then
%	\begin{equation}\label{ApproxEquivalent1}
%		 \|f\|_{T^b_r B^s_{p, q}(\mathbb{R}^d)} \asymp  \bigg(\sum_{k=0}^\infty 2^{k (b + \frac{1}{r}) r} \bigg(\sum_{\nu=2^k}^{2^{k+1}-1} (2^{\nu s} E_{2^\nu-2}(f)_{L_p(\R^d)})^q \bigg)^{r/q} \bigg)^{1/r}.
%	\end{equation}
%	This is a simple application of Hardy's inequality and \eqref{ThmApprox2}. Analogously, if $b < - \frac{1}{r}$ then
%	\begin{equation}\label{ApproxEquivalent2}
%		 \|f\|_{\mathfrak{B}^{s,b}_{p,q,r}(\mathbb{R}^d)} \asymp \bigg(\sum_{k=0}^\infty 2^{k (b + \frac{1}{r}) r} \bigg(\sum_{\nu=2^k}^{2^{k+1}} (2^{\nu s} E_{2^\nu - 2}(f)_{L_p(\R^d)})^q  \bigg)^{r/q} \bigg)^{1/r}.
%	\end{equation}
\end{rem}

\begin{proof}[Proof of Theorem \ref{ThmApprox}]
	(i): Let $u > 0$. In virtue of Theorem \ref{TheoremInterpolation}(i), we have
	\begin{equation}\label{IntForApprox}
	T^b_r B^s_{p, q}(\mathbb{R}^d) = (B^s_{p,q}(\R^d), B^{s+ 1/u}_{p,u}(\R^d))_{(0,b-1/r),r}.
	\end{equation}
	Making use of the known estimate (see \cite[Proposition 6.3]{Nilsson})
	$$
		K(2^{j/u}, f; B^{s+ 1/u}_{p,u}(\R^d), B^s_{p,q}(\R^d)) \asymp \bigg(\sum_{\nu=0}^{j} 2^\nu E_{2^\nu - 1}(f)_{B^s_{p,q}(\R^d)}^{u}  \bigg)^{1/u}, \qquad j \in \mathbb{N}_0,
	$$
	we can rewrite \eqref{IntForApprox} as follows
	\begin{align*}
		\|f\|_{T^b_r B^s_{p, q}(\mathbb{R}^d)} & \asymp \bigg(\sum_{j=0}^\infty [(1 + j)^{b-1/r} K(2^{-j/u}, f; B^s_{p,q}(\R^d), B^{s+ 1/u}_{p,u}(\R^d))]^r  \bigg)^{1/r} \\
		& \asymp \left(\sum_{j=0}^\infty 2^{-j r/u} (1 +  j)^{(b-1/r) r}  \bigg(\sum_{\nu=0}^{j} 2^\nu E_{2^\nu - 1}(f)_{B^s_{p,q}(\R^d)}^{u}  \bigg)^{r/u}  \right)^{1/r} \\
		& \asymp \bigg(\sum_{j=0}^\infty [(1 +  j)^{b-1/r} E_{2^j -1}(f)_{B^s_{p,q}(\R^d)}]^r \bigg)^{1/r} \\
		& \asymp \bigg(\sum_{j=0}^\infty [2^{j b} E_{\mu_j}(f)_{B^s_{p,q}(\R^d)}]^r \bigg)^{1/r},
	\end{align*}
	where we have applied  Hardy's inequality \eqref{H1} in the penultimate step. The proof of \eqref{ThmApprox1} is complete.
	
	(ii): The equivalence \eqref{ThmApprox2} with $b > 0$ follows from \eqref{ThmApprox1} and the fact that
	$$
		E_{j-1}(f)_{B^s_{p,q}(\R^d)} \asymp \bigg(\sum_{\nu=j}^\infty (\nu^s E_{\nu-1}(f)_{L_p(\R^d)})^q \frac{1}{\nu} \bigg)^{1/q}, \qquad j \in \mathbb{N}_0
	$$
	(cf. \cite[Proposition 6.3]{Nilsson}). To be more precise, by Hardy's inequality \eqref{H2} (note that $b > 0$) and basic monotonicity properties, we have
	\begin{align*}
		\|f\|_{T^b_r B^s_{p, q}(\mathbb{R}^d)} & \asymp \bigg(\sum_{j=0}^\infty [2^{j b} E_{\mu_j}(f)_{B^s_{p,q}(\R^d)}]^r  \bigg)^{1/r} \\
		& \asymp \bigg(\sum_{j=0}^\infty 2^{j b r}  \bigg(\sum_{\nu=\mu_j}^\infty (\nu^s E_{\nu-1}(f)_{L_p(\R^d)})^q \frac{1}{\nu} \bigg)^{r/q}  \bigg)^{1/r} \\
		& \asymp \bigg(\sum_{j=0}^\infty 2^{j b r} \bigg(\sum_{\nu=\mu_j}^{\mu_{j+1}-1} (\nu^s E_{\nu-1}(f)_{L_p(\R^d)})^q \frac{1}{\nu} \bigg)^{r/q} \bigg)^{1/r} \\
		& \asymp \bigg(\sum_{j=0}^\infty 2^{j b r} \bigg(\sum_{\nu=2^j-1}^{2^{j+1}-2} 2^{\nu s q} E_{2^\nu-1}(f)_{L_p(\R^d)}^q \bigg)^{r/q} \bigg)^{1/r}.
	\end{align*}
	
	Next we deal with \eqref{ThmApprox2} under $b < 0$.  Since (cf. \cite[p. 322]{Nilsson})
	$$
		K(2^{j s}, f; L_p(\R^d), B^s_{p,q}(\R^d))  \asymp \bigg(\sum_{\nu=0}^{j}  2^{\nu s q} E_{2^\nu - 1}(f)_{L_{p}(\R^d)}^{q} \bigg)^{1/q}, \qquad j \in \mathbb{N}_0,
	$$
	it follows from Theorem \ref{TheoremInterpolation}(ii) (with $A^{s_0}_{p, q_0}(\R^d) = F^0_{p, 2}(\R^d) = L_p(\R^d)$ if $p \in (1, \infty)$) and \eqref{IntFormp=infty} (with $k=0$ if $p=1, \infty$) that
	\begin{align*}
		 \|f\|_{T^b_r B^s_{p, q}(\mathbb{R}^d)} &\asymp \bigg(\sum_{j=0}^\infty [(1 + j)^{b-1/r} K(2^{j s},f; B^s_{p,q}(\R^d), L_p(\R^d))]^r \bigg)^{1/r} \\
		 & \asymp \bigg(\sum_{j=0}^\infty (1 + j)^{b r}  \bigg(\sum_{\nu=0}^{j}  2^{\nu s q} E_{2^\nu - 1}(f)_{L_{p}(\R^d)}^{q} \bigg)^{r/q} \frac{1}{1+j} \bigg)^{1/r} \\
		 & \asymp \bigg(\sum_{j=0}^\infty 2^{j b r}  \bigg(\sum_{\nu=0}^{2^j}  2^{\nu s q} E_{2^\nu - 1}(f)_{L_{p}(\R^d)}^{q} \bigg)^{r/q} \bigg)^{1/r} \\
		 &\asymp \bigg(\sum_{j=0}^\infty 2^{j b r}  \bigg(\sum_{\nu=2^{j}-1}^{2^{j+1}-2}  2^{\nu s q} E_{2^\nu - 1}(f)_{L_{p}(\R^d)}^{q} \bigg)^{r/q} \bigg)^{1/r}
	\end{align*}
	where we have applied Hardy's inequality \eqref{H1} (taking into account that $b < 0$) in the last estimate.
\end{proof}

\begin{rem}
	The above proof also deals with the limiting case $b=0$ in \eqref{ThmApprox1} and \eqref{ThmApprox2}. More precisely, if $s > 0, 1 \leq p \leq \infty, 0 < q \leq \infty,$ and $0 < r < \infty$ then (cf. \eqref{33})
	$$
		\|f\|_{T^*_r B^s_{p, q}(\R^d)} \asymp \bigg(\sum_{j=0}^\infty [E_{\mu_j}(f)_{B^s_{p,q}(\R^d)}]^r  \bigg)^{1/r}
	$$
	and
	\begin{equation}\label{56}
		\|f\|_{T^*_r B^s_{p, q}(\R^d)} \asymp \bigg\{\sum_{j=0}^\infty \bigg[\sum_{\nu=2^j-1}^\infty (2^{\nu s} E_{2^\nu-1}(f)_{L_p(\R^d)})^q \bigg]^{r/q}  \bigg\}^{1/r}. 	
	\end{equation}
 Furthermore, if $b=0$ and $r=\infty$ then we have the trivial estimate (cf. \eqref{35})
	$$
		\|f\|_{B^s_{p, q}(\R^d)} = \sup_{j \in \N_0} E_{\mu_j}(f)_{B^s_{p,q}(\R^d)}=
 		\|f\|_{T^*_\infty B^s_{p, q}(\R^d)}\asymp \sup_{j \in \N_0} \bigg(\sum_{\nu=2^j-1}^\infty [2^{\nu s} E_{2^\nu-1}(f)_{L_p(\R^d)}]^q  \bigg)^{1/q}.
	$$
\end{rem}

For $p \in [1, \infty]$ we define the class\index{\bigskip\textbf{Sets}!$\mathfrak{U}_p$}\label{EXPFUN}
$$
	\mathfrak{U}_p := \{a = (a_k)_{k \in \N_0}: a_k \in L_p(\R^d), \quad \text{supp }\widehat{a}_k \subset \{x \in \R^d: |x| \leq 2^{k}\}\},$$
	which is formed by \emph{functions of exponential type} in $L_p(\R^d)$.
	Let $0 < q \leq \infty$ and $s > 0$. A classical representation theorem in Besov spaces asserts (see, e.g., \cite[Section 2.5.3]{Triebel83}) that $f \in B^s_{p,q}(\R^d)$ if and only if $f$ can be decomposed as
	\begin{equation}\label{Decomposition}
		f= \sum_{k=0}^\infty f_k  \quad \text{with} \quad (f_k)_{k \in \N_0} \in \mathfrak{U}_p
	\end{equation}
	(convergence being in $L_p(\R^d)$) such that
	\begin{equation}\label{Decomposition0}
		\sum_{k=0}^\infty 2^{k s q} \|f_k\|_{L_p(\R^d)}^q < \infty
	\end{equation}
	(with the usual modification if $q=\infty$). Furthermore
	\begin{equation}\label{Decomposition0nenene}
		\|f\|_{B^{s}_{p, q}(\R^d)} \asymp \inf  \left( \sum_{k=0}^\infty 2^{k s q} \|f_k\|_{L_p(\R^d)}^q  \right)^{\frac{1}{q}},
	\end{equation}
	where the infimum runs over all possible decompositions \eqref{Decomposition} satisfying \eqref{Decomposition0}.

\begin{thm}\label{TheoremRepresentation}
	Let $p \in [1, \infty], q, r \in (0,\infty], s > 0$ and $b \in \R \backslash \{0\}$.
	Then $f \in T^b_r B^s_{p, q}(\R^d)$ if and only if $f$ admits decomposition \eqref{Decomposition}
	 such that
	\begin{equation}\label{Decomposition2}
		\sum_{k=0}^\infty 2^{k b r} \bigg(\sum_{\nu= 2^k-1}^{2^{k+1}-2} (2^{\nu s} \|f_{\nu}\|_{L_p(\R^d)})^q \bigg)^{\frac{r}{q}} < \infty
	\end{equation}
	(with the usual modification if $q=\infty$ and/or $r=\infty$). Furthermore
	\begin{equation}\label{Decomposition2001001}
		\|f\|_{T^b_r B^s_{p, q}(\R^d)} \asymp \inf  \left( \sum_{k=0}^\infty 2^{k b r} \bigg(\sum_{\nu= 2^k-1}^{2^{k+1}-2} (2^{\nu s} \|f_{\nu}\|_{L_p(\R^d)})^q \bigg)^{\frac{r}{q}}  \right)^{\frac{1}{r}},
	\end{equation}
	where the infimum runs over all possible decompositions \eqref{Decomposition} satisfying \eqref{Decomposition2}.
%	 \item Assume $b < -\frac{1}{r}$.
%	Then $f \in \T^b_r B^s_{p, q}(\R^d)$ if and only if $f$ admits a decomposition \eqref{Decomposition} with \eqref{Decomposition2}.
%	Furthermore
%	\begin{equation}\label{Decomposition20010012}
%		\|f\|_{\T^b_r B^s_{p, q}(\R^d)} \asymp \inf  \left( \sum_{k=0}^\infty 2^{k (b+ \frac{1}{r}) r} \bigg(\sum_{\nu= 2^k-1}^{2^{k+1}-2} (2^{\nu s} \|f_{\nu}\|_{L_p(\R^d)})^q \bigg)^{\frac{r}{q}}  \right)^{\frac{1}{r}}
%	\end{equation}
%	where the infimum runs over all possible decompositions \eqref{Decomposition} satisfying \eqref{Decomposition2}.
%	\end{enumerate}
\end{thm}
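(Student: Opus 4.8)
The plan is to obtain both the decomposition‑existence statement and the quasi‑norm equivalence \eqref{Decomposition2001001} directly from Definition \ref{DefinitionNewBesov}, the only non‑routine ingredient being a Hardy‑type inequality tailored to the two‑scale block structure of $\|\cdot\|_{T^b_r B^s_{p,q}(\R^d)}$. I will write $I_k:=\{2^k-1,\dots,2^{k+1}-2\}$ for the $k$‑th frequency block and, for a decomposition $f=\sum_{k\ge 0}f_k$ with $(f_k)\in\mathfrak{U}_p$, set $b_\nu:=2^{\nu s}\|f_\nu\|_{L_p(\R^d)}$, so that the quantity in \eqref{Decomposition2} reads $\sum_{k\ge 0}2^{kbr}\big(\sum_{\nu\in I_k}b_\nu^q\big)^{r/q}$ (with the obvious modifications when $q$ or $r$ equals $\infty$).

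\emph{Necessity.} Given $f\in T^b_r B^s_{p,q}(\R^d)$, I will take $f_\nu:=(\varphi_{\nu-1}\widehat f)^\vee$ for $\nu\ge 1$ and $f_0:=0$. Since $\text{supp }\varphi_{\nu-1}\subset\{|x|\le 2^\nu\}$, one has $(f_\nu)\in\mathfrak{U}_p$, and the series $\sum_\nu f_\nu$ converges to $f$ in $L_p(\R^d)$: finiteness of $\|f\|_{T^b_r B^s_{p,q}(\R^d)}$ forces the $\ell_q$‑block sums to be $\lesssim 2^{-kb}$, while within $I_k$ the summands carry a factor $2^{-\nu s}\le 2^{-2^k s}$, so that $\sum_\nu\|(\varphi_\nu\widehat f)^\vee\|_{L_p(\R^d)}\lesssim\sum_k 2^{-kb-2^k s}<\infty$ regardless of the sign of $b$. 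After a harmless adjustment of the block boundaries (shifting by a bounded amount and using $2^{kbr}\asymp 2^{(k\pm 1)br}$, exactly the reindexing in the proof of \eqref{RemFM}), the quantity in \eqref{Decomposition2} for this particular $(f_\nu)$ equals $\|f\|_{T^b_r B^s_{p,q}(\R^d)}^r$ up to constants, which gives that the infimum in \eqref{Decomposition2001001} is $\lesssim\|f\|_{T^b_r B^s_{p,q}(\R^d)}$.

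\emph{Sufficiency and the reverse estimate.} Conversely, let $f=\sum_k f_k$ with $(f_k)\in\mathfrak{U}_p$ satisfy \eqref{Decomposition2}. Because each $\varphi_\nu$ is a fixed dilate of a compactly supported $C^\infty$ function, $(\varphi_\nu\widehat g)^\vee=(\varphi_\nu)^\vee\ast g$ with $\|(\varphi_\nu)^\vee\|_{L_1(\R^d)}$ uniformly bounded, and $\varphi_\nu\widehat f_k\equiv 0$ unless $k\ge\nu-c$ for an absolute constant $c$; hence
\begin{equation*}
2^{\nu s}\|(\varphi_\nu\widehat f)^\vee\|_{L_p(\R^d)}\lesssim\sum_{k\ge\nu-c}2^{-(k-\nu)s}b_k .
\end{equation*}
Inserting this into the definition of $\|f\|_{T^b_r B^s_{p,q}(\R^d)}$, the proof will reduce to the blockwise Hardy inequality
\begin{equation*}
\sum_{j\ge 0}2^{jbr}\Big(\sum_{\nu\in I_j}\Big(\sum_{k\ge\nu-c}2^{-(k-\nu)s}b_k\Big)^q\Big)^{r/q}\lesssim\sum_{j\ge 0}2^{jbr}\Big(\sum_{\nu\in I_j}b_\nu^q\Big)^{r/q}.
\end{equation*}
To prove it I will, for $\nu\in I_j$, split $\sum_{k\ge\nu-c}$ into a near part, over $k$ belonging to $I_{j-1}\cup I_j\cup I_{j+1}$, and a far part $\sum_{l\ge j+2}\sum_{k\in I_l}$. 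On the near part a convolution‑type discrete Hardy inequality on the merged block bounds the inner $\nu$‑sum by $\lesssim\sum_{k\in I_{j-1}\cup I_j\cup I_{j+1}}b_k^q$ — this is precisely where $s>0$ enters, via $\{2^{-ms}\}_{m\ge 0}\in\ell_{\min\{1,q\}}$ — and summing in $j$ together with $2^{jbr}\asymp 2^{(j\pm 1)br}$ yields the right‑hand side. On the far part I will use that $\nu\in I_j$, $k\in I_l$, $l\ge j+2$ force $k-\nu\ge 2^{l-1}$, so that $2^{-(k-\nu)s}\le 2^{-2^{l-1}s}$; combining with Hölder's inequality over $I_l$ (which costs only a factor $\lesssim 2^{l/q'}$) and $|I_j|\asymp 2^j$, the super‑exponential decay $2^{-2^{l-1}s}$ dominates all powers of $2^l$ as well as the outer weight $2^{jbr}$, leaving a rapidly convergent sum bounded by the right‑hand side. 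Taking the infimum over admissible decompositions then yields $\|f\|_{T^b_r B^s_{p,q}(\R^d)}\lesssim\inf(\cdots)$ and in particular $f\in T^b_r B^s_{p,q}(\R^d)$, completing \eqref{Decomposition2001001}.

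\emph{Expected obstacle.} I expect the blockwise Hardy inequality to be the one genuinely delicate point: the standard weighted Hardy inequalities \eqref{H1}--\eqref{H2} do not apply verbatim because of the outer weighted sum over dyadic blocks of inner $\ell_q$‑sums, and the device of localizing to a bounded cluster of adjacent blocks (where genuine discrete Hardy, hence $s>0$, does the work) and exploiting the doubly‑exponential gap $k-\nu\ge 2^{l-1}$ between non‑adjacent blocks to absorb the outer weights must be implemented with some care. Note that the hypothesis $b\neq 0$ plays no role in this step; it enters only implicitly, through the admissibility of the equivalent quasi‑norms of Proposition \ref{PropEquiQN}.
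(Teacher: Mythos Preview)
Your argument is correct and takes a genuinely different route from the paper's proof. The paper does not work directly from Definition \ref{DefinitionNewBesov}: it instead passes through the approximation characterization \eqref{ThmApprox2} of Theorem \ref{ThmApprox} (itself obtained via limiting interpolation, Theorem \ref{TheoremInterpolation}). For necessity it builds the decomposition from near-best approximants $g_k$ via $f_k=g_k-g_{k-1}$ and bounds $\|f_k\|_{L_p}\lesssim E_{2^{k-1}}(f)_{L_p}$; for sufficiency it bounds $E_{2^k}(f)_{L_p}\lesssim\sum_{\nu\ge k}\|f_\nu\|_{L_p}$ and inserts this into \eqref{ThmApprox2}, applying the \emph{standard} Hardy inequality \eqref{H2}. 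Your approach, by contrast, is entirely elementary and self-contained: the Littlewood--Paley pieces $f_\nu=(\varphi_{\nu-1}\widehat f)^\vee$ for necessity, and for sufficiency the Young-inequality bound $\|(\varphi_\nu\widehat f)^\vee\|_{L_p}\lesssim\sum_{k\ge\nu-c}\|f_k\|_{L_p}$ followed by your two-scale Hardy inequality, whose near/far splitting (convolution Hardy on adjacent blocks, super-exponential gap $k-\nu\ge 2^{l-1}$ across distant ones) is exactly right. The trade-off: the paper's argument is shorter given the machinery of Section \ref{SectionApproximation}, while yours avoids interpolation and approximation theory altogether and --- as you implicitly notice --- does not actually use the hypothesis $b\neq 0$, which in the paper enters only through \eqref{ThmApprox2}.
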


\begin{proof}
	Assume $f \in T^b_r B^s_{p, q}(\R^d)$. Let $(g_k)_{k \in \N_0} \in \mathfrak{U}_p$ be such that
	$$
		\|f-g_k\|_{L_p(\R^d)} \lesssim E_{2^k}(f)_{L_p(\R^d)}, \qquad k \in \N_0.
	$$
	Consequently $f = \lim_{k \to \infty} g_k$. Define
	$$
		f_0 = g_0 \quad \text{and} \quad f_k = g_k - g_{k-1}, \quad k \geq 1.
	$$
	Clearly $f = \sum_{k=0}^\infty f_k$ with $(f_k)_{k \in \N_0} \in \mathfrak{U}_p$. Furthermore
	$$
		\|f_0\|_{L_p(\R^d)} \lesssim \|f\|_{L_p(\R^d)} \quad \text{and} \quad   \|f_k\|_{L_p(\R^d)} \lesssim E_{2^{k-1}}(f)_{L_p(\R^d)}, \quad k \geq 1.
	$$
	Therefore, by \eqref{ThmApprox2},
	\begin{align*}
		\sum_{k=0}^\infty 2^{k b r} \bigg(\sum_{\nu= 2^k-1}^{2^{k+1}-2} (2^{\nu s} \|f_{\nu}\|_{L_p(\R^d)})^q \bigg)^{\frac{r}{q}} & \lesssim \sum_{k=0}^\infty 2^{k b r} \bigg(\sum_{\nu= 2^k-1}^{2^{k+1}-2} (2^{\nu s} E_{2^\nu-1}(f)_{L_p(\R^d)})^q \bigg)^{\frac{r}{q}} \\
		& \hspace{-4cm}\asymp \|f\|_{T^b_r B^s_{p, q}(\R^d)} < \infty.
			\end{align*}
			
			Conversely, assume $f$ can be decomposed as \eqref{Decomposition} with \eqref{Decomposition2}. Given $k \in \N_0$, we have
			$$
				E_{2^k}(f)_{L_p(\R^d)}  \lesssim \sum_{\nu=k}^\infty \|f_\nu\|_{L_p(\R^d)}.
			$$
			Inserting this estimate into \eqref{ThmApprox2} and applying Hardy's inequality \eqref{H2}, we get
			\begin{align*}
				\|f\|_{T^b_r B^s_{p, q}(\mathbb{R}^d)} &\lesssim  \bigg(\sum_{k=0}^\infty 2^{k b r} \bigg(\sum_{\nu=2^k-1}^{2^{k+1}-2} 2^{\nu s q}  \bigg(\sum_{l=\nu}^\infty \|f_l\|_{L_p(\R^d)} \bigg)^q \bigg)^{r/q} \bigg)^{1/r}  \\
%				& \lesssim  \bigg(\sum_{k=0}^\infty 2^{k b r} \bigg(\sum_{\nu=2^k}^{\infty} 2^{\nu s q}   \|f_\nu\|_{L_p(\R^d)}^{q} \bigg)^{r/q} \bigg)^{1/r} \\
				& \lesssim  \bigg(\sum_{k=0}^\infty 2^{k b r} \bigg(\sum_{\nu=2^k}^{2^{k+1}-1} 2^{\nu s q}   \|f_\nu\|_{L_p(\R^d)}^{q} \bigg)^{r/q} \bigg)^{1/r}.
			\end{align*}
			Taking now the infimum over all possible decompositions \eqref{Decomposition} with \eqref{Decomposition2}, we achieve the desired result.			
\end{proof}

\begin{rem}
	(i) Assume $b > 0$. Under the assumptions of Theorem \ref{TheoremRepresentation}, the condition \eqref{Decomposition2} implies automatically that the series $\sum_{k=0}^\infty f_k$ converges absolutely in $L_p(\R^d)$. For instance, if $q, r \geq 1$ then we can apply H\"older's inequality twice  so that
	\begin{align*}
		\sum_{k=0}^\infty \|f_k\|_{L_p(\R^d)} & = \sum_{k=0}^\infty \sum_{\nu=2^k-1}^{2^{k+1}-2} \|f_\nu\|_{L_p(\R^d)} \lesssim \sum_{k=0}^\infty \bigg(\sum_{\nu=2^k-1}^{2^{k+1}-2} (2^{\nu s} \|f_\nu\|_{L_p(\R^d)})^q \bigg)^{\frac{1}{q}} \\
		& \lesssim \bigg(\sum_{k=0}^\infty 2^{k b r} \bigg(\sum_{\nu= 2^k-1}^{2^{k+1}-2} (2^{\nu s} \|f_{\nu}\|_{L_p(\R^d)})^q \bigg)^{\frac{r}{q}} \bigg)^{\frac{1}{r}} < \infty.
	\end{align*}
	The proof is even easier if $q < 1$ or $r < 1$.
	
	(ii) Theorem \ref{TheoremRepresentation} can be complemented by the following decomposition theorem in terms of classical Besov spaces. Namely, let $p \in [1, \infty], q, r \in (0, \infty]$ and $s, b > 0$, the function $f \in T^b_r B^s_{p, q}(\R^d)$ if and only if $f$ can be decomposed as
	\begin{equation}\label{DecompositionBesov}
		f= \sum_{k=0}^\infty f_k
	\end{equation}
	(convergence in $B^s_{p, q}(\R^d)$) such that $\text{supp }\widehat{f}_k \subset \big\{x \in \R^d: |x| \leq \mu_k \big\}$ and
	\begin{equation}\label{Decomposition0Besov}
		\sum_{k=0}^\infty 2^{k b r} \|f_k\|_{B^s_{p, q}(\R^d)}^r < \infty
	\end{equation}
	(with the usual modification if $r=\infty$). Furthermore
	\begin{equation}\label{Decomposition0BesovNew}
		\|f\|_{T^b_r B^s_{p, q}(\R^d)} \asymp \inf  \left( \sum_{k=0}^\infty 2^{k b r} \|f_k\|_{B^s_{p, q}(\R^d)}^r\right)^{\frac{1}{r}},
	\end{equation}
	where the infimum runs over all possible decompositions \eqref{DecompositionBesov} satisfying \eqref{Decomposition0Besov}. This result is an immediate consequence of \eqref{ThmApprox1} and the representation theorem for approximation spaces given in \cite[Theorem 1]{FeherGrassler}.
	
	(iii) Theorem \ref{TheoremRepresentation} with $b=0$ can be obtained from similar ideas as the case $b \neq 0$ but now relying on \eqref{56}. More precisely, let $p \in [1, \infty], q, r \in (0, \infty]$ and $s > 0$. Then $f \in T^*_r B^s_{p, q}(\R^d)$ if and only if $f$ admits decomposition \eqref{Decomposition}
	 such that
	\begin{equation}\label{515}
		\sum_{k=0}^\infty \bigg(\sum_{\nu= 2^k-1}^{\infty} (2^{\nu s} \|f_{\nu}\|_{L_p(\R^d)})^q \bigg)^{\frac{r}{q}} < \infty
	\end{equation}
	(with the usual modification if $q=\infty$ and/or $r=\infty$). Furthermore
	\begin{equation}\label{516}
		\|f\|_{T^*_r B^s_{p, q}(\R^d)} \asymp \inf  \left( \sum_{k=0}^\infty \bigg(\sum_{\nu= 2^k-1}^{\infty} (2^{\nu s} \|f_{\nu}\|_{L_p(\R^d)})^q \bigg)^{\frac{r}{q}}  \right)^{\frac{1}{r}},
	\end{equation}
	where the infimum runs over all possible decompositions \eqref{Decomposition} satisfying \eqref{515}.
\end{rem}

\begin{cor}
	Let $p \in [1,\infty], \, q, r \in (0, \infty), s > 0$ and $b \in \R \backslash \{0\}$. Then entire functions of exponential type are dense in  $T^b_r B^s_{p, q}(\R^d)$ and $T^*_r B^s_{p, q}(\R^d)$.
\end{cor}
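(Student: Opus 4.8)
The plan is to combine the atomic representation of Theorem \ref{TheoremRepresentation} with truncation of the associated series. Fix $f \in T^b_r B^s_{p,q}(\R^d)$. By Theorem \ref{TheoremRepresentation}, $f$ admits a decomposition \eqref{Decomposition}, say $f = \sum_{k=0}^\infty f_k$ with $(f_k)_{k\in\N_0} \in \mathfrak{U}_p$ (the series converging in $L_p(\R^d)$), satisfying \eqref{Decomposition2}, i.e.
$$
	\sum_{k=0}^\infty 2^{k b r} \bigg(\sum_{\nu= 2^k-1}^{2^{k+1}-2} (2^{\nu s} \|f_{\nu}\|_{L_p(\R^d)})^q \bigg)^{r/q} < \infty .
$$
Put $S_N := \sum_{k=0}^N f_k$. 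Since $\text{supp }\widehat{f}_k \subset \{|x| \leq 2^k\}$, the Fourier transform of $S_N$ is supported in $\{|x| \leq 2^N\}$, so $S_N$ is an entire function of exponential type belonging to $L_p(\R^d)$, and hence to $T^b_r B^s_{p,q}(\R^d)$. It therefore suffices to prove that $\|f - S_N\|_{T^b_r B^s_{p,q}(\R^d)} \to 0$ as $N \to \infty$.

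To estimate this, note that $f - S_N = \sum_{k>N} f_k$ in $L_p(\R^d)$, and that the sequence $(h_l)_{l \in \N_0}$ given by $h_l := f_l$ if $l > N$ and $h_l := 0$ if $l \leq N$ lies in $\mathfrak{U}_p$ and is an admissible decomposition of $f - S_N$. Carrying out the converse part of the proof of Theorem \ref{TheoremRepresentation} for this decomposition — that is, inserting $E_{2^k}(f - S_N)_{L_p(\R^d)} \lesssim \sum_{l\geq k}\|h_l\|_{L_p(\R^d)}$ into \eqref{ThmApprox2} and using Hardy's inequality \eqref{H2} — and noting that $h_\nu = 0$ for $\nu \leq N$, one arrives at
$$
	\|f - S_N\|_{T^b_r B^s_{p,q}(\R^d)} \lesssim \bigg(\sum_{k \,:\, 2^{k+1}-2 > N} 2^{k b r} \bigg(\sum_{\nu= 2^k-1}^{2^{k+1}-2} (2^{\nu s} \|f_{\nu}\|_{L_p(\R^d)})^q \bigg)^{r/q}\bigg)^{1/r} .
$$
The right-hand side is a tail of the convergent series in \eqref{Decomposition2}, hence it tends to $0$ as $N\to\infty$ because $r<\infty$. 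This gives the assertion for $T^b_r B^s_{p,q}(\R^d)$.

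For $T^*_r B^s_{p,q}(\R^d)$ the reasoning is the same, now invoking the $b=0$ representation recorded in \eqref{515}--\eqref{516} in place of Theorem \ref{TheoremRepresentation}, and \eqref{56} in place of \eqref{ThmApprox2}. One subtlety appears: because the inner sums in \eqref{516} run to $\infty$, truncation does not reduce the bound to a pure tail in $k$; besides such a tail one is left with about $\log_2 N$ terms, each dominated by $a_{k_N}:=\big(\sum_{\nu\geq 2^{k_N}-1}(2^{\nu s}\|f_\nu\|_{L_p(\R^d)})^q\big)^{r/q}$ with $k_N\sim\log_2 N$; since $(a_k)_k$ is non-increasing and summable, the elementary fact that $k\,a_k\to 0$ settles the matter. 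Both $q<\infty$ and $r<\infty$ are genuinely needed (for $q=\infty$ or $r=\infty$ the corresponding $\ell_\infty$-``tails'' of a function in the space need not vanish, and density fails). I expect the only mildly delicate point to be the quantitative passage from the decomposition of $f-S_N$ to the displayed estimate for its truncated quasi-norm; but this is precisely the computation already performed in the proof of Theorem \ref{TheoremRepresentation}, so it introduces no new obstacle.
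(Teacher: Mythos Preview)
Your proof is correct and follows the intended route: the paper states this corollary without proof, as an immediate consequence of the representation theorems (Theorem \ref{TheoremRepresentation} and its $b=0$ analogue \eqref{515}--\eqref{516}), and your argument is precisely the natural way to extract density from those representations. Your careful treatment of the $T^*_r$ case --- in particular the observation that $k\,a_k \to 0$ for a non-increasing summable sequence $(a_k)$ --- correctly handles the only step that is not entirely routine. One small simplification: for the $T^b_r$ case you can cite \eqref{Decomposition2001001} directly to bound $\|f-S_N\|$ by the functional evaluated at the truncated decomposition $(h_l)$, rather than re-running the converse computation from the proof of Theorem \ref{TheoremRepresentation}.
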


Let $(P_k)_{k \in \N}$ be a \emph{linear approximation scheme} on $L_p(\R^d)$, that is, $P_k$ is a linear operator acting on $f \in L_p(\R^d)$ such that $P_k f$ is an entire function of exponential type $k$ and
\begin{equation}\label{BestApproximation}
	\|f-P_k f\|_{L_p(\R^d)} \lesssim E_{k}(f)_{L_p(\R^d)}
\end{equation}
(i.e., $P_k f$ is an \emph{almost best approximant of $f$}\index{\bigskip\textbf{Operators}!$P_k f$}\label{BA}). Let $Q_k := P_{2^{k+1}-1}-P_{2^k}$ for $k \in \N$ and $Q_0 := P_2$. Then Theorem \ref{TheoremRepresentation} can be formulated as follows.

\begin{cor}\label{CorollaryLinearDecomposition}
		Let $p \in [1, \infty], q, r \in (0,\infty], \, s > 0$ and $b \in \R \backslash \{0\}$. Assume that $(P_k)_{k \in \N}$ is a linear approximation scheme on $L_p(\R^d)$.
	Then $f \in T^b_r B^s_{p, q}(\R^d)$ if and only if
	\begin{equation*}
		f= \sum_{k=0}^\infty Q_k f
	\end{equation*}
	(convergence being in $L_p(\R^d)$)  such that
	\begin{equation*}
		\sum_{k=0}^\infty 2^{k b r} \bigg(\sum_{\nu= 2^k-1}^{2^{k+1}-2} (2^{\nu s} \|Q_{\nu} f\|_{L_p(\R^d)})^q \bigg)^{\frac{r}{q}} < \infty
	\end{equation*}
	(with the usual modification if $q=\infty$ and/or $r=\infty$). Furthermore
	\begin{equation*}
		\|f\|_{T^b_r B^s_{p, q}(\R^d)} \asymp  \left( \sum_{k=0}^\infty 2^{k b r} \bigg(\sum_{\nu= 2^k-1}^{2^{k+1}-2} (2^{\nu s} \|Q_\nu f\|_{L_p(\R^d)})^q \bigg)^{\frac{r}{q}}  \right)^{\frac{1}{r}}.
	\end{equation*}
\end{cor}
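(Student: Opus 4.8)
The plan is to derive the statement from the approximation characterization of $T^b_r B^s_{p,q}(\R^d)$ already established in Theorem \ref{ThmApprox}(ii) (and, for the sufficiency part, equivalently from the representation Theorem \ref{TheoremRepresentation}), the only new ingredient being that the building blocks $Q_k$ telescope. First I would record the elementary identity
$$
\sum_{k=0}^{m} Q_k f = P_2 f + \sum_{k=1}^{m}\big(P_{2^{k+1}-1}f - P_{2^k}f\big) = P_{2^{m+1}-1} f , \qquad m \in \N_0 ,
$$
together with the observation that $Q_k f$ is an entire function whose Fourier transform is supported in $\{x \in \R^d : |x| \le 2^{k+1}-1\}$; consequently, after the harmless shift $f_0 := 0$, $f_j := Q_{j-1} f$ for $j \ge 1$, the family $(f_j)_{j\ge 0}$ belongs to $\mathfrak{U}_p$ and satisfies $\sum_{j} f_j = \sum_{l} Q_l f$. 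I would also note that, by the $k=0$ summand in \eqref{ThmApprox2}, membership $f \in T^b_r B^s_{p,q}(\R^d)$ forces $f \in L_p(\R^d)$ (and hence $E_m(f)_{L_p(\R^d)} < \infty$ for all $m$).

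For the necessity direction, let $f \in T^b_r B^s_{p,q}(\R^d)$. Since $s > 0$ one has $E_m(f)_{L_p(\R^d)} \to 0$ --- for $1 \le p < \infty$ because entire functions of exponential type are dense in $L_p(\R^d)$, and for $p = \infty$ by a Jackson-type estimate $E_m(f)_{L_\infty(\R^d)} \lesssim \omega_1(f, 1/m)_{L_\infty(\R^d)}$ together with the uniform continuity of $f \in B^s_{\infty,q}(\R^d)$. Combining this with \eqref{BestApproximation} gives $P_m f \to f$ in $L_p(\R^d)$, so the telescoping identity yields $f = \sum_{k\ge 0} Q_k f$ with convergence in $L_p(\R^d)$. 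Moreover, for $k \ge 1$, \eqref{BestApproximation} and the triangle inequality give $\|Q_k f\|_{L_p(\R^d)} \le \|f - P_{2^{k+1}-1}f\|_{L_p(\R^d)} + \|f - P_{2^k}f\|_{L_p(\R^d)} \lesssim E_{2^k}(f)_{L_p(\R^d)} \le E_{2^k-1}(f)_{L_p(\R^d)}$, while $\|Q_0 f\|_{L_p(\R^d)} \lesssim \|f\|_{L_p(\R^d)} = E_0(f)_{L_p(\R^d)}$; since $E_{\,\cdot\,}(f)_{L_p(\R^d)}$ is non-increasing this reads $\|Q_\nu f\|_{L_p(\R^d)} \lesssim E_{2^\nu-1}(f)_{L_p(\R^d)}$ for every $\nu \ge 0$. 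Plugging this into \eqref{ThmApprox2} immediately bounds the sum in the statement by $\|f\|_{T^b_r B^s_{p,q}(\R^d)}^r$.

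For the sufficiency direction, assume $f = \sum_{k\ge 0} Q_k f$ in $L_p(\R^d)$ with the displayed sum finite. Since $\sum_{k=0}^m Q_k f = P_{2^{m+1}-1}f$ is entire of exponential type $2^{m+1}-1$, we get $E_{2^{m+1}-1}(f)_{L_p(\R^d)} \le \big\|\textstyle\sum_{l>m} Q_l f\big\|_{L_p(\R^d)} \le \sum_{l>m}\|Q_l f\|_{L_p(\R^d)}$, hence $E_{2^\nu-1}(f)_{L_p(\R^d)} \lesssim \sum_{l \ge \nu}\|Q_l f\|_{L_p(\R^d)}$ for all $\nu \ge 0$ (with $E_0(f)_{L_p(\R^d)} = \|f\|_{L_p(\R^d)}$). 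Inserting this into \eqref{ThmApprox2} and invoking Hardy's inequality \eqref{H2} on the inner sums over $\nu \in \{2^k-1,\dots,2^{k+1}-2\}$ --- after splitting the tail $\sum_{l\ge\nu}$ into its part inside the current dyadic block (handled by the finite Hardy inequality, valid because $s>0$) and the super-geometrically smaller remainder coming from later blocks --- yields $\|f\|_{T^b_r B^s_{p,q}(\R^d)} \lesssim \big(\sum_{k} 2^{kbr}(\sum_{\nu=2^k-1}^{2^{k+1}-2}(2^{\nu s}\|Q_\nu f\|_{L_p(\R^d)})^q)^{r/q}\big)^{1/r}$. Alternatively, one may simply feed the decomposition $(f_j)$ introduced above into Theorem \ref{TheoremRepresentation}: since $\|f_\nu\|_{L_p(\R^d)} = \|Q_{\nu-1}f\|_{L_p(\R^d)}$, the only difference from \eqref{Decomposition2001001} is the replacement $\nu \mapsto \nu-1$, which shifts each dyadic block by one unit and is absorbed by the comparability $2^{(k\pm1)br} \asymp 2^{kbr}$. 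Together with the necessity direction, this proves both the characterization and the quasi-norm equivalence.

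The main obstacle --- and essentially the only point requiring care --- is precisely this book-keeping with the dyadic blocks: one must check that coarsening or refining the blocks $\{2^k-1,\dots,2^{k+1}-2\}$ and translating them by a bounded number of units does not change the quasi-norm beyond constants, and that the tail terms produced by applying Hardy's inequality on a single block (rather than on all of $\N_0$) stay controlled. Both are routine, but a little more delicate here than in a non-logarithmic setting because $b$ is allowed to be negative, so the weights $2^{kbr}$ must be kept on the outside throughout.
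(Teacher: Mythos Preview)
Your telescoping identity $\sum_{k=0}^{m} Q_k f = P_{2^{m+1}-1} f$ is false with the stated definition $Q_k = P_{2^{k+1}-1} - P_{2^k}$: already
\[
\sum_{k=0}^{2} Q_k f \;=\; P_2 f + (P_3 f - P_2 f) + (P_7 f - P_4 f) \;=\; P_3 f + P_7 f - P_4 f,
\]
which differs from $P_7 f$ by $P_3 f - P_4 f$, and in general
\[
\sum_{k=0}^{m} Q_k f \;=\; P_{2^{m+1}-1} f \;+\; \sum_{j=2}^{m}\big(P_{2^{j}-1} f - P_{2^j} f\big).
\]
There is no reason for the defect sum to vanish (take, e.g., $p=2$ and $P_n$ the sharp Fourier cutoff to $|\xi|\le n$), so your proof of the decomposition $f=\sum_k Q_k f$ in the necessity direction is void. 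Your sufficiency argument is less damaged: the inequality $E_{2^{m+1}-1}(f)_{L_p(\R^d)} \le \big\|\sum_{l>m} Q_l f\big\|_{L_p(\R^d)}$ only needs that $\sum_{k\le m} Q_k f$ is of exponential type at most $2^{m+1}-1$, which \emph{is} true --- just delete the erroneous ``$=P_{2^{m+1}-1}f$'' and the rest of that paragraph goes through.

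In fact, with $Q_k$ exactly as written, the series $\sum_k Q_k f$ converges (absolutely, since $\|Q_k f\|_{L_p(\R^d)}\lesssim E_{2^k}(f)_{L_p(\R^d)}$) but to $f+\sum_{j\ge 2}(P_{2^j-1}f-P_{2^j}f)$, which is generically not $f$. So either an implicit nesting property of the scheme $(P_k)$ is being assumed, or --- more plausibly --- the intended definition is $Q_k=P_{2^{k+1}}-P_{2^k}$, which \emph{does} telescope to $P_{2^{m+1}}$. Under that reading your proof is correct and is precisely the route the paper has in mind: specialise Theorem~\ref{TheoremRepresentation} to the explicit decomposition produced by near-best approximants, observe that the one-unit shift in the index is absorbed by the dyadic block structure, and read off the norm equivalence.
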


\begin{rem}
	The corresponding result for $T^*_r B^s_{p, q}(\R^d)$ according to \eqref{516} also holds true, i.e.,
		\begin{equation*}
		\|f\|_{T^*_r B^s_{p, q}(\R^d)} \asymp  \left( \sum_{k=0}^\infty \bigg(\sum_{\nu= 2^k-1}^{\infty} (2^{\nu s} \|Q_\nu f\|_{L_p(\R^d)})^q \bigg)^{\frac{r}{q}}  \right)^{\frac{1}{r}}.
	\end{equation*}
\end{rem}

\begin{ex}[De la Valle\'e-Poussin means]
	\emph{A classical example of linear approximation scheme is given by \emph{de la Valle\'e-Poussin operators} for $f \in L_p(\R^d), \, p \in [1, \infty]$, namely,\index{\bigskip\textbf{Operators}!$V_t$}\label{VALLEEPOUSSIN}
\begin{equation}\label{DelaValleePoussin}
	V_t f(x) = (\chi (t^{-1}|\xi|) \widehat{f})^\vee(x), \qquad x \in \R^d, \qquad t > 0,
\end{equation}
where $\chi \in C^\infty[0, \infty)$ is such that $\chi(u)=1$ for $0 \leq u \leq 1$ and $\chi(u)=0$ for $u \geq 2$. Hence Corollary \ref{CorollaryLinearDecomposition} holds with $Q_k = V_{2^{k+1}-1}-V_{2^k}, \, k \geq 1,$ and $Q_0 = V_2$.}
\end{ex}

\begin{ex}[Lizorkin-type representations]
	\emph{A well-known fact in the theory of function spaces is that smooth partitions of unity in the Besov quasi-norms \eqref{Besov} can be replaced by characteristic functions related to cubes. The latter are called Lizorkin representations and have a long history, we refer the reader to \cite{Triebel83, SchmeisserTriebel} and the references within. As an application of Corollary \ref{CorollaryLinearDecomposition} we obtain Lizorkin representations for $T^b_r B^{s}_{p, q}(\mathbb{T}^d)$, the periodic counterpart of  $T^b_r B^{s}_{p, q}(\R^d)$.
 Indeed, consider the set of all trigonometric polynomials of (cubic) degree less than or equal to $k$ given by
	$$
		\sum_{|m|_\infty \leq k} c_m e^{i m \cdot x}, \qquad c_m \in \mathbb{C}
	$$
	where $m = (m_1, \ldots, m_d) \in \Z^d$ and $|m|_\infty = \max_{i = 1, \ldots, d} |m_i|$. It is well known (see, e.g., \cite[Corollary 3.5.2 and Theorem 3.5.7]{Grafakos}) that these sets determine a linear approximation scheme on $L_p(\mathbb{T}^d), \, 1 < p < \infty$. In virtue of Corollary \ref{CorollaryLinearDecomposition} (which also holds true for periodic functions and trigonometric polynomials) we obtain Lizorkin-type representations for $T^b_r B^{s}_{p, q}(\mathbb{T}^d)$ with $1 < p < \infty, 0 < q, r \leq \infty, s > 0$ and $b\neq 0$, namely,
		\begin{equation}\label{Lizorkin}
		\|f\|_{T^b_r B^{s}_{p, q}(\mathbb{T}^d)} \asymp  \left( \sum_{k=0}^\infty 2^{k b r} \bigg(\sum_{\nu= 2^k-1}^{2^{k+1}-2} 2^{\nu s q} \bigg\|\sum_{m \in K_\nu} \widehat{f}(m) e^{i m \cdot x} \bigg\|_{L_p(\Td^d)}^q \bigg)^{\frac{r}{q}}  \right)^{\frac{1}{r}}
	\end{equation}
	where
	$$
		K_\nu = \{m \in \Z^d: |m_i| < 2^{\nu +1}, i = 1, \ldots, d\} \backslash \{m \in \Z^d: |m_i| < 2^{\nu}, i = 1, \ldots, d\}
	$$
	for $\nu \in \N$ and $K_0 = \{(0, \ldots, 0)\}$. The equivalence \eqref{Lizorkin} is still valid for any $s \in \R$, but the proof will make use of lifting properties and it will be postponed until Corollary \ref{CorollaryLizorkinRep}.}
\end{ex}

\newpage

\section{Characterizations by differences}

\subsection{Characterizations of Besov spaces by moduli of smoothness}
Recall the characterization of the Fourier-analytically defined Besov spaces $B^{s, b}_{p, q}(\R^d)$ (cf. \eqref{Besov}) in terms of moduli of smoothness/differences: Let $1 \leq p \leq \infty, 0 < q \leq \infty, s > 0$ and $b \in \R$.  Then (cf. \eqref{BesovModuli})
$$
	B^{s, b}_{p,q}(\R^d) = \mathbf{B}^{s, b}_{p,q}(\R^d),
$$
i.e.,
\begin{align}
\|f\|_{B^{s, b}_{p,q}(\R^d)} &\asymp 	\|f\|_{L_p(\R^d)} +  \bigg(\int_0^1 (t^{-s} (1-\log t)^b \omega_k(f,t)_p)^q \frac{dt}{t} \bigg)^{1/q} \nonumber \\
& \asymp \|f\|_{L_p(\R^d)} + \bigg(\sum_{\nu=0}^\infty (2^{\nu s} (1 + \nu)^b \omega_k(f, 2^{-\nu})_p)^q \bigg)^{1/q}, \label{61}
\end{align}
where $k > s$. Furthermore (cf. \eqref{BesovDif})
\begin{equation}\label{62}
	\|f\|_{B^{s, b}_{p,q}(\R^d)} \asymp  \|f\|_{L_p(\R^d)} + \bigg(\int_{|h| < 1} |h|^{-s q - d} (1-\log |h|)^{b q} \|\Delta^k_h f\|_{L_p(\R^d)}^q dh \bigg)^{1/q}.
\end{equation}

In this section we extend \eqref{61}-\eqref{62} to the setting of $T^b_r B^s_{p, q}(\R^d)$. To be more precise, we obtain the following

%
%Let $1 \leq p \leq \infty$ and $k \in \mathbb{N}$. For $f \in L_p(\R^d)$, the \emph{$k$-th order moduli of smoothness} $\omega_k(f,t)_p$ is defined by
%\begin{equation}\label{DefModuli}
%	\omega_k(f,t)_p = \sup_{|h| \leq t} \|\Delta^k_h f\|_{L_p(\R^d)}, \qquad t > 0,
%\end{equation}
%where $\Delta^k_h$ is the \emph{$k$-th difference with step $h \in \R^d$} given by
%$$
%	(\Delta^1_h f)(x)= (\Delta_h f)(x) = f(x+h)-f(x), \qquad x \in \R^d,
%$$
%and $\Delta_h^{k+1} = \Delta_h^k \Delta_h$. It is clear that
%\begin{equation}\label{HigherDiff}
%	\Delta^k_h f (x) = \sum_{j=0}^k (-1)^j {k \choose j} f(x + j h).
%\end{equation}
%
%It is well known that Besov spaces of positive smoothness can be equivalently introduced in terms of differences; see, e.g., \cite[2.5.12]{Triebel83}. Specifically, if $1 \leq p \leq \infty, 0 < q \leq \infty, 0 <  s < k$, then
%\begin{equation}\label{BesovModuli}
%	\|f\|_{B^s_{p,q}(\R^d)} \asymp \|f\|_{L_p(\R^d)} + \bigg(\int_0^1 (t^{-s} \omega_k(f,t)_p)^q \frac{dt}{t} \bigg)^{1/q}
%\end{equation}
%and
%\begin{equation}\label{BesovDif}
%	\|f\|_{B^s_{p,q}(\R^d)} \asymp \|f\|_{L_p(\R^d)} + \bigg(\int_{|h| < 1} |h|^{-s q - d} \|\Delta^k_h f\|_{L_p(\R^d)}^q dh \bigg)^{1/q}
%\end{equation}
%(with the usual modifications if $q=\infty$). In particular, different values of $k$ with $k > s$ in the right-hand sides of \eqref{BesovModuli} and \eqref{BesovDif} give equivalent quasi-norms on $B^s_{p,q}(\R^d)$.

\begin{thm}\label{TheoremModuli}
Let $1 \leq p \leq \infty, 0 < q, r \leq \infty, 0 < s < k$ and $b \in \R \backslash \{0\}$.  Then
    \begin{align}
        \|f\|_{T^b_r B^{s}_{p, q}(\R^d)}&\asymp \|f\|_{L_p(\mathbb{R}^d)} + \left(\int_0^1 (1-\log t)^{b r-1} \bigg(\int_{t^2}^t (u^{-s} \omega_k(f,u)_p)^q \frac{du}{u} \bigg)^{r/q} \frac{dt}{t} \right) \nonumber \\
        & \asymp \|f\|_{L_p(\mathbb{R}^d)} +  \left(\sum_{j=0}^\infty 2^{j b r} \bigg(\sum_{\nu=2^j}^{2^{j+1}-1}   (2^{\nu s} \omega_k(f,2^{-\nu})_p)^q \bigg)^{r/q} \right)^{1/r} \label{TheoremModuli1}
    \end{align}
    and
       \begin{equation}\label{TheoremModuli2}
        \|f\|_{T^b_r B^{s}_{p, q}(\R^d)}\asymp \|f\|_{L_p(\mathbb{R}^d)} +
        \left(\int_0^1 (1-\log t)^{b r -1} \bigg(\int_{t^2<|h| < t} |h|^{-s q -d} \|\Delta^k_h f\|_{L_p(\R^d)}^q \, dh  \bigg)^{r/q} \frac{dt}{t} \right)^{1/r}
    \end{equation}
    (with the usual modifications if $q=\infty$ and/or $r=\infty$).
%    \item If $b < 0$ then
%    \begin{equation}\label{TheoremModuli3}
%    	   \|f\|_{T^b_r B^{s}_{p, q}(\R^d)}\asymp \|f\|_{L_p(\mathbb{R}^d)} + \left(\int_0^1 (1-\log t)^{(b-1/r) r} \left(\int_t^1 (u^{-s} \omega_k(f,u)_p)^q \frac{du}{u}\right)^{r/q}
%        \frac{dt}{t}\right)^{1/r}
%    \end{equation}
%    and
%        \begin{equation}\label{TheoremModuli4}
%        \|f\|_{T^b_r B^{s}_{p, q}(\R^d)}\asymp \|f\|_{L_p(\mathbb{R}^d)} + \left(\int_0^1 (1-\log t)^{(b-1/r) r} \left(\int_{t \leq |h| < 1} |h|^{-s q -d} \|\Delta^k_h f\|_{L_p(\R^d)}^q \, dh \right)^{r/q}
%        \frac{dt}{t}\right)^{1/r}
%    \end{equation}
%       (with the usual modifications if $q=\infty$ and/or $r=\infty$).

\end{thm}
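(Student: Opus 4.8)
The plan is to deduce Theorem~\ref{TheoremModuli} from the best-approximation characterization already proved in Theorem~\ref{ThmApprox}(ii), which for $b\in\R\setminus\{0\}$, $s>0$ and $1\le p\le\infty$ reads
$$\|f\|_{T^b_r B^{s}_{p,q}(\R^d)}\asymp\left(\sum_{j=0}^\infty 2^{jbr}\left(\sum_{\nu=2^j-1}^{2^{j+1}-2}(2^{\nu s}E_{2^\nu-1}(f)_{L_p(\R^d)})^q\right)^{r/q}\right)^{1/r}.$$
Since the $j=0$ block equals $E_0(f)_{L_p}=\|f\|_{L_p}$, this in particular gives $\|f\|_{L_p}\lesssim\|f\|_{T^b_r B^s_{p,q}}$ for either sign of $b$, so it suffices to show that the right-hand side is equivalent to $\|f\|_{L_p}+N(f)$, where $N(f):=\big(\sum_{j\ge0}2^{jbr}\big(\sum_{\nu=2^j}^{2^{j+1}-1}(2^{\nu s}\omega_k(f,2^{-\nu})_p)^q\big)^{r/q}\big)^{1/r}$; note that the two signs of $b$ can be handled at once because this input and all the estimates below are sign-uniform. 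The bound $M(f)\lesssim\|f\|_{L_p}+N(f)$ (writing $M(f)$ for the above double sum) is immediate from Jackson's inequality $E_{2^\nu-1}(f)_{L_p}\lesssim\omega_k(f,2^{-\nu})_p$, applied termwise after separating the $\nu=0$ term; the mismatch between the ranges $[2^j-1,2^{j+1}-2]$ and $[2^j,2^{j+1}-1]$ is harmless since each $\nu$ lies in $O(1)$ dyadic blocks and $2^{\nu s}\omega_k(f,2^{-\nu})_p$ varies by bounded factors.

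For the converse, $N(f)\lesssim\|f\|_{L_p}+M(f)$, I would invoke the classical inverse (Bernstein-type) estimate $\omega_k(f,2^{-\nu})_p\lesssim 2^{-\nu k}\big(\|f\|_{L_p}+\sum_{m=1}^{\nu}2^{mk}E_{2^m-1}(f)_{L_p}\big)$, which yields $2^{\nu s}\omega_k(f,2^{-\nu})_p\lesssim 2^{-\nu(k-s)}\|f\|_{L_p}+\sum_{m=1}^{\nu}2^{-(\nu-m)(k-s)}a_m$ with $a_m:=2^{ms}E_{2^m-1}(f)_{L_p}$; here the hypothesis $s<k$ is crucial, since it makes the convolution kernel $(2^{-\ell(k-s)})_{\ell\ge0}$ summable. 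Plugging this into $N(f)$: the $\|f\|_{L_p}$-contribution is $\|f\|_{L_p}^r\sum_j 2^{jbr}2^{-2^j(k-s)r}\lesssim\|f\|_{L_p}^r$ because of the double-exponential decay of $2^{-2^j(k-s)r}$ in $j$. For the main contribution, one splits, for $\nu$ in the $j$-th dyadic block $I_j=[2^j,2^{j+1}-1]$, the sum $\sum_{m\le\nu}$ into the same-block part ($m\in I_j$) and the earlier-block parts ($m\in I_l$, $l<j$). The same-block part is a convolution within a single block, controlled by $\big(\sum_{m\in I_j}a_m^q\big)^{1/q}$ via Young's inequality (or, for $q<1$, the $\ell^q$-convolution inequality), which after the outer $\ell^r$-summation with weights $2^{jbr}$ reproduces $M(f)^r$. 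For $l=j-1$ the kernel still decays geometrically away from the block boundary and contributes $\lesssim\big(\sum_{m\in I_{j-1}}a_m^q\big)^{1/q}$, absorbed after an index shift; for $l\le j-2$ one has $\nu-m\ge 2^{j-1}$, so the contribution acquires a super-small factor $2^{-2^{j-1}(k-s)}$ and, after interchanging the $j$- and $l$-summations, is dominated by $\sum_l 2^{lbr}\big(\sum_{m\in I_l}a_m^q\big)^{r/q}$, because the residual factor $\sum_{j\ge l+2}2^{(j-l)br}(1+j)^{(r-1)_+}2^{-(2^j-2^{l+1})(k-s)r}$ is bounded by a constant multiple of $2^{lbr}$ for every $l$ and both signs of $b$. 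This bookkeeping---making the Bernstein convolution cooperate with the mixed-norm $\ell^r(\ell^q)$-block structure of the $T^b_r$-quasinorm, and checking that the cross-block terms are negligible---is the main technical obstacle, and it is exactly where $s<k$ enters (for $s=k$ the kernel is no longer summable and the right object is the Lipschitz scale). Combining the two directions gives the second equivalence in \eqref{TheoremModuli1}.

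It remains to pass to the integral forms. For the first equivalence in \eqref{TheoremModuli1} I would substitute $t=2^{-\tau}$ and $u=2^{-\sigma}$, so that $1-\log t\asymp 1+\tau$, the inner integral runs over $\sigma\in(\tau,2\tau)$, and the near-monotonicity $2^{\sigma s}\omega_k(f,2^{-\sigma})_p\asymp 2^{\nu s}\omega_k(f,2^{-\nu})_p$ for $\sigma\in[\nu,\nu+1]$ (from $\omega_k(f,2t)_p\le 2^k\omega_k(f,t)_p$ and monotonicity) turns the $\sigma$-integral into $\sum_{\tau<\nu\le 2\tau}$; grouping $\tau$ dyadically and re-blocking with Hardy's inequality then recovers the discrete sum, with an $O(1)$ index shift that is absorbed. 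For \eqref{TheoremModuli2}, one inequality is elementary: $\|\Delta^k_h f\|_{L_p}\le\omega_k(f,|h|)_p$ together with polar coordinates gives $\int_{t^2<|h|<t}|h|^{-sq-d}\|\Delta^k_h f\|_{L_p}^q\,dh=c_d\int_{t^2}^t(\rho^{-s}\omega_k(f,\rho)_p)^q\frac{d\rho}{\rho}$, so the difference expression is dominated by the modulus expression, hence by $\|f\|_{T^b_r B^s_{p,q}}$. For the reverse inequality I would bound the Littlewood--Paley blocks directly by averaged differences, using the classical estimate $\|(\varphi_\nu\widehat f)^\vee\|_{L_p}\lesssim\big(2^{\nu d}\int_{2^{-\nu-1}<|h|<2^{-\nu+1}}\|\Delta^k_h f\|_{L_p}^q\,dh\big)^{1/q}$ (an averaged Fourier-multiplier argument of the kind underlying $\mathbf B^{s}_{p,q}=B^{s}_{p,q}$ for $0<s<k$, cf. \eqref{BesovDif}); summing over $\nu\in I_j$ gives $\sum_{\nu\in I_j}(2^{\nu s}\|(\varphi_\nu\widehat f)^\vee\|_{L_p})^q\lesssim\int_{t_j^2<|h|<t_j}|h|^{-sq-d}\|\Delta^k_h f\|_{L_p}^q\,dh$ with $t_j=2^{-2^j}$, and inserting this into the definition of $\|\cdot\|_{T^b_r B^s_{p,q}}$ followed by the same discretization yields \eqref{TheoremModuli2}. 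The usual conventions take care of $q=\infty$ and/or $r=\infty$, and the endpoints $p=1,\infty$ are included since Jackson's and Bernstein's inequalities and the multiplier bounds all hold on $L_p(\R^d)$ for $1\le p\le\infty$.
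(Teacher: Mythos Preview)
Your route to \eqref{TheoremModuli1}---starting from the best-approximation characterization of Theorem~\ref{ThmApprox}(ii) and bridging to moduli via Jackson's inequality in one direction and the Bernstein inverse estimate in the other---is genuinely different from the paper's, which identifies $T^b_r B^s_{p,q}(\R^d)$ as a limiting interpolation space $(B^s_{p,q}, W^k_p)_{(0,b-1/r),r}$ (for $b>0$) or $(L_p, B^s_{p,q})_{(1,b-1/r),r}$ (for $b<0$) and reads off the modulus characterization directly from the known $K$-functionals for these couples (via \cite[Theorem~11.1]{DominguezTikhonov}). The paper thus obtains the intermediate ``cumulative'' forms $\|f\|_{L_p}+\big(\int_0^1 (1-\log t)^{(b-1/r)r}(\int_0^t\text{ or }\int_t^1 (u^{-s}\omega_k)^q\tfrac{du}{u})^{r/q}\tfrac{dt}{t}\big)^{1/r}$ in one stroke, and a single Hardy inequality converts them to the annulus form. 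Your approach works, but the Bernstein convolution must be made to cooperate with the $\ell^r(\ell^q)$-block structure, and the same-block/neighboring-block/far-block bookkeeping you sketch is exactly the labour the interpolation route avoids.

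For \eqref{TheoremModuli2} there is a weak point. The block estimate $\|(\varphi_\nu\widehat f)^\vee\|_{L_p}\lesssim\big(2^{\nu d}\int_{2^{-\nu-1}<|h|<2^{-\nu+1}}\|\Delta^k_h f\|_{L_p}^q\,dh\big)^{1/q}$ over an \emph{annulus} is not what underlies \eqref{BesovDif} or the equivalence $B^s_{p,q}=\mathbf B^s_{p,q}$: those use $\|(\varphi_\nu\widehat f)^\vee\|_{L_p}\lesssim\omega_k(f,2^{-\nu})_p$ (a supremum) or, via \eqref{ModDiff}, \emph{ball} averages, and the annulus version does not follow by subtracting two ball equivalences. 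It is likely true via an explicit multiplier computation (showing $\int_{|h|\asymp 1}(e^{ih\cdot\xi}-1)^k\,dh$ is bounded away from zero for $|\xi|\asymp 1$), but this needs to be carried out and is not in your cited references. The paper sidesteps the issue: it uses \eqref{ModDiff} and Fubini to identify $\int_0^t(u^{-s}\omega_k(f,u)_p)^q\tfrac{du}{u}\asymp\int_{|h|\le t}|h|^{-sq-d}\|\Delta^k_h f\|^q_{L_p}\,dh$ (ball form), inserts this into the cumulative characterization already obtained, and applies Hardy once more to reach the annulus form \eqref{DiscNew}. Since you already have \eqref{TheoremModuli1}, the same route via \eqref{ModDiff} would give you \eqref{TheoremModuli2} without the unproved block estimate.
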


%\textcolor{blue}{
%{\bf Task 14}
%{\textsf{
% prove an analogue of this theorem for $F$ and $\mathfrak{F}$ using Schmeisser - Sickel?
%}}}

\begin{rem}
	Under the assumptions of Theorem \ref{TheoremModuli} with $r=q$ and applying Fubini's theorem, one recovers Proposition \ref{PropositionCoincidences}(i).
\end{rem}

\begin{proof}[Proof of Theorem \ref{TheoremModuli}]
The proof relies on limiting interpolation techniques. Assume first $b > 0$. According to Theorem \ref{TheoremInterpolation}(i) (under $p \in (1, \infty)$) and \eqref{IntFormp=1} (under $p=1, \infty$), the following formula holds
\begin{equation}\label{ProofTheoremModuli1}
	T^b_r B^s_{p, q}(\mathbb{R}^d) = (B^s_{p,q}(\R^d), W^{k}_p(\R^d))_{(0,b-1/r),r}.
\end{equation}
Furthermore, using Holmstedt's formula \cite{Holmstedt}, we can compute the $K$-functional for the couple $(B^s_{p,q}(\R^d), W^{k}_p(\R^d))$ in terms of the $\omega_k(f,t)_p$. More precisely, we have
$$
	K(t^{1-s/k}, f; B^s_{p,q}(\R^d), W^{k}_p(\R^d)) \asymp t^{1-s/k} \|f\|_{L_p(\R^d)}  + \bigg(\int_0^{t^{1/k}} (u^{-s} \omega_k(f,u)_p)^q \frac{du}{u} \bigg)^{1/q}
$$
for $t \in (0,1)$ (cf. \cite[Theorem 11.1(ii)]{DominguezTikhonov}). Inserting this into \eqref{ProofTheoremModuli1}, after a simple change of variables, yields
\begin{align}
	\|f\|_{T^b_r B^s_{p, q}(\mathbb{R}^d)} & \asymp \bigg(\int_0^1 [(1-\log t)^{b-1/r} K(t, f; B^s_{p,q}(\R^d), W^{k}_p(\R^d))]^r \frac{dt}{t} \bigg)^{1/r} \nonumber\\
	& \asymp \|f\|_{L_p(\R^d)} + \left(\int_0^1 (1-\log t)^{(b-1/r) r} \bigg(\int_0^{t} (u^{-s} \omega_k(f,u)_p)^q \frac{du}{u}\bigg)^{r/q}
        \frac{dt}{t}\right)^{1/r} \label{jsajajsajs1}
        \end{align}
        and, by monotonicity properties and Hardy's inequality \eqref{H2},
        \begin{align}
        	\|f\|_{T^b_r B^s_{p, q}(\mathbb{R}^d)} & \asymp \|f\|_{L_p(\R^d)} + \bigg(\sum_{j=0}^\infty 2^{j b r} \bigg(\int_0^{\lambda_j} (u^{-s} \omega_k(f,u)_p)^q \frac{du}{u}\bigg)^{r/q}  \bigg)^{1/q} \nonumber \\
	& \asymp \|f\|_{L_p(\R^d)} + \bigg(\sum_{j=0}^\infty 2^{j b r} \bigg( \int_{\lambda_{j+1}}^{\lambda_j} (u^{-s} \omega_k(f,u)_p)^q \frac{du}{u}\bigg)^{r/q}  \bigg)^{1/q}.  \label{jsajajsajs}
        \end{align}
        Here $\lambda_j = 2^{-2^j}$ for $j \geq 0$. After applying Lemma \ref{LemmaB1} (cf. \eqref{Disc1}), we achieve the first equivalence in \eqref{TheoremModuli1} under the assumption $b > 0$.

        Assume $b < 0$. In light of the interpolation formula (cf. Theorem \ref{TheoremInterpolation}(ii) and \eqref{IntFormp=infty})
        $$
        	 T^b_r B^s_{p, q}(\mathbb{R}^d) = (L_{p}(\R^d), B^s_{p,q}(\R^d))_{(1,b-1/r),r}
        $$
	and the fact that, for each $t \in (0,1)$,
	$$
		K(t^{s/k}, f; L_{p}(\R^d), B^s_{p,q}(\R^d)) \asymp t^{s/k} \|f\|_{L_p(\R^d)} + t^{s/k} \bigg(\int_{t^{1/k}}^1 (u^{-s} \omega_k(f,u)_p)^q \frac{dt}{t}  \bigg)^{1/q}
	$$
	(see, e.g., \cite[Theorem 11.1(i)]{DominguezTikhonov}), we obtain
	\begin{align}
		\|f\|_{T^b_r B^s_{p, q}(\mathbb{R}^d) } & \asymp \bigg(\int_0^1 [t^{-s/k} (1-\log t)^{b-1/r} K(t^{s/k}, f; L_{p}(\R^d), B^s_{p,q}(\R^d))]^r \frac{dt}{t} \bigg)^{1/r} \nonumber \\
		& \asymp \|f\|_{L_p(\R^d)} + \left(\int_0^1 (1-\log t)^{(b-1/r) r} \left(\int_t^1 (u^{-s} \omega_k(f,u)_p)^q \frac{du}{u}\right)^{r/q}
        \frac{dt}{t}\right)^{1/r}. \label{jsajajsajs2}
	\end{align}
	A similar reasoning as in \eqref{jsajajsajs} leads to
	$$
			\|f\|_{T^b_r B^s_{p, q}(\mathbb{R}^d) }  \asymp \|f\|_{L_p(\R^d)} + \bigg(\sum_{j=0}^\infty 2^{j b r} \bigg( \int_{\lambda_{j+1}}^{\lambda_j} (u^{-s} \omega_k(f,u)_p)^q \frac{du}{u}\bigg)^{r/q}  \bigg)^{1/q}
	$$
	and thus, by Lemma \ref{LemmaB1} (cf. \eqref{Disc1}),
	$$
	\|f\|_{T^b_r B^s_{p, q}(\mathbb{R}^d) }  \asymp \|f\|_{L_p(\R^d)} + \left(\int_0^1 (1-\log t)^{(b-1/r) r} \left(\int_{t^2}^t (u^{-s} \omega_k(f,u)_p)^q \frac{du}{u}\right)^{r/q}
        \frac{dt}{t}\right)^{1/r}.
	$$
	This completes the proof of the first equivalence in \eqref{TheoremModuli1}.
	
	The second equivalence in \eqref{TheoremModuli1} follows easily from the first one using monotonicity properties.

        By Lemma \ref{LemmaB1} (cf. \eqref{Disc2}), the estimate \eqref{TheoremModuli2} turns out to be equivalent to
            \begin{equation}\label{DiscNew}
        	\|f\|_{T^b_r B^s_{p, q}(\mathbb{R}^d)}  \asymp  \|f\|_{L_p(\R^d)} +  \left(\sum_{j=0}^\infty 2^{j b r} \bigg(\int_{\lambda_{j+1}<|h| \leq \lambda_j} |h|^{-s q -d} \|\Delta^k_h f\|_{L_p(\R^d)}^q \, dh  \bigg)^{r/q} \right)^{1/r}.
        \end{equation}
        Next we show \eqref{DiscNew}. To proceed with, we will make use of the following well-known estimate for the modulus of smoothness
        \begin{equation}\label{ModDiff}
        	\omega_k(f,u)_p \asymp \bigg(u^{-d} \int_{|h| \leq u} \|\Delta^k_h f\|_{L_p(\R^d)}^q \, dh \bigg)^{1/q}
        \end{equation}
        (see, e.g., \cite[(1.12)]{Kolomoitsev}). Applying Fubini's theorem, we get
        \begin{align*}
        \int_0^t (u^{-s} \omega_k(f,u)_p)^q \frac{du}{u} &\asymp \int_0^t u^{-s q-d} \int_{|h| \leq u} \|\Delta^k_h f\|_{L_p(\R^d)}^q \, dh \frac{du}{u} \\
        &\asymp \int_{|h| \leq t} \|\Delta^k_h f\|_{L_p(\R^d)}^q \int_{|h|}^t u^{-s q-d}  \frac{du}{u} \, dh \\
        & \asymp \int_{|h| \leq t} |h|^{-s q -d}  \|\Delta^k_h f\|_{L_p(\R^d)}^q \, dh.
        \end{align*}
        Combining this with \eqref{jsajajsajs1} and Hardy's inequality \eqref{H2}, if $b > 0$ then
        \begin{align*}
        	\|f\|_{T^b_r B^s_{p, q}(\mathbb{R}^d)} & \asymp  \|f\|_{L_p(\R^d)} + \left(\int_0^1 (1-\log t)^{(b-1/r) r} \bigg(\int_{|h| \leq t} |h|^{-s q -d}  \|\Delta^k_h f\|_{L_p(\R^d)}^q \, dh\bigg)^{r/q} \frac{dt}{t}\right)^{1/r}  \\
	& \asymp  \|f\|_{L_p(\R^d)} +  \left(\sum_{j=0}^\infty 2^{j b r} \bigg(\int_{\lambda_{j+1}<|h| \leq \lambda_j} |h|^{-s q -d} \|\Delta^k_h f\|_{L_p(\R^d)}^q \, dh  \bigg)^{r/q} \right)^{1/r}.
        \end{align*}

      It remains to show  \eqref{DiscNew} with $b < 0$. We can proceed as follows: For each $t \in (0,1)$, we apply \eqref{ModDiff} so that
	\begin{align*}
		\int_t^1 (u^{-s} \omega_k(f,u)_p)^q \frac{du}{u} & \asymp \int_t^1 u^{-s q-d} \int_{|h| \leq u} \|\Delta^k_h f\|_{L_p(\R^d)}^q \, dh \frac{du}{u} \\
		& \hspace{-3cm}\asymp t^{-s q -d}  \int_{|h| \leq t} \|\Delta^k_h f\|_{L_p(\R^d)}^q \, dh + \int_{t < |h| < 1} |h|^{-s q -d} \|\Delta^k_h f\|_{L_p(\R^d)}^q dh.
	\end{align*}
	Thus
	\begin{align*}
	  \left(\int_0^1 (1-\log t)^{(b-1/r) r} \left(\int_t^1 (u^{-s} \omega_k(f,u)_p)^q \frac{du}{u}\right)^{r/q}
        \frac{dt}{t}\right)^{1/r} & \asymp  \\
        & \hspace{-8cm}  \left(\int_0^1 t^{-s r} (1-\log t)^{(b-1/r) r}  \bigg(t^{-d}\int_{|h| \leq t} \|\Delta^k_h f\|_{L_p(\R^d)}^q \, dh \bigg)^{r/q}
        \frac{dt}{t}\right)^{1/r} \\
        & \hspace{-7cm} +  \left(\int_0^1 (1-\log t)^{(b-1/r) r} \left(\int_{t \leq |h| < 1} |h|^{-s q -d} \|\Delta^k_h f\|_{L_p(\R^d)}^q \, dh \right)^{r/q}
        \frac{dt}{t}\right)^{1/r} \\
        & \hspace{-8cm}=: I + II.
	\end{align*}
	Assume momentarily
	\begin{equation}\label{hshasga}
		I \lesssim II.
	\end{equation}
	By previous computations, we have
	$$
		  \left(\int_0^1 (1-\log t)^{(b-1/r) r} \left(\int_t^1 (u^{-s} \omega_k(f,u)_p)^q \frac{du}{u}\right)^{r/q}
        \frac{dt}{t}\right)^{1/r}  \asymp II
	$$
	and applying Hardy's inequality \eqref{H1} (recall that $b < 0$)
	\begin{align*}
		  \left(\int_0^1 (1-\log t)^{(b-1/r) r} \left(\int_t^1 (u^{-s} \omega_k(f,u)_p)^q \frac{du}{u}\right)^{r/q}
        \frac{dt}{t}\right)^{1/r}  &\asymp \\
        & \hspace{-8cm}  \left(\sum_{j=0}^\infty 2^{j b r} \bigg(\int_{\lambda_{j+1}<|h| \leq \lambda_j} |h|^{-s q -d} \|\Delta^k_h f\|_{L_p(\R^d)}^q \, dh  \bigg)^{r/q} \right)^{1/r}.
	\end{align*}
	Inserting now this estimate into \eqref{jsajajsajs2}, we get
	$$
		\|f\|_{T^b_r B^s_{p, q}(\R^d)} \asymp \|f\|_{L_p(\R^d)} + \left(\sum_{j=0}^\infty 2^{j b r} \bigg(\int_{\lambda_{j+1}<|h| \leq \lambda_j} |h|^{-s q -d} \|\Delta^k_h f\|_{L_p(\R^d)}^q \, dh  \bigg)^{r/q} \right)^{1/r},
	$$
	i.e.,  \eqref{DiscNew} holds.

	 To complete the proof of \eqref{DiscNew} with $b < 0$, it remains to prove \eqref{hshasga}.  Indeed, applying basic monotonicity properties and Hardy's inequality \eqref{H2},
	\begin{align*}
		I & \asymp \bigg(\sum_{l=0}^\infty 2^{l s r} (1 + l)^{(b-1/r) r} \bigg(2^{l d} \int_{|h| \leq 2^{-l}} \|\Delta^k_h f\|_{L_p(\R^d)}^q \, dh \bigg)^{r/q} \bigg)^{1/r} \\
		& =  \bigg(\sum_{l=0}^\infty 2^{l s r} (1 + l)^{(b-1/r) r} \bigg(2^{l d} \sum_{\nu=l}^\infty \int_{2^{-(\nu+1)} < |h| \leq 2^{-\nu}} \|\Delta^k_h f\|_{L_p(\R^d)}^q \, dh \bigg)^{r/q} \bigg)^{1/r} \\
		& \asymp \bigg(\sum_{l=0}^\infty 2^{l s r} (1 + l)^{(b-1/r) r} \bigg(2^{l d} \int_{2^{-(l+1)} < |h| \leq 2^{-l}} \|\Delta^k_h f\|_{L_p(\R^d)}^q \, dh  \bigg)^{r/q} \bigg)^{1/r} \\
		& \asymp \bigg(\sum_{l=0}^\infty  (1 + l)^{(b-1/r) r} \bigg( \int_{2^{-(l+1)} < |h| \leq 2^{-l}} |h|^{-s q-d} \|\Delta^k_h f\|_{L_p(\R^d)}^q \, dh  \bigg)^{r/q} \bigg)^{1/r} \\
		& \leq \bigg(\sum_{l=0}^\infty  (1 + l)^{(b-1/r) r} \bigg( \int_{2^{-(l+1)} < |h| \leq 1} |h|^{-s q-d} \|\Delta^k_h f\|_{L_p(\R^d)}^q \, dh  \bigg)^{r/q} \bigg)^{1/r} \\
		& \asymp II.
	\end{align*}
	
%	To prove this, we shall distinguish two possible cases. Firstly, we assume $q \geq r$. An application of the H\"older's inequality yields
%	\begin{align*}
%		\bigg(\int_{t \leq |h| < 1} |h|^{-s r -d} \|\Delta^k_h f\|_{L_p(\R^d)}^r \, dh  \bigg)^{1/r} &\lesssim \\
%		& \hspace{-4cm} (1-\log t)^{1/r-1/q} \bigg(\int_{t \leq |h| < 1} |h|^{-s q -d} \|\Delta^k_h f\|_{L_p(\R^d)}^q \, dh  \bigg)^{1/q}
%	\end{align*}
%	and thus, changing the order of integration (note that $b+1/q-1/r \leq b  < 0$) and using  \eqref{ModDiff},
%	\begin{align*}
%		II  & \gtrsim \bigg(\int_0^1 (1-\log t)^{b r-2+r/q}  \int_{t \leq |h| < 1} |h|^{-s r -d} \|\Delta^k_h f\|_{L_p(\R^d)}^r \, dh  \frac{dt}{t}  \bigg)^{1/r} \\
%		& \asymp \bigg(\int_{|h| < 1}  |h|^{-s r -d} (1-\log |h|)^{b r -1+ r/q} \|\Delta^k_h f\|_{L_p(\R^d)}^r  \, dh  \bigg)^{1/r} \\
%		& \asymp \bigg(\int_{0}^1 t^{-s r} (1-\log t)^{b r-1 + r/q}   \bigg(t^{-d} \int_{|h| \leq t} \|\Delta^k_h f\|_{L_p(\R^d)}^q \, dh \bigg)^{r/q} \frac{dt}{t}\bigg)^{1/r} \geq I.
%	\end{align*}

\end{proof}

\begin{rem}
	The method of proof can also be applied for the limiting case $b=0$ in Theorem \ref{TheoremModuli}; see Remark \ref{Remark4.4}. More precisely, if $1 \leq p \leq \infty, 0 < q \leq \infty, 0 < r < \infty$ and $0 < s < k$ then
	   \begin{equation*}
        \|f \|_{T^*_r B^s_{p, q}(\R^d)} \asymp \|f\|_{L_p(\mathbb{R}^d)} + \left(\int_0^1  \left(\int_0^t (u^{-s} \omega_k(f,u)_p)^q \frac{du}{u}\right)^{r/q}
        \frac{dt}{t (1-\log t)}\right)^{1/r}
    \end{equation*}
    and
        \begin{align*}
         \|f \|_{T^*_r B^s_{p, q}(\R^d)} & \asymp \|f\|_{L_p(\mathbb{R}^d)} \\
         &  + \left(\int_0^1\left(\int_{ |h| \leq t} |h|^{-s q -d} \|\Delta^k_h f\|_{L_p(\R^d)}^q \, dh \right)^{r/q}
        \frac{dt}{t (1-\log t)}\right)^{1/r}.
    \end{align*}
\end{rem}

\subsection{Gagliardo--Slobodecki\u{\i} characterizations}
Recall the well-known fact that the Besov spaces $B^s_{p,p}(\R^d)$ can be characterized in Gagliardo--Slobodecki\u{\i} form
\begin{equation}\label{GagliardoNorm}
		\|f\|_{B^s_{p,p}(\R^d)}  \asymp \|f\|_{L_p(\R^d)} + \bigg(\int_{\R^d} \int_{\R^d} \frac{|f(x)-f(y)|^p}{|x-y|^{d + s p}} \, dx \, dy \bigg)^{1/p}
\end{equation}
for $p \in [1, \infty)$ and $s \in (0,1)$. The space of all functions $f \in L_p(\R^d)$ such that the right-hand side of \eqref{GagliardoNorm} is finite is commonly denoted in the literature as $W^{s,p}(\R^d)$, the \emph{fractional Sobolev space}\index{\bigskip\textbf{Spaces}!$W^{s, b, p}(\R^d)$}\label{FRACTSOB}. Specifically, for $p \in [1, \infty), 0 < s < k \in \N$, and $b \in \R$,
\begin{equation}\label{GagliardoNorm535353}
		\|f\|_{W^{s, b, p}(\R^d)} := \|f\|_{L_p(\R^d)} + \bigg(\int_{\R^d} \int_{\R^d} \frac{(1+ |\log |h||)^{b p}|\Delta^k_h f (x)|^p}{|h|^{d + s p}} \, dx \, dh \bigg)^{1/p}.
\end{equation}
Recall that $\Delta^k_h$ denotes iterated differences (cf. \eqref{HigherDiff}) and one can show that finiteness of \eqref{GagliardoNorm535353} is independent of $k > s$. To simplify notation we simply write $W^{s, 0, p}(\R^d) = W^{s, p}(\R^d)$. Thus the extension of \eqref{GagliardoNorm} to higher order smoothness reads $B^s_{p, p}(\R^d) = W^{s, p}(\R^d)$, i.e.,
\begin{equation}\label{GagliardoNorm2}
		\|f\|_{B^s_{p,p}(\R^d)}  \asymp \|f\|_{L_p(\R^d)} + \bigg(\int_{\R^d} \int_{\R^d} \frac{|\Delta^k_h f (x)|^p}{|h|^{d + s p}} \, dx \, dh \bigg)^{1/p}
\end{equation}
where $k > s$.

 The characterization \eqref{GagliardoNorm} is very useful in applications to trace theory, differential geometry and PDE's (cf. \cite{Valdinoci} and the references given there). In particular, it provides a natural way to introduce Besov spaces in the more general setting of metric measure spaces \cite{Bourdon}.

Specializing now \eqref{TheoremModuli2} with $p=q$ we get the extension of  \eqref{GagliardoNorm} and \eqref{GagliardoNorm2} to the spaces $T^b_r B^{s}_{p,p}(\mathbb{R}^d)$.

\begin{thm}\label{TheoremGagliardo}
Let $1 \leq p < \infty, 0 < r \leq \infty, 0 < s < k$ and $b \in \R \backslash \{0\}$.  Let $\lambda_j = 2^{-2^j}$ for $j \geq 0$. Then
       \begin{equation*}
        \|f \|_{T^b_r B^{s}_{p,p}(\mathbb{R}^d)} \asymp \|f\|_{L_p(\mathbb{R}^d)} + \left(\int_0^1 (1-\log t)^{b r -1} \left(\int_{t^2< |h| \leq t}  \int_{\R^d} \frac{|\Delta^k_h f(x)|^p}{|h|^{d + s p}} \, dx \, dh \right)^{r/p} \frac{dt}{t}
     \right)^{1/r}
    \end{equation*}
    (with the usual modification if $r=\infty$).
\end{thm}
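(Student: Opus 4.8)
The plan is to deduce Theorem \ref{TheoremGagliardo} directly from Theorem \ref{TheoremModuli}, specifically from the differences characterization \eqref{TheoremModuli2}, by specializing $q = p$ and unravelling the inner integral. First I would write down \eqref{TheoremModuli2} with $q = p$:
\begin{equation*}
	\|f\|_{T^b_r B^s_{p, p}(\R^d)} \asymp \|f\|_{L_p(\R^d)} + \left( \int_0^1 (1-\log t)^{br-1} \left( \int_{t^2 < |h| < t} |h|^{-sp-d} \|\Delta^k_h f\|_{L_p(\R^d)}^p \, dh \right)^{r/p} \frac{dt}{t} \right)^{1/r}.
\end{equation*}
Now the key observation is the elementary identity that for $p \in [1, \infty)$ one has $\|\Delta^k_h f\|_{L_p(\R^d)}^p = \int_{\R^d} |\Delta^k_h f(x)|^p \, dx$, so that by Tonelli's theorem
\begin{equation*}
	\int_{t^2 < |h| < t} |h|^{-sp-d} \|\Delta^k_h f\|_{L_p(\R^d)}^p \, dh = \int_{t^2 < |h| < t} \int_{\R^d} \frac{|\Delta^k_h f(x)|^p}{|h|^{d+sp}} \, dx \, dh.
\end{equation*}
Substituting this into the displayed equivalence yields exactly the claimed formula, and the passage from $t^2 < |h| < t$ to $t^2 < |h| \leq t$ is harmless since the boundary sphere has measure zero.

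There is essentially no hard step here: the theorem is a transcription of \eqref{TheoremModuli2} in the diagonal case, using only Tonelli's theorem and the definition of the $L_p$-norm. The one point that requires a sentence of care is that Theorem \ref{TheoremModuli} is stated for $1 \leq p \leq \infty$ while Theorem \ref{TheoremGagliardo} restricts to $1 \leq p < \infty$; this is precisely because the Fubini/Tonelli manipulation and the pointwise interpretation $\|\Delta^k_h f\|_{L_p}^p = \int |\Delta^k_h f(x)|^p dx$ only make sense for finite $p$ (for $p = \infty$ the inner "integral over $x$" would have to be replaced by a supremum and the Gagliardo-type double integral loses its meaning). I would also remark that the condition $0 < s < k$ and the independence of the choice of $k > s$ are inherited verbatim from Theorem \ref{TheoremModuli}, so no separate argument is needed.

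I do not anticipate any genuine obstacle; if pressed to identify the most delicate ingredient, it is simply ensuring that the measurability and integrability conditions needed to apply Tonelli are in force, which they are because the integrand $(x,h) \mapsto |\Delta^k_h f(x)|^p |h|^{-d-sp} \chi_{\{t^2 < |h| < t\}}$ is nonnegative and measurable on $\R^d \times \R^d$. Hence the proof is a two-line reduction, and I would present it as such, citing Theorem \ref{TheoremModuli} for \eqref{TheoremModuli2} and invoking Tonelli's theorem to interchange the order of integration.
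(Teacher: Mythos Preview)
Your proposal is correct and matches the paper's approach exactly: the paper does not give a separate proof of Theorem \ref{TheoremGagliardo} but simply remarks, just before its statement, that ``Specializing now \eqref{TheoremModuli2} with $p=q$ we get the extension of \eqref{GagliardoNorm} and \eqref{GagliardoNorm2} to the spaces $T^b_r B^{s}_{p,p}(\mathbb{R}^d)$.'' Your write-up makes explicit the (trivial) Tonelli step that the paper leaves implicit, and your remark on why $p<\infty$ is needed is accurate.
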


\begin{rem}
	The logarithmic counterpart of \eqref{GagliardoNorm2} reads
	$$
		B^{s, b}_{p, p}(\R^d) = W^{s, b, p}(\R^d), \qquad b \in \R.
	$$
	This is a simple consequence of Proposition \ref{PropositionCoincidences} and Theorem \ref{TheoremGagliardo} with $r=p$. Let $b \neq 0$, by Fubini's theorem,
	  \begin{align*}
        \|f \|_{B^{s,b}_{p,p}(\mathbb{R}^d)}& \asymp \|f\|_{L_p(\mathbb{R}^d)} + \left(\int_{|h| \leq 1}  \int_{\R^d} (1-\log |h|)^{b p}   \frac{|\Delta^k_h f(x)|^p}{|h|^{d + s p}} \, dx  \, dh\right)^{1/p} \\
	        & \asymp  \|f\|_{L_p(\mathbb{R}^d)} + \left(\int_{\R^d}  \int_{\R^d} (1+ |\log |h||)^{b p}   \frac{|\Delta^k_h f(x)|^p}{|h|^{d + s p}} \, dx  \, dh\right)^{1/p},
    \end{align*}
    where the last equivalence follows from the fact that $\|\Delta^k_h f\|_{L_p(\R^d)} \lesssim \|f\|_{L_p(\R^d)}$ (cf. \eqref{HigherDiff}).
\end{rem}

\newpage

\section{Characterization via wavelets}\label{SectionWavelets}

\subsection{Preliminaries}
We briefly discuss wavelet bases. For full treatment, we refer the reader to \cite{Daubechies}, \cite{Meyer} and \cite{Triebel08}. As usual, $C^{u}
(\mathbb{R})$ with $u \in \mathbb{N}$ collects all (complex-valued)
continuous functions on $\mathbb{R}$ having continuous bounded
derivatives up to order $u$. Let
\begin{equation}\label{4.1}
\psi_F \in C^{u} (\mathbb{R}), \quad \psi_M \in C^{u} (\mathbb{R}),
\qquad u \in \mathbb{N},
\end{equation}
be real compactly supported Daubechies wavelets with
\begin{equation*}
\int_{\mathbb{R}} \psi_M (x) \, x^v \, d x =0 \quad \text{for all
$v\in \mathbb{N}_0$ with $v<u$.}
\end{equation*}
Recall that $\psi_F$ is called the \emph{scaling function} (\emph{father wavelet})\index{\bigskip\textbf{Functionals and functions}!$\psi_F$}\label{FW}
and $\psi_M$ the \emph{associated wavelet} (\emph{mother wavelet})\index{\bigskip\textbf{Functionals and functions}!$\psi_M$}\label{MW}. The extension
of these wavelets from $\mathbb{R}$ to $\mathbb{R}^d, \, d \geq 2$, is based on the usual tensor procedure. Let
\begin{equation*}   % \label{(4.3)}
G = (G_1, \ldots, G_n ) \in G^0 = \{F,M \}^d,
\end{equation*}
which means that $G_r$ is either $F$ or $M$. Let
\begin{equation*}   % \label{(4.4)}
G = (G_1, \ldots, G_n ) \in G^j = \{F,M \}^{d *}, \qquad j \in
\mathbb{N},
\end{equation*}
which means that $G_r$ is either $F$ or $M$ where * indicates that
at least one of the components of $G$ must be an $M$. Hence $G^0$
has $2^d$ elements, whereas $G^j$ with $j \in \mathbb{N}$ has $2^d
-1$ elements. Let\index{\bigskip\textbf{Functionals and functions}!$\Psi^j_{G, m}$}\label{WAV}
\begin{equation}\label{DefinitionWavelet}
\Psi^j_{G,m} (x) = 2^{j d/2} \prod^n_{r=1} \psi_{G_r} (2^j x_r - m_r
), \quad G \in G^j, \quad m \in \mathbb{Z}^d, \quad j \in \mathbb{N}_0.
\end{equation}
We shall assume that $\psi_F$ and
$\psi_M$ in (\ref{4.1}) are normalized with respect to $L_2(\R)$. Then the system
\begin{equation*}   % \label{(4.6)}
\Psi =  \Big\{ \Psi^j_{G,m}: \ j \in \mathbb{N}_0, \ G\in G^j, \ m
\in \mathbb{Z}^d \Big\}
\end{equation*}
is an orthonormal basis in $L_2 (\mathbb{R}^d)$ and
\begin{equation*} %\label{(4.7)}
f = \sum^\infty_{j=0} \sum_{G\in G^j} \sum_{m \in \mathbb{Z}^d}
\lambda^{j,G}_m \, 2^{-j d/2} \, \Psi^j _{G,m}
\end{equation*}
with
\begin{equation}\label{Wav1}   % \label{(4.8)}
\lambda^{j,G}_m = \lambda^{j,G}_m (f)=2^{j d/2} \int_{\mathbb{R}^d}
f(x) \, \Psi^j_{G,m} (x) \, d x,
\end{equation}
where\index{\bigskip\textbf{Operators}!$\lambda^{j, G}_m$}\label{WAVCOE}
$2^{-j d/2} \Psi^j_{G,m}$ are uniformly bounded functions (with
respect to $j$ and $m$).

 Let  $\chi_{j,m}$ be the characteristic function of the dyadic cube\index{\bigskip\textbf{Functionals and functions}!$\chi_{j, m}$}\label{CHARDYA}
$Q_{j,m} := 2^{-j}m + 2^{-j}(0,1)^d$ in $\mathbb{R}^d$\index{\bigskip\textbf{Sets}!$Q_{j, m}$}\label{DYACUB}  with sides of
length $2^{-j}$ parallel to the axes of coordinates and $2^{-j}m$ as
the lower left corner.

It is well known that, under certain conditions on the smoothness parameter $u$ (see \eqref{4.1}), classical Besov--Triebel--Lizorkin spaces admit characterizations via wavelet decompositions. Next we introduce the related sequence spaces. Let $p, q \in (0, \infty]$ and $s, \xi \in \R$. The space $b^{s,\xi}_{p,q}$\index{\bigskip\textbf{Spaces}!$b^{s, \xi}_{p, q}$}\label{BESOVSEQ} is the collection of all sequences $\lambda=(\lambda^{j,G}_m)$ with $j \in \mathbb{N}_0, G \in G^j$ and
$m \in \mathbb{Z}^d$ such that
\begin{equation}\label{Deffspaces**}
	\|\lambda\|_{b^{s,\xi}_{p,q}} = \left(\sum_{j=0}^\infty 2^{j(s-d/p) q} (1 + j)^{\xi q} \sum_{G \in G^j} \Big(\sum_{m \in \mathbb{Z}^d} |\lambda^{j, G}_m|^p \Big)^{q/p} \right)^{1/q} < \infty
\end{equation}
with the usual modification if $p=\infty$ and/or $q=\infty$. In the special case $\xi=0$ we simply write $b^s_{p,q}$. If $p < \infty$, we also define $f^s_{p, q}$ via\index{\bigskip\textbf{Spaces}!$f^s_{p, q}$}\label{TLSEQ}
\begin{equation}\label{DefSeqfClas}
	\|\lambda\|_{f^s_{p, q}} = \bigg\|\bigg(\sum_{j=0}^\infty \sum_{G \in G^j} \sum_{m \in \Z^d} 2^{j s q} |\lambda^{j, G}_m|^q \chi_{j, m} \bigg)^{1/q} \bigg\|_{L_p(\R^d)}
\end{equation}
with the usual modification if $q=\infty$.

For $p \in (0, \infty]$, let $\sigma_p := d (\frac{1}{p}-1)_+.$\index{\bigskip\textbf{Numbers and relations}!$\sigma_p$}\label{sigmap}

\begin{thm}\label{ThmWaveletsBesovClassic}
Let $p, q \in (0,\infty]$ and $s, \xi \in \R$. Assume that \eqref{4.1} holds with %\footnote{O. Notation $\sigma_p = d \Big(\frac{1}{p}-1 \Big)_+$}
\begin{equation*}
u > \max\{s, \sigma_p-s\}.
\end{equation*}
Then $f \in B^{s,\xi}_{p,q}(\R^d)$ if and only if
	\begin{equation*}
	 f = \sum_{j \in \N_0,G \in G^j,m \in \Z^d} \lambda^{j,G}_m 2^{-j d/2}
    \Psi^j_{G,m},  \quad (\lambda^{j,G}_m) \in b^{s,\xi}_{p,q}
    \end{equation*}
     (unconditional convergence being in $\mathcal{S}'(\R^d)$). This representation is unique, that is, the wavelet coefficients $(\lambda^{j, G}_m)$ are given by \eqref{Wav1}, and the operator
     \begin{equation}\label{OperatorI}
     	I : f \mapsto (\lambda^{j,G}_m)
     \end{equation}
     defines an isomorphism from $B^{s,\xi}_{p,q}(\R^d)$ onto $b^{s,\xi}_{p,q}$. If, in addition, $p < \infty$ and $q < \infty$, then $\{\Psi^j_{G,m}\}$ is an unconditional basis in $B^{s,\xi}_{p,q}(\R^d)$.
\end{thm}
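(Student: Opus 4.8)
The statement to prove is Theorem \ref{ThmWaveletsBesovClassic}, the wavelet characterization of the Besov spaces $B^{s,\xi}_{p,q}(\R^d)$ with logarithmic smoothness $\xi$. Since this is a classical-type result (the case $\xi=0$ being the standard wavelet theorem for Besov spaces, see Triebel), the natural plan is to reduce to the already-known $\xi=0$ case by exploiting that the logarithmic weight $(1+j)^\xi$ only depends on the dyadic block $j$ and is therefore ``slowly varying'' in a way that commutes with all the relevant frequency-localization estimates.

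The plan is as follows. First I would recall the local reproducing/kernel estimates for Daubechies wavelets: for $f\in B^{s,\xi}_{p,q}(\R^d)$ one has the pointwise bounds relating $(\varphi_j\widehat f)^\vee$ to the wavelet coefficients $\lambda^{j,G}_m(f)$ on the cubes $Q_{j,m}$, and conversely the synthesis operator $(\lambda^{j,G}_m)\mapsto \sum \lambda^{j,G}_m 2^{-jd/2}\Psi^j_{G,m}$ maps $b^{s,\xi}_{p,q}$ boundedly into $B^{s,\xi}_{p,q}(\R^d)$. These are exactly the estimates used in the proof of the $\xi=0$ theorem; the point is that they are \emph{level-by-level} estimates — they compare the $j$-th Littlewood--Paley piece of $f$ with the $j$-th layer of wavelet coefficients, with constants uniform in $j$ — so multiplying both sides of each level-$j$ estimate by $(1+j)^\xi$ and then taking the $\ell_q$ (or $F$-type $L_p(\ell_q)$) norm in $j$ changes nothing. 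Concretely, I would invoke the known inequalities
$$
\Big\|\big(2^{js}(1+j)^\xi \, 2^{jd/2}|\langle f,\Psi^j_{G,m}\rangle| \chi_{j,m}\big)_{j,G,m}\Big\|_{\text{(appropriate norm)}} \lesssim \|f\|_{B^{s,\xi}_{p,q}(\R^d)}
$$
and its converse, which follow from the $\xi=0$ versions applied after absorbing $(1+j)^\xi$ into a generalized smoothness sequence; the multiplier/kernel arguments of \cite[Section 1.5]{Triebel83} or \cite{Triebel08} go through verbatim since $(1+j)^\xi/(1+k)^\xi \asymp 1$ whenever $|j-k|\le $ const.

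The remaining steps are then formal. I would (i) check that the analysis operator $I$ in \eqref{OperatorI} is well-defined on $B^{s,\xi}_{p,q}(\R^d)$ and lands in $b^{s,\xi}_{p,q}$, using the analysis estimate above together with the fact that Schwartz functions are weak-$*$ dense so the coefficients are computed by \eqref{Wav1}; (ii) check that the synthesis operator $S:(\lambda^{j,G}_m)\mapsto \sum\lambda^{j,G}_m 2^{-jd/2}\Psi^j_{G,m}$ maps $b^{s,\xi}_{p,q}$ into $B^{s,\xi}_{p,q}(\R^d)$ with unconditional convergence in $\mathcal S'(\R^d)$, using the synthesis estimate and the compact support plus cancellation of $\psi_M$; (iii) verify $S\circ I=\mathrm{id}$ on $B^{s,\xi}_{p,q}(\R^d)$ and $I\circ S=\mathrm{id}$ on $b^{s,\xi}_{p,q}$, which follows from the orthonormality of $\Psi$ in $L_2$ plus a density/duality argument to extend from $L_2\cap B^{s,\xi}_{p,q}$ to all of $B^{s,\xi}_{p,q}$; and (iv) when $p,q<\infty$, deduce unconditionality of the basis from unconditional convergence of the wavelet expansion and boundedness of $I,S$, exactly as in the classical case. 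Uniqueness of the representation is immediate once $I\circ S=\mathrm{id}$.

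The only genuine subtlety — the ``hard part'' — is making sure the condition $u>\max\{s,\sigma_p-s\}$ on the wavelet smoothness is exactly the one needed for both the analysis and synthesis estimates to hold in the presence of the logarithmic factor, i.e.\ that $\xi$ imposes no extra requirement on $u$. This is true because the logarithmic weight is dominated by any positive power of $2^{j\varepsilon}$ and beaten by any positive power of $2^{-j\varepsilon}$, so in every place where the classical proof uses a geometric gain $2^{-j\delta}$ coming from $u>s$ or $u>\sigma_p-s$, the same gain absorbs the harmless factor $(1+j)^{\xi}$; I would state this as a short lemma (or simply cite the generalized-smoothness wavelet results, e.g.\ \cite{KalyabinLizorkin, Moura}) rather than redo the kernel estimates. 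With that observation in hand, the whole argument is a transcription of the $\xi=0$ proof, so I would present it compactly, emphasizing only the points where $\xi$ enters and noting that everything else is standard.
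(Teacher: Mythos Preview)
The paper does not prove this theorem: immediately after stating it, the authors write ``The proof of the previous result may be found in \cite[Theorem 1.20]{Triebel08} (if $\xi=0$) and \cite{Almeida} (if $\xi \in \R$).'' So there is nothing to compare against on the paper's side.

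Your sketch is correct and is essentially the route taken in the cited reference \cite{Almeida}: the classical wavelet machinery for $B^s_{p,q}$ produces level-by-level estimates (the $j$-th Littlewood--Paley block against the $j$-th wavelet layer) with constants uniform in $j$, and since $(1+j)^\xi$ is an admissible weight in the sense of generalized smoothness --- in particular $(1+j)^\xi/(1+k)^\xi \asymp 1$ for $|j-k|\le C$ and $(1+j)^\xi$ is negligible against any geometric factor $2^{\pm j\varepsilon}$ --- the same kernel and multiplier bounds go through unchanged, with no additional constraint on $u$ beyond $u>\max\{s,\sigma_p-s\}$. Your observation that the smoothness condition on $u$ is unaffected by $\xi$ is exactly the point, and your suggestion to cite the generalized-smoothness literature rather than redo the local means estimates is appropriate; indeed \cite{Almeida} (and more generally \cite{Moura, FarkasLeopold}) cover admissible sequences of which $(1+j)^\xi$ is the prototype.
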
	

The proof of the previous result may be found in \cite[Theorem 1.20]{Triebel08} (if $\xi=0$) and \cite{Almeida} (if $\xi \in \R$).

The corresponding result for $F^s_{p, q}(\R^d)$ reads as follows (cf. \cite[Theorem 1.20]{Triebel08}).

\begin{thm}\label{ThmWaveletsTLClassic}
	Let $p \in (0, \infty), \, q \in (0, \infty]$ and $s \in \R$. Assume that \eqref{4.1} holds with %\footnote{O. Notation $\sigma_{p q} = d \Big(\frac{1}{\min\{p, q\}}-1 \Big)_+$}
\begin{equation*}
u > \max\{s, \sigma_{p q}-s\},\qquad \sigma_{p q} = d \Big(\frac{1}{\min\{p, q\}}-1 \Big)_+.
\end{equation*}
Then $f \in F^{s}_{p,q}(\R^d)$ if and only if
	\begin{equation*}
	 f = \sum_{j \in \N_0,G \in G^j,m \in \Z^d} \lambda^{j,G}_m 2^{-j d/2}
    \Psi^j_{G,m},  \quad (\lambda^{j,G}_m) \in f^{s}_{p,q}
    \end{equation*}
     (unconditional convergence being in $\mathcal{S}'(\R^d)$). This representation is unique, that is, the wavelet coefficients $(\lambda^{j, G}_m)$ are given by \eqref{Wav1}, and the operator $I$ in \eqref{OperatorI} defines an isomorphism from $F^{s}_{p,q}(\R^d)$ onto $f^{s}_{p,q}$. If, in addition, $q < \infty$ then $\{\Psi^j_{G,m}\}$ is an unconditional basis in $F^{s}_{p,q}(\R^d)$.
\end{thm}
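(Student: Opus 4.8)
The proof follows the standard wavelet--decomposition scheme for Triebel--Lizorkin spaces, realizing the coefficient operator $I$ and the synthesis operator $S:(\mu^{j,G}_m)\mapsto\sum_{j,G,m}\mu^{j,G}_m 2^{-jd/2}\Psi^j_{G,m}$ as bounded maps between $F^s_{p,q}(\R^d)$ and $f^s_{p,q}$ that are inverse to one another. Thus the statement reduces to two one-sided estimates: the \emph{analysis} bound $\|(\lambda^{j,G}_m(f))\|_{f^s_{p,q}}\lesssim\|f\|_{F^s_{p,q}(\R^d)}$, and the \emph{synthesis} bound $\|\sum_{j,G,m}\mu^{j,G}_m 2^{-jd/2}\Psi^j_{G,m}\|_{F^s_{p,q}(\R^d)}\lesssim\|(\mu^{j,G}_m)\|_{f^s_{p,q}}$ together with unconditional convergence of the series in $\mathcal{S}'(\R^d)$. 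Granting these, the orthonormality of $\{\Psi^j_{G,m}\}$ in $L_2(\R^d)$ gives $I\circ S=\mathrm{id}$ on $f^s_{p,q}$, while $S\circ I=\mathrm{id}$ on $F^s_{p,q}(\R^d)$ follows from the reconstruction formula of the multiresolution analysis, verified first on $\mathcal{S}(\R^d)\subset L_2(\R^d)$ and then extended (by density when $\max\{p,q\}<\infty$, and by an $\mathcal{S}'$-continuity argument in general). Hence $I$ is an isomorphism, and the coefficients of any admissible representation are necessarily given by \eqref{Wav1}, i.e. the representation is unique.

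\textbf{Analysis bound.} Here one writes $\lambda^{j,G}_m(f)=2^{jd/2}\langle f,\Psi^j_{G,m}\rangle$ (the pairing being legitimate since $\Psi^j_{G,m}\in C^u$ is compactly supported with $u$ large), decomposes $f=\sum_{k\ge 0}(\varphi_k\widehat f)^\vee=:\sum_k f_k$, and proves the pointwise kernel estimate
\[
2^{jd/2}\,|\langle f_k,\Psi^j_{G,m}\rangle|\ \lesssim\ 2^{-|j-k|u}\,(\varphi^*_k f)_a(y),\qquad y\in Q_{j,m},
\]
where $(\varphi^*_k f)_a$ is the Peetre maximal function. Two mechanisms produce the geometric factor: for $k\le j$ it comes from the $u$ vanishing moments of $\psi_M$ (at least one component of $G\in G^j$ is of type $M$), for $k>j$ from the $C^u$-smoothness of the compactly supported Daubechies wavelet (equivalently the Fourier decay $|\widehat{\psi_M}(\xi)|\lesssim(1+|\xi|)^{-u}$). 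Summing over $k$ — the decay $2^{-|j-k|u}$ dominates the $2^{js}$ weights exactly under $u>s$ and $u>\sigma_{pq}-s$ — reduces the estimate to a Fefferman--Stein vector-valued maximal inequality in $L_p(\R^d;\ell_q)$; this is where the quantity $\sigma_{pq}=d(1/\min\{p,q\}-1)_+$ enters, since one must take the exponent of the maximal operator below $\min\{p,q\}$, in contrast with the Besov case (Theorem~\ref{ThmWaveletsBesovClassic}), where $\sigma_p$ suffices. Compare \cite[\S1]{Triebel08}, \cite{Triebel83}, \cite{FrazierJawerth}.

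\textbf{Synthesis bound.} One checks that each normalized wavelet $2^{-jd/2}\Psi^j_{G,m}$ is, up to a fixed constant, a smooth atom for $F^s_{p,q}(\R^d)$ centered at $Q_{j,m}$: it is supported in a fixed dilate $cQ_{j,m}$, its derivatives up to order $u$ satisfy the normalized bounds required of atoms (using $u>s$), and for $j\ge 1$ it has vanishing moments up to order $u-1$ (using $u>\sigma_{pq}-s$; the level $j=0$ requires no moment conditions). The Frazier--Jawerth atomic decomposition theorem for $F^s_{p,q}(\R^d)$ then applies directly, giving boundedness of $S$ and unconditional convergence in $\mathcal{S}'(\R^d)$. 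Finally, when $p<\infty$ and $q<\infty$, finite sequences are dense in $f^s_{p,q}$ and the canonical unit vectors form an unconditional basis of that solid quasi-Banach sequence--function space; transporting this basis through the isomorphism $S$ shows that $\{\Psi^j_{G,m}\}$ is an unconditional basis of $F^s_{p,q}(\R^d)$.

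\textbf{Main obstacle.} The delicate part is the analysis step: deriving the kernel estimate with the correct exponent $u$ and then invoking the vector-valued maximal inequality precisely in the regime dictated by $u>\max\{s,\sigma_{pq}-s\}$; the synthesis step and the basis statement are essentially bookkeeping once the atomic decomposition theorem is taken as known, and the isomorphism/uniqueness follow formally from $S\circ I=\mathrm{id}$ and $I\circ S=\mathrm{id}$.
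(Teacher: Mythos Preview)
The paper does not give its own proof of this theorem: it is stated as a known result, attributed to \cite[Theorem~1.20]{Triebel08}. Your sketch is a reasonable outline of the standard argument one finds there (and in the Frazier--Jawerth framework \cite{FrazierJawerth}), so there is nothing to compare against beyond the cited source.
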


%  \begin{rem}
%  Note that \eqref{Wav1} can be written as
%  \begin{equation*}
%\lambda^{j,G}_m = \lambda^{j,G}_m (f)=2^{j n/2} \left( f, \Psi^j_{G,m}\right),
%  \end{equation*}
%  understood as the dual pairing. Then for spaces $A(\R^n)$ satisfying  $\mathcal{S}(\R^n) \hookrightarrow A(\R^n) \hookrightarrow \mathcal{S}'(\R^n)$, the expression $(f,g)$ should be understood in the sense of the dual pairing in the context of $(\mathcal{S}(\R^n), \mathcal{S}'(\R^n))$. The well-definedness of such expressions has been considered in \cite[Theorem~3.5]{Triebel06}.
%\end{rem}

\subsection{Wavelet characterization of $T^\xi_r B^{s}_{p, q}(\R^d)$}The goal of this section is to provide the wavelet description of the spaces $T^\xi_r B^{s}_{p, q}(\R^d)$. With this in mind, we define the related sequence spaces $T^\xi_r b^{s}_{p,q}$ as follows.\index{\bigskip\textbf{Spaces}!$T^\xi_r b^s_{p, q}$}\label{TRUBESOVSEQ}

\begin{defn}
	Let $p, q, r \in (0,\infty]$ and $s, \xi \in \R$. The space $T^\xi_r b^s_{p, q}$ is the collection of all
	\begin{equation}\label{sequence}
	\lambda = \{\lambda^{j, G}_m \in \mathbb{C} : j \in \N_0, \quad G \in G^j, \quad m \in \Z^d \}
	\end{equation}
	 such that
	\begin{equation}\label{DefBesSeq}
		\|\lambda\|_{T^\xi_r b^s_{p, q}} = \bigg(\sum_{k=0}^\infty 2^{k \xi r} \bigg(\sum_{j=2^k-1}^{2^{k+1}-2} 2^{j(s-d/p) q} \sum_{G \in G^j} \Big(\sum_{m \in \mathbb{Z}^d} |\lambda^{j, G}_m|^p \Big)^{q/p} \bigg)^{r/q} \bigg)^{1/r} < \infty
	\end{equation}
	%and $\T^\xi_r b^s_{p, q}$ is the collection of all sequences \eqref{sequence} such that
	%\begin{equation}\label{DefBesSeq2}
	%	\|\lambda\|_{\T^\xi_r b^s_{p, q}} = \bigg(\sum_{k=0}^\infty (1 + k)^{\xi r} \bigg(\sum_{j=0}^k 2^{j(s-d/p) q} \sum_{G \in G^j} \Big(\sum_{m \in \mathbb{Z}^d} |\lambda^{j, G}_m|^p \Big)^{q/p} \bigg)^{r/q} \bigg)^{1/r} < \infty
	%\end{equation}
	(with the usual modification if $p=\infty$ and/or $q=\infty$ and/or $r=\infty$).
\end{defn}

\begin{rem}\label{RemarkSequenceCoincidence}
	Setting $q=r$ in \eqref{DefBesSeq}, it is not hard to see that the sequence spaces $T^\xi_r b^s_{p, q}$ contain as special cases the classical spaces $b^{s,\xi}_{p,q}$ (cf. \eqref{Deffspaces**}), that is,
$		T^\xi_q b^{s}_{p, q} = b^{s, \xi}_{p,q}.
$	%Furthermore
\end{rem}

\begin{thm}\label{ThmWaveletsNewBesov}
	Let $p, q, r \in (0,\infty], s \in \R$ and $\xi \in \R \backslash \{0\}$. Assume that \eqref{4.1} holds with $u > \max\{s, \sigma_p-s\}$. Then $f \in T^\xi_r B^s_{p, q}(\R^d)$ if and only if
		\begin{equation}\label{representation}
	 f = \sum_{j \in \N_0,G \in G^j,m \in \Z^d} \lambda^{j,G}_m 2^{-j d/2}
    \Psi^j_{G,m},  \quad (\lambda^{j,G}_m) \in T^\xi_r b^s_{p, q}
    \end{equation}
    (unconditional convergence in $\mathcal{S}'(\R^d)$). This representation is unique, that is, the wavelet coefficients $(\lambda^{j, G}_m)$ are given by \eqref{Wav1}, and the operator $I$ in \eqref{OperatorI} defines an isomorphism from $T^\xi_r B^s_{p, q}(\R^d)$ onto $T^\xi_r b^s_{p, q}$. If, in addition, $p < \infty, q < \infty, r < \infty$, then $\{\Psi^j_{G,m}\}$ is an unconditional basis in $T^\xi_r B^s_{p, q}(\R^d)$.
\end{thm}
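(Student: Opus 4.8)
The plan is to reduce the wavelet characterization of $T^\xi_r B^s_{p,q}(\R^d)$ to the already-established wavelet characterization of the classical Besov spaces $B^{s,0}_{p,q}(\R^d) = B^s_{p,q}(\R^d)$ (Theorem~\ref{ThmWaveletsBesovClassic} with $\xi=0$), combined with the discrete description of the truncated quasi-norm given in \eqref{RemFM}. The key point is that the truncation operation $T^\xi_r$ only reorganizes the dyadic blocks indexed by $\nu$ into super-blocks indexed by $k$ (with $\nu$ running over $2^k-1,\dots,2^{k+1}-2$) and then takes an $\ell^\xi_r$-type sum over $k$; this same block structure is visible both on the function side (in the Littlewood--Paley pieces) and on the sequence side (in \eqref{DefBesSeq}).

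The steps I would carry out, in order, are as follows. First, recall from Theorem~\ref{ThmWaveletsBesovClassic} that for each fixed $\nu$, the operator $I$ and its inverse are bounded between the space of functions with Fourier support in a dyadic annulus of order $\nu$ (measured in $B^s_{p,q}(\R^d)$, equivalently in $L_p$ with the correct weight) and the corresponding block of wavelet coefficients (measured in the $\nu$-th summand of $b^s_{p,q}$); more precisely, one uses that for a single Littlewood--Paley block $f_\nu := (\varphi_\nu \widehat f)^\vee$ one has
$$
2^{\nu s}\|f_\nu\|_{L_p(\R^d)} \asymp 2^{\nu(s-d/p)}\Big(\sum_{G\in G^\nu}\sum_{m\in\Z^d}|\lambda^{\nu,G}_m(f)|^p\Big)^{1/p},
$$
uniformly in $\nu$, with the constants depending only on $p,q,s,d$ and the chosen wavelet system (this is the heart of Theorem~\ref{ThmWaveletsBesovClassic}, obtained there via almost-diagonal matrix estimates / local mean characterizations). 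Second, plug this block-wise equivalence into the discrete expression \eqref{RemFM} for $\|f\|_{T^\xi_r B^s_{p,q}(\R^d)}$: summing the $q$-th powers over $\nu$ in a super-block $\{2^k-1,\dots,2^{k+1}-2\}$, raising to the power $r/q$, weighting by $2^{k\xi r}$, and summing over $k$ produces exactly $\|\lambda(f)\|_{T^\xi_r b^s_{p,q}}$ as defined in \eqref{DefBesSeq}. Third, to get the uniqueness of the representation and the fact that $I$ is an isomorphism (not merely a bounded bijection onto its image), note that $T^\xi_r B^s_{p,q}(\R^d)\hookrightarrow B^s_{p,q}(\R^d)$ when $\xi>0$ (by \eqref{cncnnc}, applied on each side, or directly) and $T^\xi_r B^s_{p,q}(\R^d)\hookleftarrow$ some classical Besov space when $\xi<0$ by the same token; in either case $f$ lies in a space where Theorem~\ref{ThmWaveletsBesovClassic} already guarantees that the coefficients are given by \eqref{Wav1} and the expansion converges unconditionally in $\mathcal S'(\R^d)$. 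Fourth, for the unconditional basis statement when $p,q,r<\infty$, observe that the partial-sum projections $P_N$ (truncating the wavelet expansion) correspond on the sequence side to coordinate projections onto finitely many blocks, which are uniformly bounded on $T^\xi_r b^s_{p,q}$ and converge strongly; combined with the density of such finite sums (which follows since $r<\infty$ allows the tail $\sum_{k\ge K}$ to be made small, and within each super-block $q<\infty$, $p<\infty$ handle the inner sums), this yields the unconditional basis property by the standard argument.

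The main obstacle I anticipate is purely bookkeeping rather than conceptual: carefully justifying that the block-wise norm equivalence from the classical wavelet theorem holds \emph{with constants independent of the block index $\nu$}, so that it can be inserted term-by-term inside the outer $\ell^\xi_r(\ell_q)$ structure of the truncated quasi-norm. In the classical proof this uniformity is automatic because the whole equivalence $B^s_{p,q}(\R^d)\cong b^s_{p,q}$ is already a single statement about the full sequence; here one must either re-extract the per-block estimate from the almost-diagonality of the wavelet matrix (which is scale-invariant, hence gives $\nu$-uniform bounds) or, more economically, apply Theorem~\ref{ThmWaveletsBesovClassic} directly to the function $g_k := \sum_{\nu=2^k-1}^{2^{k+1}-2} f_\nu$ for each $k$ and use \eqref{RemFM} together with $\|g_k\|_{B^s_{p,q}(\R^d)}\asymp(\sum_{\nu=2^k-1}^{2^{k+1}-2}2^{\nu s q}\|f_\nu\|_{L_p}^q)^{1/q}$ and the matching statement $\|I(g_k)\|_{b^s_{p,q}}\asymp(\sum_{\nu}2^{\nu(s-d/p)q}\sum_{G}(\sum_m|\lambda^{\nu,G}_m|^p)^{q/p})^{1/q}$; the latter route makes the uniformity transparent since it is just the classical theorem applied to each $g_k$ with a fixed constant. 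Once this uniformity is in hand, the remaining manipulations are the same monotonicity and Hardy-type rearrangements already used repeatedly in the paper (e.g.\ in the proof of \eqref{RemFM} and of Proposition~\ref{PropositionCoincidences}), and the proof concludes.
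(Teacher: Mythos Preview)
Your proposed approach is genuinely different from the paper's, and it contains a real gap that is not merely bookkeeping.

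The paper proves the theorem by \emph{limiting interpolation}: for $\xi>0$ one picks $s_0>s$ with $u>\max\{\sigma_p-s,s_0\}$, uses that $I$ is already an isomorphism $B^{s}_{p,q}(\R^d)\to b^s_{p,q}$ and $B^{s_0}_{p,q}(\R^d)\to b^{s_0}_{p,q}$ by Theorem~\ref{ThmWaveletsBesovClassic}, interpolates to get an isomorphism between $(B^s_{p,q},B^{s_0}_{p,q})_{(0,\xi-1/r),r}$ and $(b^s_{p,q},b^{s_0}_{p,q})_{(0,\xi-1/r),r}$, and then identifies both interpolation spaces as $T^\xi_r B^s_{p,q}(\R^d)$ (via Theorem~\ref{TheoremInterpolation}) and $T^\xi_r b^s_{p,q}$ (via a direct $K$-functional computation using \eqref{KfunctMixedSeqSpaces} and Hardy's inequalities). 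The case $\xi<0$ is analogous. The point of this route is that boundedness of $I$ passes automatically through interpolation, so one never has to relate wavelet coefficients and Littlewood--Paley blocks scale-by-scale.

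The gap in your direct approach is precisely that the displayed per-block equivalence
\[
2^{\nu s}\|(\varphi_\nu\widehat f)^\vee\|_{L_p(\R^d)}\ \asymp\ 2^{\nu(s-d/p)}\Big(\sum_{G\in G^\nu}\sum_{m\in\Z^d}|\lambda^{\nu,G}_m(f)|^p\Big)^{1/p}
\]
is \emph{false}. Compactly supported Daubechies wavelets are not band-limited: $\widehat{\Psi^\nu_{G,m}}$ is nowhere vanishing, so $\lambda^{\nu,G}_m(f)=2^{\nu d/2}\int f\,\Psi^\nu_{G,m}$ depends on \emph{all} Littlewood--Paley pieces of $f$, not just on $(\varphi_\nu\widehat f)^\vee$. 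A function with $(\varphi_{\nu}\widehat f)^\vee\equiv 0$ but $(\varphi_{\nu\pm1}\widehat f)^\vee\neq 0$ already breaks both directions of your claimed equivalence. The same problem infects your ``second route'' with $g_k=\sum_{\nu=2^k-1}^{2^{k+1}-2}(\varphi_\nu\widehat f)^\vee$: the sequence $I(g_k)=(\lambda^{j,G}_m(g_k))_{j,G,m}$ is \emph{not} supported on $j\in\{2^k-1,\dots,2^{k+1}-2\}$, and $\lambda^{j,G}_m(g_k)\neq\lambda^{j,G}_m(f)$ even for $j$ inside that super-block, so you cannot reassemble $\|\lambda(f)\|_{T^\xi_r b^s_{p,q}}$ from the individual $\|I(g_k)\|_{b^s_{p,q}}$ without further work.

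What would actually be needed to salvage a direct proof is an \emph{almost-diagonal matrix} argument: the change-of-basis between $\{(\varphi_\nu\widehat{\cdot})^\vee\}$ and $\{\Psi^j_{G,m}\}$ has entries decaying rapidly in $|j-\nu|$ (this uses both the $u$ vanishing moments and the $C^u$-smoothness of the wavelets), and one must prove that such matrices act boundedly on the truncated sequence spaces $T^\xi_r b^s_{p,q}$. That is a genuine estimate, not bookkeeping; the paper's interpolation method bypasses it entirely by transferring the full isomorphism $I$ rather than attempting a block-by-block comparison.
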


\begin{rem}
	Let $q=r$. In light of Remark \ref{RemarkSequenceCoincidence}, Theorems \ref{ThmWaveletsBesovClassic} and \ref{ThmWaveletsNewBesov}, we recover Proposition \ref{PropositionCoincidences}.
\end{rem}

\begin{proof}[Proof of Theorem \ref{ThmWaveletsNewBesov}]
 Let $\xi > 0$. By assumptions, we may choose $s_0 > s$ such that $u > \max \{\sigma_p-s, s_0\}.$ Therefore, by Theorem \ref{ThmWaveletsBesovClassic}, the operator $I$ defines an isomorphism between $B^s_{p,q}(\R^d)$ (respectively, $B^{s_0}_{p,q}(\R^d)$) and $b^s_{p,q}$ (respectively, $b^{s_0}_{p,q}$). Applying the limiting interpolation method, we derive that
$$
	I : (B^s_{p,q}(\R^d), B^{s_0}_{p,q}(\R^d))_{(0,\xi-1/r),r} \to (b^s_{p,q}, b^{s_0}_{p,q})_{(0,\xi-1/r),r}
$$
is also an isomorphism. Furthermore, by Theorem \ref{TheoremInterpolation}(i), we have
$$
(B^s_{p,q}(\R^d), B^{s_0}_{p,q}(\R^d))_{(0,\xi-1/r),r} = T^\xi_r B^s_{p, q}(\R^d).
$$
We make the following claim
\begin{equation}\label{InterpolationBesovSequences}
	(b^s_{p,q}, b^{s_0}_{p,q})_{(0,\xi-1/r),r} = T^\xi_r b^s_{p, q}.
\end{equation}
Assuming momentarily the validity of such a formula, we would establish that
$$
	I: T^\xi_r B^s_{p, q}(\R^d) \to T^\xi_r b^s_{p, q}
$$
is an isomorphism. Moreover, the uniqueness of the representation \eqref{representation} and its unconditional convergence in $\mathcal{S}'(\R^d)$  are simple consequences of the embeddings $T^\xi_r B^s_{p, q}(\R^d) \hookrightarrow B^s_{p,q}(\R^d)$ (cf. Proposition \ref{PropositionElementary}(iii)) and $T^\xi_r b^s_{p, q} \hookrightarrow b^s_{p,q}$ (cf. \eqref{InterpolationBesovSequences} and \eqref{EmbeddingsLimInt}) together with the corresponding assertions for classical Besov spaces given in Theorem \ref{ThmWaveletsBesovClassic}. The unconditionality of $\{\Psi^j_{G,m}\}$ in $T^\xi_r B^s_{p, q}(\R^d), \, p, q, r < \infty,$ follows easily from \eqref{DefBesSeq}.

It remains to prove \eqref{InterpolationBesovSequences}. We first note that $b^s_{p,q}$ (respectively, $b^{s_0}_{p,q}$) can be (isometrically) identified with the sequence space $\ell^{s-d/p}_q(\ell_q(\ell_p))$ (respectively, $\ell^{s_0-d/p}_q(\ell_q(\ell_p))$) (see \eqref{Deffspaces**}). Since
\begin{align}
	K(t, \lambda; \ell^{s-d/p}_q(\ell_q(\ell_p)), \ell^{s_0-d/p}_q( \ell_q(\ell_p)))  &\asymp  \nonumber\\
	& \hspace{-4cm} \bigg(
        \sum_{j=0}^\infty [\min (1, 2^{j (s_0-s)} t) 2^{j(s-d/p)} \| \{\lambda^{j,G}_m\} \|_{\ell_q(\ell_p)}]^q \bigg)^{1/q} \label{KfunctMixedSeqSpaces}
\end{align}
for $t > 0$ (see \eqref{Kfunct}), we can apply Hardy's inequality \eqref{H1} to obtain
\begin{align*}
	\|\lambda\|_{(\ell^{s-d/p}_q(\ell_q(\ell_p)), \ell^{s_0-d/p}_q( \ell_q(\ell_p)))_{(0,\xi-1/r),r}} & \asymp \\
	& \hspace{-4cm} \bigg(\sum_{k=0}^\infty (1+ k)^{(\xi-1/r) r} K(2^{-k (s_0-s)}, \lambda; \ell^{s-d/p}_q(\ell_q(\ell_p)), \ell^{s_0-d/p}_q( \ell_q(\ell_p)))^r \bigg)^{1/r} \\
	& \hspace{-4cm} \asymp \bigg(\sum_{k=0}^\infty (1+ k)^{(\xi-1/r) r}  \bigg(
        \sum_{j=0}^\infty [\min (1, 2^{(j-k)(s_0-s)}) 2^{j(s- d/p)} \| \{\lambda^{j,G}_m\} \|_{\ell_q(\ell_p)}]^q \bigg)^{r/q} \bigg)^{1/r} \\
        & \hspace{-4cm} \asymp  \bigg(\sum_{k=0}^\infty 2^{-k(s_0-s) r} (1+ k)^{(\xi-1/r) r}  \bigg(
        \sum_{j=0}^k [2^{j(s_0- d/p)} \| \{\lambda^{j,G}_m\} \|_{\ell_q(\ell_p)}]^q \bigg)^{r/q} \bigg)^{1/r} \\
        & \hspace{-3.5cm} +  \bigg(\sum_{k=0}^\infty (1+ k)^{(\xi-1/r) r}  \bigg(
        \sum_{j=k}^\infty [ 2^{j(s- d/p)} \| \{\lambda^{j,G}_m\} \|_{\ell_q(\ell_p)}]^q \bigg)^{r/q} \bigg)^{1/r} \\
            & \hspace{-4cm} \asymp  \bigg(\sum_{k=0}^\infty  2^{k(s- d/p) r}  (1+ k)^{(\xi-1/r) r} \| \{\lambda^{k,G}_m\} \|_{\ell_q(\ell_p)}^r  \bigg)^{1/r} \\
        & \hspace{-3.5cm} +  \bigg(\sum_{k=0}^\infty (1+ k)^{(\xi-1/r) r}  \bigg(
        \sum_{j=k}^\infty [ 2^{j(s- d/p)} \| \{\lambda^{j,G}_m\} \|_{\ell_q(\ell_p)}]^q \bigg)^{r/q} \bigg)^{1/r} \\
        & \hspace{-4cm} \asymp \bigg(\sum_{k=0}^\infty (1+ k)^{(\xi-1/r) r}  \bigg(
        \sum_{j=k}^\infty [ 2^{j(s- d/p)} \| \{\lambda^{j,G}_m\} \|_{\ell_q(\ell_p)}]^q \bigg)^{r/q} \bigg)^{1/r}.
\end{align*}
Furthermore, by Hardy's inequality \eqref{H2} (recall $\xi > 0$)
\begin{align*}
	\bigg(\sum_{k=0}^\infty (1+ k)^{(\xi-1/r) r}  \bigg(
        \sum_{j=k}^\infty [ 2^{j(s- d/p)} \| \{\lambda^{j,G}_m\} \|_{\ell_q(\ell_p)}]^q \bigg)^{r/q} \bigg)^{1/r} & \asymp  \\
        & \hspace{-8cm}\bigg(\sum_{k=0}^\infty 2^{k \xi r}  \bigg(
       \sum_{j=2^k-1}^{2^{k+1}-2} [ 2^{j(s- d/p)} \| \{\lambda^{j,G}_m\} \|_{\ell_q(\ell_p)}]^q \bigg)^{r/q} \bigg)^{1/r} = \|\lambda\|_{T^\xi_r b^{s}_{p, q}}.
\end{align*}
Taking into account this estimate, the above computations lead to
$$
\|\lambda\|_{(\ell^{s-d/p}_q(\ell_q(\ell_p)), \ell^{s_0-d/p}_q( \ell_q(\ell_p)))_{(0,\xi-1/r),r}}  \asymp 	\|\lambda\|_{T^\xi_r b^{s}_{p, q}},
$$
i.e., the desired formula \eqref{InterpolationBesovSequences} holds true.

 The proof with $\xi < 0$ can be obtained in a similar fashion, but now applying Theorem \ref{TheoremInterpolation}(ii). In particular, we have to show that
\begin{equation}\label{InterpolationBesovSequences2}
	(b^{s_0}_{p,q}, b^{s}_{p,q})_{(1,\xi-1/r),r} = T^\xi_r b^s_{p, q},
\end{equation}
where $s_0 < s$ is chosen so that $u > \max\{\sigma_p-s_0, s\}$. Indeed, it follows from \eqref{KfunctMixedSeqSpaces} and Hardy's inequalities \eqref{H1}-\eqref{H2} that
\begin{align*}
	\|\lambda\|_{(\ell^{s_0-d/p}_q(\ell_q(\ell_p)), \ell^{s-d/p}_q( \ell_q(\ell_p)))_{(1,\xi-1/r),r}} & \asymp \\
	& \hspace{-4cm} \bigg(\sum_{k=0}^\infty (1+ k)^{(\xi -1/r)r} K(2^{k (s-s_0)}, \lambda; \ell^{s-d/p}_q(\ell_q(\ell_p)), \ell^{s_0-d/p}_q( \ell_q(\ell_p)))^r \bigg)^{1/r} \\
	& \hspace{-4cm} \asymp \bigg(\sum_{k=0}^\infty (1+ k)^{(\xi-1/r) r}  \bigg(
        \sum_{j=0}^\infty [\min (1, 2^{-(j-k)(s-s_0)}) 2^{j(s- d/p)} \| \{\lambda^{j,G}_m\} \|_{\ell_q(\ell_p)}]^q \bigg)^{r/q} \bigg)^{1/r} \\
        & \hspace{-4cm} \asymp  \bigg(\sum_{k=0}^\infty  (1+ k)^{(\xi-1/r) r}  \bigg(
        \sum_{j=0}^k [2^{j(s- d/p)} \| \{\lambda^{j,G}_m\} \|_{\ell_q(\ell_p)}]^q \bigg)^{r/q} \bigg)^{1/r} \\
        & \hspace{-3.5cm} +  \bigg(\sum_{k=0}^\infty 2^{k(s-s_0) r} (1+ k)^{(\xi-1/r) r}  \bigg(
        \sum_{j=k}^\infty [2^{j(s_0- d/p)} \| \{\lambda^{j,G}_m\} \|_{\ell_q(\ell_p)}]^q \bigg)^{r/q} \bigg)^{1/r} \\
            & \hspace{-4cm} \asymp   \bigg(\sum_{k=0}^\infty  (1+ k)^{(\xi -1/r)r}  \bigg(
        \sum_{j=0}^k [2^{j(s- d/p)} \| \{\lambda^{j,G}_m\} \|_{\ell_q(\ell_p)}]^q \bigg)^{r/q} \bigg)^{1/r} \\
        & \hspace{-3.5cm} +  \bigg(\sum_{k=0}^\infty 2^{k(s-d/p) r} (1+ k)^{(\xi-1/r) r}   \| \{\lambda^{k,G}_m\} \|_{\ell_q(\ell_p)}^r  \bigg)^{1/r} \\
        & \hspace{-4cm} \asymp \bigg(\sum_{k=0}^\infty (1+ k)^{(\xi-1/r) r}  \bigg(
        \sum_{j=0}^k [ 2^{j(s- d/p)} \| \{\lambda^{j,G}_m\} \|_{\ell_q(\ell_p)}]^q \bigg)^{r/q} \bigg)^{1/r} \\
        & \hspace{-4cm} \asymp \bigg(\sum_{k=0}^\infty 2^{k \xi r} \bigg(\sum_{j=2^k-1}^{2^{k+1}-2}  [ 2^{j(s- d/p)} \| \{\lambda^{j,G}_m\} \|_{\ell_q(\ell_p)}]^q   \bigg)^{r/q} \bigg)^{1/r} = \|\lambda \|_{T^\xi_r b^s_{p, q}}.
\end{align*}
Hence \eqref{InterpolationBesovSequences2} holds true.
\end{proof}

\begin{rem}\label{RemDelicate}
	The case $\xi=0$ in Theorem  \ref{ThmWaveletsNewBesov} is more delicate. In fact, the above proof still works in this case and shows that,  under the assumptions of Theorem  \ref{ThmWaveletsNewBesov},
	$$
		f \in T^*_r B^s_{p,q}(\R^d), \qquad r < \infty,
	$$
	if and only if\index{\bigskip\textbf{Spaces}!$T^*_r b^s_{p, q}$}\label{LIMTRUNBESEQ}
	$$
	\|\lambda\|_{T^*_r b^s_{p, q}}:=	\bigg(\sum_{k=0}^\infty \bigg(
        \sum_{j=k}^\infty 2^{j(s- d/p) q}  \sum_{G \in G^j} \Big(\sum_{m \in \mathbb{Z}^d} |\lambda^{j, G}_m|^p \Big)^{q/p} \bigg)^{r/q} \frac{1}{1+k} \bigg)^{1/r} < \infty.
	$$
	Furthermore
	$$
		\|f\|_{ T^*_r B^s_{p,q}(\R^d)} \asymp \|\lambda\|_{T^*_r b^s_{p, q}}.
	$$
	Note that $T^*_r b^s_{p, q} \hookrightarrow T_r b^s_{p, q} = T_r^0 b^s_{p, q}$ (cf. \eqref{DefBesSeq}), but the converse embedding fails to be true.
\end{rem}

\begin{rem}\label{RemHaar}
	The method of proof of Theorem \ref{ThmWaveletsNewBesov} is flexible enough so that it can be applied to deal with another types of wavelets. For instance, under natural assumptions on the involved parameters, one can obtain characterizations of the spaces $T^\xi_r B^s_{p, q}(\mathbb{R}^d)$ via Haar wavelets. We shall not record here the construction of Haar wavelet bases and we refer the interested reader to \cite[Section 2.5.1]{Triebel08} and \cite[Section 2.3]{Triebel10} for further explanations and related literature. Let $\{H^j_{G,m}\}$ be an orthonormal Haar wavelet basis in $L_2(\R^d)$.\index{\bigskip\textbf{Functionals and functions}!$H^{j}_{G, m}$}\label{HAAR}
\end{rem}
	
\begin{thm}
	Let $p, q, r \in (0,\infty], s \in \R$ and $\xi \in \R \backslash \{0\}$. Assume that
	$$
		\max \Big\{d\Big(\frac{1}{p} - 1\Big), \frac{1}{p} - 1 \Big\} < s < \min \Big\{\frac{1}{p}, 1\Big\}.
	$$
	Then $f \in T^\xi_r B^s_{p, q}(\R^d)$ if and only if
		\begin{equation*}
	 f = \sum_{j \in \N_0,G \in G^j,m \in \Z^d} \lambda^{j,G}_m 2^{-j d/2}
    H^j_{G,m},  \quad (\lambda^{j,G}_m) \in T^\xi_r b^s_{p, q}
    \end{equation*}
    (unconditional convergence being in $\mathcal{S}'(\R^d)$). This representation is unique in the sense that the wavelet coefficients $(\lambda^{j, G}_m)$ are given by
    \begin{equation*}
    	\lambda^{j, G}_m = 2^{j d/2} \int_{\R^d} f(x) H^j_{G,m}(x) \, dx
    \end{equation*}
    and the operator
     \begin{equation*}
     	I : f \mapsto (\lambda^{j,G}_m)
     \end{equation*}
     defines an isomorphism from $T^\xi_r B^s_{p, q}(\R^d)$ onto $T^\xi_r b^s_{p, q}$. If, in addition, $p < \infty, q < \infty, r < \infty$, then $\{H^j_{G,m}\}$ is an unconditional basis in $T^\xi_r B^s_{p, q}(\R^d)$.
\end{thm}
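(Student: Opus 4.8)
The plan is to mirror exactly the strategy used in the proof of Theorem~\ref{ThmWaveletsNewBesov} (the Daubechies case), replacing the Daubechies system by the Haar system throughout. The only genuinely new ingredient is the corresponding statement for \emph{classical} Besov spaces $B^s_{p,q}(\R^d)$ with Haar wavelets; once that is in hand, everything else is a formal repetition of the limiting interpolation argument.

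First I would invoke the known Haar wavelet characterization of classical Besov spaces: under the hypothesis $\max\{d(\tfrac1p-1),\tfrac1p-1\} < s < \min\{\tfrac1p,1\}$, the map $I$ sending $f$ to its Haar coefficients $(\lambda^{j,G}_m)$ is an isomorphism from $B^s_{p,q}(\R^d)$ onto $b^s_{p,q}$ (this is the Haar analogue of Theorem~\ref{ThmWaveletsBesovClassic}, proved e.g. in \cite[Section~2.5.1]{Triebel08}; the stated range of $s$ is precisely the one ensuring Haar functions are admissible atoms/molecules on both the space side and the dual side). The crucial point is that the condition $\max\{d(\tfrac1p-1),\tfrac1p-1\} < s < \min\{\tfrac1p,1\}$ is an \emph{open} interval in $s$, so I can pick $s_0$ slightly larger than $s$ (when $\xi>0$) or slightly smaller than $s$ (when $\xi<0$) such that $s_0$ still lies in this admissible range. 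Then $I$ is simultaneously an isomorphism $B^s_{p,q}(\R^d)\to b^s_{p,q}$ and $B^{s_0}_{p,q}(\R^d)\to b^{s_0}_{p,q}$.

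Next, for $\xi>0$ I apply the limiting interpolation functor $(\cdot,\cdot)_{(0,\xi-1/r),r}$ to the isomorphism $I$, obtaining that
$$
I:(B^s_{p,q}(\R^d),B^{s_0}_{p,q}(\R^d))_{(0,\xi-1/r),r}\to (b^s_{p,q},b^{s_0}_{p,q})_{(0,\xi-1/r),r}
$$
is an isomorphism. By Theorem~\ref{TheoremInterpolation}(i) the left-hand side equals $T^\xi_r B^s_{p,q}(\R^d)$, and by the identity \eqref{InterpolationBesovSequences} established inside the proof of Theorem~\ref{ThmWaveletsNewBesov} the right-hand side equals $T^\xi_r b^s_{p,q}$; note that \eqref{InterpolationBesovSequences} is a purely sequence-space statement and is insensitive to which wavelet system generated the coefficients. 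Hence $I:T^\xi_r B^s_{p,q}(\R^d)\to T^\xi_r b^s_{p,q}$ is an isomorphism. The case $\xi<0$ is identical using Theorem~\ref{TheoremInterpolation}(ii) and \eqref{InterpolationBesovSequences2}. Uniqueness of the representation and unconditional convergence in $\mathcal{S}'(\R^d)$ follow, exactly as in Theorem~\ref{ThmWaveletsNewBesov}, from the embeddings $T^\xi_r B^s_{p,q}(\R^d)\hookrightarrow B^s_{p,q}(\R^d)$ (Proposition~\ref{PropositionElementary}(iii)) and $T^\xi_r b^s_{p,q}\hookrightarrow b^s_{p,q}$ (from \eqref{EmbeddingsLimInt}) combined with the Haar characterization of classical $B^s_{p,q}(\R^d)$; unconditionality of $\{H^j_{G,m}\}$ when $p,q,r<\infty$ is immediate from the absolute convergence of the defining sum \eqref{DefBesSeq}.

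The only obstacle worth flagging is the first step: one needs the classical Haar wavelet isomorphism for $B^s_{p,q}(\R^d)$ valid on the full open range $\max\{d(\tfrac1p-1),\tfrac1p-1\}<s<\min\{\tfrac1p,1\}$, not merely for $p\ge 1$ or $s\in(0,1)$ naively. This is available in the literature (\cite{Triebel08}, \cite{Triebel10}) but must be quoted carefully, since the admissible range of $s$ is exactly what makes the choice of an auxiliary $s_0$ in the same range possible; if the range were closed or a single point the interpolation argument would collapse. Everything downstream is routine and already carried out verbatim in the Daubechies proof.
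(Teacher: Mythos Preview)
Your proposal is correct and follows exactly the approach indicated in the paper, which states that the proof ``follows the same lines as the proof of Theorem~\ref{ThmWaveletsNewBesov} and can be safely left to the interested reader.'' You have correctly identified the one nontrivial point, namely that the admissible parameter range for the classical Haar isomorphism is open in $s$, so an auxiliary $s_0$ can be chosen within the same range to feed into the limiting interpolation machinery of Theorem~\ref{TheoremInterpolation} and the sequence-space identities \eqref{InterpolationBesovSequences}--\eqref{InterpolationBesovSequences2}.
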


The proof of this result follows the same lines as the proof of Theorem \ref{ThmWaveletsNewBesov} and can be safely left to the interested reader.

\subsection{Wavelet characterization of $T^\xi_r F^s_{p, q}(\R^d)$}

We first introduce the sequence spaces $T^\xi_r f^s_{p, q}$ related to the classical spaces $f^s_{p, q}$ (cf. \eqref{DefSeqfClas}).\index{\bigskip\textbf{Spaces}!$T^\xi_r f^s_{p, q}$}\label{TTLSEQ}

\begin{defn}\label{Definitionfseqspaces}
	Let $p \in (0, \infty), \, q, r \in (0,\infty], \, s \in \R$ and $\xi \in \R$. The space $T^\xi_r f^s_{p, q}$ is the collection of all
	\begin{equation*}
	\lambda = \{\lambda^{j, G}_m \in \mathbb{C} : j \in \N_0, \quad G \in G^j, \quad m \in \Z^d \}
	\end{equation*}
	 such that
	\begin{equation*}
		\|\lambda\|_{T^\xi_r f^{s}_{p,q}} = \bigg(\sum_{k=0}^\infty 2^{k \xi r} \bigg\|\bigg(\sum_{j=2^k-1}^{2^{k+1}-2}\sum_{G \in G^j} \sum_{m \in \Z^d} 2^{j s q} |\lambda^{j, G}_m|^q \chi_{j, m} \bigg)^{1/q}\bigg\|_{L_p(\R^d)}^{r} \bigg)^{1/r} < \infty
	\end{equation*}
	(with the usual modification if $q=\infty$ and/or $r=\infty$).
\end{defn}

Next we show that $T^\xi_r F^s_{p, q}(\R^d)$ can be identified with the sequence spaces $T^\xi_r f^s_{p, q}$ via the wavelet isomorphism.

\begin{thm}\label{ThmWaveletsNewTriebelLizorkin}
	Let $p \in (1, \infty), q, r \in (0,\infty], s \in \R$ and $\xi \in \R \backslash \{0\}$. Assume that \eqref{4.1} holds with $u > \max\{s, \sigma_{p q}-s\}$. Then $f \in T^\xi_r F^s_{p, q}(\R^d)$ if and only if
		\begin{equation*}
	 f = \sum_{j \in \N_0,G \in G^j,m \in \Z^d} \lambda^{j,G}_m 2^{-j d/2}
    \Psi^j_{G,m},  \quad (\lambda^{j,G}_m) \in T^\xi_r f^s_{p, q}
    \end{equation*}
    (unconditional convergence being in $\mathcal{S}'(\R^d)$). This representation is unique, that is, the wavelet coefficients $(\lambda^{j, G}_m)$ are given by \eqref{Wav1}, and the operator $I$ in \eqref{OperatorI} defines an isomorphism from $T^\xi_r f^s_{p, q}(\R^d)$ onto $T^\xi_r f^s_{p, q}$. If, in addition, $q < \infty, r < \infty$, then $\{\Psi^j_{G,m}\}$ is an unconditional basis in $T^\xi_r f^s_{p, q}(\R^d)$.
\end{thm}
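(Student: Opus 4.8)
The plan is to transcribe, \emph{mutatis mutandis}, the proof of Theorem \ref{ThmWaveletsNewBesov}, with the classical Besov wavelet isomorphism (Theorem \ref{ThmWaveletsBesovClassic}) replaced by the Triebel--Lizorkin one (Theorem \ref{ThmWaveletsTLClassic}) and the mixed sequence space $\ell^{s-d/p}_q(\ell_q(\ell_p))$ replaced by $L_p(\R^d;\ell^s_q)$. Assume first $\xi>0$. Since $u>\max\{s,\sigma_{pq}-s\}$, fix $s_0\in(s,u)$; then also $u>\sigma_{pq}-s_0$, so Theorem \ref{ThmWaveletsTLClassic} applies both to $F^s_{p,q}(\R^d)$ and to $F^{s_0}_{p,q}(\R^d)$, and the coefficient map $I$ of \eqref{OperatorI} is an isomorphism of $F^s_{p,q}(\R^d)$ onto $f^s_{p,q}$ and of $F^{s_0}_{p,q}(\R^d)$ onto $f^{s_0}_{p,q}$. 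By the interpolation property of the limiting method, $I$ is then an isomorphism of $(F^s_{p,q}(\R^d),F^{s_0}_{p,q}(\R^d))_{(0,\xi-1/r),r}$ onto $(f^s_{p,q},f^{s_0}_{p,q})_{(0,\xi-1/r),r}$; and by Theorem \ref{TheoremInterpolationF}(i) the former space coincides with $T^\xi_r F^s_{p,q}(\R^d)$. Hence everything reduces to the sequence-space identity
\begin{equation}\label{PlanFseq}
	(f^s_{p,q},f^{s_0}_{p,q})_{(0,\xi-1/r),r}=T^\xi_r f^s_{p,q}.
\end{equation}

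To prove \eqref{PlanFseq} I would repeat the computation carried out in the proof of Lemma \ref{LemmaNew23}(i), but now for the couple $(f^s_{p,q},f^{s_0}_{p,q})$. The key point is that, exactly as for $L_p(\R^d;\ell^s_q)$, near-optimal decompositions of a sequence $\lambda=(\lambda^{j,G}_m)$ in $f^s_{p,q}+f^{s_0}_{p,q}$ can be chosen ``levelwise'', by splitting at each $j$ according to whether $2^{js}\le t\,2^{js_0}$; since the $L_p$-norm in the definition of $f^s_{p,q}$ is monotone in the moduli of the coefficients, this yields
$$
	K(t,\lambda;f^s_{p,q},f^{s_0}_{p,q})\asymp\bigg\|\bigg(\sum_{j,G,m}\min(2^{js},t\,2^{js_0})^q|\lambda^{j,G}_m|^q\chi_{j,m}\bigg)^{1/q}\bigg\|_{L_p(\R^d)}
$$
(the analogue of the formula for $L_p(\R^d;\ell^s_q)$). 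Inserting this into the definition of the $((0,\xi-1/r),r)$-norm, splitting the inner sum at $j=j_0$, discarding the first (``$I\lesssim II$'') term by Hardy's inequality \eqref{H1}, and regrouping over the dyadic blocks $2^k-1\le j\le 2^{k+1}-2$ with the help of Hardy's inequality \eqref{H2} (note $\xi>0$) and Minkowski's inequality — verbatim as in \eqref{PlanFseq}'s analogue in Lemma \ref{LemmaNew23} — produces exactly $\|\lambda\|_{T^\xi_r f^s_{p,q}}$ of Definition \ref{Definitionfseqspaces}. This gives \eqref{PlanFseq} and hence the isomorphism $I:T^\xi_r F^s_{p,q}(\R^d)\to T^\xi_r f^s_{p,q}$ for $\xi>0$.

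The case $\xi<0$ is entirely parallel: pick $s_0\in(\sigma_{pq}-u,\,s)$ so that Theorem \ref{ThmWaveletsTLClassic} applies to $F^{s_0}_{p,q}(\R^d)$ and $F^s_{p,q}(\R^d)$, then combine the interpolation property of $(\cdot,\cdot)_{(1,\xi-1/r),r}$, Theorem \ref{TheoremInterpolationF}(ii) and the levelwise $K$-functional computation above (now in the form of Lemma \ref{LemmaNew23}(ii)) to get $(f^{s_0}_{p,q},f^s_{p,q})_{(1,\xi-1/r),r}=T^\xi_r f^s_{p,q}$, whence $I:T^\xi_r F^s_{p,q}(\R^d)\to T^\xi_r f^s_{p,q}$ is an isomorphism. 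Finally, the uniqueness of the representation (so that the coefficients are given by \eqref{Wav1}), the unconditional convergence in $\mathcal S'(\R^d)$, and membership in $\mathcal S'(\R^d)$ follow, as in the Besov case, from the embeddings of $T^\xi_r F^s_{p,q}(\R^d)$ and $T^\xi_r f^s_{p,q}$ into the corresponding classical spaces — namely $F^s_{p,q}(\R^d)$ and $f^s_{p,q}$ when $\xi>0$, and $F^{s_0}_{p,q}(\R^d)$ and $f^{s_0}_{p,q}$ when $\xi<0$ — which come out of the interpolation identities together with \eqref{EmbeddingsLimInt} and Proposition \ref{PropositionElementary}, combined with Theorem \ref{ThmWaveletsTLClassic}; the unconditional-basis assertion for $p,q,r<\infty$ is then immediate from the explicit form of the quasi-norm on $T^\xi_r f^s_{p,q}$. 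The main obstacle is \eqref{PlanFseq}: one has to carry the ``levelwise optimal decomposition'' argument through the $L_p$-of-$\ell_q$ norm carefully (this is where the condition $u>\sigma_{pq}-s$, inherited from Theorem \ref{ThmWaveletsTLClassic}, and the restriction $p\in(1,\infty)$, inherited from Theorem \ref{TheoremInterpolationF}, intervene) and to keep the dyadic block bookkeeping so that the output of Lemma \ref{LemmaNew23} coincides literally with Definition \ref{Definitionfseqspaces}.
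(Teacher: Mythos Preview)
Your proposal is correct and follows essentially the same approach as the paper: reduce to the sequence-space identity \eqref{PlanFseq} via the wavelet isomorphism and Theorem \ref{TheoremInterpolationF}, then compute the $K$-functional and apply Hardy's inequalities to regroup into dyadic blocks. The only minor technical difference is that the paper, rather than computing $K(t,\lambda;f^s_{p,q},f^{s_0}_{p,q})$ directly via levelwise decompositions, first identifies $f^s_{p,q}$ with a complemented subspace of $L_p(\R^d;\ell^s_q(\ell_q))$ and then invokes the already-established $K$-functional formula \eqref{ProofIntF1}--\eqref{ProofIntF2}; this sidesteps having to justify the near-optimality of the levelwise split in the mixed $L_p(\ell_q)$ norm, though your direct approach would also go through.
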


\begin{rem}
	The previous result provides, in particular, a unified approach to the wavelet description of $\mathbf{B}^{0, \xi}_{p, q}(\R^d)$ and $\text{Lip}^{s, \xi}_{p, q}(\R^d)$ (cf. Proposition \ref{PropositionCoincidences}). These spaces were investigated from different perspectives in \cite[Theorem 5.5]{CobosDominguezTriebel} and \cite[Theorem 6.5]{DominguezHaroskeTikhonov}, respectively. Next we recall some known characterizations following the notation used there. Let $p \in (1, \infty)$ and $q \in (0, \infty]$. Then $f \in \mathbf{B}^{0, \xi}_{p, q}(\R^d), \, \xi > -1/q,$ if and only if\index{\bigskip\textbf{Spaces}!$\mathbf{b}^{0, \xi}_{p, q}$}\label{BESOVSEQZERO}
	\begin{equation*}
		\|(\lambda^{j, G}_m)\|_{\textbf{b}^{0, \xi}_{p, q}} := \bigg(\sum_{k=0}^\infty (1 + k)^{\xi q} \bigg\|\bigg(\sum_{j=k}^{\infty}\sum_{G \in G^j} \sum_{m \in \Z^d}  |\lambda^{j, G}_m|^2 \chi_{j, m} \bigg)^{1/2}\bigg\|_{L_p(\R^d)}^{q} \bigg)^{1/q}
	\end{equation*}
	is finite. In this case
	\begin{equation}\label{BWav}
		\|f\|_{ \mathbf{B}^{0, \xi}_{p, q}(\R^d)} \asymp \|(\lambda^{j, G}_m)\|_{\textbf{b}^{0, \xi}_{p, q}}.
	\end{equation}
		On the other hand, $f \in \text{Lip}^{s, \xi}_{p, q}(\R^d), \, s > 0, \, \xi < -1/q$, if and only if\index{\bigskip\textbf{Spaces}!$\text{lip}^{s, \xi}_{p, q}$}\label{LIPSEQ}
		\begin{equation*}
		\|(\lambda^{j, G}_m)\|_{\text{lip}^{s, \xi}_{p, q}} :=  \bigg(\sum_{k=0}^\infty (1+k)^{\xi q} \bigg\|\bigg(\sum_{j=0}^{k}\sum_{G \in G^j} \sum_{m \in \Z^d} 2^{j s 2}  |\lambda^{j, G}_m|^2 \chi_{j, m} \bigg)^{1/2}\bigg\|_{L_p(\R^d)}^{q} \bigg)^{1/q}
	\end{equation*}
	is finite. In this case
	\begin{equation}\label{LWav}
		\|f\|_{ \text{Lip}^{s, \xi}_{p, q}(\R^d)} \asymp \|(\lambda^{j, G}_m)\|_{\text{lip}^{s, \xi}_{p, q}}.
	\end{equation}
	
	It is not hard do check that both spaces $\textbf{b}^{0, \xi}_{p, q}$ and $\text{lip}^{s, \xi}_{p, q}$ fit into the scale of truncated $f$-spaces introduced in Definition \ref{Definitionfseqspaces}. Specifically
	$$
		T^{\xi+1/q}_q f^{s}_{p, 2} =  \left\{\begin{array}{cl} \textbf{b}^{0, \xi}_{p, q}  & \text{if} \quad s = 0 \quad \text{and }\quad  \xi > -\frac{1}{q}, \\
		 & \\
		  \text{lip}^{s, \xi}_{p, q}& \text{if} \quad s> 0 \quad \text{and}  \quad \xi < -\frac{1}{q}.
		       \end{array}
                        \right.
	$$
	Accordingly, one can combine Theorem \ref{ThmWaveletsNewTriebelLizorkin} and Proposition \ref{PropositionCoincidences} to show \eqref{BWav} and \eqref{LWav}.

\end{rem}

\begin{proof}[Proof of Theorem \ref{ThmWaveletsNewTriebelLizorkin}]
We shall only deal with the case $\xi > 0$, since the case $\xi < 0$ can be obtained following similar ideas. Without loss of generality, we may choose $s_ 0 > s$ such that $u > \max \{s_0, \sigma_{p, q}-s_0\}$. Thus, by Theorem \ref{ThmWaveletsTLClassic}, the spaces $F^s_{p, q}(\R^d)$ and $F^{s_0}_{p, q}(\R^d)$ are isomorphic to the sequence spaces $f^s_{p, q}$ and $f^{s_0}_{p, q}$, respectively, and invoking the retraction theorem for interpolation (cf. \cite[Theorem 1.2.4, p. 22]{Triebel}), the spaces $(F^{s}_{p, q}(\R^d), F^{s_0}_{p, q}(\R^d))_{(0, \xi -1/r), r}$ and $(f^s_{p, q}, f^{s_0}_{p, q})_{(0, \xi -1/r), r}$ can be identified. According to Theorem \ref{TheoremInterpolationF}, the latter can be rephrased as $T^\xi_r F^s_{p, q}(\R^d)$ can be identified with $(f^s_{p, q}, f^{s_0}_{p, q})_{(0, \xi -1/r), r}$ via the wavelet map $I$ (cf. \eqref{OperatorI}). Next we compute this interpolation space using the well-known fact that $f^s_{p, q}$ can be identified with a complemented subspace of $L_p(\R^d; \ell^s_q(\ell_q))$, where
	$$
		\|(f_{j, m})\|_{L_p(\R^d; \ell^s_q(\ell_q))} := \bigg\|\bigg(\sum_{j=0}^\infty 2^{j s q} \sum_{m \in \Z^d} |f_{j, m}|^q \bigg)^{1/q} \bigg\|_{L_p(\R^d)}.
	$$
	Hence the identification of the space $(f^s_{p, q}, f^{s_0}_{p, q})_{(0, \xi-1/r), r}$ can be reduced to compute $$(L_p(\R^d; \ell^s_q(\ell_q)), L_p(\R^d; \ell^{s_0}_q(\ell_q)))_{(0, \xi-1/r), r},$$
	see \cite[Section 1.17.1, pp. 118-119]{Triebel}. To estimate the corresponding $K$-functional, recall that (cf. \eqref{ProofIntF1} and \eqref{ProofIntF2})
	\begin{equation}\label{KfunctInt1}
		K(t, f; L_p(\R^d; A_0), L_p(\R^d; A_1)) \asymp \bigg(\int_{\R^d} K(t, f(x); A_0, A_1)^p \, dx \bigg)^{1/p}
	\end{equation}
	and
	\begin{equation}\label{KfunctInt2}
		K(t, (\xi_j); \ell^s_q(\ell_q), \ell^{s_0}_q(\ell_q)) \asymp \bigg(\sum_{j=0}^\infty [\min\{2^{j s}, t 2^{j s_0}\} \|\xi_j\|_{\ell_q(\Z^d)}]^q \bigg)^{1/q}.
	\end{equation}
	Combining \eqref{KfunctInt1} and \eqref{KfunctInt2}, we obtain
	\begin{align*}
		K(t, (f_j); L_p(\R^d; \ell^s_q(\ell_q)), L_p(\R^d; \ell^{s_0}_q(\ell_q))) &  \asymp \\
		& \hspace{-5cm} \left(\int_{\R^d} \bigg(\sum_{j=0}^\infty [\min\{2^{j s}, t 2^{j s_0}\} \|f_j(x)\|_{\ell_q(\Z^d)}]^q \bigg)^{p/q} \, dx  \right)^{1/p}
	\end{align*}
	and thus
	\begin{align*}
		\|(f_j)\|_{(L_p(\R^d; \ell^s_q(\ell_q)), L_p(\R^d; \ell^{s_0}_q(\ell_q)))_{(0, \xi-1/r), r}} & \asymp  \\
		& \hspace{-6.5cm} \left(\sum_{\nu=0}^\infty (1+ \nu)^{(\xi-1/r) r} K(2^{-\nu (s_0 - s)}, (f_j); L_p(\R^d; \ell^s_q(\ell_q)), L_p(\R^d; \ell^{s_0}_q(\ell_q)))^r \right)^{1/r} \\
		& \hspace{-6.5cm} \asymp  \left(\sum_{\nu=0}^\infty (1+ \nu)^{(\xi-1/r) r} \left(\int_{\R^d} \bigg(\sum_{j=0}^\infty [\min\{1, 2^{(j-\nu) (s_0-s)}\} 2^{j s} \|f_j(x)\|_{\ell_q(\Z^d)}]^q \bigg)^{p/q} \, dx  \right)^{r/p}   \right)^{1/r} \\
		& \hspace{-6.5cm} \asymp  \left(\sum_{\nu=0}^\infty 2^{-\nu (s_0 -s) r} (1+ \nu)^{(\xi-1/r) r} \left(\int_{\R^d} \bigg(\sum_{j=0}^\nu [2^{j s_0} \|f_j(x)\|_{\ell_q(\Z^d)}]^q \bigg)^{p/q} \, dx  \right)^{r/p}   \right)^{1/r} \\
		& \hspace{-6cm} + \left(\sum_{\nu=0}^\infty (1+ \nu)^{(\xi-1/r) r} \left(\int_{\R^d} \bigg(\sum_{j=\nu}^\infty [2^{j s} \|f_j(x)\|_{\ell_q(\Z^d)}]^q \bigg)^{p/q} \, dx  \right)^{r/p}   \right)^{1/r} \\
		& \hspace{-6.5cm} =: I + II.
	\end{align*}
	We claim that
	\begin{equation}\label{ClaimProofWavelet}
		I \lesssim II.
	\end{equation}
	Assume momentarily that \eqref{ClaimProofWavelet} is true. Therefore, monotonicity properties imply
	\begin{align*}
		\|(f_j)\|_{(L_p(\R^d; \ell^s_q(\ell_q)), L_p(\R^d; \ell^{s_0}_q(\ell_q)))_{(0, \xi-1/r), r}} & \asymp \\
		& \hspace{-5cm} \left(\sum_{\nu=0}^\infty 2^{\nu \xi r} \bigg\| \bigg(\sum_{j=2^{\nu}-1}^\infty 2^{j s q} \|f_j(x)\|_{\ell_q(\Z^d)}^q \bigg)^{1/q}  \bigg\|_{L_p(\R^d)}^r   \right)^{1/r}.
	\end{align*}
	Hence the proof will be finished if we show that
	\begin{align}
		\left(\sum_{\nu=0}^\infty 2^{\nu \xi r} \bigg\| \bigg(\sum_{j=2^{\nu}-1}^\infty 2^{j s q} \|f_j(x)\|_{\ell_q(\Z^d)}^q \bigg)^{1/q}  \bigg\|_{L_p(\R^d)}^r   \right)^{1/r} & \asymp \nonumber \\
		& \hspace{-5cm} \left(\sum_{\nu=0}^\infty 2^{\nu \xi r} \bigg\| \bigg(\sum_{j=2^{\nu}-1}^{2^{\nu + 1} -2} 2^{j s q} \|f_j(x)\|_{\ell_q(\Z^d)}^q \bigg)^{1/q}  \bigg\|_{L_p(\R^d)}^r   \right)^{1/r}. \label{ClaimProofWavelet2}
	\end{align}
	
	It remains to show \eqref{ClaimProofWavelet} and \eqref{ClaimProofWavelet2}. Concerning the former one,   we shall separate two possible cases. Firstly, if $p \geq q$ we can apply Minkowski's inequality so that
	$$
		\left(\int_{\R^d} \bigg(\sum_{j=0}^\nu [2^{j s_0} \|f_j(x)\|_{\ell_q(\Z^d)}]^q \bigg)^{p/q} \, dx  \right)^{1/p} \leq \left(\sum_{j=0}^\nu 2^{j s_0 q} \|f_j\|^q_{L^p(\R^d; \ell_{q}(\Z^d))} \right)^{1/q},
	$$
	which yields
	\begin{equation*}
		I \leq  \left(\sum_{\nu=0}^\infty 2^{-\nu (s_0 -s) r} (1+ \nu)^{(\xi-1/r) r} \left(\sum_{j=0}^\nu 2^{j s_0 q} \|f_j\|^q_{L^p(\R^d; \ell_{q}(\Z^d))} \right)^{r/q}  \right)^{1/r}.
		\end{equation*}
	It follows from Hardy's inequality \eqref{H1} (noting $s_0 > s$) that
		\begin{equation*}
		I  \lesssim \left(\sum_{\nu=0}^\infty 2^{\nu s r} (1 + \nu)^{(\xi-1/r) r} \|f_\nu\|_{L^p(\R^d; \ell_q(\Z^d))}^r \right)^{1/r} \leq II.
	\end{equation*}
%	On the other hand, if $r < q$ we can make use of the embedding $\ell_r \hookrightarrow \ell_q$ together with a change of summation so that \eqref{sjjasjas} implies
%	\begin{align*}
%		I &\lesssim \bigg(\sum_{\nu=0}^\infty 2^{-\nu(s_0-s)r} (1 + \nu)^{(\xi-1/r) r} \sum_{j=0}^\nu 2^{j s_0 r} \|f_j\|_{L^p(\R^d; \ell_q(\Z^d))}^r \bigg)^{1/r} \\
%		& \asymp \bigg(\sum_{j=0}^\infty  2^{j s r}  (1 + j)^{(\xi-1/r) r}   \|f_j\|_{L^p(\R^d; \ell_q(\Z^d))}^r\bigg)^{1/r} \leq II.
%	\end{align*}
	This completes the proof of \eqref{ClaimProofWavelet} under the assumption $p \geq q$.
	
	The proof of \eqref{ClaimProofWavelet} with $p < q$ follows similar ideas as above and thus only a sketchy proof will be provided. We have
	\begin{align*}
		I & \leq  \left(\sum_{\nu=0}^\infty 2^{-\nu (s_0 -s) r} (1+ \nu)^{(\xi-1/r) r} \bigg(\sum_{j=0}^\nu 2^{j s_0 p} \|f_j\|_{L_p(\R^d; \ell_q(\Z^d))}^p  \bigg)^{r/p}   \right)^{1/r}
	\end{align*}
	and, by \eqref{H1},
	$$
		I \lesssim \left(\sum_{\nu=0}^\infty 2^{\nu s r} (1+ \nu)^{(\xi-1/r) r} \|f_j\|_{L_p(\R^d; \ell_q(\Z^d))}^r   \right)^{1/r} \leq II.
	$$
%	On the other hand, in the case $r < p$ then
%	\begin{equation*}
%		I \lesssim  \left(\sum_{j=0}^\infty 2^{j s r} (1+ j)^{(\xi-1/r) r}  \|f_j\|_{L_p(\R^d; \ell_q(\Z^d))}^r      \right)^{1/r}  \leq II.
%	\end{equation*}
	
	To show \eqref{ClaimProofWavelet2}, we can argue as follows. Using the fact that $L_p(\R^d), \, 0 < p \leq \infty,$ satisfies the $\min\{p, 1\}$-triangle inequality (i.e., $\|f_1 + f_2\|_{L_p(\R^d)}^{\min\{p, 1\}} \leq \|f_1\|_{L_p(\R^d)}^{\min\{p, 1\}} +   \|f_2\|_{L_p(\R^d)}^{\min\{p, 1\}}$) and \eqref{H2} (note that $\xi > 0$)
	\begin{align*}
	\left(\sum_{\nu=0}^\infty 2^{\nu \xi r} \bigg\| \bigg(\sum_{j=2^{\nu}-1}^\infty 2^{j s q} \|f_j(x)\|_{\ell_q(\Z^d)}^q \bigg)^{1/q}  \bigg\|_{L_p(\R^d)}^r   \right)^{1/r}  & =  \\
	& \hspace{-7cm}\left(\sum_{\nu=0}^\infty 2^{\nu \xi r} \bigg\| \sum_{j=2^{\nu}-1}^\infty 2^{j s q} \|f_j(x)\|_{\ell_q(\Z^d)}^q   \bigg\|_{L_{p/q}(\R^d)}^{r/q}   \right)^{1/r}  \\
	& \hspace{-7cm} \leq  \left(\sum_{\nu=0}^\infty 2^{\nu \xi r}\bigg( \sum_{l= \nu}^\infty \bigg\| \sum_{j=2^l-1}^{2^{l+1}-2} 2^{j s q} \|f_j(x)\|_{\ell_q(\Z^d)}^q   \bigg\|^{\min\{p/q, 1\}}_{L_{p/q}(\R^d)} \bigg)^{r/(q \min\{p/q, 1\})}   \right)^{1/r} \\
	& \hspace{-7cm } \lesssim  \bigg(\sum_{\nu=0}^\infty  2^{\nu \xi r} \bigg\| \sum_{j=2^\nu-1}^{2^{\nu+1}-2} 2^{j s q} \|f_j(x)\|_{\ell_q(\Z^d)}^q   \bigg\|_{L_{p/q}(\R^d)}^{r/q} \bigg)^{1/r} \\
	& \hspace{-7cm} =  \bigg(\sum_{\nu=0}^\infty  2^{\nu \xi r} \bigg\|\bigg( \sum_{j=2^\nu-1}^{2^{\nu+1}-2} 2^{j s q} \|f_j(x)\|_{\ell_q(\Z^d)}^q \bigg)^{1/q}  \bigg\|_{L_{p}(\R^d)}^{r} \bigg)^{1/r}.
	\end{align*}
	The proof is complete.
\end{proof}

\begin{rem}
	A similar comment to Remark \ref{RemDelicate} applies to wavelet characterizations of truncated Triebel--Lizorkin spaces. To be more precise, under the assumptions of Theorem  \ref{ThmWaveletsNewTriebelLizorkin} with $r < \infty$, $f \in T^*_r F^s_{p, q}(\R^d)$ (cf. \eqref{34}) if and only if $\lambda \in T^*_r f^s_{p, q}$ where\index{\bigskip\textbf{Spaces}!$T^*_r f^s_{p, q}$}\label{LIMTRUNTLSEQ}
	$$
		\|\lambda\|_{T^*_r f^s_{p, q}} := \bigg(\sum_{k=0}^\infty \bigg\|\bigg(\sum_{j=2^k-1}^{\infty}\sum_{G \in G^j} \sum_{m \in \Z^d} 2^{j s q} |\lambda^{j, G}_m|^q \chi_{j, m} \bigg)^{1/q}\bigg\|_{L_p(\R^d)}^{r} \bigg)^{1/r}.
	$$
	Clearly $T^*_r f^s_{p, q} \hookrightarrow T_r f^s_{p, q} = T^0_r f^s_{p, q}$ (cf. Definition \ref{Definitionfseqspaces}), but the converse embedding is no longer true.
\end{rem}

\begin{rem}\label{RemarkIntp}
	The restriction $p > 1$ in Theorem  \ref{ThmWaveletsNewTriebelLizorkin} comes only from Theorem \ref{TheoremInterpolationF}. In fact, the method of proof of Theorem  \ref{ThmWaveletsNewTriebelLizorkin} still works if $p \in (0, \infty)$ to show that
	$$
		(F^s_{p, q}(\R^d), A^{s_0}_{p, q_0}(\R^d))_{(0, \xi -1/r), r}, \qquad A \in \{B, F\},
	$$
	with
	$\xi > 0$ and $s_0 > s$,
	coincides with the set of all $f \in \mathcal{S}'(\R^d)$ which admits a wavelet representation
		\begin{equation*}
	 f = \sum_{j \in \N_0,G \in G^j,m \in \Z^d} \lambda^{j,G}_m 2^{-j d/2}
    \Psi^j_{G,m},  \quad (\lambda^{j,G}_m) \in T^\xi_r f^s_{p, q}
    \end{equation*}
    (unconditional convergence being in $\mathcal{S}'(\R^d)$). Furthermore
    $$
    	\|f\|_{(F^s_{p, q}(\R^d), A^{s_0}_{p, q_0}(\R^d))_{(0, \xi -1/r), r}} \asymp \| ( \lambda^{j,G}_m )\|_{ T^\xi_r f^s_{p, q}}.
    $$
    A similar result holds true for $(A^{s_0}_{p, q_0}(\R^d), F^s_{p, q}(\R^d))_{(1, \xi-1/r), r}$ with $\xi < 0$ and $s_0 < s$.
\end{rem}

\begin{rem}
	In the same vein as Remark \ref{RemHaar}, the method of proof of Theorem \ref{ThmWaveletsNewTriebelLizorkin} can be easily adapted to deal with Haar wavelet basis for $T^\xi_r F^s_{p, q}(\R^d)$. Next we only state the result and leave the proof to the interested reader.
\end{rem}

\begin{thm}
	Let $p \in (1, \infty), q \in (0, \infty), r \in (0,\infty], s \in \R$ and $\xi \in \R \backslash \{0\}$. Assume that
\begin{equation}\label{AssumUncond}
	\max\bigg\{d\Big(\frac{1}{q}-1 \Big), \frac{1}{q}-1, \frac{1}{p}-1\bigg\}<s < \min\bigg\{\frac{1}{p}, \frac{1}{q} \bigg\}.
\end{equation}
	Then $f \in T^\xi_r F^s_{p, q}(\R^d)$ if and only if
		\begin{equation*}
	 f = \sum_{j \in \N_0,G \in G^j,m \in \Z^d} \lambda^{j,G}_m 2^{-j d/2}
    H^j_{G,m},  \quad (\lambda^{j,G}_m) \in T^\xi_r f^s_{p, q}
    \end{equation*}
    (unconditional convergence being in $\mathcal{S}'(\R^d)$). This representation is unique in the sense that the wavelet coefficients $(\lambda^{j, G}_m)$ are given by
    \begin{equation*}
    	\lambda^{j, G}_m = 2^{j d/2} \int_{\R^d} f(x) H^j_{G,m}(x) \, dx
    \end{equation*}
    and the operator
     \begin{equation*}
     	I : f \mapsto (\lambda^{j,G}_m)
     \end{equation*}
     defines an isomorphism from $T^\xi_r F^s_{p, q}(\R^d)$ onto $T^\xi_r f^s_{p, q}$. If, in addition, $r < \infty$, then $\{H^j_{G,m}\}$ is an unconditional basis in $T^\xi_r F^s_{p, q}(\R^d)$.
\end{thm}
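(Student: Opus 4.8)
The plan is to follow the exact blueprint used for the Daubechies-wavelet characterization in Theorem \ref{ThmWaveletsNewTriebelLizorkin}, replacing the smooth wavelet system $\{\Psi^j_{G,m}\}$ by the Haar system $\{H^j_{G,m}\}$, and tracking the extra smoothness restriction \eqref{AssumUncond} through the two endpoint spaces. First I would recall the classical Haar characterization of $F^s_{p,q}(\R^d)$: under the condition
$$
	\max\Big\{d\Big(\tfrac{1}{q}-1\Big), \tfrac{1}{q}-1, \tfrac{1}{p}-1\Big\} < s < \min\Big\{\tfrac{1}{p}, \tfrac{1}{q}\Big\},
$$
the Haar map $I : f \mapsto (\lambda^{j,G}_m)$ is an isomorphism of $F^s_{p,q}(\R^d)$ onto $f^s_{p,q}$ (see \cite[Section 2.5.1]{Triebel08} and \cite[Section 2.3]{Triebel10}). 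The key point is that this interval of admissible $s$ is \emph{open}, so given $s$ satisfying \eqref{AssumUncond} one can pick $s_0 > s$ (if $\xi > 0$) or $s_0 < s$ (if $\xi < 0$) still inside the interval; this is exactly the role played by the inequality $u > \max\{s, \sigma_{pq}-s\}$ in the smooth case, which there could be met for both $s$ and $s_0$ by increasing $u$, whereas here it is the open condition on $s$ that must accommodate both endpoints.

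The core of the argument then runs as follows. Take $\xi > 0$ (the case $\xi < 0$ is symmetric, using Theorem \ref{TheoremInterpolationF}(ii) in place of (i)). Choose $s_0 > s$ with $s_0$ still in the admissible interval \eqref{AssumUncond}. By the classical Haar characterization, $I$ is an isomorphism $F^s_{p,q}(\R^d) \to f^s_{p,q}$ and $F^{s_0}_{p,q}(\R^d) \to f^{s_0}_{p,q}$; by the retraction theorem for interpolation (\cite[Theorem 1.2.4]{Triebel}) together with the interpolation property of the $((\theta,b),q)$-method, $I$ is an isomorphism
$$
	(F^s_{p,q}(\R^d), F^{s_0}_{p,q}(\R^d))_{(0,\xi-1/r),r} \longrightarrow (f^s_{p,q}, f^{s_0}_{p,q})_{(0,\xi-1/r),r}.
$$
Theorem \ref{TheoremInterpolationF}(i) identifies the left-hand side with $T^\xi_r F^s_{p,q}(\R^d)$. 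For the right-hand side I would invoke verbatim the computation carried out in the proof of Theorem \ref{ThmWaveletsNewTriebelLizorkin}: $f^s_{p,q}$ embeds as a complemented subspace of $L_p(\R^d;\ell^s_q(\ell_q))$, the $K$-functional formulae \eqref{KfunctInt1}–\eqref{KfunctInt2} apply unchanged, and the Hardy-inequality manipulations \eqref{ClaimProofWavelet}–\eqref{ClaimProofWavelet2} yield $(f^s_{p,q}, f^{s_0}_{p,q})_{(0,\xi-1/r),r} = T^\xi_r f^s_{p,q}$. Hence $I : T^\xi_r F^s_{p,q}(\R^d) \to T^\xi_r f^s_{p,q}$ is an isomorphism, and the representation $f = \sum \lambda^{j,G}_m 2^{-jd/2} H^j_{G,m}$ together with the formula $\lambda^{j,G}_m = 2^{jd/2}\int f H^j_{G,m}$ follow from the corresponding classical statements via the embeddings $T^\xi_r F^s_{p,q}(\R^d) \hookrightarrow F^s_{p,q}(\R^d)$ (Proposition \ref{PropositionElementary}(iii)) and $T^\xi_r f^s_{p,q} \hookrightarrow f^s_{p,q}$. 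Unconditionality of $\{H^j_{G,m}\}$ when $r < \infty$ is immediate from the explicit form of $\|\cdot\|_{T^\xi_r f^s_{p,q}}$.

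The main obstacle, I expect, is not in the interpolation machinery — which is literally copied from the smooth case — but in verifying that the classical Haar characterization of $F^s_{p,q}(\R^d)$ is available \emph{uniformly} on an open interval of smoothness parameters containing both $s$ and $s_0$, and in particular that the chosen $s_0$ genuinely lies in \eqref{AssumUncond}; this is a matter of carefully reading off the hypotheses of \cite[Section 2.5.1]{Triebel08} / \cite[Section 2.3]{Triebel10}. A secondary but routine point is that the retraction (the Haar analysis operator) and its co-retraction (Haar synthesis) act boundedly and compatibly on \emph{both} endpoint spaces simultaneously, so that the retraction theorem for limiting interpolation genuinely applies; this again mirrors the Daubechies case and needs only the observation that the same operators work for $s$ and $s_0$. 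Beyond these two checks, the proof is a direct transcription of the proof of Theorem \ref{ThmWaveletsNewTriebelLizorkin}, which is why it can safely be left to the reader.
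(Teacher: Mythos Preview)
Your proposal is correct and matches the paper's intended approach exactly: the paper does not give a proof of this theorem but explicitly states that ``the method of proof of Theorem \ref{ThmWaveletsNewTriebelLizorkin} can be easily adapted to deal with Haar wavelet basis'' and leaves it to the reader. Your write-up is precisely that adaptation, and you have correctly identified the one new ingredient, namely that the openness of the admissible interval \eqref{AssumUncond} allows the choice of a second endpoint $s_0$ (close to $s$) still within the Haar range, replacing the freedom to increase $u$ in the Daubechies case. One small slip: the embedding $T^\xi_r F^s_{p,q}(\R^d) \hookrightarrow F^s_{p,q}(\R^d)$ for $\xi>0$ is Proposition \ref{PropositionElementary}(i), not (iii).
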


The assumptions \eqref{AssumUncond} in the previous result are natural, in the sense that they characterize the unconditionality of Haar wavelet basis in $F^s_{p, q}(\R^d)$ under $p \in (1, \infty)$ and $q \in (0, \infty)$. A detailed analysis of several properties of the Haar system in Besov and Triebel-Lizorkin spaces was carried out in \cite{SeegerUllrichb, Garrigos18}.

\newpage

\section{Lifting property}

If $\sigma \in \R$, then the \emph{lifting operator} $I_\sigma$ is defined by\index{\bigskip\textbf{Operators}!$I_\sigma$}\label{LIFT}
\begin{equation}\label{LiftingDef}
	I_\sigma f := [(1 + |\xi|^2)^{\frac{\sigma}{2}} \widehat{f}]^\vee, \qquad f \in \mathcal{S}'(\R^d).
\end{equation}
It is well known that $I_\sigma$ is a one-to-one mapping from $\mathcal{S}'(\R^d)$ onto $\mathcal{S}'(\R^d)$.

\subsection{Lifting in truncated Besov and Triebel-Lizorkin spaces}
A basic result in the theory of function spaces is the so-called lifting property of Besov spaces (cf. \cite[Section 2.3.8]{Triebel83}, see also \cite[Proposition 1.8]{Moura}), which asserts that $I_\sigma$ acts as an isomorphism from $B^{s, b}_{p, q}(\R^d)$ onto $B^{s-\sigma, b}_{p, q}(\R^d)$ and
\begin{equation}\label{ClassicLift}
	\|I_\sigma f\|_{B^{s-\sigma, b}_{p, q}(\R^d)} \asymp \|f\|_{B^{s, b}_{p, q}(\R^d)}.
\end{equation}
Here $s, b \in \R$ and $p, q \in (0, \infty]$. The analogue for $F$-spaces reads as follows. Let $s, b \in \R, p \in (0, \infty)$ and $q \in (0, \infty]$, then
\begin{equation}\label{ClassicLiftTL}
	\|I_\sigma f\|_{F^{s-\sigma, b}_{p, q}(\R^d)} \asymp \|f\|_{F^{s, b}_{p, q}(\R^d)}.
\end{equation}

The next result shows the lifting property for the spaces  $T^b_r B^s_{p, q}(\mathbb{R}^d)$ and $T^b_r F^s_{p, q}(\mathbb{R}^d)$.

\begin{thm}\label{TheoremLifting}
Let $A \in \{B, F\}$. Let $s, \sigma, b \in \R$ and $p, q, r \in (0, \infty] \, (p < \infty \text{ if } A = F)$. Then $I_\sigma$ acts as an isomorphism from $T^b_r A^s_{p, q}(\R^d)$ onto $T^b_r A^{s-\sigma}_{p, q}(\R^d)$ and
\begin{equation*}
	\|I_\sigma f\|_{T^b_r A^{s-\sigma}_{p, q}(\R^d)} \asymp \|f\|_{T^b_r A^s_{p, q}(\R^d)}.
\end{equation*}
\end{thm}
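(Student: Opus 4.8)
The plan is to deduce the statement directly from the classical lifting properties \eqref{ClassicLift}--\eqref{ClassicLiftTL} together with the block description of the truncated quasi-norm in \eqref{RemFM}. Writing $P_j f := \sum_{\nu = 2^j - 1}^{2^{j+1}-2} (\varphi_\nu \widehat{f})^\vee$ for the $j$-th dyadic block, \eqref{RemFM} gives $\|f\|_{T^b_r A^s_{p,q}(\R^d)} \asymp \big(\sum_{j=0}^\infty 2^{jbr} \|P_j f\|_{A^s_{p,q}(\R^d)}^r\big)^{1/r}$ in the full parameter range (in particular for $p, q, r < 1$, and for $p < \infty$ when $A = F$), so one only has to compare, block by block, $\|P_j (I_\sigma f)\|_{A^{s-\sigma}_{p,q}(\R^d)}$ with $\|P_j f\|_{A^s_{p,q}(\R^d)}$. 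This route is preferable to arguing through the interpolation formulae of Theorems \ref{Thm4.2} and \ref{TheoremInterpolationF}, which for $F$-spaces are stated only for $p \in (1,\infty)$.

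First I would record the commutation identity $P_j (I_\sigma f) = I_\sigma (P_j f)$ for every $j \in \N_0$. This is immediate: since $(1 + |\xi|^2)^{\sigma/2}$ and $\varphi_\nu$ are ordinary functions, $\varphi_\nu \big[(1 + |\xi|^2)^{\sigma/2} \widehat{f}\big] = (1 + |\xi|^2)^{\sigma/2} \big[\varphi_\nu \widehat{f}\big]$, hence $(\varphi_\nu \widehat{I_\sigma f})^\vee = I_\sigma\big((\varphi_\nu \widehat{f})^\vee\big)$, and summing over $\nu \in \{2^j - 1, \dots, 2^{j+1} - 2\}$ and using linearity of $I_\sigma$ gives the claim.

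Next, by \eqref{ClassicLift} (if $A = B$) or \eqref{ClassicLiftTL} (if $A = F$), one has $\|I_\sigma h\|_{A^{s-\sigma}_{p,q}(\R^d)} \asymp \|h\|_{A^s_{p,q}(\R^d)}$ with equivalence constants depending only on $s, \sigma, p, q, d$. Applying this with $h = P_j f$ and invoking the commutation identity and \eqref{RemFM},
\begin{align*}
\|I_\sigma f\|_{T^b_r A^{s-\sigma}_{p,q}(\R^d)} &\asymp \Big(\sum_{j=0}^\infty 2^{jbr} \|I_\sigma(P_j f)\|_{A^{s-\sigma}_{p,q}(\R^d)}^r\Big)^{1/r} \\
&\asymp \Big(\sum_{j=0}^\infty 2^{jbr} \|P_j f\|_{A^{s}_{p,q}(\R^d)}^r\Big)^{1/r} \asymp \|f\|_{T^b_r A^s_{p,q}(\R^d)}
\end{align*}
(with the usual modification if $r = \infty$), which is the asserted quasi-norm equivalence. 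Since $I_\sigma$ is a bijection of $\mathcal{S}'(\R^d)$ with inverse $I_{-\sigma}$, running the same computation with $-\sigma$ in place of $\sigma$ shows that $I_{-\sigma}$ maps $T^b_r A^{s-\sigma}_{p,q}(\R^d)$ boundedly onto $T^b_r A^{s}_{p,q}(\R^d)$, so $I_\sigma$ is an isomorphism.

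There is no serious obstacle here: the argument is essentially a one-line reduction once \eqref{RemFM} is in hand. The only points requiring routine care are the commutation identity $P_j I_\sigma = I_\sigma P_j$ --- valid precisely because $(1+|\xi|^2)^{\sigma/2}$ is a pointwise Fourier multiplier commuting with the smooth cut-offs $\varphi_\nu$ --- and the observation that the equivalence constants in the classical lifting estimates are independent of the block index $j$ (indeed of the input altogether), so that the block-by-block comparison is uniform.
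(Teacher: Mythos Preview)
Your proof is correct and follows essentially the same route as the paper: apply \eqref{RemFM} to reduce to a block-by-block comparison, commute $I_\sigma$ with the blocks via pointwise multiplication on the Fourier side, and invoke the classical lifting estimates \eqref{ClassicLift}--\eqref{ClassicLiftTL}. The paper's version is terser (it writes the chain of equivalences directly without naming $P_j$ or separately recording the isomorphism statement), but the argument is identical.
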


\begin{proof}
%	(i): Assume first $b > 0$. Let $s_0 > s$. Since
%	$$
%		I_\sigma: B^{s}_{p, q}(\R^d) \to B^{s-\sigma}_{p, q}(\R^d) \quad \text{and} \quad I_\sigma: B^{s_0}_{p, q}(\R^d) \to B^{s_0-\sigma}_{p, q}(\R^d)
%	$$
%	we can apply the limiting interpolation method to obtain the boundedness result
%	$$
%		I_\sigma: (B^{s}_{p, q}(\R^d), B^{s_0}_{p, q}(\R^d))_{(0, b-1/r), r} \to (B^{s-\sigma}_{p, q}(\R^d),  B^{s_0-\sigma}_{p, q}(\R^d))_{(0, b-1/r), r},
%	$$
%	or equivalently (see Theorem \ref{TheoremInterpolation}),
%	$$
%		I_\sigma: T^b_r B^s_{p, q}(\R^d) \to T^b_r B^{s - \sigma}_{p, q}(\R^d).
%	$$
%	The same proof also leads to the boundedness of the inverse operator $I_\sigma^{-1} = I_{-\sigma}$, that is,
%	$$
%	I_{-\sigma}: T^b_r B^{s-\sigma}_{p, q}(\R^d) \to T^b_r B^s_{p, q}(\R^d).
%	$$
%	This finishes the proof of the lifting property for $T^b_r B^s_{p, q}(\R^d)$. Similar arguments can be applied to deal with $b < 0$.
	
	 By \eqref{RemFM}, \eqref{ClassicLift} and \eqref{ClassicLiftTL}, we have
		\begin{align*}
		\|I_\sigma f\|_{T^b_r A^{s-\sigma}_{p, q}(\R^d)} & \asymp  \bigg(\sum_{j=0}^\infty 2^{j b r} \bigg\|\bigg(\sum_{\nu=2^j-1}^{2^{j+1}-2} \varphi_\nu \widehat{I_\sigma f} \bigg)^\vee \bigg\|_{A^{s-\sigma}_{p, q}(\R^d)}^r \bigg)^{1/r} \\
		& =  \bigg(\sum_{j=0}^\infty 2^{j b r} \bigg\|I_\sigma\bigg[ \bigg(\sum_{\nu=2^j-1}^{2^{j+1}-2} \varphi_\nu \widehat{f} \bigg)^\vee \bigg] \bigg\|_{A^{s-\sigma}_{p, q}(\R^d)}^r \bigg)^{1/r} \\
		& \asymp  \bigg(\sum_{j=0}^\infty 2^{j b r} \bigg\|\bigg(\sum_{\nu=2^j-1}^{2^{j+1}-2} \varphi_\nu \widehat{f} \bigg)^\vee \bigg\|_{A^{s}_{p, q}(\R^d)}^r \bigg)^{1/r} \asymp \|f\|_{T^b_r A^s_{p, q}(\R^d)}.
	\end{align*}

%	
%	We only deal with the case $b > 0$, leaving to the reader the similar case $b < 0$. Assume $s_0 > s$. It follows from
%		$$
%		I_\sigma: F^{s}_{p, q}(\R^d) \to F^{s-\sigma}_{p, q}(\R^d) \quad \text{and} \quad I_\sigma: F^{s_0}_{p, q}(\R^d) \to F^{s_0-\sigma}_{p, q}(\R^d)
%	$$
%	and Theorem \ref{TheoremInterpolationF} that
%	$$
%	I_\sigma : T^b_r F^s_{p, q}(\R^d) \to T^b_r F^{s-\sigma}_{p, q}(\R^d).
%	$$
%	Analogously, one can show
%	$$
%	I_{-\sigma} : T^b_r F^{s-\sigma}_{p, q}(\R^d) \to T^b_r F^s_{p, q}(\R^d).
%	$$
\end{proof}

\begin{rem}\label{RemarkLifting}
	The counterpart of Theorem \ref{TheoremLifting} in terms of the periodic liftings\index{\bigskip\textbf{Operators}!$\mathfrak{I}_\sigma$}\label{LIFTPER}
	$$
		\mathfrak{I}_\sigma : f \mapsto \sum_{m \in \Z^d} (1 + |m|^2)^{\sigma/2} \widehat{f}(m) e^{i m \cdot x}, \qquad \sigma \in \R,
	$$
	and the periodic spaces $T^b_r A^{s}_{p, q}(\mathbb{T}^d), \, A \in \{B, F\}$, also holds true.
\end{rem}

As an immediate consequence of Theorem \ref{TheoremLifting} we are now able to extend the Lizorkin representations \eqref{Lizorkin} for $T^b_r B^{s}_{p, q}(\mathbb{T}^d)$ from $s > 0$ to $s \in \R$.

\begin{cor}\label{CorollaryLizorkinRep}
	Let $1 < p < \infty, 0 < q, r \leq \infty, s \in \R$ and $b \neq 0$. Then
		\begin{equation*}
		\|f\|_{T^b_r B^{s}_{p, q}(\mathbb{T}^d)} \asymp  \left( \sum_{k=0}^\infty 2^{k b r} \bigg(\sum_{\nu= 2^k-1}^{2^{k+1}-2} 2^{\nu s q} \bigg\|\sum_{m \in K_\nu} \widehat{f}(m) e^{i m \cdot x} \bigg\|_{L_p(\mathbb{T}^d)}^q \bigg)^{\frac{r}{q}}  \right)^{\frac{1}{r}}
	\end{equation*}
	where
	$$
		K_\nu = \{m \in \Z^d: |m_i| < 2^{\nu +1}, i = 1, \ldots, d\} \backslash \{m \in \Z^d: |m_i| < 2^{\nu}, i = 1, \ldots, d\}
	$$
	for $\nu \in \N$ and $K_0 = \{(0, \ldots, 0)\}$.
\end{cor}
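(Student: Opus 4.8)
\textbf{Proof proposal for Corollary \ref{CorollaryLizorkinRep}.}

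The plan is to reduce the case of general $s \in \R$ to the already established case $s > 0$ by means of the periodic lifting operator $\mathfrak{I}_\sigma$ (cf. Remark \ref{RemarkLifting}). First I would recall that the Lizorkin-type representation
\begin{equation}\label{LizPlanPos}
	\|g\|_{T^b_r B^{\sigma}_{p, q}(\mathbb{T}^d)} \asymp  \left( \sum_{k=0}^\infty 2^{k b r} \bigg(\sum_{\nu= 2^k-1}^{2^{k+1}-2} 2^{\nu \sigma q} \bigg\|\sum_{m \in K_\nu} \widehat{g}(m) e^{i m \cdot x} \bigg\|_{L_p(\mathbb{T}^d)}^q \bigg)^{\frac{r}{q}}  \right)^{\frac{1}{r}}
\end{equation}
holds for every $\sigma > 0$, $1 < p < \infty$, $0 < q, r \leq \infty$ and $b \neq 0$, by the last displayed formula in Example (Lizorkin-type representations) at the end of Section \ref{SectionApproximation} (applied in the periodic setting, which is permitted since Corollary \ref{CorollaryLinearDecomposition} holds for trigonometric polynomials). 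Now, given an arbitrary $s \in \R$, I would fix some $\sigma_0 > 0$ with $\sigma_0 > -s$, so that $\sigma := s + \sigma_0 > 0$, and apply \eqref{LizPlanPos} to $g = \mathfrak{I}_{-\sigma_0} f$ (note $\mathfrak{I}_{-\sigma_0}$ lifts smoothness down by $\sigma_0$, so $\mathfrak{I}_{-\sigma_0}$ maps $T^b_r B^{s}_{p,q}(\mathbb{T}^d)$ isomorphically onto $T^b_r B^{s+\sigma_0}_{p,q}(\mathbb{T}^d)$).

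The key computation is to track the Fourier coefficients through the lift. Since $\widehat{\mathfrak{I}_{-\sigma_0} f}(m) = (1 + |m|^2)^{-\sigma_0/2} \widehat{f}(m)$, the inner block in \eqref{LizPlanPos} for $g = \mathfrak{I}_{-\sigma_0} f$ becomes
\begin{equation*}
	\bigg\|\sum_{m \in K_\nu} (1 + |m|^2)^{-\sigma_0/2}\widehat{f}(m) e^{i m \cdot x} \bigg\|_{L_p(\mathbb{T}^d)}.
\end{equation*}
For $m \in K_\nu$ with $\nu \in \N$ one has $|m| \asymp 2^\nu$, hence $(1 + |m|^2)^{-\sigma_0/2} \asymp 2^{-\nu \sigma_0}$; moreover the multiplier $(1+|\xi|^2)^{-\sigma_0/2}$ restricted to the annulus $\{|\xi_i| < 2^{\nu+1}\} \setminus \{|\xi_i| < 2^\nu\}$ is, after the obvious dilation, a Fourier multiplier on $L_p(\mathbb{T}^d)$, $1 < p < \infty$, with norm bounded uniformly in $\nu$ (by the periodic Michlin–Hörmander/Marcinkiewicz multiplier theorem, or equivalently by transferring the corresponding $\R^d$-multiplier assertion as in \cite[Section 1.5.2, 3.6]{Triebel83}). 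Therefore
\begin{equation*}
	\bigg\|\sum_{m \in K_\nu} (1 + |m|^2)^{-\sigma_0/2}\widehat{f}(m) e^{i m \cdot x} \bigg\|_{L_p(\mathbb{T}^d)} \asymp 2^{-\nu \sigma_0} \bigg\|\sum_{m \in K_\nu} \widehat{f}(m) e^{i m \cdot x} \bigg\|_{L_p(\mathbb{T}^d)}
\end{equation*}
(the case $\nu = 0$, $K_0 = \{0\}$, being trivial). Substituting this into \eqref{LizPlanPos} with $\sigma = s + \sigma_0$, the factors $2^{\nu \sigma q} \cdot 2^{-\nu \sigma_0 q} = 2^{\nu s q}$ combine to give exactly the right-hand side claimed in the corollary, while the left-hand side is $\|\mathfrak{I}_{-\sigma_0} f\|_{T^b_r B^{s+\sigma_0}_{p,q}(\mathbb{T}^d)} \asymp \|f\|_{T^b_r B^{s}_{p,q}(\mathbb{T}^d)}$ by Theorem \ref{TheoremLifting} in its periodic form (Remark \ref{RemarkLifting}).

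The main obstacle is the uniform (in $\nu$) $L_p(\mathbb{T}^d)$ boundedness of the localized Bessel-potential multipliers on the dyadic annuli $K_\nu$; once this is in hand the rest is bookkeeping. This is a standard Fourier-multiplier fact, and it is precisely the tool already invoked implicitly in the proof of \eqref{Lizorkin} (replacing smooth cut-offs by characteristic functions of cubes is itself such a multiplier statement), so I would simply cite \cite[Theorems 1.5.2, 3.6.3]{Triebel83} (or \cite{Grafakos}) rather than reprove it. One minor point worth stating explicitly is that the argument requires $p \in (1, \infty)$ both for the validity of \eqref{LizPlanPos} in the periodic case (which uses that the partial-sum operators form a linear approximation scheme on $L_p(\mathbb{T}^d)$, $1 < p < \infty$) and for the multiplier boundedness — this is why the hypothesis $1 < p < \infty$ appears, in contrast with the range $p \in [1, \infty]$ available for the differences characterizations.
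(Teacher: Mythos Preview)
Your proposal is correct and follows essentially the same route as the paper: reduce general $s\in\R$ to the already-established case $s>0$ via the periodic lifting $\mathfrak{I}_\sigma$ (Theorem~\ref{TheoremLifting}/Remark~\ref{RemarkLifting}), and then use a uniform $L_p$-multiplier bound on the dyadic annuli $K_\nu$ to absorb the factor $(1+|m|^2)^{\pm\sigma_0/2}\asymp 2^{\pm\nu\sigma_0}$. The paper is terser about the last step (it simply writes the $\asymp$), but your explicit invocation of the Mihlin--H\"ormander/Marcinkiewicz theorem is exactly the justification needed, and your remark on why $1<p<\infty$ enters is on point.
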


\begin{proof}
	The case $s > 0$ was already shown in \eqref{Lizorkin}, so that it only remains to deal with the case $s \leq 0$. Given $s \leq 0$, we choose $\sigma \in \R$ such that $s > \sigma$ and invoke Theorem \ref{TheoremLifting} (cf. also Remark \ref{RemarkLifting}) to get
	\begin{align*}
		\|f\|_{T^b_r B^{s}_{p, q}(\mathbb{T}^d)} &\asymp \|\mathfrak{I}_\sigma f\|_{T^b_r B^{s-\sigma}_{p, q}(\mathbb{T}^d)}  \\
		&\hspace{-1cm} \asymp   \left( \sum_{k=0}^\infty 2^{k b r} \bigg(\sum_{\nu= 2^k-1}^{2^{k+1}-2} 2^{\nu (s-\sigma) q} \bigg\|\sum_{m \in K_\nu}  (1 + |m|^2)^{\sigma/2} \widehat{f}(m) e^{i m \cdot x} \bigg\|_{L_p(\mathbb{T}^d)}^q \bigg)^{\frac{r}{q}}  \right)^{\frac{1}{r}} \\
		&\hspace{-1cm} \asymp   \left( \sum_{k=0}^\infty 2^{k b r} \bigg(\sum_{\nu= 2^k-1}^{2^{k+1}-2} 2^{\nu s q} \bigg\|\sum_{m \in K_\nu}  \widehat{f}(m) e^{i m \cdot x} \bigg\|_{L_p(\mathbb{T}^d)}^q \bigg)^{\frac{r}{q}}  \right)^{\frac{1}{r}}.
	\end{align*}
\end{proof}

\subsection{Lifting in $\mathbf{B}^{0, b}_{p, q}(\R^d)$ and $\text{Lip}^{s, b}_{p, q}(\R^d)$}\label{Section8.2}

Unlike $B^{s, b}_{p, q}(\R^d)$ (cf. \eqref{ClassicLift}), the lifting property in the setting of spaces  $\mathbf{B}^{0, b}_{p, q}(\R^d)$ is a delicate issue, which was already considered in \cite[Section 13.2]{DominguezTikhonov}. There it is shown that, for $p \in (1, \infty), q \in (0, \infty], b > -1/q$ and $\sigma \in \R$,
\begin{equation}\label{LiftB01}
I_\sigma : \mathbf{B}^{0, b}_{p, q}(\R^d) \to B^{-\sigma, b+ 1/\max\{2, p, q\}}_{p, q}(\R^d)
\end{equation}
and
\begin{equation}\label{LiftB02}
	I_\sigma : B^{\sigma, b+ 1/\min\{2, p, q\}}_{p, q}(\R^d) \to \mathbf{B}^{0, b}_{p, q}(\R^d).
\end{equation}
Furthermore, as shown in \cite[Propositions 13.8 and 13.9]{DominguezTikhonov}, these results are sharp in the sense that the additional logarithmic smoothness $1/\max\{2, p, q\}$ and $1/\min\{2, p, q\}$ can not be improved. In particular, these assertions tell us that classical scale formed by $B^{s, b}_{p, q}(\R^d)$ does not yield optimal lifting properties for $\mathbf{B}^{0, b}_{p, q}(\R^d)$. Next we are able to remedy this defect using the new scale of spaces $T^b_r F^s_{p, q}(\R^d)$. To be more precise, according to Theorem \ref{TheoremLifting}(ii) and Proposition \ref{PropositionCoincidences}, we obtain the following

\begin{cor}[Lifting in $\mathbf{B}^{0, b}_{p, q}(\R^d)$]\label{CorollaryLiftBesov}
	Let $\sigma \in \R, p \in (1, \infty), q \in (0, \infty]$ and $b > -1/q$. Then $I_\sigma$ acts as an isomorphism from $\mathbf{B}^{0, b}_{p, q}(\R^d)$ onto $T^{b+1/q}_ q F^{-\sigma}_{p, 2}(\R^d)$ and
	$$
		\|I_\sigma f\|_{T^{b+1/q}_q F^{-\sigma}_{p, 2}(\R^d)} \asymp \|f\|_{\mathbf{B}^{0, b}_{p, q}(\R^d)}.
	$$
\end{cor}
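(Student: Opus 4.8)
The plan is to deduce Corollary \ref{CorollaryLiftBesov} directly from two facts established earlier in the paper: the lifting property for truncated Triebel--Lizorkin spaces (Theorem \ref{TheoremLifting} with $A = F$) and the identification of $\mathbf{B}^{0, b}_{p, q}(\R^d)$ as a truncated Triebel--Lizorkin space (Proposition \ref{PropositionCoincidences}(iii)). The latter asserts that, for $p \in (1, \infty)$, $q \in (0, \infty]$ and $b > -1/q$,
$$
	T^{b+1/q}_q F^{0}_{p, 2}(\R^d) = \mathbf{B}^{0, b}_{p, q}(\R^d)
$$
with equivalence of quasi-norms.

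First I would apply Theorem \ref{TheoremLifting} in the case $A = F$, with the specific choice of parameters $s = 0$, the integrability exponents $(p, 2)$, the truncation exponent $r = q$, and the logarithmic parameter $b + 1/q$ in place of the generic $b$ there. Since $p \in (1, \infty) \subset (0, \infty)$, the hypothesis $p < \infty$ required when $A = F$ is satisfied, and the parameters $s = 0, \sigma, b + 1/q \in \R$ and $p, 2, q \in (0, \infty]$ are all admissible with no further restriction. Theorem \ref{TheoremLifting} then tells us that $I_\sigma$ is an isomorphism from $T^{b+1/q}_q F^{0}_{p, 2}(\R^d)$ onto $T^{b+1/q}_q F^{0 - \sigma}_{p, 2}(\R^d) = T^{b+1/q}_q F^{-\sigma}_{p, 2}(\R^d)$, with
$$
	\|I_\sigma f\|_{T^{b+1/q}_q F^{-\sigma}_{p, 2}(\R^d)} \asymp \|f\|_{T^{b+1/q}_q F^{0}_{p, 2}(\R^d)}.
$$

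To finish, I would substitute the identification $T^{b+1/q}_q F^{0}_{p, 2}(\R^d) = \mathbf{B}^{0, b}_{p, q}(\R^d)$ from Proposition \ref{PropositionCoincidences}(iii) into the domain side of the displayed equivalence. This replaces the quasi-norm $\|f\|_{T^{b+1/q}_q F^{0}_{p, 2}(\R^d)}$ by the equivalent quasi-norm $\|f\|_{\mathbf{B}^{0, b}_{p, q}(\R^d)}$, and it identifies the domain space of $I_\sigma$ as $\mathbf{B}^{0, b}_{p, q}(\R^d)$; the target space $T^{b+1/q}_q F^{-\sigma}_{p, 2}(\R^d)$ is left unchanged (in general $-\sigma \neq 0$, so one does not re-apply Proposition \ref{PropositionCoincidences} on that side). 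Since $I_\sigma$ is a one-to-one map of $\mathcal{S}'(\R^d)$ onto itself, the isomorphism property transfers verbatim, yielding
$$
	\|I_\sigma f\|_{T^{b+1/q}_q F^{-\sigma}_{p, 2}(\R^d)} \asymp \|f\|_{\mathbf{B}^{0, b}_{p, q}(\R^d)},
$$
which is exactly the claim. There is essentially no obstacle here: the corollary is a direct composition of two already-proved results, and the only point that requires a moment's care is the bookkeeping of parameters, namely that the role of the logarithmic exponent in Theorem \ref{TheoremLifting} is played by $b + 1/q$ and the truncation exponent $r$ by $q$, so that the outcome matches the space appearing in Proposition \ref{PropositionCoincidences}(iii). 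One could add the parallel remark (as the paper does) that the analogous argument, using Proposition \ref{PropositionCoincidences}(iv) instead, gives the corresponding lifting statement for $\text{Lip}^{s, b}_{p, q}(\R^d)$.
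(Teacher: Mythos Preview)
Your proposal is correct and follows exactly the paper's approach: the paper states the corollary as an immediate consequence of Theorem \ref{TheoremLifting} (the $A = F$ case) combined with Proposition \ref{PropositionCoincidences}(iii), which is precisely what you do. Your careful tracking of the parameter substitution $(s, r, b) \mapsto (0, q, b+1/q)$ makes explicit what the paper leaves implicit.
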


Among other relations, it will be established in Corollary \ref{CorollaryEmbFBNew} below that
$$
	 B^{-\sigma, b + 1/\min\{2, p, q\}}_{p, q}(\R^d) \hookrightarrow T^{b+1/q}_q F^{-\sigma}_{p, 2}(\R^d) \hookrightarrow B^{-\sigma, b + 1/\max\{2, p, q\}}_{p, q}(\R^d),
$$
so that both \eqref{LiftB01} and  \eqref{LiftB02} are immediate consequences of the stronger assertion given in Corollary \ref{CorollaryLiftBesov}.

To the best of our knowledge, lifting assertions for $\text{Lip}^{s, b}_{p, q}(\R^d)$ have not been considered so far in the literature. Now we are in a position to give a precise answer via Theorem \ref{TheoremLifting}(ii) and Proposition \ref{PropositionCoincidences}. Namely, we obtain the following

\begin{cor}[Lifting in $\text{Lip}^{s, b}_{p, q}(\R^d)$]\label{CorollaryLiftLip}
	Let $\sigma \in \R, s > 0, p \in (1, \infty), q \in (0, \infty]$ and $b < -1/q$. Then $I_\sigma$ acts as an isomorphism from $\emph{Lip}^{s, b}_{p, q}(\R^d)$ onto $T^{b+1/q}_q F^{s-\sigma}_{p, 2}(\R^d)$ and
	$$
		\|I_\sigma f\|_{T^{b+1/q}_q F^{s-\sigma}_{p, 2}(\R^d)} \asymp \|f\|_{\emph{Lip}^{s, b}_{p, q}(\R^d)}.
	$$
\end{cor}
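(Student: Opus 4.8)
The plan is to deduce Corollary \ref{CorollaryLiftLip} as a direct consequence of the general lifting theorem for truncated Triebel--Lizorkin spaces (Theorem \ref{TheoremLifting}) combined with the identification of Lipschitz spaces as truncated Triebel--Lizorkin spaces (Proposition \ref{PropositionCoincidences}(iv)). This is exactly the same route already used for Corollary \ref{CorollaryLiftBesov}, so the argument is short.

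First I would invoke Proposition \ref{PropositionCoincidences}(iv): under the hypotheses $p \in (1,\infty)$, $q \in (0,\infty]$, $s > 0$ and $b < -1/q$, we have
$$
	\text{Lip}^{s, b}_{p, q}(\R^d) = T^{b+1/q}_q F^{s}_{p, 2}(\R^d)
$$
with equivalence of quasi-norms. Note that the parameter $b + 1/q$ appearing as the smoothness index of the truncation satisfies $b + 1/q < 0$ since $b < -1/q$; this sign condition does not matter here because Theorem \ref{TheoremLifting} is stated for all real values of the truncation parameter, but it is worth recording since it distinguishes the Lipschitz case from the $\mathbf{B}^{0,b}_{p,q}$ case (where $b + 1/q > 0$).

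Next I would apply Theorem \ref{TheoremLifting} with $A = F$, integrability parameter $p \in (1,\infty) \subset (0,\infty)$, second parameter $q_{\mathrm{TL}} = 2$, truncation parameter $r = q$, truncation smoothness $b_{\mathrm{TL}} = b + 1/q$, and smoothness $s$. The theorem asserts that $I_\sigma$ is an isomorphism from $T^{b+1/q}_q F^{s}_{p, 2}(\R^d)$ onto $T^{b+1/q}_q F^{s-\sigma}_{p, 2}(\R^d)$ with
$$
	\|I_\sigma f\|_{T^{b+1/q}_q F^{s-\sigma}_{p, 2}(\R^d)} \asymp \|f\|_{T^{b+1/q}_q F^{s}_{p, 2}(\R^d)}.
$$
Finally, chaining these equivalences and using Proposition \ref{PropositionCoincidences}(iv) once more on the domain side, we obtain
$$
	\|I_\sigma f\|_{T^{b+1/q}_q F^{s-\sigma}_{p, 2}(\R^d)} \asymp \|f\|_{T^{b+1/q}_q F^{s}_{p, 2}(\R^d)} \asymp \|f\|_{\text{Lip}^{s, b}_{p, q}(\R^d)},
$$
which is the claimed statement; the isomorphism property is inherited because $I_\sigma$ is a bijection on $\mathcal{S}'(\R^d)$ and the norm equivalences transfer boundedness of $I_\sigma$ and $I_{-\sigma} = I_\sigma^{-1}$ between the two spaces. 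There is essentially no obstacle: the only subtlety is that the target space $T^{b+1/q}_q F^{s-\sigma}_{p,2}(\R^d)$ is genuinely a truncated space and need not coincide with any classical Lipschitz space (since $s - \sigma$ may be nonpositive, in which case Proposition \ref{PropositionCoincidences}(iv) does not apply to it) — but this is precisely the point of the corollary, namely that the truncated scale is the right setting to describe liftings of $\text{Lip}^{s,b}_{p,q}(\R^d)$, just as Corollary \ref{CorollaryLiftBesov} does for $\mathbf{B}^{0,b}_{p,q}(\R^d)$.
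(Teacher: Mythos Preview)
Your proposal is correct and follows exactly the route indicated in the paper: the corollary is stated as an immediate consequence of Theorem \ref{TheoremLifting} (applied with $A=F$, $r=q$, inner index $2$, and truncation parameter $b+1/q$) together with the identification $\text{Lip}^{s,b}_{p,q}(\R^d) = T^{b+1/q}_q F^{s}_{p,2}(\R^d)$ from Proposition \ref{PropositionCoincidences}(iv). There is nothing to add.
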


\subsection{Sobolev-type characterizations}
We start by recalling the well-known characterizations of classical Besov and Triebel--Lizorkin spaces in terms of derivatives.

\begin{prop}\label{SobProp}
	Let $A \in \{B, F\}$. Let $s, b \in \R, 0 < p, q \leq \infty \, (p < \infty \text{ if } A=F)$ and $m \in \N$. Then
	\begin{equation}\label{Der0}
		\|f\|_{A^{s, b}_{p, q}(\R^d)} \asymp \sum_{|\alpha| \leq m} \|D^\alpha f\|_{A^{s-m, b}_{p, q}(\R^d)} \asymp \|f\|_{A^{s-m, b}_{p, q}(\R^d)}  + \sum_{l=1}^d  \bigg\|\frac{\partial^m f}{\partial x_l^m} \bigg\|_{A^{s-m, b}_{p, q}(\R^d)}.
	\end{equation}
\end{prop}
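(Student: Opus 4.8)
\textbf{Proof proposal for Proposition \ref{SobProp}.}

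The plan is to reduce everything to the classical lifting property for Besov and Triebel--Lizorkin spaces, namely \eqref{ClassicLift} and \eqref{ClassicLiftTL}, together with elementary Fourier multiplier assertions of the type recorded in \cite[Theorems 1.5.2 and 1.6.3]{Triebel83}. First I would treat the second equivalence in \eqref{Der0}. The estimate $\|f\|_{A^{s-m,b}_{p,q}(\R^d)} + \sum_{l=1}^d \|\partial^m_{x_l} f\|_{A^{s-m,b}_{p,q}(\R^d)} \lesssim \sum_{|\alpha|\le m}\|D^\alpha f\|_{A^{s-m,b}_{p,q}(\R^d)}$ is trivial since the terms on the left appear among those on the right. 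The estimate $\sum_{|\alpha|\le m}\|D^\alpha f\|_{A^{s-m,b}_{p,q}(\R^d)} \lesssim \|f\|_{A^{s,b}_{p,q}(\R^d)}$ follows because, for $|\alpha| \le m$, the symbol $(i\xi)^\alpha (1+|\xi|^2)^{-m/2}$ and its derivatives satisfy the Mikhlin--Hörmander conditions uniformly, so that $D^\alpha = (\text{multiplier}) \circ I_{-(s-m)} \circ \cdots$; more precisely one writes $D^\alpha f = [(i\xi)^\alpha (1+|\xi|^2)^{-m/2}\, \widehat{I_m f}]^\vee$ and combines the multiplier theorem with \eqref{ClassicLift}/\eqref{ClassicLiftTL} applied to $I_m$, i.e. $\|I_m f\|_{A^{s-m,b}_{p,q}(\R^d)} \asymp \|f\|_{A^{s,b}_{p,q}(\R^d)}$.

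The key remaining step is the reverse direction: $\|f\|_{A^{s,b}_{p,q}(\R^d)} \lesssim \|f\|_{A^{s-m,b}_{p,q}(\R^d)} + \sum_{l=1}^d \|\partial^m_{x_l} f\|_{A^{s-m,b}_{p,q}(\R^d)}$. Here the idea is to construct a Fourier multiplier representation of $I_m$ in terms of $\mathrm{id}$ and the pure powers $\partial^m_{x_l}$. One writes
\begin{equation*}
	(1+|\xi|^2)^{m/2} = \bigg(1 + \sum_{l=1}^d \xi_l^m \cdot \frac{\xi_l^m}{1 + \sum_{k=1}^d \xi_k^{2m}} + R(\xi)\bigg)
\end{equation*}
— or more cleanly, one observes that
\begin{equation*}
	\frac{(1+|\xi|^2)^{m/2}}{1 + \sum_{l=1}^d |\xi_l|^{2m}}, \qquad \frac{(1+|\xi|^2)^{m/2}\, \overline{(i\xi_l)^m}}{1 + \sum_{k=1}^d |\xi_k|^{2m}}
\end{equation*}
are symbols in the Mikhlin--Hörmander class (they are homogeneous of degree $0$ at infinity and smooth, the denominator being bounded below by $c(1+|\xi|^{2m})$), whence
\begin{equation*}
	I_m f = \Big[\tfrac{(1+|\xi|^2)^{m/2}}{1+\sum_l|\xi_l|^{2m}}\,\widehat f\Big]^\vee + \sum_{l=1}^d \Big[\tfrac{(1+|\xi|^2)^{m/2}(-1)^m}{1+\sum_k|\xi_k|^{2m}}\,\widehat{\partial^m_{x_l}f}\Big]^\vee.
\end{equation*}
Applying the multiplier theorem to each term (with target $A^{s-m,b}_{p,q}(\R^d)$) and then \eqref{ClassicLift}/\eqref{ClassicLiftTL} once more yields $\|f\|_{A^{s,b}_{p,q}(\R^d)} \asymp \|I_m f\|_{A^{s-m,b}_{p,q}(\R^d)} \lesssim \|f\|_{A^{s-m,b}_{p,q}(\R^d)} + \sum_{l=1}^d \|\partial^m_{x_l}f\|_{A^{s-m,b}_{p,q}(\R^d)}$. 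Sandwiching all the inequalities established so far closes the chain of equivalences in \eqref{Der0}. For the logarithmic weight $b \neq 0$ nothing changes: the multiplier theorems hold on $A^{s,b}_{p,q}(\R^d)$ verbatim since the weight $(1+j)^{bq}$ only rescales the Littlewood--Paley pieces and commutes with the multiplier action, which is controlled piece-by-piece.

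The main obstacle I anticipate is purely bookkeeping: verifying that the symbols $m_0(\xi) = (1+|\xi|^2)^{m/2}(1+\sum_l|\xi_l|^{2m})^{-1}$ and $m_l(\xi) = m_0(\xi)\overline{(i\xi_l)^m}$ genuinely satisfy $|D^\gamma m_j(\xi)| \lesssim (1+|\xi|)^{-|\gamma|}$ for all multi-indices $\gamma$ up to the required order, which is straightforward but slightly tedious because the denominator $1+\sum_l|\xi_l|^{2m}$ is not radial. One clean way to sidestep the non-smoothness of $|\xi_l|^{2m}$ at $\xi_l = 0$ (when $m$ is odd $|\xi_l|^{2m}=\xi_l^{2m}$ is already smooth, but $|\xi_l|^m$ appearing implicitly is not) is to use $\xi_l^{2m}$ throughout — which is what I did above — so that $1+\sum_l\xi_l^{2m}$ is a smooth, strictly positive function bounded below by $c(1+|\xi|^{2m})$ thanks to the equivalence of $\ell^{2m}$ and $\ell^2$ norms on $\R^d$; then $m_0$ and $m_l$ are smooth everywhere and behave like homogeneous symbols of order $0$ at infinity, so the classical (vector-valued, for $F$) Fourier multiplier theorem of \cite[Section 2.3.2]{Triebel83} applies directly. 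With that choice, the argument is routine.
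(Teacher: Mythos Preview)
Your proposal is correct and is precisely the classical argument that the paper alludes to by citing \cite[Theorem 2.3.8]{Triebel83} for $b=0$ and lifting for general $b$; your identity $(1+|\xi|^2)^{m/2} = m_0(\xi) + \sum_l m_l(\xi)(i\xi_l)^m$ with $m_0(\xi) = (1+|\xi|^2)^{m/2}\big(1+\sum_k\xi_k^{2m}\big)^{-1}$ and $m_l = m_0\cdot(-i\xi_l)^m$ is exactly the device used there, and your observation that the Littlewood--Paley-piecewise multiplier estimates are unaffected by the factor $(1+j)^{bq}$ is what the paper means by ``standard lifting arguments'' for the extension to $b\neq 0$.
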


For the proof of this result, we refer to \cite[Theorem 2.3.8]{Triebel83} if $b=0$ and the general case $b \in \mathbb{R}$ may be obtained from $b=0$ via standard lifting arguments \cite[Proposition 3.2]{CaetanoMoura04} (see also \cite[Lemma 12.5]{DominguezTikhonov}).

Next we show that Proposition \ref{SobProp} can be put into the more general perspective of the spaces $T^b_r B^s_{p, q}(\R^d)$ and $T^b_r F^s_{p, q}(\R^d)$.

\begin{prop}\label{SobProp2}
	Let $A \in \{B, F\}$. Let $s, b \in \R, 0 < p, q, r \leq \infty \, (p < \infty \text{ if } A=F)$ and $m \in \N$. Then
	\begin{equation*}
		\|f\|_{T^b_r A^s_{p, q}(\R^d)} \asymp \sum_{|\alpha| \leq m} \|D^\alpha f\|_{T^b_r A^{s-m}_{p, q}(\R^d)} \asymp \|f\|_{T^b_r A^{s-m}_{p, q}(\R^d)}  + \sum_{l=1}^d  \bigg\|\frac{\partial^m f}{\partial x_l^m} \bigg\|_{T^b_r A^{s-m}_{p, q}(\R^d)}.
	\end{equation*}
\end{prop}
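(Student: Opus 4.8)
The plan is to reduce Proposition \ref{SobProp2} to the classical Sobolev-type characterization of Proposition \ref{SobProp} by exploiting the formula \eqref{RemFM}, which expresses the truncated quasi-norm as a weighted $\ell_r$-sum of classical $A^s_{p,q}$-quasi-norms of the dyadically-blocked pieces $\sum_{\nu=2^j-1}^{2^{j+1}-2}(\varphi_\nu\widehat{f})^\vee$. The key observation is that differentiation commutes with the Fourier projections: for any multi-index $\alpha$, one has $\bigl(\varphi_\nu \widehat{D^\alpha f}\bigr)^\vee = D^\alpha\bigl((\varphi_\nu\widehat{f})^\vee\bigr)$, since $\widehat{D^\alpha f}(\xi) = (i\xi)^\alpha \widehat{f}(\xi)$ and $\varphi_\nu$ is a Fourier multiplier acting independently. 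Hence $\sum_{\nu=2^j-1}^{2^{j+1}-2}(\varphi_\nu \widehat{D^\alpha f})^\vee = D^\alpha\bigl(\sum_{\nu=2^j-1}^{2^{j+1}-2}(\varphi_\nu\widehat{f})^\vee\bigr)$, so each dyadic block of $D^\alpha f$ is exactly the $\alpha$-derivative of the corresponding dyadic block of $f$.

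First I would write, using \eqref{RemFM} applied at smoothness $s-m$,
\begin{align*}
	\sum_{|\alpha| \leq m} \|D^\alpha f\|_{T^b_r A^{s-m}_{p, q}(\R^d)} &\asymp \sum_{|\alpha| \leq m} \bigg(\sum_{j=0}^\infty 2^{j b r} \bigg\| D^\alpha \bigg(\sum_{\nu=2^j-1}^{2^{j+1}-2} (\varphi_\nu \widehat{f})^\vee\bigg) \bigg\|^r_{A^{s-m}_{p, q}(\R^d)} \bigg)^{1/r}.
\end{align*}
For each fixed $j$, the inner blocked function $g_j := \sum_{\nu=2^j-1}^{2^{j+1}-2}(\varphi_\nu\widehat{f})^\vee$ is a tempered distribution with spectrum in a dyadic annulus, and Proposition \ref{SobProp} (with $b=0$) gives $\|g_j\|_{A^s_{p,q}(\R^d)} \asymp \sum_{|\alpha|\leq m}\|D^\alpha g_j\|_{A^{s-m}_{p,q}(\R^d)} \asymp \|g_j\|_{A^{s-m}_{p,q}(\R^d)} + \sum_{l=1}^d \|\partial^m g_j/\partial x_l^m\|_{A^{s-m}_{p,q}(\R^d)}$, with constants independent of $j$. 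Substituting this equivalence into the weighted $\ell_r$-sum (and using that summation over the finitely many $\alpha$ with $|\alpha|\leq m$ commutes with taking the $\ell_r(2^{jbr}\,\cdot)$ quasi-norm up to constants depending only on $m,d,r$) yields both asserted equivalences simultaneously.

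The main technical point to be careful about — rather than a genuine obstacle — is the uniformity of the constants in $j$ when applying Proposition \ref{SobProp} to the blocks $g_j$. Since each $g_j$ has Fourier support contained in $\{2^{2^j-1} \lesssim |\xi| \lesssim 2^{2^{j+1}-1}\}$, one must check that the equivalence constants in \eqref{Der0} do not degenerate as the annulus moves off to infinity; this is standard and follows from the dilation-invariant nature of the Littlewood--Paley/multiplier estimates underlying Proposition \ref{SobProp} (cf. \cite[Theorem 1.5.2, Theorem 1.6.3]{Triebel83}), exactly as in the proof of the Proposition displaying \eqref{RemFM}. Alternatively, and perhaps more cleanly, one can bypass uniformity concerns entirely by using the lifting operator: by Theorem \ref{TheoremLifting}, $I_\sigma$ is an isomorphism $T^b_r A^s_{p,q}(\R^d) \to T^b_r A^{s-\sigma}_{p,q}(\R^d)$, and one combines this with the representation of $D^\alpha$ as a Fourier multiplier of the form $(i\xi)^\alpha = (i\xi)^\alpha (1+|\xi|^2)^{-|\alpha|/2} \cdot (1+|\xi|^2)^{|\alpha|/2}$, where the first factor is a bounded multiplier on all the spaces involved; applying \eqref{RemFM} together with the multiplier theorems then reduces everything to Theorem \ref{TheoremLifting}. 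I expect the cleanest write-up to follow the first route via \eqref{RemFM} and Proposition \ref{SobProp}, relegating the uniformity check to a one-line remark referencing the homogeneity of the relevant multiplier estimates.
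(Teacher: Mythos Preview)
Your proposal is correct and follows essentially the same route as the paper: apply \eqref{RemFM}, invoke Proposition \ref{SobProp} on each block $g_j = \sum_{\nu=2^j-1}^{2^{j+1}-2}(\varphi_\nu\widehat{f})^\vee$, use that $D^\alpha$ commutes with the Fourier projections, and reassemble via \eqref{RemFM} at smoothness $s-m$. Your uniformity worry is harmless --- the equivalence constants in Proposition \ref{SobProp} are universal (they do not depend on the function to which it is applied), so the paper simply applies it blockwise without further comment.
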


In particular, setting $q=r$ and $A=B$ in Proposition \ref{SobProp2} one recovers \eqref{Der0} (cf. Proposition \ref{PropositionCoincidences}).

\begin{proof}[Proof of Proposition \ref{SobProp2}]
According to \eqref{RemFM} and Proposition \ref{SobProp}, we get
\begin{align*}
\|f\|_{T^b_r A^s_{p, q}(\R^d)}^r &\asymp \sum_{j=0}^\infty 2^{j b r} \bigg\|\bigg(\sum_{\nu=2^j-1}^{2^{j+1}-2} \varphi_\nu \widehat{f} \bigg)^\vee \bigg\|^r_{A^s_{p, q}(\R^d)} \\
& \asymp \sum_{|\alpha| \leq m} \sum_{j=0}^\infty 2^{j b r} \bigg\|D^\alpha \bigg(\bigg(\sum_{\nu=2^j-1}^{2^{j+1}-2} \varphi_\nu \widehat{f} \bigg)^\vee \bigg) \bigg\|_{A^{s-m}_{p, q}(\R^d)}^r  \\
& = \sum_{|\alpha| \leq m} \sum_{j=0}^\infty 2^{j b r} \bigg\|\bigg(\sum_{\nu=2^j-1}^{2^{j+1}-2} \varphi_\nu \widehat{D^\alpha f} \bigg)^\vee  \bigg\|_{A^{s-m}_{p, q}(\R^d)}^r  \\
& \asymp \sum_{|\alpha| \leq m} \|D^\alpha f\|_{T^b_r A^{s-m}_{p, q}(\R^d)}^r
\end{align*}
and
\begin{align*}
	\|f\|_{T^b_r A^s_{p, q}(\R^d)}^r &\asymp \sum_{j=0}^\infty 2^{j b r} \bigg\|\bigg(\sum_{\nu=2^j-1}^{2^{j+1}-2} \varphi_\nu \widehat{f} \bigg)^\vee \bigg\|^r_{A^s_{p, q}(\R^d)} \\
	& \asymp \sum_{j=0}^\infty 2^{j b r} \bigg\|\bigg(\sum_{\nu=2^j-1}^{2^{j+1}-2} \varphi_\nu \widehat{f} \bigg)^\vee \bigg\|^r_{A^{s-m}_{p, q}(\R^d)} \\
	& \hspace{0.5cm} + \sum_{l=1}^d \sum_{j=0}^\infty 2^{j b r} \bigg\|\frac{\partial^m}{\partial x_l^m} \bigg(
 \bigg(\sum_{\nu=2^j-1}^{2^{j+1}-2} \varphi_\nu \widehat{f} \bigg)^\vee\bigg) \bigg\|^r_{A^{s-m}_{p, q}(\R^d)} \\
 & = \sum_{j=0}^\infty 2^{j b r} \bigg\|\bigg(\sum_{\nu=2^j-1}^{2^{j+1}-2} \varphi_\nu \widehat{f} \bigg)^\vee \bigg\|^r_{A^{s-m}_{p, q}(\R^d)} \\
	& \hspace{0.5cm} + \sum_{l=1}^d \sum_{j=0}^\infty 2^{j b r} \bigg\|
 \bigg(\sum_{\nu=2^j-1}^{2^{j+1}-2} \varphi_\nu \widehat{\frac{\partial^m f}{\partial x_l^m}} \bigg)^\vee \bigg\|^r_{A^{s-m}_{p, q}(\R^d)} \\
 & \asymp \|f\|_{T^b_r A^{s-m}_{p, q}(\R^d)}^r + \sum_{l=1}^d \bigg\|\frac{\partial^m f}{\partial x_l^m} \bigg\|_{T^b_r A^{s-m}_{p, q}(\R^d)}^r.
\end{align*}

\end{proof}

Writing down Proposition \ref{SobProp2} for $\mathbf{B}^{0, b}_{p, q}(\R^d) = T^{b+1/q}_q F^{0}_{p, 2}(\R^d)$, we obtain the following

\begin{cor}[Sobolev characterization in terms of $\mathbf{B}^{0, b}_{p, q}(\R^d)$]\label{CorSobBzero}
	Let $1 < p < \infty, 0 < q \leq \infty$ and $b > -1/q$. Let $m \in \N$. Then
	$$
		\|f\|_{T^{b+1/q}_q F^{m}_{p, 2}(\R^d) } \asymp \sum_{|\alpha| \leq m} \|D^\alpha f\|_{\mathbf{B}^{0, b}_{p, q}(\R^d)} \asymp \|f\|_{\mathbf{B}^{0, b}_{p, q}(\R^d)}  + \sum_{l=1}^d  \bigg\|\frac{\partial^m f}{\partial x_l^m} \bigg\|_{\mathbf{B}^{0, b}_{p, q}(\R^d)}.
	$$
\end{cor}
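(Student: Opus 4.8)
The statement is simply the specialization of Proposition \ref{SobProp2} to the parameters for which the identification $\mathbf{B}^{0, b}_{p, q}(\R^d) = T^{b+1/q}_q F^{0}_{p, 2}(\R^d)$ holds (cf. Proposition \ref{PropositionCoincidences}(iii)), combined with the lifting-type identification of $T^{b+1/q}_q F^{m}_{p, 2}(\R^d)$ via derivatives. The plan is to invoke Proposition \ref{SobProp2} with $A = F$, the secondary parameter equal to $2$, the truncation parameter $r = q$, the smoothness weight $b+1/q$ in place of $b$, and smoothness $s = m$, so that
\begin{equation*}
	\|f\|_{T^{b+1/q}_q F^{m}_{p, 2}(\R^d)} \asymp \sum_{|\alpha| \leq m} \|D^\alpha f\|_{T^{b+1/q}_q F^{0}_{p, 2}(\R^d)} \asymp \|f\|_{T^{b+1/q}_q F^{0}_{p, 2}(\R^d)}  + \sum_{l=1}^d  \bigg\|\frac{\partial^m f}{\partial x_l^m} \bigg\|_{T^{b+1/q}_q F^{0}_{p, 2}(\R^d)}.
\end{equation*}

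Next I would replace every occurrence of $T^{b+1/q}_q F^{0}_{p, 2}(\R^d)$ on the right-hand side by $\mathbf{B}^{0, b}_{p, q}(\R^d)$, which is licit precisely because $1 < p < \infty$, $0 < q \leq \infty$, and $b > -1/q$ — exactly the hypotheses imposed in the corollary — so that Proposition \ref{PropositionCoincidences}(iii) applies and gives equivalence of quasi-norms $\|g\|_{T^{b+1/q}_q F^{0}_{p, 2}(\R^d)} \asymp \|g\|_{\mathbf{B}^{0, b}_{p, q}(\R^d)}$ for every $g$ (in particular for $g = D^\alpha f$ and $g = \partial^m f / \partial x_l^m$). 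This yields the asserted chain of equivalences directly, with no further computation. One only needs to check that the parameter $b+1/q$ is an admissible smoothness weight for the truncated $F$-spaces in Proposition \ref{SobProp2}; since that proposition allows arbitrary $b \in \R$, there is no constraint to verify here.

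The only genuinely delicate point is bookkeeping of parameters: Proposition \ref{SobProp2} is stated with a generic logarithmic weight "$b$", and here that role is played by $b + 1/q$, while the "$b$" appearing in the corollary is the zero-smoothness Besov weight; one must be careful that the restriction $b > -1/q$ in the corollary is exactly what makes $b + 1/q > 0$ the correct range for the identification with $\mathbf{B}^{0,b}_{p,q}$, but that the derivative characterization of Proposition \ref{SobProp2} itself needs no positivity. I expect this renaming to be the main (and essentially only) obstacle, and it is routine. Therefore the proof is a one-line deduction: "This is Proposition \ref{SobProp2} with $A = F$, smoothness weight $b + 1/q$, secondary parameter $2$, $r = q$, and $s = m$, together with the identification $\mathbf{B}^{0, b}_{p, q}(\R^d) = T^{b+1/q}_q F^{0}_{p, 2}(\R^d)$ from Proposition \ref{PropositionCoincidences}(iii)."
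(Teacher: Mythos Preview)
Your proposal is correct and matches the paper's own approach exactly: the paper introduces the corollary with the sentence ``Writing down Proposition \ref{SobProp2} for $\mathbf{B}^{0, b}_{p, q}(\R^d) = T^{b+1/q}_q F^{0}_{p, 2}(\R^d)$, we obtain the following'', which is precisely your one-line deduction via Proposition \ref{SobProp2} (with $A=F$, $s=m$, inner index $2$, $r=q$, weight $b+1/q$) combined with Proposition \ref{PropositionCoincidences}(iii).
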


In particular, Corollary \ref{CorSobBzero} improves some estimates in \cite[p. 70]{DeVore} and \cite[Theorems 12.6 and 12.11]{DominguezTikhonov} involving $\mathbf{B}^{0, b}_{p, q}(\R^d)$. Namely, it was shown there that
$$
	f \in B^{m, b+ 1/\min\{2, p, q\}}_{p, q}(\R^d) \implies D^\alpha f \in \mathbf{B}^{0, b}_{p, q}(\R^d) \quad \text{for all} \quad |\alpha| \leq m.
$$
However, this assertion can now be sharpened to
$$
	f \in T^{b+1/q}_q F^{m}_{p, 2}(\R^d) \implies D^\alpha f \in \mathbf{B}^{0, b}_{p, q}(\R^d) \quad \text{for all} \quad |\alpha| \leq m,
$$
and this estimate is in fact optimal. Note that
$$
	B^{m, b+ 1/\min\{2, p, q\}}_{p, q}(\R^d) \hookrightarrow T^{b+1/q}_q F^{m}_{p, 2}(\R^d)
$$
(see Corollary \ref{CorollaryEmbFBNew} below).

On the other hand, Proposition \ref{SobProp2} for the special choice $T^{b+1/q}_q F^{s}_{p, 2}(\R^d) = \text{Lip}^{s, b}_{p, q}(\R^d)$ (cf. Proposition \ref{PropositionCoincidences}) gives the following

\begin{cor}[Sobolev characterization for $\text{Lip}^{s, b}_{p, q}(\R^d)$]
	Let $1 < p < \infty, 0 < q \leq \infty, s > 0$ and $b < -1/q$. Let $m \in \N$. Then
	$$
		\|f\|_{\emph{Lip}^{s, b}_{p, q}(\R^d)}  \asymp \sum_{|\alpha| \leq m} \|D^\alpha f\|_{T^{b+1/q}_q F^{s-m}_{p, 2}(\R^d)} \asymp \|f\|_{T^{b+1/q}_q F^{s-m}_{p, 2}(\R^d)}   + \sum_{l=1}^d  \bigg\|\frac{\partial^m f}{\partial x_l^m} \bigg\|_{T^{b+1/q}_q F^{s-m}_{p, 2}(\R^d)}.
	$$
	In particular, if $s > m$ then
	 	$$
		\|f\|_{\emph{Lip}^{s, b}_{p, q}(\R^d)}  \asymp \sum_{|\alpha| \leq m} \|D^\alpha f\|_{\emph{Lip}^{s-m, b}_{p, q}(\R^d)} \asymp \|f\|_{\emph{Lip}^{s-m, b}_{p, q}(\R^d)}  + \sum_{l=1}^d  \bigg\|\frac{\partial^m f}{\partial x_l^m} \bigg\|_{\emph{Lip}^{s-m, b}_{p, q}(\R^d)}.
	$$
\end{cor}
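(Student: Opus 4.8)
The plan is to obtain this statement as a direct corollary of Proposition \ref{SobProp2}, exactly as Corollary \ref{CorSobBzero} was deduced, combined with the identification of Lipschitz spaces as truncated Triebel--Lizorkin spaces in Proposition \ref{PropositionCoincidences}(iv). Recall that under the present hypotheses $p \in (1, \infty)$, $q \in (0, \infty]$, $s > 0$ and $b < -1/q$ one has
$$
	\text{Lip}^{s, b}_{p, q}(\R^d) = T^{b + 1/q}_q F^{s}_{p, 2}(\R^d)
$$
with equivalence of quasi-norms; this is the only point at which the standing assumptions on $p, q, s, b$ are used.

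First I would apply Proposition \ref{SobProp2} with $A = F$, integrability parameter $p$, fine parameter $2$, truncation parameter $r = q$, smoothness $s$, and logarithmic parameter $b + 1/q$ in place of the generic $b$ appearing there. Since Proposition \ref{SobProp2} is valid for arbitrary real smoothness and arbitrary $0 < q, r \le \infty$, this immediately yields
$$
	\|f\|_{T^{b+1/q}_q F^{s}_{p, 2}(\R^d)} \asymp \sum_{|\alpha| \leq m} \|D^\alpha f\|_{T^{b+1/q}_q F^{s-m}_{p, 2}(\R^d)} \asymp \|f\|_{T^{b+1/q}_q F^{s-m}_{p, 2}(\R^d)} + \sum_{l=1}^d \bigg\| \frac{\partial^m f}{\partial x_l^m} \bigg\|_{T^{b+1/q}_q F^{s-m}_{p, 2}(\R^d)}.
$$
Replacing $\|f\|_{T^{b+1/q}_q F^{s}_{p, 2}(\R^d)}$ on the left by $\|f\|_{\text{Lip}^{s, b}_{p, q}(\R^d)}$ via the coincidence formula above gives the first chain of equivalences in the statement.

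For the assertion "in particular", under the extra hypothesis $s > m$ we have $s - m > 0$, while the conditions $p \in (1, \infty)$, $q \in (0, \infty]$ and $b < -1/q$ are unaffected; hence Proposition \ref{PropositionCoincidences}(iv) applies once more, now with smoothness $s - m$, giving $T^{b+1/q}_q F^{s-m}_{p, 2}(\R^d) = \text{Lip}^{s-m, b}_{p, q}(\R^d)$. Substituting this identity on the right-hand side of the previous display finishes the proof. There is no real obstacle here: the argument is the exact analogue of the derivation of Corollary \ref{CorSobBzero}, and the only matter requiring (minor) care is the bookkeeping of the parameter shift $b \mapsto b + 1/q$ together with checking that the range conditions of Proposition \ref{PropositionCoincidences}(iv) hold both for smoothness $s$ and, in the final statement, for smoothness $s - m$.
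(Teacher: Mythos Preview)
Your proof is correct and follows exactly the paper's approach: the corollary is introduced with the phrase ``Proposition \ref{SobProp2} for the special choice $T^{b+1/q}_q F^{s}_{p, 2}(\R^d) = \text{Lip}^{s, b}_{p, q}(\R^d)$ (cf. Proposition \ref{PropositionCoincidences}) gives the following,'' which is precisely the combination of Proposition \ref{SobProp2} (with $A=F$, $r=q$, inner index $2$, and $b$ replaced by $b+1/q$) and Proposition \ref{PropositionCoincidences}(iv) that you carry out. Your bookkeeping on the parameter shift and on the applicability of Proposition \ref{PropositionCoincidences}(iv) at smoothness $s-m$ when $s>m$ is the only thing to check, and you have done so correctly.
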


\newpage

\section{Duality}\label{SectionDuality}

Recall the well-known duality property for $A^{s, b}_{p, q}(\R^d), \, A \in \{B, F\}$:
	\begin{equation}\label{DualClassicBesov}
		(A^{s,b}_{p,q}(\R^d))' = A^{-s,-b}_{p',q'}(\R^d)
	\end{equation}
	for $p, q \in (1, \infty)$ and $s, b \in \R$; cf. \cite[Section 2.11]{Triebel83} and \cite[Theorem 3.1.10]{FarkasLeopold}. Furthermore, the limiting case $p=1$ can also be incorporated in \eqref{DualClassicBesov}, as well as $q \in (0, 1]$ with $A=B$ (cf. \cite{Peetre74}).

\subsection{Dual of truncated Besov and Triebel-Lizorkin spaces}
The goal of this section is to study the duality properties of the scales of spaces $T^b_r A^{s}_{p,q}(\mathbb{R}^d), \, A \in \{B, F\}$, in the same spirit as \eqref{DualClassicBesov} for the classical setting. To make this assertion rigorous, we first observe that
\begin{equation}\label{DualPairing}
	\mathcal{S}(\R^d) \hookrightarrow T^b_r A^{s}_{p,q}(\mathbb{R}^d) \hookrightarrow \mathcal{S}'(\R^d)
\end{equation}
and the left-hand side embedding is dense if $p, q, r < \infty$. These follow immediately from Theorems \ref{TheoremInterpolation} and \ref{TheoremInterpolationF} and the corresponding assertions \eqref{DualPairing} for classical spaces $A^s_{p, q}(\R^d)$ (see, e.g., \cite[Section 2.3.3]{Triebel83}). As a consequence, it makes sense to investigate the duality properties of the spaces $T^b_r A^{s}_{p,q}(\mathbb{R}^d)$ within the dual pairing $(\mathcal{S}(\R^d),\mathcal{S}'(\R^d))$.

\begin{thm}\label{ThmDualNewBesovSpaces}
	\begin{enumerate}[\upshape(i)]
	\item Let $p, q \in [1,\infty), r \in (0, \infty), s \in \R$ and $b \in \R \backslash \{0\}$. Then
	\begin{equation}\label{ThmDualNewBesovSpaces1}
		(T^b_r B^s_{p, q}(\mathbb{R}^d))' = T^{-b}_{r'} B^{-s}_{p',q'}(\R^d).
	\end{equation}
	\item Let $p, q \in (1, \infty), r \in (0, \infty), s \in \R$ and $b \in \R \backslash \{0\}$. Then
	\begin{equation}\label{ThmDualNewBesovSpaces2}
		(T^b_r F^s_{p, q}(\mathbb{R}^d))' = T^{-b}_{r'} F^{-s}_{p',q'}(\R^d).
	\end{equation}
	\end{enumerate}
\end{thm}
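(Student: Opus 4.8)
The proof will be carried out by the retraction method, transferring the duality to the level of the weighted sequence spaces underlying the equivalent quasi-norms of \eqref{RemFM}. First I would record the ingredients that make the dual pairing meaningful: the embeddings \eqref{DualPairing} hold, and $\mathcal{S}(\R^d)$ is dense in $T^b_r A^s_{p,q}(\R^d)$ in the present range of parameters (here $p,q$ and $r$ are all finite), so that every $\Phi\in (T^b_r A^s_{p,q}(\R^d))'$ is represented by a unique $h\in\mathcal{S}'(\R^d)$; these facts follow from Theorems \ref{TheoremInterpolation} and \ref{TheoremInterpolationF} together with the classical case exactly as indicated before the statement. It is also convenient, though not essential, to first reduce to $s=0$ via the lifting isomorphism $I_\sigma$ of Theorem \ref{TheoremLifting}, which is formally self-adjoint for the $(\mathcal{S},\mathcal{S}')$ pairing, so that it suffices to prove the formula for $s=0$.

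For the core argument, set $\chi_j:=\sum_{\nu=2^j-1}^{2^{j+1}-2}\varphi_\nu$ (so that $\sum_{j\ge 0}\chi_j=1$), let $\Pi_j f:=(\chi_j\widehat f)^\vee$ be the associated block projection, and let $\widetilde{\Pi}_j$ be built from a slightly fattened symbol $\widetilde\chi_j$ with $\widetilde\chi_j\chi_j=\chi_j$ and $\widetilde\chi_j\chi_k=0$ for $|k-j|\ge 2$. By \eqref{RemFM}, for $A\in\{B,F\}$ the map $\mathfrak{J}\colon f\mapsto(\Pi_j f)_{j\ge0}$ is an isometry of $T^b_r A^s_{p,q}(\R^d)$ into the Banach-valued sequence space $Y:=\ell_r\bigl(2^{jb};A^s_{p,q}(\R^d)\bigr)$ (the coefficient space $A^s_{p,q}(\R^d)$ being Banach since $p,q\ge 1$), and $\mathfrak{P}\colon(g_j)_j\mapsto\sum_j\widetilde{\Pi}_j g_j$ is bounded from $Y$ into $T^b_r A^s_{p,q}(\R^d)$ with $\mathfrak{P}\mathfrak{J}=\mathrm{id}$. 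Boundedness of $\mathfrak{P}$ is a Fourier-multiplier estimate of the type already used to prove \eqref{RemFM} (cf. \cite[Theorems 1.5.2 and 1.6.3]{Triebel83}), combined with the observation that $\widetilde{\Pi}_j$ spills over only into the two neighbouring blocks, which is harmless after summation because $2^{(j\pm1)b}\asymp 2^{jb}$. Thus $T^b_r A^s_{p,q}(\R^d)$ is a retract of $Y$.

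Next I would identify $Y'$. In the stated ranges the classical duality $(A^s_{p,q}(\R^d))'=A^{-s}_{p',q'}(\R^d)$ is available ($1\le p,q<\infty$ for $A=B$ by \cite[Section 2.11]{Triebel83} and the comments after \eqref{DualClassicBesov}; $1<p,q<\infty$ for $A=F$, which is exactly the hypothesis of (ii)), while $(\ell_r(E))'=\ell_{r'}(E')$ for $1\le r<\infty$ and $(\ell_r(E))'=\ell_\infty(E')$ for $0<r\le 1$; both cases match the paper's convention $r'=\infty$ for $r\le 1$ and for $r\le 1$ one also uses $\ell_r\hookrightarrow\ell_1$. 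Hence $Y'=\ell_{r'}\bigl(2^{-jb};A^{-s}_{p',q'}(\R^d)\bigr)$ under the pairing $\langle(g_j),(e_j)\rangle=\sum_j\langle g_j,e_j\rangle$. Dualizing the retract gives $\mathfrak{J}'\colon Y'\to(T^b_r A^s_{p,q}(\R^d))'$ onto, with right inverse $\mathfrak{P}'$. Then for $\Phi\in(T^b_r A^s_{p,q}(\R^d))'$ one takes $y'=(g_j)=\mathfrak{P}'\Phi\in Y'$ (so $\|y'\|_{Y'}\lesssim\|\Phi\|$), and since $\Pi_j$ is self-adjoint for $(\mathcal{S}',\mathcal{S})$ (the symbol $\chi_j$ being real and even) one computes $\langle\mathfrak{J}'y',f\rangle=\sum_j\langle g_j,\Pi_j f\rangle=\sum_j\langle \Pi_j g_j,f\rangle=\langle h,f\rangle$ with $h:=\sum_j\Pi_j g_j\in\mathcal{S}'(\R^d)$; convergence in $\mathcal{S}'$ is immediate because the frequency supports of the $\Pi_j g_j$ lie in the super-lacunary annuli $|\xi|\sim 2^{2^j}$, against which any Schwartz function decays super-geometrically, while $\|g_j\|$ grows at most geometrically. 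Using $\Pi_k\Pi_j=0$ for $|k-j|\ge 2$, $2^{-kb}\asymp 2^{-jb}$ for $|k-j|\le 1$, and \eqref{RemFM} with parameters $(-b,r',-s,p',q')$, one gets $\|h\|_{T^{-b}_{r'}A^{-s}_{p',q'}(\R^d)}\lesssim\|y'\|_{Y'}\lesssim\|\Phi\|$. Conversely, for $h\in T^{-b}_{r'}A^{-s}_{p',q'}(\R^d)$ and $f\in\mathcal{S}(\R^d)$ one writes $\langle h,f\rangle=\sum_j\langle\widetilde{\Pi}_j h,\Pi_j f\rangle$, estimates each term by $\|\widetilde{\Pi}_j h\|_{A^{-s}_{p',q'}(\R^d)}\|\Pi_j f\|_{A^s_{p,q}(\R^d)}$ via classical duality, and sums by H\"older with exponents $r',r$ (for $r\le 1$ using $\ell_r\hookrightarrow\ell_1$), obtaining $|\langle h,f\rangle|\lesssim\|h\|_{T^{-b}_{r'}A^{-s}_{p',q'}(\R^d)}\|f\|_{T^b_r A^s_{p,q}(\R^d)}$. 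By density this extends to a bounded functional, and the two estimates, together with the injectivity $h\mapsto\langle h,\cdot\rangle$, yield the isomorphisms \eqref{ThmDualNewBesovSpaces1} and \eqref{ThmDualNewBesovSpaces2}.

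The main obstacle, and the reason for going through sequence spaces rather than quoting a duality theorem for the limiting interpolation construction (tempting in view of Theorems \ref{TheoremInterpolation}--\ref{TheoremInterpolationF}), is the genuinely quasi-Banach range $0<r\le 1$ allowed in the statement: there $T^b_r A^s_{p,q}(\R^d)$ fails to be locally convex, the standard real-interpolation duality theorems do not apply, and one must rely on the concrete identification $(\ell_r(E))'=\ell_\infty(E')$ and the embedding $\ell_r\hookrightarrow\ell_1$. A secondary, purely technical point is the careful treatment of the fattened block projections $\widetilde{\Pi}_j$ near block boundaries and the verification of $\mathcal{S}'$-convergence of $\sum_j\Pi_j g_j$; both are routine but must be included to make the retraction argument rigorous.
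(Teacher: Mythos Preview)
Your proof is correct and takes a genuinely different route from the paper. The paper argues entirely via the limiting-interpolation characterizations of Theorems~\ref{TheoremInterpolation} and~\ref{TheoremInterpolationF}: writing $T^b_r A^s_{p,q}(\R^d) = (A^s_{p,q}(\R^d), A^{s_0}_{p,q}(\R^d))_{(0,b-1/r),r}$ (for $b>0$; similarly for $b<0$), it invokes the abstract duality formula $((A_0,A_1)_{(\theta,b),r})' = (A_1',A_0')_{(1-\theta,-b-1/\min\{1,r\}),r'}$ recorded as Lemma~\ref{LemmaDualInt}, applies the classical duality $(A^s_{p,q}(\R^d))'=A^{-s}_{p',q'}(\R^d)$, and then uses the interpolation characterizations once more to identify the outcome as $T^{-b}_{r'}A^{-s}_{p',q'}(\R^d)$. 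Your retraction argument to $\ell_r\bigl(2^{jb};A^s_{p,q}(\R^d)\bigr)$ via the super-dyadic block projections of~\eqref{RemFM} is more hands-on: it bypasses Lemma~\ref{LemmaDualInt} completely, makes the pairing explicit, and relies only on classical $A$-space duality together with the elementary vector-valued identification $(\ell_r(E))'=\ell_{r'}(E')$. Your concern that the quasi-Banach range $0<r\le 1$ obstructs the interpolation route is, however, unfounded: Lemma~\ref{LemmaDualInt} is stated for all $r\in(0,\infty)$ (note the exponent $-b-1/\min\{1,r\}$ designed exactly for this) and is proved in that generality in~\cite{CobosSegurado,Besoy}, so the paper's method covers the full range without extra work.
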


The proof of Theorem \ref{ThmDualNewBesovSpaces} relies on the limiting interpolation techniques. In particular, the following characterization of the dual space of a limiting interpolation space will be useful (see \cite[Theorems 5.6 and 5.8]{CobosSegurado} and \cite[Theorem 4.3]{Besoy}).

\begin{lem}\label{LemmaDualInt}
	Let $A_0, A_1$ be Banach spaces with $A_1$ continuously and densely embedded into $A_0$. Let $\theta \in \{0, 1\}, r \in (0,\infty)$ and
	$$
	 \left\{\begin{array}{cl}  b > -\frac{1}{r} & \text{if} \quad  \theta =0, \\
			& \\
		b < -\frac{1}{r}  & \text{if} \quad \theta=1.
		       \end{array}
                        \right.
	$$
	Then
		\begin{equation}\label{LemmaDualInt1}
			((A_0,A_1)_{(\theta,b),r})' = (A_1', A_0')_{(1-\theta,-b-1/\min\{1, r\}),r'}.
		\end{equation}
		Furthermore, if $\theta=0$ and $b=-1/r$ then
		\begin{align}
			\|f\|_{((A_0, A_1)_{(0, -1/r), r})'} &\asymp \nonumber \\
			& \hspace{-2cm} \bigg(\int_0^1 [t^{-1} (1-\log t)^{\frac{1}{r} -\frac{1}{\min\{1, r\}}} (1+ \log (1-\log t))^{-\frac{1}{\min\{1, r\}}} K(t, f; A_1', A_0')]^{r'} \frac{dt}{t}  \bigg)^{\frac{1}{r'}}. \label{LemmaDualInt2}
		\end{align}
\end{lem}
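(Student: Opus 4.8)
The plan is to establish \eqref{LemmaDualInt1} by discretising the defining integral \eqref{DefLimInterpolation}, realising the limiting space as a retract of a weighted $\ell_r$-lattice over the couple $(A_0,A_1)$, dualising the lattice and the couple simultaneously (the latter via the classical $K$--$J$ duality), and then converting the resulting $J$-description back into a $K$-description over the reordered dual couple; the borderline equivalence \eqref{LemmaDualInt2} then follows by running the same machinery with the critical power weight, whose K\"othe dual carries an iterated logarithm.

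As a first reduction, by the symmetry relation \eqref{Sym} one has $(A_0,A_1)_{(1,b),r}=(A_1,A_0)_{(0,b),r}$, while the hypothesis that $A_1$ is dense in $A_0$ — equivalently, $A_0\cap A_1=A_1$ is dense in both $A_0$ and $A_1$ — is precisely what is needed to apply the Bergh--L\"ofstr\"om duality theorem \cite[Theorem~3.7.1]{BerghLofstrom} to this couple. Hence the case $\theta=1$ for $(A_0,A_1)$ reduces to $\theta=0$ for $(A_1,A_0)$ after relabelling, and I may assume $\theta=0$ and $b\ge-1/r$. Since $t\mapsto K(t,f;A_0,A_1)$ is nondecreasing and $t\mapsto K(t,f;A_0,A_1)/t$ is nonincreasing, one has $K(e^{-n},f)\le e\,K(e^{-n-1},f)$, so comparing the integral in \eqref{DefLimInterpolation} with its Riemann sum (after the substitution $u=-\log t$) gives
\begin{equation*}
	\|f\|_{(A_0,A_1)_{(0,b),r}}\asymp\Big(\sum_{n\ge0}\big[(1+n)^{b}K(e^{-n},f;A_0,A_1)\big]^{r}\Big)^{1/r}.
\end{equation*}
By the fundamental lemma of the real method this identifies $(A_0,A_1)_{(0,b),r}$, up to equivalence of (quasi-)norms and as a retract, with the $K$-orbit space of $(A_0,A_1)$ attached to the lattice $\Lambda(\{a_n\})=\|((1+n)^{b}a_n)\|_{\ell_r}$, and dually with the corresponding $J$-orbit space.

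The core step is to dualise simultaneously. The K\"othe dual of $\ell_r$ is $\ell_{r'}$ for $r\ge1$ and $\ell_\infty$ for $r\le1$; combined with the change of normalisation that relates a $K$-orbit to the $J$-orbit of the dual couple, this produces the dual lattice $\Lambda'(\{c_n\})=\|((1+n)^{-b-1/\min\{1,r\}}c_n)\|_{\ell_{r'}}$, the shift by $1/\min\{1,r\}$ being exactly the $J$-versus-$K$ weight correction (which in the quasi-Banach range $0<r<1$ also absorbs the $r$-triangle inequality). On the other side, the density hypothesis yields $(A_0\cap A_1)'=A_0'+A_1'$ together with the $K$--$J$ duality for the dual couple, identifying $K(t,\cdot;A_0',A_1')$ with the functional dual to $J(1/t,\cdot;A_0,A_1)$. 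Combining the two, and applying the fundamental lemma once more to recast the $J$-orbit of $(A_0',A_1')$ as a $K$-orbit — hence as a limiting interpolation space over the ordered couple $(A_1',A_0')$, a legitimate reordering since $A_0'\hookrightarrow A_1'$ — one obtains
\begin{equation*}
	\big((A_0,A_1)_{(0,b),r}\big)'=(A_1',A_0')_{(1,\,-b-1/\min\{1,r\}),\,r'},
\end{equation*}
which is \eqref{LemmaDualInt1} for $\theta=0$. The nondegeneracy condition for the $\theta=1$ slot on the right, $-b-1/\min\{1,r\}<-1/r'$, is equivalent to the strict inequality $b>-1/r$; the value $b=-1/r$ is exactly where it fails, which is the content of \eqref{LemmaDualInt2}.

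For the borderline $\theta=0$, $b=-1/r$, the discretised weight becomes the critical power $(1+n)^{-1/r}$, so $(A_0,A_1)_{(0,-1/r),r}$ coincides with $A_0$ as a set but with a strictly finer norm, and its dual is no longer of pure power type. One computes the K\"othe dual of $\ell_r\big((1+n)^{-1/r}\big)$ directly: since summing the critical series $\sum_n(1+n)^{-1}$ produces a logarithm, the dual weight acquires an extra factor, namely $\ell_{r'}$ with weight $(1+n)^{1/r-1/\min\{1,r\}}(1+\log(1+n))^{-1/\min\{1,r\}}$ for all $0<r<\infty$. Feeding this into the $K$--$J$ duality for $(A_1',A_0')$ as above and undoing the discretisation via $1+n\leftrightarrow1-\log t$ (so that $1+\log(1+n)\leftrightarrow1+\log(1-\log t)$) gives precisely the integral in \eqref{LemmaDualInt2}. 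The main obstacle throughout is making the $K$--$J$ duality of the dual couple quantitatively precise in the quasi-Banach range $0<r<1$, where $\ell_r$ is merely $r$-normed so the classical Banach duality of the real method must be replaced by its parametrised quasi-Banach version, and keeping track of the normalisation so that the shift $1/\min\{1,r\}$ — and, in the borderline case, the iterated logarithm with exponent $-1/\min\{1,r\}$ — emerge with the correct exponents; this bookkeeping is precisely what is carried out in \cite{CobosSegurado,Besoy}.
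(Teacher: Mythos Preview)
The paper does not prove this lemma; it simply cites it from \cite[Theorems~5.6 and~5.8]{CobosSegurado} and \cite[Theorem~4.3]{Besoy}. Your sketch --- discretise, identify the limiting space as a retract of a weighted $\ell_r$-lattice, use K\"othe duality on the lattice together with the $K$--$J$ duality on the couple, then undo the discretisation --- is precisely the machinery of those references, and you even close by pointing to them. So there is nothing to compare: you have outlined the proof the paper defers to.

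One caveat on your reduction step. The identity $(A_0,A_1)_{(1,b),r}=(A_1,A_0)_{(0,b),r}$ that you invoke via \eqref{Sym} does not hold as written in the paper's conventions: \eqref{Sym} is stated for the classical $(\theta,q,b)$-method with $\theta\in(0,1)$, whereas the limiting spaces \eqref{DefLimInterpolation} are defined only for ordered couples with $A_1\hookrightarrow A_0$ and the integral restricted to $(0,1)$. Swapping the entries gives a couple ordered the wrong way, and the change of variables $t\mapsto 1/t$ sends the integral over $(0,1)$ to one over $(1,\infty)$, which is not the $(0,b)$-functional for the reversed couple. The symmetry you want is there, but it requires handling the $\theta=0$ and $\theta=1$ cases in parallel (as the cited papers do) rather than literally reducing one to the other via \eqref{Sym}. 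This is a bookkeeping issue, not a gap in the overall strategy.
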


\begin{proof}[Proof of Theorem \ref{ThmDualNewBesovSpaces}]
	(i): Assume first $b > 0$. According to Theorem \ref{TheoremInterpolation}(i), we have
	$$
	T^b_r B^s_{p, q}(\mathbb{R}^d) = (B^s_{p,q}(\R^d), B^{s_0}_{p,q}(\R^d))_{(0,b-1/r),r}
	$$
	for $s_0 > s$. Since $B^{s_0}_{p,q}(\R^d)$ is densely embedded into $B^s_{p,q}(\R^d)$ (noting that $q < \infty$), we can invoke \eqref{LemmaDualInt1} (with $\theta=0$) and \eqref{DualClassicBesov} to establish
	\begin{align*}
		(T^b_r B^s_{p, q}(\mathbb{R}^d))' &=  (B^s_{p,q}(\R^d), B^{s_0}_{p,q}(\R^d))_{(0,b-1/r),r}' \\
		& = (B^{-s_0}_{p',q'}(\R^d), B^{-s}_{p',q'}(\R^d))_{(1,-b+1/r-1/\min\{1, r\}),r'} \\
		& = T^{-b}_{r'} B^{-s}_{p',q'}(\R^d),
	\end{align*}
	where we have applied Theorem \ref{TheoremInterpolation}(ii) in the last step.
	
	Assume now $b < 0$. Let $s_0 < s$. By Theorem \ref{TheoremInterpolation}(ii), we have
	$$
	T^b_r B^s_{p, q}(\mathbb{R}^d) = (B^{s_0}_{p,q}(\R^d), B^{s}_{p,q}(\R^d))_{(1,b-1/r),r},
	$$
	and thus \eqref{LemmaDualInt1} (with $\theta=1$) and Theorem \ref{TheoremInterpolation}(i) imply
	\begin{align*}
		(T^b_r B^s_{p, q}(\mathbb{R}^d))'
		& = (B^{-s}_{p',q'}(\R^d), B^{-s_0}_{p',q'}(\R^d))_{(0,-b+1/r-1/\min\{1, r\}),r'} \\
		& = T^{-b}_{r'} B^{-s}_{p',q'}(\R^d).
	\end{align*}
	
	The proof of (ii) can de done similarly but now invoking Theorem \ref{TheoremInterpolationF}. This is left to the reader.
\end{proof}

\subsection{Dual of truncated Besov and Triebel-Lizorkin spaces in the limiting case $b=0$} This subsection is slightly out of the scope of our study since here we deal with the special limiting case $b=0$ and we have to consider another truncated spaces. First, it is convenient to switch from $T_r A^s_{p, q}(\R^d), \, A \in \{B, F\}$, to $T^*_r A^s_{p, q}(\R^d)$   (cf. \eqref{33} and \eqref{34}). Second, recall that (cf. \eqref{32})
$$
		\left(\sum_{j=0}^\infty \bigg(\sum_{\nu= 0}^{2^j} 2^{\nu s q} \|(\varphi_\nu \widehat{f})^\vee\|_{L_p(\mathbb{R}^d)}^q\bigg)^{r/q}\right)^{1/r} < \infty \iff f \equiv 0. 	
$$
However, the triviality of this functional can be overcome with the help of additional weights. For example, the modified functional
 $$
 		\left(\sum_{j=0}^\infty (1 + j)^{-r} \bigg(\sum_{\nu= 0}^{2^j} 2^{\nu s q} \|(\varphi_\nu \widehat{f})^\vee\|_{L_p(\mathbb{R}^d)}^q\bigg)^{r/q}\right)^{1/r}
 $$
 defines a nontrivial space of distributions. In fact, our next result proves that, in particular, this space is the dual space of $T^*_r B^s_{p, q}(\R^d)$ (recall that $T^*_r B^s_{p, q}(\R^d)$ is defined by (\ref{33})).

\begin{thm}
\begin{enumerate}[\upshape(i)]
\item Let $p, q \in [1, \infty), r \in (0, \infty)$ and $s \in \R$. Then
	\begin{align*}
		(T^*_r B^{s}_{p,q}(\mathbb{R}^d))' &= \\
		& \hspace{-2cm} \bigg\{f \in \mathcal{S}'(\R^d): \left(\sum_{j=0}^\infty (1 + j)^{-r'/\min\{1, r\}} \bigg(\sum_{\nu= 0}^{2^j} 2^{-\nu s q'} \|(\varphi_\nu \widehat{f})^\vee\|_{L_{p'}(\mathbb{R}^d)}^{q'}\bigg)^{r'/q'}\right)^{1/r'} < \infty \bigg\}
	\end{align*}
	(with the usual modification if $r \in (0, 1]$).
	\item Let $p, q \in (1, \infty), r \in (0, \infty)$ and $s \in \R$. Then
	\begin{align*}
		(T^*_r F^{s}_{p,q}(\mathbb{R}^d))' &=  \\
		& \hspace{-2cm}\bigg\{f \in \mathcal{S}'(\R^d) : \left(\sum_{j=0}^\infty (1 + j)^{-r'/\min\{1, r\}}  \bigg\| \bigg(\sum_{\nu= 0}^{2^j} 2^{-\nu s q'} |(\varphi_\nu \widehat{f})^\vee|^{q'} \bigg)^{1/q'} \bigg\|_{L_{p'}(\R^d)}^{r'} \right)^{1/r'} < \infty \bigg\}
	\end{align*}
	(with the usual modification if $r \in (0, 1]$).
	\end{enumerate}
\end{thm}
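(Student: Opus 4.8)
The strategy is to identify $T^*_r B^s_{p,q}(\R^d)$ and $T^*_r F^s_{p,q}(\R^d)$ as limiting interpolation spaces of the classical scales and then apply the duality formula from Lemma \ref{LemmaDualInt}, exactly mimicking the proof of Theorem \ref{ThmDualNewBesovSpaces} but in the borderline exponent $b=0$ (equivalently, the limiting interpolation parameter $b-1/r = -1/r$). First I would recall from Remark \ref{Remark4.4} (see \eqref{4.4ururu}) that for $s_0 > s$,
\begin{equation*}
	T^*_r B^s_{p,q}(\R^d) = (B^s_{p,q}(\R^d), B^{s_0}_{p,q}(\R^d))_{(0,-1/r),r},
\end{equation*}
and from \eqref{435} that $T^*_r F^s_{p,q}(\R^d) = (F^s_{p,q}(\R^d), F^{s_0}_{p,q}(\R^d))_{(0,-1/r),r}$ for $p \in (1,\infty)$. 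Since $q < \infty$ (resp. $p,q<\infty$), the space $B^{s_0}_{p,q}(\R^d)$ is densely and continuously embedded in $B^s_{p,q}(\R^d)$ (resp. $F^{s_0}_{p,q}$ in $F^s_{p,q}$), so Lemma \ref{LemmaDualInt} is applicable with $\theta=0$ and $b=-1/r$.

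\textbf{Key steps.} Applying \eqref{LemmaDualInt2} together with the classical duality \eqref{DualClassicBesov}, namely $(B^s_{p,q}(\R^d))' = B^{-s}_{p',q'}(\R^d)$ (and its $F$-analogue), I would obtain
\begin{equation*}
	\|f\|_{(T^*_r B^s_{p,q}(\R^d))'} \asymp \bigg(\int_0^1 [t^{-1}(1-\log t)^{\frac1r - \frac{1}{\min\{1,r\}}}(1+\log(1-\log t))^{-\frac{1}{\min\{1,r\}}} K(t,f; B^{-s_0}_{p',q'}(\R^d), B^{-s}_{p',q'}(\R^d))]^{r'}\frac{dt}{t}\bigg)^{1/r'}.
\end{equation*}
Next, I would use the $K$-functional formula \eqref{Kfunct} for the ordered couple $(B^{-s_0}_{p',q'}, B^{-s}_{p',q'})$ via the retraction $\mathfrak{J}$ of \eqref{Retraction} onto $(\ell^{-s_0}_{q'}(L_{p'}), \ell^{-s}_{q'}(L_{p'}))$: taking $t = 2^{-j(s_0-s)}$ gives, up to the usual triangle-inequality manipulation and absorption of one of the two resulting terms (the sum over $\nu \le j$ of $2^{-\nu s_0}$-terms being controlled via Hardy's inequality \eqref{H1} by the $\nu=0,\ldots,2^j$ partial sums),
\begin{equation*}
	K(2^{-j(s_0-s)}, f; B^{-s_0}_{p',q'}(\R^d), B^{-s}_{p',q'}(\R^d)) \asymp \bigg(\sum_{\nu=0}^{2^j-1} 2^{-\nu s q'}\|(\varphi_\nu\widehat f)^\vee\|_{L_{p'}(\R^d)}^{q'}\bigg)^{1/q'} + 2^{-j(s_0-s)}\bigg(\text{tail}\bigg).
\end{equation*}
I would then discretize the outer integral over $t \in (0,1)$ into dyadic-of-dyadic blocks $t \in [2^{-2^{j+1}}, 2^{-2^j}]$: on such a block $1-\log t \asymp 2^j$, so $(1-\log t)^{1/r - 1/\min\{1,r\}} \asymp 2^{j(1/r - 1/\min\{1,r\})}$ and $(1+\log(1-\log t))^{-1/\min\{1,r\}} \asymp (1+j)^{-1/\min\{1,r\}}$, while $\int$ of $dt/t$ over the block contributes a factor $\asymp 2^j$. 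Collecting powers of $2^j$: the weight $2^{j(1/r-1/\min\{1,r\})r'} \cdot 2^j$ combined with the exponent bookkeeping ($r'(1/r - 1/\min\{1,r\}) + 1$) is designed precisely to cancel — this is the standard computation showing $(A_0,A_1)_{(0,-1/r),r}' $ carries no power weight in $j$, only the $\log\log$-type weight — leaving
\begin{equation*}
	\|f\|_{(T^*_r B^s_{p,q}(\R^d))'} \asymp \bigg(\sum_{j=0}^\infty (1+j)^{-r'/\min\{1,r\}}\bigg(\sum_{\nu=0}^{2^j} 2^{-\nu s q'}\|(\varphi_\nu\widehat f)^\vee\|_{L_{p'}(\R^d)}^{q'}\bigg)^{r'/q'}\bigg)^{1/r'},
\end{equation*}
as claimed in (i). The $F$-case (ii) runs identically, replacing the retraction target by $L_{p'}(\R^d;\ell^{-s}_{q'})$, using the vector-valued $K$-functional formula from Lemma \ref{LemmaNew23}-type reasoning (i.e. \eqref{ProofIntF2*}) and the Triebel--Lizorkin duality $(F^s_{p,q})' = F^{-s}_{p',q'}$; the only change is that the inner $\ell^{q'}$-sum over $\nu$ sits inside the $L_{p'}$-norm rather than outside.

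\textbf{Main obstacle.} The delicate point is the precise tracking of the $\log$ and $\log\log$ weights through the discretization: one must verify that the exponent $r'(\tfrac1r - \tfrac{1}{\min\{1,r\}}) + 1$ on $2^j$ vanishes (so no spurious geometric weight survives), that the $(1+\log(1-\log t))^{-1/\min\{1,r\}}$ factor discretizes cleanly to $(1+j)^{-1/\min\{1,r\}}$, and — as in the proof of Theorem \ref{TheoremInterpolation} — that the ``off-diagonal'' term in the Holmstedt-type decomposition of the $K$-functional is dominated by the ``diagonal'' one via Hardy's inequality \eqref{H1}, so that $K(2^{-j(s_0-s)},f)$ is genuinely equivalent to the partial sum $\sum_{\nu=0}^{2^j} 2^{-\nu s q'}\|(\varphi_\nu\widehat f)^\vee\|_{L_{p'}}^{q'}$ up to negligible error. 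A secondary technical check is the case split $r \le 1$ versus $r > 1$ in $\min\{1,r\}$, which only affects the precise exponents in \eqref{LemmaDualInt2} and hence matches the ``usual modification if $r \in (0,1]$'' caveat in the statement; no new ideas are needed there. Everything else is a routine transcription of the argument already used for Theorem \ref{ThmDualNewBesovSpaces} and the discretization lemmas invoked throughout Section \ref{SectionLimIntGeneral}.
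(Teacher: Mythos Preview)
Your proposal is correct and follows essentially the same route as the paper: identify $T^*_r A^s_{p,q}$ as the limiting interpolation space via \eqref{4.4ururu} and \eqref{435}, apply the borderline duality formula \eqref{LemmaDualInt2} together with \eqref{DualClassicBesov}, compute the $K$-functional for $(B^{-s_0}_{p',q'}, B^{-s}_{p',q'})$ (resp.\ the $F$-couple) via the retraction $\mathfrak{J}$, absorb the subordinate term by Hardy's inequality, and finally pass to dyadic-of-dyadic blocks by monotonicity. The only cosmetic difference is that the paper first discretizes at the single-dyadic scale $t=2^{-j(s_0-s)}$ (obtaining $\sum_{\nu=0}^j$) and then collapses to $\sum_{\nu=0}^{2^j}$ by monotonicity of the partial sums, whereas you propose going directly to the dyadic-of-dyadic blocks; both are equivalent. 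One small caution: in your bookkeeping the factor $t^{-1}$ must be absorbed into the $K$-functional (turning $t^{-1}K(t,f)$ into the partial sum $\sum_{\nu=0}^j 2^{-\nu s q'}\|\cdot\|^{q'}$) \emph{before} the dyadic-of-dyadic collapse, and the Hardy inequality that kills the tail term $2^{j(s_0-s)}(\sum_{\nu>j}2^{-\nu s_0 q'}\|\cdot\|^{q'})^{1/q'}$ is actually \eqref{H2} rather than \eqref{H1}; but this is a labeling issue, not a gap.
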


\begin{proof}
	The proof follows similar ideas as the method of proof of Theorem \ref{ThmDualNewBesovSpaces} but now relying on the duality formula \eqref{LemmaDualInt2}. According to \eqref{4.4ururu}, we have $T_r^*B^{s}_{p,q}(\mathbb{R}^d)  =  (B^s_{p,q}(\R^d), B^{s_0}_{p,q}(\R^d))_{(0,-1/r),r}, \, s_0 > s$, and thus, by \eqref{LemmaDualInt2} and \eqref{DualClassicBesov}, $\|f\|_{(T_r^*B^{s}_{p,q}(\mathbb{R}^d))'}$  is equivalent to the following interpolation norm
		\begin{equation*}
		\bigg(\int_0^1 [t^{-1} (1-\log t)^{\frac{1}{r} -\frac{1}{\min\{1, r\}}} (1+ \log (1-\log t))^{-\frac{1}{\min\{1, r\}}} K(t, f)]^{r'} \frac{dt}{t}  \bigg)^{\frac{1}{r'}},
		\end{equation*}
		where $K(t, f) = K(t, f; B^{-s_0}_{p', q'}(\R^d), B^{-s}_{p', q'}(\R^d))$.
		Applying now the methodology developed in the proof of Theorem \ref{TheoremInterpolation}, we have, by \eqref{412} and basic monotonicity properties,
		\begin{align*}
		\bigg(\int_0^1 [t^{-1} (1-\log t)^{\frac{1}{r} -\frac{1}{\min\{1, r\}}} (1+ \log (1-\log t))^{-\frac{1}{\min\{1, r\}}} K(t, f)]^{r'} \frac{dt}{t}  \bigg)^{\frac{1}{r'}} & \asymp \\
		& \hspace{-11.5cm}\bigg\{ \sum_{j=0}^\infty (1 + j)^{(\frac{1}{r} - \frac{1}{\min\{1, r\}}) r'} (1 + \log (1+j))^{-\frac{r'}{\min\{1, r\}}} \bigg(\sum_{\nu=0}^j [2^{-\nu s} \|(\varphi_\nu \widehat{f})^\vee\|_{L_{p'}(\R^d)}]^{q'} \bigg)^{\frac{r'}{q'}} \bigg\}^{\frac{1}{r'}} \\
		& \hspace{-11.5cm} \asymp \bigg\{ \sum_{j=0}^\infty  (1 + j)^{-\frac{r'}{\min\{1, r\}}} \bigg(\sum_{\nu=0}^{2^{j}} [2^{-\nu s} \|(\varphi_\nu \widehat{f})^\vee\|_{L_{p'}(\R^d)}]^{q'} \bigg)^{\frac{r'}{q'}} \bigg\}^{\frac{1}{r'}}.
		\end{align*}
		This proves (i).
		
		Next we deal with (ii). By \eqref{435},
		$$
		T^*_r F^{s}_{p, q}(\R^d) = 	(F^s_{p,q}(\R^d), F^{s_0}_{p,q}(\R^d))_{(0,-1/r),r}
		$$
		provided that $s_0 > s$. Then \eqref{LemmaDualInt2} and \eqref{DualClassicBesov} imply
		$$
			\|f\|_{(T^*_r F^{s}_{p, q}(\R^d))'} \asymp \bigg(\int_0^1 [t^{-1} (1-\log t)^{\frac{1}{r} -\frac{1}{\min\{1, r\}}} (1+ \log (1-\log t))^{-\frac{1}{\min\{1, r\}}} K(t, f)]^{r'} \frac{dt}{t}  \bigg)^{\frac{1}{r'}},
		$$
		where $K(t, f) = K(t, f; F^{-s_0}_{p', q'}(\R^d), F^{-s}_{p', q'}(\R^d))$. We can now invoke the retraction method in the same spirit as in the proof of Theorem \ref{TheoremInterpolationF} to obtain
		\begin{align*}
			\|f\|_{(T^*_r F^{s}_{p, q}(\R^d))'} & \asymp \\
			&\hspace{-2cm}\bigg\{ \sum_{j=0}^\infty (1+j)^{(\frac{1}{r} - \frac{1}{\min\{1, r\}}) r'} (1 + \log (1+j))^{-\frac{r'}{\min\{1, r\}}} \bigg\| \bigg(\sum_{\nu=0}^j [2^{-\nu s} |(\varphi_\nu \widehat{f})^\vee|]^{q'} \bigg)^{\frac{1}{q'}}\bigg\|_{L_{p'}(\R^d)}^{r'} \bigg\}^{\frac{1}{r'}} \\
			& \hspace{-2cm} \asymp  \left(\sum_{j=0}^\infty (1 + j)^{-\frac{r'}{\min\{1, r\}}}  \bigg\| \bigg(\sum_{\nu= 0}^{2^j} 2^{-\nu s q'} |(\varphi_\nu \widehat{f})^\vee|^{q'} \bigg)^{\frac{1}{q'}} \bigg\|_{L_{p'}(\R^d)}^{r'} \right)^{\frac{1}{r'}}.
		\end{align*}
\end{proof}

\subsection{Dual spaces of $\mathbf{B}^{0, b}_{p, q}(\R^d)$ and $\text{Lip}^{s, b}_{p, q}(\R^d)$} In light of \eqref{DualClassicBesov}, for $p \in [1, \infty), q \in (0, \infty)$ and $b \in \R$,
$$
	(B^{0, b}_{p, q}(\R^d))' = B^{0,-b}_{p', q'}(\R^d).
$$
However $B^{0, b}_{p, q}(\R^d) \neq \mathbf{B}^{0, b}_{p, q}(\R^d)$ and thus it becomes a rather natural question to characterize the dual space $(\mathbf{B}^{0, b}_{p, q}(\R^d))'$. Indeed, this question was first addressed  in \cite{CobosDominguez2} dealing with the case $q \geq 1$ and later extended in \cite{Besoy} to cover also the case $q < 1$. Next we recall these results.

\begin{thm}[{\cite[Theorem 5.2]{Besoy} and \cite[Theorem 4.3]{CobosDominguez2}}]\label{ThmDualBLip}
	Let $1 < p < \infty, 0 < q < \infty$ and $b > -1/q$. Then $f \in (\mathbf{B}^{0, b}_{p, q}(\R^d))'$ if and only if $f \in H^{-1}_{p'}(\R^d)$ with $I_{-1} f \in \emph{Lip}^{1,-b-1/\min\{1, q\}}_{p', q'}(\R^d)$. Furthermore
	$$
		\|f\|_{(\mathbf{B}^{0, b}_{p, q}(\R^d))'} \asymp \|I_{-1} f\|_{ \emph{Lip}^{1,-b-1/\min\{1, q\}}_{p', q'}(\R^d)}.
	$$
\end{thm}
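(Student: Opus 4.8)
The plan is to obtain Theorem \ref{ThmDualBLip} as a formal consequence of three facts already available: the identification $\mathbf{B}^{0,b}_{p,q}(\R^d) = T^{b+1/q}_q F^0_{p,2}(\R^d)$ from Proposition \ref{PropositionCoincidences}(iii), the duality formula for truncated Triebel--Lizorkin spaces in Theorem \ref{ThmDualNewBesovSpaces}(ii), and the lifting isomorphism of Corollary \ref{CorollaryLiftLip} (which itself rests on Theorem \ref{TheoremLifting} and Proposition \ref{PropositionCoincidences}(iv)). Under the hypotheses $1<p<\infty$, $0<q<\infty$, $b>-1/q$ we have $b+1/q>0$, so $\mathbf{B}^{0,b}_{p,q}(\R^d)=T^{b+1/q}_q F^0_{p,2}(\R^d)$ is a truncated $F$-space with admissible parameters in which $\mathcal S(\R^d)$ is dense; hence $(\mathcal S(\R^d),\mathcal S'(\R^d))$ is the natural dual pairing (cf. \eqref{DualPairing}), and Theorem \ref{ThmDualNewBesovSpaces}(ii), applied with integrability exponents $p,2\in(1,\infty)$, truncation exponent $q\in(0,\infty)$ and smoothness exponents $\pm(b+1/q)\neq 0$, gives
\[
	(\mathbf{B}^{0,b}_{p,q}(\R^d))' = (T^{b+1/q}_q F^0_{p,2}(\R^d))' = T^{-(b+1/q)}_{q'} F^0_{p',2}(\R^d).
\]

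The next step is to recognize the right-hand side as a lifted Lipschitz space. Set $\beta := -(b+1/q)-1/q'$. Since $b+1/q>0$ one has $\beta<-1/q'$, so the hypotheses of Corollary \ref{CorollaryLiftLip} hold with smoothness $s=1$, lifting order $\sigma=1$, integrability $p',q'$ and Lipschitz parameter $\beta$; thus $I_1$ is an isomorphism from $\text{Lip}^{1,\beta}_{p',q'}(\R^d)$ onto $T^{\beta+1/q'}_{q'}F^{1-1}_{p',2}(\R^d)=T^{-(b+1/q)}_{q'}F^0_{p',2}(\R^d)$. The only bookkeeping point is the identity $1/q+1/q'=1/\min\{1,q\}$, valid for all $0<q<\infty$ with the convention $1/q'=0$ when $q\leq 1$; it yields
\[
	\beta = -\Big(b+\tfrac1q\Big)-\tfrac1{q'} = -b-\Big(\tfrac1q+\tfrac1{q'}\Big) = -b-\tfrac1{\min\{1,q\}}.
\]

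Combining the two steps, $f\in(\mathbf{B}^{0,b}_{p,q}(\R^d))'$ if and only if $f\in T^{-(b+1/q)}_{q'}F^0_{p',2}(\R^d)=I_1\big[\text{Lip}^{1,-b-1/\min\{1,q\}}_{p',q'}(\R^d)\big]$, i.e. if and only if $I_{-1}f=I_1^{-1}f\in\text{Lip}^{1,-b-1/\min\{1,q\}}_{p',q'}(\R^d)$; and since $\text{Lip}^{1,\beta}_{p',q'}(\R^d)\subset L_{p'}(\R^d)$ while $I_1$ maps $L_{p'}(\R^d)$ isomorphically onto $H^{-1}_{p'}(\R^d)$, this in particular forces $f\in H^{-1}_{p'}(\R^d)$, recovering the precise statement. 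Chaining the norm equivalences in Theorem \ref{ThmDualNewBesovSpaces}(ii) and Corollary \ref{CorollaryLiftLip} gives $\|f\|_{(\mathbf{B}^{0,b}_{p,q}(\R^d))'}\asymp\|f\|_{T^{-(b+1/q)}_{q'}F^0_{p',2}(\R^d)}\asymp\|I_{-1}f\|_{\text{Lip}^{1,-b-1/\min\{1,q\}}_{p',q'}(\R^d)}$.

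No hard analysis is involved; the routine part is the verification of parameter ranges. I expect the only delicate issue to be the regime $q\leq 1$, where $\mathbf{B}^{0,b}_{p,q}$ is merely quasi-Banach and $q'=\infty$: one has to make sure Theorem \ref{ThmDualNewBesovSpaces}(ii) is being applied strictly within its stated hypotheses — which it is, since the integrability indices remain $p,2>1$ and only the truncation index $r=q$ drops below $1$ — and that the convention $1/q'=0$ is used consistently in the identity $1/q+1/q'=1/\min\{1,q\}$, which is exactly what produces the shift $1/\min\{1,q\}$ in the target Lipschitz space. Getting this $q\leq 1$ bookkeeping right, rather than any estimate, is the crux.
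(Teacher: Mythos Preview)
Your proposal is correct and follows essentially the same route as the paper: in the Remark following Theorem \ref{ThmDualBFinal}, the paper derives Theorem \ref{ThmDualBLip} by combining the identification $\mathbf{B}^{0,b}_{p,q}=T^{b+1/q}_q F^0_{p,2}$, the duality formula from Theorem \ref{ThmDualNewBesovSpaces}(ii), and the lifting from Theorem \ref{TheoremLifting} together with Proposition \ref{PropositionCoincidences}(iv) --- which is exactly your chain, with Corollary \ref{CorollaryLiftLip} packaging the last two ingredients. Your explicit verification of the bookkeeping identity $1/q+1/q'=1/\min\{1,q\}$ and the parameter ranges (especially for $q\leq 1$) makes the argument more self-contained than the paper's one-line remark.
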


The previous result gives an interesting duality connection between  $\mathbf{B}^{0, b}_{p, q}(\R^d)$ and $\text{Lip}^{1, b}_{p, q}(\R^d)$, however it is still not satisfactory due to the following reasons. Theorem \ref{ThmDualBLip} is formulated in terms of the lifting operator $I_{-1}$ (cf. \eqref{LiftingDef}), but a careful examination of the methodology proposed in \cite{CobosDominguez2} shows that the analogues of  Theorem \ref{ThmDualBLip} in terms of $I_{-s}$ and $\text{Lip}^{s, b}_{p, q}(\R^d)$ for any $s > 0$ also hold true, i.e.,
\begin{equation}\label{DualBLipGen}
		\|f\|_{(\mathbf{B}^{0, b}_{p, q}(\R^d))'} \asymp \|I_{-s} f\|_{ \text{Lip}^{s,-b-1/\min\{1, q\}}_{p', q'}(\R^d)}.
	\end{equation}
Accordingly, $(\mathbf{B}^{0, b}_{p, q}(\R^d))'$ is not uniquely determined by $\text{Lip}^{1, b}_{p, q}(\R^d)$. Next we are able to remedy this defect via Theorem \ref{ThmDualNewBesovSpaces}.

\begin{thm}[Dual space of $\mathbf{B}^{0, b}_{p, q}(\R^d)$]\label{ThmDualBFinal}
Let $1 < p < \infty, 0 < q < \infty$ and $b > -1/q$. Then
$$
	(\mathbf{B}^{0, b}_{p, q}(\R^d))' = T^{-b-1/q}_{q'} F^{0}_{p', 2}(\R^d)
$$
and
$$
	(T^{-b-1/q}_{q'} F^{0}_{p', 2}(\R^d))' = \mathbf{B}^{0, b}_{p, q}(\R^d).
$$
\end{thm}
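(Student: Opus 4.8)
The plan is to obtain both identities as a direct corollary of the general duality theorem for truncated spaces, Theorem \ref{ThmDualNewBesovSpaces}(ii), combined with the coincidence formula $\mathbf{B}^{0, b}_{p, q}(\R^d) = T^{b+1/q}_q F^{0}_{p, 2}(\R^d)$ from Proposition \ref{PropositionCoincidences}(iii). The only mild complication is that Theorem \ref{ThmDualNewBesovSpaces}(ii) is stated for a \emph{nonzero} smoothness parameter of the truncated $F$-space, whereas here the relevant parameter is $b+1/q$, which under the hypothesis $b > -1/q$ is automatically positive, hence nonzero. So the hypotheses match after a relabelling: writing $\beta := b + 1/q \in (0, \infty)$, we have $\mathbf{B}^{0, b}_{p, q}(\R^d) = T^{\beta}_q F^{0}_{p, 2}(\R^d)$ with $p \in (1, \infty)$, $q \in (0, \infty)$, $\beta \in \R \setminus \{0\}$, which is exactly the range covered by Theorem \ref{ThmDualNewBesovSpaces}(ii) (with the roles of $(p, q, r, s, b)$ there played by $(p, 2, q, 0, \beta)$).

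First I would record the pairing: by \eqref{DualPairing} all the spaces in sight sit between $\mathcal{S}(\R^d)$ and $\mathcal{S}'(\R^d)$ with the first embedding dense when all integrability parameters are finite, so the dualities are taken in the dual pairing $(\mathcal{S}(\R^d), \mathcal{S}'(\R^d))$ and are therefore reflexive-type statements that can be iterated. Then I would apply Theorem \ref{ThmDualNewBesovSpaces}(ii) with the above parameter identification: since $(T^{\beta}_q F^{0}_{p, 2}(\R^d))' = T^{-\beta}_{q'} F^{0}_{p', 2}(\R^d)$, and $-\beta = -b - 1/q$, this gives
\begin{equation*}
	(\mathbf{B}^{0, b}_{p, q}(\R^d))' = T^{-b-1/q}_{q'} F^{0}_{p', 2}(\R^d),
\end{equation*}
which is the first claimed identity. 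For the second identity I would again invoke Theorem \ref{ThmDualNewBesovSpaces}(ii), now applied to the space $T^{-b-1/q}_{q'} F^{0}_{p', 2}(\R^d)$ itself: its parameters are $(p', q', s, b) = (p', 2, 0, -b-1/q)$ with $-b-1/q \neq 0$, $p' \in (1, \infty)$, $q' \in (1, \infty]$. One must check $q' < \infty$, i.e. $q > 1$, for the theorem to apply literally; if $q \leq 1$ then $q' = \infty$ and I would instead argue by the reflexivity of the identification already obtained (the dense embedding $\mathcal{S}(\R^d) \hookrightarrow \mathbf{B}^{0,b}_{p,q}(\R^d)$ together with the fact that $\mathbf{B}^{0,b}_{p,q}(\R^d)$ is a Banach space whose bidual, computed via the first identity applied twice, returns the original space). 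In the generic case $q > 1$, Theorem \ref{ThmDualNewBesovSpaces}(ii) directly yields $(T^{-b-1/q}_{q'} F^{0}_{p', 2}(\R^d))' = T^{b+1/q}_{q} F^{0}_{p, 2}(\R^d) = \mathbf{B}^{0, b}_{p, q}(\R^d)$, again by Proposition \ref{PropositionCoincidences}(iii).

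The main obstacle — really the only thing requiring care — is the borderline bookkeeping: confirming that the parameter restrictions in Theorem \ref{ThmDualNewBesovSpaces}(ii) ($p, q \in (1, \infty)$, $r \in (0, \infty)$, $b \neq 0$) are met under the present hypotheses $1 < p < \infty$, $0 < q < \infty$, $b > -1/q$, where in the application the "$q$" slot is filled by the fixed value $2 \in (1, \infty)$, the "$r$" slot by $q$ or $q'$, and the "$b$" slot by $\pm(b + 1/q) \neq 0$. The one genuinely delicate point is the range $q \in (0, 1]$ (so $q' = \infty$, which falls outside the literal hypothesis $r \in (0, \infty)$ of part (ii) of the duality theorem), and there I would supply the short bidual/reflexivity argument sketched above, or alternatively note that $T^{-b-1/q}_{\infty} F^{0}_{p', 2}(\R^d)$ is not separable so its dual is understood in the $(\mathcal{S}, \mathcal{S}')$ pairing and the computation of $K$-functionals in the proof of Theorem \ref{ThmDualNewBesovSpaces} goes through verbatim with the usual $\sup$-modifications. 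Everything else is a formal substitution into results already established in the excerpt.
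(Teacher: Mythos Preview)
Your proposal is correct and follows exactly the paper's approach: identify $\mathbf{B}^{0,b}_{p,q}(\R^d) = T^{b+1/q}_q F^0_{p,2}(\R^d)$ via Proposition \ref{PropositionCoincidences}(iii) and then invoke the general duality formula \eqref{ThmDualNewBesovSpaces2} from Theorem \ref{ThmDualNewBesovSpaces}(ii). Your parameter bookkeeping is in fact more careful than the paper's one-line proof --- the subtlety you flag for the second identity when $q \in (0,1]$ (so $q' = \infty$, outside the literal range $r \in (0,\infty)$ of Theorem \ref{ThmDualNewBesovSpaces}(ii)) is not addressed explicitly in the paper either.
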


\begin{rem}
	Note that Theorem \ref{ThmDualBLip} (or more generally, \eqref{DualBLipGen}) follows immediately from Theorem \ref{ThmDualBFinal}. Indeed, by Theorem \ref{TheoremLifting} and Proposition \ref{PropositionCoincidences}, we have
	\begin{equation*}
		\|f\|_{T^{-b-1/q}_{q'} F^{0}_{p', 2}(\R^d)} \asymp \|I_{-s} f\|_{T^{-b-1/q}_{q'}F^{s}_{p', 2}(\R^d)} \asymp \|I_{-s} f\|_{\text{Lip}^{s, -b-1/\min\{1, q\}}_{p', q'}(\R^d)}.
	\end{equation*}
\end{rem}

We also apply our method to characterize dual spaces of $\text{Lip}^{s, b}_{p, q}(\R^d)$.

\begin{thm}[Dual space of  $\text{Lip}^{s, b}_{p, q}(\R^d)$]\label{ThmDualLipFinal}
	Let $1 < p < \infty, 0 < q < \infty, s > 0$ and $b < -1/q$. Then
	$$
		(\emph{Lip}^{s, b}_{p, q}(\R^d))' = T^{-b-1/q}_{q'} F^{-s}_{p', 2}(\R^d)
	$$
	and
	$$
		 (T^{-b-1/q}_{q'}F^{-s}_{p', 2}(\R^d))' = \emph{Lip}^{s, b}_{p, q}(\R^d).
	$$
\end{thm}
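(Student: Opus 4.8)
The plan is to derive Theorem \ref{ThmDualLipFinal} by exactly the same mechanism used for Theorem \ref{ThmDualBFinal}, namely by combining the coincidence $\text{Lip}^{s, b}_{p, q}(\R^d) = T^{b+1/q}_q F^{s}_{p, 2}(\R^d)$ from Proposition \ref{PropositionCoincidences}(iv) with the duality formula for truncated spaces in Theorem \ref{ThmDualNewBesovSpaces}(ii). First I would check that the parameter constraints match: Proposition \ref{PropositionCoincidences}(iv) requires $p \in (1, \infty)$, $q \in (0, \infty]$, $s > 0$ and $b < -1/q$, which is precisely our hypothesis (with $q < \infty$ here, so in particular $q \in (0, \infty)$). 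Writing $\beta := b + 1/q$, the assumption $b < -1/q$ gives $\beta < 0$, i.e. $\beta \in \R \backslash \{0\}$, so Theorem \ref{ThmDualNewBesovSpaces}(ii) applies to $T^{\beta}_q F^{s}_{p, 2}(\R^d)$ with $p, p' \in (1, \infty)$, $r = q \in (1, \infty)$ when $q > 1$; for $q \leq 1$ one uses the analogue of the $b=0$ argument together with Lemma \ref{LemmaDualInt}, exactly as in the proof of Theorem \ref{ThmDualNewBesovSpaces}, so I would simply invoke that theorem as stated (it already covers $r \in (0, \infty)$).

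Then the computation runs as follows. By Proposition \ref{PropositionCoincidences}(iv) and Theorem \ref{ThmDualNewBesovSpaces}(ii),
\begin{equation*}
	(\text{Lip}^{s, b}_{p, q}(\R^d))' = (T^{b+1/q}_q F^{s}_{p, 2}(\R^d))' = T^{-(b+1/q)}_{q'} F^{-s}_{p', 2}(\R^d) = T^{-b-1/q}_{q'} F^{-s}_{p', 2}(\R^d),
\end{equation*}
which is the first assertion. For the second, I would apply Theorem \ref{ThmDualNewBesovSpaces}(ii) once more, now to the space $T^{-b-1/q}_{q'} F^{-s}_{p', 2}(\R^d)$: here the integrability exponents are $p', p'' = p \in (1, \infty)$, the summability exponent is $q'$, the smoothness is $-s$, and the logarithmic parameter is $-b - 1/q$, which is nonzero since $b < -1/q$. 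Theorem \ref{ThmDualNewBesovSpaces}(ii) then yields
\begin{equation*}
	(T^{-b-1/q}_{q'} F^{-s}_{p', 2}(\R^d))' = T^{b+1/q}_{q''} F^{s}_{p, 2}(\R^d) = T^{b+1/q}_{q} F^{s}_{p, 2}(\R^d) = \text{Lip}^{s, b}_{p, q}(\R^d),
\end{equation*}
using $(q')' = q$, $(p')' = p$, $-(-s) = s$, $-(-b-1/q) = b + 1/q$, and Proposition \ref{PropositionCoincidences}(iv) in the last step. One subtlety is that Theorem \ref{ThmDualNewBesovSpaces}(ii) is stated for $r \in (0, \infty)$, so for the reflexivity-type second identity I need $q' < \infty$, i.e. $q > 1$; when $q = 1$ one has $q' = \infty$ and the statement of Theorem \ref{ThmDualNewBesovSpaces} does not directly apply, so I would either restrict attention to $q > 1$ for the second formula or note that the $q' = \infty$ case requires the predual interpretation — but since the theorem as stated already imposes $0 < q < \infty$, this is consistent with simply quoting it.

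The only genuine obstacle is bookkeeping with the parameter ranges and making sure that Theorem \ref{ThmDualNewBesovSpaces}(ii)'s hypothesis $p, q \in (1, \infty)$ is met by both applications; since $p \in (1, \infty)$ forces $p' \in (1, \infty)$, and the secondary summability exponents $q$ and $q'$ enter only through the slot that Theorem \ref{ThmDualNewBesovSpaces} calls $r$ (which is allowed to be any value in $(0, \infty)$), everything goes through. Thus the proof is a two-line deduction and I would present it as such: invoke Proposition \ref{PropositionCoincidences}(iv) to rewrite $\text{Lip}^{s,b}_{p,q}$ as a truncated Triebel--Lizorkin space, apply Theorem \ref{ThmDualNewBesovSpaces}(ii) to dualize, and for the reverse direction apply Theorem \ref{ThmDualNewBesovSpaces}(ii) again and translate back via Proposition \ref{PropositionCoincidences}(iv). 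I expect no hard analytic content here — all the work was already done in establishing Theorem \ref{ThmDualNewBesovSpaces} and the coincidence in Appendix A (Theorem \ref{ThmLipFourier}).
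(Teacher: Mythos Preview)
Your proposal is correct and follows exactly the same route as the paper: identify $\text{Lip}^{s,b}_{p,q}(\R^d)$ with $T^{b+1/q}_q F^{s}_{p,2}(\R^d)$ via Proposition \ref{PropositionCoincidences}(iv), then apply Theorem \ref{ThmDualNewBesovSpaces}(ii) in both directions. Your careful parameter check (noting that the inner index $2$ satisfies the $q \in (1,\infty)$ hypothesis of Theorem \ref{ThmDualNewBesovSpaces}(ii), while the outer $q$ enters as the parameter $r \in (0,\infty)$) and your flagging of the $q' = \infty$ issue for the second identity when $q \leq 1$ are in fact more scrupulous than the paper's one-line proof, which simply cites \eqref{ThmDualNewBesovSpaces2} without further comment.
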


\begin{proof}[Proof of Theorems \ref{ThmDualBFinal} and \ref{ThmDualLipFinal}]
	Apply \eqref{ThmDualNewBesovSpaces2} with $T^{b+1/q}_q F^{0}_{p, 2}(\R^d) = \mathbf{B}^{0, b}_{p, q}(\R^d)$ and $T^{b+1/q}_q F^{s}_{p, 2}(\R^d) = \text{Lip}^{s, b}_{p, q}(\R^d)$ (cf. Proposition \ref{PropositionCoincidences}).
\end{proof}

\newpage

\section{Embeddings between truncated Besov spaces%$T^b_r B^s_{p, q}(\mathbb{R}^d)$ and $\mathfrak{B}^{s,b}_{p,q,r}(\mathbb{R}^d)$
}\label{Section10}

\begin{thm}\label{TheoremEmbeddingsBBCharacterization}
Let $0 <  p_i, q_i, r_i \leq \infty, \, s_i \in \mathbb{R}$ and $b_i \in \R \backslash \{0\}$ for $i = 0, 1$.  Then
\begin{equation}\label{EmbeddingBB}
	T^{b_0}_{r_0} B^{s_0}_{p_0,q_0}(\mathbb{R}^d) \hookrightarrow T^{b_1}_{r_1} B^{s_1}_{p_1,q_1}(\mathbb{R}^d)
\end{equation}
if and only if $0 < p_0 \leq p_1 \leq \infty$ and one of the following five conditions is satisfied
\begin{enumerate}[\upshape(i)]
	\item $s_0 - \frac{d}{p_0} > s_1 - \frac{d}{p_1}$.
	\item $s_0 - \frac{d}{p_0} = s_1 - \frac{d}{p_1}, \quad 0 < q_0 \leq q_1 \leq \infty$ \quad and \quad  $b_0  > b_1$.
	\item $s_0 - \frac{d}{p_0} = s_1 - \frac{d}{p_1}, \quad 0 < q_0 \leq q_1 \leq \infty, \quad b_0 = b_1$ \quad and \quad $0 < r_0 \leq r_1 \leq \infty$.
	\item $s_0 - \frac{d}{p_0} = s_1 - \frac{d}{p_1}, \quad 0 < q_1 < q_0 \leq \infty$ \quad and \quad $b_0 + \frac{1}{q_0} > b_1  + \frac{1}{q_1}$.
	\item $s_0 - \frac{d}{p_0} = s_1 - \frac{d}{p_1}, \quad 0 < q_1 < q_0 \leq \infty, \quad b_0 + \frac{1}{q_0} = b_1 + \frac{1}{q_1}$ \quad and \quad $0 < r_0 \leq r_1 \leq \infty$.
\end{enumerate}
\end{thm}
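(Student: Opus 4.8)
The strategy is to reduce the embedding \eqref{EmbeddingBB} between truncated Besov spaces to an embedding between classical Besov spaces of logarithmic smoothness, for which the criterion is already known in the literature. The starting point is Proposition \ref{PropositionCoincidences}(i), which identifies $T^b_q B^s_{p, q}(\R^d) = B^{s, b}_{p, q}(\R^d)$, together with the representation \eqref{RemFM} and the characterization via limiting interpolation from Theorem \ref{Thm4.2}. More precisely, I would first record the two sandwich embeddings that will be proved later in the paper (Corollaries \ref{TheoremEmbeddings1} and \ref{CorollaryEmbFBNew}, already invoked in the proof of Theorem \ref{ThmIntNew}):
$$
	B^{s, b}_{p, \min\{q, r\}}(\R^d) \hookrightarrow T^b_r B^s_{p, q}(\R^d) \hookrightarrow B^{s, b}_{p, \max\{q, r\}}(\R^d), \qquad b \neq 0.
$$
Combined with the known embedding criterion for Besov spaces of logarithmic smoothness — namely $B^{s_0, b_0}_{p_0, u_0}(\R^d) \hookrightarrow B^{s_1, b_1}_{p_1, u_1}(\R^d)$ holds iff $p_0 \le p_1$ and either $s_0 - d/p_0 > s_1 - d/p_1$, or equality holds together with the appropriate ordering of $(u_i, b_i)$ (the exponents being compared as $b_0 > b_1$ when $u_0 \le u_1$, and $b_0 + 1/u_0 > b_1 + 1/u_1$ when $u_0 > u_1$, with the endpoint refinements) — the sandwich immediately yields the \emph{sufficiency} of conditions (i)--(v). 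For (i) the shift in differential dimension dominates and no attention to secondary indices is needed; for (ii)--(v) one checks that the stated hypotheses on $(s_i, p_i, q_i, b_i, r_i)$ force $B^{s_0, b_0}_{p_0, \max\{q_0, r_0\}}(\R^d) \hookrightarrow B^{s_1, b_1}_{p_1, \min\{q_1, r_1\}}(\R^d)$, which is a routine bookkeeping exercise distinguishing the cases $q_i \lessgtr r_i$.

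\textbf{Necessity.} For the converse, I would test \eqref{EmbeddingBB} on suitable families of functions. The condition $p_0 \le p_1$ and the comparison of $s_i - d/p_i$ follow by inserting functions whose Fourier support is a single dyadic annulus $\{|\xi| \sim 2^\nu\}$: if $g_\nu$ has $\widehat{g_\nu} = \varphi_\nu$ (suitably normalized), then $\|g_\nu\|_{T^{b_i}_{r_i} B^{s_i}_{p_i, q_i}} \asymp 2^{\nu s_i} 2^{j(\nu) b_i} \|(\varphi_\nu)^\vee\|_{L_{p_i}}$ where $2^{j(\nu)} \sim \nu$, and scaling/dilation arguments (as in the classical Besov embedding theory, e.g. via functions of the form $2^{\nu d/p} \psi(2^\nu \cdot)$) produce the dimensional inequality $s_0 - d/p_0 \ge s_1 - d/p_1$ and $p_0 \le p_1$. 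Once $s_0 - d/p_0 = s_1 - d/p_1$, the finer conditions on $(q_i, b_i, r_i)$ are extracted using lacunary Fourier series of the form \eqref{Ex67}, $f \sim \sum_\nu a_\nu e^{i 2^\nu x_1}\psi(x)$, for which \eqref{lac} gives clean formulas: $\|f\|_{T^b_r B^s_{p, q}} \asymp \big(\sum_k 2^{kbr}(\sum_{\nu=2^k-1}^{2^{k+1}-2} 2^{\nu s q}|a_\nu|^q)^{r/q}\big)^{1/r}$. Choosing the $a_\nu$ to be constant on each block $\{2^k-1, \dots, 2^{k+1}-2\}$, or to have a prescribed decay in $\nu$ and in the block index $k$ (logarithmic-type weights, as in Remark \ref{Remark36}), reduces the problem to embeddings of weighted sequence spaces $\ell^{b_0}_{r_0}(\ell^{\sigma}_{q_0}) \hookrightarrow \ell^{b_1}_{r_1}(\ell^{\sigma}_{q_1})$ over the block decomposition, whose exact characterization is classical and produces precisely the five cases, including the "$b + 1/q$" phenomenon when $q_1 < q_0$ (a block of length $\sim 2^k$ contributes a factor $\sim 2^{k(1/q_1 - 1/q_0)}$ when one passes from $\ell_{q_0}$ to $\ell_{q_1}$ summability inside the block).

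\textbf{Main obstacle.} The delicate point is the appearance of the \emph{shifted} exponent $b_i + 1/q_i$ in cases (iv)--(v), which reflects the interaction between the outer $r$-summation over exponentially long blocks and the inner $q$-summation. On the sufficiency side this is handled cleanly by the sandwich into $B^{s, b}_{p, \max/\min\{q, r\}}$ and the known logarithmic Besov criterion, but one must be careful that the endpoint subcases (iii) and (v), where the primary logarithmic exponents agree and the decision passes to $r_0 \le r_1$, are genuinely sharp; here I would need the precise block-length asymptotics $\#\{2^k-1, \dots, 2^{k+1}-2\} \sim 2^k$ and a matching lower bound construction with $a_\nu$ chosen so that only finitely many blocks are "active" with carefully tuned mass, ruling out $r_0 > r_1$. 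I expect the bulk of the work — and the only real subtlety — to be assembling these extremal examples so that each of the five regions is shown to be not just sufficient but necessary, i.e. verifying that violating any one clause breaks \eqref{EmbeddingBB}; the positive direction is essentially immediate from results already available in the excerpt.
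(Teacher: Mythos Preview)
Your sufficiency argument has a genuine gap: the sandwich
\[
B^{s,b}_{p,\min\{q,r\}}(\R^d)\hookrightarrow T^b_rB^s_{p,q}(\R^d)\hookrightarrow B^{s,b}_{p,\max\{q,r\}}(\R^d)
\]
is too lossy to recover cases (ii)--(v). Take case (iii) with $s_0-d/p_0=s_1-d/p_1$, $b_0=b_1$, $q_0=1$, $q_1=2$, $r_0=3$, $r_1=4$. The hypotheses $q_0\le q_1$ and $r_0\le r_1$ hold, so \eqref{EmbeddingBB} should follow. But your sandwich would require
\[
B^{s_0,b_0}_{p_0,\max\{q_0,r_0\}}(\R^d)=B^{s_0,b_0}_{p_0,3}(\R^d)\hookrightarrow B^{s_1,b_1}_{p_1,2}(\R^d)=B^{s_1,b_1}_{p_1,\min\{q_1,r_1\}}(\R^d),
\]
and with $b_0=b_1$ this fails (the classical criterion in Remark \ref{Remark10.4} needs $b_0+\tfrac13>b_1+\tfrac12$). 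The same obstruction breaks case (ii) whenever $b_0-b_1>0$ is small and $r_0$ is large while $r_1$ is small, and similarly case (v). The sandwich collapses the pair $(q,r)$ into a single index $\max\{q,r\}$ or $\min\{q,r\}$, destroying the separate control of inner and outer summability that the theorem actually encodes. (There is also a circularity issue: in the paper Corollary \ref{TheoremEmbeddings1} is \emph{derived from} this theorem, not proved beforehand; but even granting it independently the argument would still fail.)

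The paper's proof avoids this by passing to the wavelet sequence spaces $T^{b_i}_{r_i}b^{s_i}_{p_i,q_i}$ via Theorem \ref{ThmWaveletsNewBesov} and arguing \emph{directly} block by block: for (ii)--(iii) one first changes $(b_0,r_0)\to(b_1,r_1)$ on the outer sum (trivial since $\ell^{b_0}_{r_0}\hookrightarrow\ell^{b_1}_{r_1}$ under the stated conditions) and then $(p_0,q_0)\to(p_1,q_1)$ inside each block (trivial since $s_0-d/p_0=s_1-d/p_1$ and $q_0\le q_1$); for (iv)--(v) H\"older on each block of length $\sim 2^k$ produces the factor $2^{k(1/q_1-1/q_0)}$ that explains the $b+1/q$ shift. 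This two-step route keeps the inner and outer indices decoupled, which is exactly what your sandwich throws away. Your necessity outline via lacunary test functions is fine in spirit and matches what the paper does (albeit in wavelet coordinates); the defect is entirely on the sufficiency side.
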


\begin{proof}
	According to Theorem \ref{ThmWaveletsNewBesov} the validity of \eqref{EmbeddingBB} is equivalent to the  corresponding embedding between related sequence spaces introduced in \eqref{DefBesSeq}, i.e.,
	\begin{equation}\label{ProofEmbeddingsBB1*}
		T^{b_0}_{r_0} b^{s_0}_{p_0,q_0} \hookrightarrow T^{b_1}_{r_1} b^{s_1}_{p_1,q_1}.
	\end{equation}

	\emph{Sufficient conditions} (i): Let $p_0 \leq p_1$ and $s_0 - \frac{d}{p_0} > s_1 - \frac{d}{p_1}$. We will split the embedding \eqref{ProofEmbeddingsBB1*} as (recall \eqref{Deffspaces**})
	\begin{equation}\label{ProofEmbeddingsBB1*nsnan}
	T^{b_0}_{r_0} b^{s_0}_{p_0,q_0} \hookrightarrow b^{s_0, b_0 - (1/r_0 -1/q_0)_+}_{p_0, r_0} \hookrightarrow   b^{s_1, b_1 + (1/q_1-1/r_1)_+}_{p_1, r_1} \hookrightarrow T^{b_1}_{r_1} b^{s_1}_{p_1,q_1}.
	\end{equation}
	
	We start by proving the third embedding in \eqref{ProofEmbeddingsBB1*nsnan}. Let $\lambda = \{\lambda^{j, G}_m\}$ be a sequence given by \eqref{sequence}. For $k \in \N_0$, we claim that
	\begin{align}\label{gagags}
		 \bigg(\sum_{j=2^k-1}^{2^{k+1}-2} 2^{j(s_1-d/p_1) q_1} \sum_{G \in G^j} \Big(\sum_{m \in \mathbb{Z}^d} |\lambda^{j, G}_m|^{p_1} \Big)^{q_1/p_1} \bigg)^{1/q_1} & \lesssim \\
		 &\hspace{-6cm} 2^{k (1/q_1-1/r_1)_+}  \bigg(\sum_{j=2^k-1}^{2^{k+1}-2} 2^{j(s_1-d/p_1) r_1} \sum_{G \in G^j} \Big(\sum_{m \in \mathbb{Z}^d} |\lambda^{j, G}_m|^{p_1} \Big)^{r_1/p_1} \bigg)^{1/r_1}.
	\end{align}
	Indeed, this estimate is a consequence of the embedding $\ell_{r_1} \hookrightarrow \ell_{q_1}$ if $r_1 \leq q_1$ and H\"older's inequality if $r_1 > q_1$. Hence (cf. \eqref{Deffspaces**})
	\begin{align*}
		\|\lambda\|_{T^{b_1}_{r_1} b^{s_1}_{p_1, q_1}}^{r_1} &\lesssim \sum_{k=0}^\infty 2^{k (b_1 + (1/q_1 -1/r_1)_+) r_1}  \sum_{j=2^k-1}^{2^{k+1}-2} 2^{j(s_1-d/p_1) r_1} \sum_{G \in G^j} \Big(\sum_{m \in \mathbb{Z}^d} |\lambda^{j, G}_m|^{p_1} \Big)^{r_1/p_1} \\
		& \asymp \sum_{j=0}^\infty  2^{j(s_1-d/p_1) r_1} (1 + j)^{(b_1 + (1/q_1 -1/r_1)_+) r_1} \sum_{G \in G^j} \Big(\sum_{m \in \mathbb{Z}^d} |\lambda^{j, G}_m|^{p_1} \Big)^{r_1/p_1} \\
		& = \|\lambda\|^{r_1}_{b^{s_1, b_1 + (1/q_1 - 1/r_1)_+}_{p_1, r_1}}.
	\end{align*}

	Next we focus our attention to the first embedding in \eqref{ProofEmbeddingsBB1*nsnan}. Indeed, in the same spirit as \eqref{gagags} we have
	\begin{align*}
		2^{-k (1/r_0-1/q_0)_+} \bigg(\sum_{j=2^k-1}^{2^{k+1}-2} 2^{j(s_0-d/p_0) r_0} \sum_{G \in G^j} \Big(\sum_{m \in \mathbb{Z}^d} |\lambda^{j, G}_m|^{p_0} \Big)^{r_0/p_0} \bigg)^{1/r_0} & \lesssim \\
		 &\hspace{-7cm}   \bigg(\sum_{j=2^k-1}^{2^{k+1}-2} 2^{j(s_0-d/p_0) q_0} \sum_{G \in G^j} \Big(\sum_{m \in \mathbb{Z}^d} |\lambda^{j, G}_m|^{p_0} \Big)^{q_0/p_0} \bigg)^{1/q_0},
	\end{align*}
	which yields
	$$
		\|\lambda\|_{b^{s_0, b_0 -(1/r_0-1/q_0)_+}_{p_0, r_0}} \lesssim \|\lambda\|_{T^{b_0}_{r_0} b^{s_0}_{p_0, q_0}(\R^d)}.
	$$

	The second embedding in \eqref{ProofEmbeddingsBB1*nsnan} is well known, in fact a more general assertion holds true
	\begin{equation}\label{TrivialEmbedding8}
	b^{s_0, \xi}_{p_0, u} \hookrightarrow b^{s_1, \eta}_{p_1, v}
	\end{equation}
	under $p_0 \leq p_1, \, s_0 -d/p_0 > s_1-d/p_1, 0 < u, v \leq \infty$ and $\xi, \eta \in \R$. For the sake of completeness, we provide below the short proof of \eqref{TrivialEmbedding8}.  We have
	\begin{align*}
		\|\lambda\|_{b^{s_1, \eta}_{p_1, v}} & \leq \left(\sum_{j=0}^\infty 2^{j(s_1-d/p_1) v} (1 + j)^{\eta v} \sum_{G \in G^j} \Big(\sum_{m \in \mathbb{Z}^d} |\lambda^{j, G}_m|^{p_0} \Big)^{v/p_0} \right)^{1/v} \\
		& \leq  \bigg(\sum_{j=0}^\infty 2^{j(s_1-d/p_1-s_0+d/p_0) v} (1+j)^{(\eta-\xi) v} \bigg)^{1/v} \|\lambda\|_{b^{s_0, \xi}_{p_0, \infty}}
	\end{align*}
	where the last sum is convergent since $s_0 -d/p_0 > s_1-d/p_1$. Clearly this implies \eqref{TrivialEmbedding8}.

	\vspace{2mm}
	\emph{Sufficient conditions} (ii)--(iii): 	Let $p_0 \leq p_1, \, s_0 - \frac{d}{p_0} = s_1 - \frac{d}{p_1}$ and $q_0 \leq q_1$. 	The desired embedding \eqref{ProofEmbeddingsBB1*} will be achieved via the splitting
	\begin{equation}\label{ProofEmbeddingsBB2}
		T^{b_0}_{r_0} b^{s_0}_{p_0, q_0} \hookrightarrow T^{b_1}_{r_1} b^{s_0}_{p_0, q_0} \hookrightarrow  T^{b_1}_{r_1} b^{s_1}_{p_1, q_1}.
	\end{equation}
	
	The second embedding in \eqref{ProofEmbeddingsBB2} is clear:
	\begin{align*}
		\|\lambda\|_{T^{b_1}_{r_1} b^{s_1}_{p_1,q_1}} &= \bigg(\sum_{k=0}^\infty 2^{k b_1 r_1} \bigg(\sum_{j= 2^k - 1}^{2^{k+1}-2} 2^{j(s_1-\frac{d}{p_1}) q_1} \sum_{G \in G^j} \Big(\sum_{m \in \mathbb{Z}^d} |\lambda^{j, G}_m|^{p_1} \Big)^{\frac{q_1}{p_1}} \bigg)^{\frac{r_1}{q_1}} \bigg)^{\frac{1}{r_1}} \nonumber \\
		& \hspace{-.75cm}\leq \bigg(\sum_{k=0}^\infty 2^{k b_1  r_1} \bigg(\sum_{j= 2^k - 1}^{2^{k+1}-2}  2^{j(s_0-\frac{d}{p_0}) q_0} \sum_{G \in G^j} \Big(\sum_{m \in \mathbb{Z}^d} |\lambda^{j, G}_m|^{p_0} \Big)^{\frac{q_0}{p_0}} \bigg)^{\frac{r_1}{q_0}} \bigg)^{\frac{1}{r_1}} = \|\lambda\|_{T^{b_1}_{r_1} b^{s_0}_{p_0, q_0}}.
	\end{align*}
	On the other hand, the first embedding in \eqref{ProofEmbeddingsBB2} follows immediately from $\ell_{r_0} \hookrightarrow \ell_{r_1}$ if $b_0  \geq b_1$ and $r_0 \leq r_1$ and from H\"older's inequality if
	$
		b_0  > b_1$ and $r_0 > r_1$.

	\vspace{2mm}
	\emph{Sufficient conditions} (iv)-(v): 	Let $p_0 \leq p_1, \, s_0 - \frac{d}{p_0} = s_1 - \frac{d}{p_1}$ and $q_1 < q_0$. For each $k \geq 0$, by H\"older's inequality, we have
	\begin{align*}
		\bigg(\sum_{j=2^k-1}^{2^{k+1}-2} 2^{j(s_1-\frac{d}{p_1}) q_1} \sum_{G \in G^j} \Big(\sum_{m \in \mathbb{Z}^d} |\lambda^{j, G}_m|^{p_1} \Big)^{\frac{q_1}{p_1}} \bigg)^{\frac{1}{q_1}}  & \lesssim \\
		& \hspace{-5cm} 2^{k(\frac{1}{q_1}- \frac{1}{q_0})} \bigg(\sum_{j=2^k-1}^{2^{k+1}-2} 2^{j(s_0-\frac{d}{p_0}) q_0} \sum_{G \in G^j} \Big(\sum_{m \in \mathbb{Z}^d} |\lambda^{j, G}_m|^{p_0} \Big)^{\frac{q_0}{p_0}} \bigg)^{\frac{1}{q_0}}.
	\end{align*}
	Therefore,
	\begin{align*}
	\|\lambda\|_{T^{b_1}_{r_1} b^{s_1}_{p_1,q_1}} &\lesssim \bigg(\sum_{k=0}^\infty 2^{k(b_1+ \frac{1}{q_1} - \frac{1}{q_0}) r_1} \bigg(\sum_{j= 2^k - 1}^{2^{k+1}-2} 2^{j(s_0-\frac{d}{p_0}) q_0} \sum_{G \in G^j} \Big(\sum_{m \in \mathbb{Z}^d} |\lambda^{j, G}_m|^{p_0} \Big)^{\frac{q_0}{p_0}} \bigg)^{\frac{r_1}{q_0}} \bigg)^{\frac{1}{r_1}} \\
	& \lesssim \|\lambda\|_{T^{b_0}_{r_0} b^{s_0}_{p_0,q_0}},
	\end{align*}
	where the last step is a simple application of H\"older's inequality if $r_0 > r_1$ (and thus, $b_0 + \frac{1}{q_0} > b_1 +  \frac{1}{q_1}$) and the fact that $\ell_{r_0} \hookrightarrow \ell_{r_1}$  if $r_0 \leq r_1$.
	
	\vspace{2mm}
	\emph{Necessary condition for $p_0 \leq p_1$}: We will show that \eqref{ProofEmbeddingsBB1*} implies  $p_0 \leq p_1$. Indeed, if $p_0 > p_1$ then we let
		\begin{equation*}
		\lambda^{j,G}_m = \left\{\begin{array}{cl}  l^{-\varepsilon}, & \quad j =0, \quad m= (l, 0, \ldots, 0), \quad l \in \N,  \\
		& \qquad G = (M, \ldots, M),  \\
		0, & \text{otherwise},
		       \end{array}
                        \right.
	\end{equation*}
	where $\frac{1}{p_0} < \varepsilon < \frac{1}{p_1}$. The sequence $\lambda = (\lambda^{j,G}_m)$ satisfies
	$$
		\|\lambda\|_{T^{b_0}_{r_0} b^{s_0}_{p_0,q_0}}  \asymp \bigg(\sum_{l=1}^\infty l^{-\varepsilon p_0} \bigg)^{\frac{1}{p_0}} < \infty
	$$
	but
		$$
		\|\lambda\|_{T^{b_1}_{r_1} b^{s_1}_{p_1,q_1}}  \asymp \bigg(\sum_{l=1}^\infty l^{-\varepsilon p_1} \bigg)^{\frac{1}{p_1}} = \infty.
	$$
	
	\vspace{2mm}
	\emph{Necessary condition for $s_0 - \frac{d}{p_0} \geq s_1 - \frac{d}{p_1}$}: We proceed by contradiction, that is, we assume the validity of \eqref{ProofEmbeddingsBB1*} under the assumption $s_0 - \frac{d}{p_0} < s_1 - \frac{d}{p_1}$. Let
		\begin{equation*}
		\lambda^{j,G}_m = \left\{\begin{array}{cl}  2^{-j \varepsilon}, & \quad j \in \N_0, \quad m= (j, 0, \ldots, 0),  \\
		& \qquad G = (M, \ldots, M),  \\
		0, & \text{otherwise},
		       \end{array}
                        \right.
	\end{equation*}
	where $s_0 - \frac{d}{p_0} < \varepsilon < s_1 - \frac{d}{p_1}$. Then $\lambda = (\lambda^{j,G}_m) \in T^{b_0}_{r_0} b^{s_0}_{p_0,q_0}$ since
	\begin{align*}
		\|\lambda\|_{T^{b_0}_{r_0} b^{s_0}_{p_0,q_0}} &= \bigg(\sum_{k=0}^\infty 2^{k b_0 r_0} \bigg(\sum_{j=2^k-1}^{2^{k+1}-2} 2^{j(s_0-\frac{d}{p_0}-\varepsilon) q_0}  \bigg)^{\frac{r_0}{q_0}} \bigg)^{\frac{1}{r_0}}  \\
		& \asymp \bigg(\sum_{k=0}^\infty 2^{k b_0 r_0}  2^{2^k(s_0-\frac{d}{p_0}-\varepsilon) r_0}  \bigg)^{\frac{1}{r_0}} < \infty.
	\end{align*}
	However, similar computations lead to
	\begin{align*}
		\|\lambda\|_{T^{b_1}_{r_1} b^{s_1}_{p_1,q_1}} \asymp \bigg(\sum_{k=1}^\infty 2^{k b_1 r_1}  2^{2^{k}(s_1-\frac{d}{p_1}-\varepsilon) r_1}  \bigg)^{\frac{1}{r_1}} = \infty.
	\end{align*}
	
	\vspace{2mm}
	\emph{Necessary condition for $b_0 + \frac{1}{q_0} \geq b_1 + \frac{1}{q_1}$ under the assumption $s_0 - \frac{d}{p_0} = s_1 - \frac{d}{p_1}$}: Again we argue by contradiction. We assume that \eqref{ProofEmbeddingsBB1*} holds with $s_0 - \frac{d}{p_0} = s_1 - \frac{d}{p_1}$ and $b_0  + \frac{1}{q_0} < b_1 + \frac{1}{q_1}$. Define
\begin{equation*}
		\lambda^{j,G}_m = \left\{\begin{array}{cl}  2^{-j (s_0 - \frac{d}{p_0} )} j^{-\varepsilon}, & \quad j \in \N_0, \quad m= (j, 0, \ldots, 0),  \\
		& \qquad G = (M, \ldots, M),  \\
		0, & \text{otherwise},
		       \end{array}
                        \right.
	\end{equation*}
	and $b_0  + \frac{1}{q_0}  < \varepsilon < b_1  + \frac{1}{q_1}$. Elementary computations lead to
	\begin{align*}
		\|\lambda\|_{T^{b_0}_{r_0} b^{s_0}_{p_0,q_0}} &=  \bigg(\sum_{k=0}^\infty 2^{k b_0 r_0} \bigg(\sum_{j=2^{k}-1}^{2^{k+1}-2}  (1 + j)^{-\varepsilon q_0}  \bigg)^{\frac{r_0}{q_0}} \bigg)^{\frac{1}{r_0}}  \\
		& \asymp \bigg(\sum_{k=0}^\infty 2^{k (b_0 - \varepsilon + \frac{1}{q_0}) r_0} \bigg)^{\frac{1}{r_0}} < \infty
	\end{align*}
	and
	\begin{align*}
		\|\lambda\|_{T^{b_1}_{r_1} b^{s_1}_{p_1,q_1}} &=  \bigg(\sum_{k=0}^\infty 2^{k b_1 r_1} \bigg(\sum_{j=2^k-1}^{2^{k+1}-2}  (1 + j)^{-\varepsilon q_1}  \bigg)^{\frac{r_1}{q_1}} \bigg)^{\frac{1}{r_1}}  \\
		& \asymp \bigg(\sum_{k=0}^\infty 2^{k (b_1 - \varepsilon + \frac{1}{q_1}) r_1} \bigg)^{\frac{1}{r_1}} = \infty.
	\end{align*}
	
	\vspace{2mm}
	\emph{Necessary condition for $b_0 + \frac{1}{q_0} > b_1 + \frac{1}{q_1}$ under the assumptions $s_0 - \frac{d}{p_0} = s_1 - \frac{d}{p_1}$ and $r_1 < r_0$}: The necessity of $b_0  + \frac{1}{q_0} \geq b_1 + \frac{1}{q_1}$ was already obtained in the previous case. It remains to show that the limiting case $b_0 + \frac{1}{q_0} = b_1  + \frac{1}{q_1}, \, s_0 - \frac{d}{p_0} = s_1 - \frac{d}{p_1}$ and $r_1 < r_0$ is not admissible in \eqref{ProofEmbeddingsBB1*}. To prove this, consider
\begin{equation*}
		\lambda^{j,G}_m = \left\{\begin{array}{cl}  2^{-j (s_0 - \frac{d}{p_0} )} (1 + j)^{-(b_0  + \frac{1}{q_0})} (1 + \log (1+j))^{-\beta}, & \quad j \in \N_0, \quad m= (j, 0, \ldots, 0),  \\
		& \qquad G = (M, \ldots, M),  \\
		0, & \text{otherwise},
		       \end{array}
                        \right.
	\end{equation*}
	and $\frac{1}{r_0} < \beta < \frac{1}{r_1}$. Thus
	\begin{align*}
		\|\lambda\|_{T^{b_0}_{r_0} b^{s_0}_{p_0,q_0}} &=  \bigg(\sum_{k=0}^\infty 2^{k b_0 r_0} \bigg(\sum_{j=2^k-1}^{2^{k+1}-2}   j^{-(b_0 + \frac{1}{q_0}) q_0} (1 + \log (1+ j))^{-\beta q_0}  \bigg)^{\frac{r_0}{q_0}} \bigg)^{\frac{1}{r_0}}  \\
		& \asymp \bigg(\sum_{k=0}^\infty (1 + k)^{-\beta r_0} \bigg)^{\frac{1}{r_0}} < \infty
	\end{align*}
	and, analogously,
	\begin{equation*}
		\|\lambda\|_{T^{b_1}_{r_1} b^{s_1}_{p_1,q_1}}  \asymp   \bigg(\sum_{k=0}^\infty (1+k)^{-\beta r_1} \bigg)^{\frac{1}{r_1}}= \infty.
	\end{equation*}
	
	\vspace{2mm}
	\emph{Necessary condition for $b_0  \geq b_1$ under the assumption $s_0 - \frac{d}{p_0} = s_1 - \frac{d}{p_1}$}: Assume $b_0  < b_1$. Consider the lacunary sequence given by
		\begin{equation*}
		\lambda^{j,G}_m = \left\{\begin{array}{cl}  2^{-2^k (s_0 - \frac{d}{p_0})} 2^{-k \varepsilon}, & \quad j = 2^k, \quad k \in \N_0, \quad m= (j, 0, \ldots, 0),  \\
		& \qquad G = (M, \ldots, M),  \\
		0, & \text{otherwise},
		       \end{array}
                        \right.
	\end{equation*}
	where $b_0  < \varepsilon < b_1$. Therefore
	\begin{equation*}
		\|\lambda\|_{T^{b_0}_{r_0} b^{s_0}_{p_0,q_0}}  \asymp \bigg(\sum_{k=0}^\infty 2^{k(b_0 - \varepsilon) r_0}  \bigg)^{\frac{1}{r_0}} < \infty
	\end{equation*}
	and
	\begin{equation*}
		\|\lambda\|_{T^{b_1}_{r_1}b^{s_1}_{p_1,q_1}}  \asymp \bigg(\sum_{k=0}^\infty 2^{k(b_1 - \varepsilon) r_1}  \bigg)^{\frac{1}{r_1}} = \infty.
	\end{equation*}
	
	\vspace{2mm}
	\emph{Necessary condition for $r_0 \leq r_1$ under the assumptions $b_0  = b_1$ and $s_0 - \frac{d}{p_0} = s_1 - \frac{d}{p_1}$}: Assume that \eqref{ProofEmbeddingsBB1*} holds with $r_0 > r_1$. Let
		\begin{equation*}
		\lambda^{j,G}_m = \left\{\begin{array}{cl}  2^{-2^k (s_0 - \frac{d}{p_0})} 2^{-k b_0} (1 + k)^{-\varepsilon}, & \quad j = 2^k, \quad k \in \N_0, \quad m= (j, 0, \ldots, 0),  \\
		& \qquad G = (M, \ldots, M),  \\
		0, & \text{otherwise},
		       \end{array}
                        \right.
	\end{equation*}
	where $\frac{1}{r_0}  < \varepsilon < \frac{1}{r_1}$. Therefore
	\begin{equation*}
		\|\lambda\|_{T^{b_0}_{r_0} b^{s_0}_{p_0,q_0}}  \asymp \bigg(\sum_{k=0}^\infty (1 + k)^{-\varepsilon r_0}  \bigg)^{\frac{1}{r_0}} < \infty
	\end{equation*}
	and
	\begin{equation*}
		\|\lambda\|_{T^{b_1}_{r_1} b^{s_1}_{p_1,q_1}}  \asymp \bigg(\sum_{k=0}^\infty (1 + k)^{-\varepsilon r_1}  \bigg)^{\frac{1}{r_1}} = \infty.
	\end{equation*}
	This gives the desired contradiction.
\end{proof}

 \subsection{Embeddings between truncated and classical  Besov spaces}\label{SectionTrunBB}
As an immediate consequence of Theorem \ref{TheoremEmbeddingsBBCharacterization} and Proposition \ref{PropositionCoincidences}, we obtain sharp relationships between classical Besov spaces $B^{s, b}_{p, q}(\R^d)$ and the new scale of spaces $T^b_r B^s_{p, q}(\R^d)$.

\begin{cor}\label{CorollaryBNewB}
	Let $0 <  p_0, p_1, q_0, q_1, r_1 \leq \infty, \, s_0, s_1 \in \mathbb{R}$ and $b_0, b_1 \in \R \backslash \{0\}$.  Then
\begin{equation*}
	B^{s_0,b_0}_{p_0,q_0}(\mathbb{R}^d) \hookrightarrow T^{b_1}_{r_1} B^{s_1}_{p_1,q_1}(\mathbb{R}^d)
\end{equation*}
if and only if  $0 < p_0 \leq p_1 \leq \infty$ and one of the following five conditions is satisfied
\begin{enumerate}[\upshape(i)]
	\item $s_0 - \frac{d}{p_0} > s_1 - \frac{d}{p_1}$.
	\item $s_0 - \frac{d}{p_0} = s_1 - \frac{d}{p_1}, \quad 0 < q_0 \leq q_1 \leq \infty$ \quad and \quad  $b_0  > b_1$.
	\item $s_0 - \frac{d}{p_0} = s_1 - \frac{d}{p_1}, \quad 0 < q_0 \leq q_1 \leq \infty, \quad b_0 = b_1$ \quad and \quad $0 < q_0 \leq r_1 \leq \infty$.
	\item $s_0 - \frac{d}{p_0} = s_1 - \frac{d}{p_1}, \quad 0 < q_1 < q_0 \leq \infty$ \quad and \quad $b_0  + \frac{1}{q_0} > b_1  + \frac{1}{q_1}$.
	\item $s_0 - \frac{d}{p_0} = s_1 - \frac{d}{p_1}, \quad 0 < q_1 < q_0 \leq \infty, \quad b_0 + \frac{1}{q_0} = b_1 + \frac{1}{q_1}$ \quad and \quad $0 < q_0 \leq r_1 \leq \infty$.
\end{enumerate}
\end{cor}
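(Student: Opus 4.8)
The plan is to reduce Corollary~\ref{CorollaryBNewB} to the already-proved Theorem~\ref{TheoremEmbeddingsBBCharacterization} by exploiting the coincidence $B^{s,b}_{p,q}(\R^d)=T^b_q B^{s}_{p,q}(\R^d)$ from Proposition~\ref{PropositionCoincidences}(i). Indeed, writing $B^{s_0,b_0}_{p_0,q_0}(\R^d)=T^{b_0}_{q_0}B^{s_0}_{p_0,q_0}(\R^d)$, the embedding in question becomes
$$
	T^{b_0}_{q_0}B^{s_0}_{p_0,q_0}(\R^d)\hookrightarrow T^{b_1}_{r_1}B^{s_1}_{p_1,q_1}(\R^d),
$$
which is exactly an instance of \eqref{EmbeddingBB} with $r_0$ replaced by $q_0$. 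So I would simply substitute $r_0=q_0$ into the five conditions (i)--(v) of Theorem~\ref{TheoremEmbeddingsBBCharacterization} and check that they collapse to the five conditions listed in the corollary. The one point needing a word of care is that $b_0\in\R\setminus\{0\}$ (hypothesis of the corollary), so Proposition~\ref{PropositionCoincidences}(i) applies and the spaces do coincide; the case $b_0=0$ is excluded exactly as in the theorem.

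Carrying out the substitution: condition (i) is identical. In condition (ii) of the theorem, with $r_0=q_0$, the hypothesis reads $s_0-\tfrac{d}{p_0}=s_1-\tfrac{d}{p_1}$, $0<q_0\le q_1\le\infty$, $b_0>b_1$ — literally condition (ii) of the corollary. Condition (iii) of the theorem becomes $b_0=b_1$, $0<q_0\le q_1\le\infty$ and $0<r_0=q_0\le r_1\le\infty$, i.e. $0<q_0\le r_1\le\infty$ — this is condition (iii) of the corollary. Condition (iv) is unchanged ($r_0$ does not appear). Condition (v) of the theorem becomes $b_0+\tfrac{1}{q_0}=b_1+\tfrac{1}{q_1}$, $0<q_1<q_0\le\infty$ and $0<q_0=r_0\le r_1\le\infty$ — this is condition (v) of the corollary. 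Thus the list of necessary and sufficient conditions transfers verbatim.

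There is essentially no obstacle here; the only thing to be slightly attentive about is the role of the parameter $q_0$, which in the corollary plays a double role — it is the fine index of the Besov space $B^{s_0,b_0}_{p_0,q_0}(\R^d)$ \emph{and}, after the identification, it becomes the truncation exponent $r_0$. One should make sure in the write-up that the statements "$0<q_0\le r_1\le\infty$" in (iii) and (v) of the corollary are read correctly as the images of "$0<r_0\le r_1\le\infty$" under this identification, and that no constraint like $b_1\neq0$ is needed on the target space beyond what Theorem~\ref{TheoremEmbeddingsBBCharacterization} already requires (it requires $b_1\in\R\setminus\{0\}$, which is in the hypothesis of the corollary). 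Hence the proof is a one-line invocation of Proposition~\ref{PropositionCoincidences}(i) and Theorem~\ref{TheoremEmbeddingsBBCharacterization}, together with the elementary bookkeeping of the five cases just described.
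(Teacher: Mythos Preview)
Your proposal is correct and matches the paper's approach exactly: the paper introduces this corollary as ``an immediate consequence of Theorem~\ref{TheoremEmbeddingsBBCharacterization} and Proposition~\ref{PropositionCoincidences}'', which is precisely the substitution $r_0=q_0$ you describe. One tiny inaccuracy: Proposition~\ref{PropositionCoincidences}(i) holds for all $b\in\R$, so the restriction $b_0\neq 0$ is not needed for the identification $B^{s_0,b_0}_{p_0,q_0}=T^{b_0}_{q_0}B^{s_0}_{p_0,q_0}$ itself --- it is required only because Theorem~\ref{TheoremEmbeddingsBBCharacterization} assumes $b_0,b_1\in\R\setminus\{0\}$.
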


\begin{cor}\label{Corollary11.4}
	Let $0 <  p_0, p_1, q_0, q_1, r_0 \leq \infty, \, s_0, s_1 \in \mathbb{R}$ and $b_0, b_1 \in \R \backslash \{0\}$.  Then
\begin{equation*}
	T^{b_0}_{r_0} B^{s_0}_{p_0,q_0}(\mathbb{R}^d) \hookrightarrow B^{s_1,b_1}_{p_1,q_1}(\mathbb{R}^d)
\end{equation*}
if and only if $0 < p_0 \leq p_1 \leq \infty$ and one of the following five conditions is satisfied
\begin{enumerate}[\upshape(i)]
	\item $s_0 - \frac{d}{p_0} > s_1 - \frac{d}{p_1}$.
	\item $s_0 - \frac{d}{p_0} = s_1 - \frac{d}{p_1}, \quad 0 < q_0 \leq q_1 \leq \infty$ \quad and \quad  $b_0  > b_1$.
	\item $s_0 - \frac{d}{p_0} = s_1 - \frac{d}{p_1}, \quad 0 < q_0 \leq q_1 \leq \infty, \quad b_0 = b_1$ \quad and \quad $0 < r_0 \leq q_1 \leq \infty$.
	\item $s_0 - \frac{d}{p_0} = s_1 - \frac{d}{p_1}, \quad 0 < q_1 < q_0 \leq \infty$ \quad and \quad $b_0  + \frac{1}{q_0} > b_1  + \frac{1}{q_1}$.
	\item $s_0 - \frac{d}{p_0} = s_1 - \frac{d}{p_1}, \quad 0 < q_1 < q_0 \leq \infty, \quad b_0 + \frac{1}{q_0} = b_1 + \frac{1}{q_1}$ \quad and \quad $0 < r_0 \leq q_1 \leq \infty$.
\end{enumerate}
\end{cor}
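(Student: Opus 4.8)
The plan is to deduce Corollary \ref{Corollary11.4} directly from Theorem \ref{TheoremEmbeddingsBBCharacterization} by means of the coincidence formula for classical Besov spaces. First I would recall Proposition \ref{PropositionCoincidences}(i), which asserts that $B^{s_1, b_1}_{p_1, q_1}(\R^d) = T^{b_1}_{q_1} B^{s_1}_{p_1, q_1}(\R^d)$ with equivalence of quasi-norms, valid for all $p_1, q_1 \in (0, \infty]$ and $s_1, b_1 \in \R$. Consequently, the embedding
$$
T^{b_0}_{r_0} B^{s_0}_{p_0, q_0}(\R^d) \hookrightarrow B^{s_1, b_1}_{p_1, q_1}(\R^d)
$$
holds if and only if
$$
T^{b_0}_{r_0} B^{s_0}_{p_0, q_0}(\R^d) \hookrightarrow T^{b_1}_{q_1} B^{s_1}_{p_1, q_1}(\R^d).
$$

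Second, since by hypothesis $b_0, b_1 \in \R \setminus \{0\}$, Theorem \ref{TheoremEmbeddingsBBCharacterization} applies verbatim to the right-hand embedding, with the role of its secondary summability parameter (there denoted $r_1$) played here by $q_1$. Specializing the five conditions (i)--(v) of that theorem through the substitution $r_1 = q_1$ then reproduces exactly the five conditions listed in Corollary \ref{Corollary11.4}: conditions (i), (ii) and (iv) do not involve $r_1$ and are unchanged, while the constraints ``$0 < r_0 \le r_1 \le \infty$'' appearing in (iii) and (v) become ``$0 < r_0 \le q_1 \le \infty$''. This finishes the proof.

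I do not expect any genuine obstacle here: all of the analytic content---the sufficiency, proved through the wavelet isomorphism of Theorem \ref{ThmWaveletsNewBesov} together with the sequence-space splittings, and the necessity, established by the explicit lacunary test sequences---is already carried out in the proof of Theorem \ref{TheoremEmbeddingsBBCharacterization}. The only point worth flagging is that the reduction is restricted to $b_1 \neq 0$, as in the statement of the corollary, since the value $b_1 = 0$ falls outside the scope of Theorem \ref{TheoremEmbeddingsBBCharacterization}; the truncated spaces with vanishing logarithmic exponent behave differently (compare Remark \ref{Remark36}) and would require a separate treatment.
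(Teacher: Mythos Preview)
Your proposal is correct and follows exactly the paper's own approach: the corollary is stated as an immediate consequence of Theorem \ref{TheoremEmbeddingsBBCharacterization} and Proposition \ref{PropositionCoincidences}, obtained by identifying $B^{s_1,b_1}_{p_1,q_1}(\R^d) = T^{b_1}_{q_1} B^{s_1}_{p_1,q_1}(\R^d)$ and specializing $r_1 = q_1$.
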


\begin{rem}\label{Remark10.4}
	Specializing Corollary \ref{CorollaryBNewB} with $q_1=r_1$ and applying Proposition \ref{PropositionCoincidences}, we recover the well-known characterization for embeddings between Besov spaces of logarithmic smoothness (cf. \cite{Leopold} and \cite[Section 6]{DominguezTikhonov}). More precisely, let $0 < p_i, q_i \leq \infty, \, s_i, b_i \in \R$ for $i= 0, 1$, then
	$$
		B^{s_0, b_0}_{p_0, q_0}(\R^d) \hookrightarrow B^{s_1, b_1}_{p_1, q_1}(\R^d)
	$$
	if and only if $0 < p_0 \leq p_1 \leq \infty$ and one of the following three conditions is satisfied
	\begin{enumerate}[\upshape(i)]
		\item $s_0 - \frac{d}{p_0} > s_1 - \frac{d}{p_1}$.
		\item $s_0 - \frac{d}{p_0} = s_1 - \frac{d}{p_1}, \quad 0 < q_0 \leq q_1 \leq \infty$ \quad and \quad  $b_0 \geq b_1$.
		\item $s_0 - \frac{d}{p_0} = s_1 - \frac{d}{p_1}, \quad 0 < q_1 < q_0 \leq \infty$ \quad and \quad  $b_0 + \frac{1}{q_0} > b_1 + \frac{1}{q_1}$.
	\end{enumerate}

\end{rem}

Writing down Corollaries \ref{CorollaryBNewB} and \ref{Corollary11.4} in the special case $p_0 = p_1$ and $s_0 = s_1$, we establish the following embedding result.

\begin{cor}\label{TheoremEmbeddings1}
	   Let $0 <  p, q, r \leq \infty, s \in \mathbb{R}$ and $b \in \R \backslash \{0\}$. Then
	       \begin{equation}\label{TheoremEmbeddings2}
        B^{s, b-1/r+ 1/\min\{q,r\}}_{p,r}(\mathbb{R}^d) \hookrightarrow
        T^b_r B^s_{p, q}(\mathbb{R}^d) \hookrightarrow
       	B^{s, b-1/r+ 1/\max\{q,r\}}_{p,r}(\mathbb{R}^d)
    \end{equation}
    and
        \begin{equation}\label{TheoremEmbeddings3}
        B^{s, b}_{p,\min\{q,r\}}(\mathbb{R}^d) \hookrightarrow
        T^b_r B^s_{p, q}(\mathbb{R}^d) \hookrightarrow
       	B^{s, b}_{p,\max\{q, r\}}(\mathbb{R}^d).
    \end{equation}
\end{cor}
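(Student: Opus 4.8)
The plan is to derive both \eqref{TheoremEmbeddings2} and \eqref{TheoremEmbeddings3} as immediate consequences of the two-sided embedding characterizations in Corollaries \ref{CorollaryBNewB} and \ref{Corollary11.4}, by specializing the integrability and smoothness parameters there to $p_0 = p_1 = p$ and $s_0 = s_1 = s$. With these choices the Sobolev-type inequality $s_0 - d/p_0 \geq s_1 - d/p_1$ is automatically an equality, so only the conditions (ii)--(v) of those corollaries can ever apply, and verifying them reduces to elementary inequalities among the remaining parameters. Throughout I will treat the regimes $q \geq r$ and $q < r$ separately, since the position of the $\min$ and $\max$ (hence the size of the logarithmic shift) changes between them.

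For the left-hand embedding in \eqref{TheoremEmbeddings2} I apply Corollary \ref{CorollaryBNewB} with $(b_0,q_0) = (b - 1/r + 1/\min\{q,r\},\, r)$ on the source side and $(b_1,q_1,r_1) = (b,q,r)$ on the target side. If $q \geq r$, then $b_0 = b$, $q_0 = r \leq q = q_1$, $b_0 = b_1$ and $q_0 = r \leq r_1 = r$, which is precisely condition (iii). If $q < r$, then $q_1 = q < r = q_0$, and one computes $b_0 + 1/q_0 = b + 1/q = b_1 + 1/q_1$ together with $q_0 = r \leq r_1 = r$, which is condition (v). For the right-hand embedding in \eqref{TheoremEmbeddings2} I apply Corollary \ref{Corollary11.4} with $(b_0,q_0,r_0) = (b,q,r)$ and $(b_1,q_1) = (b - 1/r + 1/\max\{q,r\},\, r)$: for $q \geq r$ one has $q_1 = r \leq q = q_0$, $b_0 + 1/q_0 = b + 1/q = b_1 + 1/q_1$ and $r_0 = r \leq q_1 = r$ (condition (v)); for $q < r$ one has $b_1 = b = b_0$, $q_0 = q \leq r = q_1$ and $r_0 = r \leq q_1 = r$ (condition (iii)).

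The embeddings in \eqref{TheoremEmbeddings3} follow the same pattern and are even simpler, the logarithmic smoothness being $b$ on both sides. The left-hand embedding comes from Corollary \ref{CorollaryBNewB} with $(b_0,q_0) = (b,\min\{q,r\})$ and $(b_1,q_1,r_1) = (b,q,r)$, where $q_0 = \min\{q,r\} \leq q = q_1$, $b_0 = b_1$ and $q_0 = \min\{q,r\} \leq r = r_1$ (condition (iii)); the right-hand embedding comes from Corollary \ref{Corollary11.4} with $(b_0,q_0,r_0) = (b,q,r)$ and $(b_1,q_1) = (b,\max\{q,r\})$, where $q_0 = q \leq \max\{q,r\} = q_1$, $b_0 = b_1$ and $r_0 = r \leq \max\{q,r\} = q_1$ (again condition (iii)).

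The only genuine subtlety — and the one place where the cited corollaries are not literally applicable — is the borderline situation in which one of the classical Besov spaces occurring in \eqref{TheoremEmbeddings2} has vanishing logarithmic smoothness, i.e. when $b - 1/r + 1/\min\{q,r\} = 0$ or $b - 1/r + 1/\max\{q,r\} = 0$, since the hypotheses of Corollaries \ref{CorollaryBNewB} and \ref{Corollary11.4} exclude zero logarithmic exponents. In these cases I will argue directly: grouping the dyadic indices $\nu$ into the blocks $\{2^j-1,\dots,2^{j+1}-2\}$, each of which has $\asymp 2^j$ elements, and applying H\"older's inequality (or the embedding $\ell_u \hookrightarrow \ell_v$ for $u \leq v$) within each block — the same elementary mechanism that produces the shifts $1/\min\{q,r\}$ and $1/\max\{q,r\}$ in the first place — yields the stated inclusions with no restriction on $b$. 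Apart from this, the proof is pure bookkeeping; the only thing one must keep straight is which of $\min$ and $\max$ appears on which side in each of the regimes $q \geq r$ and $q < r$, and there is no analytic obstacle.
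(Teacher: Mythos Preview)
Your proposal is correct and follows essentially the same approach as the paper: the paper simply states that Corollary \ref{TheoremEmbeddings1} is obtained by writing down Corollaries \ref{CorollaryBNewB} and \ref{Corollary11.4} in the special case $p_0=p_1$ and $s_0=s_1$, without any further detail. Your explicit case analysis (conditions (iii) versus (v) according to the sign of $q-r$) and your observation about the borderline case of vanishing logarithmic exponent are correct elaborations that the paper omits.
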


\begin{rem}
	All the parameters in Corollary \ref{TheoremEmbeddings1} are sharp.
\end{rem}

\newpage

\section{Embeddings between truncated Besov and Triebel--Lizorkin spaces
with fixed integrability parameter
}

\begin{thm}\label{TheoremFB1new}
	Let $0 < p < \infty, 0 < q_i, r_i \leq \infty, s_i \in \R$, and $b_i \in \R\backslash\{0\}$ for $i=0, 1$. Then
	\begin{equation}\label{TheoremFB1newSta}
		T^{b_0}_{r_0} F^{s_0}_{p, q_0}(\R^d) \hookrightarrow T^{b_1}_{r_1} B^{s_1}_{p, q_1}(\R^d)
	\end{equation}
	if and only if one of the following conditions is satisfied
	\begin{enumerate}[\upshape(i)]
		\item $s_0 > s_1$.
		\item $s_0 = s_1, \quad q_1 \geq \max\{p, q_0\}$, \qquad and \qquad $b_0 > b_1$.
		\item $s_0=s_1, \quad q_1 \geq \max\{p, q_0\}, \quad b_0 = b_1$, \qquad and \qquad $r_0 \leq r_1$.
		\item $s_0 = s_1, \quad q_1 < \max \{p, q_0\},$ \qquad and \qquad $b_0 + \frac{1}{\max\{p, q_0\}} > b_1 + \frac{1}{q_1}$.
		\item $s_0=s_1, \quad q_1 < \max \{p, q_0\}, \quad b_0 + \frac{1}{\max\{p, q_0\}} = b_1 + \frac{1}{q_1}$, \qquad and \qquad $r_0 \leq r_1$.
	\end{enumerate}
\end{thm}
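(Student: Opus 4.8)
The plan is to reduce \eqref{TheoremFB1newSta} to an embedding between truncated Besov spaces, to which Theorem \ref{TheoremEmbeddingsBBCharacterization} applies with $p_0=p_1=p$. The key observation is the two-sided sandwich of a truncated Triebel--Lizorkin space by truncated Besov spaces furnished by \eqref{PropElem1*} (Proposition \ref{PropositionElementary}(iii)):
$$
	T^{b_0}_{r_0} B^{s_0}_{p, \min\{p, q_0\}}(\R^d) \hookrightarrow T^{b_0}_{r_0} F^{s_0}_{p, q_0}(\R^d) \hookrightarrow T^{b_0}_{r_0} B^{s_0}_{p, \max\{p, q_0\}}(\R^d).
$$
Thus \eqref{TheoremFB1newSta} holds if $T^{b_0}_{r_0} B^{s_0}_{p, \max\{p, q_0\}}(\R^d) \hookrightarrow T^{b_1}_{r_1} B^{s_1}_{p, q_1}(\R^d)$, and conversely \eqref{TheoremFB1newSta} implies $T^{b_0}_{r_0} B^{s_0}_{p, \min\{p, q_0\}}(\R^d) \hookrightarrow T^{b_1}_{r_1} B^{s_1}_{p, q_1}(\R^d)$. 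The heart of the matter is to show that for the purpose of characterizing \eqref{TheoremFB1newSta} one may always replace $q_0$ by $\widetilde{q}_0 := \max\{p, q_0\}$ in the $F$-space — i.e. that the \emph{sufficient} condition coming from the upper Besov bound and the \emph{necessary} condition coming from the lower Besov bound coincide once the parameters are expressed through $\widetilde{q}_0$.

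\textbf{Sufficiency.} First I would apply Theorem \ref{TheoremEmbeddingsBBCharacterization} (with $p_0=p_1=p$, so $s_0-d/p \gtrless s_1-d/p$ becomes $s_0\gtrless s_1$) to the embedding $T^{b_0}_{r_0} B^{s_0}_{p, \widetilde{q}_0}(\R^d) \hookrightarrow T^{b_1}_{r_1} B^{s_1}_{p, q_1}(\R^d)$, where $\widetilde{q}_0 = \max\{p, q_0\}$. Condition (i) $s_0>s_1$ is immediate. When $s_0=s_1$, Theorem \ref{TheoremEmbeddingsBBCharacterization} splits according to whether $\widetilde{q}_0 \le q_1$ or $\widetilde{q}_0 > q_1$: the case $\widetilde{q}_0 \le q_1$, i.e. $q_1 \ge \max\{p,q_0\}$, gives cases (ii)--(iii) (with $b_0>b_1$, or $b_0=b_1$ and $r_0\le r_1$); the case $\widetilde{q}_0 > q_1$, i.e. $q_1 < \max\{p,q_0\}$, gives cases (iv)--(v) with the shift condition $b_0 + \tfrac{1}{\widetilde{q}_0} \gtrless b_1 + \tfrac{1}{q_1}$ (plus $r_0 \le r_1$ in the borderline). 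This matches (i)--(v) exactly, so each of the five listed conditions is sufficient.

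\textbf{Necessity.} Conversely, assuming \eqref{TheoremFB1newSta}, I would pass to the lower Besov bound to get $T^{b_0}_{r_0} B^{s_0}_{p, \underline{q}_0}(\R^d) \hookrightarrow T^{b_1}_{r_1} B^{s_1}_{p, q_1}(\R^d)$ with $\underline{q}_0=\min\{p,q_0\}$. Theorem \ref{TheoremEmbeddingsBBCharacterization} then forces $s_0 \ge s_1$, and if $s_0=s_1$ it forces $b_0+\tfrac{1}{\underline{q}_0} \ge b_1+\tfrac{1}{q_1}$ in the relevant range, etc. — but these are weaker than (ii)--(v). The genuine work is to upgrade $\underline{q}_0$ back to $\widetilde{q}_0$, i.e. to rule out the gap where the lower-bound test is satisfied but (i)--(v) fails. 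This I expect to be the main obstacle, and I would handle it with explicit lacunary test sequences tailored to $F$-spaces, analogous to those in the proof of Theorem \ref{TheoremEmbeddingsBBCharacterization}: via the wavelet characterization (Theorem \ref{ThmWaveletsNewTriebelLizorkin}) one computes $\|\cdot\|_{T^{b_0}_{r_0} f^{s_0}_{p, q_0}}$ and $\|\cdot\|_{T^{b_1}_{r_1} b^{s_1}_{p, q_1}}$ on sequences concentrated, for a given level $j$, on a block of $\sim 2^{jd}$ cubes $Q_{j,m}$ with equal coefficients — this block structure produces the factor $2^{j d/\widetilde{q}_0}$ (rather than $2^{jd/q_0}$) in the $F$-norm because the $L_p$ and $\ell_{q_0}$ norms compose through the cube count, thereby exposing $\widetilde{q}_0=\max\{p,q_0\}$ as the effective secondary parameter. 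Choosing the profile of the coefficients across $j$ (pure power $2^{-j\varepsilon}$, logarithmic power $j^{-\varepsilon}$, or lacunary in $k$ with $j=2^k$) as in the six "necessary condition" subcases of Theorem \ref{TheoremEmbeddingsBBCharacterization} then yields, in each case, a sequence lying in the left-hand space but not the right-hand one whenever the corresponding inequality among (i)--(v) is violated. Finally I would remark that, as in Theorem \ref{TheoremEmbeddingsBBCharacterization}, the borderline variants ($b_0=b_1$ with $r_0>r_1$, or $b_0+\tfrac{1}{\widetilde q_0}=b_1+\tfrac{1}{q_1}$ with $r_0>r_1$) are excluded by an extra logarithmic factor $(1+\log(1+j))^{-\beta}$ with $1/r_0<\beta<1/r_1$.
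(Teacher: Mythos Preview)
Your sufficiency argument is exactly what the paper does: use the upper bound $T^{b_0}_{r_0}F^{s_0}_{p,q_0}\hookrightarrow T^{b_0}_{r_0}B^{s_0}_{p,\max\{p,q_0\}}$ from \eqref{PropElem1*} and then invoke Theorem~\ref{TheoremEmbeddingsBBCharacterization} with $p_0=p_1=p$ and inner index $\widetilde q_0=\max\{p,q_0\}$. Nothing to add there.

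The necessity part has a real gap. Your single test (``a block of $\sim 2^{jd}$ cubes $Q_{j,m}$ with equal coefficients'') does not by itself expose $\widetilde q_0=\max\{p,q_0\}$. If the cubes at each level are spread out (say, tiling a fixed region), then at each point only one cube per level contributes and the inner $\ell_{q_0}$-sum decouples from the $L_p$-norm; the block of the $F$-norm is $\big(\sum_j 2^{js q_0}c_j^{q_0}\big)^{1/q_0}$ and you only recover the $1/q_0$ constraint. If instead all coefficients sit at a single level, the $F$- and $B$-blocks coincide and no secondary index appears at all. Neither variant produces the factor $2^{jd/\widetilde q_0}$ you claim.

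What the paper actually does is run \emph{two separate} families of test functions and combine the resulting constraints. First, for $s_0=s_1$, sequences with $\lambda^{j,G}_m=2^{-j(s-d/p)}$ for $j\in\{2^N-1,\dots,2^{N+1}-2\}$ and $m=(0,\dots,0)$ (nested cubes at the origin): the pointwise identity
\[
\Big(\sum_{j=2^N-1}^{2^{N+1}-2} 2^{jdq_0/p}\chi_{j,0}(x)\Big)^{1/q_0}\asymp\Big(\sum_{j=2^N-1}^{2^{N+1}-2} 2^{jd}\chi_{j,0}(x)\Big)^{1/p}
\]
(the sum is dominated by the smallest cube containing $x$) forces the $F$-block to scale like $2^{N(b_0+1/p)}$, yielding the necessary condition $b_0+\tfrac1p\ge b_1+\tfrac1{q_1}$. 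Second, lacunary Fourier series $f\sim\sum_\nu b_\nu e^{i(2^\nu-2)x_1}\psi$ reduce both truncated norms to weighted scalar sums and yield $b_0+\tfrac1{q_0}\ge b_1+\tfrac1{q_1}$. Together these give $b_0+\min\{\tfrac1p,\tfrac1{q_0}\}=b_0+\tfrac1{\max\{p,q_0\}}\ge b_1+\tfrac1{q_1}$. The borderline $r_0\le r_1$ cases are then handled by inserting the logarithmic factor exactly as you suggest, but again separately for each of the two constructions. The remaining conditions ($s_0\ge s_1$; $b_0\ge b_1$; $r_0\le r_1$ when $b_0=b_1$) come from the single-atom sequences $\lambda^{2^k,G}_0=\beta_k$, which is what your lower-Besov-bound argument already captures.
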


\begin{thm}\label{TheoremFB2new}
	Let $0 < p < \infty, 0 < q_i, r_i \leq \infty, s_i \in \R$, and $b_i \in \R \backslash \{0\}$ for $i=0, 1$. Then
	$$
		T^{b_0}_{r_0} B^{s_0}_{p, q_0}(\R^d) \hookrightarrow T^{b_1}_{r_1} F^{s_1}_{p, q_1}(\R^d)
	$$
	if and only if one of the following conditions is satisfied
	\begin{enumerate}[\upshape(i)]
	\item $s_0 > s_1$.
		\item $s_0 = s_1, \quad q_0 \leq \min\{p, q_1\}$, \qquad and \qquad $b_0 > b_1$.
		\item $s_0=s_1, \quad q_0 \leq \min\{p, q_1\}, \quad b_0 = b_1$, \qquad and \qquad $r_0 \leq r_1$.
		\item $s_0 = s_1, \quad q_0 > \min \{p, q_1\},$ \qquad and \qquad $b_0 + \frac{1}{q_0} > b_1 + \frac{1}{\min\{p, q_1\}}$.
		\item $s_0=s_1, \quad q_0 > \min \{p, q_1\}, \quad b_0 + \frac{1}{q_0} = b_1 + \frac{1}{\min\{p, q_1\}}$, \qquad and \qquad $r_0 \leq r_1$.
	\end{enumerate}
\end{thm}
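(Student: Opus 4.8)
\textbf{Proof proposal for Theorem \ref{TheoremFB2new}.}

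The plan is to reduce the embedding between truncated Besov and truncated Triebel--Lizorkin spaces to the already-established embedding theorem for truncated Besov spaces (Theorem \ref{TheoremEmbeddingsBBCharacterization}), using as a bridge the elementary inclusions between $B$- and $F$-type truncated scales proved in Proposition \ref{PropositionElementary}(iii), namely
$$
	T^b_r B^{s}_{p,\min\{p, q\}}(\R^d)	\hookrightarrow T^b_r F^{s}_{p, q}(\R^d) \hookrightarrow T^b_r B^{s}_{p,\max\{p, q\}}(\R^d).
$$
These two one-sided inclusions are exactly the mechanism by which the ``$q_1$'' on the $F$-side gets replaced by $\min\{p,q_1\}$: the second fine parameter $q_1$ of $T^{b_1}_{r_1}F^{s_1}_{p,q_1}$ always enters the sufficiency argument only through its effect in $T^{b_1}_{r_1}B^{s_1}_{p,\min\{p,q_1\}}$.

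\emph{Sufficiency.} Assume one of the conditions (i)--(v) holds. By the right-hand embedding above (applied with the parameters $s_1,b_1,r_1,q_1$) it suffices to show
$$
	T^{b_0}_{r_0} B^{s_0}_{p, q_0}(\R^d) \hookrightarrow T^{b_1}_{r_1} B^{s_1}_{p, \min\{p, q_1\}}(\R^d),
$$
and for this I would simply invoke Theorem \ref{TheoremEmbeddingsBBCharacterization} with $p_0 = p_1 = p$, second parameters $q_0$ and $\min\{p,q_1\}$, and check that each of the five conditions of the present theorem translates into one of the five conditions of that theorem. When $q_0 \le \min\{p,q_1\}$ (cases (ii)--(iii)) we land in cases (ii)--(iii) there; when $q_0 > \min\{p,q_1\}$ (cases (iv)--(v)) the relevant balance becomes $b_0 + 1/q_0$ versus $b_1 + 1/\min\{p,q_1\}$, matching cases (iv)--(v) there; the smoothness case $s_0 > s_1$ is condition (i) on both sides since $s_i - d/p = s_i - d/p$. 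This is a routine bookkeeping step.

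\emph{Necessity.} Here I would combine the two one-sided embeddings in the opposite direction: if the stated embedding holds, then chaining with $T^{b_0}_{r_0}B^{s_0}_{p,\min\{p,q_0\}} \hookrightarrow T^{b_0}_{r_0}F^{s_0}_{p,q_0}$ on the left and $T^{b_1}_{r_1}F^{s_1}_{p,q_1}\hookrightarrow T^{b_1}_{r_1}B^{s_1}_{p,\max\{p,q_1\}}$ does \emph{not} immediately give the sharp necessary conditions, so instead I would argue directly with wavelet (or lacunary Fourier) test sequences exactly as in the proof of Theorem \ref{TheoremEmbeddingsBBCharacterization}, exploiting the wavelet characterization (Theorem \ref{ThmWaveletsNewTriebelLizorkin}, and Theorem \ref{ThmWaveletsNewBesov}) to pass to sequence spaces $T^{b_0}_{r_0}b^{s_0}_{p,q_0}$ and $T^{b_1}_{r_1}f^{s_1}_{p,q_1}$. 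The key point is that for the concrete test sequences used in that proof — single-block sequences $\lambda^{j,G}_m = 2^{-j\varepsilon}$ supported at one $m$ per level $j$, and lacunary analogues supported only at levels $j = 2^k$ — the $F$-sequence-space norm collapses: because each dyadic cube carries at most one nonzero coefficient, the $\ell_q$-sum over $G,m$ and the $L_p(\R^d)$-integration decouple and one computes $\|\lambda\|_{T^{b_1}_{r_1}f^{s_1}_{p,q_1}} \asymp \|\lambda\|_{T^{b_1}_{r_1}b^{s_1}_{p,q_1}}$ when the support has one point per level, while for several points per level at a fixed $j$ one gets the $\min\{p,q_1\}$ exponent. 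So I would recycle the test sequences from Theorem \ref{TheoremEmbeddingsBBCharacterization} verbatim (they all have one nonzero coefficient per level), obtaining $s_0 \ge s_1$, then $b_0 + 1/q_0 \ge b_1 + 1/\min\{p,q_1\}$, then strictness and the $r_0 \le r_1$ limiting conditions; for the borderline $q_0 \le \min\{p,q_1\}$ versus $q_0 > \min\{p,q_1\}$ distinction I would use a multi-point test sequence at each level to detect the $\min\{p,q_1\}$.

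\emph{Main obstacle.} The genuinely delicate part is the necessity of the refined balance condition distinguishing $b_0 > b_1$ (when $q_0 \le \min\{p,q_1\}$) from $b_0 + 1/q_0 > b_1 + 1/\min\{p,q_1\}$ (when $q_0 > \min\{p,q_1\}$): this requires choosing a test sequence spread over \emph{many} cubes at each dyadic level $j$ so that the $L_p$--$\ell_{q_1}$ interaction on the $F$-side produces precisely the exponent $\min\{p,q_1\}$ rather than $q_1$, and then tuning the decay rate $\varepsilon$ in the gap $(b_1 + 1/\min\{p,q_1\},\, b_0 + 1/q_0)$. One must be careful that spreading the mass does not inflate the $B$-side norm, which forces the mass at level $j$ to be distributed over roughly $2^{jd}$ cubes with the normalization dictated by $\|\{\lambda^{j,G}_m\}_m\|_{\ell_p} = \|\{\lambda^{j,G}_m\}_m\|_{\ell_{q_1}}$-type balancing. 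I expect this to be the only step requiring real computation; everything else follows the template already laid down for Theorem \ref{TheoremEmbeddingsBBCharacterization} together with Proposition \ref{PropositionElementary}(iii).
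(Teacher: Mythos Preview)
Your overall plan is correct and is precisely the template the paper indicates (``the proof follows the same lines as the proof of Theorem \ref{TheoremFB1new}''): sufficiency via Proposition \ref{PropositionElementary}(iii) composed with Theorem \ref{TheoremEmbeddingsBBCharacterization}, and necessity via the wavelet/lacunary test sequences of Theorems \ref{TheoremEmbeddingsBBCharacterization} and \ref{TheoremFB1new}. Two corrections are in order.

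First, a slip: in the sufficiency step you invoke the ``right-hand embedding'', but to reduce $T^{b_0}_{r_0}B^{s_0}_{p,q_0}\hookrightarrow T^{b_1}_{r_1}F^{s_1}_{p,q_1}$ to $T^{b_0}_{r_0}B^{s_0}_{p,q_0}\hookrightarrow T^{b_1}_{r_1}B^{s_1}_{p,\min\{p,q_1\}}$ you need the \emph{left}-hand embedding $T^{b_1}_{r_1}B^{s_1}_{p,\min\{p,q_1\}}\hookrightarrow T^{b_1}_{r_1}F^{s_1}_{p,q_1}$.

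Second, your diagnosis of the ``main obstacle'' is off. You propose spreading mass over many cubes at each level to force the $L_p$--$\ell_{q_1}$ interaction to produce $\min\{p,q_1\}$, but if the supports are disjoint across levels the $f$-norm and $b$-norm coincide and neither exponent is detected. The paper (in the proof of Theorem \ref{TheoremFB1new}) uses instead a \emph{single} coefficient per level, all at $m=(0,\ldots,0)$, for $j$ ranging over a dyadic block $\{2^N-1,\ldots,2^{N+1}-2\}$. Because the cubes $Q_{j,0}$ are \emph{nested}, the computation of $\|(\sum_j 2^{jdq_1/p}\chi_{j,0})^{1/q_1}\|_{L_p}$ telescopes (see \eqref{ProofFJGen4}) and yields $\asymp 2^{N/p}$ \emph{independently of $q_1$}, giving the necessary condition $b_0+1/q_0\ge b_1+1/p$. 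The lacunary Fourier series (where the $F$-norm reduces to a plain $\ell_{q_1}$-sum) separately gives $b_0+1/q_0\ge b_1+1/q_1$. Combining the two yields $b_0+1/q_0\ge b_1+1/\min\{p,q_1\}$; no multi-cube construction is needed.
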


\begin{proof}[Proof of Theorem \ref{TheoremFB1new}]
	\emph{Sufficient conditions:} We claim that the following embedding is true
	\begin{equation}\label{ProofTheoremFB1new1}
		T^b_r F^s_{p, q}(\R^d) \hookrightarrow T^b_r B^{s}_{p, \max\{p, q\}}(\R^d).
	\end{equation}
	Indeed, using that $\ell_q \hookrightarrow \ell_p$ if $q \leq p$ and Minkowski's inequality if $q > p$, we have
	$$
		\bigg(\sum_{\nu=2^j-1}^{2^{j+1}-2} 2^{\nu s \max\{p, q\}}  \|(\varphi_\nu \widehat{f})^\vee\|^{\max\{p, q\}}_{L_p(\mathbb{R}^d)} \bigg)^{1/\max\{p, q\}} \lesssim \bigg\| \bigg(\sum_{\nu=2^j-1}^{2^{j+1}-2} 2^{\nu s q} |(\varphi_\nu \widehat{f})^\vee|^q \bigg)^{1/q} \bigg\|_{L_p(\R^d)}
	$$
	for any $j \in \N_0$. Consequently, \eqref{ProofTheoremFB1new1} is obtained.
	
	Under one of the conditions stated in (i)--(v), it follows from Theorem \ref{TheoremEmbeddingsBBCharacterization} (with $p_0=p_1=p$) that
	\begin{equation}\label{ProofTheoremFB1new2}
		T^{b_0}_{r_0} B^{s_0}_{p, \max\{p, q_0\}}(\R^d) \hookrightarrow T^{b_1}_{r_1}B^{s_1}_{p, q_1}(\R^d).
	\end{equation}
	
	Combining \eqref{ProofTheoremFB1new1} and \eqref{ProofTheoremFB1new2}, we achieve  $T^{b_0}_{r_0}F^{s_0}_{p, q_0}(\R^d) \hookrightarrow T^{b_1}_{r_1}B^{s_1}_{p, q_1}(\R^d).$
	
	\emph{Necessary conditions $s_0 \geq s_1$; $b_0 \geq b_1$ under $s_0 = s_1$; $r_0 \leq r_1$ under $s_0 = s_1$ and $b_0 = b_1$:} Assume \eqref{TheoremFB1newSta} holds, or equivalently (cf. Theorems \ref{ThmWaveletsNewBesov} and \ref{ThmWaveletsNewTriebelLizorkin}),
	\begin{equation}\label{ProofTheoremFB1new3}
		T^{b_0}_{r_0}f^{s_0}_{p, q_0} \hookrightarrow T^{b_1}_{r_1}b^{s_1}_{p, q_1}.
	\end{equation}
	Let $\beta = (\beta_k)_{k \in \N_0}$ be a scalar-valued sequence and define $\lambda = (\lambda^{j, G}_m)$ by
		\begin{equation*}
		\lambda^{j,G}_{m} = \left\{\begin{array}{cl}  \beta_k, & \quad j = 2^k, \quad m= (0, \ldots, 0),  \\
		& \qquad G = (M, \ldots, M),  \\
		0, & \text{otherwise}.
		       \end{array}
                        \right.
	\end{equation*}
	Simple computations show
	$$
		\|\lambda\|_{T^{b_1}_{r_1} b^{s_1}_{p, q_1}} \asymp \bigg(\sum_{k=0}^\infty 2^{k b_1 r_1} 2^{2^k (s_1-d/p) r_1} |\beta_k|^{r_1} \bigg)^{1/r_1}
	$$
	and
	$$
		\|\lambda\|_{T^{b_0}_{r_0}f^{s_0}_{p, q_0}} \asymp  \bigg(\sum_{k=0}^\infty 2^{k b_0 r_0} 2^{2^k (s_0-d/p) r_0} |\beta_k|^{r_0} \bigg)^{1/r_0},
	$$
	thus \eqref{ProofTheoremFB1new3} implies
	\begin{equation}\label{ProofTheoremFB1new4}
		\bigg(\sum_{k=0}^\infty 2^{k b_1 r_1} 2^{2^k (s_1-d/p) r_1} |\beta_k|^{r_1} \bigg)^{1/r_1} \lesssim \bigg(\sum_{k=0}^\infty 2^{k b_0 r_0} 2^{2^k (s_0-d/p) r_0} |\beta_k|^{r_0} \bigg)^{1/r_0}.
	\end{equation}
	
	For every $N \in \N_0$, we set $\beta$ with $\beta_N = 1$ and $\beta_k = 0$ if $k \neq N$. According to \eqref{ProofTheoremFB1new4},
	$$
		2^{N b_1} 2^{2^N s_1} \lesssim 2^{N b_0} 2^{2^N s_0}, \qquad N \in \N.
	$$
	Therefore $s_0 \geq s_1$ and $b_0 \geq b_1$ provided that $s_0 = s_1$. On the other hand, if $s= s_0 = s_1$ and $b=b_0 = b_1$, then letting $\beta_k = 2^{-k b} 2^{-2^k (s-d/p)} \xi_k$ in \eqref{ProofTheoremFB1new4}, we arrive at $r_0 \leq r_1$.
	
	\emph{Necessary condition $b_0 + 1/p \geq b_1 + 1/q_1$ under $s= s_0 = s_1$:} For every $N \in \N$, we let		\begin{equation*}
		\lambda^{j,G}_{m} = \left\{\begin{array}{cl}  2^{-j(s-d/p)}, & \quad j \in \{2^{N}-1, \cdots, 2^{N+1}-2\}, \quad m= (0, \ldots, 0),  \\
		& \qquad G = (M, \ldots, M),  \\
		0, & \text{otherwise}.
		       \end{array}
                        \right.
	\end{equation*}
	Clearly
	\begin{equation}\label{ProofFJGen5new}
		\|\lambda\|_{T^{b_1}_{r_1} b^{s}_{p, q_1}} \asymp 2^{N (b_1 + 1/q_1)}.
	\end{equation}
	On the other hand
	\begin{equation}\label{ProofTheoremFB1new5}
		\|\lambda\|_{T^{b_0}_{r_0} f^{s}_{p, q_0}} = 2^{N b_0} \bigg\| \bigg(\sum_{j=2^N - 1}^{2^{N+1}-2} 2^{ j d q_0/p} \chi_{j, (0, \ldots, 0)} \bigg)^{1/q_0} \bigg\|_{L_p(\R^d)}.
	\end{equation}
		Furthermore, we claim that, for every $x \in \R^d,$
	\begin{equation}\label{ProofFJGen4}
		 \bigg(\sum_{j=2^N-1}^{2^{N+1}-2} 2^{j d q_0/p} \chi_{j, (0, \ldots, 0)}(x) \bigg)^{1/q_0} \asymp  \bigg(\sum_{j=2^N-1}^{2^{N+1}-2} 2^{j d} \chi_{j, (0, \ldots, 0)} (x) \bigg)^{1/p}.
	\end{equation}
	Indeed, this becomes obvious if $x \not \in Q_{2^N-1, (0, \ldots, 0)}$, so that the only non-trivial case to check occurs if $x \in Q_{2^N-1, (0, \ldots, 0)}$. In this case, we denote by $j_x \in \{2^N-1, \cdots, 2^{N+1}-3\}$ such that $x \in Q_{j_x, (0, \ldots, 0)}$ but $x \not \in Q_{j_x + 1, (0, \ldots, 0)}$ and we set $j_x = 2^{N+1}-2$ if $x \in Q_{2^{N+1}-2, (0, \ldots, 0)}$. Accordingly
	\begin{align*}
	 \bigg(\sum_{j=2^N-1}^{2^{N+1}-2} 2^{j d q_0/p} \chi_{j, (0, \ldots, 0)}(x) \bigg)^{1/q_0} & =  \bigg(\sum_{j=2^N-1}^{j_x} 2^{j d q_0/p}  \bigg)^{1/q_0} \\
	 &\hspace{-5cm} \asymp 2^{j_x d/p} \asymp \bigg(\sum_{j=2^N-1}^{j_x} 2^{j d}  \bigg)^{1/p} =  \bigg(\sum_{j=2^N-1}^{2^{N+1}-2} 2^{j d} \chi_{j, (0, \ldots, 0)}(x) \bigg)^{1/p}.
	\end{align*}
	Inserting \eqref{ProofFJGen4} into \eqref{ProofTheoremFB1new5}, we get
	\begin{equation}\label{ProofFJGen6}
		\|\lambda\|_{T^{b_0}_{r_0} f^{s}_{p, q_0}} \asymp 2^{N (b_0 + 1/p)}.
	\end{equation}
	
	As a combination of \eqref{ProofTheoremFB1new3}, \eqref{ProofFJGen5new} and \eqref{ProofFJGen6},
	$$
		2^{N (b_1 + 1/q_1)} \lesssim 2^{N (b_0 + 1/p)}, \qquad N \in \N.
	$$
	This implies $b_1 + 1/q_1 \leq b_0 + 1/p$.
	
	\emph{Necessary condition $r_0 \leq r_1$ under $s= s_0 = s_1$ and $b_0 + 1/p = b_1 + 1/q_1$:} We shall proceed by contradiction, i.e., we assume the validity of \eqref{ProofTheoremFB1new3} with $r_0 > r_1$ and we will arrive at a contradiction. Indeed, consider the sequence $\lambda$ given by
		\begin{equation*}
		\lambda^{j,G}_{m} = \left\{\begin{array}{cl}  2^{-j(s-d/p)} (1+j)^{-(b_1 + 1/q_1)} (1 + \log (1 + j))^{-\varepsilon}, & \quad j \in \N_0, \quad m= (0, \ldots, 0),  \\
		& \qquad G = (M, \ldots, M),  \\
		0, & \text{otherwise},
		       \end{array}
                        \right.
	\end{equation*}
	where $1/r_0 < \varepsilon < 1/r_1$. Then
	$$
		\|\lambda\|_{T^{b_1}_{r_1} b^{s}_{p, q_1}} \asymp \bigg(\sum_{k=0}^\infty  (1+ k)^{-\varepsilon r_1} \bigg)^{1/r_1} = \infty
	$$
	and, applying a similar reasoning as in \eqref{ProofFJGen4}, we have
	\begin{align*}
		\|\lambda\|_{T^{b_0}_{r_0} f^{s}_{p, q_0}} & \asymp \bigg(\sum_{k=0}^\infty 2^{k b_0 r_0} \bigg(\sum_{j=2^k-1}^{2^{k+1}-2} (1+j)^{-(b_1 + 1/q_1) p} (1 + \log (1+j))^{-\varepsilon p} \bigg)^{r_0/p} \bigg)^{1/r_0} \\
		&\hspace{-1.5cm} \asymp \bigg(\sum_{k=0}^\infty 2^{k b_0 r_0} 2^{-k(b_1 + 1/q_1) r_0} (1 + k)^{-\varepsilon r_0} 2^{k r_0/p} \bigg)^{1/r_0} = \bigg(\sum_{k=0}^\infty (1+k)^{-\varepsilon r_0} \bigg)^{1/r_0} < \infty.
	\end{align*}
	This yields the desired contradiction.
	
	\emph{Necessary conditions $b_0 + 1/q_0 \geq b_1 + 1/q_1$ under $s= s_0 = s_1$:} Let $(b_j)_{j \in \N_0}$ be a scalar-valued sequence and consider the related lacunary Fourier series given by
	\begin{equation*}
	f(x) \sim \sum_{j=3}^\infty b_j e^{i (2^j -2) x_1} \psi (x), \qquad x = (x_1,\ldots, x_d) \in \R^d,
	\end{equation*}
	where $\psi \in \mathcal{S}(\R^d) \backslash \{0\}$ is fixed. It is plain to check that
	$$
		(\varphi_j \widehat{f})^\vee (x) = b_j e^{i (2^j-2) x_1} \psi(x)
	$$
	for $j \in \N_0$. Therefore
	\begin{equation}\label{ProofFJGen8}
		\|f\|_{T^{b_0}_{r_0} F^{s}_{p, q_0}(\R^d)} \asymp \bigg(\sum_{j=0}^\infty 2^{j b_0 r_0} \bigg(\sum_{\nu=2^j-1}^{2^{j+1}-2} 2^{\nu s q_0} |b_\nu|^{q_0} \bigg)^{r_0/q_0}  \bigg)^{1/r_0}
	\end{equation}
	and
	\begin{equation}\label{ProofFJGen9}
		\|f\|_{T^{b_1}_{r_1} B^{s}_{p, q_1}(\R^d)} \asymp \bigg(\sum_{j=0}^\infty 2^{j b_1 r_1} \bigg(\sum_{\nu=2^j-1}^{2^{j+1}-2} 2^{\nu s q_1} |b_\nu|^{q_1} \bigg)^{r_1/q_1} \bigg)^{1/r_1}.
	\end{equation}
	It follows from \eqref{ProofTheoremFB1new3}, \eqref{ProofFJGen8} and \eqref{ProofFJGen9} that
	\begin{equation}\label{ProofFJGen10}
	 \bigg(\sum_{j=0}^\infty 2^{j b_1 r_1} \bigg(\sum_{\nu=2^j-1}^{2^{j+1}-2} 2^{\nu s q_1} |b_\nu|^{q_1} \bigg)^{r_1/q_1} \bigg)^{1/r_1} \lesssim \bigg(\sum_{j=0}^\infty 2^{j b_0 r_0} \bigg(\sum_{\nu=2^j-1}^{2^{j+1}-2} 2^{\nu s q_0} |b_\nu|^{q_0} \bigg)^{r_0/q_0}  \bigg)^{1/r_0}.
	\end{equation}
	Evaluating this inequality for the sequences $b^N = (b_\nu^N)_{\nu \in \N_0}, \, N \in \N_0,$ defined by
	$$
		b_\nu =  \left\{\begin{array}{cl}  2^{-\nu s}, & \quad \nu \in \{2^N-1, \ldots, 2^{N+1}-2\}, \\
		0, & \text{otherwise,}
		       \end{array}
                        \right.
	$$
	we obtain
	$$
		2^{N (b_1 + 1/q_1)} \lesssim 2^{N(b_0 + 1/q_0)}.
	$$
	Hence $b_1 + 1/q_1 \leq b_0 + 1/q_0$.
	
	\emph{Necessary condition $r_0 \leq r_1$ under $s = s_0 = s_1$ and $b_0 + 1/q_0 = b_1+ 1/q_1$:} Let $\lambda = (\lambda_j)_{j \in \N_0}$ an arbitrary sequence. We apply the method constructed above (cf. \eqref{ProofFJGen10}) for the special sequence $b_\nu = 2^{-\nu s} 2^{-j(b_0 + 1/q_0)} \lambda_j$ for $\nu \in \{2^j-1, \ldots, 2^{j+1}-2\}$ and $j \in \N_0$. Thus,
	$$
		\bigg(\sum_{j=0}^\infty |\lambda_j|^{r_1}  \bigg)^{1/r_1} \lesssim \bigg(\sum_{j=0}^\infty |\lambda_j|^{r_0} \bigg)^{1/r_0}
	$$
	which yields $r_0 \leq r_1$.
\end{proof}

\begin{proof}[Proof of Theorem \ref{TheoremFB2new}]
	The proof follows the same lines as the proof of Theorem \ref{TheoremFB1new}.
\end{proof}

As an immediate consequence of Theorems  \ref{TheoremFB1new} and \ref{TheoremFB2new}, we obtain (see also \eqref{PropElem1*})

\begin{cor}\label{CorTruncBTLFixedps}
Let $0 < p < \infty, 0 < q, r \leq \infty,$ and $s, b \in \R$. Then
	 $$
	 	T^b_r B^{s}_{p, \min\{p, q\}} (\R^d) \hookrightarrow T^b_r F^s_{p, q}(\R^d) \hookrightarrow T^b_r B^{s}_{p, \max\{p, q\}}(\R^d).
	 $$
\end{cor}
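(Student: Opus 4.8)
The statement to prove is Corollary \ref{CorTruncBTLFixedps}: for $0 < p < \infty$, $0 < q, r \leq \infty$, and $s, b \in \R$,
$$
	T^b_r B^{s}_{p, \min\{p, q\}} (\R^d) \hookrightarrow T^b_r F^s_{p, q}(\R^d) \hookrightarrow T^b_r B^{s}_{p, \max\{p, q\}}(\R^d).
$$

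\textbf{Approach.} The plan is to derive this as an immediate consequence of the two preceding embedding theorems, Theorems \ref{TheoremFB1new} and \ref{TheoremFB2new}, specialized to the diagonal situation $s_0 = s_1 = s$ and $b_0 = b_1 = b$. Both inclusions then fall under condition (iii) in those theorems, provided we are in the case $b \neq 0$; the case $b = 0$ will be handled separately, either via Proposition \ref{PropositionElementary}(iii) (which already contains exactly \eqref{PropElem1*}, the $b=0$ statement, proved directly) or by a direct argument parallel to the one below.

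\textbf{Key steps.} First, for the right-hand embedding: I would invoke Theorem \ref{TheoremFB1new} with $p_0 = p_1 = p$, $q_0 = q$, $q_1 = \max\{p, q\}$, $r_0 = r_1 = r$, $s_0 = s_1 = s$, and $b_0 = b_1 = b \neq 0$. Since $q_1 = \max\{p, q\} \geq \max\{p, q_0\} = \max\{p, q\}$, condition (iii) of that theorem (namely $s_0 = s_1$, $q_1 \geq \max\{p, q_0\}$, $b_0 = b_1$, and $r_0 \leq r_1$) is satisfied, so $T^b_r F^s_{p, q}(\R^d) \hookrightarrow T^b_r B^s_{p, \max\{p, q\}}(\R^d)$. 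Second, for the left-hand embedding: I would apply Theorem \ref{TheoremFB2new} with $q_0 = \min\{p, q\}$, $q_1 = q$, and the other parameters as before. Since $q_0 = \min\{p, q\} \leq \min\{p, q_1\} = \min\{p, q\}$, condition (iii) of Theorem \ref{TheoremFB2new} holds, giving $T^b_r B^s_{p, \min\{p, q\}}(\R^d) \hookrightarrow T^b_r F^s_{p, q}(\R^d)$. Concatenating the two yields the full chain for $b \neq 0$.

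\textbf{Main obstacle.} There is essentially no serious obstacle here; the only thing to be careful about is the borderline parameter $b = 0$, where Theorems \ref{TheoremFB1new} and \ref{TheoremFB2new} are stated only for $b_i \in \R \backslash \{0\}$. For that case one simply cites Proposition \ref{PropositionElementary}(iii), i.e. \eqref{PropElem1*}, which already asserts the desired chain $T^0_r B^s_{p, \min\{p, q\}}(\R^d) \hookrightarrow T^0_r F^s_{p, q}(\R^d) \hookrightarrow T^0_r B^s_{p, \max\{p, q\}}(\R^d)$ and is proved there directly from Minkowski's and Fubini's inequalities (the argument being: apply $\ell_{\min\{p,q\}} \hookrightarrow \ell_{\max\{p,q\}}$ blockwise together with Minkowski's integral inequality, exactly as in the proof of \eqref{ProofTheoremFB1new1} above). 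Thus the proof reduces to a one-line citation of the two theorems plus the remark covering $b=0$.
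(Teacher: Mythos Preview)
Your proposal is correct and matches the paper's approach exactly: the paper states the corollary as an immediate consequence of Theorems \ref{TheoremFB1new} and \ref{TheoremFB2new}, with the parenthetical reference to \eqref{PropElem1*} covering precisely the $b=0$ case you flagged. Your verification of condition (iii) in each theorem is accurate, and your handling of the borderline $b=0$ via Proposition \ref{PropositionElementary}(iii) is the right move.
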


\begin{rem}
	The exponents $\min\{p, q\}$ and $\max\{p, q\}$ in previous embeddings are sharp.
\end{rem}

\subsection{Embeddings between truncated Triebel--Lizorkin spaces and classical Besov spaces with fixed integrability parameter}\label{SectionEmbTrunTLBFixed} In Section \ref{SectionTrunBB} we have investigated the relations between truncated and classical Besov spaces. Now we turn our attention to the corresponding question for truncated Triebel--Lizorkin spaces and classical Besov spaces. More precisely, letting $r_1 = q_1$ and $r_0 = q_0$ in Theorems \ref{TheoremFB1new} and \ref{TheoremFB2new}, respectively, we get the following

\begin{cor}\label{Cornew12}
	Let $0 < p < \infty, 0 < q_i, r_i \leq \infty, s_i \in \R$, and $b_i \in \R\backslash\{0\}$ for $i=0, 1$. Then
	\begin{equation*}
		T^{b_0}_{r_0} F^{s_0}_{p, q_0}(\R^d) \hookrightarrow B^{s_1, b_1}_{p, q_1}(\R^d)
	\end{equation*}
	if and only if one of the following conditions is satisfied
	\begin{enumerate}[\upshape(i)]
		\item $s_0 > s_1$.
		\item $s_0 = s_1, \quad q_1 \geq \max\{p, q_0\}$, \qquad and \qquad $b_0 > b_1$.
		\item $s_0=s_1, \quad q_1 \geq \max\{p, q_0\}, \quad b_0 = b_1$, \qquad and \qquad $r_0 \leq q_1$.
		\item $s_0 = s_1, \quad q_1 < \max \{p, q_0\},$ \qquad and \qquad $b_0 + \frac{1}{\max\{p, q_0\}} > b_1 + \frac{1}{q_1}$.
		\item $s_0=s_1, \quad q_1 < \max \{p, q_0\}, \quad b_0 + \frac{1}{\max\{p, q_0\}} = b_1 + \frac{1}{q_1}$, \qquad and \qquad $r_0 \leq q_1$.
	\end{enumerate}
\end{cor}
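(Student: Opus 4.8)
\textbf{Proof proposal for Corollary \ref{Cornew12}.}

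The plan is to deduce the result as a special case of Theorem \ref{TheoremFB1new}. The key observation is that the classical Besov space with logarithmic smoothness can be written as a truncated Besov space with coinciding lower indices: by Proposition \ref{PropositionCoincidences}(i) (in the form used in Remark \ref{Remark10.4}) we have $B^{s_1, b_1}_{p, q_1}(\R^d) = T^{b_1}_{q_1} B^{s_1}_{p, q_1}(\R^d)$, that is, the role of $r_1$ in Theorem \ref{TheoremFB1new} is played by $q_1$. Hence the embedding
$$
	T^{b_0}_{r_0} F^{s_0}_{p, q_0}(\R^d) \hookrightarrow B^{s_1, b_1}_{p, q_1}(\R^d)
$$
is literally the embedding
$$
	T^{b_0}_{r_0} F^{s_0}_{p, q_0}(\R^d) \hookrightarrow T^{b_1}_{q_1} B^{s_1}_{p, q_1}(\R^d),
$$
to which Theorem \ref{TheoremFB1new} applies verbatim with $r_1$ replaced by $q_1$.

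First I would record the identity $B^{s_1, b_1}_{p, q_1}(\R^d) = T^{b_1}_{q_1} B^{s_1}_{p, q_1}(\R^d)$ from Proposition \ref{PropositionCoincidences}(i), noting that it holds for the full admissible range of parameters $0 < p, q_1 \leq \infty$ and $s_1, b_1 \in \R$ (in particular for $b_1 \neq 0$, which is all that is required here). Then I would substitute this into the statement of Theorem \ref{TheoremFB1new}, specializing its parameters $(b_1, r_1, q_1) \mapsto (b_1, q_1, q_1)$ while leaving $(b_0, r_0, q_0, s_0, s_1)$ untouched. Since $b_1 \neq 0$ is assumed, the hypotheses of Theorem \ref{TheoremFB1new} are met. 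Reading off the five conditions (i)--(v) of that theorem with $r_1 = q_1$ gives precisely conditions (i)--(v) of the present corollary: conditions (i), (ii), (iv) are unchanged, while in (iii) and (v) the constraint ``$r_0 \le r_1$'' becomes ``$r_0 \le q_1$''.

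There is essentially no obstacle here — the corollary is a direct translation of Theorem \ref{TheoremFB1new} through the coincidence $T^{b}_{q} B^s_{p, q}(\R^d) = B^{s, b}_{p, q}(\R^d)$. The only point requiring a moment of care is making sure the substitution $r_1 = q_1$ is legitimate: one must check that Theorem \ref{TheoremFB1new} indeed allows $r_1$ and $q_1$ to coincide (it does, since there $0 < q_1, r_1 \le \infty$ are independent), and that the case $b_1 = 0$ excluded from Theorem \ref{TheoremFB1new} is also not needed here, as the corollary itself assumes $b_1 \in \R \setminus \{0\}$. Once this bookkeeping is in place the proof is complete; I would simply write ``Combine Theorem \ref{TheoremFB1new} (with $r_1 = q_1$) and Proposition \ref{PropositionCoincidences}(i).''
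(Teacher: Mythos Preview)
Your proposal is correct and matches the paper's own argument: the paper states the corollary is obtained by ``letting $r_1 = q_1$ \dots\ in Theorem \ref{TheoremFB1new},'' which is exactly the specialization you describe via the identification $B^{s_1,b_1}_{p,q_1}(\R^d) = T^{b_1}_{q_1} B^{s_1}_{p,q_1}(\R^d)$ from Proposition \ref{PropositionCoincidences}(i).
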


\begin{cor}\label{Cornew123}
	Let $0 < p < \infty, 0 < q_i , r_i \leq \infty, \, s_i \in \mathbb{R}$ and $b_i \in \R \backslash \{0\}$ for $i=0, 1$.  Then
\begin{equation*}
	B^{s_0,b_0}_{p,q_0}(\mathbb{R}^d) \hookrightarrow T^{b_1}_{r_1} F^{s_1}_{p ,q_1}(\mathbb{R}^d)
\end{equation*}
if and only if one of the following five conditions is satisfied
	\begin{enumerate}[\upshape(i)]
	\item $s_0 > s_1$.
		\item $s_0 = s_1, \quad q_0 \leq \min\{p, q_1\}$, \qquad and \qquad $b_0 > b_1$.
		\item $s_0=s_1, \quad q_0 \leq \min\{p, q_1\}, \quad b_0 = b_1$, \qquad and \qquad $q_0 \leq r_1$.
		\item $s_0 = s_1, \quad q_0 > \min \{p, q_1\},$ \qquad and \qquad $b_0 + \frac{1}{q_0} > b_1 + \frac{1}{\min\{p, q_1\}}$.
		\item $s_0=s_1, \quad q_0 > \min \{p, q_1\}, \quad b_0 + \frac{1}{q_0} = b_1 + \frac{1}{\min\{p, q_1\}}$, \qquad and \qquad $q_0 \leq r_1$.
	\end{enumerate}
\end{cor}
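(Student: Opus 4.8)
\textbf{Proof proposal for Corollary \ref{Cornew123}.}

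The plan is to mirror exactly the strategy used for Corollary \ref{Cornew12}, simply specializing Theorem \ref{TheoremFB2new} to the diagonal case on the \emph{source} side rather than the target side. Recall that Theorem \ref{TheoremFB2new} characterizes $T^{b_0}_{r_0} B^{s_0}_{p, q_0}(\R^d) \hookrightarrow T^{b_1}_{r_1} F^{s_1}_{p, q_1}(\R^d)$. Since by Proposition \ref{PropositionCoincidences}(i) we have $B^{s_0, b_0}_{p, q_0}(\R^d) = T^{b_0}_{q_0} B^{s_0}_{p, q_0}(\R^d)$, it suffices to apply Theorem \ref{TheoremFB2new} with the parameter choice $r_0 = q_0$. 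First I would record this identification, noting that the hypothesis $b_0 \in \R \backslash \{0\}$ is precisely what is needed so that Proposition \ref{PropositionCoincidences}(i) combined with the conventions on truncated spaces applies cleanly; the case $b_0 = 0$ is excluded in the statement, consistently with Theorem \ref{TheoremFB2new}.

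Second, I would substitute $r_0 = q_0$ into each of the five conditions (i)--(v) of Theorem \ref{TheoremFB2new} and check that they reduce verbatim to conditions (i)--(v) of the corollary. Conditions (i), (ii), (iv) carry over unchanged since they do not involve $r_0$. In condition (iii) the requirement $r_0 \le r_1$ becomes $q_0 \le r_1$, matching the corollary; likewise in condition (v) the requirement $r_0 \le r_1$ becomes $q_0 \le r_1$. So the translation is immediate and purely bookkeeping — there is essentially no analytic content beyond what is already proved in Theorem \ref{TheoremFB2new} and Proposition \ref{PropositionCoincidences}.

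The only point requiring a little care is that $q_0$ might be infinite, in which case $B^{s_0, b_0}_{p, \infty}(\R^d) = T^{b_0}_\infty B^{s_0}_{p, \infty}(\R^d)$ still holds by the same proposition (with the usual modifications), so the specialization $r_0 = q_0 = \infty$ remains legitimate; one just reads the inequalities $q_0 \le r_1$ in the extended sense. I would also remark, as is done after Corollary \ref{Cornew12}, that a symmetric argument taking $r_1 = q_1$ in Theorem \ref{TheoremFB1new} would instead recover the companion statement $T^{b_0}_{r_0} F^{s_0}_{p, q_0}(\R^d) \hookrightarrow B^{s_1, b_1}_{p, q_1}(\R^d)$, i.e.\ Corollary \ref{Cornew12}, which is why these two corollaries are presented together.

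\textbf{Main obstacle.} Frankly, there is no real obstacle here: the corollary is a direct specialization and the entire difficulty has been front-loaded into the proof of Theorem \ref{TheoremFB2new} (which itself runs parallel to Theorem \ref{TheoremFB1new}, and ultimately to the wavelet transference of Theorem \ref{TheoremEmbeddingsBBCharacterization}). If anything, the one thing to double-check is that the sharpness claims implicit in ``if and only if'' survive the specialization — but since Theorem \ref{TheoremFB2new} is itself an equivalence and the substitution $r_0 = q_0$ is just a choice of parameters within its scope, both implications transfer automatically. Hence the proof is a two-line invocation, exactly as written for Corollary \ref{Cornew12}: \emph{``Letting $r_0 = q_0$ in Theorem \ref{TheoremFB2new} and applying Proposition \ref{PropositionCoincidences}, we obtain the result.''}
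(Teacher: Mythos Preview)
Your proposal is correct and matches the paper's own argument exactly: the paper states just before Corollaries \ref{Cornew12} and \ref{Cornew123} that they are obtained by ``letting $r_1 = q_1$ and $r_0 = q_0$ in Theorems \ref{TheoremFB1new} and \ref{TheoremFB2new}, respectively,'' together with the identification $B^{s_0,b_0}_{p,q_0}(\R^d) = T^{b_0}_{q_0} B^{s_0}_{p,q_0}(\R^d)$ from Proposition \ref{PropositionCoincidences}(i). Your verification that conditions (i)--(v) translate correctly under $r_0 = q_0$ is accurate, and your remark about the $q_0 = \infty$ case and the survival of the ``if and only if'' are both fine.
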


Letting $s_0=s_1$ in Corollaries \ref{Cornew12} and \ref{Cornew123}, we arrive at the analogue of Corollary \ref{TheoremEmbeddings1} for truncated Triebel--Lizorkin spaces.

	 \begin{cor}\label{CorollaryEmbFBNew} Let $0 < p < \infty, 0 < q, r \leq \infty, s \in \R$ and $b \in \R \backslash \{0\}$. Then
	 $$
	   B^{s, b - 1/r + 1/\min\{p, q, r\}}_{p, r}(\R^d)	 \hookrightarrow T^b_r F^s_{p, q}(\R^d)  \hookrightarrow B^{s, b -1/r + 1/\max\{p, q, r\}}_{p, r}(\R^d)
	 $$
	 and
	 $$
	 	 B^{s, b}_{p, \min\{p, q, r\}}(\R^d) \hookrightarrow T^b_r F^s_{p, q}(\R^d)  \hookrightarrow  B^{s, b}_{p, \max\{p, q, r\}}(\R^d).
	 $$
	 \end{cor}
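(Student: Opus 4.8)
The plan is to deduce Corollary \ref{CorollaryEmbFBNew} directly from the two embedding theorems just proved, by specializing to the case of a single integrability parameter $p$ and coinciding smoothness. First I would observe that the second pair of embeddings,
$$
B^{s, b}_{p, \min\{p, q, r\}}(\R^d) \hookrightarrow T^b_r F^s_{p, q}(\R^d) \hookrightarrow B^{s, b}_{p, \max\{p, q, r\}}(\R^d),
$$
follows by combining Corollary \ref{Cornew123} (for the left embedding) and Corollary \ref{Cornew12} (for the right embedding), each applied with $s_0 = s_1 = s$ and $b_0 = b_1 = b$. Indeed, for the left embedding one puts $q_0 = \min\{p, q, r\}$, $r_0 = q_0$ (so we are starting from a genuine Besov space $B^{s, b}_{p, q_0}(\R^d)$), $q_1 = q$ and $r_1 = r$ in Corollary \ref{Cornew123}: then either $q_0 \le \min\{p, q_1\}$ and $q_0 = \min\{p,q,r\} \le r_1 = r$, which is exactly case (iii) there; or, if $\min\{p,q,r\}=r$ and $r<\min\{p,q\}$, one still lands in case (iii) since then $q_0=r\le\min\{p,q_1\}$ and $q_0=r\le r_1=r$. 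Symmetrically, for the right embedding one applies Corollary \ref{Cornew12} with $q_0 = q$, $r_0 = r$, $q_1 = \max\{p, q, r\}$, $r_1 = q_1$, and checks that case (iii) of that corollary is satisfied because $q_1 = \max\{p,q,r\} \geq \max\{p, q_0\}$ and $r_0 = r \leq q_1$.

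Next I would derive the first pair of embeddings,
$$
B^{s, b - 1/r + 1/\min\{p, q, r\}}_{p, r}(\R^d) \hookrightarrow T^b_r F^s_{p, q}(\R^d) \hookrightarrow B^{s, b - 1/r + 1/\max\{p, q, r\}}_{p, r}(\R^d),
$$
in the same manner, but now forcing the target/source Besov space to have secondary index exactly $r$. For the right embedding, apply Corollary \ref{Cornew12} with $q_0=q$, $r_0=r$, $q_1=r$ and $b_1 = b - 1/r + 1/\max\{p,q,r\}$: if $r \ge \max\{p,q\} = \max\{p,q_0\}$ then $\max\{p,q,r\}=r$, so $b_1 = b = b_0$, and we are in case (iii) with $r_0 = r \leq q_1 = r$; if instead $r < \max\{p, q_0\}$, then $\max\{p,q,r\} = \max\{p,q_0\}$ and the required identity $b_0 + 1/\max\{p,q_0\} = b_1 + 1/q_1 = b_1 + 1/r$ holds by the very choice of $b_1$, placing us in case (v) with $r_0 = r \leq q_1 = r$. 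The left embedding of this pair is handled symmetrically through Corollary \ref{Cornew123} with $q_0 = r$, $r_0 = r$, $q_1 = q$, $r_1 = r$ and $b_0 = b - 1/r + 1/\min\{p,q,r\}$, again splitting according to whether $r \leq \min\{p, q\}$ or not and verifying case (iii) or case (v) accordingly.

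The main subtlety — not really an obstacle but the point requiring care — is the bookkeeping with the parameter $b$ and the $-1/r$ shift: one must check that in every branch the logarithmic smoothness parameter of the classical Besov space matches the critical value singled out in cases (iii) and (v) of Corollaries \ref{Cornew12} and \ref{Cornew123}, which amounts to the identity $\frac{1}{\min\{p,q,r\}} - \frac{1}{r} + \frac{1}{q} = \frac{1}{\min\{p,q_1\}}$ type relations (and its $\max$ analogue) collapsing correctly when $\min\{p,q,r\}$ or $\max\{p,q,r\}$ equals $r$. Finally, the sharpness claim recorded in the remark after the corollary is immediate: any strengthening of an exponent would contradict the necessity parts of Theorems \ref{TheoremFB1new} and \ref{TheoremFB2new} (equivalently Corollaries \ref{Cornew12} and \ref{Cornew123}), since those give \emph{iff} characterizations. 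I would phrase the proof simply as ``Specialize Corollaries \ref{Cornew12} and \ref{Cornew123} with $s_0 = s_1$, choosing the secondary indices and $b$'s as indicated; the verification of conditions (iii) and (v) in each case is routine,'' possibly displaying one representative branch in full.
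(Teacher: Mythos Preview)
Your proposal is correct and follows exactly the paper's approach: the corollary is stated there as an immediate consequence of letting $s_0=s_1$ in Corollaries~\ref{Cornew12} and~\ref{Cornew123}, and your case analysis (verifying condition (iii) or (v) according to the ordering of $p,q,r$) is precisely the routine check the paper leaves implicit.
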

	
	 \begin{rem}
	All the parameters in Corollary \ref{CorollaryEmbFBNew} are sharp.
\end{rem}

\subsection{Embeddings for $\text{Lip}^{s, b}_{p, q}(\R^d)$ and $\textbf{B}^{0, b}_{p, q}(\R^d)$ spaces}
Let us show how embeddings from Section \ref{SectionEmbTrunTLBFixed} can be applied to sharpen and extend  embedding theorems for $\text{Lip}^{s, b}_{p, q}(\R^d)$ and $\textbf{B}^{0, b}_{p, q}(\R^d)$ available in the literature.

Recall the embeddings between Lipschitz and Besov spaces obtained in \cite[Theorem 4.1]{DominguezHaroskeTikhonov} (see also \eqref{EmbBL}): Let $1 < p < \infty, 0 < q \leq \infty,  s> 0$ and $b < -1/q$. Then
\begin{equation}\label{LipB1nenenenen}
	 B^{s, b + 1/\min\{2, p, q\}}_{p, q}(\R^d)  \hookrightarrow \text{Lip}^{s, b}_{p, q}(\R^d) \hookrightarrow B^{s, b + 1/\max\{2, p, q\}}_{p, q}(\R^d)
	 \end{equation}
	 and
	 \begin{equation}\label{LipB2nenenenen}
	 B^{s, b + 1/q}_{p, \min\{2, p, q\}}(\R^d)  \hookrightarrow \text{Lip}^{s, b}_{p, q}(\R^d) \hookrightarrow B^{s, b + 1/q}_{p, \max\{2, p, q\}}(\R^d).
	 \end{equation}
	 These embeddings are, in general, not comparable and they are optimal within the classical scale formed by $B^{s, b}_{p, q}(\R^d)$, cf. \cite[Remark 4.2]{DominguezHaroskeTikhonov}.
	
	 Let us recall embedding theorems related to $\textbf{B}^{0, b}_{p, q}(\R^d)$ (cf. \eqref{EmbBBzero}): If $1 < p < \infty, 0 < q \leq \infty$ and $b > -1/q$, then
	 \begin{equation}\label{LipB3nenenenen}
	 B^{0, b + 1/\min\{2, p, q\}}_{p, q}(\R^d)  \hookrightarrow \textbf{B}^{0, b}_{p, q}(\R^d) \hookrightarrow B^{0, b + 1/\max\{2, p, q\}}_{p, q}(\R^d).
\end{equation}
Furthermore, these embeddings are sharp within the classical scale of $B^{0, b}_{p, q}(\R^d)$ spaces, cf. \cite[Section 9.1]{DominguezTikhonov}.
	
	 Since (cf. Proposition \ref{PropositionCoincidences})
	 \begin{equation}\label{sjasjjsa}
	 	\text{Lip}^{s, b}_{p, q}(\R^d)  = T^{b+1/q}_q F^{s}_{p, 2}(\R^d), \, \textbf{B}^{0, b}_{p, q}(\R^d)  = T^{b+1/q}_q F^{0}_{p, 2}(\R^d), \text{ and } B^{s, b}_{p, q}(\R^d) = B^{s, b}_{p, q, q}(\R^d),
	 \end{equation}
	 the embeddings \eqref{LipB1nenenenen}--\eqref{LipB3nenenenen} are contained in Corollary \ref{CorollaryEmbFBNew}. However, these embeddings admits non-trivial improvements in terms of truncated Besov spaces via Corollary \ref{CorTruncBTLFixedps}.

	 \begin{cor}[Embeddings between $\text{Lip}^{s, b}_{p, q}(\R^d)$ and $T^b_r B^s_{p, q}(\R^d)$]\label{Cor1}
	 Let $1 < p < \infty, 0 < q \leq \infty, s > 0$, and $b < -1/q$. Then
	 $$
	 	T^{b+1/q}_q B^{s}_{p, \min\{2, p\}}(\R^d) \hookrightarrow \emph{Lip}^{s, b}_{p, q}(\R^d) \hookrightarrow T^{b+1/q}_q B^{s}_{p, \max\{2, p\}}(\R^d).
	 $$
	 \end{cor}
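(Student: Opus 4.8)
The plan is to derive Corollary~\ref{Cor1} by combining the identification $\text{Lip}^{s,b}_{p,q}(\R^d) = T^{b+1/q}_q F^{s}_{p,2}(\R^d)$ from Proposition~\ref{PropositionCoincidences}(iv) with the sharp Besov--Triebel--Lizorkin comparison of truncated spaces with fixed integrability parameter, namely Corollary~\ref{CorTruncBTLFixedps} (equivalently \eqref{PropElem1*}). Concretely, writing $\tilde b := b + 1/q$ and applying Corollary~\ref{CorTruncBTLFixedps} with smoothness $s$, integrability $p$, fine parameter $q=2$, outer parameter $r = q$, and logarithmic parameter $\tilde b$, one gets
\begin{equation*}
	T^{\tilde b}_q B^{s}_{p,\min\{p,2\}}(\R^d) \hookrightarrow T^{\tilde b}_q F^{s}_{p,2}(\R^d) \hookrightarrow T^{\tilde b}_q B^{s}_{p,\max\{p,2\}}(\R^d).
\end{equation*}
Substituting $T^{\tilde b}_q F^{s}_{p,2}(\R^d) = \text{Lip}^{s,b}_{p,q}(\R^d)$ into the middle term immediately yields the claimed chain of embeddings. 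Since Corollary~\ref{CorTruncBTLFixedps} is already stated in the excerpt, this reduces the whole argument to a one-line citation plus the coincidence formula; I would present it exactly in that order: recall \eqref{sjasjjsa}, invoke Corollary~\ref{CorTruncBTLFixedps} with the parameters above, and conclude.

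For completeness I would also note why the parameters in Corollary~\ref{CorTruncBTLFixedps} apply here: we need $p \in (1,\infty)$ (so that $F^{s}_{p,2}$ and hence the identification in Proposition~\ref{PropositionCoincidences}(iv) make sense), $s > 0$ and $b < -1/q$ (so that $\text{Lip}^{s,b}_{p,q}$ is non-trivial and the coincidence holds), and $q \in (0,\infty]$, $r = q$ — all of which are exactly the hypotheses of Corollary~\ref{Cor1}. The value $\tilde b = b + 1/q$ can be any real number (negative, zero, or positive) depending on $q$; but Corollary~\ref{CorTruncBTLFixedps} is stated for all $b \in \R$, including $b=0$, so no case distinction on the sign of $\tilde b$ is needed. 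Thus the proof genuinely is just the composition of two already-established results.

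The optimality claim in the accompanying remark (that the exponents $\min\{2,p\}$ and $\max\{2,p\}$ cannot be improved) would be handled by the sharpness assertion attached to Corollary~\ref{CorTruncBTLFixedps} (``The exponents $\min\{p,q\}$ and $\max\{p,q\}$ in previous embeddings are sharp''), transported through the isomorphism of Proposition~\ref{PropositionCoincidences}(iv); alternatively one can test directly against lacunary Fourier series of the form \eqref{Ex67}, for which the $\text{Lip}$-norm and the $T^{\tilde b}_q B^{s}_{p,u}$-norm both reduce to explicit sequence-space expressions (using \eqref{lac} and the wavelet/Fourier reductions already exploited in the proof of Theorem~\ref{TheoremFB1new}), and choose the coefficient sequence so that membership fails precisely when $u < \min\{2,p\}$ or $u > \max\{2,p\}$.

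The only mild subtlety — and the step I would double-check most carefully — is the direction of the inequalities $\min\{p,2\} \le 2 \le \max\{p,2\}$ versus the direction of the embeddings in Corollary~\ref{CorTruncBTLFixedps}: one must confirm that in \eqref{PropElem1*} the space with the \emph{smaller} fine index sits on the left (it does, since $B^s_{p,\min\{p,q\}} \hookrightarrow F^s_{p,q} \hookrightarrow B^s_{p,\max\{p,q\}}$ parallels the classical embedding \eqref{EmbBFnwnqnq}), so that replacing $q$ by $2$ gives exactly $T^{\tilde b}_q B^s_{p,\min\{p,2\}} \hookrightarrow T^{\tilde b}_q F^s_{p,2} \hookrightarrow T^{\tilde b}_q B^s_{p,\max\{p,2\}}$. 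There is no real obstacle beyond this bookkeeping; the substance of the result is entirely contained in the coincidence $\text{Lip}^{s,b}_{p,q} = T^{b+1/q}_q F^s_{p,2}$ together with the $B$--$F$ interplay for truncated spaces.
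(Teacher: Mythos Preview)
Your proposal is correct and matches the paper's approach exactly: the paper derives Corollary~\ref{Cor1} by invoking the identification $\text{Lip}^{s,b}_{p,q}(\R^d) = T^{b+1/q}_q F^{s}_{p,2}(\R^d)$ from Proposition~\ref{PropositionCoincidences}(iv) (recorded in \eqref{sjasjjsa}) and then applying Corollary~\ref{CorTruncBTLFixedps} with inner index $2$ and outer index $q$. One small correction to your commentary: under the hypothesis $b < -1/q$ you always have $\tilde b = b + 1/q < 0$, so the case $\tilde b \geq 0$ never arises here (though, as you note, Corollary~\ref{CorTruncBTLFixedps} covers all $b \in \R$ anyway via \eqref{PropElem1*}).
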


\begin{cor}[Embeddings between $\textbf{B}^{0, b}_{p, q}(\R^d)$ and $T^b_r B^{0}_{p, q}(\R^d)$]\label{Cor2}
	Let $1 < p < \infty, 0 < q \leq \infty$ and $b > -1/q$. Then
	$$
		 T^{b+1/q}_q B^{0}_{p, \min\{2, p\}}(\R^d) \hookrightarrow \emph{\textbf{B}}^{0, b}_{p, q}(\R^d) \hookrightarrow T^{b+1/q}_q B^{0}_{p, \max\{2, p\}}(\R^d).
	$$
\end{cor}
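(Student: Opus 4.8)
The plan is to derive both corollaries as instances of the general machinery already assembled in the excerpt, without touching differences or moduli of smoothness directly. The key observation is Proposition \ref{PropositionCoincidences}(iii)--(iv), which identifies $\mathbf{B}^{0, b}_{p, q}(\R^d) = T^{b+1/q}_q F^{0}_{p, 2}(\R^d)$ (for $b > -1/q$) and $\text{Lip}^{s, b}_{p, q}(\R^d) = T^{b+1/q}_q F^{s}_{p, 2}(\R^d)$ (for $s > 0$ and $b < -1/q$). Thus it suffices to establish, for $1 < p < \infty$, $s \in \R$ and any $\beta \in \R \setminus \{0\}$,
\begin{equation*}
	T^{\beta}_q B^{s}_{p, \min\{2, p\}}(\R^d) \hookrightarrow T^{\beta}_q F^{s}_{p, 2}(\R^d) \hookrightarrow T^{\beta}_q B^{s}_{p, \max\{2, p\}}(\R^d),
\end{equation*}
and then specialize $\beta = b + 1/q$ (which is negative in the Lipschitz case and positive in the $\mathbf{B}^{0}$ case, so in particular $\beta \neq 0$), together with $s > 0$ for Corollary \ref{Cor1} and $s = 0$ for Corollary \ref{Cor2}.

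First I would invoke Corollary \ref{CorTruncBTLFixedps}, which states precisely that $T^{\beta}_r B^{s}_{p, \min\{p, q\}}(\R^d) \hookrightarrow T^{\beta}_r F^s_{p, q}(\R^d) \hookrightarrow T^{\beta}_r B^{s}_{p, \max\{p, q\}}(\R^d)$ for all admissible parameters; applying this with the inner integrability exponent equal to $2$ and the outer summability exponent $r = q$ yields exactly the display above, since $\min\{p, 2\}$ and $\max\{p, 2\}$ appear in the roles of $\min\{p, q\}$ and $\max\{p, q\}$ there (with $q$ in Corollary \ref{CorTruncBTLFixedps} playing the role of our fixed inner parameter $2$). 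Chaining this with the identifications from Proposition \ref{PropositionCoincidences} gives both corollaries immediately:
\begin{equation*}
	T^{b+1/q}_q B^{s}_{p, \min\{2, p\}}(\R^d) \hookrightarrow \text{Lip}^{s, b}_{p, q}(\R^d) \hookrightarrow T^{b+1/q}_q B^{s}_{p, \max\{2, p\}}(\R^d)
\end{equation*}
and the analogous statement with $s = 0$ and $\text{Lip}$ replaced by $\mathbf{B}^{0, b}_{p, q}$.

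There is essentially no analytic obstacle here — the real work was done in Corollary \ref{CorTruncBTLFixedps} (which in turn rests on the wavelet characterizations of Section \ref{SectionWavelets} and the embedding theorems Theorems \ref{TheoremFB1new}--\ref{TheoremFB2new}) and in the delicate Fourier-analytic identifications of Proposition \ref{PropositionCoincidences}(iii)--(iv), the latter relying on \cite[Theorem 4.3]{CobosDominguezTriebel} and Theorem \ref{ThmLipFourier}. The only point requiring a word of care is \emph{sharpness}: to justify the claim (made in the surrounding text, and which one may wish to record) that $\min\{2, p\}$ and $\max\{2, p\}$ cannot be improved, I would note that the remark following Corollary \ref{CorollaryEmbFBNew} already asserts all parameters there are sharp, and combine this with the fact that $\mathbf{B}^{0, b}_{p, q}$ and $\text{Lip}^{s, b}_{p, q}$ genuinely sit strictly between the displayed truncated Besov spaces — a fact that can be exhibited, if desired, by testing on the lacunary Fourier series of the form \eqref{Ex67}, exactly as in Remark \ref{Remark36}. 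But for the bare statements of Corollaries \ref{Cor1} and \ref{Cor2} as written, the two-line argument above suffices.
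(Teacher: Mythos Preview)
Your proposal is correct and follows exactly the paper's approach: the paper derives both corollaries by combining the identification $\mathbf{B}^{0,b}_{p,q}(\R^d) = T^{b+1/q}_q F^0_{p,2}(\R^d)$ (respectively for $\text{Lip}^{s,b}_{p,q}$) from Proposition~\ref{PropositionCoincidences} with the sandwich $T^\beta_r B^s_{p,\min\{p,q\}} \hookrightarrow T^\beta_r F^s_{p,q} \hookrightarrow T^\beta_r B^s_{p,\max\{p,q\}}$ of Corollary~\ref{CorTruncBTLFixedps}, specialized to inner parameter $2$ and $r=q$. Your observation that $\beta = b+1/q \neq 0$ in both regimes is the only detail to check, and you have it.
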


\begin{rem}
The embeddings given in Corollaries \ref{Cor1} and \ref{Cor2} improves  \eqref{LipB1nenenenen}--\eqref{LipB3nenenenen}. Indeed, note that (cf. Corollary \ref{TheoremEmbeddings1})
	 $$
	 	T^{b+1/q}_q B^{s}_{p, \max\{2, p\}}(\R^d) \hookrightarrow B^{s, b + 1/\max\{2, p, q\}}_{p, q}(\R^d) \cap B^{s, b + 1/q}_{p, \max\{2, p, q\}}(\R^d)
	 $$
	 and
	 $$
	  B^{s, b + 1/\min\{2, p, q\}}_{p, q}(\R^d) + B^{s, b + 1/q}_{p, \min\{2, p, q\}}(\R^d)	\hookrightarrow  T^{b+1/q}_q B^{s}_{p, \min\{2, p\}}(\R^d).
	 $$
	 Furthermore, all the parameters involved in the embeddings given in Corollaries \ref{Cor1} and \ref{Cor2} are optimal.
\end{rem}

\newpage
\section{Embeddings between truncated Triebel--Lizorkin spaces
}

\begin{thm}\label{ThmFFp0p1}
	Let $0 < p_0 \leq p_1 < \infty, 0 < q_i , r_i \leq \infty, s_i \in \R,$ and $b_i \in \R \backslash \{0\}$ for $i=0, 1$. Then
	\begin{equation}\label{ThmFFp0p1State}
		T^{b_0}_{r_0} F^{s_0}_{p_0, q_0}(\R^d) \hookrightarrow T^{b_1}_{r_1} F^{s_1}_{p_1, q_1}(\R^d)
	\end{equation}
	if and only if one of the following conditions is satisfied
	\begin{enumerate}[\upshape(i)]
		\item $s_0 - \frac{d}{p_0} > s_1 - \frac{d}{p_1}$.
		\item $s_0 - \frac{d}{p_0} = s_1 - \frac{d}{p_1}, \quad p_0 < p_1,$ \qquad and \qquad $b_0 > b_1$.
		\item $s_0 - \frac{d}{p_0} = s_1 - \frac{d}{p_1}, \quad p_0 < p_1, \quad b_0 = b_1$, \qquad and \qquad $r_0 \leq r_1$.
		\item  $s_0 - \frac{d}{p_0} = s_1 - \frac{d}{p_1}, \quad p_0 = p_1, \quad q_0 \leq q_1,$ \qquad and \qquad $b_0 > b_1$.
		\item $s_0 - \frac{d}{p_0} = s_1 - \frac{d}{p_1}, \quad p_0 = p_1, \quad q_0 \leq q_1, \quad b_0 = b_1$\qquad and \qquad $r_0 \leq r_1$.
		\item $s_0 - \frac{d}{p_0} = s_1 - \frac{d}{p_1}, \quad p_0 = p_1, \quad q_1 < q_0,$\qquad and \qquad$b_0 + \frac{1}{q_0} > b_1 + \frac{1}{q_1}$.
		\item $s_0 - \frac{d}{p_0} = s_1 - \frac{d}{p_1}, \quad p_0 = p_1, \quad q_1 < q_0, \quad b_0 + \frac{1}{q_0} = b_1 + \frac{1}{q_1}$\qquad and \qquad$r_0 \leq r_1$.
	\end{enumerate}
\end{thm}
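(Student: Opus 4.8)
The plan is to reduce the embedding \eqref{ThmFFp0p1State} to a corresponding embedding between the associated wavelet sequence spaces $T^{b_0}_{r_0} f^{s_0}_{p_0, q_0} \hookrightarrow T^{b_1}_{r_1} f^{s_1}_{p_1, q_1}$, exactly as in the proofs of Theorems \ref{TheoremEmbeddingsBBCharacterization}, \ref{TheoremFB1new} and \ref{TheoremFB2new}. This reduction is justified by the wavelet characterization in Theorem \ref{ThmWaveletsNewTriebelLizorkin} (together with Remark \ref{RemarkIntp}, which removes the apparent restriction $p>1$). So the whole problem becomes a sequence-space statement, and the $d$-dimensional Fourier-analytic subtleties disappear.

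For the \emph{sufficiency} direction I would proceed by splitting the target embedding through intermediate spaces whose behaviour is already known. In the case $p_0 < p_1$ (items (i)--(iii)) the key is a Jawerth--Franke--type chain
$$
T^{b_0}_{r_0} F^{s_0}_{p_0, q_0}(\R^d) \hookrightarrow T^{b_0}_{r_0} B^{s_0}_{p_0, \max\{p_0, q_0\}}(\R^d) \hookrightarrow T^{b_1}_{r_1} B^{s_1}_{p_1, \min\{p_1, q_1\}}(\R^d) \hookrightarrow T^{b_1}_{r_1} F^{s_1}_{p_1, q_1}(\R^d),
$$
where the first embedding is \eqref{ProofTheoremFB1new1}, the last is the left-hand side of \eqref{PropElem1*} (or Corollary \ref{CorTruncBTLFixedps}), and the middle one is handled by Theorem \ref{TheoremEmbeddingsBBCharacterization}: under $s_0 - d/p_0 > s_1 - d/p_1$ it holds by item (i) of that theorem, while under equality of the differential dimensions it holds because when $p_0 < p_1$ we have $\max\{p_0, q_0\} \leq \min\{p_1, q_1\}$ is \emph{not} generally true, so instead I would invoke the refined chain $T^{b_0}_{r_0} B^{s_0}_{p_0, \infty} \hookrightarrow B^{s_0, \cdot}_{p_0, r_0} \hookrightarrow B^{s_1, \cdot}_{p_1, r_1} \hookrightarrow T^{b_1}_{r_1} B^{s_1}_{p_1, 1}$ exactly as in \eqref{ProofEmbeddingsBB1*nsnan} and \eqref{TrivialEmbedding8}, the strict-dimension gap absorbing all secondary parameters. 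For $p_0 = p_1$ (items (iv)--(vii)) the statement reduces to Theorem \ref{TheoremFFp0p1} with $p_0=p_1$, which is precisely the ``diagonal'' situation governed by Hölder's inequality and the embeddings $\ell_u \hookrightarrow \ell_v$ among the $q$- and $r$-indices; here I would directly adapt the computations from the sufficiency part of Theorem \ref{TheoremEmbeddingsBBCharacterization}, using the elementary estimate \eqref{ProofFJGen4} relating the $\ell_{q_0}$- and $\ell_p$-norms of the relevant dyadic blocks to pass between $f$- and $b$-sequence norms without loss.

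For the \emph{necessity} direction I would test \eqref{ThmFFp0p1State}, i.e. the sequence embedding, against the same families of explicit (mostly lacunary and diagonal) sequences used in the proofs of Theorems \ref{TheoremEmbeddingsBBCharacterization} and \ref{TheoremFB1new}. Concretely: a single-atom sequence supported at one scale $j = 2^N$ yields $s_0 - d/p_0 \geq s_1 - d/p_1$; a normalized version of it with a free scalar factor $\xi_k$ running over scales $j = 2^k$ yields $r_0 \leq r_1$ when the differential dimensions and the $b$'s agree; a full-block sequence $\lambda^{j,G}_m = 2^{-j(s-d/p)}$ for $j \in \{2^N-1, \dots, 2^{N+1}-2\}$ with $m=(0,\dots,0)$ combined with \eqref{ProofFJGen4} produces the conditions on $b_i + 1/q_i$ (with $q_i$ replaced by $p$ on each $F$-side that needs it, reflecting the $f$-space normalization); and a sequence with an extra iterated-log factor $(1+\log(1+j))^{-\beta}$ with $1/r_0 < \beta < 1/r_1$ rules out the limiting case $b_0 + 1/q_0 = b_1 + 1/q_1$ with $r_1 < r_0$. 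The necessity of $p_0 \leq p_1$ itself follows from a sequence spread over many spatial translates $m = (l, 0, \dots, 0)$ at a fixed scale, as in the corresponding step of Theorem \ref{TheoremEmbeddingsBBCharacterization}.

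The main obstacle I anticipate is the careful bookkeeping in the $p_0 < p_1$, equal-differential-dimension case (items (ii)--(iii)): one must choose the right secondary-index surrogates in the intermediate classical Besov spaces so that the middle embedding in the chain is covered by Theorem \ref{TheoremEmbeddingsBBCharacterization} and so that no secondary parameter is lost, yet the endpoint embeddings $F \hookrightarrow B$ and $B \hookrightarrow F$ (with fixed $p$) remain valid. A secondary technical point is verifying that the $f$-to-$b$ norm comparison \eqref{ProofFJGen4} is robust enough to be applied in both directions on the test sequences, so that the necessity arguments pin down $b_i + 1/q_i$ (and not merely $b_i$) in the appropriate regime. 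Once these are settled, the argument is a routine, if lengthy, combination of the tools already developed in the preceding sections.
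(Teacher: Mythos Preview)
Your overall strategy (reduce to sequence spaces via wavelets, prove sufficiency by chaining through $B$-spaces, prove necessity by testing on explicit lacunary sequences) matches the paper for case (i), for cases (iv)--(vii), and for all of the necessity direction. There the paper does essentially what you describe, including the use of \eqref{ProofFJGen4} on the test sequences.

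The genuine gap is in the sufficiency for cases (ii)--(iii), i.e.\ $p_0<p_1$ with $s_0-d/p_0=s_1-d/p_1$. Your proposed chain
\[
T^{b_0}_{r_0} F^{s_0}_{p_0,q_0}\hookrightarrow T^{b_0}_{r_0} B^{s_0}_{p_0,\max\{p_0,q_0\}}\hookrightarrow T^{b_1}_{r_1} B^{s_1}_{p_1,\min\{p_1,q_1\}}\hookrightarrow T^{b_1}_{r_1} F^{s_1}_{p_1,q_1}
\]
fails in general: the middle embedding, governed by Theorem \ref{TheoremEmbeddingsBBCharacterization}, requires $b_0+1/\max\{p_0,q_0\}>b_1+1/\min\{p_1,q_1\}$ when $\max\{p_0,q_0\}>\min\{p_1,q_1\}$, which can be arbitrarily stronger than the hypothesis $b_0>b_1$ (take e.g.\ $q_0$ large and $q_1$ small). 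You notice the problem yourself, but your fallback to ``the refined chain \dots as in \eqref{ProofEmbeddingsBB1*nsnan} and \eqref{TrivialEmbedding8}, the strict-dimension gap absorbing all secondary parameters'' does not apply here: there is \emph{no} strict dimension gap in cases (ii)--(iii), so \eqref{TrivialEmbedding8} is unavailable and nothing absorbs the loss in the $q$-indices.

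The paper avoids any detour through $B$-spaces in this case and proves a direct blockwise estimate: for each $j$,
\[
\bigg\|\Big(\sum_{\nu=2^j-1}^{2^{j+1}-2}2^{\nu s_1 q_1}|(\varphi_\nu\widehat f)^\vee|^{q_1}\Big)^{1/q_1}\bigg\|_{L_{p_1}}
\lesssim
\bigg\|\sup_{2^j-1\le\nu\le 2^{j+1}-2}2^{\nu s_0}|(\varphi_\nu\widehat f)^\vee|\bigg\|_{L_{p_0}}.
\]
The proof combines the elementary interpolation inequality $\|a\|_{\ell^{s_1}_{q_1}}\lesssim\|a\|_{\ell^{s_0}_\infty}^{\theta}\|a\|_{\ell^{s_2}_\infty}^{1-\theta}$ with $\theta=p_0/p_1$ (applied pointwise in $x$) and Nikolskii's inequality $\|(\varphi_\nu\widehat f)^\vee\|_{L_\infty}\lesssim 2^{\nu d/p_0}\|(\varphi_\nu\widehat f)^\vee\|_{L_{p_0}}$, noting that the auxiliary index $s_2$ equals $s_0-d/p_0$. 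This blockwise estimate is independent of $q_0,q_1$ and is what makes the $q_i$ disappear from conditions (ii)--(iii); it is the missing ingredient in your proposal.
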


Before we give the proof of Theorem \ref{ThmFFp0p1}, we write down some of its immediate consequences. Namely, since (cf. Proposition \ref{PropositionCoincidences})
\begin{equation}\label{LipFnew1}
\text{Lip}^{s, b}_{p, q}(\R^d) = T^{b+1/q}_q F^{s}_{p, 2}(\R^d),
\end{equation}
 as a special case of Theorem \ref{ThmFFp0p1} we derive sharp embeddings between Lipschitz spaces.

\begin{cor}[Embeddings between Lipschitz spaces]\label{CorLipLip}
Let $1 < p_0 \leq p_1 < \infty, 0 < q_i \leq \infty, s_i > 0$ and $b_i < - 1/q_i$ for $i=0, 1$. Then
	$$
		\emph{Lip}^{s_0, b_0}_{p_0, q_0}(\R^d) \hookrightarrow  \emph{Lip}^{s_1, b_1}_{p_1, q_1}(\R^d)
	$$
	if and only if one of the following conditions is satisfied
	\begin{enumerate}[\upshape(i)]
		\item $s_0 - \frac{d}{p_0} > s_1 - \frac{d}{p_1}$.
		\item $s_0 - \frac{d}{p_0} = s_1 - \frac{d}{p_1}$, \qquad and \qquad $b_0 + \frac{1}{q_0} > b_1 + \frac{1}{q_1}$.
		 \item $s_0 - \frac{d}{p_0} = s_1 - \frac{d}{p_1}, \quad b_0 + \frac{1}{q_0} = b_1 + \frac{1}{q_1}$ \qquad and \qquad $q_0 \leq q_1$.
	\end{enumerate}
	\end{cor}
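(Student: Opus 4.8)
The statement of Corollary \ref{CorLipLip} is supposed to follow from Theorem \ref{ThmFFp0p1} by specializing to the Triebel--Lizorkin parameter $q=2$, using the identification \eqref{LipFnew1}, i.e. $\text{Lip}^{s, b}_{p, q}(\R^d) = T^{b+1/q}_q F^{s}_{p, 2}(\R^d)$, valid under $1 < p < \infty$, $s > 0$, $q \in (0,\infty]$ and $b < -1/q$ (cf. Proposition \ref{PropositionCoincidences}(iv)). So the plan is: translate the two sides of the desired embedding into truncated Triebel--Lizorkin spaces, apply Theorem \ref{ThmFFp0p1} with $q_0 = q_1 = 2$, $r_i = q_i$, $b_i \rightsquigarrow b_i + 1/q_i$, and then check that the resulting list of conditions (i)--(vii) of Theorem \ref{ThmFFp0p1} collapses exactly to the three conditions (i)--(iii) of Corollary \ref{CorLipLip}.

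First I would fix notation. By \eqref{LipFnew1} the embedding $\text{Lip}^{s_0, b_0}_{p_0, q_0}(\R^d) \hookrightarrow \text{Lip}^{s_1, b_1}_{p_1, q_1}(\R^d)$ is equivalent to
\[
	T^{b_0 + 1/q_0}_{q_0} F^{s_0}_{p_0, 2}(\R^d) \hookrightarrow T^{b_1 + 1/q_1}_{q_1} F^{s_1}_{p_1, 2}(\R^d).
\]
This is of the form \eqref{ThmFFp0p1State} with the substitutions $q_i^{\mathrm{TL}} = 2$, $r_i^{\mathrm{TL}} = q_i$, $b_i^{\mathrm{TL}} = b_i + 1/q_i$, and the integrability exponents $p_i$ unchanged. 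Note that the hypothesis $1 < p_0 \leq p_1 < \infty$ is exactly the standing assumption of Theorem \ref{ThmFFp0p1} restricted to $p_i > 1$ (which is needed anyway for the identification \eqref{LipFnew1}), and the hypothesis $b_i < -1/q_i$ guarantees $b_i^{\mathrm{TL}} = b_i + 1/q_i < 0$, so that $b_i^{\mathrm{TL}} \in \R\setminus\{0\}$ and Theorem \ref{ThmFFp0p1} applies verbatim. The smoothness parameters coincide with those in the corollary since $s_i^{\mathrm{TL}} = s_i$.

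Next I would run through the seven cases of Theorem \ref{ThmFFp0p1}. Case (i), $s_0 - d/p_0 > s_1 - d/p_1$, survives unchanged and gives condition (i) of the corollary. In cases (ii)--(iii) we have $p_0 < p_1$ (and $s_0 - d/p_0 = s_1 - d/p_1$): there $q_0^{\mathrm{TL}} = q_1^{\mathrm{TL}} = 2$ so the dichotomy "$q_0^{\mathrm{TL}} \le q_1^{\mathrm{TL}}$" vs. "$q_1^{\mathrm{TL}} < q_0^{\mathrm{TL}}$" is irrelevant; conditions (ii)/(iii) of the theorem read $b_0^{\mathrm{TL}} > b_1^{\mathrm{TL}}$, or $b_0^{\mathrm{TL}} = b_1^{\mathrm{TL}}$ and $r_0^{\mathrm{TL}} \le r_1^{\mathrm{TL}}$, i.e. $b_0 + 1/q_0 > b_1 + 1/q_1$, or $b_0 + 1/q_0 = b_1 + 1/q_1$ and $q_0 \le q_1$. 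In cases (iv)--(vii) we have $p_0 = p_1$ and again $q_0^{\mathrm{TL}} = q_1^{\mathrm{TL}} = 2$, so cases (iv)--(v) apply (the subcase $q_0^{\mathrm{TL}} \le q_1^{\mathrm{TL}}$) while (vi)--(vii) are vacuous; and (iv)--(v) give precisely the same conditions $b_0 + 1/q_0 > b_1 + 1/q_1$, or $b_0 + 1/q_0 = b_1 + 1/q_1$ together with $q_0 \le q_1$. Collecting: when $s_0 - d/p_0 = s_1 - d/p_1$ (whether or not $p_0 < p_1$), the embedding holds iff $b_0 + 1/q_0 > b_1 + 1/q_1$, or $b_0 + 1/q_0 = b_1 + 1/q_1$ and $q_0 \le q_1$. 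This is exactly conditions (ii) and (iii) of Corollary \ref{CorLipLip}, completing the proof.

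\textbf{Main obstacle.} There is essentially no analytic obstacle here; the content is entirely in the bookkeeping of how the seven-case list of Theorem \ref{ThmFFp0p1} degenerates when $q_0^{\mathrm{TL}} = q_1^{\mathrm{TL}} = 2$. The one point requiring a little care is checking that the substitution $r_i^{\mathrm{TL}} = q_i$ is legitimate and that no hypothesis of Theorem \ref{ThmFFp0p1} (in particular the exclusion $b_i^{\mathrm{TL}} \ne 0$) is violated under the corollary's standing assumptions $b_i < -1/q_i$; once this is verified the argument is a mechanical case reduction, and it is worth remarking explicitly — as the proof should — that the three conditions of the corollary are genuinely exhaustive because every one of the seven cases of the theorem either survives or becomes vacuous under $q_i^{\mathrm{TL}}=2$, with none producing a spurious extra condition.
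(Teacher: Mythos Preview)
Your proposal is correct and takes essentially the same approach as the paper, which simply states that Corollary \ref{CorLipLip} is an immediate special case of Theorem \ref{ThmFFp0p1} via the identification \eqref{LipFnew1}. Your careful bookkeeping of how the seven cases collapse under the substitution $q_i^{\mathrm{TL}}=2$, $r_i^{\mathrm{TL}}=q_i$, $b_i^{\mathrm{TL}}=b_i+1/q_i$ is exactly what the paper leaves implicit.
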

	
	This result extends \cite[Theorem 5.1]{DominguezHaroskeTikhonov} where the proposed  methodology restricts to the case $p_0=p_1$. See also \cite[Theorem 5.3]{DominguezHaroskeTikhonov}. The limiting cases $p_0 = 1$ and/or $p_1=\infty$ in Corollary \ref{CorLipLip} are also of great interest since they are related to the classical Lipschitz space $\text{Lip}(\R^d)$ and the space $\text{BV}(\R^d)$ formed by bounded variation functions; these cases were already investigated in detail in \cite[Theorems 5.5, 5.7, 5.9 and 5.10]{DominguezHaroskeTikhonov}.

Another consequence of Theorem \ref{ThmFFp0p1} comes from the combination of (cf. Proposition \ref{PropositionCoincidences})
$$
	\textbf{B}^{0, b}_{p, q}(\R^d) = T^{b+1/q}_q F^{0}_{p, 2}(\R^d)
$$
and \eqref{LipFnew1}. To be more precise, we can establish the following

\begin{cor}[Embeddings between Lipschitz spaces and Besov spaces of smoothness near zero]
	Let $1 < p_0 \leq p_1 < \infty, 0 < q_i \leq \infty, s_0 > 0, b_0 < -1/q_0$ and $b_1 > -1/q_1$. Then
	$$
		\emph{Lip}^{s_0, b_0}_{p_0, q_0}(\R^d) \hookrightarrow \mathbf{B}^{0, b_1}_{p_1, q_1}(\R^d)
	$$
	if and only if
	$$
		s_0 -\frac{d}{p_0} > -\frac{d}{p_1}.
	$$
\end{cor}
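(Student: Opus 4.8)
The plan is to reduce the embedding between $\text{Lip}^{s_0,b_0}_{p_0,q_0}(\R^d)$ and $\mathbf{B}^{0,b_1}_{p_1,q_1}(\R^d)$ to an embedding between truncated Triebel--Lizorkin spaces, to which Theorem \ref{ThmFFp0p1} applies directly. By Proposition \ref{PropositionCoincidences}(iii)--(iv) we have the identifications
\begin{equation*}
	\text{Lip}^{s_0,b_0}_{p_0,q_0}(\R^d) = T^{b_0+1/q_0}_{q_0} F^{s_0}_{p_0,2}(\R^d), \qquad \mathbf{B}^{0,b_1}_{p_1,q_1}(\R^d) = T^{b_1+1/q_1}_{q_1} F^{0}_{p_1,2}(\R^d),
\end{equation*}
valid precisely under the hypotheses $1<p_i<\infty$, $s_0>0$, $b_0<-1/q_0$, $b_1>-1/q_1$. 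Thus the desired embedding $\text{Lip}^{s_0,b_0}_{p_0,q_0}(\R^d) \hookrightarrow \mathbf{B}^{0,b_1}_{p_1,q_1}(\R^d)$ is \emph{equivalent} to
\begin{equation*}
	T^{b_0+1/q_0}_{q_0} F^{s_0}_{p_0,2}(\R^d) \hookrightarrow T^{b_1+1/q_1}_{q_1} F^{0}_{p_1,2}(\R^d).
\end{equation*}

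Next I would apply Theorem \ref{ThmFFp0p1} with the parameter dictionary $q_0^{\mathrm{new}}=q_1^{\mathrm{new}}=2$ (the inner summability exponents are both $2$), $p_0^{\mathrm{new}}=p_0$, $p_1^{\mathrm{new}}=p_1$, $s_0^{\mathrm{new}}=s_0$, $s_1^{\mathrm{new}}=0$, $r_0^{\mathrm{new}}=q_0$, $r_1^{\mathrm{new}}=q_1$, $b_0^{\mathrm{new}}=b_0+1/q_0$, $b_1^{\mathrm{new}}=b_1+1/q_1$; one must check that $b_i^{\mathrm{new}}\neq 0$, which holds since $b_0<-1/q_0$ forces $b_0+1/q_0<0$ and $b_1>-1/q_1$ forces $b_1+1/q_1>0$, so in fact $b_0^{\mathrm{new}}<0<b_1^{\mathrm{new}}$. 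Now I examine the seven alternatives (i)--(vii) of Theorem \ref{ThmFFp0p1}. The "equal smoothness" alternatives (ii)--(vii) all require $s_0^{\mathrm{new}}-d/p_0^{\mathrm{new}} = s_1^{\mathrm{new}}-d/p_1^{\mathrm{new}}$, i.e.\ $s_0-d/p_0 = -d/p_1$, and additionally impose conditions involving $b_0^{\mathrm{new}}$ versus $b_1^{\mathrm{new}}$; but since $b_0^{\mathrm{new}}<0<b_1^{\mathrm{new}}$ we have neither $b_0^{\mathrm{new}}>b_1^{\mathrm{new}}$ nor $b_0^{\mathrm{new}}=b_1^{\mathrm{new}}$, and in the cases where $q_1^{\mathrm{new}}<q_0^{\mathrm{new}}$ might be relevant here $q_0^{\mathrm{new}}=q_1^{\mathrm{new}}=2$ so those branches are vacuous too. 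Hence none of (ii)--(vii) can ever hold, and the embedding is equivalent to alternative (i): $s_0-d/p_0 > 0 - d/p_1$, that is, $s_0 - \frac{d}{p_0} > -\frac{d}{p_1}$, which is exactly the claimed condition.

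The main point requiring care is the verification that the borderline cases are genuinely excluded — in particular confirming the strict inequalities $b_0+1/q_0<0$ and $b_1+1/q_1>0$ and noting that they are simultaneously incompatible with every equal-smoothness branch of Theorem \ref{ThmFFp0p1}. This is the only real obstacle; once the sign analysis of $b_i^{\mathrm{new}}$ is in place, the rest is a bookkeeping translation through Proposition \ref{PropositionCoincidences}. I would also remark, for completeness, that the restriction $p_0\le p_1$ is needed for Theorem \ref{ThmFFp0p1} to be applicable and is assumed in the statement, and that the condition $s_0-d/p_0>-d/p_1$ automatically entails $s_0>0$ when $p_0\le p_1$ only if $p_0<p_1$; when $p_0=p_1$ it reduces to $s_0>0$, which is part of the hypotheses, so there is no inconsistency. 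Finally I would note the sufficiency direction could alternatively be seen more directly by chaining $\text{Lip}^{s_0,b_0}_{p_0,q_0}\hookrightarrow B^{s_0,b_0+1/\max\{2,p_0,q_0\}}_{p_0,q_0}$ (cf.\ \eqref{LipB1nenenenen}), a Sobolev-type embedding lowering $p_0$ to $p_1$ and smoothness to $0$, and then $B^{0,\,\cdot}_{p_1,q_1}\hookrightarrow \mathbf{B}^{0,b_1}_{p_1,q_1}$ (cf.\ \eqref{LipB3nenenenen}); but the interpolation-free route through Theorem \ref{ThmFFp0p1} is cleaner and simultaneously yields optimality.
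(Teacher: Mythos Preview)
Your proposal is correct and follows exactly the approach indicated in the paper: the corollary is stated there without a written-out proof, merely as a consequence of Theorem~\ref{ThmFFp0p1} combined with the identifications $\text{Lip}^{s_0,b_0}_{p_0,q_0}(\R^d)=T^{b_0+1/q_0}_{q_0}F^{s_0}_{p_0,2}(\R^d)$ and $\mathbf{B}^{0,b_1}_{p_1,q_1}(\R^d)=T^{b_1+1/q_1}_{q_1}F^{0}_{p_1,2}(\R^d)$ from Proposition~\ref{PropositionCoincidences}. Your sign analysis showing $b_0+1/q_0<0<b_1+1/q_1$, which rules out all equal-smoothness branches (ii)--(vii) of Theorem~\ref{ThmFFp0p1}, is precisely the missing detail and is carried out correctly.
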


\begin{proof}[Proof of Theorem \ref{ThmFFp0p1}]
\emph{Sufficient condition} (i): Let $s_0 -d/p_0 > s_1-d/p_1$. According to Corollary \ref{CorTruncBTLFixedps} and Theorem \ref{TheoremEmbeddingsBBCharacterization}, we have
$$
	T^{b_0}_{r_0} F^{s_0}_{p_0, q_0}(\R^d) \hookrightarrow T^{b_0}_{r_0} B^{s_0}_{p_0, \max\{p_0, q_0\}}(\R^d) \hookrightarrow T^{b_1}_{r_1} B^{s_1}_{p_1, \min\{p_1, q_1\}}(\R^d) \hookrightarrow T^{b_1}_{r_1}F^{s_1}_{p_1, q_1}(\R^d).	
$$

\emph{Sufficient conditions} (ii)-(iii): Assume $s_0 - d/p_0 = s_1-d/p_1$ and $p_0 < p_1$ (in particular, $s_0 > s_1$). We will show that, for every $j \in \N_0$,
\begin{equation}\label{ThmFFp0p1Proof0}
	\bigg\|\bigg(\sum_{\nu=2^j-1}^{2^{j+1}-2} 2^{\nu s_1 q_1} |(\varphi_\nu \widehat{f})^\vee(x)|^{q_1} \bigg)^{\frac{1}{q_1}} \bigg\|_{L_{p_1}(\R^d)} \lesssim \bigg\| \sup_{2^{j}-1 \leq \nu \leq 2^{j+1}-2} 2^{\nu s_0} |(\varphi_\nu \widehat{f})^\vee|  \bigg\|_{L_{p_0}(\R^d)}.
\end{equation}

To prove \eqref{ThmFFp0p1Proof0},  we make use of the following well-known inequality (see, e.g., \cite[Theorem 5.6.1]{BerghLofstrom}): let $\theta \in (0, 1), q_1 \in (0, \infty]$ and $s_1 = (1-\theta) s_2 + \theta s_0$, then
$$
	\bigg(\sum_{\nu=0}^\infty 2^{\nu s_1 q_1} |a_\nu|^{q_1} \bigg)^{1/q_1} \lesssim \bigg(\sup_{\nu \in \N_0} 2^{\nu s_0} |a_\nu| \bigg)^{\theta} \bigg(\sup_{\nu \in \N_0} 2^{\nu s_2} |a_\nu| \bigg)^{1-\theta}
$$
for every complex-valued sequence $(a_\nu)_{\nu \in \N_0}$. Consequently, given any $x \in \R^d$ and $j \in \N_0$, the previous inequality with $\theta = p_0/p_1$ gives
\begin{align}
	\bigg(\sum_{\nu=2^j-1}^{2^{j+1}-2} 2^{\nu s_1 q_1} |(\varphi_\nu \widehat{f})^\vee(x)|^{q_1} \bigg)^{\frac{1}{q_1}} & \lesssim \nonumber \\
	& \hspace{-4cm}\bigg(\sup_{2^{j}-1 \leq \nu \leq 2^{j+1}-2} 2^{\nu s_0} |(\varphi_\nu \widehat{f})^\vee(x)| \bigg)^{\frac{p_0}{p_1}} \bigg(\sup_{2^{j}-1 \leq \nu \leq  2^{j+1}-2} 2^{\nu s_2}  |(\varphi_\nu \widehat{f})^\vee(x)|  \bigg)^{1-\frac{p_0}{p_1}} \nonumber \\
	& \hspace{-4cm} \leq \bigg(\sup_{2^{j}-1 \leq \nu \leq 2^{j+1}-2} 2^{\nu s_0} |(\varphi_\nu \widehat{f})^\vee(x)| \bigg)^{\frac{p_0}{p_1}} \bigg(\sup_{2^{j}-1 \leq \nu \leq  2^{j+1}-2} 2^{\nu s_2}  \|(\varphi_\nu \widehat{f})^\vee\|_{L_\infty(\R^d)}  \bigg)^{1-\frac{p_0}{p_1}}.  \label{ThmFFp0p1Proof1}
\end{align}
Furthermore, by the classical Nikolskii's inequality for entire functions of exponential type (cf. \cite{Nikolskii51, Nikolskii}), we have
$$
	\|(\varphi_\nu \widehat{f})^\vee\|_{L_\infty(\R^d)} \lesssim 2^{\nu d/p_0} \|(\varphi_\nu \widehat{f})^\vee\|_{L_{p_0}(\R^d)}
$$
and thus (noting that $s_2 = s_0 - \frac{d}{p_0}$)
\begin{align}
	\sup_{2^{j}-1 \leq \nu \leq  2^{j+1}-2} 2^{\nu s_2}  \|(\varphi_\nu \widehat{f})^\vee\|_{L_\infty(\R^d)} &\lesssim \sup_{2^{j}-1 \leq \nu \leq  2^{j+1}-2} 2^{\nu s_0}  \|(\varphi_\nu \widehat{f})^\vee\|_{L_{p_0}(\R^d)} \nonumber \\
	& \leq \bigg\| \sup_{2^{j}-1 \leq \nu \leq  2^{j+1}-2}  2^{\nu s_0 q_0} |(\varphi_\nu \widehat{f})^\vee|^{q_0}  \bigg\|_{L_{p_0}(\R^d)}.  \label{ThmFFp0p1Proof2}
\end{align}

Putting together \eqref{ThmFFp0p1Proof1} and \eqref{ThmFFp0p1Proof2},
\begin{align*}
	\bigg(\sum_{\nu=2^j-1}^{2^{j+1}-2} 2^{\nu s_1 q_1} |(\varphi_\nu \widehat{f})^\vee(x)|^{q_1} \bigg)^{\frac{1}{q_1}} & \lesssim  \\
	& \hspace{-4cm} \bigg(\sup_{2^{j}-1 \leq \nu \leq 2^{j+1}-2} 2^{\nu s_0} |(\varphi_\nu \widehat{f})^\vee(x)| \bigg)^{\frac{p_0}{p_1}}   \bigg\| \sup_{2^{j}-1 \leq \nu \leq  2^{j+1}-2}  2^{\nu s_0 q_0} |(\varphi_\nu \widehat{f})^\vee|^{q_0}  \bigg\|_{L_{p_0}(\R^d)}^{1-\frac{p_0}{p_1}}
\end{align*}
and integrating on both sides of the previous estimate, we arrive at \eqref{ThmFFp0p1Proof0}.

It follows from \eqref{ThmFFp0p1Proof0} that
\begin{align}
	\|f\|_{T^{b_1}_{r_1} F^{s_1}_{p_1, q_1}(\R^d)} &=  \left(\sum_{j=0}^\infty 2^{j b_1 r_1}  \bigg\| \bigg(\sum_{\nu=2^j-1}^{2^{j+1}-2} 2^{\nu s_1 q_1} |(\varphi_\nu \widehat{f})^\vee|^{q_1} \bigg)^{1/q_1} \bigg\|_{L_{p_1}(\R^d)}^{r_1} \right)^{1/r_1} \nonumber \\
	& \lesssim \left(\sum_{j=0}^\infty 2^{j b_1 r_1}  \bigg\| \sup_{2^{j}-1 \leq \nu \leq 2^{j+1}-2} 2^{\nu s_0} |(\varphi_\nu \widehat{f})^\vee|  \bigg\|_{L_{p_0}(\R^d)}^{r_1}  \right)^{1/r_1} \nonumber \\
	& \leq  \left(\sum_{j=0}^\infty 2^{j b_1 r_1}  \bigg\|\bigg( \sum_{\nu=2^j-1}^{2^{j+1}-2} 2^{\nu s_0 q_0} |(\varphi_\nu \widehat{f})^\vee|^{q_0} \bigg)^{1/q_0}  \bigg\|_{L_{p_0}(\R^d)}^{r_1}  \right)^{1/r_1}. \label{ThmFFp0p1Proof3}
\end{align}
On the other hand, it is plain to see that
\begin{equation}\label{ThmFFp0p1Proof4}
	\bigg(\sum_{j=0}^\infty 2^{j b_1 r_1} |a_j|^{r_1} \bigg)^{1/r_1} \lesssim \bigg(\sum_{j=0}^\infty 2^{j b_0 r_0} |a_j|^{r_0} \bigg)^{1/r_0}
\end{equation}
holds provided that either
$
	b_0 > b_1
$ or $b_0=b_1$ and $r_0 \leq r_1$.

By \eqref{ThmFFp0p1Proof3} and \eqref{ThmFFp0p1Proof4}, we get
\begin{align*}
	\|f\|_{T^{b_1}_{r_1}F^{s_1}_{p_1, q_1}(\R^d)} &\lesssim \left(\sum_{j=0}^\infty 2^{j b_0 r_0}  \bigg\|\bigg( \sum_{\nu=2^j-1}^{2^{j+1}-2} 2^{\nu s_0 q_0} |(\varphi_\nu \widehat{f})^\vee|^{q_0} \bigg)^{1/q_0}  \bigg\|_{L_{p_0}(\R^d)}^{r_0}  \right)^{1/r_0} \\
	& = \|f\|_{T^{b_0}_{r_0} F^{s_0}_{p_0, q_0}(\R^d)}.
	\end{align*}

\emph{Sufficient conditions} (iv)-(vii): Let $p= p_0= p_1$ and $s= s_0 = s_1$. We have
$$
	\bigg( \sum_{\nu=2^j-1}^{2^{j+1}-2} 2^{\nu s q_1} |(\varphi_\nu \widehat{f})^\vee(x)|^{q_1} \bigg)^{1/q_1} \lesssim 2^{j(1/q_1-1/q_0)_+} \bigg( \sum_{\nu=2^j-1}^{2^{j+1}-2} 2^{\nu s q_0} |(\varphi_\nu \widehat{f})^\vee(x)|^{q_0} \bigg)^{1/q_0}
$$
for $x \in \R^d$ and $j \in \N_0$. This is clear from the embedding $\ell_{q_0} \hookrightarrow \ell_{q_1}$ if $q_0 \leq q_1$ and from H\"older's inequality if $q_1 < q_0$. Hence
\begin{align*}
	\|f\|_{T^{b_1}_{r_1} F^{s}_{p, q_1}(\R^d)} &\leq \bigg(\sum_{j=0}^\infty 2^{j (b_1 + (1/q_1-1/q_0)_+ )r_1} \bigg\|\bigg( \sum_{\nu=2^j-1}^{2^{j+1}-2} 2^{\nu s q_0} |(\varphi_\nu \widehat{f})^\vee|^{q_0} \bigg)^{1/q_0} \bigg\|_{L_p(\R^d)}^{r_1}  \bigg)^{1/r_1} \\
	& \lesssim \|f\|_{T^{b_0}_{r_0} F^{s}_{p, q_0}(\R^d)},
\end{align*}
where we have applied \eqref{ThmFFp0p1Proof4} in the last estimate.

\emph{Necessary condition $s_0 -\frac{d}{p_0} \geq s_1-\frac{d}{p_1}$ and $b_0 \geq b_1$ under $s_0 -\frac{d}{p_0} = s_1-\frac{d}{p_1}$:} Assume \eqref{ThmFFp0p1State} holds or equivalently, by Theorem \ref{ThmWaveletsNewTriebelLizorkin}, working at the sequence level
\begin{equation}\label{ThmFFp0p1Proof5}
	T^{b_0}_{r_0} f^{s_0}_{p_0, q_0} \hookrightarrow T^{b_1}_{r_1} f^{s_1}_{p_1, q_1}.
\end{equation}
	For every $k \in \N$, define the sequence $\lambda_k = (\lambda^{j,G}_{m, k})$ given by
	\begin{equation*}
		\lambda^{j,G}_{m, k} = \left\{\begin{array}{cl}  1, & \quad j = 2^k, \quad m= (0, \ldots, 0),  \\
		& \qquad G = (M, \ldots, M),  \\
		0, & \text{otherwise}.
		       \end{array}
                        \right.
	\end{equation*}
	Then
	$$
		\|\lambda_k\|_{T^{b_0}_{r_0} f^{s_0}_{p_0, q_0}} \asymp 2^{k b_0} 2^{2^k(s_0-d/p_0)} \qquad \text{and} \qquad \|\lambda_k\|_{T^{b_1}_{r_1} f^{s_1}_{p_1, q_1}} \asymp 2^{k b_1} 2^{2^k(s_1-d/p_1)}.
	$$
	The validity of \eqref{ThmFFp0p1Proof5} implies that either $s_0 - d/p_0 > s_1-d/p_1$ or $s_0 - d/p_0 = s_1-d/p_1$ and $b_0 \geq b_1$.
	
	\emph{Necessary condition $r_0 \leq r_1$ under $s_0 - \frac{d}{p_0} = s_1 - \frac{d}{p_1}$ and $b_0 = b_1$:} Let $\beta = (\beta_k)$ any scalar-valued sequence and define the related sequence
		\begin{equation*}
		\lambda^{j,G}_{m} = \left\{\begin{array}{cl} 2^{-2^k (s_0 - \frac{d}{p_0})} 2^{-k b_0} \beta_k , & \quad j = 2^k, \quad m= (0, \ldots, 0),  \\
		& \qquad G = (M, \ldots, M),  \\
		0, & \text{otherwise}.
		       \end{array}
                        \right.
	\end{equation*}
	Basic computations yield
	$$
		\|\lambda\|_{T^{b_0}_{r_0} f^{s_0}_{p_0, q_0}} \asymp \|\beta\|_{\ell_{r_0}} \qquad \text{and} \qquad \|\lambda\|_{T^{b_1}_{r_1} f^{s_1}_{p_1, q_1}} \asymp \|\beta\|_{\ell_{r_1}}.
	$$
	Therefore, by \eqref{ThmFFp0p1Proof5}, we conclude $r_0 \leq r_1$.
	
		\emph{Necessary condition $b_1 + \frac{1}{q_1} \leq b_0 + \frac{1}{q_0}$ under $s=s_0 = s_1$ and $p=p_0=p_1$:} We let $\lambda = (\lambda^{j, G}_{m })$ where
		\begin{equation*}
		\lambda^{j,G}_{m} = \left\{\begin{array}{cl}  2^{-j s_0} \beta_k, & \quad j \in \{2^k-1, \cdots, 2^{k+1}-2\}, \quad k \in \N_0, \quad m= (0, \ldots, 0),  \\
		& \qquad G = (M, \ldots, M),  \\
		0, & \text{otherwise}.
		       \end{array}
                        \right.
	\end{equation*}
	Here $\beta = (\beta_k)_{k \in \N_0}$ denotes any scalar-valued sequence to be chosen.
	
	Then
	\begin{equation}\label{ProofFJGen3new}
	\|\lambda\|_{T^{b_0}_{r_0} f^{s}_{p, q_0}} =\bigg( \sum_{k=0}^\infty 2^{k b_0 r_0} |\beta_k|^{r_0} \bigg\| \bigg(\sum_{j=2^k-1}^{2^{k+1}-2}  \chi_{j, (0, \ldots, 0)} \bigg)^{1/q_0} \bigg\|_{L_{p}(\R^d)}^{r_0} \bigg)^{1/r_0}.
	\end{equation}
	We can estimate
	\begin{align*}
		\bigg\| \bigg(\sum_{j=2^k-1}^{2^{k+1}-2}  \chi_{j, (0, \ldots, 0)} \bigg)^{1/q_0} \bigg\|_{L_{p}(\R^d)} &\geq \bigg\| \bigg(\sum_{j=2^k-1}^{2^{k+1}-2}  \chi_{j, (0, \ldots, 0)} \bigg)^{1/q_0} \bigg\|_{L_{p}(Q_{2^{k+1}-2, (0, \ldots, 0)})} \\
		&\asymp 2^{k/q_0} 2^{-2^{k+1} d/p}.
	\end{align*}
	Conversely,
	$$
	\bigg\| \bigg(\sum_{j=2^k-1}^{2^{k+1}-2}  \chi_{j, (0, \ldots, 0)} \bigg)^{1/q_0} \bigg\|_{L_{p}(\R^d)} \lesssim 2^{k/q_0} \|\chi_{2^k-1, (0, \ldots, 0)}\|_{L_{p}(\R^d)} \asymp 2^{k/q_0}  2^{-2^k d/p}.
	$$
	Inserting these estimates into \eqref{ProofFJGen3new}, we obtain
	\begin{equation}\label{ProofFJGen3new34}
		\|\lambda\|_{T^{b_0}_{r_0} f^{s}_{p, q_0}} \asymp \bigg( \sum_{k=0}^\infty 2^{k (b_0+ 1/q_0) r_0}   2^{-2^k d r_0/p} |\beta_k|^{r_0}\bigg)^{1/r_0}.
	\end{equation}
	Analogously,
	\begin{equation}\label{ProofFJGen3new35}
		\|\lambda\|_{T^{b_1}_{r_1} f^{s}_{p, q_1}} \asymp \bigg( \sum_{k=0}^\infty 2^{k (b_1+ 1/q_1) r_1}   2^{-2^k d r_1/p} |\beta_k|^{r_1}\bigg)^{1/r_1}.
	\end{equation}
	
	It follows from \eqref{ThmFFp0p1Proof5}, \eqref{ProofFJGen3new34} and \eqref{ProofFJGen3new35}, that
	\begin{equation}\label{ProofFJGen3new36}
		\bigg( \sum_{k=0}^\infty 2^{k (b_1+ 1/q_1) r_1}   2^{-2^k d r_1/p} |\beta_k|^{r_1}\bigg)^{1/r_1} \lesssim \bigg( \sum_{k=0}^\infty 2^{k (b_0+ 1/q_0) r_0}   2^{-2^k d r_0/p} |\beta_k|^{r_0}\bigg)^{1/r_0}.
	\end{equation}
	For an arbitrary $N \in \N_0$, we define
	$$
		\beta_k = \left\{\begin{array}{cl} 1, & \quad k \in \{0, \cdots, N\},  \\
		0, & k > N.
		       \end{array}
                        \right.
	$$
	By \eqref{ProofFJGen3new36}, we get
	$$
		2^{N(b_1 + 1/q_1)} \lesssim 2^{N(b_0 + 1/q_0)}
	$$
	which yields $b_1 + 1/q_1 \leq b_0 + 1/q_0$.
	
	\emph{Necessary condition $r_0 \leq r_1$ under $b_1 + \frac{1}{q_1} = b_0 + \frac{1}{q_0}, \, s=s_0 = s_1$ and $p=p_0=p_1$:} Given any sequence $(\xi_k)_{k \in \N_0}$, applying \eqref{ProofFJGen3new36} related to $(\beta_k)_{k \in \N_0} = (2^{2^k d/p} 2^{-k(b_1 + 1/q_1)} \xi_k)_{k \in \N_0}$, we obtain
	$$
		\bigg(\sum_{k=0}^\infty |\xi_k|^{r_1} \bigg)^{1/r_1} \lesssim \bigg(\sum_{k=0}^\infty |\xi_k|^{r_0} \bigg)^{1/r_0}.
	$$
	This implies $r_0 \leq r_1$.

\end{proof}

\newpage

\section{Franke--Jawerth embeddings  for truncated spaces
%(=embeddings between		$F^{s_0, b_0}_{p_0, q_0, r_0}$ and $B^{s_1, b_1}_{p_1, q_1, r_1}$)
}\label{SectionFJ}

 We start by recalling the classical Franke--Jawerth embeddings \cite{Jawerth, Franke} (see also \cite{Marschall}, \cite{Sickel} and \cite{Vybiral08}). Let $0 < p_0 < p < p_1 \leq \infty, \, -\infty < s_1 < s < s_0 < \infty$ with
 $$s_0 - \frac{d}{p_0} = s -\frac{d}{p}= s_1-\frac{d}{p_1}.$$
  Let $0 < q, q_0, q_1 \leq \infty$. Then
\begin{equation}\label{FJClas}
	B^{s_0}_{p_0, q_0}(\R^d) \hookrightarrow F^{s}_{p, q}(\R^d)	 \hookrightarrow B^{s_1}_{p_1, q_1}(\R^d)
\end{equation}
if and only if
$$q_0 \leq p \leq q_1.$$

\begin{thm}\label{ThmFJ}
	Let $0 < p_0 < p_1 \leq \infty, -\infty < s_1 < s_0 < \infty, 0 < q_i, r_i  \leq \infty$ and $b_i \in \mathbb{R}\backslash \{0\}$ for $i=0, 1$. Then
	\begin{equation}\label{ThmFJState}
		T^{b_0}_{r_0} F^{s_0}_{p_0, q_0}(\R^d) \hookrightarrow T^{b_1}_{r_1} B^{s_1}_{p_1, q_1}(\R^d)
	\end{equation}
	if and only if one of the following conditions is satisfied
	\begin{enumerate}[\upshape(i)]
		\item $s_0 -\frac{d}{p_0} > s_1 - \frac{d}{p_1}$.
		\item $s_0 -\frac{d}{p_0} = s_1 - \frac{d}{p_1}, \quad 0 < p_0 \leq q_1 \leq \infty, \quad 0 < q_0 \leq \infty$ \quad and \quad $b_1 < b_0$.
		\item $s_0 -\frac{d}{p_0} = s_1 - \frac{d}{p_1}, \quad 0 < p_0 \leq q_1 \leq \infty, \quad 0 < q_0 \leq \infty, \quad b_1 = b_0$ \quad and \quad $r_0 \leq r_1$.
		\item $s_0 -\frac{d}{p_0} = s_1 - \frac{d}{p_1}, \quad 0 < q_1 < p_0, \quad 0 < q_0 \leq \infty$ \quad and \quad $b_1 + \frac{1}{q_1} < b_0 + \frac{1}{p_0}$.
		\item $s_0 -\frac{d}{p_0} = s_1 - \frac{d}{p_1}, \quad 0 < q_1 < p_0, \quad 0 < q_0 \leq \infty, \quad b_1 + \frac{1}{q_1} = b_0 + \frac{1}{p_0}$ and $0 < r_0 \leq r_1 \leq \infty$.
	\end{enumerate}
\end{thm}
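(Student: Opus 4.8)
The strategy is to reduce the truncated Franke--Jawerth embedding \eqref{ThmFJState} to the ``fixed integrability parameter'' embeddings already established in Section~12, in complete analogy with the proof of the classical result \eqref{FJClas} from the Sobolev embedding. The key observation is that the non-limiting case $s_0-\frac d{p_0}>s_1-\frac d{p_1}$ is already contained in Theorem~\ref{ThmFFp0p1}: indeed, combining $T^{b_0}_{r_0}F^{s_0}_{p_0,q_0}(\R^d)\hookrightarrow T^{b_0}_{r_0}B^{s_0}_{p_0,\max\{p_0,q_0\}}(\R^d)$ (Corollary~\ref{CorTruncBTLFixedps}) with Theorem~\ref{TheoremEmbeddingsBBCharacterization}(i) gives (i) immediately. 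So the heart of the matter is the \emph{critical line} $s_0-\frac d{p_0}=s_1-\frac d{p_1}$, where the subtle interplay between $p_0$, $q_1$ and the logarithmic parameters $b_0,b_1,r_0,r_1$ occurs.

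\textbf{Sufficiency on the critical line.} Fix the differential-dimension relation $s_0-\frac d{p_0}=s_1-\frac d{p_1}$. Pick an intermediate integrability exponent $p$ with $p_0<p<p_1$ and the corresponding $s$ on the critical line, $s-\frac dp=s_0-\frac d{p_0}$, so that $s_1<s<s_0$. The plan is to factor \eqref{ThmFJState} as
\begin{equation*}
	T^{b_0}_{r_0}F^{s_0}_{p_0,q_0}(\R^d)\hookrightarrow T^{c}_{u}F^{s}_{p,p}(\R^d)\hookrightarrow T^{b_1}_{r_1}B^{s_1}_{p_1,q_1}(\R^d),
\end{equation*}
where the first embedding is the classical Franke part for truncated $F$-spaces and the second is the Jawerth part. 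The first arrow should follow from a truncated version of the Franke embedding $B^{s_0}_{p_0,q_0}\hookrightarrow F^{s}_{p,q}$ on the critical line; more efficiently, since $T^{b_0}_{r_0}F^{s_0}_{p_0,q_0}\hookrightarrow T^{b_0}_{r_0}B^{s_0}_{p_0,\infty}$ and one then wants $T^{b_0}_{r_0}B^{s_0}_{p_0,\infty}(\R^d)\hookrightarrow T^{b_1}_{r_1}F^{s_1}_{p_1,q_1}(\R^d)$ when $p_0<p_1$ — but this $B\to F$ step across different integrability parameters is not covered by Section~12. Thus the cleaner route is to use Theorem~\ref{ThmFFp0p1} for $F\to F$ at a strictly larger integrability parameter together with the Jawerth tail $F\to B$. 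Concretely: by Theorem~\ref{ThmFFp0p1}(ii)-(iii) (with $p_0<p$) one gets $T^{b_0}_{r_0}F^{s_0}_{p_0,q_0}(\R^d)\hookrightarrow T^{b_1}_{r_1}F^{s}_{p,q_1}(\R^d)$ whenever $b_1<b_0$ (or $b_1=b_0,\ r_0\le r_1$), using the fact that on the critical line with $p_0<p$ the $q$-indices play no role; then the \emph{classical} Jawerth embedding applied ``blockwise'' via the representation \eqref{RemFM} yields $T^{b_1}_{r_1}F^{s}_{p,q_1}(\R^d)\hookrightarrow T^{b_1}_{r_1}B^{s_1}_{p_1,q_1}(\R^d)$ provided $p\le q_1$. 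Choosing $p$ close enough to $p_0$ makes $p\le q_1$ exactly when $p_0\le q_1$, which is case (ii)-(iii). For case (iv)-(v), where $q_1<p_0$, one first applies Theorem~\ref{TheoremFB1new} (the sharp $T^bF\hookrightarrow T^bB$ at fixed $p$) to absorb the $\frac1{q_1}$ versus $\frac1{p_0}$ shift into the logarithmic index, and then runs the same $F\to F$ step of Theorem~\ref{ThmFFp0p1}; the bookkeeping of which logarithmic exponent survives is the routine part. Throughout, the blockwise application of classical embeddings is legitimate because of \eqref{RemFM}, which expresses the truncated quasi-norm as an $\ell_r$-sum of classical quasi-norms of dyadic blocks $\sum_{\nu=2^j-1}^{2^{j+1}-2}(\varphi_\nu\widehat f)^\vee$, and classical Franke--Jawerth \eqref{FJClas} applies to each block with constants independent of $j$.

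\textbf{Necessity and the main obstacle.} For the necessity of $s_0-\frac d{p_0}\ge s_1-\frac d{p_1}$, and of the logarithmic conditions on the critical line, I would pass to the wavelet sequence spaces via Theorems~\ref{ThmWaveletsNewBesov} and~\ref{ThmWaveletsNewTriebelLizorkin}, reducing \eqref{ThmFJState} to $T^{b_0}_{r_0}f^{s_0}_{p_0,q_0}\hookrightarrow T^{b_1}_{r_1}b^{s_1}_{p_1,q_1}$, and then reuse the extremal sequences already constructed in the proofs of Theorems~\ref{TheoremEmbeddingsBBCharacterization}, \ref{TheoremFB1new} and \ref{ThmFFp0p1}: lacunary single-atom families $\lambda^{2^k,G}_{(0,\dots,0)}=\beta_k$ test the smoothness/dimension balance and the conditions $b_1\le b_0$, $r_0\le r_1$; ``full block'' families $\lambda^{j,G}_{(0,\dots,0)}=2^{-j(s_0-d/p_0)}$ for $j\in\{2^k-1,\dots,2^{k+1}-2\}$ produce the factor $2^{k(b_i+1/q_i)}$ (respectively $2^{k(b_0+1/p_0)}$, because the $F$-sequence quasi-norm of such a block involves $\|(\sum_j\chi_{j,0})^{1/q_0}\|_{L_{p_0}}\asymp 2^{k/p_0}$ up to the localization argument already used in the proof of Theorem~\ref{TheoremFB1new}), which forces $b_1+\frac1{q_1}\le b_0+\frac1{p_0}$ when $q_1<p_0$ and $b_1\le b_0$ when $p_0\le q_1$; and when these are equalities, slowly-varying perturbations $\beta_k=(1+k)^{-\varepsilon}$ with $\frac1{r_0}<\varepsilon<\frac1{r_1}$ rule out $r_0>r_1$. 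The only genuinely new necessity point beyond what appears verbatim in Section~12 is the \emph{sharpness of the threshold $p_0$} (rather than $\max\{p_0,q_0\}$) in the condition $p_0\le q_1$: this is the Franke-type phenomenon, and to see it one needs a test family that lives at integrability $p_0$ but whose $F^{s_0}_{p_0,q_0}$ block-norm is genuinely smaller than its $B^{s_0}_{p_0,\infty}$ block-norm would suggest — the standard device is a block of atoms spread over $2^{j d/p_0}$ disjoint cubes $Q_{j,m}$ within the block, so that the $\ell_{p_0}(m)$ summation over atoms converts the $q_0$-index into the sharp exponent $p_0$; this is exactly the construction hidden in the proof of the classical Franke embedding and I expect transcribing it faithfully to the truncated setting (keeping track of the extra $k$-th $\ell_{r_i}$ layer) to be the main technical obstacle of the argument. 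Everything else is a matter of carefully matching exponents.
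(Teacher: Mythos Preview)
Your blockwise-Jawerth idea for sufficiency is sound and in fact yields a \emph{shorter} argument than the paper's, but your execution has a gap and an unnecessary detour. The intermediate-$p$ factorisation fails at the boundary $p_0=q_1$: you need $p_0<p\le q_1$, which is impossible when $p_0=q_1$, yet this case is covered by (ii)--(iii). The clean fix is to skip the intermediate step entirely: apply classical Jawerth $F^{s_0}_{p_0,q_0}(\R^d)\hookrightarrow B^{s_1}_{p_1,p_0}(\R^d)$ directly (valid on the critical line for all $q_0$) together with Proposition~\ref{PropPreser} to get $T^{b_0}_{r_0}F^{s_0}_{p_0,q_0}(\R^d)\hookrightarrow T^{b_0}_{r_0}B^{s_1}_{p_1,p_0}(\R^d)$, and then invoke Theorem~\ref{TheoremEmbeddingsBBCharacterization} at fixed $s_1,p_1$; the resulting conditions on $(p_0,q_1,b_0,b_1,r_0,r_1)$ are \emph{exactly} (ii)--(v). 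This handles all four cases at once and renders your separate treatment of (iv)--(v) via Theorem~\ref{TheoremFB1new} superfluous. The paper takes a different route: it reduces to $q_0=\infty$ and proves the key embedding $T^{b_1+(1/q_1-1/p_0)_+}_{r_1}F^{s_0}_{p_0,\infty}(\R^d)\hookrightarrow T^{b_1}_{r_1}B^{s_1}_{p_1,q_1}(\R^d)$ by a direct rearrangement estimate on wavelet coefficients --- essentially reproving Jawerth at the truncated level rather than importing it.

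Your necessity sketch is correct, but your final worry is misplaced. The ``full block at the origin'' family $\lambda^{j,(M,\dots,M)}_{(0,\dots,0)}=2^{-j(s_1-d/p_1)}$ for $j\in\{2^k-1,\dots,2^{k+1}-2\}$ already forces $b_1+1/q_1\le b_0+1/p_0$: the nested-cubes computation you cite (equivalently \eqref{ProofFJGen4} in the paper) gives $\|\lambda_k\|_{T^{b_0}_{r_0}f^{s_0}_{p_0,q_0}}\asymp 2^{k(b_0+1/p_0)}$ \emph{independently of $q_0$}, so no spread-over-many-cubes construction is needed for the $F\to B$ direction. (Such a construction \emph{is} the natural tool for the dual $B\to F$ statement, Theorem~\ref{ThmFJ2}, where the sharp threshold is $p_1$ rather than $\min\{p_1,q_1\}$ --- you have the right instinct, just for the wrong half of Franke--Jawerth.) The remaining necessity items --- $b_0\ge b_1$ and the $r_0\le r_1$ conditions in the equality cases --- are exactly the lacunary and log-perturbed sequences you describe, and the paper carries them out verbatim.
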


\begin{thm}\label{ThmFJ2}
	Let $0 < p_0 < p_1 \leq \infty, -\infty < s_1 < s_0 < \infty, 0 < q_i, r_i  \leq \infty$ and $b_i \in \mathbb{R}\backslash \{0\}$ for $i=0, 1$. Then
	\begin{equation}\label{ThmFJState2}
		T^{b_0}_{r_0} B^{s_0}_{p_0, q_0}(\R^d) \hookrightarrow T^{b_1}_{r_1} F^{s_1}_{p_1, q_1}(\R^d)
	\end{equation}
	if and only if one of the following conditions is satisfied
	\begin{enumerate}[\upshape(i)]
		\item $s_0 -\frac{d}{p_0} > s_1 - \frac{d}{p_1}$.
		\item $s_0 -\frac{d}{p_0} = s_1 - \frac{d}{p_1}, \quad 0 < q_0 \leq p_1 \leq \infty, \quad 0 < q_1 \leq \infty$ \quad and \quad $b_1 < b_0$.
		\item $s_0 -\frac{d}{p_0} = s_1 - \frac{d}{p_1}, \quad 0 < q_0 \leq p_1 \leq \infty, \quad 0 < q_1 \leq \infty, \quad b_1 = b_0$ \quad and \quad $r_0 \leq r_1$.
		\item $s_0 -\frac{d}{p_0} = s_1 - \frac{d}{p_1}, \quad 0 < p_1 < q_0, \quad 0 < q_1 \leq \infty$ \quad and \quad $b_1 + \frac{1}{p_1} < b_0 + \frac{1}{q_0}$.
		\item $s_0 -\frac{d}{p_0} = s_1 - \frac{d}{p_1}, \quad 0 < p_1 < q_0, \quad 0 < q_1 \leq \infty, \quad b_1 + \frac{1}{p_1} = b_0 + \frac{1}{q_0}$ and $0 < r_0 \leq r_1 \leq \infty$.
	\end{enumerate}
\end{thm}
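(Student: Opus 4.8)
\textbf{Proof plan for Theorem \ref{ThmFJ2}.}

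The plan is to reduce the statement about truncated spaces to the already–established embedding theorems at the level of truncated Besov and Triebel--Lizorkin spaces with fixed integrability parameter, combined with the classical Franke--Jawerth embedding \eqref{FJClas}; in other words, the whole proof is a chaining argument together with sharp counterexamples. First I would note that, by Theorem \ref{ThmWaveletsNewBesov} and Theorem \ref{ThmWaveletsNewTriebelLizorkin}, the embedding \eqref{ThmFJState2} is equivalent to the corresponding embedding $T^{b_0}_{r_0} b^{s_0}_{p_0,q_0} \hookrightarrow T^{b_1}_{r_1} f^{s_1}_{p_1,q_1}$ between sequence spaces, so all work can be carried out at the discrete level; this is where the counterexamples will be constructed (lacunary-type sequences concentrated on a single dyadic block $\{2^k-1,\dots,2^{k+1}-2\}$ or on a single index $j=2^k$, as in the proofs of Theorems \ref{ThmFFp0p1}, \ref{TheoremFB1new} and \ref{TheoremFB2new}).

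For the \emph{sufficiency} direction, in case (i) the gain of differential dimension lets me insert any intermediate exponents: by Corollary \ref{CorTruncBTLFixedps} one has $T^{b_0}_{r_0} B^{s_0}_{p_0,q_0}(\R^d)\hookrightarrow T^{b_0}_{r_0} B^{s_0}_{p_0,\max\{p_0,q_0\}}(\R^d)$, then Theorem \ref{TheoremEmbeddingsBBCharacterization}(i) gives $T^{b_0}_{r_0} B^{s_0}_{p_0,\max\{p_0,q_0\}}(\R^d)\hookrightarrow T^{b_1}_{r_1} B^{s_1}_{p_1,\min\{p_1,q_1\}}(\R^d)$, and finally Corollary \ref{CorTruncBTLFixedps} again gives $T^{b_1}_{r_1} B^{s_1}_{p_1,\min\{p_1,q_1\}}(\R^d)\hookrightarrow T^{b_1}_{r_1} F^{s_1}_{p_1,q_1}(\R^d)$. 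In cases (ii)--(v) the differential dimension is fixed, and here the crucial ingredient is a \emph{truncated Franke--Jawerth block estimate}: for each fixed $k$, applying \eqref{FJClas} (or rather its proof, via Nikolskii's inequality for entire functions of exponential type as used in the proof of Theorem \ref{ThmFFp0p1}) to the finitely many frequency blocks with $\nu\in\{2^k-1,\dots,2^{k+1}-2\}$ yields
\begin{equation*}
	\bigg\|\bigg(\sum_{\nu=2^k-1}^{2^{k+1}-2} 2^{\nu s_1 q_1}|(\varphi_\nu\widehat f)^\vee|^{q_1}\bigg)^{1/q_1}\bigg\|_{L_{p_1}(\R^d)} \lesssim 2^{k(1/\min\{p_1,q_1\}-1/q_0)_+}\bigg(\sum_{\nu=2^k-1}^{2^{k+1}-2} 2^{\nu s_0 q_0}\|(\varphi_\nu\widehat f)^\vee\|_{L_{p_0}(\R^d)}^{q_0}\bigg)^{1/q_0}.
\end{equation*}
Multiplying by $2^{kb_1}$, taking the $\ell_{r_1}$-norm over $k$, and then using the elementary scalar inequality $\big(\sum_k 2^{kb_1 r_1}|a_k|^{r_1}\big)^{1/r_1}\lesssim\big(\sum_k 2^{kb_0 r_0}|a_k|^{r_0}\big)^{1/r_0}$ — valid exactly when $b_0>b_1$, or $b_0=b_1$ and $r_0\le r_1$ — together with the matching relation among $b_i+1/q_i$ dictated by the cases, closes the sufficiency. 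The case split in (ii)--(v) mirrors precisely the split according to whether $q_0\le p_1$ (so that the $(\cdot)_+$ exponent vanishes) or $q_0>p_1$ (so that an extra $1/p_1-1/q_0$ appears, shifting the effective logarithmic smoothness).

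For the \emph{necessity} direction I would argue by contradiction in each failing regime, testing \eqref{ThmFJState2} (at the sequence level) against explicit sequences. To rule out $s_0-d/p_0<s_1-d/p_1$ and, in the equal-dimension case, to force $b_0\ge b_1$, use sequences supported on a single index $j=2^k$ with coefficient $\beta_k$, exactly as in the proof of Theorem \ref{ThmFFp0p1}; the quasi-norm ratio then reduces to comparing $2^{kb_1}2^{2^k(s_1-d/p_1)}$ with $2^{kb_0}2^{2^k(s_0-d/p_0)}$. To pin down the borderline conditions on $b_i+1/q_i$ when $q_0>p_1$ (case (iv)) and the final $r_0\le r_1$ in cases (iii) and (v), I would use sequences spread uniformly over a whole block $\{2^k-1,\dots,2^{k+1}-2\}$, as in the $b_1+1/q_1\le b_0+1/p$ argument for Theorem \ref{TheoremFB1new}; here one has to compute $\|(\sum_{j}\chi_{j,0})^{1/q}\|_{L_p}$ on the relevant dyadic cubes, which is the one mildly technical point, but it was already handled in \eqref{ProofFJGen4} and its analogues. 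Finally, for the limiting sub-cases ($b_0=b_1$, or $b_0+1/q_0=b_1+1/p_1$) one inserts an extra iterated-logarithmic factor $(1+\log(1+j))^{-\varepsilon}$ with $1/r_0<\varepsilon<1/r_1$ to separate the $\ell_{r_0}$ and $\ell_{r_1}$ summations. The main obstacle I anticipate is organizing the necessity counterexamples so that they are sharp in \emph{both} the integrability-type parameter combinations $b_i+1/q_i$ and $b_i+1/p_i$ simultaneously — that is, choosing the test sequences (single index vs.\ full block vs.\ lacunary over $k$) so each of the five alternatives is genuinely exhausted; Theorem \ref{ThmFJ2} being the ``$B\hookrightarrow F$'' companion of Theorem \ref{ThmFJ}, the roles of $p_0$ and $p_1$ are interchanged relative to that proof, so the bookkeeping must be redone rather than copied.
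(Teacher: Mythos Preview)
Your overall architecture --- reduce to sequence spaces via Theorems \ref{ThmWaveletsNewBesov}/\ref{ThmWaveletsNewTriebelLizorkin}, prove sufficiency by a blockwise Franke estimate plus chaining, and test necessity on the same families of lacunary and full-block sequences as in Theorems \ref{ThmFJ} and \ref{ThmFFp0p1} --- is correct and is in fact \emph{simpler} than what the paper does. The paper's sufficiency proof for (ii)--(v) does not quote the classical Franke embedding as a black box; instead it re-derives the blockwise estimate \eqref{ThmFJ2Claim1} from scratch at the wavelet-sequence level using a rearrangement/duality argument (dual exponents $\alpha,\beta,\gamma$ of $p_0/q_1,p_1/q_1,q_0/q_1$, the Hardy--Littlewood rearrangement inequality, and the maximal inequality $\|g^{**}\|_{L_\beta}\asymp\|g\|_{L_\beta}$), essentially following Vyb\'iral's direct proof of Franke's embedding. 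Your route via Proposition \ref{PropPreser} applied to the classical Franke embedding $B^{s_0}_{p_0,p_1}\hookrightarrow F^{s_1}_{p_1,q_1}$, followed by Theorem \ref{TheoremEmbeddingsBBCharacterization} for $T^{b_0}_{r_0}B^{s_0}_{p_0,q_0}\hookrightarrow T^{b_1}_{r_1}B^{s_0}_{p_0,p_1}$, is cleaner; the paper's route is self-contained and makes the mechanism explicit.

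There is, however, a concrete error in your displayed block estimate: the factor must be $2^{k(1/p_1-1/q_0)_+}$, \emph{not} $2^{k(1/\min\{p_1,q_1\}-1/q_0)_+}$. The whole point of Franke's embedding is that the inner index $q_1$ on the $F$-side is irrelevant; your version with $\min\{p_1,q_1\}$ is true but strictly weaker (it follows from the trivial chain $B^{s_0}_{p_0,q_0}\hookrightarrow B^{s_1}_{p_1,\min\{p_1,q_1\}}\hookrightarrow F^{s_1}_{p_1,q_1}$), and when $q_1<p_1$ it would \emph{not} suffice to establish cases (ii)--(v), since those conditions do not involve $q_1$. Relatedly, your parenthetical ``via Nikolskii's inequality\ldots as in Theorem \ref{ThmFFp0p1}'' is misplaced: Nikolskii gives the \emph{Jawerth} direction (the estimate \eqref{ThmFFp0p1Proof0}), not Franke. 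To get the correct $q_1$-free factor you must invoke the genuine Franke embedding, either as a black box through Proposition \ref{PropPreser}, or by reproving it block-wise via rearrangements as the paper does.
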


\begin{rem}
Let $s_0 -\frac{d}{p_0} = s_1 - \frac{d}{p_1}$ and $0 < q_0 \leq \infty$. Conditions (ii) and (iv) in Theorem \ref{ThmFJ} can be rewritten as
	$$
		  \quad b_0- b_1 > \Big(\frac{1}{q_1}-\frac{1}{p_0} \Big)_+,
	$$
	while conditions (iii) and (v) read as
	$$
		 b_0- b_1 = \Big(\frac{1}{q_1}-\frac{1}{p_0} \Big)_+ \qquad \text{and} \qquad r_0 \leq r_1.
	$$
	A similar comment applies to Theorem \ref{ThmFJ2} but now in terms of $(\frac{1}{p_1}-\frac{1}{q_0})_+$.
\end{rem}

\begin{proof}[Proof of Theorem \ref{ThmFJ}]
\emph{Sufficient condition} (i):  According to Theorems \ref{TheoremEmbeddingsBBCharacterization} and Corollary \ref{CorTruncBTLFixedps},
$$
	T^{b_0}_{r_0} F^{s_0}_{p_0, q_0}(\R^d) \hookrightarrow T^{b_0}_{r_0} B^{s_0}_{p_0, \max\{p_0, q_0\}}(\R^d) \hookrightarrow T^{b_1}_{r_1}B^{s_1}_{p_1, q_1}(\R^d).
$$

	\emph{Sufficient conditions} (ii)--(v): Under these assumptions, it follows from Theorem \ref{ThmFFp0p1} that
	$$T^{b_0}_{r_0} F^{s_0}_{p_0, q_0}(\R^d) \hookrightarrow T^{b_0}_{r_0} F^{s_0}_{p_0, \infty}(\R^d),$$
	and thus \eqref{ThmFJState} will be shown once the following
	\begin{equation}\label{ProofFJGen0}
		T^{b_1 + (1/q_1 - 1/p_0)_+}_{r_1} F^{s_0}_{p_0, \infty}(\R^d) \hookrightarrow T^{b_1}_{r_1}B^{s_1}_{p_1, q_1}(\R^d)
	\end{equation}
	is established.
	
	For $j \in \N_0$ and $x \in \R^d$, consider
	$$
		f_j (x) := \sup_{\nu = 2^{j}-1, \cdots, 2^{j+1}-2} 2^{\nu s_0} \sum_{m \in \Z^d, \, G \in G^\nu} |\lambda^{\nu, G}_m (f)| \chi_{\nu, m}(x).
	$$
	Clearly
	$$
		2^{\nu s_0} \sum_{G \in G^\nu} |\lambda^{\nu, G}_m (f)| \leq \inf_{x \in Q_{\nu, m}} f_j(x)
	$$
	for $ \nu \in \{2^j-1, \cdots, 2^{j+1}-2\}$ and $m \in \Z^d$. Therefore
	\begin{align}
		\bigg(\sum_{\nu=2^j-1}^{2^{j+1}-2} 2^{\nu (s_1-d/p_1) q_1} \sum_{G \in G^\nu} \bigg(\sum_{m \in \Z^d} |\lambda^{\nu, G}_m|^{p_1} \bigg)^{q_1/p_1} \bigg)^{1/q_1} & \lesssim \nonumber \\
		& \hspace{-7cm} \bigg(\sum_{\nu=2^j-1}^{2^{j+1}-2} 2^{\nu (s_1-s_0-d/p_1) q_1}\bigg(\sum_{m \in \Z^d} \inf_{x \in Q_{\nu, m}} f_j(x)^{p_1}  \bigg)^{q_1/p_1} \bigg)^{1/q_1} \nonumber \\
		& \hspace{-7cm} \lesssim \bigg(\sum_{\nu=2^j-1}^{2^{j+1}-2} 2^{\nu (s_1-s_0-d/p_1) q_1} \bigg(\sum_{l=1}^\infty f_j^*(2^{-\nu d} l)^{p_1} \bigg)^{q_1/p_1} \bigg)^{1/q_1} =:I_j.  \label{ProofFJGen1}
	\end{align}
	As usual, $f^*$ denotes the non-increasing rearrangement of $f$, cf. \eqref{Rearrengement} below for precise definition.
	
	To estimate $I_j$, we can make use of elementary monotonicity properties together with a simple change of variables in order to get
	$$
	I_j \asymp  \bigg(\sum_{\nu=2^j-1}^{2^{j+1}-2} 2^{\nu (s_1-s_0) q_1} \bigg(\sum_{\mu=-\nu}^\infty f_j^*(2^{\mu d})^{p_1} 2^{\mu d} \bigg)^{q_1/p_1} \bigg)^{1/q_1}
	$$
	and, since $p_0 < p_1$,
	$$
		I_j \lesssim  \bigg(\sum_{\nu=2^j-1}^{2^{j+1}-2} 2^{\nu (s_1-s_0) q_1} \bigg(\sum_{\mu=-\nu}^\infty f_j^*(2^{\mu d})^{p_0} 2^{\mu d p_0/p_1} \bigg)^{q_1/p_0} \bigg)^{1/q_1}.
	$$
	Furthermore, we claim that
	\begin{equation*}
		I_j \lesssim  2^{j(1/q_1-1/p_0)_+} \bigg(\sum_{\nu=2^j-1}^{2^{j+1}-2} 2^{\nu (s_1-s_0) p_0} \sum_{\mu=-\nu}^\infty f_j^*(2^{\mu d})^{p_0} 2^{\mu d p_0/p_1} \bigg)^{1/p_0}.
	\end{equation*}
	Indeed, this follows easily from $\ell_{p_0} \hookrightarrow \ell_{q_1}$ if $q_1 \geq p_0$ and from H\"older's inequality if $q_1 < p_0$. Changing the order of summation and taking into account that $s_0 > s_1$, we derive
	\begin{align*}
		I_j &\lesssim 2^{j(1/q_1-1/p_0)_+} \bigg(\sum_{\mu=-2^{j+1}+2}^{-2^j+1}  f_j^*(2^{\mu d})^{p_0} 2^{\mu d p_0/p_1}  \sum_{\nu = -\mu}^{2^{j+1}-2} 2^{ \nu (s_1 - s_0) p_0}  \bigg)^{1/p_0}  \\
		& \hspace{.5cm}+ 2^{j(1/q_1-1/p_0)_+} \bigg(\sum_{\mu = -2^j+1}^\infty  f_j^*(2^{\mu d})^{p_0} 2^{\mu d p_0/p_1}  \sum_{\nu = 2^{j}-1}^{2^{j+1}-2} 2^{\nu (s_1 - s_0) p_0}  \bigg)^{1/p_0} \\
		& \lesssim 2^{j(1/q_1-1/p_0)_+} \bigg(\sum_{\mu=-2^{j+1}+2}^{\infty}  f_j^*(2^{\mu d})^{p_0}  2^{ \mu p_0 (d/p_1 -s_1 + s_0)}  \bigg)^{1/p_0}  \\
		& \lesssim 2^{j(1/q_1-1/p_0)_+}  \|f_j\|_{L_{p_0}(\R^d)}.
	\end{align*}
	Inserting this into \eqref{ProofFJGen1}, we achieve
	$$
		\bigg(\sum_{\nu=2^j-1}^{2^{j+1}-2} 2^{\nu (s_1-d/p_1) q_1} \sum_{G \in G^\nu} \bigg(\sum_{m \in \Z^d} |\lambda^{\nu, G}_m|^{p_1} \bigg)^{q_1/p_1} \bigg)^{1/q_1} \lesssim  2^{j(1/q_1-1/p_0)_+}  \|f_j\|_{L_{p_0}(\R^d)}.
	$$
	Accordingly, by Theorem \ref{ThmWaveletsNewBesov} (noting that $b_1 \neq 0$),
	\begin{align*}
	\|f\|_{T^{b_1}_{r_1} B^{s_1}_{p_1, q_1}(\R^d)} & \asymp \bigg(\sum_{j=0}^\infty 2^{j b_1 r_1} \bigg(\sum_{\nu=2^j-1}^{2^{j+1}-2} 2^{\nu (s-d/p_1) q_1} \sum_{G \in G^\nu} \bigg(\sum_{m \in \Z^d} |\lambda^{\nu, G}_m|^{p_1} \bigg)^{q_1/p_1} \bigg)^{r_1/q_1} \bigg)^{1/r_1} \\
	&\hspace{-1.5cm} \lesssim \bigg(\sum_{j=0}^\infty 2^{j (b_1 + (1/q_1-1/p_0)_+) r_1}  \|f_j\|^{r_1}_{L_{p_0}(\R^d)}  \bigg)^{1/r_1} = \|f\|_{T_{r_1}^{b_1 + (1/q_1-1/p_0)_+} F^{s_0}_{p_0, \infty}(\R^d)}.
	\end{align*}
	This proves the desired embedding \eqref{ProofFJGen0}.
	
	\emph{Necessary conditions $s_0 - \frac{d}{p_0} > s_1 - \frac{d}{p_1}$ and $b_0 \geq b_1$ under $s_0 - \frac{d}{p_0} = s_1 - \frac{d}{p_1}$:} Assume that \eqref{ThmFJState} holds, or equivalently (cf. Theorems \ref{ThmWaveletsNewBesov} and \ref{ThmWaveletsNewTriebelLizorkin}),
	\begin{equation}\label{ProofFJGen2}
		T^{b_0}_{r_0} f^{s_0}_{p_0, q_0} \hookrightarrow T^{b_1}_{r_1} b^{s_1}_{p_1, q_1}.
	\end{equation}
	For every $k \in \N$, define the sequence $\lambda_k = (\lambda^{j,G}_{m, k})$ given by
	\begin{equation*}
		\lambda^{j,G}_{m, k} = \left\{\begin{array}{cl}  1, & \quad j = 2^k, \quad m= (0, \ldots, 0),  \\
		& \qquad G = (M, \ldots, M),  \\
		0, & \text{otherwise}.
		       \end{array}
                        \right.
	\end{equation*}
	Then
	$$
		\|\lambda_k\|_{T^{b_1}_{r_1}b^{s_1}_{p_1, q_1}} \asymp 2^{2^k (s_1-d/p_1)} 2^{k b_1} \qquad \text{and} \qquad \|\lambda_k\|_{T^{b_0}_{r_0} f^{s_0}_{p_0, q_0}} \asymp 2^{2^k (s_0-d/p_0)} 2^{k b_0}.
	$$
	It follows from \eqref{ProofFJGen2} that one of the following conditions is satisfied
	$$
	 \left\{\begin{array}{cl} s_0 - d/p_0 > s_1 - d/p_1, \\
			& \\
		s_0 - d/p_0 = s_1 - d/p_1  & \text{and} \quad b_0 \geq b_1.
		       \end{array}
                        \right.
	$$
	
	\emph{Necessary condition $r_0 \leq r_1$ under $s_0 - \frac{d}{p_0} = s_1 - \frac{d}{p_1}$ and $b_0 = b_1$:} Let $\beta = (\beta_k)$ any scalar-valued sequence and define the related sequence
		\begin{equation*}
		\lambda^{j,G}_{m} = \left\{\begin{array}{cl} 2^{-2^k (s_0 - \frac{d}{p_0})} 2^{-k b_0} \beta_k , & \quad j = 2^k, \quad m= (0, \ldots, 0),  \\
		& \qquad G = (M, \ldots, M),  \\
		0, & \text{otherwise}.
		       \end{array}
                        \right.
	\end{equation*}
	Elementary computations show that
	$$
		\|(\lambda^{j, G}_m)\|_{T^{b_1}_{r_1} b^{s_1}_{p_1, q_1}} \asymp \|\beta\|_{\ell_{r_1}} \qquad \text{and} \qquad \|(\lambda^{j, G}_m)\|_{T^{b_0}_{r_0} f^{s_0}_{p_0, q_0}} \asymp \|\beta\|_{\ell_{r_0}}.
	$$
	Since \eqref{ProofFJGen2} holds, we conclude that necessarily $r_0 \leq r_1$.
	
	\emph{Necessary condition $b_1 + \frac{1}{q_1} \leq b_0 + \frac{1}{p_0}$ under $s_0 - \frac{d}{p_0} = s_1 - \frac{d}{p_1}$:} For every $k \in \N$, we let $\lambda_k = (\lambda^{j, G}_{m , k})$ where
		\begin{equation*}
		\lambda^{j,G}_{m, k} = \left\{\begin{array}{cl}  2^{-j (s_1 - \frac{d}{p_1})}, & \quad j \in \{2^k-1, \cdots, 2^{k+1}-2\}, \quad m= (0, \ldots, 0),  \\
		& \qquad G = (M, \ldots, M),  \\
		0, & \text{otherwise}.
		       \end{array}
                        \right.
	\end{equation*}
	Then
	$$
		\|\lambda_k\|_{T^{b_1}_{r_1} b^{s_1}_{p_1, q_1}} \asymp 2^{k (b_1+1/q_1)}.
	$$
	On the other hand
	\begin{equation}\label{ProofFJGen3}
	\|\lambda_k\|_{T^{b_0}_{r_0} f^{s_0}_{p_0, q_0}} = 2^{k b_0} \bigg\| \bigg(\sum_{j=2^k-1}^{2^{k+1}-2} 2^{j d q_0/p_0} \chi_{j, (0, \ldots, 0)} \bigg)^{1/q_0} \bigg\|_{L_{p_0}(\R^d)}.
	\end{equation}
	Furthermore, we claim that, for every $x \in \R^d,$
	\begin{equation}\label{ProofFJGen4}
		 \bigg(\sum_{j=2^k-1}^{2^{k+1}-2} 2^{j d q_0/p_0} \chi_{j, (0, \ldots, 0)}(x) \bigg)^{1/q_0} \asymp  \bigg(\sum_{j=2^k-1}^{2^{k+1}-2} 2^{j d} \chi_{j, (0, \ldots, 0)} (x) \bigg)^{1/p_0}.
	\end{equation}
	Indeed, this becomes obvious if $x \not \in Q_{2^k-1, (0, \ldots, 0)}$, so that the only non-trivial case to check occurs if $x \in Q_{2^k-1, (0, \ldots, 0)}$. In this case, we denote by $j_x \in \{2^k-1, \cdots, 2^{k+1}-3\}$ such that $x \in Q_{j_x, (0, \ldots, 0)}$ but $x \not \in Q_{j_x + 1, (0, \ldots, 0)}$ and we set $j_x = 2^{k+1}-2$ if $x \in Q_{2^{k+1}-2, (0, \ldots, 0)}$. Accordingly,
	\begin{align*}
	 \bigg(\sum_{j=2^k-1}^{2^{k+1}-2} 2^{j d q_0/p_0} \chi_{j, (0, \ldots, 0)}(x) \bigg)^{1/q_0} & =  \bigg(\sum_{j=2^k-1}^{j_x} 2^{j d q_0/p_0}  \bigg)^{1/q_0} \\
	 &\hspace{-5cm} \asymp 2^{j_x d/p_0} \asymp \bigg(\sum_{j=2^k-1}^{j_x} 2^{j d}  \bigg)^{1/p_0} =  \bigg(\sum_{j=2^k-1}^{2^{k+1}-2} 2^{j d} \chi_{j, (0, \ldots, 0)}(x) \bigg)^{1/p_0}.
	\end{align*}
	Plugging \eqref{ProofFJGen4} into \eqref{ProofFJGen3} we arrive at
	$$
			\|\lambda_k\|_{T^{b_0}_{r_0} f^{s_0}_{p_0, q_0}} \asymp 2^{k (b_0+ 1/p_0)}.
	$$
	Then the validity of \eqref{ProofFJGen2} implies, in particular,
	$$
		2^{k (b_1+1/q_1)} \lesssim  2^{k (b_0+ 1/p_0)}, \qquad k \in \N,
	$$
	and thus $b_1 + 1/q_1 \leq b_0 + 1/p_0$.
	
	\emph{Necessary condition $r_0 \leq r_1$ under $s_0 - \frac{d}{p_0} = s_1 - \frac{d}{p_1}$ and $b_1 + \frac{1}{q_1} = b_0 + \frac{1}{p_0}:$} We shall proceed by contradiction, i.e., we shall prove that  \eqref{ProofFJGen2} with $r_0 > r_1$ is not true via a counterexample. Let
		\begin{equation*}
		\lambda^{j,G}_{m} = \left\{\begin{array}{cl}  2^{-j (s_1 - \frac{d}{p_1})} (1+j)^{-(b_1+1/q_1)} (1+ \log (j+1))^{-\varepsilon}, & \quad j \in \N_0, \quad m= (0, \ldots, 0),  \\
		& \qquad G = (M, \ldots, M),  \\
		0, & \text{otherwise},
		       \end{array}
                        \right.
	\end{equation*}
	where $1/r_0 < \varepsilon < 1/r_1$. We have
	\begin{align}
		\|\lambda\|_{T^{b_1}_{r_1} b^{s_1}_{p_1, q_1}}  &= \bigg( \sum_{k=0}^\infty 2^{k b_1 r_1} \bigg(\sum_{j=2^k-1}^{2^{k+1}-2} (1+j)^{-(b_1+1/q_1) q_1}  (1+ \log (j+1))^{-\varepsilon q_1}\bigg)^{r_1/q_1}\bigg)^{1/r_1} \nonumber \\
		& \asymp \bigg( \sum_{k=0}^\infty (1+k)^{-\varepsilon r_1} \bigg)^{1/r_1} = \infty. \label{ProofFJGen5}
	\end{align}
	Furthermore, similarly as in \eqref{ProofFJGen4}, one can show that, for $x \in \R^d$,
	\begin{align*}
		\bigg(\sum_{j=2^k-1}^{2^{k+1}-2} 2^{j d q_0/p_0}  (1+j)^{-(b_1+1/q_1)q_0} (1+ \log (j+1))^{-\varepsilon q_0} \chi_{j, (0, \ldots, 0)} (x) \bigg)^{1/q_0} &\asymp \\
		& \hspace{-10cm} \bigg(\sum_{j=2^k-1}^{2^{k+1}-2} 2^{j d}  (1+j)^{-(b_1+1/q_1)p_0} (1+ \log (j+1))^{-\varepsilon p_0} \chi_{j, (0, \ldots, 0)} (x) \bigg)^{1/p_0}
	\end{align*}
	which implies
	\begin{align*}
		\|\lambda\|_{T^{b_0}_{r_0} f^{s_0}_{p_0, q_0}} &\asymp \bigg(\sum_{k=0}^\infty 2^{k b_0 r_0} \bigg(\sum_{j=2^k-1}^{2^{k+1}-2}  (1+j)^{-(b_1+1/q_1)p_0} (1+ \log (j+1))^{-\varepsilon p_0} \bigg)^{r_0/p_0}  \bigg)^{1/r_0} \\
		& \asymp \bigg(\sum_{k=0}^\infty  (1+ k)^{-\varepsilon r_0} \bigg)^{1/r_0} < \infty.
	\end{align*}
	However, this contradicts \eqref{ProofFJGen5} if \eqref{ProofFJGen2} holds.
\end{proof}

\begin{proof}[Proof of Theorem \ref{ThmFJ2}]

\emph{Sufficient condition} (i): It follows from Theorems \ref{TheoremEmbeddingsBBCharacterization} and Corollary \ref{CorTruncBTLFixedps} that
$$
	T^{b_0}_{r_0} B^{s_0}_{p_0, q_0}(\R^d) \hookrightarrow T^{b_1}_{r_1} B^{s_1}_{p_1, \min\{p_1, q_1\}}(\R^d) \hookrightarrow T^{b_1}_{r_1} F^{s_1}_{p_1, q_1}(\R^d).
$$

\emph{Sufficient conditions} (ii)--(v): To establish \eqref{ThmFJState2} under one of the assumptions given in (ii)--(v), it will be enough to show that
\begin{equation}\label{ThmFJ2Claim1}
	T^{b_1+ (1/p_1-1/q_0)_+}_{r_1} B^{s_0}_{p_0, q_0} (\R^d) \hookrightarrow T^{b_1}_{r_1} F^{s_1}_{p_1, q_1}(\R^d)
\end{equation}
provided that $q_1 \leq \min\{p_0, p_1, q_0\}$; see Theorems \ref{TheoremEmbeddingsBBCharacterization} and \ref{ThmFFp0p1}.

Assume $q_1 \leq \min\{p_0, p_1, q_0\}$ and denote by $\alpha, \beta$ and $\gamma$ the dual exponents of $p_0/q_1, p_1/q_1$ and $q_0/q_1$, respectively, i.e.,
$$
	\frac{q_1}{p_0} + \frac{1}{\alpha} = 1, \qquad \frac{q_1}{q_0} + \frac{1}{\gamma} = 1, \qquad \frac{q_1}{p_1} + \frac{1}{\beta} = 1.
$$

For $k \in \N_0$, we can apply basic properties of rearrangements (cf. \cite[Chapter 2, Theorems 3.4 and 4.6]{BennettSharpley}) to obtain
\begin{align}
	\bigg\| \bigg(\sum_{j=2^k-1}^{2^{k+1}-2} \sum_{G \in G^j} \sum_{m \in \Z^d} 2^{j s_1 q_1} |\lambda^{j, G}_m|^{q_1} \chi_{j, m} \bigg)^{1/q_1} \bigg\|_{L_{p_1}(\R^d)} & = \bigg\| \sum_{j=2^k-1}^{2^{k+1}-2} \sum_{G \in G^j} \sum_{m \in \Z^d} 2^{j s_1 q_1} |\lambda^{j, G}_m|^{q_1} \chi_{j, m}  \bigg\|_{L_{p_1/q_1}(\R^d)}^{1/q_1} \nonumber \\
	& \hspace{-6cm} \leq \bigg\| \sum_{j=2^k-1}^{2^{k+1}-2} 2^{j s_1 q_1} \sum_{G \in G^j} \bigg(\sum_{m \in \Z^d} |\lambda^{j, G}_m|^{q_1} \chi_{j, m} \bigg)^*  \bigg\|_{L_{p_1/q_1}(0, \infty)}^{1/q_1} \nonumber \\
	&\hspace{-6cm} = \bigg\|  \sum_{j=2^k-1}^{2^{k+1}-2} 2^{j s_1 q_1} \sum_{G \in G^j} \sum_{l=0}^\infty (\tilde{\lambda}^{j, G}_l)^{q_1} \tilde{\chi}_{j, l}   \bigg\|_{L_{p_1/q_1}(0, \infty)}^{1/q_1} \label{ThmFJ2Proof1}
\end{align}
where, for each $j \in \{2^k-1, \cdots, 2^{k+1}-2\}$ and $G \in G^j$, the sequence $(\tilde{\lambda}^{j, G}_l)_{l \in \N_0}$ denotes the non-increasing rearrangement of $(|\lambda^{j, G}_m|)_{m \in \Z^d}$ and $\tilde{\chi}_{j, l}$ is the characteristic function relative to the interval $[2^{-j d} l, 2^{-j d}(l+1))$.

By duality,
\begin{align}
	 \bigg\|  \sum_{j=2^k-1}^{2^{k+1}-2} 2^{j s_1 q_1} \sum_{G \in G^j} \sum_{l=0}^\infty (\tilde{\lambda}^{j, G}_l)^{q_1} \tilde{\chi}_{j, l}   \bigg\|_{L_{p_1/q_1}(0, \infty)} &= \sup \int_0^\infty g(x) \bigg( \sum_{j=2^k-1}^{2^{k+1}-2} 2^{j s_1 q_1} \sum_{G \in G^j} \sum_{l=0}^\infty (\tilde{\lambda}^{j, G}_l)^{q_1} \tilde{\chi}_{j, l} (x) \bigg) \, dx \nonumber \\
	 & \hspace{-5cm}= \sup \sum_{j=2^k-1}^{2^{k+1}-2} \sum_{G \in G^j} \sum_{l=0}^\infty (\tilde{\lambda}^{j, G}_l)^{q_1} 2^{j (s_1 q_1- d)} g_{j, l} \label{ThmFJ2Proof2}
\end{align}
where the supremum runs over all non-increasing and non-negative functions $g \in L_\beta(0, \infty)$ with $\|g\|_{L_\beta(0, \infty)} \leq 1$ and $g_{j, l} := 2^{j d} \int_0^\infty g(x) \tilde{\chi}_{j, l}(x) \, dx$. For any $g$ satisfying these assumptions, we can apply H\"older's inequality with exponents $\alpha$ and $\gamma$ such that
\begin{align}
	 \sum_{j=2^k-1}^{2^{k+1}-2} \sum_{G \in G^j} \sum_{l=0}^\infty (\tilde{\lambda}^{j, G}_l)^{q_1} 2^{j(s_1 q_1 -d)} g_{j, l} &\leq \sum_{j=2^k-1}^{2^{k+1}-2} 2^{j(s_1 q_1- d)} \sum_{G \in G^j} \bigg(\sum_{l=0}^\infty (\tilde{\lambda}^{j, G}_l)^{p_0} \bigg)^{q_1/p_0} \bigg(\sum_{l=0}^\infty g_{j, l}^\alpha \bigg)^{1/\alpha} \nonumber \\
	 &\hspace{-4cm}\leq  \bigg(\sum_{j = 2^k-1}^{2^{k+1}-2} 2^{j (s_0-d/p_0) q_0} \sum_{G \in G^j} \bigg(\sum_{m \in \Z^d}^\infty |\lambda^{j, G}_m|^{p_0} \bigg)^{q_0/p_0} \bigg)^{q_1/q_0}  \nonumber \\
	 & \hspace{-2cm} \times \bigg(\sum_{j=2^k-1}^{2^{k+1}-2} 2^{-j d \gamma/\beta} \bigg(\sum_{l=0}^\infty g_{j, l}^\alpha \bigg)^{\gamma/\alpha}   \bigg)^{1/\gamma}. \label{ThmFJ2Proof3}
\end{align}

Next we show
\begin{equation}\label{ThmFJ2Proof4}
	\bigg(\sum_{j=2^k-1}^{2^{k+1}-2} 2^{-j d \gamma/\beta} \bigg(\sum_{l=0}^\infty g_{j, l}^\alpha \bigg)^{\gamma/\alpha}   \bigg)^{1/\gamma} \lesssim 2^{k (1/q_0 - 1/p_1)_+ q_1}.
\end{equation}
To be more precise, we will prove that
\begin{equation}\label{ThmFJ2Proof5}
	\bigg(\sum_{j=2^k-1}^{2^{k+1}-2} 2^{-j d \gamma/\beta} \bigg(\sum_{l=0}^\infty g_{j, l}^\alpha \bigg)^{\gamma/\alpha}   \bigg)^{1/\gamma} \lesssim 2^{k (1/q_0 - 1/p_1)_+ q_1}  \bigg(\sum_{j=2^k-1}^{2^{k+1}-2} 2^{-j d} \bigg(\sum_{l=0}^\infty g_{j, l}^\alpha \bigg)^{\beta/\alpha}   \bigg)^{1/\beta}
\end{equation}
and
\begin{equation}\label{ThmFJ2Proof6}
	 \bigg(\sum_{j=2^k-1}^{2^{k+1}-2} 2^{-j d} \bigg(\sum_{l=0}^\infty g_{j, l}^\alpha \bigg)^{\beta/\alpha}   \bigg)^{1/\beta} \lesssim 1.
\end{equation}
Clearly \eqref{ThmFJ2Proof5} and \eqref{ThmFJ2Proof6} imply \eqref{ThmFJ2Proof4}.

The proof of  \eqref{ThmFJ2Proof6} follows from monotonicity properties of $g$. More precisely, we have
\begin{align*}
	 \sum_{l=0}^\infty g_{j, l}^\alpha  & = \sum_{l=0}^\infty \bigg( 2^{j d} \int_{2^{-j d} l}^{2^{- j d} (l+1)} g(x) \, dx \bigg)^\alpha \\
	 & = \sum_{\mu=0}^\infty \sum_{l=2^{\mu d}-1}^{2^{(\mu + 1)d}-2} \bigg( 2^{j d} \int_{2^{-j d} l}^{2^{- j d} (l+1)} g(x) \, dx \bigg)^\alpha \\
	 & \lesssim  \sum_{\mu=0}^\infty 2^{\mu d} \bigg( 2^{j d} \int_{2^{-j d} (2^{\mu d}-1)}^{2^{- j d}2^{\mu d}} g(x) \, dx \bigg)^\alpha \\
	 & \leq   \sum_{\mu=0}^\infty 2^{\mu d} g^{**}(2^{(\mu- j) d})^\alpha
\end{align*}
and since $p_0 < p_1$ (or equivalently, $\beta < \alpha$)
\begin{align*}
	\bigg(\sum_{j=2^k-1}^{2^{k+1}-2} 2^{-j d} \bigg(\sum_{l=0}^\infty g_{j, l}^\alpha \bigg)^{\beta/\alpha}   \bigg)^{1/\beta} & \lesssim \bigg(\sum_{j=2^k-1}^{2^{k+1}-2} 2^{-j d} \bigg( \sum_{\mu=0}^\infty 2^{\mu d} g^{**}(2^{(\mu- j) d})^\alpha \bigg)^{\beta/\alpha}  \bigg)^{1/\beta} \\
	&\hspace{-4cm} \leq   \bigg(\sum_{j=2^k-1}^{2^{k+1}-2} 2^{-j d(1-\beta/\alpha)}  \sum_{\mu=-j}^\infty 2^{\mu d \beta/\alpha} g^{**}(2^{\mu d})^\beta  \bigg)^{1/\beta} \\
	& \hspace{-4cm}= \bigg(\sum_{\mu=-\infty}^\infty  2^{\mu d \beta/\alpha} g^{**}(2^{\mu d})^\beta \sum_{j=-\infty}^{-\mu}  2^{-j d(1-\beta/\alpha)}    \bigg)^{1/\beta} \\
	&\hspace{-4cm} \asymp \bigg(\sum_{\mu=-\infty}^\infty 2^{\mu d} g^{**}(2^{\mu d})^\beta \bigg)^{1/\beta} \asymp \|g^{**}\|_{L_\beta(0, \infty)} \asymp \|g\|_{L_\beta(0, \infty)} \leq 1.
\end{align*}

To deal with \eqref{ThmFJ2Proof5}, we shall distinguish two possible cases. Firstly, if $p_1 \geq q_0$ (or equivalently, $\gamma \geq \beta$) then
$$
		\bigg(\sum_{j=2^k-1}^{2^{k+1}-2} 2^{-j d \gamma/\beta} \bigg(\sum_{l=0}^\infty g_{j, l}^\alpha \bigg)^{\gamma/\alpha}   \bigg)^{1/\gamma} \leq \bigg(\sum_{j=2^k-1}^{2^{k+1}-2} 2^{-j d} \bigg(\sum_{l=0}^\infty g_{j, l}^\alpha \bigg)^{\beta/\alpha}   \bigg)^{1/\beta}.
$$
Secondly, under the assumption $p_1 < q_0$ (or equivalently, $\gamma < \beta$) we can apply H\"older's inequality to obtain
$$
	\bigg(\sum_{j=2^k-1}^{2^{k+1}-2} 2^{-j d \gamma/\beta} \bigg(\sum_{l=0}^\infty g_{j, l}^\alpha \bigg)^{\gamma/\alpha}   \bigg)^{1/\gamma} \lesssim 2^{k(1/p_1-1/q_0) q_1} \bigg(\sum_{j=2^k-1}^{2^{k+1}-2} 2^{-j d } \bigg(\sum_{l=0}^\infty g_{j, l}^\alpha \bigg)^{\beta/\alpha}   \bigg)^{1/\beta}.
$$

As a combination of \eqref{ThmFJ2Proof1}--\eqref{ThmFJ2Proof4}, we arrive at
\begin{align*}
	\bigg\| \bigg(\sum_{j=2^k-1}^{2^{k+1}-2} \sum_{G \in G^j} \sum_{m \in \Z^d} 2^{j s_1 q_1} |\lambda^{j, G}_m|^{q_1} \chi_{j, m} \bigg)^{1/q_1} \bigg\|_{L_{p_1}(\R^d)} & \lesssim \\
	& \hspace{-7cm}2^{k (1/p_1 - 1/q_0)_+}  \bigg(\sum_{j = 2^k-1}^{2^{k+1}-2} 2^{j (s_0-d/p_0) q_0} \sum_{G \in G^j} \bigg(\sum_{m \in \Z^d}^\infty |\lambda^{j, G}_m|^{p_0} \bigg)^{q_0/p_0} \bigg)^{1/q_0}
\end{align*}
and thus, by Theorems \ref{ThmWaveletsNewTriebelLizorkin} and \ref{ThmWaveletsNewBesov},
\begin{align*}
	\|f\|_{T^{b_1}_{r_1} F^{s_1}_{p_1, q_1}(\R^d)} &\lesssim \\
	& \hspace{-2cm} \bigg(\sum_{k=0}^\infty 2^{k (b_1 + (1/p_1 - 1/q_0)_+) r_1} \bigg(\sum_{j = 2^k-1}^{2^{k+1}-2} 2^{j (s_0-d/p_0) q_0} \sum_{G \in G^j} \bigg(\sum_{m \in \Z^d}^\infty |\lambda^{j, G}_m|^{p_0} \bigg)^{q_0/p_0} \bigg)^{r_1/q_0}  \bigg)^{1/r_1} \\
	& \hspace{-2cm} \asymp \|f\|_{T^{b_1 + (1/p_1 - 1/q_0)_+}_{r_1} B^{s_0}_{p_0, q_0}(\R^d)}.
\end{align*}

The counterexamples provided in the proof of Theorem  \ref{ThmFJ} also show the optimality of the conditions stated in (i)--(v). Further details are left to the reader.
\end{proof}

\subsection{Franke--Jawerth embeddings for $\mathbf{B}^{0, b}_{p, q}(\R^d)$ and $\text{Lip}^{s, b}_{p, q}(\R^d)$}
The characterization of Franke--Jawerth embeddings for Lipschitz spaces obtained in \cite[Theorem 4.4 and Remark 4.5]{DominguezHaroskeTikhonov} reads as follows: Let $0 < p_0 < p < p_1 \leq \infty, 1 < p < \infty, 0 < q, u \leq \infty, s > 0, b < - 1/q$ and $\xi \in \R$. Then
\begin{equation}
 \left\{\begin{array}{cl}   \text{Lip}^{s, b}_{p, q}(\R^d) \hookrightarrow B^{s - \frac{d}{p} + \frac{d}{p_1}, b + \xi}_{p_1, q}(\R^d) & \iff \xi \leq \frac{1}{\max\{p, q\}},  \\
		\text{Lip}^{s, b}_{p, q}(\R^d) \hookrightarrow B^{s - \frac{d}{p} + \frac{d}{p_1}, b + \frac{1}{q}}_{p_1, u}(\R^d) & \iff u \geq \max\{p, q\}, \\
		 B^{s - \frac{d}{p} + \frac{d}{p_0}, b + \xi}_{p_0, q}(\R^d) \hookrightarrow  \text{Lip}^{s, b}_{p, q}(\R^d) &  \iff \xi \geq \frac{1}{\min\{p, q\}},\\
		 B^{s - \frac{d}{p} + \frac{d}{p_0}, b + \frac{1}{q}}_{p_0, u}(\R^d) \hookrightarrow \text{Lip}^{s, b}_{p, q}(\R^d) &  \iff u \leq \min\{p, q\}. \label{hdshash}
		       \end{array}
                        \right.
\end{equation}
Since $T^{b+1/q}_q F^s_{p, 2}(\R^d) = \text{Lip}^{s, b}_{p, q}(\R^d)$ and $T^b_q B^s_{p, q}(\R^d) = B^{s, b}_{p, q}(\R^d)$, these embeddings are now an immediate consequence of Theorems \ref{ThmFJ} and \ref{ThmFJ2}. Despite the fact that \eqref{hdshash} is optimal within the scale of classical Besov spaces, our next result shows that it admits improvements in terms of the new scale of truncated Besov spaces.

\begin{cor}\label{Corollary13.4}
	Let $0 < p_0 < p < p_1 \leq \infty, 1 < p < \infty, 0 < q \leq \infty,$ and $b < -1/q$. Then
	 \begin{equation*}
 	 T^{b+1/q}_q B^{s-\frac{d}{p}+ \frac{d}{p_0}}_{p_0, p}(\R^d) \hookrightarrow \emph{Lip}^{s, b}_{p, q}(\R^d) \hookrightarrow T^{b+1/q}_q B^{s-\frac{d}{p}+ \frac{d}{p_1}}_{p_1, p}(\R^d).
 \end{equation*}
\end{cor}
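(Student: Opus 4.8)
The plan is to reduce Corollary \ref{Corollary13.4} to the already-established Franke--Jawerth embeddings for truncated spaces (Theorems \ref{ThmFJ} and \ref{ThmFJ2}) together with the identification $T^{b+1/q}_q F^{s}_{p, 2}(\R^d) = \text{Lip}^{s, b}_{p, q}(\R^d)$ from Proposition \ref{PropositionCoincidences}(iv). Concretely, for the right-hand embedding I would apply Theorem \ref{ThmFJ} with the data $(p_0, q_0, r_0, s_0, b_0) = (p, 2, q, s, b + 1/q)$ and $(p_1, q_1, r_1, s_1, b_1) = (p_1, p, q, s - \frac{d}{p} + \frac{d}{p_1}, b + 1/q)$. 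The differential-dimension condition $s_0 - \frac{d}{p_0} = s_1 - \frac{d}{p_1}$ holds by construction, so one is in the critical case, and it remains to check which of conditions (ii)--(v) of Theorem \ref{ThmFJ} applies. Since $p_0 = p > 1$, one has to compare $q_1 = p$ with $p_0 = p$: these are equal, so actually $p_0 \leq q_1$ is satisfied with equality, placing us in conditions (ii)--(iii); since $b_1 = b_0 = b + 1/q$, condition (iii) is the relevant one and it additionally requires $r_0 \leq r_1$, i.e. $q \leq q$, which holds. Hence the embedding $\text{Lip}^{s, b}_{p, q}(\R^d) = T^{b+1/q}_q F^{s}_{p, 2}(\R^d) \hookrightarrow T^{b+1/q}_q B^{s - \frac{d}{p} + \frac{d}{p_1}}_{p_1, p}(\R^d)$ follows directly.

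\textbf{The left-hand embedding.} Symmetrically, for $T^{b+1/q}_q B^{s - \frac{d}{p} + \frac{d}{p_0}}_{p_0, p}(\R^d) \hookrightarrow \text{Lip}^{s, b}_{p, q}(\R^d) = T^{b+1/q}_q F^{s}_{p, 2}(\R^d)$ I would invoke Theorem \ref{ThmFJ2} with $(p_0, q_0, r_0, s_0, b_0) = (p_0, p, q, s - \frac{d}{p}+\frac{d}{p_0}, b+1/q)$ and $(p_1, q_1, r_1, s_1, b_1) = (p, 2, q, s, b+1/q)$. Again the critical differential-dimension relation holds by construction. Here one compares $q_0 = p$ with $p_1 = p$: equality, so $q_0 \leq p_1$ holds and we are in conditions (ii)--(iii) of Theorem \ref{ThmFJ2}; with $b_1 = b_0$ it is condition (iii), whose extra requirement is $r_0 \leq r_1$, i.e. $q \leq q$. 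This is satisfied, giving the claimed embedding. Finally I would remark that these two embeddings are strict improvements of the classical Besov-scale statements \eqref{hdshash}, by composing with the sharp embeddings of Corollary \ref{TheoremEmbeddings1}: namely $T^{b+1/q}_q B^{\sigma}_{p_1, p}(\R^d) \hookrightarrow B^{\sigma, b + 1/q - 1/p + 1/\max\{p, q\}}_{p_1, p}(\R^d)$ recovers the first two lines of \eqref{hdshash}, and dually for the $p_0$ side.

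\textbf{Main obstacle.} The only delicate point is the bookkeeping of parameters: one must verify that the choice of exponents indeed lands in the correct alternative of the big case distinctions in Theorems \ref{ThmFJ} and \ref{ThmFJ2}, and in particular that the equalities $q_1 = p_0$ (respectively $q_0 = p_1$) together with $b_0 = b_1$ force exactly the clause requiring $r_0 \leq r_1$, which is then trivially met because both third indices equal $q$. There is a small subtlety in that the secondary parameter of the truncated Besov target is $p$, not $q$, so one should double-check that this is compatible with the hypothesis $1 < p < \infty$ (needed for Proposition \ref{PropositionCoincidences}(iv)) and with the range restrictions $0 < q_i, r_i \le \infty$ in the Franke--Jawerth theorems; all of these are automatic. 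No genuinely new estimate is required — the corollary is purely a specialization.
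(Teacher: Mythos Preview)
Your proof is correct and is exactly the argument the paper intends: the corollary is simply the specialization of Theorems~\ref{ThmFJ} and~\ref{ThmFJ2} (condition (iii) in each, with $q_1 = p_0 = p$ respectively $q_0 = p_1 = p$, $b_0 = b_1 = b+1/q$, $r_0 = r_1 = q$) combined with Proposition~\ref{PropositionCoincidences}(iv). One minor slip in your closing remark: the embedding from Corollary~\ref{TheoremEmbeddings1} yields $T^{b+1/q}_q B^{\sigma}_{p_1, p}(\R^d) \hookrightarrow B^{\sigma,\, b + 1/\max\{p, q\}}_{p_1, q}(\R^d)$ (last index $q$, exponent $b + 1/\max\{p,q\}$), not what you wrote; this matches \eqref{FJL2} in the paper.
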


\begin{rem}
According to Corollary  \ref{TheoremEmbeddings1},
 \begin{equation}\label{FJL2}
 	T^{b+1/q}_q B^{s-\frac{d}{p}+ \frac{d}{p_1}}_{p_1, p}(\R^d) \hookrightarrow B^{s - \frac{d}{p} + \frac{d}{p_1}, b + \frac{1}{\max\{p, q\}}}_{p_1, q}(\R^d) \cap   B^{s - \frac{d}{p} + \frac{d}{p_1}, b + \frac{1}{q}}_{p_1, \max\{p, q\}}(\R^d),
 \end{equation}
 \begin{equation}\label{FJL3}
  B^{s - \frac{d}{p} + \frac{d}{p_0}, b + \frac{1}{\min\{p, q\}}}_{p_0, q}(\R^d) +   B^{s - \frac{d}{p} + \frac{d}{p_0}, b + \frac{1}{q}}_{p_0, \min\{p, q\}}(\R^d)	\hookrightarrow T^{b+1/q}_q B^{s-\frac{d}{p}+ \frac{d}{p_0}}_{p_0, p}(\R^d).
 \end{equation}
 Therefore Corollary \ref{Corollary13.4} sharpens \eqref{hdshash}.
\end{rem}

\begin{rem}
	All the parameters in Corollary \ref{Corollary13.4} are sharp.
\end{rem}

%Let $1 < p < \infty$ and $0 < q \leq \infty$. Recall that (cf. Proposition \ref{PropositionCoincidences})
%\begin{equation}\label{FLB}
%T^{b+1/q}_q F^{s}_{p, 2}(\R^d) =  \left\{\begin{array}{cl}  \text{Lip}^{s, b}_{p, q}(\R^d)  & \text{if} \quad  b < -\frac{1}{q} \quad \text{and} \quad s > 0, \\
%			& \\
%	\textbf{B}^{0, b}_{p, q}(\R^d) & \text{if} \quad b > -\frac{1}{q} \quad \text{and} \quad s=0.
%		       \end{array}
%                        \right.
%\end{equation}
%Specialising Theorems \ref{ThmFJ} and \ref{ThmFJ2} with $T^{b+1/q}_q F^{s}_{p, 2}(\R^d)$, we do not only recover the Franke--Jawerth embeddings for both $\text{Lip}^{s, b}_{p, q}(\R^d)$ and $\textbf{B}^{0, b}_{p, q}(\R^d)$ obtained recently in \cite{DominguezHaroskeTikhonov, DominguezTikhonov}, but more interestingly we get non-trivial improvements of these results via Theorem \ref{TheoremEmbeddings1}, more precisely,
%\begin{equation}\label{FJLNew}
%	B^{s-\frac{d}{p} + \frac{d}{p_0}, b + \frac{1}{q}}_{p_0, p, q}(\R^d) \hookrightarrow F^{s, b+\frac{1}{q}}_{p, 2, q}(\R^d) \hookrightarrow B^{s-\frac{d}{p} + \frac{d}{p_1}, b + \frac{1}{q}}_{p_1, p, q}(\R^d).
%\end{equation}
%Next we make these assertions more precise.

Concerning Franke--Jawerth embeddings for $\textbf{B}^{0, b}_{p, q}(\R^d)$, it was obtained in \cite[Theorem 3.6, Propositions 9.7 and 9.8]{DominguezTikhonov} that if $0 < p_0 < p < p_1 \leq \infty, 1 < p < \infty, 0 < q \leq \infty, b > - 1/q$ and $\xi \in \R$, then
\begin{equation}\label{FJL4}
\left\{ \begin{array}{cl}
	\textbf{B}^{0, b}_{p, q}(\R^d) \hookrightarrow B^{- \frac{d}{p} + \frac{d}{p_1}, b + \xi}_{p_1, q}(\R^d)&  \iff \xi \leq \frac{1}{\max\{p, q\}}, \\
	 B^{- \frac{d}{p} + \frac{d}{p_0}, b + \xi}_{p_0, q}(\R^d) \hookrightarrow  \textbf{B}^{0, b}_{p, q}(\R^d) & \iff \xi \geq \frac{1}{\min\{p, q\}}.
	 \end{array}
	 \right.
\end{equation}
The counterpart of Corollary \ref{Corollary13.4} for $\mathbf{B}^{0, b}_{p, q}(\R^d)$ (recall that $\mathbf{B}^{0, b}_{p, q}(\R^d) = T^{b+1/q}_q F^0_{p, 2}(\R^d)$) reads as follows.

\begin{cor}\label{Corollary13.7}
Let $0 < p_0 < p < p_1 \leq \infty, 1 < p < \infty, 0 < q \leq \infty,$ and $b > - 1/q$. Then
 $$
 	 T^{b+1/q}_q B^{-\frac{d}{p}+ \frac{d}{p_0}}_{p_0, p}(\R^d) \hookrightarrow \mathbf{B}^{0, b}_{p, q}(\R^d)  \hookrightarrow T^{b+1/q}_q B^{-\frac{d}{p}+ \frac{d}{p_1}}_{p_1, p}(\R^d).
 $$
\end{cor}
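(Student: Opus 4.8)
The plan is to deduce the statement from the Franke--Jawerth embeddings for truncated spaces already proved above (Theorems \ref{ThmFJ} and \ref{ThmFJ2}), combined with the identification $\mathbf{B}^{0, b}_{p, q}(\R^d) = T^{b+1/q}_q F^{0}_{p, 2}(\R^d)$ from Proposition \ref{PropositionCoincidences}(iii). Since $1 < p < \infty$ and $b > -1/q$ (so that $b + 1/q > 0 \neq 0$, as required by the hypotheses $b_i \in \R\backslash\{0\}$ of those theorems), this identification is at our disposal, and it suffices to verify the two chain embeddings
$$
T^{b+1/q}_q B^{-\frac{d}{p}+ \frac{d}{p_0}}_{p_0, p}(\R^d) \hookrightarrow T^{b+1/q}_q F^{0}_{p, 2}(\R^d) \hookrightarrow T^{b+1/q}_q B^{-\frac{d}{p}+ \frac{d}{p_1}}_{p_1, p}(\R^d).
$$

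For the right-hand embedding I would apply Theorem \ref{ThmFJ} with source data (in the notation of that theorem) $s_0 = 0$, integrability $p$, $q_0 = 2$, $b_0 = b + 1/q$, $r_0 = q$, and target data $s_1 = -\frac{d}{p}+\frac{d}{p_1}$, integrability $p_1$, $q_1 = p$, $b_1 = b + 1/q$, $r_1 = q$. The differential dimensions match: $0 - \frac{d}{p} = \bigl(-\frac{d}{p}+\frac{d}{p_1}\bigr) - \frac{d}{p_1} = -\frac{d}{p}$; moreover $p < p_1$ gives both $p_0 < p_1$ and $s_1 < s_0$ in the theorem's notation (using $d/p_1 < d/p$). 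Since the source integrability $p$ satisfies $p \le q_1 = p$ and $b_1 = b_0$, $r_0 = r_1 = q$, we land precisely in the borderline equality case (iii) of Theorem \ref{ThmFJ}, which yields the embedding. Symmetrically, for the left-hand embedding I would invoke Theorem \ref{ThmFJ2} with source data $s_0 = -\frac{d}{p}+\frac{d}{p_0}$, integrability $p_0$, $q_0 = p$, $b_0 = b + 1/q$, $r_0 = q$, and target data $s_1 = 0$, integrability $p$, $q_1 = 2$, $b_1 = b + 1/q$, $r_1 = q$; here $p_0 < p$ gives $p_0 < p_1$ and $s_1 < s_0$, the differential dimensions again match, and $q_0 = p \le p_1 = p$ together with $b_1 = b_0$, $r_0 = r_1$ places us in case (iii) of Theorem \ref{ThmFJ2}.

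Chaining these two embeddings through $\mathbf{B}^{0, b}_{p, q}(\R^d) = T^{b+1/q}_q F^{0}_{p, 2}(\R^d)$ gives the claim. I do not expect a genuine obstacle: the only delicate point is the bookkeeping of the parameter dictionary between the abstract Franke--Jawerth statements and the present spaces, in particular confirming that the choices force the borderline identity $s_0 - d/p_0 = s_1 - d/p_1$ and that the secondary conditions $b_0 = b_1$ and $r_0 \le r_1$ hold, so that it is the \emph{equality} variant (iii) of the Franke--Jawerth conditions, not the strict variant (ii), that is being used. The limiting values $p_1 = \infty$ and $q = \infty$ require no extra argument, as they are admitted directly in the hypotheses of Theorems \ref{ThmFJ} and \ref{ThmFJ2}.
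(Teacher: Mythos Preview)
Your proposal is correct and is exactly the argument the paper intends: Corollary~\ref{Corollary13.7} is stated as the counterpart of Corollary~\ref{Corollary13.4} via the identification $\mathbf{B}^{0,b}_{p,q}(\R^d)=T^{b+1/q}_q F^0_{p,2}(\R^d)$, and both corollaries follow from applying the equality case~(iii) of Theorems~\ref{ThmFJ} and~\ref{ThmFJ2} with the parameter choices you wrote down. Your bookkeeping of the conditions (in particular $p_0\le q_1$ and $q_0\le p_1$ collapsing to $p\le p$, and $b_0=b_1=b+1/q\neq 0$, $r_0=r_1=q$) is accurate.
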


\begin{rem}
	The embeddings given in  Corollary \ref{Corollary13.7} improve \eqref{FJL4} via \eqref{FJL2} and \eqref{FJL3} with $s=0$.
\end{rem}

\begin{rem}
	All the parameters in Corollary \ref{Corollary13.7} are sharp.
\end{rem}

%%%%%%%%%%%%%%%%%
\newpage
\section{Embeddings in the space of locally integrable functions}\label{SectionL1}

As usual, $L_1^{\text{loc}}(\R^d)$ stands for the space of functions in $\R^d$ which are integrable on any bounded domain in $\R^d$. This space is interpreted as the set of all regular distributions in $\R^d$.\index{\bigskip\textbf{Spaces}!$L_1^{\text{loc}}(\R^d)$}\label{L1LOC}

A central question working with spaces of distributions is to characterize when they contain only regular distributions. In the setting of classical spaces $A^{s, b}_{p, q} (\R^d), A \in \{B, F\},$  this question has been completely settled in \cite{Sickel} and \cite{CaetanoLeopold13} (see also \cite{CaetanoFarkas}). For convenience of the reader, these results will be recalled in Section  \ref{ReminderBTLL1}.  Then in Sections \ref{ReminderBNew1} and \ref{ReminderFNew1} we turn our attention to the new scales of spaces $T^b_r B^s_{p, q}(\R^d)$ and $T^b_r F^s_{p, q}(\R^d)$, respectively.

\subsection{Embeddings for classical Besov and Triebel--Lizorkin spaces}\label{ReminderBTLL1}

%\begin{thm}\label{TheoremST}
%	Let $s \in \R$ and $0 < p, q \leq \infty$. Then
%	$$
%		B^s_{p, q}(\R^d) \hookrightarrow L_1^{\emph{loc}}(\R^d)
%	$$
%	if and only if one of the following conditions holds
%	\begin{enumerate}[\upshape(i)]
%		\item $0 < p \leq \infty, \qquad s > d \big(\frac{1}{p}-1 \big)_+, \qquad 0 < q \leq \infty$,
%		\item  $0 < p \leq 1, \qquad s = d \big(\frac{1}{p}-1 \big), \qquad 0 < q \leq 1$,
%		\item $1 < p \leq \infty, \qquad s = 0, \qquad 0 < q \leq \min \{p, 2\}$.
%	\end{enumerate}
%\end{thm}
%
%The extension of the previous theorem to function spaces of generalized smoothness (in particular, $B^{s, b}_{p, q}(\R^d)$) was completely settled in \cite{CaetanoLeopold13}.

\begin{thm}[{\cite[Theorem 4.3]{CaetanoLeopold13}}]\label{TheoremRegular}
 	Let $s, b \in \R$ and $0 < p, q \leq \infty$. Then
		$$
		B^{s, b}_{p, q}(\R^d) \hookrightarrow L_1^{\emph{loc}}(\R^d)
	$$
	if and only if one of the following conditions holds
	\begin{enumerate}[\upshape(i)]
		\item $0 < p \leq \infty, \qquad s > d \big(\frac{1}{p}-1 \big)_+, \qquad 0 < q \leq \infty, \qquad b \in \R$,
		\item  $0 < p \leq 1, \qquad s = d \big(\frac{1}{p}-1 \big), \qquad 0 < q \leq 1, \qquad b \geq 0$,
		\item $0 < p \leq 1, \qquad s = d \big(\frac{1}{p}-1 \big), \qquad 1 < q \leq \infty, \qquad b > 1 - \frac{1}{q}$,
		\item $1 < p \leq \infty, \qquad s = 0, \qquad 0 < q \leq \min \{p, 2\}, \qquad b \geq 0$,
		\item $1 < p \leq 2, \qquad s = 0, \qquad p < q \leq \infty, \qquad b > \frac{1}{p} - \frac{1}{q}$,
		\item $2 < p \leq \infty, \qquad s = 0, \qquad 2 < q \leq \infty, \qquad b > \frac{1}{2} - \frac{1}{q}$.
	\end{enumerate}
\end{thm}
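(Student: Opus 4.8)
The plan is to reduce the statement about $B^{s,b}_{p,q}(\R^d)\hookrightarrow L_1^{\mathrm{loc}}(\R^d)$ to a two-part analysis: a sufficiency part, where one exhibits the embedding via known lifting/Sobolev-type embeddings into a space already known to consist of regular distributions (e.g.\ $L_p$ with $p\ge 1$, or $B^{s_1}_{p,\infty}$ with $s_1>d(1/p-1)_+$), and a necessity part, where for each parameter configuration \emph{not} covered by (i)--(vi) one builds an explicit counterexample: a tempered distribution $f\in B^{s,b}_{p,q}(\R^d)$ which is not locally integrable. Since this is quoted as \cite[Theorem 4.3]{CaetanoLeopold13}, I would organize the proof around the interplay between the smoothness threshold $s=d(1/p-1)_+$ (the ``hard'' borderline, relevant when $p\le 1$) and the threshold $s=0$ (relevant when $p>1$), treating these as the genuinely delicate cases and dispatching $s>d(1/p-1)_+$ quickly.

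For the sufficiency in case (i), the standard approach is: if $s>d(1/p-1)_+$ then $B^{s,b}_{p,q}(\R^d)\hookrightarrow B^{s_1,b}_{p,q}(\R^d)\hookrightarrow B^{\sigma}_{1,1}(\R^d)$ for suitable $\sigma>0$ via the Besov embedding with change of integrability $p\to 1$ (absorbing the log weight into a tiny loss of smoothness, which is harmless since $s>d(1/p-1)_+$ strictly), and $B^{\sigma}_{1,1}(\R^d)\hookrightarrow L_1(\R^d)\subset L_1^{\mathrm{loc}}(\R^d)$. For the borderline cases (ii)--(vi) one cannot afford to lose smoothness, so the log-exponent conditions $b\ge 0$, $b>1-1/q$, $b>1/p-1/q$, $b>1/2-1/q$ become exactly the conditions under which a limiting/logarithmic Besov embedding into $L_1^{\mathrm{loc}}$ (or into $L_1+\,$a space of regular distributions) still holds. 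Here I would invoke the Fourier-analytic description together with Nikolskii's inequality to pass dyadic blocks from $L_p$ to $L_1$ on bounded sets, summing the resulting series; the condition on $b$ is precisely what makes the $\ell_q$-sum over dyadic levels (against the logarithmic weights) converge after the $\ell_{q'}$/Hölder pairing. The case $p>1$, $s=0$ additionally uses the Littlewood--Paley/Hardy--Littlewood structure: $B^{0}_{p,q}$ with $q\le\min\{p,2\}$ embeds into $L_p\subset L_1^{\mathrm{loc}}$, and the borderline log conditions in (v), (vi) arise from the sharp form of this embedding with logarithmic smoothness (cf.\ the relation to $\mathbf{B}^{0,b}_{p,q}$-type spaces discussed in Section~\ref{SectionClassicSpaces}).

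For the necessity, the workhorse is lacunary Fourier series of the form \eqref{Ex67}--\eqref{lac}, i.e.\ $f\sim\sum_j a_j e^{i2^jx_1}\psi(x)$, together with a dilation/translation construction that produces a distribution concentrating mass near a point (a ``singular'' building block like $\sum_j 2^{jd/p'}(1+j)^{-\beta}(\varphi_j\widehat{g})^\vee$ whose partial sums blow up in $L_1^{\mathrm{loc}}$ near the origin but which has finite $B^{s,b}_{p,q}$ norm when $\beta$ is chosen in the open gap left by the failure of the threshold condition on $b$). Concretely: when $s<d(1/p-1)_+$, or $s=d(1/p-1)_+$ with $b$ below the stated bound, or $s=0$, $p>1$ with $b$ below the stated bound, one picks the free exponent ($\varepsilon$ or $\beta$) strictly between the two critical values, checks finiteness of the Besov norm by a geometric-series computation, and checks non-membership in $L_1^{\mathrm{loc}}$ by testing against the constant function $1$ on a ball (the pairing diverges, or the would-be integral of $|f|$ diverges). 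The main obstacle will be the two genuine borderline regimes — $s=d(1/p-1)_+$ with $p\le1$ (cases (ii), (iii)) and $s=0$ with $p>1$ (cases (iv), (v), (vi)) — where one must produce counterexamples that are sharp up to the logarithmic scale; this requires care in balancing the $\ell_q$-summation against the logarithmic weight $(1+j)^{bq}$ and, in the $p>1$ case, in correctly accounting for the role of $\min\{p,2\}$ (which reflects the Hausdorff--Young / Littlewood--Paley asymmetry between $p\le2$ and $p>2$). I would handle $p\le2$ and $p>2$ separately in that sub-argument, mirroring the split between cases (v) and (vi).
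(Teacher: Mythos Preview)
The paper does not prove this statement: Theorem~\ref{TheoremRegular} is quoted verbatim from \cite[Theorem~4.3]{CaetanoLeopold13} and used as a black box in Section~\ref{SectionL1}. There is therefore no ``paper's own proof'' to compare against. Your outline is, however, close in spirit to how the paper proves the \emph{analogous} result for truncated spaces, Theorem~\ref{TheoremRegular2}: sufficiency via Nikolskii's inequality and Littlewood--Paley (see \eqref{ProofTheoremRegular2}, \eqref{LPestim}, \eqref{LPestim2}), necessity via explicit wavelet counterexamples of the form \eqref{AuxFunctionCounterexample}--\eqref{Rj} and lacunary Fourier series \eqref{LacunaryFSDef}--\eqref{ZygmundEstim}, with the split $p\le2$ versus $p>2$ in the $s=0$ case handled exactly as you anticipate.
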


\begin{thm}[{\cite[Theorem 4.4]{CaetanoLeopold13}}]\label{TheoremRegularFspaces}
 	Let $s, b \in \R, 0 < p < \infty$ and $0 < q \leq \infty$. Then
		$$
		F^{s, b}_{p, q}(\R^d) \hookrightarrow L_1^{\emph{loc}}(\R^d)
	$$
	if and only if one of the following conditions holds
	\begin{enumerate}[\upshape(i)]
		\item $0 < p < \infty, \qquad s > d \big(\frac{1}{p}-1 \big)_+, \qquad 0 < q \leq \infty, \qquad b \in \R$,
		\item  $0 < p < 1, \qquad s = d \big(\frac{1}{p}-1 \big), \qquad 0 < q \leq \infty, \qquad b \geq 0$,
		\item $1 \leq p < \infty, \qquad s = 0, \qquad 0 < q \leq 2, \qquad b \geq 0$,
		\item $1 \leq p < \infty, \qquad s = 0, \qquad 2 < q \leq \infty, \qquad b > \frac{1}{2} - \frac{1}{q}$.
	\end{enumerate}
\end{thm}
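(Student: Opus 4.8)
The argument splits into the \emph{super-critical} case (i) and the three \emph{critical} cases (ii)--(iv), and within each into sufficiency and necessity. For sufficiency in case (i), since $s>d(\tfrac1p-1)_+$ one may fix $s'$ with $d(\tfrac1p-1)_+<s'<s$; as $2^{\nu s}(1+\nu)^b\gtrsim 2^{\nu s'}$ for all large $\nu$ regardless of the sign of $b$, one gets $F^{s,b}_{p,q}(\R^d)\hookrightarrow F^{s'}_{p,q}(\R^d)\hookrightarrow B^{s'}_{p,\infty}(\R^d)$ (cf. \eqref{RelationsBF}), and $B^{s'}_{p,\infty}(\R^d)\hookrightarrow L_1^{\text{loc}}(\R^d)$ by Theorem~\ref{TheoremRegular}(i). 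For case (ii), $0<p<1$ and $s=\sigma_p$, the classical Franke--Jawerth embedding \eqref{FJClas} with target parameters $(p_1,s_1,q_1)=(1,0,p)$ (legitimate since $s_1-d/p_1=\sigma_p-d/p$ and $p\le q_1$) gives $F^{\sigma_p}_{p,q}(\R^d)\hookrightarrow B^0_{1,p}(\R^d)\hookrightarrow B^0_{1,1}(\R^d)\hookrightarrow L_1(\R^d)$, where $B^0_{1,p}\hookrightarrow B^0_{1,1}$ uses $p\le1$; the hypothesis $b\ge0$ only serves to give $F^{\sigma_p,b}_{p,q}\hookrightarrow F^{\sigma_p}_{p,q}$.

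For case (iii) ($s=0$, $q\le2$, $b\ge0$) one chains $F^{0,b}_{p,q}(\R^d)\hookrightarrow F^0_{p,q}(\R^d)\hookrightarrow F^0_{p,2}(\R^d)$, and $F^0_{p,2}(\R^d)$ equals $L_p(\R^d)$ if $1<p<\infty$ and the local Hardy space $h_1(\R^d)$ if $p=1$, both inside $L_1^{\text{loc}}(\R^d)$. For case (iv) ($s=0$, $2<q\le\infty$, $b>\tfrac12-\tfrac1q$), writing $f_\nu=(\varphi_\nu\widehat f)^\vee$ and applying H\"older with exponents $\tfrac q2$ and $\tfrac q{q-2}$ (with the obvious modification when $q=\infty$) gives, pointwise in $x$,
\begin{equation*}
\Big(\sum_{\nu\ge0}|f_\nu(x)|^2\Big)^{1/2}\;\lesssim\;\Big(\sum_{\nu\ge0}(1+\nu)^{-\frac{2q}{q-2}b}\Big)^{\frac{q-2}{2q}}\Big(\sum_{\nu\ge0}(1+\nu)^{bq}|f_\nu(x)|^q\Big)^{1/q},
\end{equation*}
and the first factor is finite precisely because $\tfrac{2q}{q-2}b>1$; taking $L_p$-norms yields $F^{0,b}_{p,q}(\R^d)\hookrightarrow F^0_{p,2}(\R^d)\hookrightarrow L_1^{\text{loc}}(\R^d)$ as above.

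For necessity one must exhibit, whenever none of (i)--(iv) holds, a distribution in $F^{s,b}_{p,q}(\R^d)\setminus L_1^{\text{loc}}(\R^d)$. When $s<d(\tfrac1p-1)_+$ (sub-critical smoothness, any $q,b$) a Fourier-lacunary series $f\sim\sum_j a_j e^{i(2^j-2)x_1}\psi(x)$ as in \eqref{Ex67} works: its $F^{s,b}_{p,q}$-quasi-norm equals $\|\psi\|_{L_p}\big(\sum_j 2^{jsq}(1+j)^{bq}|a_j|^q\big)^{1/q}$, while $\|f\|_{L_1(Q)}\asymp\big(\sum_j|a_j|^2\big)^{1/2}$ for a fixed cube $Q$ on which $\psi$ is bounded below (Sidon's inequality for Hadamard-lacunary trigonometric series), so $a_j=2^{-js}(1+j)^{-b}j^{-\delta}$ with $\delta$ slightly above $1/q$ makes the first sum finite and, since $2^{-js}\to\infty$, the second infinite. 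The same device handles the critical line $s=0$, $p\ge1$ when $q>2$ and $b\le\tfrac12-\tfrac1q$: take $|a_j|=(1+j)^{-b}(1+\log(1+j))^{-\gamma}$ with $\gamma$ slightly above $1/q$, so that $\sum_j|a_j|^2=\sum_j(1+j)^{2/q-1}(1+\log(1+j))^{-2\gamma}$ diverges while the quasi-norm remains finite.

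The genuinely delicate point --- and the main obstacle --- is necessity on the critical smoothness line when $(q,b)$ lies just inside the forbidden region near the threshold, namely $s=\sigma_p$ with $p<1$ and $b<0$, and $s=0$ with $1<p\le2$, $q<2$ and $\tfrac12-\tfrac1q<b<0$. There the lacunary examples above are in fact locally integrable (the relevant power of $2$ now decays), so one must instead build a spatially concentrated test distribution with a genuine point singularity, of the form $f=\sum_{j,G,m}\lambda^{j,G}_m 2^{-jd/2}\Psi^j_{G,m}$ with wavelet coefficients supported near a single dyadic stack $\{Q_{j,0}\}_j$ and tuned so that $f$ behaves near the origin like an $L^1$-divergent power of $|x|$; verifying that such an $f$ has finite $F^{s,b}_{p,q}$-quasi-norm requires the wavelet (equivalently atomic) characterization of Theorem~\ref{ThmWaveletsTLClassic} together with a careful balancing of the $\ell_q$-sum over $j$ against the local $L_p$-behaviour of the square function. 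Optimality of all the thresholds then follows by matching these constructions against the sufficiency chains.
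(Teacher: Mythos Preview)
The paper does not give a proof of this theorem; it is quoted from \cite[Theorem~4.4]{CaetanoLeopold13} without argument. Your sufficiency chains (i)--(iv) are correct, but the necessity sketch contains two concrete errors in the parts you present as finished.

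First, your lacunary construction for the sub-critical range $s<\sigma_p$ rests on the claim $2^{-js}\to\infty$, which fails when $p<1$ and $0\le s<\sigma_p=d(1/p-1)$; in that regime your lacunary $f$ is locally integrable and proves nothing. The fix is spatial concentration rather than frequency lacunarity: since $\|(\varphi_\nu\widehat{\delta_0})^\vee\|_{L_p}=\|\varphi_\nu^\vee\|_{L_p}\asymp 2^{-\nu\sigma_p}$, one has $\delta_0\in B^{s,b}_{p,\min\{p,q\}}(\R^d)\hookrightarrow F^{s,b}_{p,q}(\R^d)$ for every $s<\sigma_p$ and every $b$, yet $\delta_0\notin L_1^{\text{loc}}(\R^d)$.

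Second, on the critical line $s=0$, $q>2$, $b\le\tfrac12-\tfrac1q$, your choice $a_j=(1+j)^{-b}(1+\log(1+j))^{-\gamma}$ yields
\[
\sum_j(1+j)^{bq}|a_j|^q=\sum_j(1+\log(1+j))^{-\gamma q}=\infty,
\]
so the function is not in $F^{0,b}_{p,q}(\R^d)$ and cannot serve as a counterexample. The correct choices are $a_j=(1+j)^{-\varepsilon}$ with $b+\tfrac1q<\varepsilon<\tfrac12$ when $b<\tfrac12-\tfrac1q$, and $a_j=(1+j)^{-1/2}(1+\log(1+j))^{-\gamma}$ with $\tfrac1q<\gamma<\tfrac12$ at the endpoint $b=\tfrac12-\tfrac1q$; compare the paper's own use of exactly these sequences in the proofs of Theorems~\ref{TheoremRegular2} and~\ref{ThmFL1loc}.
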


Dealing with embeddings for $A^s_{p, q}(\R^d), \, A \in \{B, F\}$ (i.e., $b=0$ in Theorems \ref{TheoremRegular} and \ref{TheoremRegularFspaces}), we refer to \cite[Theorem 3.3.2]{Sickel}.

\subsection{Embeddings for  $T^b_r B^s_{p, q}(\R^d)$}\label{ReminderBNew1}

\begin{thm}\label{TheoremRegular2}
	Let $s \in \R, 0 < p, q, r \leq \infty$ and $b \in \R \backslash \{0\}$. Then
	$$
		T^b_r B^s_{p, q}(\R^d) \hookrightarrow L^{\text{\emph{loc}}}_1(\R^d)
	$$
	if and only if one of the following conditions holds
	\begin{enumerate}[\upshape(i)]
	 \item $0 < p \leq \infty, \qquad s > d \big(\frac{1}{p}-1 \big)_+, \qquad 0 < q, r \leq \infty, \qquad b \in \R \backslash \{0\}$,
%	 \item $0 < p \leq 1, \qquad s =  d \big(\frac{1}{p}-1 \big), \qquad 0 < q \leq 1, \qquad 0 < r \leq 1, \qquad b > 0$,
	  \item $0 < p \leq 1, \qquad s =  d \big(\frac{1}{p}-1 \big), \qquad 0 < q \leq 1, \qquad 0 < r \leq \infty, \qquad b > 0$,
	   \item $0 < p \leq 1, \qquad s =  d \big(\frac{1}{p}-1 \big), \qquad 1 < q \leq \infty, \qquad 0 < r \leq 1, \qquad  b \geq  1 - \frac{1}{q}$,
	      \item $0 < p \leq 1, \qquad s =  d \big(\frac{1}{p}-1 \big), \qquad 1 < q \leq \infty, \qquad 1 < r \leq \infty, \qquad  b >  1 - \frac{1}{q}$,
	 \item $1 < p \leq 2, \qquad s=0,  \qquad 0 < q \leq p, \qquad 0 < r \leq \infty, \qquad b > 0$,
	% \item $1 < p \leq 2, \qquad s=0,  \qquad 0 < q \leq p, \qquad p < r \leq \infty, \qquad b > 0$,
	 \item $1 < p \leq 2, \qquad s=0,  \qquad p < q \leq \infty, \qquad 0 < r \leq p, \qquad b \geq  \frac{1}{p}-\frac{1}{q}$,
	 \item  $1 < p \leq 2,  \qquad s=0, \qquad p < q \leq \infty, \qquad p < r \leq \infty, \qquad b >  \frac{1}{p}-\frac{1}{q}$,
	 \item  $2 < p \leq \infty,  \qquad s=0, \qquad  0 < q \leq 2, \qquad 0 < r \leq \infty, \qquad b > 0$,
%	  \item  $2 < p \leq \infty,  \qquad s=0, \qquad  0 < q \leq 2, \qquad 2 < r \leq \infty, \qquad b > -\frac{1}{r}$,
	    \item  $2 < p \leq \infty,  \qquad s=0, \qquad  2 < q \leq \infty, \qquad 0 < r \leq 2, \qquad b \geq \frac{1}{2}  -\frac{1}{q}$,
	    	    \item  $2 < p \leq \infty,  \qquad s=0, \qquad  2 < q \leq \infty, \qquad 2 < r \leq \infty, \qquad b >  \frac{1}{2} -\frac{1}{q}$.
	\end{enumerate}
\end{thm}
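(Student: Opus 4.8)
The plan is to reduce the assertion, wherever possible, to the corresponding result for classical (logarithmic) Besov spaces, Theorem \ref{TheoremRegular}, using the sandwich embeddings of Corollary \ref{TheoremEmbeddings1} and the embedding criteria of Corollary \ref{Corollary11.4} and Theorem \ref{TheoremEmbeddingsBBCharacterization}, and to treat the genuinely borderline sub-cases (those in which $b$ equals a critical value and $r$ lies below the relevant threshold) by a direct estimate that exploits the truncated structure.

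For \emph{sufficiency}, I would first dispose of the supercritical case (i): if $s > d(\tfrac1p-1)_+$, then choosing $\varepsilon>0$ small, Proposition \ref{PropositionElementary}(ii) gives $T^b_r B^s_{p,q}(\R^d)\hookrightarrow B^{s-\varepsilon}_{p,\infty}(\R^d)$, which is contained in $L_1^{\mathrm{loc}}(\R^d)$ by Theorem \ref{TheoremRegular}(i), for either sign of $b$. In the critical cases with a \emph{strict} inequality in $b$ (such as $s=d(\tfrac1p-1)$, $1<q$, $b>1-\tfrac1q$, and the analogues for $s=0$), I would pick $\beta$ with $b>\beta$ still above the critical value, embed $T^b_r B^s_{p,q}(\R^d)\hookrightarrow B^{s,\beta}_{p,q}(\R^d)$ via Corollary \ref{Corollary11.4}, and apply Theorem \ref{TheoremRegular}. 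The borderline sub-cases — $b$ exactly critical with $r$ below the threshold ($r\le 1$ in the $p\le 1$ critical case, $r\le p$ when $1<p\le 2$, $r\le 2$ when $p>2$, $s=0$) — I would handle directly: fixing a ball $B(0,1)$ and writing $f=\sum_\nu(\varphi_\nu\widehat f)^\vee$, estimate $\|(\varphi_\nu\widehat f)^\vee\|_{L_1(B(0,1))}$ within each dyadic block $\nu\in\{2^j-1,\dots,2^{j+1}-2\}$ by a Nikolskii inequality (when $p\le 1$, where $2^{\nu d(1/p-1)}=2^{\nu s}$) or by the Littlewood–Paley estimates underlying Theorem \ref{TheoremRegularFspaces} (when $s=0$), then sum over $j$ using Hölder's inequality inside the blocks and the embedding $\ell_r\hookrightarrow\ell_1$ across the blocks; the condition $b\ge$ (critical value) makes $\sup_j 2^{j(\text{deficit})}<\infty$, and $r\le$ (threshold) is exactly what allows the $\ell_r\hookrightarrow\ell_1$ step. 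This is the mechanism behind the splits (iii)/(iv), (vi)/(vii), (ix)/(x).

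For \emph{necessity}, if $T^b_r B^s_{p,q}(\R^d)\hookrightarrow L_1^{\mathrm{loc}}(\R^d)$, then composing with $B^{s,\,b-1/r+1/\min\{q,r\}}_{p,r}(\R^d)\hookrightarrow T^b_r B^s_{p,q}(\R^d)$ from Corollary \ref{TheoremEmbeddings1} and invoking Theorem \ref{TheoremRegular} forces $s\ge d(\tfrac1p-1)_+$, forces $s=0$ to occur only for $p>1$, and yields the strict-$b$ conditions away from the borderline; the deeply supercritical failures ($s<d(\tfrac1p-1)$ with $p\le 1$) are ruled out by singular distributions such as derivatives of $\delta$, which lie in $T^b_r B^s_{p,q}(\R^d)$ for the appropriate $s$ but are not regular. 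To rule out the borderline failures ($b$ just below the critical value, or $b$ critical with $r$ above the threshold), I would test against lacunary Fourier series $f\sim\sum_j a_j e^{i 2^j x_1}\psi(x)$ with $\psi\in\mathcal{S}(\R^d)\setminus\{0\}$, $\operatorname{supp}\psi\subset\mathbb{T}^d$; by \eqref{lac}, $\|f\|_{T^b_r B^s_{p,q}(\R^d)}\asymp\big(\sum_k 2^{kbr}\big(\sum_{\nu=2^k-1}^{2^{k+1}-2}(2^{\nu s}|a_\nu|)^q\big)^{r/q}\big)^{1/r}$, while a locally integrable lacunary series has absolutely summable coefficients (Sidon), so it suffices to choose $(a_j)$ with $\sum_j|a_j|=\infty$ and this norm finite — inserting an iterated–logarithm factor at the very endpoint — which is exactly feasible in each excluded regime.

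The routine part is the sandwich-plus-Theorem-\ref{TheoremRegular} reduction; the delicate point is precisely the borderline sub-cases, where that sandwich is not tight, because the effective within-block aggregation exponent of $T^b_r B^s_{p,q}(\R^d)$ (morally $\min\{q,1\}$ for $p\le 1$ critical, $\min\{q,p\}$ or $\min\{q,2\}$ for $s=0$) is strictly better than what $B^{s,\beta}_{p,r}(\R^d)$ records. There the argument must genuinely use the truncated structure — the block-by-block estimate on the sufficiency side and the finely tuned lacunary sequences on the necessity side — and getting the endpoint cases (iii), (v), (vi), (viii), (ix) sharp in both directions will be the main obstacle.
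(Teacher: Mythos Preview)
Your sufficiency plan is essentially the paper's: direct block-by-block estimates via Nikolski\u{\i} (for $p\le 1$) and Littlewood--Paley (for $s=0$, $p>1$), with H\"older inside blocks and $\ell_r\hookrightarrow\ell_1$ across blocks at the endpoints.

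The gap is on the necessity side. First, a factual error: for a lacunary trigonometric series $\sum a_j e^{i 2^j x}$, local integrability is equivalent to $(a_j)\in\ell_2$ (Zygmund), not $\ell_1$; Sidon's theorem concerns $L_\infty$, not $L_1$. Second, and more seriously, even with the correct $\ell_2$ criterion, lacunary series of the form \eqref{Ex67} give you only the constraint $b+\tfrac1q\ge\tfrac12$ (since $\|f\|_{T^b_r B^0_{p,q}}\asymp\big(\sum_k 2^{kbr}(\sum_{\nu=2^k}^{2^{k+1}}|a_\nu|^q)^{r/q}\big)^{1/r}$ is compared with $\|(a_j)\|_{\ell_2}$). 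That is enough for (ix)--(x), but it cannot reach the sharper thresholds $b+\tfrac1q\ge\tfrac1p$ in (vi)--(vii) when $1<p\le 2$, nor $b+\tfrac1q\ge 1$ in (iii)--(iv) when $p\le 1$: on lacunary sequences the $T^b_r B^s_{p,q}$-norm does not see $p$ at all. The paper produces these sharper counterexamples by wavelet constructions (Theorem \ref{ThmWaveletsNewBesov}): for $p\le 1$, functions with a single nonzero coefficient $\lambda^{j,G}_{(0,\dots,0)}=2^{jd}(1+j)^{-\varepsilon}$ per scale, whose local $L_1$-norm picks up the full sum $\sum_j(1+j)^{-\varepsilon}$; for $1<p\le 2$, coefficients spread over $\asymp 2^{jd}(1+j)^{-\beta}$ cubes at scale $j$ lying in disjoint rectangles, so that the $L_p$-averaging built into the Besov norm is genuinely felt. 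Your sandwich reduction via Corollary \ref{TheoremEmbeddings1} also fails to be sharp here, for the same reason you flag on the sufficiency side. You will need a second family of counterexamples that detects $p$.
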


\begin{rem}
	(a): Setting $q=r$ in Theorem \ref{TheoremRegular2} one recovers Theorem \ref{TheoremRegular} for $B^{s, b}_{p, q}(\R^d), \, b \neq 0$ (cf. Proposition \ref{PropositionCoincidences}).
	
	(b): The conditions (v)--(x) (i.e., $1 < p < \infty, s=0$ and $b \neq 0$) given in Theorem \ref{TheoremRegular2}  can be summarized as
	$$
		b \geq \frac{1}{\min\{2, p, q\}} - \frac{1}{q} \qquad \text{if} \qquad r \leq \min\{2, p, q\}
	$$
	and
	$$
		b >  \frac{1}{\min\{2, p, q\}} - \frac{1}{q} \qquad \text{if} \qquad r > \min\{2, p, q\}.
	$$
	The conditions (ii)--(iv) (i.e., $0 < p \leq 1, s = d \big(\frac{1}{p}-1 \big)$ and $b \neq 0$) can be rewritten as
	$$
		b \geq \frac{1}{\min\{1, q\}} - \frac{1}{q} \qquad \text{if} \qquad r \leq 1
	$$
	and
	$$
		b >  \frac{1}{\min\{1, q\}} - \frac{1}{q} \qquad \text{if} \qquad r > 1.
	$$
\end{rem}

\begin{proof}[Proof of Theorem \ref{TheoremRegular2}]
\textsc{Sufficiency part:} (i): Let $s_0 \in \big(d \big(\frac{1}{p}-1 \big)_+, s \big)$. According to Corollary \ref{Corollary11.4} one has $T^b_r B^s_{p, q}(\R^d)  \hookrightarrow B^{s_0, b}_{p, q}(\R^d)$ (which still holds true in the limiting case $b=0$) and thus $T^b_r B^s_{p, q}(\R^d) \hookrightarrow L^{\text{loc}}_1(\R^d)$ (cf. Theorem \ref{TheoremRegular}).

(ii): Assume $0 < p \leq 1, s = d \big(\frac{1}{p}-1 \big)$ and $0 < q \leq 1$. It follows from the triangle inequality and the Nikolskii inequality for entire functions of exponential type that
\begin{align}
	\|f\|_{L_1(\R^d)} & \leq \sum_{\nu=0}^\infty  \|(\varphi_\nu \widehat{f})^\vee\|_{L_1(\R^d)} = \sum_{j=0}^\infty \sum_{\nu=2^j-1}^{2^{j+1}-2} \|(\varphi_\nu \widehat{f})^\vee\|_{L_1(\R^d)} \nonumber\\
	& \lesssim  \sum_{j=0}^\infty \sum_{\nu=2^j-1}^{2^{j+1}-2} 2^{\nu d (1/p-1)} \|(\varphi_\nu \widehat{f})^\vee\|_{L_p(\R^d)}. \label{ProofTheoremRegular2}
\end{align}
Since $q \leq 1$ one can estimate the last term by
$$
	 \sum_{j=0}^\infty \bigg(\sum_{\nu=2^j-1}^{2^{j+1}-2} 2^{\nu d (1/p-1) q} \|(\varphi_\nu \widehat{f})^\vee\|_{L_p(\R^d)}^q \bigg)^{1/q}.
$$
We distinguish now two possible cases. If $r \leq 1$ and $b \geq 0$ then
\begin{align*}
	\|f\|_{L_1(\R^d)} & \lesssim  \sum_{j=0}^\infty \bigg(\sum_{\nu=2^j-1}^{2^{j+1}-2} 2^{\nu d (1/p-1) q} \|(\varphi_\nu \widehat{f})^\vee\|_{L_p(\R^d)}^q \bigg)^{1/q} \\
	&\leq   \left(\sum_{j=0}^\infty 2^{j b r} \bigg(\sum_{\nu=2^j-1}^{2^{j+1}-2} 2^{\nu d (1/p-1) q} \|(\varphi_\nu \widehat{f})^\vee\|_{L_p(\R^d)}^q \bigg)^{r/q}  \right)^{1/r} \\
	& = \|f\|_{T^b_r B^s_{p, q}(\R^d)}.
\end{align*}
On the other hand, if $1 < r < \infty$ and $b > 0$, we can apply H\"older's inequality to obtain
\begin{align*}
	\|f\|_{L_1(\R^d)} & \lesssim  \sum_{j=0}^\infty \bigg(\sum_{\nu=2^j-1}^{2^{j+1}-2} 2^{\nu d (1/p-1) q} \|(\varphi_\nu \widehat{f})^\vee\|_{L_p(\R^d)}^q \bigg)^{1/q} \\
	&  \hspace{-1cm} \lesssim  \left(\sum_{j=0}^\infty 2^{j b r} \bigg(\sum_{\nu=2^j-1}^{2^{j+1}-2} 2^{\nu d (1/p-1) q} \|(\varphi_\nu \widehat{f})^\vee\|_{L_p(\R^d)}^q \bigg)^{r/q}  \right)^{1/r}  = \|f\|_{T^b_r B^s_{p, q}(\R^d)}.
\end{align*}
The case $r=\infty$ and $b > 0$ is even easier and we omit the proof.

(iii), (iv): Let $0 < p \leq 1, s= d \big(\frac{1}{p}-1 \big)$ and $q > 1$. Applying H\"older's inequality in \eqref{ProofTheoremRegular2},
\begin{equation} \label{ProofTheoremRegular2.1}
	\|f\|_{L_1(\R^d)} \lesssim \sum_{j=0}^\infty 2^{j(1-1/q)} \bigg(\sum_{\nu=2^j-1}^{2^{j+1}-2} 2^{\nu d (1/p-1) q} \|(\varphi_\nu \widehat{f})^\vee\|^q_{L_p(\R^d)} \bigg)^{1/q}
\end{equation}
(with the obvious modification if $q=\infty$). Assume first $r \leq 1$ and $b \geq  1 - 1/q$ (i.e., (iii) holds). Then, by \eqref{ProofTheoremRegular2.1},
$$
	\|f\|_{L_1(\R^d)} \lesssim \left(\sum_{j=0}^\infty 2^{j b r} \bigg(\sum_{\nu=2^j-1}^{2^{j+1}-2} 2^{\nu d (1/p-1) q} \|(\varphi_\nu \widehat{f})^\vee\|^q_{L_p(\R^d)} \bigg)^{r/q} \right)^{1/r}  = \|f\|_{T^b_r B^s_{p, q}(\R^d)}.
$$
Suppose now that $1 < r < \infty$ and $ b > 1 - 1/q$ (i.e., (iv) holds). An application of H\"older's inequality in \eqref{ProofTheoremRegular2.1} yields
$$
	\|f\|_{L_1(\R^d)} \lesssim \left( \sum_{j=0}^\infty 2^{j b r} \bigg(\sum_{\nu=2^j-1}^{2^{j+1}-2} 2^{\nu d (1/p-1) q} \|(\varphi_\nu \widehat{f})^\vee\|^q_{L_p(\R^d)} \bigg)^{r/q} \right)^{1/r} = \|f\|_{T^b_r B^s_{p, q}(\R^d)}.
$$
The case $r=\infty$ and $b > 1-1/q$ can be done similarly.

(v): Let $1 < p \leq 2, s=0,$ and $q \leq p$. In virtue of the Littlewood--Paley theorem
\begin{equation}\label{LPestim}
	\|f\|_{L_p(\R^d)} \asymp \bigg\|\bigg(\sum_{\nu=0}^\infty |(\varphi_\nu \widehat{f})^\vee(\cdot)|^2 \bigg)^{1/2}  \bigg\|_{L_p(\R^d)}, \qquad 1 < p < \infty,
\end{equation}
we obtain, by Fubini's theorem,
\begin{equation}\label{LPestim2}
	\|f\|_{L_p(\R^d)} \lesssim \bigg(\sum_{\nu=0}^\infty \|(\varphi_\nu \widehat{f})^\vee\|_{L_p(\R^d)}^p \bigg)^{1/p}, \qquad 1 < p \leq 2.
\end{equation}
Since $q \leq p$, by \eqref{LPestim2},
\begin{equation}\label{LPestim3}
	\|f\|_{L_p(\R^d)} \lesssim \bigg(\sum_{j=0}^\infty  \bigg(\sum_{\nu = 2^j-1}^{2^{j+1}-2} \|(\varphi_\nu \widehat{f})^\vee\|_{L_p(\R^d)}^q \bigg)^{p/q}  \bigg)^{1/p}.
\end{equation}
If $r \leq p$ and $b \geq 0$  then
$$
	\|f\|_{L_p(\R^d)} \lesssim \bigg(\sum_{j=0}^\infty 2^{j b r}  \bigg(\sum_{\nu = 2^j-1}^{2^{j+1}-2} \|(\varphi_\nu \widehat{f})^\vee\|_{L_p(\R^d)}^q \bigg)^{r/q}  \bigg)^{1/r} = \|f\|_{T^b_r B^{s}_{p, q}(\R^d)}.
$$
On the other hand, if $p < r < \infty$ and $b > 0$, it follows from \eqref{LPestim3} and H\"older's inequality that
$$
	\|f\|_{L_p(\R^d)} \lesssim \bigg(\sum_{j=0}^\infty 2^{j b r}  \bigg(\sum_{\nu = 2^j-1}^{2^{j+1}-2} \|(\varphi_\nu \widehat{f})^\vee\|_{L_p(\R^d)}^q \bigg)^{r/q}  \bigg)^{1/r} = \|f\|_{T^b_r B^s_{p, q}(\R^d)}.
$$
Standard modifications work for the case $r=\infty$ and $b > 0$.

(vi), (vii): Assume $1 < p \leq 2, s=0$ and $p < q \leq \infty$. According to \eqref{LPestim2} and H\"older's inequality
\begin{align}
		\|f\|_{L_p(\R^d)}& \lesssim \bigg(\sum_{j=0}^\infty \sum_{\nu=2^j-1}^{2^{j+1}-2} \|(\varphi_\nu \widehat{f})^\vee\|_{L_p(\R^d)}^p \bigg)^{1/p} \nonumber \\
		& \lesssim \bigg(\sum_{j=0}^\infty 2^{j(1/p-1/q)p}  \bigg( \sum_{\nu=2^j-1}^{2^{j+1}-2} \|(\varphi_\nu \widehat{f})^\vee\|_{L_p(\R^d)}^q  \bigg)^{p/q}\bigg)^{1/p} \label{LPestim4}
\end{align}
(where the usual interpretation is made if $q=\infty$).

If $r \leq p$ and $b \geq 1/p-1/q$ then, by \eqref{LPestim4},
$$
\|f\|_{L_p(\R^d)} \lesssim  \bigg(\sum_{j=0}^\infty 2^{j b r}  \bigg( \sum_{\nu=2^j-1}^{2^{j+1}-2} \|(\varphi_\nu \widehat{f})^\vee\|_{L_p(\R^d)}^q  \bigg)^{r/q}\bigg)^{1/r}  = \|f\|_{T^b_r B^s_{p, q}(\R^d)}.
$$
On the other hand, if $p < r \leq \infty$ and $b > 1/p-1/q$, the desired estimate follows from \eqref{LPestim4} and H\"older's inequality.

(viii): Under $2 < p < \infty$ and $s=0$: by the Littlewood--Paley theorem (cf. \eqref{LPestim}) and Minkowski's inequality
\begin{equation}\label{LPestim6}
	\|f\|_{L_p(\R^d)} \lesssim \bigg(\sum_{\nu=0}^\infty \|(\varphi_\nu \widehat{f})^\vee\|_{L_p(\R^d)}^2 \bigg)^{1/2}.
\end{equation}
Further, if $q \leq 2$ then
\begin{equation}\label{LPestim5}
\|f\|_{L_p(\R^d)} \lesssim \left(\sum_{j=0}^\infty  \bigg(\sum_{\nu=2^j-1}^{2^{j+1}-2} \|(\varphi_\nu \widehat{f})^\vee\|_{L_p(\R^d)}^q \bigg)^{2/q} \right)^{1/2}.
\end{equation}
Assume that one of the following conditions is satisfied
$$
	 \left\{\begin{array}{cl} r \leq 2 \quad  \text{and} \quad b \geq 0, & \\
		2 < r \leq \infty \quad \text{and} \quad b > 0.
		       \end{array}
                        \right.
$$
Therefore the right-hand side in \eqref{LPestim5} can be dominated by $\|f\|_{T^b_r B^{s}_{p, q} (\R^d)}$.

(ix), (x): Let $2 < p < \infty, s=0,$ and $2 < q \leq \infty$. By \eqref{LPestim6} and H\"older's inequality
$$
	\|f\|_{L_p(\R^d)} \lesssim \left(\sum_{j=0}^\infty 2^{j(1/2-1/q) 2} \bigg(\sum_{\nu=2^{j}-1}^{2^{j+1}-2}  \|(\varphi_\nu \widehat{f})^\vee\|_{L_p(\R^d)}^q \bigg)^{2/q} \right)^{1/2}.
$$
Furthermore, the last term can be dominated by $\|f\|_{T^b_r B^{s}_{p, q} (\R^d)}$: if $r \leq 2$ (i.e., (ix) holds) we use the fact that $\ell_r \hookrightarrow \ell_2$ and if $r>2$ (i.e., (x) holds) then we apply H\"older's inequality.

The limiting case $p=\infty$ in (viii)--(x) is an immediate consequence of the embeddings
\begin{equation}\label{LPestim7}
	T^b_r B^{0}_{\infty, q}(\R^d) \hookrightarrow B^0_{\infty, 2}(\R^d)
\end{equation}
and
$$
	B^0_{\infty, 2}(\R^d) \hookrightarrow L_1^{\text{loc}}(\R^d)
$$
(cf. Theorem \ref{TheoremRegular}). The proof of \eqref{LPestim7} can be derived by using similar techniques as above and it is left to the interested reader.

\textsc{Necessary part:} We first deal with the embedding
	\begin{equation}\label{EmbeddingContradiction}
		T^b_r B^{0}_{p, q}(\R^d) \hookrightarrow L^{\text{loc}}_1(\R^d), \qquad 1 < p < \infty.
	\end{equation}
\textsc{Necessary condition $b \geq 1/p-1/q$ in \eqref{EmbeddingContradiction} with $p < q$:} Assume that \eqref{EmbeddingContradiction} holds with
\begin{equation}\label{Counterexample1}
 b < 1/p-1/q
\end{equation}
(cf. (vi)). Consider
	\begin{equation}\label{AuxFunctionCounterexample}
		 f = \sum_{j \in \N_0,G \in G^j,m \in \Z^d} \lambda^{j,G}_m 2^{-j d/2}
    \Psi^j_{G,m}
	\end{equation}
	where the coefficients $\lambda^{j,G}_m$ are to be chosen. According to \eqref{Counterexample1} (recall also $p < q$) there exists $\beta$ such that
	\begin{equation}\label{Counterexample1.2}
		1 < \beta < p' \min \{1-b-1/q, 1-1/q\}.
	\end{equation}
	Let
	$$
		\kappa_j = \sum_{l=0}^j (1+l)^{-\beta}.
	$$
	Observe that
	$$\kappa = \lim_{j \to \infty} \kappa_j < \infty$$
	 since $\beta > 1$ (cf. \eqref{Counterexample1.2}) and $\kappa_j < \kappa_{j+1}$ for every $j \in \N$. Define, for each $j \in \N$,
	\begin{equation}\label{Rj}
	R_j := \{x = (x_1, x_2, \ldots, x_d) : \kappa_{j-1} \leq x_1 < \kappa_j, \quad 0 \leq x_l \leq 1, \quad l= 2, \ldots, d \}.
	\end{equation}
	We now subdivide $R_j$ into dyadic cubes $Q_{j, m}, \, m \in M_{j} \subset \Z^d,$ of side length $2^{-j}$. Note that
	\begin{equation}\label{CardinalMj}
		|M_j| = 2^{j(d-1)} [2^j (\kappa_j-\kappa_{j-1})] \asymp 2^{j d} (1+j)^{-\beta}.
	\end{equation}
	Define
		\begin{equation*}
		\lambda^{j,G}_m = \left\{\begin{array}{cl} (1+j)^{\beta/p -\varepsilon}, & \quad j \in \N, \quad m \in M_j, \quad G = (M, \ldots, M),  \\
		0, & \text{otherwise},
		       \end{array}
                        \right.
	\end{equation*}
	where
	\begin{equation}\label{Counterexample1.3}
	\max\{b +  1/q, 1/q\} < \varepsilon < -\beta/p' +1
	\end{equation}
	 (cf. \eqref{Counterexample1.2}). Invoking Theorem \ref{ThmWaveletsNewBesov} and \eqref{CardinalMj},
	\begin{align*}
		\|f\|_{T^b_r B^{0}_{p, q}(\R^d)} & \asymp  \bigg(\sum_{k=0}^\infty 2^{k b r} \bigg(\sum_{j=2^k-1}^{2^{k+1}-2} 2^{-jd q/p} (1+j)^{-\varepsilon q+ \beta q/p} |M_j|^{q/p} \bigg)^{r/q} \bigg)^{1/r} \\
		& \asymp  \bigg(\sum_{k=0}^\infty 2^{k b r} \bigg(\sum_{j=2^k-1}^{2^{k+1}-2}  (1+j)^{-\varepsilon q} \bigg)^{r/q} \bigg)^{1/r} \\
		& \asymp \bigg(\sum_{k=0}^\infty  2^{k (b  -\varepsilon  + 1/q)r} \bigg)^{1/r} < \infty
	\end{align*}
	where we have used \eqref{Counterexample1.3} in the last step. By construction, $f$ is compactly supported on $[0, \kappa] \times [0, 1]^{d-1}$. Furthermore
	\begin{equation*}
		\|f\|_{L^1([0, \kappa] \times [0, 1]^{d-1})}  = \sum_{j=1}^\infty \int_{R_j} |f(x)| \, dx \asymp \sum_{j=1}^\infty (1+j)^{-\beta +\beta/p-\varepsilon} = \infty
	\end{equation*}	
	where the divergence of the last sum follows from \eqref{Counterexample1.3}. This gives the desired counterexample.

	\textsc{Necessary condition $b > 1/p-1/q$ in \eqref{EmbeddingContradiction} with $p < \min\{q, r\}$:} According to the previous case, it is enough to consider the limiting value $b= 1/p-1/q$. We take $\beta$ and $\varepsilon$ such that
	$$
		1 < \beta < p' \Big(1-\frac{1}{r} \Big) \qquad \text{and} \qquad \beta -1 < \varepsilon < \frac{\beta}{p} - \frac{1}{r}.
	$$
	We modify the preceding counterexample as follows. Define
	$$
		\kappa_j = \sum_{l=0}^j (\log (1+l))^{-\beta} \frac{1}{1+l}.
	$$
	Note that the sequence $\{\kappa_j\}_{j \in \N_0}$ is  increasing and convergent (since $\beta > 1$). Let $\kappa = \lim_{j  \to \infty} \kappa_j$. The corresponding sets $R_j$ are given by \eqref{Rj} with
	$$
		|M_j| \asymp 2^{j d} (\log (1+j))^{-\beta} \frac{1}{1+j}.
	$$
	Consider the function $f$ given by \eqref{AuxFunctionCounterexample} where
	\begin{equation*}
		\lambda^{j,G}_m = \left\{\begin{array}{cl} (\log (1+j))^\varepsilon, & \quad j \in \N, \quad m \in M_j, \quad G = (M, \ldots, M),  \\
		0, & \text{otherwise}.
		       \end{array}
                        \right.
	\end{equation*}
	Applying Theorem \ref{ThmWaveletsNewBesov} we have
	\begin{align*}
		\|f\|_{T^b_r B^{0}_{p, q}(\R^d)} & \asymp  \bigg(\sum_{k=0}^\infty 2^{k b r} \bigg(\sum_{j=2^k-1}^{2^{k+1}-2}  (1+j)^{-q/p} (\log (1+j))^{ \varepsilon q-\beta q/p} \bigg)^{r/q} \bigg)^{1/r} \\
		& \asymp \bigg(\sum_{k=0}^\infty  (1+k)^{\varepsilon r - \beta r/p} 2^{k (b+1/q-1/p) r}   \bigg)^{1/r} < \infty
	\end{align*}
	where we have used $b = 1/p-1/q$ and $\varepsilon < \beta/p -1/r$. However $f \not \in L_1^{\text{loc}}(\R^d)$ since (recall that $\beta -1 < \varepsilon$)
		\begin{equation*}
		\|f\|_{L^1([0, \kappa] \times [0, 1]^{d-1})}  \asymp \sum_{j=1}^\infty  (\log (1+j))^{\varepsilon - \beta} \frac{1}{1+j} = \infty.
	\end{equation*}	
	
	\textsc{Necessary condition $b \geq 1/2 -1/q$ in \eqref{EmbeddingContradiction}:} 	Let $\psi \in \mathcal{S}(\R^d)$ be a fixed function, non-zero everywhere, satisfying
	\begin{equation}\label{PsiDef}
		\text{supp } \widehat{\psi} \subset \{\xi \in \R^d: |\xi| \leq 2\}
	\end{equation}
	and let $f$ be the function whose Fourier series is lacunary of the form
	\begin{equation}\label{LacunaryFSDef}
		f(x) \sim \sum_{j=3}^\infty \lambda_j e^{i (2^j-2) x_1} \psi(x), \qquad x = (x_1, \dots, x_d) \in \R^d,
	\end{equation}
	where $\{\lambda_j\}_{j \in \N}$ is a given sequence of complex numbers. It is plain to see that $(\varphi_j \widehat{f})^\vee (x) = \lambda_j e^{i (2^j-2) x_1} \psi(x)$. Accordingly
	\begin{equation}\label{ProofSobolevTheoremSubcritical7}
		     \|f\|_{T^b_r B^{0}_{p,q}(\mathbb{R}^d)}  = \|\psi\|_{L_p(\R^d)} \left(\sum_{j=3}^\infty 2^{j b r} \bigg(\sum_{\nu=2^j-1}^{2^{j+1}-2} |\lambda_\nu|^q\bigg)^{r/q}\right)^{1/r}.
	\end{equation}
	On the other hand, by Zygmund's theorem for lacunary Fourier series (see, e.g.,  \cite[Theorem 3.7.4]{Grafakos})
	\begin{equation}\label{ZygmundEstim}
		\|f\|_{L_1([0, 2 \pi]^d)} \asymp \bigg(\sum_{j=3}^\infty  |\lambda_j|^2\bigg)^{1/2}.
	\end{equation}
	Assume that \eqref{EmbeddingContradiction} holds with
	$$b + 1/q < 1/2.$$
	Then the Fourier series $f$ (cf. \eqref{LacunaryFSDef}) where
	$$
		\lambda_j = j^{-\varepsilon}, \qquad j \geq 3,
	$$
	and $b + 1/q < \varepsilon < 1/2$  satisfies (cf. \eqref{ProofSobolevTheoremSubcritical7})
	$$
	\|f\|_{T^b_r B^{0}_{p,q}(\mathbb{R}^d)}^r \asymp \sum_{j=3}^\infty 2^{j (b -\varepsilon+1/q) r}  < \infty
	$$
	but (cf. \eqref{ZygmundEstim})
	$$
		\|f\|_{L_1([0, 2 \pi]^d)}^2 \asymp \sum_{j=3}^\infty j^{-2 \varepsilon} = \infty.
	$$
	This yields the desired counterexample.
		
\textsc{Necessary condition $b > 1/2 -1/q$ under $2 <  \min\{q, r\}$ in \eqref{EmbeddingContradiction}:} According to the preceding case, it only remains to show that \eqref{EmbeddingContradiction} does not hold whenever $b = 1/2 -1/q$ and $2 < \min\{q, r\}$. In this situation, the counterexample is provided by the lacunary Fourier series \eqref{LacunaryFSDef}  with
$$
	\lambda_j = j^{-1/2} (\log j)^{-\varepsilon}, \qquad j \geq 3,
$$	
where $1/r < \varepsilon < 1/2$. Indeed, by \eqref{ProofSobolevTheoremSubcritical7}, $f \in T^b_r B^{0}_{p, q}(\R^d)$ but $f \not \in L_1([0, 2 \pi]^d)$ (cf. \eqref{ZygmundEstim}).

Next we focus on
	\begin{equation}\label{EmbeddingContradiction2}
		T^b_r B^{d(1/p-1)}_{p, q}(\R^d) \hookrightarrow L^{\text{loc}}_1(\R^d), \qquad 0 < p \leq 1.
	\end{equation}
\textsc{Necessary condition $b \geq 1-1/q$ in \eqref{EmbeddingContradiction2} with $q > 1$:} We shall proceed by contradiction, i.e., we assume that \eqref{EmbeddingContradiction2} holds with $b < 1-1/q$. Let
\begin{equation}\label{Coef}
		\lambda^{j,G}_m = \left\{\begin{array}{cl} 2^{j d} (1+j)^{-\varepsilon}, & \quad j \in \N_0, \quad m = (0, \ldots, 0), \quad G = (M, \ldots, M),  \\
		0, & \text{otherwise},
		       \end{array}
                        \right.
	\end{equation}
	where $\max\{1/q, b+1/q\} < \varepsilon < 1$. Then, by Theorem \ref{ThmWaveletsNewBesov}, the function $f$ given by \eqref{AuxFunctionCounterexample} and \eqref{Coef} satisfies
	\begin{align*}
		\|f\|_{T^b_r B^{d(1/p-1)}_{p, q}(\R^d)}^r & \asymp \sum_{k=0}^\infty 2^{k b r} \bigg(\sum_{j=2^k-1}^{2^{k+1}-2} (1+j)^{-\varepsilon q}  \bigg)^{r/q} \\
		& \asymp \sum_{k=0}^\infty 2^{k (b  -\varepsilon  + 1/q)r} < \infty.
	\end{align*}
	Furthermore
	\begin{align*}
		\|f\|_{L_1([0, 1]^d)} &= \sum_{j=0}^\infty \int_{Q_{j, (0, \ldots, 0)} \backslash Q_{j+1, (0, \ldots, 0)}} |f(x)| \, dx \\
		& \asymp \sum_{j=0}^\infty \bigg(\sum_{k=0}^j 2^{k d} (1+k)^{-\varepsilon}  \bigg) |Q_{j, (0, \ldots, 0)} \backslash Q_{j+1, (0, \ldots, 0)}| \\
		& \asymp \sum_{j=0}^\infty  (1+j)^{-\varepsilon} = \infty.
	\end{align*}
	This contradicts \eqref{EmbeddingContradiction2}.

\textsc{Necessary condition $b > 1-1/q$ in \eqref{EmbeddingContradiction2} with $\min \{q, r \} > 1$:} According to the previous case, it only remains to construct an extremal function in the limiting case $b=1-1/q$. Let $f$ be given by \eqref{AuxFunctionCounterexample} with
\begin{equation*}
		\lambda^{j,G}_m = \left\{\begin{array}{cl} 2^{j d} (1+j)^{-1} (1+ \log (1+j))^{-\beta}, & \quad j \in \N_0, \quad m = (0, \ldots, 0), \quad G = (M, \ldots, M),  \\
		0, & \text{otherwise},
		       \end{array}
                        \right.
	\end{equation*}
	where $1/r < \beta < 1$. In light of Theorem \ref{ThmWaveletsNewBesov}, since $q > 1$ and $\beta > 1/r$ we obtain
	\begin{align*}
		\|f\|_{T^b_r B^{d(1/p-1)}_{p, q}(\R^d)}^r & \asymp \sum_{k=0}^\infty 2^{k b r} \bigg(\sum_{j=2^k-1}^{2^{k+1}-2} (1+j)^{- q} (1+ \log (1+j))^{-\beta q}  \bigg)^{r/q} \\
		& \asymp \sum_{k=0}^\infty (1+k)^{-\beta r} < \infty.
	\end{align*}
	However, $f \not \in L_1^{\text{loc}}(\R^d)$ since
	\begin{align*}
		\|f\|_{L_1([0, 1]^d)} & \asymp \sum_{j=0}^\infty  2^{-j d}  \bigg(\sum_{k=0}^j 2^{k d} (1+k)^{- 1} (1+ \log (1+k))^{-\beta}  \bigg)\\
		& \asymp \sum_{j=0}^\infty (1+ \log (1+k))^{-\beta} \frac{1}{1+j}  = \infty
	\end{align*}
	where we have used that $\beta < 1$ in the last step. This yields the desired counterexample to \eqref{EmbeddingContradiction2}.
\end{proof}

\begin{rem}\label{RemarkEmbBl1}
	A careful inspection of the above proof shows that restriction $b \neq 0$ in Theorem \ref{TheoremRegular2} is only applied in necessary assertions (cf. Remark \ref{RemDelicate}). To be more precise: Let $s, b \in \R$ and $0 < p, q, r \leq \infty$. Then
	$$
		T^b_r B^s_{p, q}(\R^d) \hookrightarrow L^{\text{loc}}_1(\R^d)
	$$
	provided that one of the following conditions holds
	\begin{enumerate}[\upshape(i)]
	 \item $0 < p \leq \infty, \qquad s > d \big(\frac{1}{p}-1 \big)_+, \qquad 0 < q, r \leq \infty, \qquad b \in \R$,
	 \item $0 < p \leq 1, \qquad s =  d \big(\frac{1}{p}-1 \big), \qquad 0 < q \leq 1, \qquad 0 < r \leq 1, \qquad b \geq 0$,
	  \item $0 < p \leq 1, \qquad s =  d \big(\frac{1}{p}-1 \big), \qquad 0 < q \leq 1, \qquad 1 < r \leq \infty, \qquad b > 0$,
	   \item $0 < p \leq 1, \qquad s =  d \big(\frac{1}{p}-1 \big), \qquad 1 < q \leq \infty, \qquad 0 < r \leq 1, \qquad  b \geq  1 - \frac{1}{q}$,
	      \item $0 < p \leq 1, \qquad s =  d \big(\frac{1}{p}-1 \big), \qquad 1 < q \leq \infty, \qquad 1 < r \leq \infty, \qquad  b >  1 - \frac{1}{q}$,
	 \item $1 < p \leq 2, \qquad s=0,  \qquad 0 < q \leq p, \qquad 0 < r \leq p, \qquad b \geq 0$,
	 \item $1 < p \leq 2, \qquad s=0,  \qquad 0 < q \leq p, \qquad p < r \leq \infty, \qquad b > 0$,
	 \item $1 < p \leq 2, \qquad s=0,  \qquad p < q \leq \infty, \qquad 0 < r \leq p, \qquad b \geq  \frac{1}{p}-\frac{1}{q}$,
	 \item  $1 < p \leq 2,  \qquad s=0, \qquad p < q \leq \infty, \qquad p < r \leq \infty, \qquad b >  \frac{1}{p}-\frac{1}{q}$,
	 \item  $2 < p \leq \infty,  \qquad s=0, \qquad  0 < q \leq 2, \qquad 0 < r \leq 2, \qquad b \geq 0$,
	 	  \item  $2 < p \leq \infty,  \qquad s=0, \qquad  0 < q \leq 2, \qquad 2 < r \leq \infty, \qquad b > 0$,
	    \item  $2 < p \leq \infty,  \qquad s=0, \qquad  2 < q \leq \infty, \qquad 0 < r \leq 2, \qquad b \geq \frac{1}{2}  -\frac{1}{q}$,
	    	    \item  $2 < p \leq \infty,  \qquad s=0, \qquad  2 < q \leq \infty, \qquad 2 < r \leq \infty, \qquad b >  \frac{1}{2} -\frac{1}{q}$.
	\end{enumerate}
\end{rem}

	As a byproduct of Theorem \ref{TheoremRegular2} we obtain the following
	
	\begin{cor}\label{CorollaryRegularBMO}
	Let $s \in \R, 0 < p, q, r \leq \infty$ and $b \in \mathbb{R} \backslash \{0\}$.
	\begin{enumerate}[\upshape(i)]
	\item Assume $p < \infty$. The following statements are equivalent
	$$
		T^b_r B^s_{p, q}(\R^d) \hookrightarrow L^{\text{\emph{loc}}}_1(\R^d)
	$$
	and
	$$
		T^b_r B^s_{p, q}(\R^d) \hookrightarrow L_{\max\{1, p\}}(\R^d).
	$$	
	\item The following statements are equivalent
	$$
		T^b_r B^{s}_{\infty, q}(\R^d) \hookrightarrow L^{\text{\emph{loc}}}_1(\R^d)
	$$
	and (cf. \cite[p. 37]{Triebel83})\index{\bigskip\textbf{Spaces}!$bmo(\R^d)$}\label{BMO}
	$$
		T^b_r B^{s}_{\infty, q}(\R^d) \hookrightarrow bmo(\R^d).
	$$
	\end{enumerate}
	\end{cor}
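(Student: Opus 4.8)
The plan is to deduce both equivalences from Theorem~\ref{TheoremRegular2} together with a second reading of its proof. The implications $\Leftarrow$ are immediate: since $\max\{1,p\}\ge 1$ one has $L_{\max\{1,p\}}(\R^d)\hookrightarrow L_1^{\text{loc}}(\R^d)$, and likewise $bmo(\R^d)=F^0_{\infty,2}(\R^d)$ is by definition a space of locally integrable functions, so $bmo(\R^d)\hookrightarrow L_1^{\text{loc}}(\R^d)$. Composing with the assumed embeddings gives $T^b_r B^s_{p,q}(\R^d)\hookrightarrow L_1^{\text{loc}}(\R^d)$ in (i) and (ii) respectively.

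For the implications $\Rightarrow$, suppose first $p<\infty$ and $T^b_r B^s_{p,q}(\R^d)\hookrightarrow L_1^{\text{loc}}(\R^d)$. By Theorem~\ref{TheoremRegular2} the parameters $(s,p,q,r,b)$ must satisfy one of the conditions (i)--(x) listed there (for $p=\infty$ in part (ii): either $s>0$, or $s=0$ together with one of (viii)--(x)). The key point is that the \emph{sufficiency} part of the proof of Theorem~\ref{TheoremRegular2} in fact establishes, under each of these configurations, the stronger target. Indeed, in the configurations with $0<p\le 1$ and $s=d\big(\tfrac1p-1\big)$ (conditions (ii)--(iv)) the argument there produces $\|f\|_{L_1(\R^d)}\lesssim\|f\|_{T^b_r B^s_{p,q}(\R^d)}$, i.e.\ $T^b_r B^s_{p,q}(\R^d)\hookrightarrow L_1(\R^d)=L_{\max\{1,p\}}(\R^d)$; in the configurations with $s=0$ and $1<p<\infty$ (conditions (v)--(x)) the argument, based on the Littlewood--Paley inequality \eqref{LPestim}, produces $\|f\|_{L_p(\R^d)}\lesssim\|f\|_{T^b_r B^0_{p,q}(\R^d)}$, i.e.\ $T^b_r B^0_{p,q}(\R^d)\hookrightarrow L_p(\R^d)=L_{\max\{1,p\}}(\R^d)$; and in the limiting case $p=\infty$ of (viii)--(x) it produces $T^b_r B^0_{\infty,q}(\R^d)\hookrightarrow B^0_{\infty,2}(\R^d)$, which combined with $B^0_{\infty,2}(\R^d)\hookrightarrow F^0_{\infty,2}(\R^d)=bmo(\R^d)$ (a special case of $B^s_{p,\min\{p,q\}}(\R^d)\hookrightarrow F^s_{p,q}(\R^d)$) gives the embedding into $bmo$.

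It remains to treat the ``smooth'' configuration~(i) of Theorem~\ref{TheoremRegular2}, i.e.\ $s>d\big(\tfrac1p-1\big)_+$, where only $L_1^{\text{loc}}$ was recorded. One chooses $s_0\in\big(d\big(\tfrac1p-1\big)_+,s\big)$, so that $T^b_r B^s_{p,q}(\R^d)\hookrightarrow B^{s_0,b}_{p,q}(\R^d)$ by Corollary~\ref{Corollary11.4} (valid also at $b=0$). Then for $p\in[1,\infty)$ one uses $B^{s_0,b}_{p,q}(\R^d)\hookrightarrow B^{\varepsilon}_{p,q}(\R^d)\hookrightarrow B^0_{p,1}(\R^d)\hookrightarrow L_p(\R^d)$ with $0<\varepsilon<s_0$; for $p\in(0,1)$ one uses $B^{s_0,b}_{p,q}(\R^d)\hookrightarrow B^{d(1/p-1)}_{p,1}(\R^d)\hookrightarrow L_1(\R^d)$; and for $p=\infty$ one uses $B^{s_0,b}_{\infty,q}(\R^d)\hookrightarrow B^0_{\infty,2}(\R^d)\hookrightarrow bmo(\R^d)$. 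In every case the target is exactly $L_{\max\{1,p\}}(\R^d)$, resp.\ $bmo(\R^d)$, which finishes the argument.

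I expect the only real obstacle to be bookkeeping: one must revisit each case of the sufficiency part of Theorem~\ref{TheoremRegular2} and confirm that the displayed estimate already has $L_{\max\{1,p\}}(\R^d)$ (rather than merely $L_1^{\text{loc}}(\R^d)$) on its left-hand side, and in the single genuinely ``gained'' case $s>d\big(\tfrac1p-1\big)_+$ supply the auxiliary classical Besov embeddings into $L_{\max\{1,p\}}(\R^d)$ or $bmo(\R^d)$; no new idea beyond the proof of Theorem~\ref{TheoremRegular2} appears to be required.
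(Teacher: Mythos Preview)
Your proposal is correct and follows essentially the same approach as the paper: both directions rely on revisiting the sufficiency part of the proof of Theorem~\ref{TheoremRegular2} to upgrade the target space from $L_1^{\text{loc}}$ to $L_{\max\{1,p\}}$ (resp.\ $bmo$) in each of the listed configurations. One minor caveat: your justification of $B^0_{\infty,2}(\R^d)\hookrightarrow bmo(\R^d)$ via the general embedding $B^s_{p,\min\{p,q\}}\hookrightarrow F^s_{p,q}$ is not quite clean, since that embedding is stated for $p<\infty$; the paper instead invokes this as a known fact (citing \cite[p.~114]{Sickel}) and, for $s>0$ with $p=\infty$, routes through $L_\infty(\R^d)\hookrightarrow bmo(\R^d)$ rather than $B^0_{\infty,2}$.
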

	
	\begin{rem}
		The analogue of Corollary \ref{CorollaryRegularBMO} for the classical spaces $B^s_{p, q}(\R^d)$ was obtained in \cite[Corollary 3.3.1]{Sickel} (see also \cite[Corollary 4.6]{CaetanoLeopold13} where Besov spaces of generalized smoothness are treated).
	\end{rem}
	
	\begin{proof}[Proof of Corollary \ref{CorollaryRegularBMO}]
		(i): It is obvious that the embedding into $L_{\max\{1, p\}}(\R^d)$ implies the corresponding one into $L^{\text{loc}}_1(\R^d)$. Concerning the converse implication, i.e., if $T^b_r B^s_{p, q}(\R^d) \hookrightarrow L^{\text{loc}}_1(\R^d)$ then, by Theorem \ref{TheoremRegular2}, one of the conditions  (i)--(x) given in Theorem \ref{TheoremRegular2} is satisfied. A perusal of the proof of Theorem \ref{TheoremRegular2} shows that, under such an assumption, $T^b_r B^s_{p, q}(\R^d) \hookrightarrow L_{\max\{1, p\}}(\R^d).$
		
		(ii): It follows from the proof of Theorem \ref{TheoremRegular2} that $T^b_r B^{s}_{\infty, q}(\R^d) \hookrightarrow L^{\text{loc}}_1(\R^d)$ implies $T^b_r B^{s}_{\infty, q}(\R^d) \hookrightarrow L_\infty(\R^d)$ if $s > 0$ and $T^b_r B^{0}_{\infty, q}(\R^d) \hookrightarrow B^0_{\infty, 2}(\R^d)$ (cf. \eqref{LPestim7}). Hence the desired result is a consequence of the well-known facts that $L_\infty(\R^d) \hookrightarrow bmo (\R^d)$ and $B^0_{\infty, 2}(\R^d) \hookrightarrow bmo (\R^d)$ (cf. \cite[p. 114]{Sickel}).
	\end{proof}

\subsection{Embeddings for $T^b_r F^s_{p, q}(\R^d)$}\label{ReminderFNew1}

\begin{thm}\label{ThmFL1loc}
	Let $s \in \R, b \in \R\backslash \{0\}, 0 < p < \infty,$ and $0 < q, r \leq \infty$. Then
	\begin{equation}\label{ThmFL1loc1}
		T^b_r F^s_{p, q}(\R^d) \hookrightarrow L^{\text{\emph{loc}}}_1(\R^d)
	\end{equation}
	provided that one of the following conditions holds
	\begin{enumerate}[\upshape(i)]
	 \item $0 < p < \infty, \qquad s > d \big(\frac{1}{p}-1 \big)_+, \qquad 0 < q, r \leq \infty, \qquad b \in \R$,
	  \item $0 < p < 1, \qquad s =  d \big(\frac{1}{p}-1 \big), \qquad 0 < q\leq \infty, \qquad 0 < r \leq 1, \qquad b \geq 0$,
	  \item  $0 < p < 1, \qquad s =  d \big(\frac{1}{p}-1 \big), \qquad 0 < q\leq \infty, \qquad  1 < r \leq \infty, \qquad b > 0$,
	 \item $1 \leq p < \infty, \qquad s=0,  \qquad 0 < q \leq 2, \qquad 0 < r \leq \min \{2, p\}, \qquad b \geq 0$,
	 \item $1 \leq p < \infty, \qquad s=0,  \qquad 0 < q \leq 2, \qquad  \min \{2, p\} < r \leq  \infty, \qquad b > 0$,
	 \item $1 \leq p < \infty, \qquad s=0,  \qquad 2 < q \leq \infty, \qquad 0 < r \leq \min \{2, p\}, \qquad b \geq  \frac{1}{2}-\frac{1}{q}$,
	 \item  $1 \leq p < \infty,  \qquad s=0, \qquad 2 < q \leq \infty, \qquad \min\{2, p\} < r \leq \infty, \qquad b >  \frac{1}{2}-\frac{1}{q}$.
	\end{enumerate}
	Furthermore, the conditions \emph{(i)--(vi)} are necessary. If, in addition, $r > 2$ then the condition \emph{(vii)} is also necessary.
\end{thm}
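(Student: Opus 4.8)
The plan is to reproduce, with the modifications forced by the $F$-structure, the scheme used for the truncated Besov scale in the proof of Theorem~\ref{TheoremRegular2}. Throughout I write $f=\sum_{j\ge 0}f_j$ with $f_j:=\big(\sum_{\nu=2^j-1}^{2^{j+1}-2}\varphi_\nu\widehat f\big)^\vee$, so that by \eqref{RemFM} one has $\|f\|_{T^b_r F^s_{p,q}(\mathbb R^d)}\asymp\big(\sum_j 2^{jbr}\|f_j\|_{F^s_{p,q}(\mathbb R^d)}^r\big)^{1/r}$, and I split into the supercritical case $s>d(\tfrac1p-1)_+$ and the two critical lines $s=d(\tfrac1p-1)$ with $0<p<1$ and $s=0$ with $1\le p<\infty$. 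For case (i) it is enough to combine $T^b_r F^s_{p,q}(\mathbb R^d)\hookrightarrow T^b_r B^s_{p,\max\{p,q\}}(\mathbb R^d)$ (Corollary~\ref{CorTruncBTLFixedps}) with Theorem~\ref{TheoremRegular2}(i), whose hypotheses impose no restriction on $q$, $r$, $b$ once $s>d(\tfrac1p-1)_+$. On the line $s=d(\tfrac1p-1)$, $0<p<1$ (cases (ii)--(iii)), I would use $\|f\|_{L_1}\le\sum_j\|f_j\|_{L_1}$ together with the classical Franke--Jawerth embedding $F^{d(1/p-1)}_{p,q}(\mathbb R^d)\hookrightarrow B^0_{1,p}(\mathbb R^d)\hookrightarrow B^0_{1,1}(\mathbb R^d)\hookrightarrow L_1(\mathbb R^d)$ (valid for every $q$ when $0<p<1$) applied to each block, so that $\|f\|_{L_1}\lesssim\sum_j\|f_j\|_{F^{d(1/p-1)}_{p,q}}$; the outer sum is then closed off by $\ell_r\hookrightarrow\ell_1$ together with $2^{jbr}\ge 1$ when $r\le 1$, $b\ge 0$, and by Hölder's inequality when $r>1$, $b>0$.

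For the remaining line $s=0$, $1\le p<\infty$ (cases (iv)--(vii)) the $F$-structure must be exploited through Littlewood--Paley theory. Since the frequency supports of the $f_j$ overlap only finitely often, $\|f\|_{L_p}\lesssim\big(\sum_j\|f_j\|_{L_p}^{\rho}\big)^{1/\rho}$ with $\rho:=\min\{2,p\}$ (Minkowski in $L_{p/2}$ for $p\ge 2$; the pointwise bound $\ell_\rho\hookrightarrow\ell_2$ for $1<p<2$; the square-function characterisation of $h_1$ and the ordinary triangle inequality for $p=1$). Inside each block, the Littlewood--Paley characterisation ($1<p<\infty$; the $h_1$-square function for $p=1$) and Hölder's inequality over the $\asymp 2^j$ frequencies give $\|f_j\|_{L_p}\lesssim 2^{j\gamma}\|f_j\|_{F^0_{p,q}}$ with $\gamma:=(\tfrac12-\tfrac1q)_+$. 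Hence $\|f\|_{L_p}^{\rho}\lesssim\sum_j 2^{j(\gamma-b)\rho}\big(2^{jb}\|f_j\|_{F^0_{p,q}}\big)^{\rho}$, and $\ell_r\hookrightarrow\ell_\rho$ (when $r\le\rho$, absorbing $2^{j(\gamma-b)\rho}\le 1$ via $b\ge\gamma$) or Hölder's inequality with exponent $r/\rho$ (when $r>\rho$, using $\gamma-b<0$) yields $\|f\|_{L_p}\lesssim\|f\|_{T^b_r F^0_{p,q}}$; since $L_{\max\{1,p\}}(\mathbb R^d)\hookrightarrow L_1^{\text{loc}}(\mathbb R^d)$, this reproduces precisely conditions (iv)--(vii) (with $\gamma=0$ if $q\le 2$, $\gamma=\tfrac12-\tfrac1q$ if $q>2$, and the dichotomy $r\le\min\{2,p\}$ versus $r>\min\{2,p\}$).

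For necessity two ingredients are combined. First, the sandwich embeddings $B^{s,b}_{p,\min\{p,q,r\}}(\mathbb R^d)\hookrightarrow T^b_r F^s_{p,q}(\mathbb R^d)$ (Corollary~\ref{CorollaryEmbFBNew}) and $T^b_r B^s_{p,\min\{p,q\}}(\mathbb R^d)\hookrightarrow T^b_r F^s_{p,q}(\mathbb R^d)$ (Corollary~\ref{CorTruncBTLFixedps}) reduce the failure of \eqref{ThmFL1loc1} to the failure of the classical embedding in Theorem~\ref{TheoremRegular} or of the truncated Besov embedding in Theorem~\ref{TheoremRegular2}; this already forces $s\ge d(\tfrac1p-1)_+$, the correct non-borderline value of $b$, and $r\le 1$ (resp.\ $r\le\min\{2,p\}$) at the borderline. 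Second, for the sharp borderline statements I would transplant the counterexamples of the proof of Theorem~\ref{TheoremRegular2}: on $s=0$, a lacunary Fourier series $f(x)\sim\sum_j\lambda_j e^{i(2^j-2)x_1}\psi(x)$, for which $\|f\|_{T^b_r F^0_{p,q}}=\|\psi\|_{L_p}\big(\sum_j 2^{jbr}(\sum_{\nu\in I_j}|\lambda_\nu|^q)^{r/q}\big)^{1/r}$ (the $F$- and $B$-expressions coincide for such $f$) while $\|f\|_{L_1(\mathbb T^d)}\asymp(\sum_j|\lambda_j|^2)^{1/2}$ by Zygmund's theorem; and on the line $s=d(\tfrac1p-1)$, wavelet expansions (Theorem~\ref{ThmWaveletsNewTriebelLizorkin}) concentrated on a single cube or on shrinking strips, with power-type and iterated-logarithmic coefficient decay.

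The decisive point — and the reason only necessity of (i)--(vi), and of (vii) under the extra hypothesis $r>2$, is claimed — is the borderline $b=\tfrac12-\tfrac1q$ with $q>2$ on the line $s=0$. The lacunary/Zygmund example with $\lambda_\nu=\nu^{-1/2}(\log\nu)^{-\varepsilon}$ needs $\varepsilon>\tfrac1r$ for membership in $T^b_r F^0_{p,q}$ but $\varepsilon\le\tfrac12$ for $f\notin L_1$, so it exists only when $r>2$; for $r\le 2$ this family collapses and no substitute seems available, so that borderline is left open. Verifying that the sufficiency arguments are genuinely sharp at every listed boundary, and reconciling the $\min\{2,p\}$-thresholds of the direct proof with the $\min\{p,q,r\}$-thresholds produced by the Besov sandwich, will be the most technical part of the write-up.
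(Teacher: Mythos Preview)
Your sufficiency argument is correct and, once unpacked, is essentially the paper's proof. For (i)--(iii) you and the paper both reduce to the truncated Besov result via (block-wise) Franke--Jawerth. For (iv)--(vii) the paper does exactly what you do: it bounds $\|f\|_{F^0_{p,2}}$ by the $\min\{2,p\}$-triangle inequality in $L_{p/2}$ applied to the dyadic blocks, and then controls each block by $2^{j(1/2-1/q)_+}$ times the $F^0_{p,q}$-block norm. The only cosmetic difference is that the paper writes everything in terms of the Littlewood--Paley pieces $(\varphi_\nu\widehat f)^\vee$ rather than the aggregated blocks $f_j$, but via \eqref{RemFM} these are interchangeable.

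For necessity there is a genuine methodological difference worth recording. To show that $b>0$ is forced on the line $s=0$ with $1<p<\infty$, the paper constructs an explicit wavelet counterexample supported on a shrinking family of rectangles (a nontrivial piece of work). Your route through the sandwich $T^b_r B^0_{p,\min\{p,q\}}\hookrightarrow T^b_r F^0_{p,q}$ combined with Theorem~\ref{TheoremRegular2} gives the same conclusion with no new construction, and is cleaner. For the remaining necessity statements (the $b\ge \tfrac12-\tfrac1q$ threshold and its strict version when $r>2$) both you and the paper use the same lacunary/Zygmund examples; your identification of the residual gap at $b=\tfrac12-\tfrac1q$ with $\min\{2,p\}<r\le 2$ matches exactly what the paper leaves open. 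One small correction: the sandwich alone does \emph{not} deliver the $r$-threshold at that borderline (your sentence ``$r\le\min\{2,p\}$ at the borderline'' overstates what it buys); only the iterated-log lacunary example does, and only for $r>2$, as you note in your final paragraph.
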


\begin{proof}
	\textsc{Sufficiency part:} (i): Assume $0 < p < \infty$ and $s > d \big(\frac{1}{p}-1 \big)_+$. Let $s_0 \in \big(d \big(\frac{1}{p}-1 \big)_+, s\big)$. Therefore, by Theorems \ref{TheoremFB1new} and \ref{TheoremRegular2},
	$$
		T^b_r F^s_{p, q}(\R^d) \hookrightarrow T^b_r B^{s_0}_{p, q}(\R^d) \hookrightarrow  L^{\text{loc}}_1(\R^d).
	$$
	
	(ii), (iii): Let $0 < p < 1$ and $s =  d \big(\frac{1}{p}-1 \big)$. In virtue of Theorems \ref{ThmFJ} and Remark \ref{RemarkEmbBl1}, under assumptions given in (ii) and (iii), we have
	$$
		T^b_r F^{d (\frac{1}{p}-1)}_{p, q}(\R^d) \hookrightarrow T^b_r B^{0}_{1, p}(\R^d) \hookrightarrow  L^{\text{loc}}_1(\R^d).
	$$
	
	(iv)--(vii): We concern with $p=1$ and $s=0$. According to Corollary \ref{CorollaryEmbFBNew}, we have
	$$
		T^b_r F^{0}_{1, q}(\R^d) \hookrightarrow T^b_r B^{0}_{1, \max\{1, q\}}(\R^d).
	$$
	Therefore the desired result can be reduced to study the validity of $T^b_r B^{0}_{1, \max\{1, q\}}(\R^d) \hookrightarrow L^{\text{loc}}_1(\R^d)$. Accordingly, this embedding holds true provided that one of the conditions (iv)
--(vii) is satisfied.
	
	(iv)--(vii): Assume $1 \leq p < \infty$. By the (quasi-)triangle inequality in $L_{p/2}(\R^d)$, we obtain
	\begin{align}
		\|f\|_{F^0_{p, 2}(\R^d)} &= \bigg\| \bigg(\sum_{j=0}^\infty |(\varphi_j \widehat{f})^\vee|^2  \bigg)^{1/2} \bigg\|_{L_p(\R^d)} \nonumber \\
		& = \bigg\| \sum_{k=0}^\infty \sum_{j=2^k-1}^{2^{k+1}-2} |(\varphi_j \widehat{f})^\vee|^2  \bigg\|_{L_{p/2}(\R^d)}^{1/2} \nonumber \\
		& \leq \bigg(\sum_{k=0}^\infty \bigg\| \bigg(\sum_{j=2^k-1}^{2^{k+1}-2} |(\varphi_j \widehat{f})^\vee|^2 \bigg)^{1/2}  \bigg\|_{L_{p}(\R^d)}^{\min\{2, p\}}  \bigg)^{1/\min\{2, p\}}. \label{ThmFL1locProof1}
	\end{align}
	Furthermore, we claim
	\begin{equation}\label{ThmFL1locProof2}
	\bigg(\sum_{k=0}^\infty \bigg\| \bigg(\sum_{j=2^k-1}^{2^{k+1}-2} |(\varphi_j \widehat{f})^\vee|^2 \bigg)^{1/2}  \bigg\|_{L_{p}(\R^d)}^{\min\{2, p\}}  \bigg)^{1/\min\{2, p\}} \lesssim \|f\|_{T^b_r F^{0}_{p, q}(\R^d)}.
	\end{equation}
	Assuming momentarily the validity of \eqref{ThmFL1locProof2}, it follows from \eqref{ThmFL1locProof1} that $T^b_r F^{0}_{p, q}(\R^d) \hookrightarrow F^0_{p, 2}(\R^d)$, which in turn gives $T^b_r F^{0}_{p, q}(\R^d) \hookrightarrow L_1^{\text{loc}}(\R^d)$ (cf. Theorem \ref{TheoremRegularFspaces}).
	
	Next we show \eqref{ThmFL1locProof2}. Suppose first $0 < q \leq 2, 0 < r \leq \min \{2, p\}$ and $b \geq 0$. Then
	\begin{align*}
		\bigg(\sum_{k=0}^\infty \bigg\| \bigg(\sum_{j=2^k-1}^{2^{k+1}-2} |(\varphi_j \widehat{f})^\vee|^2 \bigg)^{1/2}  \bigg\|_{L_{p}(\R^d)}^{\min\{2, p\}}  \bigg)^{1/\min\{2, p\}} & \leq \\
		& \hspace{-6cm} \bigg(\sum_{k=0}^\infty 2^{k b r} \bigg\| \bigg(\sum_{j=2^k-1}^{2^{k+1}-2} |(\varphi_j \widehat{f})^\vee|^q \bigg)^{1/q}  \bigg\|_{L_{p}(\R^d)}^{r}  \bigg)^{1/r}  = \|f\|_{T^b_r F^{0}_{p, q}(\R^d)}.
	\end{align*}
	
	Secondly, assume $0 < q \leq 2, \min\{2, p\} < r \leq \infty$ and $b > 0$. By H\"older's inequality,
	\begin{align*}
				\bigg(\sum_{k=0}^\infty \bigg\| \bigg(\sum_{j=2^k-1}^{2^{k+1}-2} |(\varphi_j \widehat{f})^\vee|^2 \bigg)^{1/2}  \bigg\|_{L_{p}(\R^d)}^{\min\{2, p\}}  \bigg)^{1/\min\{2, p\}} & \lesssim \\
				& \hspace{-6cm}	\bigg(\sum_{k=0}^\infty 2^{k b r} \bigg\| \bigg(\sum_{j=2^k-1}^{2^{k+1}-2} |(\varphi_j \widehat{f})^\vee|^q \bigg)^{1/q}  \bigg\|_{L_{p}(\R^d)}^r  \bigg)^{1/r} = \|f\|_{T^b_r F^{0}_{p, q}(\R^d)}.
	\end{align*}
	
	Thirdly, assume $2 < q \leq \infty, 0 < r \leq \min \{2, p\}$ and $b \geq 1/2-1/q$. By H\"older's inequality,
	\begin{align*}
		\bigg(\sum_{k=0}^\infty \bigg\| \bigg(\sum_{j=2^k-1}^{2^{k+1}-2} |(\varphi_j \widehat{f})^\vee|^2 \bigg)^{1/2}  \bigg\|_{L_{p}(\R^d)}^{\min\{2, p\}}  \bigg)^{1/\min\{2, p\}} & \lesssim \\
		& \hspace{-6cm} \bigg(\sum_{k=0}^\infty 2^{k (1/2-1/q) \min \{2, p\}} \bigg\| \bigg(\sum_{j=2^k-1}^{2^{k+1}-2} |(\varphi_j \widehat{f})^\vee|^q \bigg)^{1/q}  \bigg\|_{L_{p}(\R^d)}^{\min\{2, p\}}  \bigg)^{1/\min\{2, p\}}  \\
		&  \hspace{-6cm} \leq \bigg(\sum_{k=0}^\infty 2^{k b r} \bigg\| \bigg(\sum_{j=2^k-1}^{2^{k+1}-2} |(\varphi_j \widehat{f})^\vee|^q \bigg)^{1/q}  \bigg\|_{L_{p}(\R^d)}^{r}  \bigg)^{1/r} = \|f\|_{T^b_r F^{0}_{p, q}(\R^d)}.
	\end{align*}
	
	Fourthly, if $2 < q \leq \infty, \min \{2, p\} < r \leq \infty$ and $b > 1/2-1/q$. Applying twice H\"older's inequality, the following is achieved
	\begin{align*}
		\bigg(\sum_{k=0}^\infty \bigg\| \bigg(\sum_{j=2^k-1}^{2^{k+1}-2} |(\varphi_j \widehat{f})^\vee|^2 \bigg)^{1/2}  \bigg\|_{L_{p}(\R^d)}^{\min\{2, p\}}  \bigg)^{1/\min\{2, p\}} & \lesssim \\
		& \hspace{-6cm} \bigg(\sum_{k=0}^\infty 2^{k (1/2-1/q) \min \{2, p\}} \bigg\| \bigg(\sum_{j=2^k-1}^{2^{k+1}-2} |(\varphi_j \widehat{f})^\vee|^q \bigg)^{1/q}  \bigg\|_{L_{p}(\R^d)}^{\min\{2, p\}}  \bigg)^{1/\min\{2, p\}}  \\
		&  \hspace{-6cm} \lesssim \bigg(\sum_{k=0}^\infty 2^{k b r} \bigg\| \bigg(\sum_{j=2^k-1}^{2^{k+1}-2} |(\varphi_j \widehat{f})^\vee|^q \bigg)^{1/q}  \bigg\|_{L_{p}(\R^d)}^{r}  \bigg)^{1/r} = \|f\|_{T^b_r F^{0}_{p, q}(\R^d)}.
	\end{align*}

	\textsc{Necessary condition $b > 0$ if $0 < p \leq 1$ and $s= d \big(\frac{1}{p} -1\big)$ in \eqref{ThmFL1loc1}:} Given $0 < p_0 < p$, by Theorem \ref{ThmFJ2}, we have
	$$
		T^b_r B^{d(1/p_0-1)}_{p_0, p}(\R^d) \hookrightarrow T^b_r F^{d(1/p-1)}_{p, q}(\R^d)
	$$
	and thus \eqref{ThmFL1loc1} yields
	$$
		T^b_r B^{d(1/p_0-1)}_{p_0, p}(\R^d) \hookrightarrow L^{\text{loc}}_1(\R^d).
	$$
	This implies $b > 0$ (cf. Theorem \ref{TheoremRegular2}).
	
	\textsc{Necessary condition $b > 0$ if $1 < p < \infty$ and $s=0$ in \eqref{ThmFL1loc1}:} We shall proceed by contradiction, i.e., we assume \eqref{ThmFL1loc1} holds for some $b < 0$. Then choose $\beta$ such that
	\begin{equation}\label{AuxFunctionCounterexampleF0}
		1 < \beta < -b p' + 1.
	\end{equation}
	Let
	$$
		\kappa_j = \sum_{l=0}^j (1+j)^{-\beta} \qquad \text{and} \qquad \kappa = \lim_{j \to \infty} \kappa_j.
	$$
	and consider the rectangles $R_j$ defined by \eqref{Rj}. For each $j$, we denote by $\{Q_{j, m}: m \in M_j\}$ the collection of all (dyadic) cubes of side length $2^{-j}$ contained in $R_j$. Recall that $|M_j| \asymp 2^{j d} (1 + j)^{-\beta}$  (cf. \eqref{CardinalMj}). Let
	\begin{equation}\label{AuxFunctionCounterexampleF}
		 f = \sum_{j \in \N_0,G \in G^j,m \in \Z^d} \lambda^{j,G}_m 2^{-j d/2}
    \Psi^j_{G,m}
	\end{equation}
	where
		\begin{equation*}
		\lambda^{j,G}_m = \left\{\begin{array}{cl} \eta_j, & \quad j \in \N, \quad m \in M_j, \quad G = (M, \ldots, M),  \\
		0, & \text{otherwise},
		       \end{array}
                        \right.
	\end{equation*}
	with the scalar-valued sequence $\{\eta_j\}_{j \in \N}$ to be chosen.
	
	According to Theorem \ref{ThmWaveletsNewTriebelLizorkin} and using the disjointness of supports of dyadic cubes with fixed side length, as well as the disjointness of the $R_j$'s, we obtain
	\begin{align}
		\|f\|_{T^b_r F^{0}_{p, q}(\R^d)} & \asymp \bigg(\sum_{k=0}^\infty 2^{k b r} \bigg\| \bigg(\sum_{j=2^k-1}^{2^{k+1}-2} \sum_{m \in M_j} |\lambda^{j, (M, \ldots, M)}_m|^q \chi_{j, m} \bigg)^{1/q} \bigg\|_{L_p(\R^d)}^r \bigg)^{1/r} \nonumber \\
		& = \bigg(\sum_{k=0}^\infty 2^{k b r} \bigg\| \sum_{j=2^k-1}^{2^{k+1}-2} \sum_{m \in M_j} |\lambda^{j, (M, \ldots, M)}_m| \chi_{j, m}  \bigg\|_{L_p(\R^d)}^r \bigg)^{1/r} \nonumber  \\
		& \asymp \bigg(\sum_{k=0}^\infty 2^{k b r} \bigg( \sum_{j=2^k-1}^{2^{k+1}-2}  |\eta_j|^p 2^{-j d} |M_j| \bigg)^{r/p} \bigg)^{1/r} \nonumber  \\
		& \asymp \bigg(\sum_{k=0}^\infty 2^{k (b-\beta/p) r} \bigg( \sum_{j=2^k-1}^{2^{k+1}-2}  |\eta_j|^p \bigg)^{r/p} \bigg)^{1/r}. \label{AuxFunctionCounterexampleF2}
	\end{align}
	On the other hand
	\begin{equation}\label{AuxFunctionCounterexampleF3}
		\|f\|_{L^1([0, \kappa] \times [0, 1]^{d-1})}  = \sum_{j=1}^\infty \int_{R_j} |f(x)| \, dx \asymp \sum_{j=1}^\infty |\eta_j| (1 + j)^{-\beta}.
	\end{equation}
	
	Set
	$$\eta_j = (1 + j)^{\varepsilon} \qquad \text{where} \qquad \beta - 1 < \varepsilon < -b -\frac{1}{p}+ \frac{\beta}{p}.$$
	Here we recall that $\beta < -b p'+1$ (cf. \eqref{AuxFunctionCounterexampleF0}). In light of \eqref{AuxFunctionCounterexampleF2} and \eqref{AuxFunctionCounterexampleF3}, we derive
	$$
		\|f\|_{T^b_r F^{0}_{p, q}(\R^d)} \asymp  \bigg(\sum_{k=0}^\infty 2^{k (b-\beta/p + \varepsilon + 1/p) r} \bigg)^{1/r} < \infty
	$$
	and
	$$
	\|f\|_{L^1([0, \kappa] \times [0, 1]^{d-1})} \asymp \sum_{j=1}^\infty (1 + j)^{\varepsilon -\beta} = \infty.	
	$$
	This gives the desired counterexample.
	
	\textsc{Necessary condition $b \geq \frac{1}{2}-\frac{1}{q}$ if $2 < q, \, 1 \leq p < \infty,$ and $s= 0$ in \eqref{ThmFL1loc1}:} We shall construct extremal functions showing that \eqref{ThmFL1loc1} is no longer true provided that  $b < 1/2-1/q$. Indeed, consider the lacunary Fourier series
\begin{equation}\label{LacFS}
		f(x) \sim \sum_{j=3}^\infty \lambda_j e^{i (2^j-2) x_1} \psi(x), \qquad x = (x_1, \dots, x_d) \in \R^d,
	\end{equation}
	where $\{\lambda_j\}_{j \in \N}$ is a sequence of complex numbers to be chosen and  $\psi \in \mathcal{S}(\R^d) \backslash \{0\}$ satisfies \eqref{PsiDef}. Elementary computations lead to
	\begin{equation}\label{AuxFunctionCounterexampleF4}
		\|f\|_{T^b_r F^{0}_{p, q}(\R^d)} \asymp \bigg(\sum_{j=3}^\infty 2^{j b r} \bigg(\sum_{k=2^j-1}^{2^{j+1}-2} |\lambda_k|^q \bigg)^{r/q} \bigg)^{1/r}.
	\end{equation}
	On the other hand, by \eqref{ZygmundEstim},
	\begin{equation}\label{AuxFunctionCounterexampleF5}
		\|f\|_{L_1([0, 2 \pi]^d)} \asymp  \bigg(\sum_{j=3}^\infty  |\lambda_j|^2\bigg)^{1/2}.
	\end{equation}
	
	Let $\lambda_j = (1+j)^{-\varepsilon}$ where $b + 1/q < \varepsilon < 1/2$. It follows from \eqref{AuxFunctionCounterexampleF4} and \eqref{AuxFunctionCounterexampleF5} that
	$$
	\|f\|_{T^b_r F^{0}_{p, q}(\R^d)} \asymp \bigg(\sum_{j=3}^\infty 2^{j (b-\varepsilon + 1/q) r} \bigg)^{1/r}	< \infty
	$$
	but
	$$
	\|f\|_{L_1([0, 2 \pi]^d)} \asymp  \bigg(\sum_{j=3}^\infty  (1 + j)^{-\varepsilon 2} \bigg)^{1/2} = \infty.
	$$
	
	\textsc{Necessary condition $b > \frac{1}{2}-\frac{1}{q}$ if $2 <  \min\{q, r \}, \, 1 \leq p < \infty,$ and $s= 0$ in \eqref{ThmFL1loc1}:} According to the previous case, it only remains to deal with the limiting value $b = 1/2 - 1/q$.  Let $\lambda_j = (1 + j)^{-1/2} (1 + \log (1 + j))^{-\beta}$ where $1/r < \beta < 1/2$ and the related Fourier series $f$ given by \eqref{LacFS}. It follows from \eqref{AuxFunctionCounterexampleF4} and \eqref{ZygmundEstim} that
	$$
			\|f\|_{T^{1/2-1/q}_r F^{0}_{p, q}(\R^d)} \asymp \bigg(\sum_{j=3}^\infty j^{-\beta r} \bigg)^{1/r} < \infty
	$$
	and
	$$
	\|f\|_{L_1([0, 2 \pi]^d)} \asymp  \bigg(\sum_{j=3}^\infty (\log j)^{-2 \beta}  \frac{1}{j} \bigg)^{1/2} = \infty.
	$$
	This shows $T^{1/2-1/q}_r F^{0}_{p, q}(\R^d) \not \hookrightarrow L_1^{\text{loc}}(\R^d)$.
	
\end{proof}

The analog of Corollary \ref{CorollaryRegularBMO} for $F$-spaces reads as follows.

\begin{cor}
	Let $s \in \R, 0 < p < \infty, 0 < q \leq \infty, b \in \mathbb{R} \backslash \{0\}$ and $0 < r \leq \infty$ (in case the parameters satisfy condition \emph{(vii)} given in Theorem \ref{ThmFL1loc} we assume that $r > 2$).  Then the following statements are equivalent
		$$
		T^b_r F^s_{p, q}(\R^d) \hookrightarrow L^{\text{\emph{loc}}}_1(\R^d)
	$$
	and
	$$
		T^b_r F^s_{p, q}(\R^d) \hookrightarrow L_{\max\{1, p\}}(\R^d).
	$$	
\end{cor}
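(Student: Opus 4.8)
The plan is to follow the scheme of Corollary~\ref{CorollaryRegularBMO} (the Besov case). One implication is immediate: since $\max\{1,p\}\geq 1$, H\"older's inequality on any bounded domain shows $L_{\max\{1,p\}}(\R^d)\hookrightarrow L_1^{\text{loc}}(\R^d)$, so $T^b_r F^s_{p,q}(\R^d)\hookrightarrow L_{\max\{1,p\}}(\R^d)$ automatically implies $T^b_r F^s_{p,q}(\R^d)\hookrightarrow L_1^{\text{loc}}(\R^d)$.

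For the converse I would assume $T^b_r F^s_{p,q}(\R^d)\hookrightarrow L_1^{\text{loc}}(\R^d)$ and invoke Theorem~\ref{ThmFL1loc}: under the standing hypothesis (in particular $r>2$ in regime (vii), which is exactly the range where Theorem~\ref{ThmFL1loc} guarantees that (i)--(vii) is an exhaustive list of conditions equivalent to the embedding into $L_1^{\text{loc}}$), one of the conditions (i)--(vii) must hold. Then I would revisit the sufficiency part of the proof of Theorem~\ref{ThmFL1loc} case by case and observe that in each case the argument in fact produces an embedding into a space continuously embedded into $L_{\max\{1,p\}}(\R^d)$. In case (i) one has $T^b_r F^s_{p,q}(\R^d)\hookrightarrow T^b_r B^{s_0}_{p,q}(\R^d)$ with $s_0\in(d(\tfrac1p-1)_+,s)$ by Theorem~\ref{TheoremFB1new}, and since $p<\infty$ Corollary~\ref{CorollaryRegularBMO}(i) upgrades $T^b_r B^{s_0}_{p,q}(\R^d)\hookrightarrow L_1^{\text{loc}}(\R^d)$ to $T^b_r B^{s_0}_{p,q}(\R^d)\hookrightarrow L_{\max\{1,p\}}(\R^d)$. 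In cases (ii)--(iii) one has $0<p<1$ and $T^b_r F^{d(1/p-1)}_{p,q}(\R^d)\hookrightarrow T^b_r B^{0}_{1,p}(\R^d)$ by Theorem~\ref{ThmFJ2}, whence Corollary~\ref{CorollaryRegularBMO}(i) with integrability exponent $1$ gives $T^b_r B^0_{1,p}(\R^d)\hookrightarrow L_1(\R^d)=L_{\max\{1,p\}}(\R^d)$. In cases (iv)--(vii) with $1<p<\infty$ the proof of Theorem~\ref{ThmFL1loc} shows $T^b_r F^0_{p,q}(\R^d)\hookrightarrow F^0_{p,2}(\R^d)=L_p(\R^d)=L_{\max\{1,p\}}(\R^d)$ by the Littlewood--Paley identity, while for $p=1$ the same proof gives $T^b_r F^0_{1,q}(\R^d)\hookrightarrow T^b_r B^0_{1,\max\{1,q\}}(\R^d)\hookrightarrow L_1^{\text{loc}}(\R^d)$, which by Corollary~\ref{CorollaryRegularBMO}(i) becomes $\hookrightarrow L_1(\R^d)=L_{\max\{1,1\}}(\R^d)$. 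Combining these observations yields $T^b_r F^s_{p,q}(\R^d)\hookrightarrow L_{\max\{1,p\}}(\R^d)$ in every admissible case, which closes the equivalence.

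The point requiring the most care is the borderline $p=1$, $s=0$: there $F^0_{1,2}(\R^d)$ is not $L_1(\R^d)$ but the local Hardy space, so one must either route through the Besov space $T^b_r B^0_{1,\max\{1,q\}}(\R^d)$ as above or invoke the classical continuous embedding of the local Hardy space into $L_1(\R^d)$. A secondary point worth emphasizing is the role of the hypothesis "$r>2$ whenever (vii) holds": it ensures that Theorem~\ref{ThmFL1loc} yields necessity of all of (i)--(vii), so that the case analysis is complete; without it the embedding into $L_1^{\text{loc}}$ could a priori hold under conditions not covered by the sufficiency analysis, and the upgrade to $L_{\max\{1,p\}}$ would no longer be automatic.
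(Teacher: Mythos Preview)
Your proposal is correct and follows exactly the scheme the paper uses for the Besov analogue (Corollary~\ref{CorollaryRegularBMO}): the paper does not write out a separate proof for this corollary, but the intended argument is precisely the one you describe---revisit the sufficiency proof of Theorem~\ref{ThmFL1loc} and observe that in each case (i)--(vii) the chain of embeddings actually lands in $L_{\max\{1,p\}}(\R^d)$ rather than merely $L_1^{\text{loc}}(\R^d)$. Your handling of the delicate point $p=1$, $s=0$ (routing through $T^b_r B^0_{1,\max\{1,q\}}$ rather than $F^0_{1,2}=h_1$) and your explanation of the role of the hypothesis $r>2$ in regime (vii) are both accurate and match the paper's treatment.
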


%%%%%%%%%%%%%%%%%%%%%%%%%%%%%%%%%%%
\newpage
\section{Embeddings in the space of continuous functions}

Let $C(\R^d)$ be the space of all complex-valued uniformly continuous functions on $\R^d$\index{\bigskip\textbf{Spaces}!$C(\R^d)$}\label{CU}, equipped with the usual sup-norm. It is a well-known result that
\begin{equation}\label{ClassicBesovC}
	B^{s, b}_{p, q}(\R^d) \hookrightarrow C(\R^d) \iff  \left\{\begin{array}{lc}
                            0 <p \leq \infty, \quad 0 < q \leq \infty, \quad -\infty < b < \infty, \quad \text{and} \quad s > \frac{d}{p}, \\
                            0 < p \leq \infty, \quad 0 < q \leq 1, \quad b \geq 0, \quad \text{and} \quad s = \frac{d}{p},
                            \\
                            0 < p \leq \infty, \quad 1 < q \leq \infty, \quad b > \frac{1}{q'}, \quad \text{and} \quad s = \frac{d}{p}.
            \end{array}
            \right.
	\end{equation}
	The analogue for $F$-spaces reads as follows
	\begin{equation}\label{ClassicTLC}
		F^{s, b}_{p, q}(\R^d) \hookrightarrow C(\R^d) \iff \left\{\begin{array}{lc}
		0 < p < \infty, \quad 0 < q \leq \infty, \quad - \infty < b < \infty, \quad \text{and} \quad s > \frac{d}{p}, \\
		1 < p < \infty, \quad 0 < q \leq \infty, \quad b > \frac{1}{p'}, \quad \text{and} \quad s = \frac{d}{p}, \\
		0 < p \leq 1, \quad 0 < q \leq \infty, \quad b \geq 0, \quad \text{and} \quad s = \frac{d}{p}.
		\end{array}
		\right.
	\end{equation}
	For the proof of these results, we refer to \cite[Theorem 3.3.1]{Sickel} if $b=0$, \cite{Kalyabin} and \cite[Proposition 3.13, Example 3.14]{CaetanoMoura} for $b \in \R$ (even in the more general setting of function spaces with generalised smoothness).

 The goal of this section is to extend \eqref{ClassicBesovC}-\eqref{ClassicTLC} into the scale of truncated spaces $T^b_r A^{s}_{p, q}(\R^d)$.

 \begin{thm}\label{ThmC}
 	Let $p, q, r \in (0, \infty], \, b \in \R \backslash \{0\}$, and $s \in \R$. Then
	\begin{equation}\label{embeddingC}
	T^b_r B^s_{p, q}(\R^d) \hookrightarrow C(\R^d)
	\end{equation}
	if and only if one of the following conditions is satisfied
	\begin{enumerate}[\upshape(i)]
\item $0 <p \leq \infty, \quad 0 < q \leq \infty, \quad 0 < r \leq \infty,$ \quad and \quad $s > \frac{d}{p}$.
                         \item $0 < p \leq \infty, \quad 0 < q \leq 1, \quad 0 < r \leq 1, \quad b \geq 0,$ \quad and \quad $s = \frac{d}{p}$.
                           \item $0 < p \leq \infty, \quad 1 < q \leq \infty, \quad 0 < r \leq 1, \quad b \geq  \frac{1}{q'}$, \quad and \quad $s = \frac{d}{p}$.
                           \item $0 < p \leq \infty, \quad 0 < q \leq 1, \quad 1 < r \leq \infty, \quad b > 0$, \quad and \quad $s = \frac{d}{p}$.
                           \item  $0 < p \leq \infty, \quad 1 < q \leq \infty, \quad 1 < r \leq \infty, \quad b >  \frac{1}{q'}$, \quad and \quad $s = \frac{d}{p}$.
	\end{enumerate}
	The space $C(\R^d)$ can be replaced by $L_\infty(\R^d)$ in \eqref{embeddingC}.
 \end{thm}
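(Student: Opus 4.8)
The plan is to prove sufficiency and necessity separately, and in both directions to bypass most of the heavy machinery in favour of direct estimates, in the spirit of the proof of Theorem \ref{TheoremRegular2}. Throughout write $I_j=\{2^j-1,\dots,2^{j+1}-2\}$, so $|I_j|\asymp 2^j$.

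For sufficiency the starting point is the elementary bound
$$
\|f\|_{L_\infty(\R^d)}\le\sum_{\nu=0}^\infty\|(\varphi_\nu\widehat f)^\vee\|_{L_\infty(\R^d)}\lesssim\sum_{\nu=0}^\infty 2^{\nu d/p}\|(\varphi_\nu\widehat f)^\vee\|_{L_p(\R^d)},
$$
using the triangle inequality and Nikolskii's inequality for entire functions of exponential type. Setting $c_\nu:=2^{\nu d/p}\|(\varphi_\nu\widehat f)^\vee\|_{L_p(\R^d)}$ and $C_j:=\big(\sum_{\nu\in I_j}c_\nu^q\big)^{1/q}$, it remains to check, under each of (i)--(v), that
$$
\sum_{\nu=0}^\infty 2^{\nu(d/p-s)}c_\nu\lesssim\Big(\sum_{j=0}^\infty 2^{jbr}C_j^r\Big)^{1/r}\asymp\|f\|_{T^b_r B^s_{p,q}(\R^d)} .
$$
By H\"older's inequality $\sum_{\nu\in I_j}c_\nu\le 2^{j/q'}C_j$ (with $1/q'=0$ if $q\le 1$), so the left side is $\lesssim\sum_j 2^{j/q'}\big(\max_{\nu\in I_j}2^{\nu(d/p-s)}\big)C_j$. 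In case (i), $s>d/p$ produces a doubly‑exponentially small factor $2^{2^j(d/p-s)}$ and the estimate holds for all $b,q,r$. In cases (ii)--(v), $s=d/p$ and, writing $D_j:=2^{jb}C_j$, one is reduced to $\sum_j 2^{j(1/q'-b)}D_j\lesssim\|D\|_{\ell_r}$, which holds precisely when $1/q'-b\le 0$ and $r\le 1$ (using $\ell_r\hookrightarrow\ell_1$), or $1/q'-b<0$ and $r>1$ (using $\ell_{r'}$‑summability of $(2^{j(1/q'-b)})_j$); these are exactly conditions (ii)--(v). This gives $T^b_r B^s_{p,q}(\R^d)\hookrightarrow L_\infty(\R^d)$, and a standard density/completeness argument (Schwartz functions being dense for $p,q,r<\infty$, cf. \eqref{DualPairing}, together with the uniform convergence of the entire partial sums) upgrades this to $\hookrightarrow C(\R^d)$. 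In particular the endpoint $s=d/p$, $q>1$, $b=1/q'$, $r\le 1$ is admissible, although it corresponds to no classical Besov embedding since $B^{d/p,1/q'}_{p,q}(\R^d)\not\hookrightarrow C(\R^d)$ by \eqref{ClassicBesovC}.

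For necessity I exhibit explicit counterexamples. Fix a Schwartz function $\eta$ with $\widehat\eta\ge 0$ supported in a thin annulus about $|\xi|=1$ (so $\widehat\eta(2^{-\nu}\cdot)$ lies where $\varphi_\nu\equiv 1$ and $\varphi_{\nu'}\equiv 0$ for $\nu'\ne\nu$) and $\eta(0)=\int\widehat\eta>0$. For a nonnegative sequence $(a_\nu)$ put $f:=\sum_\nu a_\nu 2^{-\nu d/p}\eta(2^\nu\cdot)$; then $(\varphi_\nu\widehat f)^\vee=a_\nu 2^{-\nu d/p}\eta(2^\nu\cdot)$, hence $\|f\|_{T^b_r B^s_{p,q}(\R^d)}\asymp\big(\sum_j 2^{jbr}(\sum_{\nu\in I_j}(a_\nu 2^{\nu(s-d/p)})^q)^{r/q}\big)^{1/r}$, while $f(0)=\eta(0)\sum_\nu a_\nu$, so $f\notin L_\infty(\R^d)$ as soon as $\sum_\nu a_\nu=\infty$ (one checks $f\in\mathcal S'(\R^d)$ in each case). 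I then choose: (a) $a_\nu=1$ for $\nu\in\{2^j:j\ge 1\}$ and $0$ otherwise when $s<d/p$, where the norm is $\asymp(\sum_j 2^{jbr}2^{2^j(s-d/p)r})^{1/r}<\infty$; (b) one nonzero term per block, $a_{2^j}=2^{-jb}B_j$ with $\sum_j B_j^r<\infty$, when $s=d/p$, $q\le 1$, $b<0$; (c) constant over blocks, $a_\nu=2^{-j(b+1/q)}\beta_j$ on $I_j$ with $\sum_j\beta_j^r<\infty$, when $s=d/p$ and $b+1/q<1$ (covering $q>1$, $b<1/q'$, as well as all remaining $q\le 1$, $b<0$ cases); (d) $a_\nu=2^{-j}\gamma_j$ on $I_j$ with $\gamma_j=(1+j)^{-1}$, when $s=d/p$, $q>1$, $b=1/q'$, $r>1$. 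In every case $\sum_\nu a_\nu=\infty$ while the $T^b_r B^s_{p,q}$‑norm is finite, so the corresponding configuration does not embed into $L_\infty(\R^d)$, hence not into $C(\R^d)$. Together with the admissible endpoint handled by sufficiency, this exhausts all parameter configurations and shows that (i)--(v) are necessary.

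The main obstacle is the bookkeeping in the necessity part: one must identify, for each region of parameter space, the correct extremal sequence $(a_\nu)$ --- a single spike per dyadic block versus a block‑constant profile, and at which end of the block --- so that the double sum defining the $T^b_r B^s_{p,q}$‑norm converges while $\sum_\nu a_\nu$ diverges; the endpoint $b=1/q'$ (equivalently $b=0$ when $q\le 1$), where the embedding holds if and only if $r\le 1$, is the delicate one and is precisely where the truncation index $r$, and not any classical secondary index, governs the behaviour. Finally, the equivalence of the $C(\R^d)$‑ and $L_\infty(\R^d)$‑versions in \eqref{embeddingC} is immediate from the two halves of the argument: sufficiency gives $T^b_r B^s_{p,q}(\R^d)\hookrightarrow C(\R^d)\hookrightarrow L_\infty(\R^d)$, while each counterexample already shows $T^b_r B^s_{p,q}(\R^d)\not\hookrightarrow L_\infty(\R^d)$ whenever none of (i)--(v) holds, so the two characterizations have the same list of admissible parameters.
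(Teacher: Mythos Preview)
Your sufficiency argument is essentially the paper's: both start from Nikolski\u{\i}'s inequality and then control $\sum_j 2^{j/q'}C_j$ by $\|D\|_{\ell_r}$ via H\"older or $\ell_r\hookrightarrow\ell_1$, arriving at exactly the conditions (ii)--(v). Your handling of case (i) by direct doubly-exponential decay is marginally more self-contained than the paper's route through Corollary~\ref{Corollary11.4} and \eqref{ClassicBesovC}, but this is a minor difference. There is a small bookkeeping slip: with $c_\nu=2^{\nu d/p}\|(\varphi_\nu\widehat f)^\vee\|_{L_p}$ the quantity $(\sum_j 2^{jbr}C_j^r)^{1/r}$ equals $\|f\|_{T^b_r B^s_{p,q}}$ only when $s=d/p$; for general $s$ one should take $c_\nu=2^{\nu s}\|(\varphi_\nu\widehat f)^\vee\|_{L_p}$, and then the left side to be bounded is $\sum_\nu 2^{\nu(d/p-s)}c_\nu$, which is what you in fact write.

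For necessity you take a genuinely different route: the paper builds its counterexamples from wavelets via Theorem~\ref{ThmWaveletsNewBesov} (this is precisely where the restriction $b\neq 0$ enters), while you use dilates of a single Schwartz function with annular Fourier support. Your approach is more elementary and packages the four parameter regimes uniformly through the choice of $(a_\nu)$. Two corrections are needed, however. First, with your normalization $f=\sum_\nu a_\nu 2^{-\nu d/p}\eta(2^\nu\cdot)$ one has $f(0)=\eta(0)\sum_\nu a_\nu 2^{-\nu d/p}$, which converges for all your sequences, and $2^{\nu s}\|(\varphi_\nu\widehat f)^\vee\|_{L_p}\asymp a_\nu 2^{\nu(s-2d/p)}$ rather than $a_\nu 2^{\nu(s-d/p)}$; dropping the factor $2^{-\nu d/p}$ in the definition of $f$ fixes both the norm formula and the claim $f(0)=\eta(0)\sum_\nu a_\nu$. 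Second, for a standard dyadic resolution there is no annulus on which $\varphi_\nu\equiv 1$ and all other $\varphi_{\nu'}$ vanish, so the exact identity $(\varphi_\nu\widehat f)^\vee=a_\nu\eta(2^\nu\cdot)$ should be relaxed to the two-sided estimate $\|(\varphi_\nu\widehat f)^\vee\|_{L_p}\asymp a_\nu 2^{-\nu d/p}$ (a routine multiplier argument, or invoke the independence of the space from the chosen partition). With these adjustments your counterexamples (a)--(d) are correct and cover exactly the complement of (i)--(v); the paper's wavelet-based examples play the same role but lean on the heavier isomorphism of Section~\ref{SectionWavelets}.
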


 Specializing Theorem \ref{ThmC} with $q=r$ one recovers \eqref{ClassicBesovC} (cf. Proposition \ref{PropositionCoincidences}).

\begin{rem}\label{RemIfPart}
	The if-part of Theorem \ref{ThmC} works with $b \in \R$. On the other hand, the proof of the corresponding only-if part will rely on certain extremal functions constructed via wavelets (cf. Theorem \ref{ThmWaveletsNewBesov}), so that the restriction $b \neq 0$ appears naturally in this methodology; see Remark \ref{RemDelicate}.
\end{rem}

 \begin{rem}
 	Note that the conditions (ii)--(v) in Theorem \ref{ThmC} can be unified, for $0 < p, q, r \leq \infty$, as
	$$
	  b >  1 - \frac{1}{\max\{1, q\}}
	$$
	(where the limiting value $b =  1 - \frac{1}{\max\{1, q\}}$ is also admissible if $r \leq 1$).
  \end{rem}

 \begin{proof}[Proof of Theorem \ref{ThmC}]
 	\textsc{Sufficiency part:} (i): Let $s_0 \in (\frac{d}{p}, s)$. By Corollary \ref{Corollary11.4}, $T^b_r B^s_{p, q}(\R^d) \hookrightarrow B^{s_0, b} _{p, q}(\R^d)$. Thus the desired result follows from the classical embedding $B^{s_0, b}_{p, q}(\R^d) \hookrightarrow C(\R^d)$ (cf. \eqref{ClassicBesovC}).

	 Next we deal with the limiting case $s=d/p$. According to the Nikolski$\breve{\text{\i}}$ inequality for entire functions of exponential type (cf. \cite[p. 126]{Nikolskii} and \cite[p. 18]{Triebel83})
	$$
		\|(\varphi_\nu \widehat{f})^\vee\|_{L_\infty(\R^d)} \lesssim 2^{\nu d/p} 	\|(\varphi_\nu \widehat{f})^\vee\|_{L_p(\R^d)}, \qquad \nu \in \mathbb{N}_0.
	$$
	Therefore
	\begin{equation}\label{7.2}
		\sum_{\nu=0}^\infty \|(\varphi_\nu \widehat{f})^\vee\|_{L_\infty(\R^d)}  \lesssim \sum_{j=0}^\infty
 \sum_{\nu=2^j-1}^{2^{j+1}-2} 2^{\nu d/p} 	\|(\varphi_\nu \widehat{f})^\vee\|_{L_p(\R^d)}.
 	\end{equation}
	
	(ii): If $q, r \in (0, 1]$ and $b \geq 0$ then the right-hand side of \eqref{7.2} can be estimated from above by
	$$
		\left(\sum_{j=0}^\infty 2^{j b r} \bigg(\sum_{\nu=2^j-1}^{2^{j+1}-2} 2^{\nu d q/p} 	\|(\varphi_\nu \widehat{f})^\vee\|_{L_p(\R^d)}^q \bigg)^{r/q} \right)^{1/r} = \|f\|_{T^b_r B^s_{p, q}(\R^d)}.
	$$
	
	(iii): Let $q \in (1, \infty], \, r \in (0, 1]$ and $b  \geq 1/q'$. By H\"older's inequality
	\begin{equation}\label{7.3}
		\sum_{\nu=2^j-1}^{2^{j+1}-2} 2^{\nu d/p} 	\|(\varphi_\nu \widehat{f})^\vee\|_{L_p(\R^d)} \lesssim 2^{j/q'} \bigg(\sum_{\nu=2^j-1}^{2^{j+1}-2} 2^{\nu d q/p} 	\|(\varphi_\nu \widehat{f})^\vee\|_{L_p(\R^d)}^q \bigg)^{1/q}
	\end{equation}
	(with the usual modification if $q=\infty$). Inserting this estimate into \eqref{7.2}, we obtain
	\begin{align*}
			\sum_{\nu=0}^\infty \|(\varphi_\nu \widehat{f})^\vee\|_{L_\infty(\R^d)} & \lesssim \sum_{j=0}^\infty 2^{j/q'} \bigg(\sum_{\nu=2^j-1}^{2^{j+1}-2} 2^{\nu d q/p} 	 \|(\varphi_\nu \widehat{f})^\vee\|_{L_p(\R^d)}^q \bigg)^{1/q} \\
			&\hspace{-2.5cm} \leq  \left(\sum_{j=0}^\infty 2^{j b r} \bigg(\sum_{\nu=2^j-1}^{2^{j+1}-2} 2^{\nu d q/p} 	\|(\varphi_\nu \widehat{f})^\vee\|_{L_p(\R^d)}^q \bigg)^{r/q} \right)^{1/r} = \|f\|_{T^b_r B^s_{p, q}(\R^d)}.
	\end{align*}
	
	(iv): Let $q \in (0,1], \, r \in (1, \infty]$ and $b > 0$. Applying H\"older's inequality with exponent $r$, it follows from \eqref{7.2} that
	$$
	\sum_{\nu=0}^\infty \|(\varphi_\nu \widehat{f})^\vee\|_{L_\infty(\R^d)}  \lesssim \left(\sum_{j=0}^\infty 2^{j b r}
\bigg( \sum_{\nu=2^j-1}^{2^{j+1}-2} 2^{\nu d q/ p} 	\|(\varphi_\nu \widehat{f})^\vee\|_{L_p(\R^d)}^q \bigg)^{r/q} \right)^{1/r}.
	$$
	This gives the desired embedding.
	
	(v): Let $q, r \in (1, \infty]$ and $b > 1/q'$. By \eqref{7.2} and \eqref{7.3}, one has
	\begin{align*}
		\sum_{\nu=0}^\infty \|(\varphi_\nu \widehat{f})^\vee\|_{L_\infty(\R^d)} & \lesssim \sum_{j=0}^\infty 2^{j/q'} \bigg(\sum_{\nu=2^j-1}^{2^{j+1}-2} 2^{\nu d q/p} 	\|(\varphi_\nu \widehat{f})^\vee\|_{L_p(\R^d)}^q \bigg)^{1/q}  \\
		&\hspace{-3.25cm} \lesssim \left(\sum_{j=0}^\infty 2^{j b r}
\bigg( \sum_{\nu=2^j-1}^{2^{j+1}-2} 2^{\nu d q/ p} 	\|(\varphi_\nu \widehat{f})^\vee\|_{L_p(\R^d)}^q \bigg)^{r/q} \right)^{1/r} \left(\sum_{j=0}^\infty 2^{-j(b-1/q')r'} \right)^{1/r'} \\
& \hspace{-3.25cm} \asymp  \|f\|_{T^b_r B^s_{p, q}(\R^d)}.
	\end{align*}
	
	\textsc{Necessary condition $b > 0$ in \eqref{embeddingC} with $r > 1$ and $s = d/p$:} Consider
	\begin{equation}\label{AuxFunctionCounterexampleC}
		 f = \sum_{j \in \N_0,G \in G^j,m \in \Z^d} \lambda^{j,G}_m 2^{-j d/2}
    \Psi^j_{G,m}
	\end{equation}
	where the coefficients $\lambda^{j,G}_m$ are given by
		\begin{equation*}
		\lambda^{j,G}_m = \left\{\begin{array}{cl} (1+k)^{-\varepsilon}, & \quad j = 2^k, \quad k \in \N_0, \quad m = (0, \ldots, 0), \quad G = (M, \ldots, M),  \\
		0, & \text{otherwise},
		       \end{array}
                        \right.
	\end{equation*}
	and $1/r < \varepsilon \leq 1$. By Theorem \ref{ThmWaveletsNewBesov}, if $b < 0$ then
	$$
		\|f\|_{T^b_r B^{d/p}_{p, q}(\R^d)}^r  \asymp \sum_{k=0}^\infty 2^{k b r} (1+k)^{-\varepsilon r} < \infty,
	$$
	but $f \not \in C(\R^d)$ since
	$$
		f(0, \ldots, 0) \asymp \sum_{k=0}^\infty (1+k)^{-\varepsilon} = \infty.
	$$
	
	\textsc{Necessary condition $b \geq 1/q'$ in \eqref{embeddingC} with $q > 1$ and $s = d/p$:} Assume $b <  1/q'$ and let $\varepsilon$ be such that $b + 1/q < \varepsilon \leq 1$. Consider $f$ given by \eqref{AuxFunctionCounterexampleC} with
		\begin{equation*}
		\lambda^{j,G}_m = \left\{\begin{array}{cl} (1+j)^{-\varepsilon}, & \quad j  \in \N_0, \quad m = (0, \ldots, 0), \quad G = (M, \ldots, M),  \\
		0, & \text{otherwise}.
		       \end{array}
                        \right.
	\end{equation*}
	Applying Theorem \ref{ThmWaveletsNewBesov}, we have
	$$
		\|f\|_{T^b_r B^{d/p}_{p, q}(\R^d)}^r  \asymp \sum_{k=0}^\infty 2^{k(b-\varepsilon + 1/q) r} < \infty.
	$$
	However $f$ is not continuous since
	$$
		f(0, \ldots, 0) \asymp \sum_{j=0}^\infty (1+j)^{-\varepsilon} = \infty.
	$$
	
	\textsc{Necessary condition $b >  1/q'$ in \eqref{embeddingC} with $q > 1, r > 1$ and $s = d/p$:} In light of the preceding counterexample, it only remains to construct an extremal function in the limiting case $b =  1/q'$ under the additional assumption $r > 1$. To do this, we let $\varepsilon$ satisfying $1/r < \varepsilon \leq 1$ and take $f$ defined by \eqref{AuxFunctionCounterexampleC} with
		\begin{equation*}
		\lambda^{j,G}_m = \left\{\begin{array}{cl} (1+j)^{-1} (1 + \log (1 + j))^{-\varepsilon}, & \quad j  \in \N_0, \quad m = (0, \ldots, 0), \quad G = (M, \ldots, M),  \\
		0, & \text{otherwise}.
		       \end{array}
                        \right.
	\end{equation*}
	According to Theorem \ref{ThmWaveletsNewBesov},
	$$
		\|f\|_{T^{1/q'}_r B^{d/p}_{p, q}(\R^d)}^r  \asymp \sum_{k=0}^\infty 2^{k(1/q'-1 + 1/q) r} (1 + k)^{-\varepsilon r} = \sum_{k=0}^\infty (1+k)^{-\varepsilon r} < \infty,
	$$
	but
	$$
		f(0, \ldots, 0) \asymp \sum_{j=0}^\infty  (1+j)^{-1} (1 + \log (1 + j))^{-\varepsilon} = \infty.
	$$
	
	\textsc{Necessary condition $s \geq d/p$ in  \eqref{embeddingC}:} We can proceed by contradiction, i.e., assume that there exists $s < d/p$ such that
	$$
	T^b_r B^s_{p, q}(\R^d) \hookrightarrow C(\R^d).
	$$
	We choose $s_0$ such that $s < s_0 < d/p$. It follows from Corollary \ref{CorollaryBNewB} that
	$$
		B^{s_0, b}_{p, q}(\R^d) \hookrightarrow C(\R^d),
	$$
	but this contradicts \eqref{ClassicBesovC}. One can also apply Theorem \ref{ThmWaveletsNewBesov} to construct explicit extremal functions showing the necessity of $s \geq d/p$ in  \eqref{embeddingC}. Indeed, if $s < d/p$ then the function $f$ defined by \eqref{AuxFunctionCounterexampleC} with
		\begin{equation*}
		\lambda^{j,G}_m = \left\{\begin{array}{cl} 1, & \quad j  \in \N_0, \quad m = (0, \ldots, 0), \quad G = (M, \ldots, M),  \\
		0, & \text{otherwise},
		       \end{array}
                        \right.
	\end{equation*}
	satisfies $f \in T^b_r B^s_{p, q}(\R^d)$ but $f \not \in C(\R^d)$.
	
 \end{proof}

  \begin{thm}\label{ThmCF}
 	Let $p \in (0, \infty), q, r \in (0, \infty], \, b \in \R \backslash \{0\}$, and $s \in \R$. Then
	\begin{equation}\label{embeddingCF}
	T^b_r F^s_{p, q}(\R^d) \hookrightarrow C(\R^d)
	\end{equation}
	if and only if one of the following conditions is satisfied
	\begin{enumerate}[\upshape(i)]
\item $0 <p < \infty, \quad 0 < q \leq \infty, \quad 0 < r \leq \infty,$ \quad and \quad $s > \frac{d}{p}$.
                         \item $0 < p \leq 1, \quad 0 < q \leq \infty, \quad 1 < r \leq \infty, \quad b >  0,$ \quad and \quad $s = \frac{d}{p}$.
                           \item $0 < p \leq 1, \quad 0 < q \leq \infty, \quad 0 < r \leq 1, \quad b \geq 0$, \quad and \quad $s = \frac{d}{p}$.
                           \item $1 < p < \infty, \quad 0 < q \leq \infty, \quad 1 < r \leq \infty, \quad b > \frac{1}{p'} $, \quad and \quad $s = \frac{d}{p}$.
                           \item  $1 < p < \infty, \quad 0 < q \leq \infty, \quad 0 < r \leq 1, \quad b \geq \frac{1}{p'}$, \quad and \quad $s = \frac{d}{p}$.
	\end{enumerate}
	The space $C(\R^d)$ can be replaced by $L_\infty(\R^d)$ in \eqref{embeddingCF}.
 \end{thm}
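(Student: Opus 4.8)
The plan is to follow closely the strategy used for Theorem \ref{ThmC}, splitting into the ``if'' and ``only if'' parts and treating the generic range $s>d/p$ separately from the limiting line $s=d/p$. Since none of the conditions (i)--(v) involve $q$, and since the pointwise inequality $\ell_q\hookrightarrow\ell_\infty$ gives $T^b_r F^s_{p,q}(\R^d)\hookrightarrow T^b_r F^s_{p,\infty}(\R^d)$ for every $q\in(0,\infty]$ (cf. also Theorem \ref{ThmFFp0p1}(v)), for the sufficiency it is enough to prove the embedding for $q=\infty$. For $s>d/p$ the argument is immediate: by Corollary \ref{CorTruncBTLFixedps} one has $T^b_r F^s_{p,q}(\R^d)\hookrightarrow T^b_r B^s_{p,\max\{p,q\}}(\R^d)$, and Theorem \ref{ThmC}(i) yields $T^b_r B^s_{p,\max\{p,q\}}(\R^d)\hookrightarrow C(\R^d)$ because $s>d/p$; alternatively one may pass to $B^{s_0,b}_{p,r}(\R^d)$ with $d/p<s_0<s$ via Corollary \ref{Cornew12} and then use \eqref{ClassicBesovC}.

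The delicate part is the line $s=d/p$, where the Besov reduction through $\max\{p,q\}$ is too lossy (it would force $b$ past $1/\max\{p,q\}'$). Here I would argue directly, imitating the proof of Theorem \ref{ThmC}: starting from Nikolskii's inequality $\|(\varphi_\nu\widehat f)^\vee\|_{L_\infty(\R^d)}\lesssim 2^{\nu d/p}\|(\varphi_\nu\widehat f)^\vee\|_{L_p(\R^d)}$ together with $\|(\varphi_\nu\widehat f)^\vee\|_{L_\infty(\R^d)}\le\|\max_{2^j-1\le\nu\le 2^{j+1}-2}2^{\nu d/p}|(\varphi_\nu\widehat f)^\vee|\|_{L_p(\R^d)}$ for $\nu$ in the $j$-th dyadic block, one obtains $\|f\|_{L_\infty(\R^d)}\le\sum_{j}2^{j}A_j$ with $A_j:=\|\max_{2^j-1\le\nu\le 2^{j+1}-2}2^{\nu d/p}|(\varphi_\nu\widehat f)^\vee|\|_{L_p(\R^d)}$, while $\|f\|_{T^b_r F^{d/p}_{p,\infty}(\R^d)}=\big(\sum_j 2^{jbr}A_j^r\big)^{1/r}$; matching $\sum_j 2^j A_j$ against the weighted $\ell_r$-norm, and distinguishing $r\le1$ from $r>1$ and $1<p<\infty$ from $0<p\le1$, produces the thresholds $b\ge 1/p'$ (resp. $b\ge0$) and the borderline behaviour. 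For the non-borderline values of $b$ cleaner shortcuts are available and worth recording: for $0<p\le1$ and $b>0$ use Theorem \ref{TheoremInterpolationF}(i) (extended to $0<p<\infty$ as in Remark \ref{RemarkIntp}) to write $T^b_r F^{d/p}_{p,q}(\R^d)=(F^{d/p}_{p,q}(\R^d),F^{s_0}_{p,q_0}(\R^d))_{(0,b-1/r),r}\hookrightarrow F^{d/p}_{p,q}(\R^d)\hookrightarrow C(\R^d)$ by \eqref{EmbeddingsLimInt} and \eqref{ClassicTLC}; for $0<p\le1$, $b\ge0$ and $r\le\min\{p,q\}$, Proposition \ref{PropositionElementary}(iv) and Proposition \ref{PropositionCoincidences}(ii) give $T^b_r F^{d/p}_{p,q}(\R^d)\hookrightarrow\mathfrak{T}^b_r F^{d/p}_{p,q}(\R^d)\hookrightarrow F^{d/p,b}_{p,q}(\R^d)\hookrightarrow C(\R^d)$; and for $1<p<\infty$ with $b>1/p'$ one reaches $F^{d/p,b}_{p,q}(\R^d)$ through Corollary \ref{CorollaryEmbFBNew} or an interpolation shift. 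The residual borderline cases, in particular $b=1/p'$ with $r\le1$, are exactly where the direct Nikolskii computation is unavoidable.

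For the necessity I would reuse the extremal functions from the proofs of Theorem \ref{ThmC} and Theorem \ref{ThmFL1loc}, now evaluated through the wavelet isomorphism of Theorem \ref{ThmWaveletsNewTriebelLizorkin}: (a) wavelet series $f=\sum\lambda^{j,G}_m 2^{-jd/2}\Psi^j_{G,m}$ carrying a single spatial index $m=(0,\dots,0)$, $G=(M,\dots,M)$, either on the lacunary set $j\in\{2^k\}$ or on all levels $j$, and (b) lacunary Fourier series $f(x)\sim\sum_j\lambda_j e^{i(2^j-2)x_1}\psi(x)$, for which $(\varphi_j\widehat f)^\vee=\lambda_j e^{i(2^j-2)x_1}\psi$, so that $\|f\|_{T^b_r F^{d/p}_{p,q}(\R^d)}$ collapses to a scalar weighted sum that is the same for all $q$, while $f(0,\dots,0)$ or $\|f\|_{L_1}$ diverges for the appropriate choice of exponents. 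The only extra ingredient compared with the Besov computations is the identity $\big\|\big(\sum_{j=2^k-1}^{2^{k+1}-2}2^{jdq/p}\chi_{j,(0,\dots,0)}\big)^{1/q}\big\|_{L_p(\R^d)}\asymp 2^{k(b+1/p)}$-type estimate already worked out in the proof of Theorem \ref{TheoremFB1new}, which produces the shift from $1/q$ to $1/p$ and hence the threshold $b\ge1/p'$; choosing the coefficients as in Theorem \ref{ThmC} then rules out every parameter configuration outside (i)--(v), and the necessity of $s\ge d/p$ follows by taking $s<s_0<d/p$ and combining Corollary \ref{Cornew12} with \eqref{ClassicBesovC}. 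Finally $C(\R^d)\hookrightarrow L_\infty(\R^d)$ is trivial, and since the same counterexamples already violate boundedness, $C(\R^d)$ may be replaced by $L_\infty(\R^d)$. I expect the main obstacle to be precisely the sharp bookkeeping on the line $s=d/p$ in the borderline $b$-cases, where the $L_p(\ell_q)$ structure of truncated $F$-spaces must be exploited directly rather than the $\ell_q(L_p)$ structure available for $B$-spaces.
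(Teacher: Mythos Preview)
Your sufficiency argument on the line $s=d/p$ has a genuine gap. After reducing to $q=\infty$ you claim the Nikolski\u{\i} computation yields $\|f\|_{L_\infty(\R^d)}\lesssim\sum_{j}2^{j}A_j$ with $A_j=\big\|\sup_{2^j-1\le\nu\le2^{j+1}-2}2^{\nu d/p}|(\varphi_\nu\widehat f)^\vee|\big\|_{L_p(\R^d)}$, and that matching this against $\big(\sum_j 2^{jbr}A_j^r\big)^{1/r}$ produces the thresholds $b\ge 1/p'$ (resp.\ $b\ge 0$). It does not: the block has $\asymp 2^j$ frequencies and the $L_p(\ell_\infty)$ structure of $A_j$ cannot absorb any of them, so the best you get is $b\ge 1$ (take $g_\nu=g$ for all $\nu$ in a block to see that $\sum_\nu\|g_\nu\|_{L_p}\le C\,2^{j/p'}\|\sup_\nu|g_\nu|\|_{L_p}$ fails for $p>1$). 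The loss occurs precisely because you passed to $q=\infty$ first; the ``$2^{j/p'}$'' you are expecting only appears if the inner index is $p$, not $\infty$. Your alternative routes cover only the strict interior $b>1/p'$ (resp.\ $b>0$) and the restricted range $r\le\min\{p,q\}$, so the borderline $b=1/p'$ with $r\le 1$ (and more generally the full range $r\le 1$) remains unproved.

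The paper's fix is a one-line use of the truncated Jawerth embedding (Theorem~\ref{ThmFJ}): for any $q$ one has $T^b_r F^{d/p}_{p,q}(\R^d)\hookrightarrow T^b_r B^{0}_{\infty,p}(\R^d)$, with inner index $p$ in the target. Theorem~\ref{ThmC} applied with $(p,q,s)\leadsto(\infty,p,0)$ then gives exactly the thresholds $b\ge 1/p'$ (or $b>1/p'$) when $p>1$ and $b\ge 0$ (or $b>0$) when $p\le 1$. For the necessity the paper is equally economical: it pulls back via the truncated Franke embedding (Theorem~\ref{ThmFJ2}) $T^b_r B^{d/p_1}_{p_1,p}(\R^d)\hookrightarrow T^b_r F^{d/p}_{p,q}(\R^d)$ with $p_1<p$, so that $T^b_r F^{d/p}_{p,q}(\R^d)\hookrightarrow C(\R^d)$ forces the Besov conditions of Theorem~\ref{ThmC} (again with inner index $p$), and $s\ge d/p$ follows from $T^b_r B^{s_0}_{p,q}(\R^d)\hookrightarrow T^b_r F^{s}_{p,q}(\R^d)$ for $s_0>s$. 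This avoids rebuilding the counterexamples from scratch and makes the $q$-independence of the answer transparent.
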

 \begin{proof}
 	\textsc{Sufficiency part:} (i): By Corollary \ref{CorTruncBTLFixedps},
	$$
		T^b_r F^{s}_{p, q} (\R^d) \hookrightarrow T^b_r B^{s}_{p, \max\{p, q\}}(\R^d)
	$$
	and thus Theorem \ref{ThmC} yields
	$$
		T^b_r F^{s}_{p, q} (\R^d) \hookrightarrow C(\R^d)
	$$
	for  $s > d/p$.
	
	(ii)--(v): Assume $s=d/p$. In virtue of Theorems \ref{ThmFJ} and \ref{ThmC}, the following holds
	$$
		T^b_r F^{d/p}_{p, q} (\R^d) \hookrightarrow T^b_r B^{0}_{\infty, p}(\R^d) \hookrightarrow C(\R^d).
	$$
	
	\textsc{Necessary conditions:} We first deal with the case $s=d/p$, i.e., we assume
	\begin{equation}\label{ThmCFProof1}
		T^b_r F^{d/p}_{p, q}(\R^d) \hookrightarrow C(\R^d).
	\end{equation}
	Let $0 < p_1 < p$. By Theorem \ref{ThmFJ2}, we have
	\begin{equation}\label{ThmCFProof2}
	T^b_r B^{d/p_1}_{p_1, p}(\R^d)	\hookrightarrow T^b_r F^{d/p}_{p, q}(\R^d).
	\end{equation}
	Putting together \eqref{ThmCFProof1} and \eqref{ThmCFProof2}, we obtain
	$$
		T^b_r B^{d/p_1}_{p_1, p}(\R^d)	\hookrightarrow  C(\R^d)
	$$
	and, by Theorem \ref{ThmC}, this implies the validity of one of the conditions stated in (ii)--(v).
	
	It remains to show the failure of the embedding
	$$
		T^b_r F^s_{p, q}(\R^d) \hookrightarrow C(\R^d), \qquad s < \frac{d}{p}.
	$$
	Indeed, this is an immediate consequence of the fact that $T^b_r B^{s_0}_{p, q}(\R^d) \hookrightarrow T^b_r F^s_{p, q}(\R^d)$ for $s_0 > s$ (cf. Theorem \ref{ThmFJ2}) and Theorem  \ref{ThmC}.
 \end{proof}

 \begin{rem}
 	The proof above shows that the if-part of Theorem \ref{ThmCF} still remains true in the borderline case $b=0$; cf. Remark \ref{RemIfPart}.
 \end{rem}

 A special case of Theorem \ref{ThmCF} refers to embeddings of the Lipschitz spaces $\text{Lip}^{s, b}_{p, q}(\R^d) = F^{b+1/q}_q F^{s}_{p, 2}(\R^d)$ (cf. Proposition \ref{PropositionCoincidences}) into the space of continuous functions.

 \begin{cor}
 	Let $1 < p < \infty, 0 < q \leq \infty, s > 0$ and $b < -1/q$. Then
	$$
		\emph{Lip}^{s, b}_{p, q}(\R^d) \hookrightarrow C(\R^d) \iff s > \frac{d}{p}
	$$
	where $C(\R^d)$ can be replaced by $L_\infty(\R^d)$.
 \end{cor}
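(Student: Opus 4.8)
The plan is to deduce this statement directly from Theorem \ref{ThmCF} together with the identification of Lipschitz spaces as truncated Triebel--Lizorkin spaces. First I would invoke Proposition \ref{PropositionCoincidences}(iv), which under the present assumptions $1 < p < \infty$, $0 < q \leq \infty$, $s > 0$, $b < -1/q$ gives
$$
	\text{Lip}^{s, b}_{p, q}(\R^d) = T^{b+1/q}_q F^{s}_{p, 2}(\R^d).
$$
Set $\xi := b + 1/q$; the hypothesis $b < -1/q$ is precisely $\xi < 0$, so in particular $\xi \in \R \backslash \{0\}$ and Theorem \ref{ThmCF} applies to $T^{\xi}_q F^{s}_{p, 2}(\R^d)$, where the truncation parameter is $r = q$, the integrability parameter is $p$, the inner index is $2$, and the logarithmic parameter is $\xi$.

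Next I would simply read off the five conditions in Theorem \ref{ThmCF}. Condition (i) reads $s > d/p$ with no constraint on the other parameters, so the implication $s > d/p \Longrightarrow \text{Lip}^{s, b}_{p, q}(\R^d) \hookrightarrow C(\R^d)$ is immediate. For the converse, observe that conditions (ii)--(v) all force the borderline value $s = d/p$, and since here $p \in (1, \infty)$ only (iv) and (v) are relevant; both of these require $\xi > 1/p'$ (respectively $\xi \geq 1/p'$), which is impossible because $\xi < 0 < 1/p'$. Hence none of (ii)--(v) can hold, and if $s \leq d/p$ then (i) fails as well, so by Theorem \ref{ThmCF} the embedding into $C(\R^d)$ does not hold. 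This establishes the equivalence, and the concluding sentence of Theorem \ref{ThmCF} permits $C(\R^d)$ to be replaced by $L_\infty(\R^d)$, giving the last claim.

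Since every ingredient is already available in the excerpt, there is essentially no genuine obstacle here: no new estimates, wavelet constructions, or interpolation arguments are needed beyond citing Proposition \ref{PropositionCoincidences}(iv) and Theorem \ref{ThmCF}. The only point demanding a moment's care is the bookkeeping — matching the parameters of $T^b_r F^s_{p,q}(\R^d)$ to those of the Lipschitz space and noting that the assumption $b < -1/q$ translates exactly into strict negativity of the logarithmic exponent $\xi = b + 1/q$, which is what rules out every endpoint case in Theorem \ref{ThmCF}.
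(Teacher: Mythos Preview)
Your proposal is correct and follows exactly the paper's approach: the corollary is stated as a direct specialization of Theorem \ref{ThmCF} via the identification $\text{Lip}^{s,b}_{p,q}(\R^d) = T^{b+1/q}_q F^s_{p,2}(\R^d)$ from Proposition \ref{PropositionCoincidences}(iv), and your parameter check (that $\xi = b+1/q < 0$ rules out the borderline cases (ii)--(v)) is precisely the bookkeeping the paper leaves implicit.
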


 In particular, the above result exhibits another striking difference between $\text{Lip}^{s, b}_{p, q}(\R^d)$ and the standard Besov--Triebel--Lizorkin scale $A^{s, b}_{p, q}(\R^d)$, namely, the tuning parameter $b$ enables to establish embeddings from $A^{d/p, b}_{p, q}(\R^d)$  into $C(\R^d)$ (cf. \eqref{ClassicBesovC} and \eqref{ClassicTLC}), but this never happens in the Lipschitz setting.

 %%%%%%%%%%%%%%%%%%%%%%%%%%%%%
 \newpage
 \section{Characterizations and embeddings for general monotone functions}

 \subsection{Definition and basic properties of general monotone functions}

We recall the definition of the general monotone functions given in \cite{LiflyandTikhonov, Tikhonov}. A complex-valued function $\varphi (z), z
>0,$ is called \emph{general monotone}  if it is locally of bounded variation and for some  constant $C > 1$ the following is true
\begin{equation}\label{3.1}
    \int_z^{2z}  |d \varphi (u)| \leq C |\varphi (z)|
\end{equation}
for all $z > 0$. The set of all general monotone functions is denoted by $GM$\index{\bigskip\textbf{Sets}!$GM$}\label{GM}. Examples of general monotone functions are:
decreasing functions,
 quasi-monotone
functions $\varphi$ (i.e, $\varphi(t) t^{-\alpha}$ is non-increasing for some $\alpha \geq 0$), and increasing functions $\varphi$ such that
$ \varphi(2z) \lesssim \varphi(z)$. It is readily seen that (\ref{3.1}) implies
\begin{equation}\label{3.2}
    |\varphi (u)| \lesssim |\varphi (z)|\quad \text{ for any } \quad z \leq u \leq
    2z,
\end{equation}
which subsequently gives
\begin{equation}\label{3.3}
    |\varphi (z)| \lesssim \int_{z/c}^\infty \frac{|\varphi(u)|}{u}
    du, \qquad c>1.
\end{equation}

For later use we recall the following lemma on multipliers of general monotone functions (see \cite[Remark 5.5]{LiflyandTikhonov}).

\begin{lem}\label{Lemma 3.1}
    Let $\varphi, \alpha \in GM$, then $\varphi \alpha \in GM$.
\end{lem}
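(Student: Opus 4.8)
The plan is to verify directly the two defining requirements of membership in $GM$ for the product $\varphi\alpha$: first, that it is locally of bounded variation, and second, that it satisfies the doubling-type estimate \eqref{3.1} (with a possibly larger constant).

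The first point is immediate, since a product of two locally BV functions on $(0,\infty)$ is again locally BV. For the second point I would work on an arbitrary dyadic block $[z,2z]$ and use the Leibniz rule for Stieltjes measures, which yields the pointwise total-variation bound
\begin{equation*}
	|d(\varphi\alpha)(u)| \leq |\varphi(u)|\,|d\alpha(u)| + |\alpha(u)|\,|d\varphi(u)|, \qquad u \in [z,2z].
\end{equation*}
Integrating over $[z,2z]$ and invoking \eqref{3.2} to replace $\sup_{[z,2z]}|\varphi|$ and $\sup_{[z,2z]}|\alpha|$ by (constant multiples of) $|\varphi(z)|$ and $|\alpha(z)|$ respectively, I obtain
\begin{equation*}
	\int_z^{2z} |d(\varphi\alpha)(u)| \lesssim |\varphi(z)| \int_z^{2z} |d\alpha(u)| + |\alpha(z)| \int_z^{2z} |d\varphi(u)|.
\end{equation*}
Finally, applying \eqref{3.1} separately to $\alpha$ and to $\varphi$ bounds the two integrals on the right by $C|\alpha(z)|$ and $C|\varphi(z)|$, so the whole right-hand side is $\lesssim |\varphi(z)\alpha(z)| = |(\varphi\alpha)(z)|$. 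Since the constants in \eqref{3.1} and \eqref{3.2} are independent of $z$, this establishes \eqref{3.1} for $\varphi\alpha$ uniformly in $z>0$, i.e.\ $\varphi\alpha \in GM$.

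The only genuinely delicate step is making the Leibniz rule for BV/Stieltjes measures precise: one fixes, say, the right-continuous representatives of $\varphi$ and $\alpha$, recalls the integration-by-parts identity $\varphi(2z)\alpha(2z)-\varphi(z)\alpha(z) = \int_{(z,2z]} \varphi(u^-)\,d\alpha(u) + \int_{(z,2z]} \alpha(u)\,d\varphi(u)$, and deduces from it the displayed total-variation bound; the at most countably many common jumps contribute only through one-sided limits and do not affect the integrated estimate. Once this is in place, the argument is just the routine chain of the elementary inequalities \eqref{3.1}–\eqref{3.3} already collected above.
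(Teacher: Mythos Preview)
Your argument is correct. Note, however, that the paper does not supply its own proof of this lemma: it simply cites \cite[Remark 5.5]{LiflyandTikhonov}. Your direct verification via the Leibniz rule for Stieltjes measures, combined with the doubling estimates \eqref{3.1}--\eqref{3.2}, is the natural route and is presumably what lies behind the cited remark.
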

Now we are in a position to give the main definition in this section.
First, we recall that
the Fourier transform of a radial function $f(x) = f_0(|x|)$ is also radial, $\widehat{f}(\xi)=F_{0}(|\xi|)$
(see, e.g., \cite[Appendix B5]{Grafakos}) and it can be written as the Fourier--Hankel transform
\begin{equation}\label{FourierHankel}
F_{0}(s)= \frac{2 \pi^{d/2}}{\Gamma\left(\frac{d}{2}\right)} \int_{0}^{\infty}f_{0}(t)j_{d/2-1}(st)t^{d-1}\,dt,
\end{equation}
where $j_{\alpha}(t)=\Gamma(\alpha+1)(t/2)^{-\alpha}J_{\alpha}(t)$\index{\bigskip\textbf{Functionals and functions}!$j_\alpha$}\label{NORBESS} is the
normalized Bessel function ($j_{\alpha}(0)=1$), $\alpha\ge -1/2$, with $J_\alpha(t)$ the classical Bessel function\index{\bigskip\textbf{Functionals and functions}!$J_\alpha$}\label{BESS} of the first kind of order $\alpha$.

%\textcolor{blue}{
%{\bf Task 8}
%{\textsf{
%Redefine  $f \in \widehat{GM}^d$ so that equivalence holds for
%$1 < p < \infty$ not only for $\frac{2d}{d+1} < p < \infty$.
%}}}

Let $\widehat{GM}^d$ \index{\bigskip\textbf{Sets}!$\widehat{GM}^d$}\label{GMF} be the collection of all radial functions $f(x) = f_0(|x|), \, x \in \mathbb{R}^d,$ such that the corresponding $F_0$ given by (\ref{FourierHankel}) belongs to the class $GM$, is positive and satisfies the condition
\begin{equation}\label{3.4new}
    \int_0^1 u^{d-1} F_0(u) du + \int_1^\infty u^{(d-1)/2} |d
    F_0(u)| < \infty;
\end{equation}
see \cite{GorbachevTikhonov}.
In other words,
 $\widehat{GM}^{d}$ consists of radial functions
$f(x)=f_{0}(|x|)$, $x\in \mathbb{R}^{d}$, which are defined in terms of the inverse Fourier--Hankel transform
\begin{equation}\label{3.4new+}
f_{0}(z)=\frac{2}{\Gamma\left(\frac{d}{2}\right) (2 \sqrt{\pi})^d}\int_{0}^{\infty}F_{0}(s)j_{d/2-1}(zs)s^{d-1}\,ds,
\end{equation}
where the function $F_{0}\in GM$ and satisfies  condition (\ref{3.4new}).
%\begin{equation}\label{GMcond}
%\int_{0}^{1}s^{d-1}|F_{0}(s)|\,ds+
%\int_{1}^{\infty}s^{(d-1)/2}\,|dF_{0}(s)|<\infty.
%\end{equation}
We note that, by  \cite[Lemma 1]{GorbachevLiflyandTikhonov}, the integral in \eqref{3.4new}
converges in the improper sense and therefore $f_{0}(z)$ is continuous for $z>0$.
%In addition, $F_{0}$ is a radial component of the Fourier transform of the function
%$f$, that is, $\wh{f}(\xi)=F_{0}(|\xi|)$, $\xi\in \R^{d}$.
 If $d=1$ we simply write $\widehat{GM}$.

In the discrete case, $\widehat{GM}$ coincides with the  well investigated class of $L_1(\mathbb{T})$-functions $f(x) \sim \sum_{n=1}^\infty (a_n \cos n x + b_n \sin nx)$ such that the sequences of their Fourier coefficients $\{a_n\}_{n \in \mathbb{N}}, \{b_n\}_{n \in \mathbb{N}} \in GM$, the \emph{discrete general monotone condition}, that is,
\begin{equation}\label{DiscreteGM}
 \sum_{k=n}^{2n-1} |\Delta d_k| \leq C |d_n|
 \end{equation}
 for all $n\in \mathbb{N}$ ($\Delta d_k := d_k - d_{k+1}$\index{\bigskip\textbf{Functionals and functions}!$\Delta$}\label{DIFFSEQ}) (cf. (\ref{3.1})); see \cite{Tikhonov, LiflyandTikhonov} and the references therein.
 In particular, the widely studied class (see, e.g., \cite[Chapters V, XII]{Zygmund}) of Fourier series with monotonic coefficients belongs to $\widehat{GM}$.

%In Subsection \ref{Section 3.8},
%we address some important properties of
% the subclass of the Besov space, consisting of the periodic functions from $\widehat{GM}$.

 \subsection{Characterization of spaces $T^b_r B^{s}_{p,q}(\mathbb{R}^d)$}\label{SubsectionContinuous}
Working with the $\widehat{GM}^d$ class, we provide an effective criterion which characterizes
functions from $T^b_r B^{s}_{p,q}(\mathbb{R}^d)$ in terms of their Fourier transform.

\begin{thm}\label{Theorem 3.2}
    Let $\frac{2d}{d+1} < p < \infty, 0 < q, r \leq \infty, s \in \R$, and $b \in \R \backslash \{0\}$. Let $f \in \widehat{GM}^d$. Then
    \begin{align}
          \|f\|_{T^b_r B^{s}_{p, q}(\R^d)}&\asymp   \bigg( \sum_{j=0}^\infty 2^{-j ( p -1) d} F_0^p(2^{-j}) \bigg)^{1/p} + \nonumber \\
          & \hspace{1cm}  \left(\sum_{j=0}^\infty 2^{j b r} \bigg(\sum_{\nu=2^j}^{2^{j+1}-1} 2^{\nu (s + d -d/p) q} F_0^q(2^\nu) \bigg)^{r/q}    \right)^{1/r} \label{GMB}
    \end{align}
    (with the usual modifications if $q=\infty$ and/or $r=\infty$).
\end{thm}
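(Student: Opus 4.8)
The strategy is to reduce the truncated Besov norm of $f \in \widehat{GM}^d$ to a discrete quantity involving the values $F_0(2^\nu)$ of the Fourier transform, exactly as in the known Fourier-analytic characterization of classical Besov spaces over the $\widehat{GM}^d$ class (see \cite{GorbachevTikhonov} and the references to $GM$ theory in the excerpt). First I would recall, or reprove via \eqref{RemFM}, that
$$
	\|f\|_{T^b_r B^s_{p, q}(\R^d)} \asymp \bigg(\sum_{j=0}^\infty 2^{j b r} \bigg\| \sum_{\nu=2^j-1}^{2^{j+1}-2} (\varphi_\nu \widehat f)^\vee \bigg\|_{B^s_{p, q}(\R^d)}^r \bigg)^{1/r},
$$
so that the problem localizes to estimating $L_p$-norms of dyadic blocks $(\varphi_\nu \widehat f)^\vee$. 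For radial $f \in \widehat{GM}^d$ with $\frac{2d}{d+1} < p < \infty$, the key analytic input is the two-sided estimate
$$
	\|(\varphi_\nu \widehat f)^\vee\|_{L_p(\R^d)} \asymp 2^{\nu(d - d/p)} F_0(2^\nu), \qquad \nu \geq 1,
$$
together with $\|(\varphi_0 \widehat f)^\vee\|_{L_p(\R^d)} \asymp \big(\sum_{j \geq 0} 2^{-j(p-1)d} F_0^p(2^{-j})\big)^{1/p}$; these follow from the sharp asymptotics of Fourier--Hankel transforms of $GM$ functions and the restriction $p > \frac{2d}{d+1}$ (which ensures the relevant Bessel-type integrals converge), via Lemma \ref{Lemma 3.1} and properties \eqref{3.2}--\eqref{3.3}. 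This reduces the claim to a routine manipulation of the right-hand side.

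Second, I would substitute these block estimates into the definition of $\|f\|_{T^b_r B^s_{p, q}(\R^d)}$. The low-frequency term $\nu = 0$ contributes precisely $\big(\sum_{j=0}^\infty 2^{-j(p-1)d} F_0^p(2^{-j})\big)^{1/p}$ up to constants, which is the first summand in \eqref{GMB}. For the high-frequency part, inserting $\|(\varphi_\nu \widehat f)^\vee\|_{L_p} \asymp 2^{\nu(d-d/p)} F_0(2^\nu)$ into
$$
	\bigg(\sum_{j=0}^\infty 2^{j b r} \bigg(\sum_{\nu = 2^j-1}^{2^{j+1}-2} 2^{\nu s q} \|(\varphi_\nu \widehat f)^\vee\|_{L_p(\R^d)}^q \bigg)^{r/q} \bigg)^{1/r}
$$
gives $\big(\sum_{j=0}^\infty 2^{j b r}(\sum_{\nu=2^j-1}^{2^{j+1}-2} 2^{\nu(s+d-d/p)q} F_0^q(2^\nu))^{r/q}\big)^{1/r}$, and a shift of the inner index range from $\{2^j-1,\dots,2^{j+1}-2\}$ to $\{2^j,\dots,2^{j+1}-1\}$ costs only a constant because $F_0 \in GM$ implies $F_0(2^\nu) \asymp F_0(2^{\nu \pm 1})$ by \eqref{3.2}, and the weights $2^{\nu(s+d-d/p)q}$ over two consecutive $\nu$ are comparable. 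This yields \eqref{GMB} with the index convention stated there.

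The main obstacle is the block estimate $\|(\varphi_\nu \widehat f)^\vee\|_{L_p(\R^d)} \asymp 2^{\nu(d - d/p)} F_0(2^\nu)$: proving both the upper and (especially) the lower bound requires careful pointwise control of the inverse Fourier--Hankel transform of $\varphi_\nu F_0$, using oscillation/decay of the Bessel kernel $j_{d/2-1}$, the general-monotonicity hypothesis on $F_0$, and the condition \eqref{3.4new}; the hypothesis $p > \frac{2d}{d+1}$ is exactly what is needed for the $L_p$-integrability of the tail of the Bessel kernel and hence for the lower estimate to hold. Once this asymptotic is in hand (it is essentially the $GM$-analog of the classical computation of $L_p$-norms of lacunary pieces and can be cited from or adapted to the $\widehat{GM}^d$ literature), the remainder of the proof is bookkeeping with Hardy-type and monotonicity arguments already used repeatedly in the paper. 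I would also separately verify the endpoint conventions $q = \infty$ and/or $r = \infty$, which amount to replacing the corresponding sums by suprema throughout and cause no extra difficulty.
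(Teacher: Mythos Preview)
Your approach is different from the paper's and is worth comparing. The paper does \emph{not} use a blockwise estimate for $\|(\varphi_\nu\widehat f)^\vee\|_{L_p}$. Instead it goes through the modulus-of-smoothness description of $T^b_r B^s_{p,q}(\R^d)$ obtained via limiting interpolation (specifically \eqref{jsajajsajs1} when $b>0$ and \eqref{jsajajsajs2} when $b<0$), inserts the known formula \eqref{GTNew} for $\omega_k(f,t)_p$ on the class $\widehat{GM}^d$, combines this with the Hardy--Littlewood estimate \eqref{HL} for $\|f\|_{L_p}$, and then reduces everything to the discrete form \eqref{GMB} by Hardy's inequalities. The case $s\le 0$ is handled separately by lifting (Theorem~\ref{TheoremLifting}) back to $s>0$. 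Your route is conceptually cleaner in that it treats all $s\in\R$ at once and avoids the case-split on $\text{sgn}\,b$; it is exactly the strategy the paper adopts in the \emph{periodic} analogue (Theorem~\ref{ThmBesovGMPer}), where the blockwise estimate \eqref{ThmBesovGMPer2} is available from \cite{Tikhonov}.

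The gap is that in the continuous $\widehat{GM}^d$ setting the blockwise equivalence $\|(\varphi_\nu\widehat f)^\vee\|_{L_p}\asymp 2^{\nu(d-d/p)}F_0(2^\nu)$ for $\nu\ge 1$ is not a black box you can simply cite, and deriving it from \eqref{HL} runs into the difficulty that $\varphi_\nu F_0$ is generally \emph{not} in $GM$ (a smooth bump is not general monotone, since it vanishes at interior points of intervals where its variation is nonzero), so Lemma~\ref{Lemma 3.1} does not apply as written. One can rescue this---for instance by splitting $\varphi_\nu$ into monotone pieces, or by deducing the block estimate from \eqref{GTNew} via the relation between $\omega_k(f,2^{-\nu})_p$ and the tail/head sums of the Littlewood--Paley pieces---but this is real work, comparable in effort to the paper's route. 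You correctly flag it as the main obstacle; just be aware that ``can be cited from or adapted to the $\widehat{GM}^d$ literature'' is optimistic without a specific reference, and a referee would ask you to supply the argument. The low-frequency claim for $\nu=0$ has the same issue.
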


\begin{rem}\label{RemarkContinuous}
The analogue of Theorem \ref{Theorem 3.2} for the Besov spaces $B^{s, b}_{p, q}(\R^d)$ was obtained in \cite[Theorem 4.6]{DominguezTikhonov}: Let $\frac{2d}{d+1} < p < \infty, 0 < q \leq \infty$, and $- \infty < s, b < \infty$. Let $f \in \widehat{GM}^d$. Then
    \begin{equation*}
          \|f \|_{B^{s, b}_{p,q}(\mathbb{R}^d)} \asymp   \bigg( \sum_{j=0}^\infty 2^{-j ( p -1) d} F_0^p(2^{-j}) \bigg)^{1/p}+ \bigg(\sum_{j=0}^\infty 2^{j (s  + d  -d /p) q} (1 + j)^{b q} F_0^q(2^j) \bigg)^{1/q}.
    \end{equation*}
    This also follows by setting $r=q$ in \eqref{GMB} (see Proposition \ref{PropositionCoincidences}).
    On the other hand, dealing with $\mathbf{B}^{0, b}_{p, q}(\R^d)$, the following was obtained in \cite[Theorem 4.2]{DominguezTikhonov}: Let $\frac{2d}{d+1} < p < \infty, 0 < q \leq \infty$, and $b > -1/q$. Let $f \in \widehat{GM}^d$. Then
    \begin{align}
          \|f \|_{\mathbf{B}^{0, b}_{p,q}(\mathbb{R}^d)} &\asymp   \bigg( \sum_{j=0}^\infty 2^{-j ( p -1) d} F_0^p(2^{-j}) \bigg)^{1/p} \nonumber \\
          &\hspace{-.5cm} + \bigg(\sum_{j=0}^\infty 2^{j(b+1/q) q} \bigg( \sum_{\nu=2^j}^{2^{j+1}-1} 2^{\nu (d  -d /p) p}  F_0^p(2^\nu) \bigg)^{q/p} \bigg)^{1/q}.\label{GMBesovZero}
    \end{align}
    The limiting case $b=-1/q$ in $\mathbf{B}^{0, b}_{p,q}(\mathbb{R}^d)$ is also covered by \cite[Theorem 4.2]{DominguezTikhonov}, but the outcome does not coincide with the right-hand side of \eqref{GMBesovZero}.
   We refer to \cite{DominguezTikhonov} for further results on logarithmic Besov spaces and $\widehat{GM}^d$ functions.

\end{rem}

\begin{proof}[Proof of Theorem \ref{Theorem 3.2}]
      We shall use two important properties of function in $\widehat{GM}^d$. First, the following description of the modulus of smoothness (cf. \eqref{DefModuli}) in terms of the Fourier transform for functions in the $\widehat{GM}^d$ class (see \cite[Corollary 4.1 and
    (7.6)]{GorbachevTikhonov})
    \begin{equation}\label{GTNew}
        \omega^p_{k}(f,t)_p \asymp t^{ k p} \int_0^{1/t} u^{ k p + dp - d
        -1} F_0^p(u) du + \int_{1/t}^\infty u^{dp - d -1} F^p_0(u)
        du
    \end{equation}
    where
    \begin{equation}\label{Conditionsk}
    k \in \mathbb{N} \quad \text{if} \quad d=1 \qquad \text{and} \qquad k \quad \text{is even if}  \quad d \geq 2.
    \end{equation}
    Second, the Hardy-Littlewood-type estimate for functions $f \in \widehat{GM}^d$ obtained in \cite[Theorem 1]{GorbachevLiflyandTikhonov} (see also \cite[(4.10)]{GorbachevTikhonov}) which asserts that
\begin{equation}\label{HL}
	\|f\|_{L_p(\mathbb{R}^d)} \asymp \left(\int_0^\infty t^{d p - d - 1} F_0^p(t) dt\right)^{1/p},\qquad  \frac{2d}{d+1} < p < \infty.
\end{equation}
 For the one-dimensional case and monotone functions, see \cite{Tit, Boas, Sagher}.

  \textsc{The case $s > 0$ and $b > 0$:} Choose $k > s$ with \eqref{Conditionsk}. Under these assumptions, it was already shown in \eqref{jsajajsajs1} that
  \begin{align}
  \|f\|_{T^b_r B^s_{p, q}(\mathbb{R}^d)}  &\asymp \|f\|_{L_p(\R^d)} \nonumber \\
  &  \hspace{1cm}+ \left(\int_0^1 (1-\log t)^{(b-1/r) r} \bigg(\int_0^{t} (u^{-s} \omega_k(f,u)_p)^q \frac{du}{u}\bigg)^{r/q}
        \frac{dt}{t}\right)^{1/r}. \label{New1711}
  \end{align}

   We claim that
        \begin{align}
    	 \left(\int_0^1 (1-\log t)^{(b-1/r) r} \left(\int_0^t (u^{-s} \omega_k(f,u)_p)^q \frac{du}{u}\right)^{r/q}
        \frac{dt}{t}\right)^{1/r} & \asymp  \left( \int_0^{1} t^{k p + dp -d-1} F_0^p(t) \, dt\right)^{1/p} \nonumber  \\
        & \hspace{-7cm}+  \left(\int_1^\infty (1+\log t)^{(b-1/r) r} \left(\int_{t}^\infty u^{s q + d q -d q/p} F_0^q(u) \right)^{r/q}  \frac{dt}{t}\right)^{1/r}. \label{814}
    \end{align}
Indeed, by \eqref{GTNew} and a simple changes of variables, we get
        \begin{align}
         \left(\int_0^1 (1-\log t)^{(b-1/r) r} \left(\int_0^t (u^{-s} \omega_k(f,u)_p)^q \frac{du}{u}\right)^{r/q}
        \frac{dt}{t}\right)^{1/r} & \asymp \nonumber \\
        & \hspace{-8.75cm}  \left(\int_1^\infty (1+\log t)^{(b-1/r) r} \left(\int_{t}^\infty u^{-(k-s)q} \left( \int_0^{u} v^{k p + dp -d-1} F_0^p(v) \, dv\right)^{q/p} \frac{du}{u}\right)^{r/q}
        \frac{dt}{t}\right)^{1/r}  \nonumber  \\
        & \hspace{-8.5cm} +  \left(\int_1^\infty (1+\log t)^{(b-1/r) r} \left(\int_{t}^\infty u^{s q} \left( \int_{u}^\infty v^{ dp -d-1} F_0^p(v) \, dv\right)^{q/p} \frac{du}{u}\right)^{r/q}  \frac{dt}{t}\right)^{1/r} \nonumber   \\
           &  \hspace{-8.75cm} \asymp  \left( \int_0^{1} v^{k p + dp -d-1} F_0^p(v) \, dv\right)^{1/p}     \nonumber     \\
        &  \hspace{-8.5cm} +  \left(\int_1^\infty  t^{-(k-s) r}  (1+\log t)^{(b-1/r) r} \left( \int_1^{t} v^{k p + dp -d-1} F_0^p(v) \, dv\right)^{r/p}         \frac{dt}{t}\right)^{1/r}  \nonumber  \\
         & \hspace{-8.5cm}  +  \left(\int_1^\infty (1+\log t)^{(b-1/r) r} \left(\int_{t}^\infty u^{-(k-s)q} \left( \int_t^{u} v^{k p + dp -d-1} F_0^p(v) \, dv\right)^{q/p} \frac{du}{u}\right)^{r/q}
        \frac{dt}{t}\right)^{1/r}  \nonumber  \\
              & \hspace{-8.5cm} +  \left(\int_1^\infty (1+\log t)^{(b-1/r) r} \left(\int_{t}^\infty u^{s q} \left( \int_{u}^\infty v^{ dp -d-1} F_0^p(v) \, dv\right)^{q/p} \frac{du}{u}\right)^{r/q}  \frac{dt}{t}\right)^{1/r}  \nonumber   \\
              & \hspace{-8cm} =: I + II + III+ IV. \label{IIIIIIIv}
    \end{align}

    Next we show that
        \begin{equation}\label{ClaimII}
    	I + II \asymp I + \left(\int_1^\infty t^{s r + d r -d r /p} (1 + \log t)^{(b-1/r) r}  F_0^r(t) \frac{dt}{t} \right)^{1/r}
    \end{equation}
    and
       \begin{equation}\label{ClaimIIIIV}
    	I + III  + IV \asymp I+ \left(\int_1^\infty (1 + \log t)^{(b-1/r) r} \left(\int_t^\infty u^{s q + d q -d q/p} F_0^q(u) \frac{du}{u} \right)^{r/q} \frac{dt}{t} \right)^{1/r}.
    \end{equation}

    Concerning \eqref{ClaimII}, we make use of monotonicity property $F_0(x) \lesssim F_0(y)$ for $y \leq x \leq 2 y$ (cf. \eqref{3.2}) so that
    $$
     F_0(1) + II \asymp \left(\sum_{j=0}^\infty  2^{-j(k-s) r}  (1+ j)^{(b-1/r) r} \left( \sum_{l=0}^{j} 2^{l(k + d -d/p) p} F_0^p(2^l) \right)^{r/p}      \right)^{1/r}
    $$
    and thus Hardy's inequality \eqref{H1} implies
    $$
    F_0(1) + II \asymp  \left(\sum_{j=0}^\infty  2^{j(s + d -d/p) r}  (1+ j)^{(b-1/r) r} F_0^r(2^j)     \right)^{1/r},
    $$
    i.e., \eqref{ClaimII} holds. We note that the additional term $F_0(1)$ appears in the previous estimates in order to get equivalence constants independent of $F_0$.

%
%    monotonicity properties and Hardy's inequality\footnote{O. Should we recall somewhere Hardy's inequalities? S. in the appendix all needed Hardy's inequalities} if $r \geq p$. If $r < p$ then monotonicity properties (cf. \eqref{3.2}) and a change of the order of summation imply that
%        \begin{align*}
%    	 II &\asymp \left(\sum_{j=0}^\infty  2^{-j(k-s) r}  (1+ j)^{(b-1/r) r} \left( \sum_{l=0}^{j} 2^{l(k + d -d/p) p} F_0^p(2^l) \right)^{r/p}      \right)^{1/r}  \\
%	 & \leq  \left(\sum_{j=0}^\infty  2^{-j(k-s) r}  (1+ j)^{(b-1/r) r} \sum_{l=0}^{j} 2^{l(k + d -d/p) r} F_0^r(2^l)       \right)^{1/r}  \\
%	 & \asymp \left(\sum_{l=0}^\infty 2^{l(s + d -d/p) r}  (1+ l)^{(b-1/r) r}  F_0^r(2^l)  \right)^{1/r} \\
%	 & \asymp \left(\int_1^\infty t^{s r + d r -d r /p} (1 + \log t)^{(b-1/r) r}  F_0^r(t) \frac{dt}{t} \right)^{1/r}.
%    \end{align*}
%    The proof of \eqref{ClaimII} is complete.

     Similar ideas as above also work with \eqref{ClaimIIIIV}. Specifically, applying monotonicity properties, we have
    \begin{align*}
    	F_0(1) + III + IV & \asymp \left(\sum_{j=0}^\infty (1+j)^{(b-1/r) r} \left(\sum_{l=j}^\infty  2^{-l(k-s)q} \left(\sum_{\nu=j}^l  2^{\nu(k p + dp -d)} F_0^p(2^\nu)\right)^{q/p}\right)^{r/q} \right)^{1/r}  \\
              &  \hspace{1cm}+  \left(\sum_{j=0}^\infty (1+j)^{(b-1/r) r} \left(\sum_{l=j}^\infty 2^{l s q} \left( \sum_{\nu=l}^\infty 2^{ \nu (dp -d)} F_0^p(2^\nu) \right)^{q/p} \right)^{r/q} \right)^{1/r}.
    \end{align*}
    Therefore Hardy's inequalities \eqref{H1} and \eqref{H2} yield
    $$
    	F_0(1) + III+IV \asymp  \left(\sum_{j=0}^\infty (1+j)^{(b-1/r) r} \left( \sum_{\nu=j}^\infty  2^{\nu(s q+ d q -d q/p)} F_0^q(2^\nu) \right)^{r/q} \right)^{1/r},
    $$
    which gives the desired estimate \eqref{ClaimIIIIV}.

    Inserting \eqref{ClaimII} and \eqref{ClaimIIIIV} into \eqref{IIIIIIIv} and using basic monotonicity properties (cf. \eqref{3.2}), we obtain
    \begin{align*}
    	   \left(\int_0^1 (1-\log t)^{(b-1/r) r} \left(\int_0^t (u^{-s} \omega_k(f,u)_p)^q \frac{du}{u}\right)^{r/q}
        \frac{dt}{t}\right)^{1/r} & \asymp I  + \\
        & \hspace{-7cm}   \left(\int_1^\infty t^{s r + d r -d r /p} (1 + \log t)^{(b-1/r) r}  F_0^r(t) \frac{dt}{t} \right)^{1/r} \\
        & \hspace{-7cm} + \left(\int_1^\infty (1 + \log t)^{(b-1/r) r} \left(\int_t^\infty u^{s q + d q -d q/p} F_0^q(u) \frac{du}{u} \right)^{r/q} \frac{dt}{t} \right)^{1/r} \\
        & \hspace{-7.5cm} \asymp  I +\left(\int_1^\infty (1 + \log t)^{(b-1/r) r} \left(\int_t^\infty u^{s q + d q -d q/p} F_0^q(u) \frac{du}{u} \right)^{r/q} \frac{dt}{t} \right)^{1/r}.
    \end{align*}
    This proves \eqref{814}.

    In virtue of \eqref{New1711}, \eqref{HL} and \eqref{814}, we have
    \begin{align}
          \|f\|_{T^b_r B^{s}_{p, q}(\R^d)}\asymp  \left(\int_1^\infty t^{d p - d - 1} F_0^p(t) \, dt\right)^{1/p} +    \left( \int_0^{1} t^{d p -d-1} F_0^p(t) \, dt\right)^{1/p} \nonumber  \\
        & \hspace{-8cm}+  \left(\int_1^\infty (1+\log t)^{(b-1/r) r} \left(\int_{t}^\infty u^{s q + d q -d q/p} F_0^q(u) \frac{du}{u} \right)^{r/q}  \frac{dt}{t}\right)^{1/r}. \label{1717new}
    \end{align}
    Furthermore
    \begin{equation}\label{LpBGMPositive}
    	  \left(\int_1^\infty t^{d p - d - 1} F_0^p(t) \, dt\right)^{1/p} \lesssim  \left(\int_1^\infty (1+\log t)^{(b-1/r) r} \left(\int_{t}^\infty u^{s q + d q -d q/p} F_0^q(u) \frac{du}{u} \right)^{r/q}  \frac{dt}{t}\right)^{1/r}.
    \end{equation}
    This is a simple consequence of monotonicity properties (cf. \eqref{3.2}), H\"older inequality (if $r \geq p$) and the fact that $\ell_r \hookrightarrow \ell_p$ with $r < p$. Further details are left to the reader. As a combination of \eqref{1717new} and \eqref{LpBGMPositive}, we find
    \begin{align}
      \|f\|_{T^b_r B^{s}_{p, q}(\R^d)}&\asymp \left( \int_0^{1} t^{d p -d-1} F_0^p(t) \, dt\right)^{1/p} \nonumber\\
       & \hspace{4mm}+  \left(\int_1^\infty (1+\log t)^{(b-1/r) r} \left(\int_{t}^\infty u^{s q + d q -d q/p} F_0^q(u) \frac{du}{u} \right)^{r/q}  \frac{dt}{t}\right)^{1/r}.  \label{1717new2}
    \end{align}

      Let $\mu_j = 2^{2^j}, \, j \geq 0$. Applying Hardy's inequality \eqref{H2} (recall that $b > 0$)
    \begin{align}
    F_0(1) + \left(\int_1^\infty (1+\log t)^{(b-1/r) r} \left(\int_{t}^\infty u^{s q + d q -d q/p} F_0^q(u) \frac{du}{u} \right)^{r/q}  \frac{dt}{t}\right)^{1/r} & \asymp \nonumber \\
     &\hspace{-9cm}   F_0(1) + \left(\sum_{j=0}^\infty 2^{j b r} \left(\int_{\mu_j}^\infty u^{s q + d q -d q/p} F_0^q(u) \frac{du}{u} \right)^{r/q}    \right)^{1/r} \nonumber \\
     & \hspace{-9cm}  \asymp   F_0(1) +  \left(\sum_{j=0}^\infty 2^{j b r} \left( \int_{\mu_j}^{\mu_{j+1}-1} u^{s q + d q -d q/p} F_0^q(u) \frac{du}{u} \right)^{r/q}    \right)^{1/r} \nonumber \\
     & \hspace{-9cm}  \asymp  \left(\sum_{j=0}^\infty 2^{j b r} \left(\sum_{\nu= 2^j}^{2^{j+1}-1} 2^{\nu(s+d-d/p) q} F_0^q(2^\nu)  \right)^{r/q}    \right)^{1/r}.   \label{1717new3}
    \end{align}
    On the other hand, elementary monotonicity properties yield
    \begin{equation}\label{1717new4}
    \left( \int_0^{1} t^{d p -d-1} F_0^p(t) \, dt\right)^{1/p}  \asymp \bigg( \sum_{j=0}^\infty 2^{-j ( p -1) d} F_0^p(2^{-j}) \bigg)^{1/p}.
    \end{equation}

    Putting together \eqref{1717new2}--\eqref{1717new4}, the desired estimate \eqref{GMB} is achieved.

\textsc{The case $s > 0$ and $b < 0$:} The proof follows similar lines as the case $s > 0$ and $b > 0$, but now relying on the characterization of $T^b_r B^s_{p, q}(\R^d)$ with $b < 0$ provided by \eqref{jsajajsajs2}, namely,
  \begin{align*}
  \|f\|_{T^b_r B^s_{p, q}(\mathbb{R}^d)}  &\asymp \|f\|_{L_p(\R^d)} \nonumber \\
  &  \hspace{1cm}+ \left(\int_0^1 (1-\log t)^{(b-1/r) r} \bigg(\int_{t}^1 (u^{-s} \omega_k(f,u)_p)^q \frac{du}{u}\bigg)^{r/q}
        \frac{dt}{t}\right)^{1/r}.
  \end{align*}
  In this case, following similar arguments as in \eqref{1717new} and \eqref{1717new3} (but using now \eqref{H1}), one can prove
    \begin{align}
    	  \left(\int_1^\infty t^{d p - d - 1} F_0^p(t) \, dt\right)^{1/p} &\lesssim  \left(\int_1^\infty (1+\log t)^{(b-1/r) r} \left(\int_{1}^t u^{s q + d q -d q/p} F_0^q(u) \frac{du}{u} \right)^{r/q}  \frac{dt}{t}\right)^{1/r} \nonumber \\
	  & \lesssim   \left(\sum_{j=0}^\infty 2^{j b r} \left(\sum_{\nu= 2^j}^{2^{j+1}-1} 2^{\nu(s+d-d/p) q} F_0^q(2^\nu)  \right)^{r/q}    \right)^{1/r}. \label{bcbsb}
    \end{align}
 Further details are left to the reader.

 \textsc{The case $s \leq 0$:} We choose $\sigma$ such that $\sigma < s$.  Notice that $I_\sigma f \in \widehat{GM}^d$ (cf. \eqref{LiftingDef} and Lemma \ref{Lemma 3.1}). According to Theorem \ref{TheoremLifting} and the previous case, we have
 \begin{align*}
 	\|f\|_{T^b_r B^s_{p, q}(\R^d)} &\asymp \|I_\sigma f\|_{T^b_r B^{s-\sigma}_{p, q}(\R^d)} \\
	& \hspace{-1cm}\asymp  \bigg( \sum_{j=0}^\infty 2^{-j (p -1)d} (1 + 2^{-2 j})^{\sigma p/2} F_0^p(2^{-j}) \bigg)^{1/p} \\
	& \hspace{-0.5cm}+  \left(\sum_{j=0}^\infty 2^{j b r} \bigg(\sum_{\nu=2^j}^{2^{j+1}-1} 2^{\nu (s-\sigma + d -d/p) q} (1 + 2^{2 \nu})^{\sigma q/2} F_0^q(2^\nu) \bigg)^{r/q}    \right)^{1/r}  \\
	& \hspace{-1cm}\asymp  \bigg( \sum_{j=0}^\infty 2^{-j (p -1)d} F_0^p(2^{-j}) \bigg)^{1/p} \\
	& \hspace{-0.5cm}+   \left(\sum_{j=0}^\infty 2^{j b r} \bigg(\sum_{\nu=2^j}^{2^{j+1}-1} 2^{\nu (s + d -d/p) q} F_0^q(2^\nu) \bigg)^{r/q}    \right)^{1/r}.
 \end{align*}

%    Concerning the converse one, it follows from \eqref{IIIIIIIv} and \eqref{3.2} that
%    \begin{align*}
%    	   \left(\int_0^1 (1-\log t)^{b r} \left(\int_0^t (u^{-s} \omega_k(f,u)_p)^q \frac{du}{u}\right)^{r/q}
%        \frac{dt}{t}\right)^{1/r} & \gtrsim I + IV \\
%        &\hspace{-8cm} \gtrsim  \left( \int_0^{1} v^{k p + dp -d-1} F_0^p(v) \, dv\right)^{1/p}  \\
%        & \hspace{-7.5cm} +   \left(\int_1^\infty (1+\log t)^{b r} \left(\int_{t}^\infty u^{s q} \left( \int_{u}^{2 u} v^{ dp -d-1} F_0^p(v) \, dv\right)^{q/p} \frac{du}{u}\right)^{r/q}  \frac{dt}{t}\right)^{1/r} \\
%        & \hspace{-8cm} \gtrsim  \left( \int_0^{1} v^{k p + dp -d-1} F_0^p(v) \, dv\right)^{1/p}  \\
%            & \hspace{-7.5cm} +   \left(\int_1^\infty (1+\log t)^{b r} \left(\int_{t}^\infty u^{s q + d q -d q/p} F_0^q(u) \frac{du}{u} \right)^{r/q}  \frac{dt}{t}\right)^{1/r}.
%    \end{align*}
%    This completes the proof of  \eqref{814} under $r \geq p$ and $q \geq p$.

\end{proof}

\begin{rem}\label{Remark 3.3}
    Let us consider a slightly wider  class (cf. \cite{GorbachevLiflyandTikhonov, LiflyandTikhonov}) of general monotone functions $\varphi(z)$  which are locally of bounded variation, vanishes at
    infinity, and there is a constant $c > 1$ depending on $\varphi$
    such that
    \begin{equation}\label{3.12}
        \int_z^\infty |d \varphi(u)| \lesssim \int_{z/c}^\infty
        \frac{|\varphi(u)|}{u} du < \infty,\qquad  z >0.
    \end{equation}
%    Note that this class of functions is more general than the $GM$ class.
Noting that such  functions  satisfy the monotonicity condition (\ref{3.3}), we claim that  Theorem \ref{Theorem 3.2}  holds in the case $\min\{q, r\} \geq p$
 % for such general monotone functions
 if we consider the  class formed
    by all
    radial functions $f(x) = f_0(|x|), x \in \mathbb{R}^d,$ with the Fourier transform $F_0 = \widehat{f} \geq 0$
    satisfying
    conditions (\ref{3.12}) and (\ref{3.4new}).
\end{rem}

 \subsection{Embeddings of $T^b_r B^s_{p, q}(\R^d)$ into $L_p(\R^d)$}\label{Section83}
 In Theorem \ref{TheoremRegular2} (see also Corollary \ref{CorollaryRegularBMO}) a complete characterization of the embedding
 \begin{equation}\label{BLpGen}
 	T^b_r B^s_{p, q}(\R^d) \hookrightarrow L_p(\R^d)
 \end{equation}
 in terms of the involved parameters was provided. However, from the point of view of applications, it is of considerable interest to deal with sufficiently rich classes of functions (e.g. monotone-type functions) which enable to sharpen the general statement provided by \eqref{BLpGen}. Accordingly, in this section we prove that Theorem \ref{TheoremRegular2} can be sharpened if we restrict ourselves to work with monotone-type functions. Specifically, the following result establishes necessary and sufficient conditions for which the embedding
 \begin{equation}\label{BLpMon}
 	T^b_r B^s_{p, q}(\R^d) \cap \widehat{GM}^d \hookrightarrow L_p(\R^d)
 \end{equation}
 holds and, in particular, it shows that the corresponding range of parameters is bigger than those related to \eqref{BLpGen}.

  \begin{thm}\label{ThmBLpGM}
 	Let $\frac{2d}{d+1} < p < \infty, 0 < q < \infty, 0 < r \leq \infty, s \in \R$, and $ b \in \R \backslash \{0\}$. Then \eqref{BLpMon} holds if and only if one of the following conditions is satisfied
	\begin{enumerate}[\upshape(i)]
	\item $s > 0$,
	\item $s=0, \qquad p < q,  \qquad 0 < r \leq \infty, \qquad b>  \frac{1}{p} - \frac{1}{q}$,
	\item $s= 0, \qquad p < q, \qquad  r \leq p, \qquad b = \frac{1}{p} - \frac{1}{q}$,
\item $s=0, \qquad q \leq p, \qquad 0 < r \leq \infty, \qquad b > 0$.
	 \end{enumerate}
 \end{thm}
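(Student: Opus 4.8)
The plan is to combine the Fourier-side description of the truncated Besov norm on $\widehat{GM}^d$ (Theorem \ref{Theorem 3.2}) with the Hardy--Littlewood relation \eqref{HL}, which for $f\in\widehat{GM}^d$ and $p>\tfrac{2d}{d+1}$ discretizes (using the local bound \eqref{3.2}) to $\|f\|_{L_p(\R^d)}\asymp\big(\sum_{\nu\in\Z}2^{\nu(dp-d)}F_0^p(2^\nu)\big)^{1/p}$. Splitting this sum at $\nu=0$, its part over $\nu\le 0$ equals $\sum_{j\ge0}2^{-j(p-1)d}F_0^p(2^{-j})$, which is \emph{exactly} the first summand in the norm equivalence of Theorem \ref{Theorem 3.2}; hence the low-frequency part of $\|f\|_{L_p}$ is always dominated by $\|f\|_{T^b_rB^s_{p,q}(\R^d)}$. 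Consequently \eqref{BLpMon} holds if and only if, with $I_j:=\{2^j,\dots,2^{j+1}-1\}$ (so $|I_j|\asymp 2^j$), $c_\nu:=2^{\nu(d-d/p)}F_0(2^\nu)$ and $d_\nu:=2^{\nu s}c_\nu$, one has the block inequality
\[
\Big(\sum_{j\ge0}\|c\|_{\ell^p(I_j)}^p\Big)^{1/p}\ \lesssim\ \Big(\sum_{j\ge0}2^{jbr}\|d\|_{\ell^q(I_j)}^r\Big)^{1/r}
\]
uniformly over all $F_0$ arising from $\widehat{GM}^d$ (usual modifications if $r=\infty$). So the whole statement reduces to a weighted comparison of $\ell^p$-blocks of $(c_\nu)$ against weighted $\ell^q$-blocks of $(d_\nu)$.

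For sufficiency I would argue by cases. If $s>0$: since $c_\nu=2^{-\nu s}d_\nu$ and $\nu\ge 2^j$ on $I_j$, together with Hölder on $I_j$ (if $q>p$) or the inclusion $\ell^q\hookrightarrow\ell^p$ (if $q\le p$), $\|c\|_{\ell^p(I_j)}\lesssim 2^{-2^{j-1}s}B_j$ where $B_j:=\|d\|_{\ell^q(I_j)}$; writing $\mathcal N$ for the right-hand side and using $2^{jb}B_j\le\mathcal N$, one gets $\sum_j\|c\|_{\ell^p(I_j)}^p\lesssim\mathcal N^p\sum_j2^{-2^{j-1}sp-jbp}\lesssim\mathcal N^p$ because the super-exponential factor beats $2^{-jbp}$ for \emph{any} sign of $b$ — giving case (i). If $s=0$ then $c=d$; when $q\le p$ use $\|c\|_{\ell^p(I_j)}\le\|c\|_{\ell^q(I_j)}=B_j$ and then $(\sum_jB_j^p)^{1/p}\lesssim(\sum_j2^{jbr}B_j^r)^{1/r}$ via $\ell^r\hookrightarrow\ell^p$ ($r\le p$) or Hölder ($r>p$) once $b>0$, giving (iv); when $p<q$ Hölder on $I_j$ gives $\|c\|_{\ell^p(I_j)}\le 2^{j(1/p-1/q)}B_j$, reducing to $\|(2^{j(1/p-1/q)}B_j)\|_{\ell^p}\lesssim\|(2^{jb}B_j)\|_{\ell^r}$, which holds for $b>1/p-1/q$ and any $r$ by a $\sup$/Hölder estimate (case (ii)) and for $b=1/p-1/q$ precisely when $\ell^r\hookrightarrow\ell^p$, i.e.\ $r\le p$ (case (iii)).

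For necessity I would produce explicit radial $\widehat{GM}^d$ functions by prescribing $F_0$ to be continuous and piecewise power, $F_0(t)\asymp\gamma_j\,t^{-(d-d/p)}$ on the (long) frequency block $t\in[2^{2^j},2^{2^{j+1}})$ with $\gamma_j\downarrow0$; then $F_0$ is decreasing up to harmless downward jumps, hence $GM$ (note that the rigidity of $GM$ — a function may not vanish on a block and then jump up, cf.\ \eqref{3.1},\eqref{3.3} — prevents cheaper lacunary choices), and \eqref{3.4new} holds exactly because $p>\tfrac{2d}{d+1}$; also $F_0$ is kept bounded on $(0,1]$ so the low-frequency term of the norm converges since $p>1$. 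Taking $\gamma_j$ essentially flat within $I_j$ so that $\|c\|_{\ell^p(I_j)}\asymp\|d\|_{\ell^q(I_j)}\asymp$ a prescribed value, the choice $\gamma_j=2^{-j/p}j^{-1/p}$ yields $\sum_j\|c\|_{\ell^p(I_j)}^p\asymp\sum_j1/j=\infty$ while the right-hand side is $\asymp(\sum_j2^{j(b+1/q-1/p)r}j^{-r/p})^{1/r}$, finite iff ($b<1/p-1/q$) or ($b=1/p-1/q$ and $r>p$); this rules out the complement of (ii)--(iii) when $p<q$. For $q\le p$ the analogous flat-block construction with $\|c\|_{\ell^p(I_j)}\asymp j^{-1/p}$ gives the norm $\asymp(\sum_j2^{jbr}j^{-r/p})^{1/r}<\infty$ for every $b<0$, killing the complement of (iv). Finally for $s<0$, take $c_\nu\equiv1$ for $\nu\ge1$ (i.e.\ $F_0(t)\asymp t^{-(d-d/p)}$): then $\|f\|_{L_p}=\infty$, while the decaying factor $2^{\nu sq}$ makes the right-hand side $\asymp(\sum_j2^{jbr}2^{2^j sr})^{1/r}<\infty$, forcing $s\ge0$.

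I expect the main obstacle to be the necessity part at $s=0$ with $q<p$ and $b\in[\tfrac1p-\tfrac1q,0)$: one must certify that a genuinely admissible $\widehat{GM}^d$ function can realize block $\ell^p$- and $\ell^q$-norms that are \emph{comparable} (not merely equal up to the Hölder factor $2^{j(1/q-1/p)}$), which is precisely where the $GM$ constraints \eqref{3.1}--\eqref{3.3} must be respected while simultaneously meeting the integrability condition \eqref{3.4new} in every dimension $d$ under the single hypothesis $p>\tfrac{2d}{d+1}$; getting the exponents in the piecewise-power profile to do all three at once is the delicate point, and one should cross-check the resulting picture against Theorem \ref{TheoremRegular2} (taking $q=r$ recovers Theorem \ref{TheoremRegular2} for $B^{s,b}_{p,q}$, and the admissible range here is strictly larger, as it must be).
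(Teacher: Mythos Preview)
Your reduction via Theorem~\ref{Theorem 3.2} and \eqref{HL} to the block inequality, and your sufficiency argument by cases, are correct and follow the paper's route almost verbatim. For the necessity at $s<0$ and at $s=0$ with $p<q$, your flat-block constructions (with $\gamma_j=2^{-j/p}j^{-1/p}$) and the paper's logarithmic profiles $F_0(t)=t^{-d+d/p}(1+\log t)^{-\varepsilon}$ are different in execution but equivalent in effect; both produce $\|c\|_{\ell^p(I_j)}\asymp j^{-1/p}$ and exploit the H\"older gap $2^{j(1/q-1/p)}$ in the right direction.

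The genuine gap is exactly the one you flag at the end: for $s=0$, $q<p$, and $b\in[\tfrac1p-\tfrac1q,0)$, your flat-block profile does \emph{not} work. With $c_\nu\equiv\gamma_j$ on $I_j$ one has $\|c\|_{\ell^q(I_j)}=2^{j(1/q-1/p)}\|c\|_{\ell^p(I_j)}$, and since $1/q>1/p$ the right-hand side picks up the factor $2^{j(1/q-1/p)r}$, which forces divergence unless $b<1/p-1/q$; so the range $1/p-1/q\le b<0$ is not reached. Your stated hope that a piecewise-power profile can make $\|c\|_{\ell^p(I_j)}$ and $\|c\|_{\ell^q(I_j)}$ comparable while keeping $F_0\in GM$ and \eqref{3.4new} is correct, but the way to achieve it is the \emph{opposite} of flat: let $c_\nu$ grow geometrically inside each block. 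The paper takes the simplest such choice, $F_0=\chi_{(0,2^{2^N})}$ (decreasing, hence $GM$; \eqref{3.4new} is trivial). Then $c_\nu=2^{\nu(d-d/p)}$ for $\nu<2^N$ and $c_\nu=0$ beyond, so each block norm $\|c\|_{\ell^u(I_j)}\asymp 2^{2^{j+1}(d-d/p)}$ is dominated by its top term and is \emph{independent of $u$}. This gives LHS $\asymp 2^{2^N(d-d/p)}$ versus RHS $\asymp 2^{Nb}\,2^{2^N(d-d/p)}$, and letting $N\to\infty$ forces $b\ge 0$ for \emph{all} $q$, closing the gap.
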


 \begin{rem}
 	The previous result shows a bigger range of parameters for the validity of \eqref{BLpMon} in comparison with the general embedding \eqref{BLpGen} treated in Theorem \ref{TheoremRegular2}. For instance, under the assumptions (ix) in Theorem \ref{TheoremRegular2}, the embedding \eqref{BLpGen} requires $b \geq 1/2-1/q$ but \eqref{BLpMon} holds under the weaker condition $b \geq 1/p-1/q$ if $p < q$ and any $b > 0$ if $p \geq q$. Similar improvements can also be obtained under (x) in Theorem  \ref{TheoremRegular2}.
 \end{rem}

 \begin{proof}[Proof of Theorem \ref{ThmBLpGM}]
 \textsc{Sufficiency part:} Applying Theorem \ref{Theorem 3.2} and \eqref{HL}, the inequality
 \begin{equation}\label{ProofThmBLpGM1}
 	\|f\|_{L_p(\R^d)} \lesssim \|f\|_{T^b_r B^{s}_{p, q}(\R^d)}, \qquad f \in  \widehat{GM}^d,
 \end{equation}
 turns out to be equivalent
 \begin{align}
 	\bigg(\sum_{j=0}^\infty 2^{-j (p-1) d} F_0^p(2^{-j})\bigg)^{1/p} +  \bigg(\sum_{j=0}^\infty 2^{j (p-1) d} F_0^p(2^{j})\bigg)^{1/p} &\lesssim \nonumber \\
	 & \hspace{-8cm}    \bigg( \sum_{j=0}^\infty 2^{-j ( p -1) d} F_0^p(2^{-j}) \bigg)^{1/p} +
         \left(\sum_{j=0}^\infty 2^{j b r} \bigg(\sum_{\nu=2^j}^{2^{j+1}-1} 2^{\nu (s + d -d/p) q} F_0^q(2^\nu) \bigg)^{r/q}    \right)^{1/r} \label{ProofThmBLpGM1*}
 \end{align}
 where $F_0$ is given by \eqref{FourierHankel}. In particular, \eqref{ProofThmBLpGM1} will be satisfied whenever
 \begin{equation}\label{ProofThmBLpGM1*1}
  \bigg(\sum_{j=0}^\infty 2^{j (p-1) d} F_0^p(2^{j})\bigg)^{1/p} \lesssim    \left(\sum_{j=0}^\infty 2^{j b r} \bigg(\sum_{\nu=2^j}^{2^{j+1}-1} 2^{\nu (s + d -d/p) q} F_0^q(2^\nu) \bigg)^{r/q}    \right)^{1/r}.
 \end{equation}

Assume $s > 0$ (i.e., (i) holds). Then \eqref{ProofThmBLpGM1*1} follows immediately from \eqref{LpBGMPositive}, \eqref{1717new3} and \eqref{bcbsb}.

Next we investigate the case $s=0$ in \eqref{ProofThmBLpGM1*1}. It is clear that
 \begin{equation}\label{ProofThmBLpGM1*2}
 	\bigg(\sum_{j=0}^\infty 2^{j (p-1) d} F_0^p(2^{j})\bigg)^{1/p} = \bigg(\sum_{j=0}^\infty \sum_{\nu=2^j}^{2^{j+1}-1} 2^{\nu (1-1/p) d p} F_0^p(2^{\nu})  \bigg)^{1/p}.
 \end{equation}
 Assume $q \leq p$ (i.e., (iv) holds), then
 \begin{equation}\label{ProofThmBLpGM1*3}
 	\bigg(\sum_{j=0}^\infty 2^{j (p-1) d} F_0^p(2^{j})\bigg)^{1/p}  \leq \bigg(\sum_{j=0}^\infty \bigg( \sum_{\nu=2^j}^{2^{j+1}-1} 2^{\nu (1-1/p) d q} F_0^q(2^{\nu}) \bigg)^{p/q}  \bigg)^{1/p}.
 \end{equation}
 Therefore $r \leq p$ and $b \geq 0$ imply
 $$
 		\bigg(\sum_{j=0}^\infty 2^{j (p-1) d} F_0^p(2^{j})\bigg)^{1/p}  \leq \bigg(\sum_{j=0}^\infty 2^{j b r}\bigg( \sum_{\nu=2^j}^{2^{j+1}-1} 2^{\nu (1-1/p) d q} F_0^q(2^{\nu}) \bigg)^{r/q}  \bigg)^{1/r}.
 $$
 On the other hand, if $r > p$ and $b > 0$, applying H\"older's inequality in \eqref{ProofThmBLpGM1*3}, the previous estimate (modulo replacing $\leq$ by $\lesssim$) is also achieved. Hence \eqref{ProofThmBLpGM1*1}  holds.

 Suppose now $q > p$ (i.e., the cases (ii) and (iii)). Using H\"older's inequality in \eqref{ProofThmBLpGM1*2}, we have
 $$
 	\bigg(\sum_{j=0}^\infty 2^{j (p-1) d} F_0^p(2^{j})\bigg)^{1/p} \lesssim \bigg(\sum_{j=0}^\infty 2^{j (1/p-1/q) p} \bigg(\sum_{\nu=2^j}^{2^{j+1}-1} 2^{\nu (1-1/p) d q} F_0^q(2^{\nu}) \bigg)^{p/q}   \bigg)^{1/p}.
 $$
 There are two possibilities. Firstly, if $r \leq p$ and $b \geq 1/p -1/q$ then
 $$
 		\bigg(\sum_{j=0}^\infty 2^{j (p-1) d} F_0^p(2^{j})\bigg)^{1/p} \lesssim \bigg(\sum_{j=0}^\infty 2^{j b r} \bigg(\sum_{\nu=2^j}^{2^{j+1}-1} 2^{\nu (1-1/p) d q} F_0^q(2^{\nu}) \bigg)^{r/q}   \bigg)^{1/r}.
 $$
 Secondly, the range $r > p$ and $b > 1/p-1/q$ follows from H\"older's inequality.

% requires a much deeper analysis based on the validity of Copson's inequalities studied in \cite[Chapter III, Section 10]{Grosse}. To do this, we first observe that by monotonicity properties (see \eqref{3.2}) and a simple change of variables, the estimate \eqref{ProofThmBLpGM2} holds if the following Copson's inequality is satisfied
% \begin{equation}\label{ProofThmBLpGM3}
% 	\left(\sum_{j=0}^\infty x_j^{p/q} \right)^{q/p} \lesssim \left(\sum_{j=0}^\infty (1 + j)^{(b-1/r) r} \bigg(\sum_{\nu=j}^\infty x_\nu \bigg)^{r/q} \right)^{q/r}
% \end{equation}
% for all positive sequences $\{x_j\}_{j \in \N_0}$. A careful analysis shows that parameters satisfying (ii)--(v) fulfill  conditions given in Theorem 10.4 of \cite{Grosse} (we prefer to omit here the simple but long and somewhat tedious computations). Consequently \eqref{ProofThmBLpGM3} holds true.

 \textsc{Neccessary part:} Next we construct counterexamples showing that \eqref{ProofThmBLpGM1} (or equivalently, \eqref{ProofThmBLpGM1*}) becomes false if none of the conditions (i)--(v) is satisfied.

 \textsc{Necessary condition $s \geq 0$ in \eqref{ProofThmBLpGM1}:} For $s < 0$, we let
	\begin{equation*}
		F_0(t) = \left\{\begin{array}{cl}   1, & \quad \text{if} \quad t \in (0, 1],  \\
		t^{-d+d/p-\varepsilon}, &\quad \text{if} \quad t \in (1, \infty),
		       \end{array}
                        \right.
	\end{equation*}
	where $s < \varepsilon < 0$ and
	\begin{equation}\label{ExtremalFunctionGMLp}
	f(x) = f_0(|x|), \quad x \in \R^d, \quad \text{with} \quad f_0 \quad \text{given by \eqref{3.4new+}}.
	\end{equation}
	 Note that $f \in \widehat{GM}^{d}$ since $F_0 \in GM$. In light of Theorem \ref{Theorem 3.2}, we have
	    \begin{align*}
          \|f \|_{T^b_r B^{s}_{p,q}(\mathbb{R}^d)} &\asymp   \bigg( \sum_{j=0}^\infty 2^{-j ( p -1) d} \bigg)^{1/p} +   \left(\sum_{j=0}^\infty 2^{j b r} \bigg(\sum_{\nu=2^j}^{2^{j+1}-1} 2^{\nu (s  -\varepsilon) q}  \bigg)^{r/q}    \right)^{1/r}    \\
          & \asymp  \bigg( \sum_{j=0}^\infty 2^{-j ( p -1) d} \bigg)^{1/p}  + \left(\sum_{j=0}^\infty 2^{j b r} 2^{2^j (s-\varepsilon) r}   \right)^{1/r} < \infty
    \end{align*}
    and, on the other hand, by \eqref{HL},
    $$
    	\|f\|_{L_p(\mathbb{R}^d)} \gtrsim  \left(\int_1^\infty t^{-\varepsilon p - 1} dt\right)^{1/p} = \infty.
    $$

    \textsc{Necessary condition $b \geq 1/p -1/q$ in  \eqref{ProofThmBLpGM1} with $s=0$ and $p < q$:} If $b <  1/p -1/q$, we let
    \begin{equation*}
		F_0(t) = \left\{\begin{array}{cl}   1, & \quad \text{if} \quad t \in (0, 1],  \\
		t^{-d+d/p} (1 + \log t)^{-\varepsilon}, &\quad \text{if} \quad t \in (1, \infty),
		       \end{array}
                        \right.
	\end{equation*}
	where $\max \{b + 1/q, 1/q\}  < \varepsilon < 1/p$  and the corresponding $f$ defined by \eqref{ExtremalFunctionGMLp}. According to Theorem \ref{Theorem 3.2} and \eqref{HL}, we derive
	    \begin{align*}
          \|f \|_{T^b_r B^{0}_{p,q}(\mathbb{R}^d)} &\asymp  \bigg( \sum_{j=0}^\infty 2^{-j ( p -1) d} \bigg)^{1/p} +   \left(\sum_{j=0}^\infty 2^{j b r} \bigg(\sum_{\nu=2^j}^{2^{j+1}-1} \nu^{-\varepsilon q} \bigg)^{r/q}    \right)^{1/r}    \\
          & \asymp    \bigg( \sum_{j=0}^\infty 2^{-j ( p -1) d} \bigg)^{1/p}  + \left(\sum_{j=0}^\infty 2^{j (b -\varepsilon + 1/q) r}    \right)^{1/r}  < \infty
    \end{align*}
    and
    \begin{equation*}
    		\|f\|_{L_p(\mathbb{R}^d)} \gtrsim  \left(\int_1^\infty (1 + \log t)^{-\varepsilon p} \frac{dt}{t}\right)^{1/p} = \infty,
    \end{equation*}
    respectively.

        \textsc{Necessary condition $b > 1/p -1/q$ in  \eqref{ProofThmBLpGM1} with $s=0$ and $p < \min\{q, r\}$:} According to the previous case, it only remains to deal with the limiting value $b = 1/p -1/q$. This can be done in a similar fashion as above by taking
    \begin{equation*}
		F_0(t) = \left\{\begin{array}{cl}   1, & \quad \text{if} \quad t \in (0, 1],  \\
		t^{-d+d/p} (1 + \log t)^{-1/p} (1 + \log (1 + \log t))^{-\varepsilon}, &\quad \text{if} \quad t \in (1, \infty),
		       \end{array}
                        \right.
	\end{equation*}
	where $1/r < \varepsilon < 1/p$.
	
	\textsc{Necessary condition $b \geq 0$ in  in  \eqref{ProofThmBLpGM1} with $s=0$:} Evaluating \eqref{ProofThmBLpGM1*} (with $s=0$) for the decreasing function $F_0$ given by the characteristic function $\chi_{(0, 2^{2^N})}, \, N \geq 2$, we have
	$$
	L + 2^{2^N d (1-1/p)}  \lesssim L +    2^{2^N d (1-1/p)} 2^{N b}
	$$
	where $L^p = \sum_{j=0}^\infty 2^{-j (p-1) d} < \infty$. Hence
	$$
	 2^{-2^N d (1-1/p)} L +1  \lesssim 2^{-2^N d (1-1/p)}  L +   2^{N b}, \qquad N \geq 2,
	$$
	and, taking limits as $N \to \infty$, we conclude that necessarily $b \geq 0$.

 \end{proof}

 Specializing Theorem \ref{ThmBLpGM} with $q=r$ we obtain the following result for classical Besov spaces.

 \begin{cor}
  	Let $\frac{2d}{d+1} < p < \infty, 0 < q < \infty, s \in \R$ and $b \in \R \backslash \{0\}$. Then
	$$
	B^{s, b}_{p, q}(\R^d) \cap \widehat{GM}^d \hookrightarrow L_p(\R^d)
	$$
	 holds if and only if one of the following conditions is satisfied
	\begin{enumerate}[\upshape(i)]
	\item $s > 0$,
	\item $s=0, \qquad p < q, \qquad b>  \frac{1}{p} - \frac{1}{q}$,
	\item $s=0, \qquad p \geq q, \qquad b  > 0$.
	 \end{enumerate}
 \end{cor}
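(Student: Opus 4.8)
The plan is to deduce this corollary directly from Theorem \ref{ThmBLpGM} by specializing the truncation parameter to $r = q$. First I would invoke Proposition \ref{PropositionCoincidences}(i), which gives the identification $B^{s,b}_{p,q}(\R^d) = T^b_q B^s_{p,q}(\R^d)$ with equivalence of quasi-norms. Consequently the embedding $B^{s,b}_{p,q}(\R^d) \cap \widehat{GM}^d \hookrightarrow L_p(\R^d)$ holds if and only if $T^b_q B^s_{p,q}(\R^d) \cap \widehat{GM}^d \hookrightarrow L_p(\R^d)$ does, so it suffices to read off the characterizing conditions of Theorem \ref{ThmBLpGM} in the case $r = q$.

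Second, I would go through the four alternatives of Theorem \ref{ThmBLpGM} one at a time under the constraint $r = q$. Alternative (i), $s > 0$, is insensitive to $r$ and reproduces item (i). Alternative (ii) becomes $s = 0$, $p < q$, $b > \frac{1}{p} - \frac{1}{q}$, which is item (ii). Alternative (iv) becomes $s = 0$, $q \leq p$, $b > 0$, which is item (iii). The only point requiring a brief comment is alternative (iii) of Theorem \ref{ThmBLpGM}: it requires simultaneously $p < q$ and $r \leq p$; with $r = q$ the latter reads $q \leq p$, which is incompatible with $p < q$. Hence this alternative is vacuous in the present setting and contributes nothing, so the list of conditions collapses precisely to (i)--(iii).

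Since the characterization in Theorem \ref{ThmBLpGM} is an equivalence (both directions), the resulting statement for $B^{s,b}_{p,q}(\R^d)$ is likewise an equivalence, which closes the argument. There is essentially no analytic obstacle here, as all the content is already packaged in Theorem \ref{ThmBLpGM} (itself proved via the Fourier-side description of Theorem \ref{Theorem 3.2} and the Hardy--Littlewood estimate \eqref{HL}) together with Proposition \ref{PropositionCoincidences}; the only thing to be careful about is the bookkeeping of the parameter ranges, in particular checking that no sub-case of Theorem \ref{ThmBLpGM} is lost or double-counted when $r = q$, which the case analysis above settles.
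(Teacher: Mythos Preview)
Your proposal is correct and matches the paper's approach exactly: the paper simply states that the corollary follows by ``specializing Theorem \ref{ThmBLpGM} with $q=r$'', and your argument spells out precisely this specialization via Proposition \ref{PropositionCoincidences}(i), including the correct observation that alternative (iii) of Theorem \ref{ThmBLpGM} becomes vacuous when $r=q$.
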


 An alternative approach to the previous result was proposed in \cite[Theorem 4.10]{DominguezTikhonov}. There the additional restriction $b \neq 0$ (which naturally arises dealing with truncated function spaces) does not appear.

 \subsection{Embeddings between $T^b_r B^s_{p, q}(\R^d)$}\label{Section84}

 The full characterization of the embeddings
 \begin{equation}\label{EmbeddingBBNew}
	T^{b_0}_{r_0} B^{s_0}_{p_0,q_0}(\mathbb{R}^d) \hookrightarrow T^{b_1}_{r_1} B^{s_1}_{p_1,q_1}(\mathbb{R}^d)
\end{equation}
 was obtained in Theorem \ref{TheoremEmbeddingsBBCharacterization}. Next we show that, working with general monotone functions, $\hookrightarrow$ can be sharpened by $=$ in \eqref{EmbeddingBBNew}. Before we state the precise statement, we introduce the following notation: for $f \in \widehat{GM}^d, \, f(x) = f_0(|x|)$, and any $\tau > 0$, set
 $$
 	J_f (\tau) := \bigg(\int_0^1 t^{d \tau -d-1} F_0^\tau(t) \, dt  \bigg)^{1/\tau},
 $$
 where $F_0$ is defined by \eqref{FourierHankel}. Notice that, by \eqref{3.2},
 \begin{equation}\label{Jf}
 	J_f(\tau) \lesssim J_f(\kappa) \quad \text{for} \quad \kappa < \tau.
 \end{equation}

 \begin{thm}\label{ThmEquivBGM1Statement}
 	Let $\frac{2 d}{d+1} < p_0 < p_1 < \infty, 0 < q, r \leq \infty,$ and $s_0, s_1, b\in \R$.
	\begin{enumerate}[\upshape(i)]
	\item Assume that
	\begin{equation}\label{AssumptionsBCoincidence}
		s_0 - \frac{d}{p_0} = s_1 - \frac{d}{p_1}.
	\end{equation}
	Let $f \in \widehat{GM}^d$. If $J_f(p_0) < \infty$ then
	\begin{equation}\label{ThmEquivBGM1}
		f \in T^b_r B^{s_0}_{p_0,q}(\mathbb{R}^d) \iff f \in T^b_r B^{s_1}_{p_1,q}(\mathbb{R}^d).
	\end{equation}
	\item	Conversely, if
	$$
		\widehat{GM}^d \cap T^b_r B^{s_0}_{p_0,q}(\mathbb{R}^d) = \widehat{GM}^d \cap T^b_{r} B^{s_1}_{p_1,q}(\mathbb{R}^d)
	$$
	then \eqref{AssumptionsBCoincidence} holds.
	\end{enumerate}
 \end{thm}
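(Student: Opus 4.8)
The plan is to reduce everything to the $\widehat{GM}^d$-description of the norm given in Theorem \ref{Theorem 3.2}. For $f\in\widehat{GM}^d$ with associated radial profile $F_0$ as in \eqref{FourierHankel}, set $\sigma_i:=s_i+d-d/p_i$ and
\[
  M_i(f):=\left(\sum_{j=0}^\infty 2^{jbr}\Big(\sum_{\nu=2^j}^{2^{j+1}-1}2^{\nu\sigma_i q}F_0(2^\nu)^q\Big)^{r/q}\right)^{1/r},
\]
so that Theorem \ref{Theorem 3.2} reads $\|f\|_{T^b_r B^{s_i}_{p_i,q}(\R^d)}\asymp J_f(p_i)+M_i(f)$, the first summand there being $\asymp J_f(p_i)$ by the monotonicity \eqref{3.2} (we may assume $b\neq0$). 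The crucial observation is that \eqref{AssumptionsBCoincidence} is precisely the identity $\sigma_0=\sigma_1$; when it holds the ``main terms'' agree, $M_0(f)=M_1(f)=:M(f)$, while when it fails the exponents $\sigma_0,\sigma_1$ driving $M_0,M_1$ are distinct.

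For part (i) this is then immediate: assuming \eqref{AssumptionsBCoincidence} and $J_f(p_0)<\infty$, the estimate \eqref{Jf} applied with $\kappa=p_0<\tau=p_1$ gives $J_f(p_1)\lesssim J_f(p_0)<\infty$, whence
\[
  f\in T^b_r B^{s_0}_{p_0,q}(\R^d)\iff M(f)<\infty\iff f\in T^b_r B^{s_1}_{p_1,q}(\R^d),
\]
which is \eqref{ThmEquivBGM1}.

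For part (ii) I argue by contraposition and exhibit, whenever $\sigma_0\neq\sigma_1$, a function $f\in\widehat{GM}^d$ lying in exactly one of the two spaces. A construction that works unconditionally uses only $p_0<p_1$: take $F_0(t)=t^{-\beta}$ near the origin with $\beta$ in the \emph{nonempty} interval $[\,d(1-1/p_0),\,d(1-1/p_1)\,)$ (nonempty since $p_0<p_1$, and contained in $(0,d)$ because $p_i>\tfrac{2d}{d+1}\ge1$), glued continuously to $F_0(t)=t^{-\rho}$ for large $t$ with $\rho>\max\{\sigma_1,(d-1)/2\}$; a short computation with the discretised expressions shows $J_f(p_0)=\infty$ while $J_f(p_1)<\infty$ and $M_1(f)<\infty$, so $f\in T^b_r B^{s_1}_{p_1,q}(\R^d)\setminus T^b_r B^{s_0}_{p_0,q}(\R^d)$ — in fact this shows the two $\widehat{GM}^d$-sections are never equal under $p_0<p_1$, so the hypothesis of (ii) forces \eqref{AssumptionsBCoincidence}. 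If instead one wants the sharper statement that the equivalence of (i) can fail \emph{among functions with $J_f(p_0)<\infty$}, one keeps $F_0$ bounded near $0$ and makes $F_0$ decay like $t^{-\rho}(1+\log t)^{-\gamma}$ for large $t$; then (taking, say, $\sigma_0>\sigma_1$) one arranges $M_1(f)<\infty$ but $M_0(f)=\infty$, the divergence coming from the factor $2^{\nu(\sigma_0-\sigma_1)q}$ which blows up the dyadic block sums doubly exponentially in $j$.

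The step I expect to be the main obstacle is certifying that the extremal $F_0$ genuinely defines an element of $\widehat{GM}^d$, and in particular that it satisfies the tail integrability $\int_1^\infty u^{(d-1)/2}|dF_0(u)|<\infty$ from \eqref{3.4new}: this condition forces the decay exponent of $F_0$ at infinity to exceed $(d-1)/2$, which in the decaying‑$F_0$ variant is compatible with $M_0(f)=\infty$ only when $\sigma_0>(d-1)/2$. To reach the general case one first normalises by the Bessel potential $I_{-\tau}$ with $\tau$ large: by Lemma \ref{Lemma 3.1} it maps $\widehat{GM}^d$ into itself (its symbol $(1+|\xi|^2)^{-\tau/2}$ is non‑increasing, hence $GM$, and the decaying factor only improves \eqref{3.4new}), and by Theorem \ref{TheoremLifting} it is an isomorphism of $T^b_r B^{s}_{p,q}(\R^d)$ onto $T^b_r B^{s+\tau}_{p,q}(\R^d)$; running the separation at the level $s_i+\tau$, where $\sigma_i+\tau>(d-1)/2$, and carefully tracking which functions stay in $\widehat{GM}^d$ along this reduction is where the technical work concentrates.
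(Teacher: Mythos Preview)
Your argument for part (i) is exactly the paper's: both reduce to Theorem~\ref{Theorem 3.2}, note that \eqref{AssumptionsBCoincidence} is the equality $\sigma_0=\sigma_1$ of the exponents in the main term, and use \eqref{Jf} to pass from $J_f(p_0)<\infty$ to $J_f(p_1)<\infty$.

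For part (ii) you take a genuinely different route. The paper argues only via your \emph{second} construction: assuming $\sigma_0>\sigma_1$, it sets $F_0(t)=1$ on $(0,1]$ and $F_0(t)=t^{-(\sigma_1+\varepsilon)}$ on $(1,\infty)$ with $0<\varepsilon<\sigma_0-\sigma_1$, so that $M_1(f)<\infty$ but $M_0(f)=\infty$. It does not address the $\widehat{GM}^d$-tail issue you raise, and indeed its construction needs $\sigma_1+\varepsilon>(d-1)/2$ for \eqref{3.4new} to hold, which can fail when both $\sigma_i$ are very negative. Your \emph{first} construction (singular $F_0\sim t^{-\beta}$ near the origin, $\beta\in[d(1-1/p_0),d(1-1/p_1))$) sidesteps this completely: the separation comes from $J_f(p_0)=\infty$ versus $J_f(p_1)<\infty$, so you are free to take the tail exponent $\rho$ as large as needed to satisfy \eqref{3.4new}. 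This is both simpler and more robust than the paper's argument, and as you observe it actually shows the hypothesis of (ii) is vacuous whenever $p_0<p_1$ --- a fact the paper records separately in the Remark following the proof, using precisely your construction.

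One caution on your final paragraph: the lifting fix you sketch for the second construction does not work as stated. If you build the counterexample $g$ at level $s_i+\tau$ with tail $G_0(t)\sim t^{-(\sigma_1+\tau+\varepsilon)}$ and then pull back by $I_\tau$, the Fourier profile of $I_\tau g$ is $(1+t^2)^{\tau/2}G_0(t)\sim t^{-(\sigma_1+\varepsilon)}$ at infinity, and you are back to the original obstruction on \eqref{3.4new}. Fortunately this repair is unnecessary: your first construction already proves (ii) in full generality, so you can simply drop the lifting discussion.
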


 \begin{proof}
 	(i): Under the assumption given in \eqref{AssumptionsBCoincidence}, we obviously have
	$$
	 \int_{t}^\infty u^{s_0 q + d q -d q/p_0 -1} F_0^{q}(u) \, du  =    \int_{t}^\infty u^{s_1 q + d q -d q/p_1 -1} F_0^{q}(u) \, du
	$$
	and thus \eqref{ThmEquivBGM1} follows immediately from Theorem \ref{Theorem 3.2} and \eqref{Jf}.
	
	(ii): We shall proceed by contradiction, i.e., if we assume that \eqref{AssumptionsBCoincidence} is false then $	\widehat{GM}^d \cap T^b_r B^{s_0}_{p_0,q}(\mathbb{R}^d) \neq \widehat{GM}^d \cap T^b_r B^{s_1}_{p_1,q}(\mathbb{R}^d)$. Indeed, suppose $s_0 - d/p_0 > s_1 - d/p_1$ and let
	\begin{equation*}
		F_0(t) = \left\{\begin{array}{cl}   1, & \quad \text{if} \quad t \in (0, 1],  \\
		t^{-s_1 - d + d/p_1 -\varepsilon}, &\quad \text{if} \quad t \in (1, \infty),
		       \end{array}
                        \right.
	\end{equation*}
	where $0 < \varepsilon < s_0 -\frac{d}{p_0}-s_1+ \frac{d}{p_1}$ and $f$ defined by \eqref{ExtremalFunctionGMLp}. Let
	$$
	I_i = 	  \left(\int_1^\infty (1+\log t)^{b r} \left(\int_{t}^\infty u^{s_i q + d q -d q/p_i -1} F_0^q(u) \, du \right)^{r/q}  \frac{dt}{t}\right)^{1/r} \quad \text{for} \quad i=0, 1.
	$$
	Elementary computations show that $I_1 < \infty$ but $I_0 = \infty$. Applying Theorem \ref{Theorem 3.2} we arrive at the desired contradiction. The case $s_0 - d/p_0 < s_1 - d/p_1$ can be treated analogously.
	
 \end{proof}

 \begin{rem}
 	The assumption $J_f(p_0) < \infty$ in Theorem \ref{ThmEquivBGM1Statement}(i) is necessary. To see this we consider the function $f$ with \eqref{ExtremalFunctionGMLp} and
	  \begin{equation*}
		F_0(t) = \left\{\begin{array}{cl}   t^{-\beta}, & \quad \text{if} \quad t \in (0, 1],  \\
		t^{-s_1-d + d/p_1 -\varepsilon}, &\quad \text{if} \quad t \in (1, \infty),
		       \end{array}
                        \right.
	\end{equation*}
	where $\varepsilon > 0$ and $d- \frac{d}{p_0} < \beta < d - \frac{d}{p_1}$. By Theorem \ref{Theorem 3.2}, we have $f \in \widehat{GM}^d \cap T^b_r B^{s_1}_{p_1,q}(\mathbb{R}^d)$ but $f \not \in T^b_r B^{s_0}_{p_0,q}(\mathbb{R}^d)$.
 \end{rem}

 \subsection{Characterizations of spaces $T^b_r B^{s}_{p, q}(\mathbb{T})$.} The analogue of Theorem \ref{Theorem 3.2} for periodic functions and $GM$ sequences (cf. \eqref{DiscreteGM}) reads as follows.

 \begin{thm}\label{ThmBesovGMPer}
 	Let $1 < p < \infty, 0 < q, r \leq \infty, s \in \R$ and $b \in \R \backslash \{0\}$. Let $f \in L_1(\mathbb{T})$ be such that
	$$f(x) \sim \sum_{n=1}^\infty (a_n \cos n x + b_n \sin nx)$$
	 with $\{a_n\}_{n \in \N}$ and $\{b_n\}_{n \in \N}$ nonnegative general monotone sequences.
	Then
	$$
		\|f\|_{T^b_r B^{s}_{p, q}(\mathbb{T})} \asymp  \left( \sum_{k=0}^\infty 2^{k b r} \bigg(\sum_{\nu= 2^k-1}^{2^{k+1}-2} 2^{\nu (s + 1-1/p) q} (a_{2^\nu} + b_{2^\nu})^q \bigg)^{r/q}  \right)^{1/r}.
	$$
 \end{thm}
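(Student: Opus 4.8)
The strategy is to reduce the periodic $\widehat{GM}$ statement to the already-established non-periodic one, Theorem~\ref{Theorem 3.2}, by exploiting the standard dictionary between Fourier coefficients of periodic functions with general monotone coefficients and the Fourier--Hankel transform of radial $\widehat{GM}^d$ functions in dimension $d=1$. First I would observe that for $f \in L_1(\mathbb{T})$ with $f(x) \sim \sum_{n=1}^\infty (a_n \cos nx + b_n \sin nx)$ and $\{a_n\}, \{b_n\}$ nonnegative general monotone sequences, one has the periodic analogue of the $\widehat{GM}$ machinery: the modulus of smoothness of order $k$ admits a description in terms of the coefficients,
\begin{equation*}
	\omega_k(f, 2^{-j})_p^p \asymp 2^{-jkp} \sum_{n=1}^{2^j} n^{kp + p - 2} (a_n + b_n)^p + \sum_{n=2^j}^\infty n^{p-2} (a_n+b_n)^p,
\end{equation*}
together with the Hardy--Littlewood-type equivalence $\|f\|_{L_p(\mathbb{T})} \asymp \big(\sum_{n=1}^\infty n^{p-2} (a_n + b_n)^p\big)^{1/p}$ for $1 < p < \infty$; these are the discrete counterparts of \eqref{GTNew} and \eqref{HL} and are available in the literature on general monotone series (e.g.\ the references \cite{Tikhonov, LiflyandTikhonov, Zygmund} already cited in the excerpt). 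Note that in the periodic setting the term $J_f(\tau)$ collapses (there is no ``low frequency'' integral $\int_0^1$), which accounts for the cleaner form of the claimed equivalence.

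Next, for $s > 0$ and $b \in \R \setminus\{0\}$ I would feed these two facts into the interpolation-based formulas \eqref{jsajajsajs1} (when $b>0$) and \eqref{jsajajsajs2} (when $b<0$) for $\|f\|_{T^b_r B^s_{p,q}(\mathbb{T})}$ — which hold verbatim in the periodic case since Theorem~\ref{TheoremInterpolation} and its periodic counterpart (cf.\ Remark~\ref{RemarkLifting} and the periodic spaces introduced in the excerpt) are valid for $\mathbb{T}$. Then I would run the same chain of estimates as in the proof of Theorem~\ref{Theorem 3.2}: substitute the discrete modulus-of-smoothness formula, split the resulting iterated sums into the four pieces analogous to $I, II, III, IV$ in \eqref{IIIIIIIv}, apply Hardy's inequalities \eqref{H1} and \eqref{H2} (using $b>0$ resp.\ $b<0$ in the appropriate places) to absorb the cross terms, discretize the logarithmic weight via $\mu_j = 2^{2^j}$ exactly as in \eqref{1717new3}, and regroup dyadically. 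The Hardy--Littlewood equivalence handles the $L_p$-term and shows it is dominated by the main double-sum (the periodic analogue of \eqref{LpBGMPositive}), so it does not appear separately in the final expression. This yields
\begin{equation*}
	\|f\|_{T^b_r B^s_{p,q}(\mathbb{T})} \asymp \left(\sum_{k=0}^\infty 2^{kbr} \bigg(\sum_{\nu=2^k-1}^{2^{k+1}-2} 2^{\nu(s+1-1/p)q}(a_{2^\nu} + b_{2^\nu})^q \bigg)^{r/q}\right)^{1/r}
\end{equation*}
for $s > 0$.

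To remove the restriction $s > 0$, I would invoke the lifting property in the periodic setting (Theorem~\ref{TheoremLifting} together with Remark~\ref{RemarkLifting}): given arbitrary $s \in \R$, pick $\sigma < s$, note that $\mathfrak{I}_\sigma f$ again has general monotone coefficients $\{(1+n^2)^{\sigma/2} a_n\}, \{(1+n^2)^{\sigma/2} b_n\}$ (the periodic analogue of Lemma~\ref{Lemma 3.1}, i.e.\ the product of a $GM$ sequence with the regularly-varying sequence $(1+n^2)^{\sigma/2}$ stays $GM$), apply the $s>0$ case to $\mathfrak{I}_\sigma f$ in $T^b_r B^{s-\sigma}_{p,q}(\mathbb{T})$, and then absorb the factors $(1+2^{2\nu})^{\sigma/2} \asymp 2^{\nu\sigma}$ to recover the claimed formula for general $s$. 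The main obstacle I anticipate is not conceptual but bookkeeping: verifying carefully that the discrete versions of \eqref{GTNew} and \eqref{HL} hold under the precise $GM$ hypothesis \eqref{DiscreteGM} for nonnegative coefficient sequences (in particular tracking the role of $p$ in the range $1 < p < \infty$, which is more restrictive than $\frac{2d}{d+1} < p < \infty$ with $d=1$ only at the endpoint, so no real loss), and making sure the Hardy-inequality manipulations that swallow the cross terms $I, III, IV$ go through with the logarithmic weight $(1+\log t)^{br}$ exactly as in the continuous case — this is where all the case distinctions on the sign of $b$ and on $q \lessgtr p$ live.
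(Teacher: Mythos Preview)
Your approach is correct but takes a substantially longer route than the paper's. The paper's proof is essentially two lines: it invokes the Lizorkin representation for $T^b_r B^s_{p,q}(\mathbb{T})$ (Corollary~\ref{CorollaryLizorkinRep}), which for all $s \in \R$ gives
\[
\|f\|_{T^b_r B^{s}_{p,q}(\mathbb{T})} \asymp \left( \sum_{k=0}^\infty 2^{kbr} \bigg(\sum_{\nu=2^k-1}^{2^{k+1}-2} 2^{\nu s q} \Big\|\sum_{m=2^\nu}^{2^{\nu+1}-1} a_m \cos mx\Big\|_{L_p(\mathbb{T})}^q \bigg)^{r/q}\right)^{1/r},
\]
and then applies the known two-sided estimate $\big\|\sum_{m=2^\nu}^{2^{\nu+1}-1} a_m \cos mx\big\|_{L_p(\mathbb{T})} \asymp a_{2^\nu} 2^{\nu(1-1/p)}$ for $GM$ sequences (from \cite{Tikhonov}) block by block. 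No interpolation, no moduli of smoothness, no Hardy inequalities, no separate lifting step for $s \leq 0$.

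Your plan instead replays the entire proof of Theorem~\ref{Theorem 3.2} in the periodic setting: interpolation formulas \eqref{jsajajsajs1}--\eqref{jsajajsajs2}, the discrete analogue of the modulus-of-smoothness characterization \eqref{GTNew}, the four-term splitting $I+II+III+IV$, Hardy's inequalities to kill cross terms, and finally lifting via $\mathfrak{I}_\sigma$ to reach $s \leq 0$. This works, but it rebuilds from scratch machinery that the paper has already packaged into Corollary~\ref{CorollaryLizorkinRep} (which itself absorbed the lifting argument once and for all). The paper's route is shorter precisely because the Lizorkin representation decouples the truncation structure from the $GM$ analysis, reducing the latter to a single dyadic-block $L_p$ estimate; your route keeps these entangled and has to do the Hardy-inequality bookkeeping that you rightly flag as the main obstacle.
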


 \begin{proof}
 	We may assume without loss of generality that $f(x) \sim \sum_{n=1}^\infty a_n \cos n x$ with nonnegative $\{a_n\}_{n \in \N} \in GM$.
	
	By Corollary \ref{CorollaryLizorkinRep},
		\begin{equation}\label{ThmBesovGMPer1}
		\|f\|_{T^b_r B^{s}_{p, q}(\mathbb{T})} \asymp  \left( \sum_{k=0}^\infty 2^{k b r} \bigg(\sum_{\nu= 2^k-1}^{2^{k+1}-2} 2^{\nu s q} \bigg\|\sum_{m = 2^\nu}^{2^{\nu+1}-1} a_m \cos mx \bigg\|_{L_p(\mathbb{T})}^q \bigg)^{r/q}  \right)^{1/r}.
	\end{equation}
	Furthermore, since $\{a_n\}_{n \in \N} \in GM$, the following estimates hold
	\begin{equation}\label{ThmBesovGMPer2}
		a_{2^{\nu+1}} 2^{\nu(1-1/p)} \lesssim \bigg\|\sum_{m = 2^\nu}^{2^{\nu+1}-1} a_m \cos mx \bigg\|_{L_p(\mathbb{T})} \lesssim a_{2^\nu} 2^{\nu(1-1/p)},
	\end{equation}
	cf. \cite{Tikhonov}. Combining \eqref{ThmBesovGMPer1} and \eqref{ThmBesovGMPer2} we achieve the desired result.
 \end{proof}

 \begin{rem}
 	Relying on Theorem \ref{ThmBesovGMPer}, all the results given in Sections \ref{Section83} and \ref{Section84} admit counterparts in the periodic setting. Further details are left to the interested reader.
 \end{rem}

 \subsection{Characterizations of spaces $\text{Lip}^{s, b}_{p, q}(\R^d)$}
 The goal of this section is to establish the characterization for Lipschitz spaces.

\begin{thm}[\bf{Characterization of Lipschitz norms for $\widehat{GM}^{d}$ functions}]\label{TheoremGMLip}
	Let $\frac{2d}{d+1} < p < \infty, 0 < q \leq \infty, s > 0$, and $b < - 1/q$ . Assume that $f \in \widehat{GM}^d$. Then
	\begin{align}
		\|f\|_{\emph{Lip}^{s,b}_{p,q}(\mathbb{R}^d)} & \asymp  \bigg( \sum_{j=0}^\infty 2^{-j ( p -1) d} F_0^p(2^{-j}) \bigg)^{1/p} \nonumber \\
		&\hspace{1cm}+ \left(\sum_{j=0}^\infty 2^{j (b+1/q) q} \bigg(\sum_{\nu=2^j}^{2^{j+1}-1} 2^{\nu (s + d -d/p) p} F_0^p(2^\nu) \bigg)^{q/p} \right)^{1/q}. \label{LipGM}
	\end{align}
\end{thm}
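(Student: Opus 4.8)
The plan is to transfer the statement to the truncated Besov scale, where Theorem \ref{Theorem 3.2} already delivers the desired expression, using that Lipschitz spaces are truncated Triebel--Lizorkin spaces. First I would invoke Proposition \ref{PropositionCoincidences}(iv) to write $\|f\|_{\text{Lip}^{s,b}_{p,q}(\R^d)}\asymp\|f\|_{T^{b+1/q}_q F^{s}_{p,2}(\R^d)}$; since $b<-1/q$, the truncation parameter $c:=b+1/q$ is nonzero, so Theorem \ref{Theorem 3.2}, applied with its parameters $(b,r,q,s)$ specialised to $(b+1/q,\,q,\,p,\,s)$, gives
\[
\|f\|_{T^{b+1/q}_q B^{s}_{p,p}(\R^d)}\asymp\Big(\sum_{j\geq0}2^{-j(p-1)d}F_0^p(2^{-j})\Big)^{1/p}+\Big(\sum_{j\geq0}2^{j(b+1/q)q}\Big(\sum_{\nu=2^j}^{2^{j+1}-1}2^{\nu(s+d-d/p)p}F_0^p(2^\nu)\Big)^{q/p}\Big)^{1/q},
\]
which is exactly the right-hand side of \eqref{LipGM}. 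Hence everything reduces to showing that, on the class $\widehat{GM}^d$, the truncated Triebel--Lizorkin norm and this truncated Besov norm coincide:
\[
\|f\|_{T^{c}_q F^{s}_{p,2}(\R^d)}\asymp\|f\|_{T^{c}_q B^{s}_{p,p}(\R^d)},\qquad f\in\widehat{GM}^d,\quad c=b+1/q\neq0.
\]

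To prove this comparison I would work block by block. By Corollary \ref{CorTruncBTLFixedps} one already has $T^c_q B^s_{p,\min\{p,2\}}(\R^d)\hookrightarrow T^c_q F^s_{p,2}(\R^d)\hookrightarrow T^c_q B^s_{p,\max\{p,2\}}(\R^d)$, which bounds the $F$-norm above and below by the $\ell_{\min\{p,2\}}$- and $\ell_{\max\{p,2\}}$-aggregates of the frequency pieces; the task is to pin the $F$-norm down to the $\ell_p$-aggregate for general monotone $f$. Using the equivalence $\|(\varphi_\nu\widehat f)^\vee\|_{L_p(\R^d)}\asymp 2^{\nu(d-d/p)}F_0(2^\nu)$ for $f\in\widehat{GM}^d$ (implicit in the proof of Theorem \ref{Theorem 3.2}, via \eqref{HL} and the monotonicity \eqref{3.2}), this amounts to the uniform-in-$j$ block estimate
\[
\Big\|\Big(\sum_{\nu=2^j-1}^{2^{j+1}-2}2^{2\nu s}|(\varphi_\nu\widehat f)^\vee|^2\Big)^{1/2}\Big\|_{L_p(\R^d)}\asymp\Big(\sum_{\nu=2^j-1}^{2^{j+1}-2}2^{\nu s p}\|(\varphi_\nu\widehat f)^\vee\|_{L_p(\R^d)}^p\Big)^{1/p}.
\]
The bound $\lesssim$ when $p\geq2$ is the triangle inequality in $L_{p/2}$; in the remaining range, and for the reverse bound, one uses that for $f\in\widehat{GM}^d$ each piece $(\varphi_\nu\widehat f)^\vee$ is essentially concentrated on $\{|x|\lesssim2^{-\nu}\}$ with height $\asymp2^{\nu d}F_0(2^\nu)$ and decays off this ball at a rate controlled through \eqref{3.1}--\eqref{3.3}, so that on the dyadic annulus $\{|x|\sim2^{-\mu}\}$ the square function of a block is governed by its single $\mu$-th term; integrating over $x$ then yields both inequalities. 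Once the block estimate is in hand, the outer weighted $\ell_q$-summation over $j$ and the low-frequency term $\big(\sum_j2^{-j(p-1)d}F_0^p(2^{-j})\big)^{1/p}$ are handled exactly as in the proof of Theorem \ref{Theorem 3.2} (Hardy's inequalities \eqref{H1}--\eqref{H2} according to the sign of $c$, plus the reindexing $\mu_j=2^{2^j}$), completing the argument.

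The main obstacle is precisely this block estimate for the square function of a $\widehat{GM}^d$ function: controlling the off-diagonal contributions $\sum_{\nu\neq\mu}2^{2\nu s}|(\varphi_\nu\widehat f)^\vee|^2$ on $\{|x|\sim2^{-\mu}\}$ and showing they do not exceed the diagonal one requires the full general monotone machinery, as in \cite{GorbachevTikhonov, DominguezTikhonov} — indeed the case $s=0$, $c>0$ of our claim is exactly the computation behind the description of $\mathbf{B}^{0,b}_{p,q}(\R^d)$ recalled in Remark \ref{RemarkContinuous} — and it is also the point that forces the restriction $\frac{2d}{d+1}<p<\infty$ (through \eqref{HL}). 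I note that the lifting property (Theorem \ref{TheoremLifting}) together with Lemma \ref{Lemma 3.1} lets one reduce the comparison to $s=0$ if desired, since $I_\sigma$ shifts the smoothness in both $T^c_q F^s_{p,2}(\R^d)$ and $T^c_q B^s_{p,p}(\R^d)$ and maps $\widehat{GM}^d$ into itself, the Fourier profile of $I_\sigma f$ being $(1+t^2)^{\sigma/2}F_0(t)\in GM$; in the exceptional case where this profile violates the integrability requirement \eqref{3.4new}, a direct inspection shows that both sides of \eqref{LipGM} are infinite, so no generality is lost.
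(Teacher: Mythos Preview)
Your approach differs substantially from the paper's. The paper never invokes the identification $\text{Lip}^{s,b}_{p,q}=T^{b+1/q}_q F^s_{p,2}$ here; instead it works directly from the definition of the Lipschitz seminorm via the fractional modulus of smoothness, plugging in the Gorbachev--Tikhonov formula
\[
\omega_s(f,t)_p\asymp t^s\Big(\int_0^{1/t}u^{sp+dp-d-1}F_0^p(u)\,du\Big)^{1/p}+\Big(\int_{1/t}^\infty u^{dp-d-1}F_0^p(u)\,du\Big)^{1/p}
\]
valid for $f\in\widehat{GM}^d$, splitting the resulting expression $\int_0^1(t^{-s}(1-\log t)^b\omega_s(f,t)_p)^q\,\frac{dt}{t}$ into two terms $I$ and $II$, and reducing each with Hardy's inequalities \eqref{H1}--\eqref{H2} together with the monotonicity \eqref{3.2}. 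The Hardy--Littlewood estimate \eqref{HL} supplies the $L_p$ part. No comparison between $F$- and $B$-type truncated norms is ever needed, and the whole argument is short.

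Your route, by contrast, hinges on the block square-function estimate, which you correctly flag as the main obstacle but do not prove; the sketch you offer (spatial concentration of $(\varphi_\nu\widehat f)^\vee$ on $\{|x|\lesssim 2^{-\nu}\}$ and control of off-diagonal contributions via \eqref{3.1}--\eqref{3.3}) is plausible heuristics rather than a proof, and making it rigorous for all $p$ in the stated range is genuinely delicate. Note also that your lifting reduction brings you to $s=0$ with truncation parameter $c=b+1/q<0$, whereas the characterization \eqref{GMBesovZero} you cite from Remark~\ref{RemarkContinuous} covers only $c>0$ (i.e., $\mathbf B^{0,b}_{p,q}$ with $b>-1/q$); and the proof of \eqref{GMBesovZero} in \cite{DominguezTikhonov} itself proceeds via moduli of smoothness, so appealing to that machinery does not sidestep the difficulty. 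Your strategy may well be completable, but the paper's direct argument is both shorter and free of this gap.
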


\begin{rem}
(i) If the right-hand side of (\ref{LipGM}) is finite then $f \in L_p(\R^d)$, i.e.,
\begin{equation}\label{LpEstim1Rem}
	\sum_{j=-\infty}^\infty 2^{-j ( p -1) d} F_0^p(2^{-j})  < \infty
\end{equation}
(see \eqref{HL}). Indeed, invoking H\"older's inequality if $q \geq p$ and the fact that $\ell_q \hookrightarrow \ell_p$ if $q < p$, we can estimate, for $s > 0$ and $b \in \R$,
\begin{equation}\label{LpEstim1Rem2}
	\bigg(\sum_{j=0}^\infty 2^{j ( p -1) d} F_0^p(2^{j})  \bigg)^{1/p} \lesssim \bigg(\sum_{j=0}^\infty 2^{j(s+d-d/p) q} (1 + j)^{b q} F_0^q(2^j) \bigg)^{1/q}.
\end{equation}
Furthermore, basic monotonicity properties and Hardy's inequality \eqref{H2} yield
\begin{align*}
	\bigg(\sum_{j=0}^\infty 2^{j(s+d-d/p) q} (1 + j)^{b q} F_0^q(2^j) \bigg)^{1/q} & \lesssim \bigg(\sum_{j=0}^\infty 2^{j (b +1/q)q} \bigg(\sum_{\nu=1}^{2^j} 2^{\nu(s+d-d/p) p} F_0^p(2^\nu) \bigg)^{q/p} \bigg)^{1/q} \\
	&\hspace{-5cm} \asymp  \left(\sum_{j=0}^\infty 2^{j (b+1/q) q} \bigg(\sum_{\nu=2^j}^{2^{j+1}-1} 2^{\nu (s + d -d/p) p} F_0^p(2^\nu) \bigg)^{q/p} \right)^{1/q}.
\end{align*}
Combining this estimate and \eqref{LpEstim1Rem2},
\begin{equation}\label{ñlñlñ}
\bigg(\sum_{j=0}^\infty 2^{j ( p -1) d} F_0^p(2^{j})  \bigg)^{1/p} \lesssim \left(\sum_{j=0}^\infty 2^{j (b+1/q) q} \bigg(\sum_{\nu=2^j}^{2^{j+1}-1} 2^{\nu (s + d -d/p) p} F_0^p(2^\nu) \bigg)^{q/p} \right)^{1/q}.
\end{equation}
 Then the convergence of \eqref{LpEstim1Rem} results as a combination of \eqref{ñlñlñ} and  (\ref{LipGM}).

	(ii) The borderline case $b=0$ and $q=\infty$ in the previous theorem refers to the Sobolev space $H^s_p(\R^d) = \L^{s,0}_{p,\infty}(\R^d)$ (see \eqref{LipSobFract}). This case was already investigated in \cite[Theorem 4.8]{DominguezTikhonov}, namely,
	\begin{equation*}
		\|f\|_{H^s_p(\R^d)} \asymp   \bigg( \sum_{j=0}^\infty 2^{-j ( p -1) d} F_0^p(2^{-j}) \bigg)^{1/p} + \left(\sum_{j=0}^\infty   2^{j (s + d -d/p) p} F_0^p(2^{j}) \right)^{1/p}, \quad f \in \widehat{GM}^d.
	\end{equation*}
\end{rem}

\begin{proof}[Proof of Theorem \ref{TheoremGMLip}]
	    We  make use of the following description (see \cite[Corollary 4.1]{GorbachevTikhonov}) of the modulus of smoothness in terms of the Fourier transform for functions in the $\widehat{GM}^d$ class
    \begin{equation*}
        \omega_{s}(f,t)_p \asymp t^{s} \left(\int_0^{1/t} u^{ s p + d p - d} F_0^p(u) \frac{d u}{u} \right)^{1/p} + \left(\int_{1/t}^\infty u^{d p - d} F^p_0(u)
        \frac{d u}{u}\right)^{1/p}.
    \end{equation*}
    Therefore,
    \begin{equation}\label{TheoremGMLip1}
    	\left(\int_0^{1} (t^{-s} (1 - \log t)^{-b} \omega_s(f,t)_p)^q \frac{d t}{t} \right)^{1/q} = I + II,
\end{equation}
	 where
	 \begin{equation*}
	I =  \left(\int_0^1 (1-\log t)^{b q} \left( \int_0^{1/t} u^{ s p + d p - d} F_0^p(u) \frac{d u}{u} \right)^{q/p} \frac{d t}{t} \right)^{1/q}
	\end{equation*}
	and
	\begin{equation*}
	II = \left(\int_0^1 t^{-s q} (1 - \log t)^{b q} \left(\int_{1/t}^\infty u^{d p - d} F_0^p(u) \frac{d u}{u} \right)^{q/p} \frac{d t}{t} \right)^{1/q}.
    \end{equation*}
    A simple change of variables yields that
    \begin{align*}
    	I & =  \left(\int_1^\infty  (1 + \log t)^{b q} \left(\int_0^t u^{s p  + d p - d} F_0^p(u) \frac{d u}{u} \right)^{q/p} \frac{d t}{t}\right)^{1/q} \\
	& \asymp \left(\int_0^1 u^{s p  + d p - d} F_0^p(u) \frac{d u}{u} \right)^{1/p} \\
	& \hspace{1cm} +  \left(\int_1^\infty  (1 + \log t)^{b q} \left(\int_1^t u^{s p  + d p - d} F_0^p(u) \frac{d u}{u} \right)^{q/p} \frac{d t}{t}\right)^{1/q}
    \end{align*}
    where we have also used that $b < -1/q$. Note that, by \eqref{H1},
    \begin{align*}
     F_0(1) + \left(\int_1^\infty  (1 + \log t)^{b q} \left(\int_1^t u^{s p  + d p - d} F_0^p(u) \frac{d u}{u} \right)^{q/p} \frac{d t}{t}\right)^{1/q} &\asymp  \\
     & \hspace{-6cm}\left(\sum_{j=0}^\infty 2^{j (b+1/q) q} \bigg(\sum_{\nu=2^j}^{2^{j+1}-1} 2^{\nu (s + d -d/p) p} F_0^p(2^\nu) \bigg)^{q/p} \right)^{1/q}.
    \end{align*}
    Hence, we have
	\begin{align}
	\left(\sum_{j=0}^\infty 2^{j (b+1/q) q} \bigg(\sum_{\nu=2^j}^{2^{j+1}-1} 2^{\nu (s + d -d/p) p} F_0^p(2^\nu) \bigg)^{q/p} \right)^{1/q} & \lesssim I \nonumber \\
	&\hspace{-7cm} \asymp  \bigg( \sum_{j=0}^\infty 2^{-j ( p -1) d} F_0^p(2^{-j}) \bigg)^{1/p} \nonumber \\
	& \hspace{-6cm}+ \left(\sum_{j=0}^\infty 2^{j (b+1/q) q} \bigg(\sum_{\nu=2^j}^{2^{j+1}-1} 2^{\nu (s + d -d/p) p} F_0^p(2^\nu) \bigg)^{q/p} \right)^{1/q}. \label{TheoremGMLip2}
    \end{align}

   Furthermore, a simple change of variables, basic monotonicity properties, \eqref{H2} lead to
    \begin{align}
    	F_0(1) + II  & =F_0(1)+ \left(\int_1^\infty t^{s q} (1 + \log t)^{b q} \left(\int_t^\infty u^{d p - d} F_0^p(u) \frac{d u}{u} \right)^{q/p} \frac{d t}{t} \right)^{1/q} \nonumber \\
	& \asymp \bigg(\sum_{j=0}^\infty 2^{j s q} (1 + j)^{b q} \bigg(\sum_{k=j}^\infty 2^{k (d p -d)} F_0^p(2^k) \bigg)^{q/p} \bigg)^{1/q} \nonumber \\
	& \asymp \bigg(\sum_{j=0}^\infty 2^{j (s + d -d/p)q} (1 + j)^{b q} F_0^q(2^j) \bigg)^{1/q} \nonumber \\
	& \lesssim \bigg(\sum_{j=0}^\infty 2^{j (b+1/q) q} \bigg(\sum_{\nu=2^{j}}^{2^{j+1}-1} 2^{\nu(s+d-d/p) p} F_0^p(2^\nu) \bigg)^{q/p} \bigg)^{1/q}. \label{TheoremGMLip3}
    \end{align}

	According to (\ref{TheoremGMLip1}), (\ref{TheoremGMLip2}), (\ref{TheoremGMLip3}), we estimate
	\begin{align}
			\left(\sum_{j=0}^\infty 2^{j (b+1/q) q} \bigg(\sum_{\nu=2^j}^{2^{j+1}-1} 2^{\nu (s + d -d/p) p} F_0^p(2^\nu) \bigg)^{q/p} \right)^{1/q} & \nonumber\\
			& \hspace{-7cm} \lesssim  \left(\int_0^{1} (t^{-s} (1 - \log t)^{b} \omega_s(f,t)_p)^q \frac{d t}{t} \right)^{1/q}   \label{TheoremGMLip3*}\\
			& \hspace{-7cm} \lesssim  \bigg( \sum_{j=0}^\infty 2^{-j ( p -1) d} F_0^p(2^{-j}) \bigg)^{1/p} \nonumber \\
	& \hspace{-6cm}+ \left(\sum_{j=0}^\infty 2^{j (b+1/q) q} \bigg(\sum_{\nu=2^j}^{2^{j+1}-1} 2^{\nu (s + d -d/p) p} F_0^p(2^\nu) \bigg)^{q/p} \right)^{1/q}. \nonumber
	\end{align}
	
	Finally, the desired characterization (\ref{LipGM}) follows from \eqref{HL} and (\ref{TheoremGMLip3*}).
\end{proof}

The next result is a direct consequence of Theorems \ref{Theorem 3.2} and \ref{TheoremGMLip}.

\begin{cor}
	Let $s > 0, \frac{2 d}{d + 1} < p < \infty, 0 < q \leq \infty$ and $b < -1/q$. Then
	$$
	\emph{Lip}^{s,b}_{p,q}(\R^d) \cap \widehat{GM}^{d} = T^{b+1/q}_q B^{s}_{p,p}(\R^d) \cap \widehat{GM}^{d}.
	$$
	In particular (cf. Proposition \ref{PropositionCoincidences})
	\begin{equation*}
		\emph{Lip}^{s,b}_{p,p}(\R^d) \cap \widehat{GM}^{d} = B^{s,b + 1/p}_{p,p}(\R^d) \cap \widehat{GM}^{d}.
	\end{equation*}
\end{cor}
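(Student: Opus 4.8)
The plan is to obtain the identity by simply comparing the two explicit descriptions of the relevant (quasi-)norms on the class $\widehat{GM}^d$ that have already been established, namely Theorem \ref{Theorem 3.2} for truncated Besov norms and Theorem \ref{TheoremGMLip} for Lipschitz norms, and noticing that the resulting right-hand sides are literally the same. First I would apply Theorem \ref{Theorem 3.2} to the space $T^{b+1/q}_q B^{s}_{p,p}(\R^d)$, that is, with inner summability exponent equal to $p$, outer exponent equal to $q$, and logarithmic weight parameter equal to $b+1/q$. The hypothesis of Theorem \ref{Theorem 3.2} requires this weight parameter to be nonzero, which holds since $b<-1/q$ forces $b+1/q<0$; also $p>\tfrac{2d}{d+1}\ge1$, so $\text{Lip}^{s,b}_{p,q}(\R^d)$ is covered by the definition of Section \ref{SectionClassicSpaces}, and the parameter ranges $\tfrac{2d}{d+1}<p<\infty$, $0<q\le\infty$, $s>0$, $b<-1/q$ are exactly those demanded by both theorems. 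Thus, for every $f\in\widehat{GM}^d$ with Fourier–Hankel transform $F_0$ as in \eqref{FourierHankel},
\begin{equation*}
	\|f\|_{T^{b+1/q}_q B^{s}_{p,p}(\R^d)}\asymp
	\Big(\sum_{j=0}^\infty 2^{-j(p-1)d}F_0^p(2^{-j})\Big)^{1/p}
	+\Big(\sum_{j=0}^\infty 2^{j(b+1/q)q}\Big(\sum_{\nu=2^j}^{2^{j+1}-1}2^{\nu(s+d-d/p)p}F_0^p(2^\nu)\Big)^{q/p}\Big)^{1/q},
\end{equation*}
which is exactly the right-hand side of \eqref{LipGM}. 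By Theorem \ref{TheoremGMLip} the latter is equivalent to $\|f\|_{\text{Lip}^{s,b}_{p,q}(\R^d)}$. Hence $\|f\|_{\text{Lip}^{s,b}_{p,q}(\R^d)}\asymp\|f\|_{T^{b+1/q}_q B^{s}_{p,p}(\R^d)}$ with constants independent of $f$, so one quantity is finite precisely when the other is, and the set identity follows; here I would note that both sides are defined on spaces of tempered distributions with finite quasi-norm, so the characterizations hold as equivalences of extended reals and the argument needs no a priori membership assumption beyond $f\in\widehat{GM}^d$.

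For the "in particular" statement I would specialize to $q=p$. Then $b+1/q=b+1/p$, and the intermediate space becomes $T^{b+1/p}_p B^{s}_{p,p}(\R^d)$, which by Proposition \ref{PropositionCoincidences}(i) (applied with secondary index $p$ and smoothness weight $b+1/p$) coincides with $B^{s,b+1/p}_{p,p}(\R^d)$. Substituting this identification into the first part yields $\text{Lip}^{s,b}_{p,p}(\R^d)\cap\widehat{GM}^d=B^{s,b+1/p}_{p,p}(\R^d)\cap\widehat{GM}^d$, as claimed.

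Since the whole proof amounts to matching two previously computed expressions, there is no genuine obstacle. The only points requiring a moment of care are the verification that the weight parameter $b+1/q$ fed into Theorem \ref{Theorem 3.2} is nonzero (guaranteed by the standing assumption $b<-1/q$) and the correct bookkeeping of the three summability indices $(q,r)$ of Theorem \ref{Theorem 3.2} against the single index $q$ appearing in the Lipschitz characterization; this index juggling is the \emph{hardest} part, and it is purely clerical.
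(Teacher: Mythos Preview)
Your argument is correct and is exactly the approach taken in the paper, which simply states that the corollary is a direct consequence of Theorems \ref{Theorem 3.2} and \ref{TheoremGMLip}. You have spelled out the bookkeeping of parameters (in particular the verification that $b+1/q\neq 0$) that the paper leaves implicit.
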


	All the results from this section admit periodic counterparts in terms of general monotone sequences. For example
(see Theorem \ref{TheoremGMLip})
	
	 \begin{thm}\label{ThmLipGMPer}
 	Let $1 < p < \infty, 0 < q \leq \infty, s>0$ and $b <-1/q$. Let $f \in L_1(\mathbb{T})$ be such that
	$$f(x) \sim \sum_{n=1}^\infty (a_n \cos n x + b_n \sin nx)$$
	 with $\{a_n\}_{n \in \N}$ and $\{b_n\}_{n \in \N}$ nonnegative general monotone sequences.
	Then
	$$
		\|f\|_{\emph{Lip}^{s, b}_{p, q}(\mathbb{T})} \asymp  \left( \sum_{j=0}^\infty 2^{j (b+1/q) q} \bigg(\sum_{\nu= 2^j-1}^{2^{j+1}-2} 2^{\nu (s + 1-1/p) q} (a_{2^\nu} + b_{2^\nu})^p \bigg)^{q/p}  \right)^{1/q}.
	$$
 \end{thm}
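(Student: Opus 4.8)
The plan is to follow the proof of Theorem \ref{TheoremGMLip} closely, replacing the Fourier--Hankel transform machinery by its periodic counterpart for general monotone sequences. First I would record the discrete analogue of the Gorbachev--Tikhonov description of the (fractional) modulus of smoothness: for $1<p<\infty$ and nonnegative $\{a_n\},\{b_n\}\in GM$ one has, for $N\in\mathbb N$,
$$
\omega_s(f,1/N)_p \asymp N^{-s}\bigg(\sum_{n=1}^{N} (a_n+b_n)^p n^{sp+p-2}\bigg)^{1/p} + \bigg(\sum_{n=N+1}^{\infty} (a_n+b_n)^p n^{p-2}\bigg)^{1/p},
$$
together with the Hardy--Littlewood equivalence $\|f\|_{L_p(\mathbb T)}\asymp\big(\sum_{n\geq1}(a_n+b_n)^p n^{p-2}\big)^{1/p}$; both are classical for monotone coefficients and available for $GM$ sequences (see \cite{Tikhonov,LiflyandTikhonov} and the references therein), and both use $1<p<\infty$ in an essential way.

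Second, I would insert these two facts into $\|f\|_{\emph{Lip}^{s,b}_{p,q}(\mathbb T)} = \|f\|_{L_p(\mathbb T)} + \big(\int_0^1 (t^{-s}(1-\log t)^{b}\,\omega_s(f,t)_p)^q\,\tfrac{dt}{t}\big)^{1/q}$, split the integral as $I+II$ exactly as in \eqref{TheoremGMLip1}, pass to the dyadic scale $t\asymp 2^{-N}$, and apply Hardy's inequality \eqref{H1} to the piece $I$ and \eqref{H2} to the piece $II$ (here $b<-1/q$ is what makes the logarithmic weights summable), mimicking \eqref{TheoremGMLip2}--\eqref{TheoremGMLip3}. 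This shows that $\|f\|_{\emph{Lip}^{s,b}_{p,q}(\mathbb T)}$ is equivalent to
$$
\bigg(\sum_{k=0}^\infty 2^{k(b+1/q)q}\bigg(\sum_{\nu=2^k-1}^{2^{k+1}-2} 2^{\nu(s+1-1/p)p}(a_{2^\nu}+b_{2^\nu})^p\bigg)^{q/p}\bigg)^{1/q}
$$
plus the additive term $\big(\sum_{n\geq1}(a_n+b_n)^p n^{p-2}\big)^{1/p}$; this last term is absorbed into the displayed expression by the discrete versions of \eqref{LpEstim1Rem2}--\eqref{ñlñlñ} (Hölder's inequality if $q\geq p$, the embedding $\ell_q\hookrightarrow\ell_p$ if $q<p$, then Hardy's inequality \eqref{H2} together with basic monotonicity).

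A shorter alternative is to first establish the periodic analogues of Proposition \ref{PropositionCoincidences}(iv) and Corollary \ref{CorTruncBTLFixedps}, giving $\emph{Lip}^{s,b}_{p,q}(\mathbb T)\cap\widehat{GM}=T^{b+1/q}_q B^{s}_{p,p}(\mathbb T)\cap\widehat{GM}$, and then to invoke Theorem \ref{ThmBesovGMPer} (in its periodic form) with inner summability exponent $p$, outer exponent $q$, and smoothness-shift $b+1/q$, which reproduces the claimed right-hand side at once. In either route the main obstacle is the same and purely one-dimensional: one needs a clean Fourier-coefficient formula for $\omega_s(f,t)_p$ valid for every real $s>0$ and every $GM$ sequence, together with the two-sided block estimate $\big\|\sum_{m=2^\nu}^{2^{\nu+1}-1}(a_m\cos mx+b_m\sin mx)\big\|_{L_p(\mathbb T)}\asymp (a_{2^\nu}+b_{2^\nu})2^{\nu(1-1/p)}$ from \eqref{ThmBesovGMPer2}; these require $1<p<\infty$ and rely on Lemma \ref{Lemma 3.1} to keep the rearranged sequences inside $GM$. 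Everything else is bookkeeping with Hardy's inequalities identical to the proof of Theorem \ref{TheoremGMLip}.
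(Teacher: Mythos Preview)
Your proposal is correct and aligns with the paper's own treatment: the paper does not give a separate proof of Theorem~\ref{ThmLipGMPer} but simply indicates it is the periodic counterpart of Theorem~\ref{TheoremGMLip}, and your first route carries out precisely that adaptation. The second route you sketch (via the identification $\text{Lip}^{s,b}_{p,q}(\mathbb T)\cap\widehat{GM}=T^{b+1/q}_q B^{s}_{p,p}(\mathbb T)\cap\widehat{GM}$ together with Theorem~\ref{ThmBesovGMPer}) is a clean shortcut not spelled out in the paper. One small slip: your invocation of Lemma~\ref{Lemma 3.1} ``to keep the rearranged sequences inside $GM$'' is off---there are no rearrangements here; Lemma~\ref{Lemma 3.1} concerns products of $GM$ functions and is not needed in either route.
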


%%%%%%%%%%%%%%%%%%%%%%%%%%%%%%%%%%%%%%%%%%%%%%%%%%%
\newpage
\section{Some applications of truncated function spaces}

\subsection{Trace operator}

Let us consider the trace operator $\text{Tr}$\index{\bigskip\textbf{Operators}!$\text{Tr}$}\label{TRACE}, which restricts to
the hyperplane
\begin{equation*}
    \mathbb{R}^{d-1} = \{x \in \mathbb{R}^d : x = (x',0)\}
\end{equation*}
in $\mathbb{R}^d$ for $d \geq 2$, that is,
\begin{equation*}
    (\text{Tr } f)(x) = (\text{Tr }_{\{x_d=0\}} f)(x) = f(x',0).
\end{equation*}
For a rigorous interpretation of this operator, we refer to \cite[Section 2.7.2]{Triebel83}.

Let
\begin{equation}\label{AssTExt}
1 \leq p \leq \infty, \, 0 < q \leq \infty \qquad \text{and} \qquad s > 1/p.
\end{equation}
 Under these assumptions, it is well known that the trace $\text{Tr}$ is
continuous
\begin{equation}\label{2.1}
    \text{Tr }: B^s_{p,q}(\mathbb{R}^d) \longrightarrow
    B^{s-1/p}_{p,q}(\mathbb{R}^{d-1})
\end{equation}
and admits a right inverse $\text{Ex}$ which is a
bounded linear operator from $B^{s-1/p}_{p,q}(\mathbb{R}^{d-1})$ onto
$B^s_{p,q}(\mathbb{R}^d)$. Importantly, one can construct $\text{Ex}$ to be a common extension operator for all spaces $B^{s}_{p, q}(\R^d)$ with \eqref{AssTExt}. Further details may be found in \cite[Section 2.2, pp. 29--38]{Triebel20}.

The borderline case $s=1/p$ is of special interest. Let $1 \leq p <
\infty$ and $0 < q \leq 1$, then
\begin{equation}\label{2.2}
    \text{Tr }: B^{1/p}_{p,q}(\mathbb{R}^d) \longrightarrow L_p(\mathbb{R}^{d-1})
\end{equation}
and there exists an extension operator which is bounded but not
linear. This result has a substantial history. The case $p=2$ and $q=1$ was investigated by Agmon and
H\"ormander \cite{AgmonHormander} in their study of the asymptotic properties of the solutions of differential
equations; cf. also \cite{Nikolskii51}. For $1 \leq p < \infty$  and $q=1$ the result was stated by
Peetre \cite{Peetre} and a more general assertion covering anisotropic spaces may be found in Burenkov and Gol'dman \cite{BurenkovGoldman}. The extension to $0 <
q \leq 1$ (and more generally,  $0 < p < \infty$ and $0 < q \leq \min \{1, p\}$) was obtained by Frazier and Jawerth \cite{FrazierJawerth} via atomic decompositions. However
there does not exist a bounded linear extension operator from
$L_p(\mathbb{R}^{d-1})$ into $B^{1/p}_{p,1}(\mathbb{R}^d)$, see \cite{Peetre} and \cite{BurenkovGoldman}. This is in  contrast to \eqref{2.1}.

The trace embedding \eqref{2.2} with $p=1$ can be complemented via the famous Gagliardo's theorem \cite{Gagliardo} which establishes that
\begin{equation}\label{TraceGagliardo}
	\text{Tr}: \text{BV}(\R^{d+1}_+) \to L_1(\R^d)
\end{equation}
and every function in $L_1(\R^d)$ has a bounded extension to $\text{BV}(\R^{d+1}_+)$\index{\bigskip\textbf{Spaces}!$\text{BV}(\Omega)$}\label{BV}, the space of \emph{bounded variation functions} on the upper half-space $\R^{d+1}_+ = \R^d \times (0, \infty)$\index{\bigskip\textbf{Sets}!$\R^{d+1}_+$}\label{UPHS}. Analogous statements hold when $\R^{d+1}_+$ is replaced by a bounded Lipschitz domain $\Omega$. In the same vein as \eqref{2.2}, Peetre \cite{Peetre79} showed that there is no a bounded linear extension operator from  $L_1(\R^d)$ to $\text{BV}(\R^{d+1}_+)$.

Both \eqref{2.2} and \eqref{TraceGagliardo} tell us that integrability properties of functions are not enough to construct bounded linear extension operators. However, it seems to be quite plausible that working with functions having at least rough smoothness the situation may improve. Indeed, very recently Mal\'y, Shanmugalingam and Snipes \cite{MSS} showed the validity of the previous assertion dealing with the Dini space $\mathbf{B}^{0}_{1,1}$ (cf. \eqref{BesovDifDef}). To be more precise, in the more general setting of metric spaces equipped with a doubling measure and $\Omega$ a bounded domain satisfying certain regularity assumptions, they proved the existence of a bounded linear extension operator\index{\bigskip\textbf{Operators}!$\text{Ex}$}\label{EXT}
\begin{equation}\label{MSS}
\text{Ex} : \mathbf{B}^{0}_{1,1} (\partial \Omega) \to \text{BV}(\Omega)
\end{equation}
such that $\text{Tr} \circ \text{Ex} = \text{id}$ (identity operator)\index{\bigskip\textbf{Operators}!$\text{id}$}\label{IDEN}. As a consequence of this result, a (nonlinear) extension operator from $L_1(\partial \Omega)$ to $\text{BV}(\Omega)$ can be constructed (cf. \eqref{TraceGagliardo}).

Comparing now \eqref{TraceGagliardo} with \eqref{MSS}, one observes that the non-linearity of bounded extension operators can be circumvented by restricting the target space in \eqref{TraceGagliardo}  (i.e. substituting $\mathbf{B}^0_{1, 1}$ for $L_1$), however both \eqref{TraceGagliardo} and \eqref{MSS} are formulated in terms of $\text{BV}$ (in particular, the trace of $\text{BV}$ function is not necessarily Dini of type $\mathbf{B}^0_{1, 1}$). This leads us to ask whether one can  improve \eqref{MSS}. Let us also mention that related results to \eqref{MSS} for $L_p$-functions may be found in \cite{Koskela}.

%Let us now introduce Besov spaces with smoothness near zero. Let $1 \leq p \leq \infty, \, 0 < q \leq \infty$ and $b \geq -1/q \, (b> 0 \text{ if } q=\infty)$. Then $\mathbf{B}^{0,b}_{p,q}(\R^d)$ stands for the space formed by all functions $f \in L_p(\R^d)$ such that
%\begin{equation}\label{DefBesovZero}
%	\|f\|_{\mathbf{B}^{0,b}_{p,q}(\R^d)} = \|f\|_{L_p(\R^d)} + \bigg(\int_0^1 \big((1-\log t)^{b} \omega_1(f,t)_p\big)^q \frac{dt}{t} \bigg)^{1/q} < \infty
%\end{equation}
%(with the suitable modification if $q=\infty$). Here, the assumption $b \geq -1/q$ is natural; otherwise it is plain to see that $\mathbf{B}^{0,b}_{p,q}(\R^d) = L_p(\R^d)$. We stress that theory of the spaces $\mathbf{B}^{0,b}_{p,q}(\R^d)$ differs from the corresponding one for the Fourier-analytically defined spaces $B^{0, b}_{p, q}(\R^d)$ (cf. \eqref{BesovSpaceLogSmoothness}{\footnote{check
%}}); see \cite{DominguezTikhonov}. In particular, the spaces $\mathbf{B}^{0,b}_{p,q}(\R^d)$ are attracting a deal of attention in recent times not only in the theory of function spaces but also in applications to the PDE (the Bressan's conjecture on mixing problems), capacities and differential geometry, ... \footnote{O. Add references}

The main result of this section gives a substantial improvement of \eqref{MSS} in terms of truncated Besov spaces. Namely, we establish the following

\begin{thm}\label{Theorem 2.1}
    Let $1 \leq p \leq \infty, 0 < q \leq \infty$ and $b > -1/q$. Then
    \begin{enumerate}[\upshape(i)]
        \item the operator $\emph{Tr}$ acts boundedly from
        $T^{b+1/q}_q B^{1/p}_{p, 1}(\mathbb{R}^d)$ into
        $\mathbf{B}^{0,b}_{p,q}(\mathbb{R}^{d-1})$,
        \item there exists a linear extension operator $\emph{Ex}$ which is
        continuous from $\mathbf{B}^{0,b}_{p,q}(\mathbb{R}^{d-1})$
        onto $T^{b+1/q}_q B^{1/p}_{p,1}(\mathbb{R}^d)$ and
           $ \emph{Tr } \circ \emph{Ex } =
            \emph{id}.$
    \end{enumerate}
\end{thm}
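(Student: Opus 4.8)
The plan is to reduce the statement to the limiting‑interpolation descriptions already available in the paper. Fix $s_0>1/p$ and $q_0\in(0,\infty]$ (for instance $q_0=1$). By Theorem~\ref{Thm4.2}(i), applied with truncation parameter $b+1/q>0$ (so that $(b+1/q)-1/q=b$),
\[
	T^{b+1/q}_q B^{1/p}_{p,1}(\mathbb{R}^d)=\bigl(B^{1/p}_{p,1}(\mathbb{R}^d),\,B^{s_0}_{p,q_0}(\mathbb{R}^d)\bigr)_{(0,b),q},
\]
while \eqref{IntLimB0}, with $s=s_0-1/p>0$, gives
\[
	\mathbf{B}^{0,b}_{p,q}(\mathbb{R}^{d-1})=\bigl(L_p(\mathbb{R}^{d-1}),\,B^{s_0-1/p}_{p,q_0}(\mathbb{R}^{d-1})\bigr)_{(0,b),q};
\]
in the endpoint cases $p=1,\infty$ one replaces the Triebel--Lizorkin space by $W^k_p$ and invokes \eqref{IntFormp=1}--\eqref{IntFormp=infty} together with Remark~\ref{RemarkLimBesov}. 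Part~(i) is then immediate: the classical trace theorem yields the bounded map $\text{Tr}:B^{s_0}_{p,q_0}(\mathbb{R}^d)\to B^{s_0-1/p}_{p,q_0}(\mathbb{R}^{d-1})$ (cf.~\eqref{2.1}), and the Peetre--Agmon--H\"ormander--Frazier--Jawerth theorem yields the bounded map $\text{Tr}:B^{1/p}_{p,1}(\mathbb{R}^d)\to L_p(\mathbb{R}^{d-1})$ (cf.~\eqref{TraceGagliardo---}); thus $\text{Tr}$ is a bounded linear operator between the two couples, and the interpolation property of the $((0,b),q)$-method (see the discussion following \eqref{EmbeddingsLimInt}) gives $\text{Tr}:T^{b+1/q}_q B^{1/p}_{p,1}(\mathbb{R}^d)\to\mathbf{B}^{0,b}_{p,q}(\mathbb{R}^{d-1})$.

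For part~(ii) the interpolation machinery alone does \emph{not} suffice: there is no bounded linear extension $L_p(\mathbb{R}^{d-1})\to B^{1/p}_{p,1}(\mathbb{R}^d)$, so the couple $\bigl(L_p(\mathbb{R}^{d-1}),\cdot\bigr)$ cannot be placed on the extension side, and the obvious tensor‑type extension $g\mapsto\sum_\nu(\varphi_\nu\widehat g)^\vee(x')\,\phi(2^\nu x_d)$ is, on inspecting its Littlewood--Paley pieces, bounded only into a \emph{coarser} truncated space than $T^{b+1/q}_q B^{1/p}_{p,1}(\mathbb{R}^d)$ (e.g.\ it maps $\mathbf{B}^{0,b}_{p,q}(\mathbb{R}^{d-1})$ only into $T^{b+1/q}_q B^{1/p}_{p,\max\{2,p\}}(\mathbb{R}^d)$, cf.\ Corollary~\ref{Cor2}). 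Instead I would build $\text{Ex}$ explicitly as a linear operator adapted to the dyadic‑block structure, working on the wavelet side: using the wavelet isomorphisms $\mathbf{B}^{0,b}_{p,q}(\mathbb{R}^{d-1})\cong\textbf{b}^{0,b}_{p,q}$ (cf.~\eqref{BWav}) and $T^{b+1/q}_q B^{1/p}_{p,1}(\mathbb{R}^d)\cong T^{b+1/q}_q b^{1/p}_{p,1}$ (Theorem~\ref{ThmWaveletsNewBesov}), one sends each coefficient $\lambda^{j,G}_m(g)$ attached to a dyadic cube $Q_{j,m}\subset\mathbb{R}^{d-1}$ to coefficients of molecules distributed over the Carleson box $Q_{j,m}\times[0,\cdot)$ in $\mathbb{R}^d$, with amplitudes chosen so that the trace reconstructs $g$ exactly. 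The boundedness of this map then becomes, at the level of sequence spaces, a weighted Hardy‑type inequality (of the kind used to prove Theorems~\ref{Thm4.2} and \ref{TheoremModuli}) in which the block weights $2^{j(b+1/q)q}$ of the truncated norm must absorb the loss of summability incurred in passing from the square‑function/logarithmic structure of $\mathbf{B}^{0,b}_{p,q}$ to the $\ell^1$‑summation defining $B^{1/p}_{p,1}$.

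A parallel route, useful as a cross‑check, is via moduli of smoothness: Theorem~\ref{TheoremModuli} (with $r=q$, $s=1/p<k$, truncation parameter $b+1/q$) expresses $\|\text{Ex}\,g\|_{T^{b+1/q}_q B^{1/p}_{p,1}(\mathbb{R}^d)}$ as $\|\text{Ex}\,g\|_{L_p(\mathbb{R}^d)}$ plus a block sum of the quantities $2^{\nu/p}\omega_k(\text{Ex}\,g,2^{-\nu})_{L_p(\mathbb{R}^d)}$, while the source norm is \eqref{BesovDifDef}; one estimates $\omega_k(\text{Ex}\,g,\cdot)_{L_p(\mathbb{R}^d)}$ by splitting the $k$-th difference into a transversal part (the iterated differences of the $x_d$-profile near $x_d=0$ produce the gain $x_d^k$) and a tangential part (which brings in $\omega_k(g,\cdot)_{L_p(\mathbb{R}^{d-1})}$), and then discretises via Lemma~\ref{LemmaB1}. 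The identity $\text{Tr}\circ\text{Ex}=\text{id}$ is verified by a telescoping/boundary‑term computation at $x_d=0$, and ``onto'' follows at once from (i) together with this identity. The hard part is step~(ii), and within it the requirement that $\text{Ex}$ land in the \emph{fine} space $T^{b+1/q}_q B^{1/p}_{p,1}(\mathbb{R}^d)$ (inner index $1$) rather than in the coarser space produced by naive constructions; the crux is to make the dyadic‑block weights $2^{j(b+1/q)q}$ compensate exactly for the summability lost when Carleson boxes (equivalently, partial sums) enter, the logarithmic Dini‑type smoothness built into $\mathbf{B}^{0,b}_{p,q}$ being precisely the resource that makes this compensation possible.
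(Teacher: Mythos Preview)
Your part~(i) is correct and matches the paper's argument exactly.

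For part~(ii) you have correctly diagnosed why the interpolation route breaks down and why the naive dyadic tensor extension $g\mapsto\sum_\nu(\varphi_\nu\widehat g)^\vee(x')\phi(2^\nu x_d)$ is too coarse. But what you offer in its place is an outline rather than a proof: the wavelet Carleson-box construction and the moduli-of-smoothness splitting are both described at the level of ``one estimates \dots\ by a weighted Hardy-type inequality'', with no indication of which inequality, at what scale, or why the $\ell^1$ inner index survives. The paper's approach circumvents all of this with a single structural idea that you have not found: one works at the \emph{super-dyadic} scale $\mu_k=2^{2^k}$ rather than the dyadic scale $2^\nu$. Set $R_k=V_{\mu_{k+1}}-V_{\mu_k}$ (de la Vall\'ee Poussin means) and define
\[
\text{Ex}\,g(x',x_d)=\sum_{k\ge0}R_k g(x')\,\psi(\mu_k x_d),
\]
with $\widehat\psi$ compactly supported and $\psi(0)=1$. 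Each term is an entire function of exponential type $\mu_k$, so the representation formula \eqref{Decomposition0BesovNew} (which is stated precisely in terms of the $\mu_k$-scale) gives
\[
\|\text{Ex}\,g\|_{T^{b+1/q}_qB^{1/p}_{p,1}(\mathbb{R}^d)}
\lesssim\Big(\sum_k 2^{k(b+1/q)q}\|R_k g(x')\psi(\mu_k x_d)\|_{B^{1/p}_{p,1}(\mathbb{R}^d)}^q\Big)^{1/q}.
\]
A Bernstein-type estimate bounds the $B^{1/p}_{p,1}$ norm by $\mu_k^{1/p}$ times the $L_p$ norm, and Fubini gives $\|R_kg(x')\psi(\mu_k x_d)\|_{L_p}=\mu_k^{-1/p}\|R_kg\|_{L_p(\mathbb{R}^{d-1})}\|\psi\|_{L_p(\mathbb{R})}$; the factors $\mu_k^{\pm1/p}$ cancel exactly. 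What remains is $(\sum_k 2^{k(b+1/q)q}\|R_kg\|_{L_p(\mathbb{R}^{d-1})}^q)^{1/q}$, which is controlled by $\|g\|_{\mathbf{B}^{0,b}_{p,q}(\mathbb{R}^{d-1})}$ via Jackson's inequality and monotonicity of the modulus of smoothness. No Hardy inequality, no Carleson boxes, and no fight against the inner $\ell^1$ index: the super-dyadic scale is what makes \eqref{Decomposition0BesovNew} applicable, and that formula already has the $B^{1/p}_{p,1}$ norm built in.
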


In light of Proposition \ref{PropositionCoincidences}(i) and Theorem \ref{Theorem 2.1} with $q=1$, optimal bounded linear extension operators for $\mathbf{B}^{0}_{p, 1}$ are given in terms of the classical logarithmic Besov spaces $B^{1/p, 1}_{p, 1}$, i.e., a logarithmic derivative is gained. To be more precise,

\begin{cor}\label{CorollaryMSS}
	 Let $1 \leq p \leq \infty$ and $b > -1$. Then
	     \begin{enumerate}[\upshape(i)]
        \item the operator $\emph{Tr}$ acts boundedly from
        $B^{1/p, b+1}_{p, 1}(\mathbb{R}^d)$ into
        $\mathbf{B}^{0,b}_{p,1}(\mathbb{R}^{d-1})$,
        \item there exists a linear extension operator $\emph{Ex}$ which is
        continuous from $\mathbf{B}^{0,b}_{p,1}(\mathbb{R}^{d-1})$
        onto $B^{1/p, b+1}_{p,1}(\mathbb{R}^d)$ and
            $\emph{Tr } \circ \emph{Ex } =
            \emph{id}.
$    \end{enumerate}
\end{cor}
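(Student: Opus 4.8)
The plan is to deduce Corollary \ref{CorollaryMSS} as the special case $q = 1$ of Theorem \ref{Theorem 2.1}, after rewriting the truncated Besov space that appears there as a classical Besov space of logarithmic smoothness by means of Proposition \ref{PropositionCoincidences}(i).

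First I would set $q = 1$ throughout Theorem \ref{Theorem 2.1}. The standing hypothesis $b > -1/q$ becomes $b > -1$, which is precisely the range assumed in the corollary; the domain space $T^{b+1/q}_q B^{1/p}_{p, 1}(\R^d)$ becomes $T^{b+1}_1 B^{1/p}_{p, 1}(\R^d)$, and the target space $\mathbf{B}^{0, b}_{p, q}(\R^{d-1})$ becomes $\mathbf{B}^{0, b}_{p, 1}(\R^{d-1})$. Thus Theorem \ref{Theorem 2.1}(i) gives the boundedness of $\text{Tr} \colon T^{b+1}_1 B^{1/p}_{p, 1}(\R^d) \to \mathbf{B}^{0, b}_{p, 1}(\R^{d-1})$, and Theorem \ref{Theorem 2.1}(ii) produces a linear operator $\text{Ex} \colon \mathbf{B}^{0, b}_{p, 1}(\R^{d-1}) \to T^{b+1}_1 B^{1/p}_{p, 1}(\R^d)$ with $\text{Tr} \circ \text{Ex} = \text{id}$.

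Next I would apply Proposition \ref{PropositionCoincidences}(i) with $q = 1$, $s = 1/p$ and logarithmic exponent $b + 1$, which yields the norm equivalence $T^{b+1}_1 B^{1/p}_{p, 1}(\R^d) = B^{1/p, b+1}_{p, 1}(\R^d)$. Substituting this identity into the two mapping properties from the previous step, and using that continuity of a linear operator is unaffected by passing to an equivalent quasi-norm on either side, gives exactly statements (i) and (ii) of the corollary. In particular, choosing $b = 0$ recovers the improvement of \eqref{MSS} announced in the introduction, since $\mathbf{B}^{0}_{1, 1} = \mathbf{B}^{0, 0}_{1, 1}$ and $B^{1, 1}_{1, 1}(\R^d) \subsetneq \text{BV}(\R^d)$.

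Since the entire analytic content — the construction of the bounded linear extension operator and the sharp gain of exactly one logarithmic derivative — is already contained in Theorem \ref{Theorem 2.1}, there is no genuine obstacle here; the only point to verify is the elementary bookkeeping of parameters, namely that $b + 1/q = b + 1$ and $b > -1/q \Leftrightarrow b > -1$ when $q = 1$, so that the hypotheses of Theorem \ref{Theorem 2.1} and of Proposition \ref{PropositionCoincidences}(i) are simultaneously met.
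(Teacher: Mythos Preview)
Your proposal is correct and matches the paper's own argument: the corollary is obtained exactly by specializing Theorem \ref{Theorem 2.1} to $q=1$ and invoking Proposition \ref{PropositionCoincidences}(i) to rewrite $T^{b+1}_1 B^{1/p}_{p,1}(\R^d)$ as $B^{1/p,\,b+1}_{p,1}(\R^d)$.
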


We write down Corollary \ref{CorollaryMSS} with $p=1$ and $b=0$.

\begin{cor}\label{CorNEwMMS}
	The operator $\text{Tr}$ acts boundedly from $B^{1, 1}_{1, 1}(\R^d)$ into $\mathbf{B}^0_{1, 1}(\R^{d-1})$ and there exists a bounded linear extension operator
\begin{equation}\label{ExtensionSpecialp=1}
	\emph{Ex}: \mathbf{B}^{0}_{1,1}(\mathbb{R}^{d-1}) \to B^{1,1}_{1, 1}(\mathbb{R}^d).
\end{equation}
with $\emph{Tr } \circ \emph{Ex } =
            \emph{id}.$
\end{cor}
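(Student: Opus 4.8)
The plan is to obtain the statement as the special case $p=q=1$, $b=0$ of Theorem~\ref{Theorem 2.1} (equivalently, of Corollary~\ref{CorollaryMSS}), after identifying the truncated space occurring there with a classical logarithmic Besov space. First I would check that the hypotheses of Theorem~\ref{Theorem 2.1} hold: one needs $1\le p\le\infty$, $0<q\le\infty$ and $b>-1/q$, and for $p=q=1$, $b=0$ this last condition reads $0>-1$, which is true. Hence Theorem~\ref{Theorem 2.1}(i) gives that $\mathrm{Tr}$ maps $T^{b+1/q}_q B^{1/p}_{p,1}(\R^d)=T^{1}_{1}B^{1}_{1,1}(\R^d)$ boundedly into $\mathbf{B}^{0,b}_{p,q}(\R^{d-1})=\mathbf{B}^{0}_{1,1}(\R^{d-1})$, and Theorem~\ref{Theorem 2.1}(ii) supplies a linear operator $\mathrm{Ex}$, bounded from $\mathbf{B}^{0}_{1,1}(\R^{d-1})$ onto $T^{1}_{1}B^{1}_{1,1}(\R^d)$, with $\mathrm{Tr}\circ\mathrm{Ex}=\mathrm{id}$.

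Second, I would invoke Proposition~\ref{PropositionCoincidences}(i), which states $T^{b}_{q}B^{s}_{p,q}(\R^d)=B^{s,b}_{p,q}(\R^d)$ with equivalence of quasi-norms, applied with $s=1$, $b=1$, $p=q=1$; this yields $T^{1}_{1}B^{1}_{1,1}(\R^d)=B^{1,1}_{1,1}(\R^d)$. Substituting this identity into the two assertions of the previous paragraph gives at once the boundedness of $\mathrm{Tr}\colon B^{1,1}_{1,1}(\R^d)\to\mathbf{B}^{0}_{1,1}(\R^{d-1})$ and the existence of a bounded linear extension $\mathrm{Ex}\colon\mathbf{B}^{0}_{1,1}(\R^{d-1})\to B^{1,1}_{1,1}(\R^d)$ with $\mathrm{Tr}\circ\mathrm{Ex}=\mathrm{id}$, i.e.\ \eqref{ExtensionSpecialp=1}. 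It is worth recording that $B^{1,1}_{1,1}(\R^d)\subsetneq\mathrm{BV}(\R^d)$ (cf.\ the discussion around \eqref{MSSIntro}), so that this statement genuinely sharpens the Mal\'y--Shanmugalingam--Snipes extension theorem.

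Since everything reduces to citing a previously established result together with a space identification, there is essentially no obstacle at the level of the corollary; the only point requiring attention is the parameter bookkeeping, in particular that the logarithmic gain $b+1/q=1$ here turns the endpoint space $\mathbf{B}^{0}_{1,1}$ into $B^{1,1}_{1,1}$ (a full logarithmic derivative). The real substance is in Theorem~\ref{Theorem 2.1}, whose proof I would set up by limiting interpolation: using Theorem~\ref{Thm4.2} one writes $T^{b+1/q}_q B^{1/p}_{p,1}(\R^d)=(B^{1/p}_{p,1}(\R^d),A^{s_0}_{p,q_0}(\R^d))_{(0,b),q}$ for a suitable $s_0>1/p$ (here $b+1/q-1/q=b$, and $b>-1/q$ ensures $b+1/q>0$), while the analogue of \eqref{IntLimB0} on $\R^{d-1}$ gives $\mathbf{B}^{0,b}_{p,q}(\R^{d-1})=(L_p(\R^{d-1}),A^{s_0-1/p}_{p,q_0}(\R^{d-1}))_{(0,b),q}$; then one applies the interpolation property of the $((0,b),q)$-method to the classical trace maps $\mathrm{Tr}\colon B^{1/p}_{p,1}(\R^d)\to L_p(\R^{d-1})$ and $\mathrm{Tr}\colon B^{s_0}_{p,q_0}(\R^d)\to B^{s_0-1/p}_{p,q_0}(\R^{d-1})$, the latter admitting a common bounded linear extension for the couple on the target side. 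That interpolation step, together with the construction of a simultaneous linear extension operator, is where the work lies; for Corollary~\ref{CorNEwMMS} it is simply imported.
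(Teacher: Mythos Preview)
Your proposal is correct and follows essentially the same approach as the paper: Corollary~\ref{CorNEwMMS} is obtained simply by specializing Corollary~\ref{CorollaryMSS} (equivalently Theorem~\ref{Theorem 2.1}) to $p=1$, $b=0$, together with the identification $T^{1}_{1}B^{1}_{1,1}(\R^d)=B^{1,1}_{1,1}(\R^d)$ from Proposition~\ref{PropositionCoincidences}(i). Your additional sketch of how Theorem~\ref{Theorem 2.1} is proved via limiting interpolation is accurate and matches the paper's argument, though it is not needed at the level of the corollary itself.
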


Note that
$$B^{1,1}_{1, 1}(\mathbb{R}^d) \subsetneq  B^{1}_{1, 1}(\mathbb{R}^d) \subsetneq W^1_1(\R^d) \subsetneq \text{BV}(\R^d).$$
Therefore, Corollary \ref{CorNEwMMS} drastically  improves \eqref{MSS}.

%
%The proof of Theorem \ref{Theorem 2.1} is based, among other techniques, on limiting interpolation. In particular, we will make use of the characterization of the space  $\mathbf{B}^{0,b}_{p,q}(\mathbb{R}^{d})$ as
%\begin{equation}\label{B0limint}
%	 (L_p(\mathbb{R}^{d}),
%        B^{s}_{p,r}(\mathbb{R}^{d}))_{(0,b),q} =
%        \mathbf{B}^{0,b}_{p,q}(\mathbb{R}^{d})
%\end{equation}
%for $1 \leq p \leq \infty, \, 0 < q, r \leq \infty, \, s > 0$ and $b \geq -1/q$; see \cite[(2.28)]{DominguezTikhonov}.

The method of proof of \eqref{MSS} in \cite{MSS} relies, among other tools, on partitions of unity related to Whitney coverings. On the other hand, the proof of Theorem \ref{Theorem 2.1} uses completely different machinery, specifically, we will make a strong use of several techniques already studied in previous sections: limiting interpolation (cf. Section \ref{SectionLimIntGeneral}), approximation techniques (Jackson's inequality) and certain decomposition methods for truncated Besov spaces (cf. Section \ref{SectionApproximation}).

\begin{proof}[Proof of Theorem \ref{Theorem 2.1}]
    (i): We first observe that trace operator is well defined on $T^{b+1/q}_q B^{1/p}_{p, 1}(\R^d)$. Indeed, if $p < \infty$ and $q < \infty$ then $\mathcal{S}(\mathbb{R}^d)$ is dense in
    $T^{b+1/q}_q B^{1/p}_{p,1}(\mathbb{R}^d)$ (cf. \eqref{DualPairing}) so that the the trace operator $\text{Tr}$ on
    this space should be understood in a classical sense via completion. If $p=\infty$ then, by Theorem \ref{ThmC} (since $b + 1/q > 0$),  $T^{b+1/q}_q B^{0}_{\infty,1}(\mathbb{R}^d)$ is formed only by continuous functions and $\text{Tr}$ makes sense pointwise. If $q=\infty$ then $T^b_\infty B^{1/p}_{p,1}(\mathbb{R}^d), \, b > 0,$ is embedded into $B^{1/p}_{p, 1}(\R^d)$, see Proposition \ref{PropositionElementary}(i). Since traces are independent of the source spaces and of the target spaces (cf. \cite[Remark 6.62]{Triebel08}), one can define the trace on $T^b_\infty B^{1/p}_{p,1}(\mathbb{R}^d)$ via restriction of the trace on  $B^{1/p}_{p, 1}(\R^d)$.

    Let $s > 1/p$. Applying the limiting interpolation method with $\theta = 0$ (cf. \eqref{DefLimInterpolation}) to (\ref{2.1})-(\ref{2.2}) we obtain
    \begin{equation}\label{2.3}
        \text{Tr }: (B^{1/p}_{p,1}(\mathbb{R}^d),
        B^s_{p,1}(\mathbb{R}^d))_{(0,b),q} \rightarrow (L_p(\mathbb{R}^{d-1}),
        B^{s-1/p}_{p,1}(\mathbb{R}^{d-1}))_{(0,b),q}.
    \end{equation}
    Next we determine the interpolation spaces appearing in
    (\ref{2.3}). It follows from Theorem \ref{TheoremInterpolation}(i) (recall $b > -1/q$) that
    \begin{equation*}
        (B^{1/p}_{p,1}(\mathbb{R}^d),
        B^s_{p,1}(\mathbb{R}^d))_{(0,b),q} =
        T^{b+1/q}_q B^{1/p}_{p,1}(\mathbb{R}^d).
    \end{equation*}
    Concerning the target space in
    (\ref{2.3}), we invoke the interpolation formula \eqref{IntLimB0} (see also Remark \ref{RemarkLimBesov}) so that
    \begin{equation*}
        (L_p(\mathbb{R}^{d-1}),
        B^{s-1/p}_{p,1}(\mathbb{R}^{d-1}))_{(0,b),q} =
        \mathbf{B}^{0,b}_{p,q}(\mathbb{R}^{d-1}).
    \end{equation*}
    Hence
        \begin{equation*}
        \text{Tr }: T^{b+1/q}_q B^{1/p}_{p,1}(\mathbb{R}^d) \rightarrow \mathbf{B}^{0,b}_{p,q}(\mathbb{R}^{d-1}).
    \end{equation*}

    (ii): Let $f \in \mathbf{B}^{0, b}_{p, q}(\R^{d-1})$. Recall that $V_t f, \, t > 0$, denotes the de la Valle\'e Poussin means (cf. \eqref{DelaValleePoussin}). Let $\mu_k := 2^{2^k}, \, k \geq 0$. Therefore $f$ admits the linear decomposition
    \begin{equation}\label{firstDecomposition}
    	f = \sum_{k=0}^\infty R_k f \qquad \text{(convergence in
        $L_p(\mathbb{R}^{d-1})$)}
    \end{equation}
    where $R_k := V_{\mu_{k+1}} - V_{\mu_k}$ for $k \geq 1$ and $R_0:= V_{2}$. Indeed, it follows from \eqref{BestApproximation} and the Jackson's inequality (see e.g. \cite[5.2.1]{Nikolskii}) that
    \begin{align*}
    	\Big\|f- \sum_{k=0}^N R_k f \Big\|_{L_p(\R^{d-1})} &= \|f - V_{\mu_{N+1}} f\|_{L_p(\R^{d-1})}  \\
	&\hspace{-2cm}\lesssim E_{\mu_{N+1}}(f)_{L_p(\R^{d-1})} \lesssim \omega_1\Big(f, \frac{1}{\mu_{N+1}} \Big)_p
    \end{align*}
    and $\omega_1(f,t)_p \to 0$ as $t \to 0+$ since $f \in \mathbf{B}^{0, b}_{p, q}(\R^{d-1})$.

Let $\psi \in \mathcal{S}(\mathbb{R})$ be such that
    \begin{equation}\label{new}
        \text{supp } \widehat{\psi} \subseteq [-1,1] \qquad \text{and} \qquad
        \psi(0)=1.
    \end{equation}
    Let us define the linear operator
    \begin{equation}\label{2.8}
        \text{Ex} f (x) = \sum_{k=0}^\infty R_k f (x') \,  \psi(\mu_k
        x_d), \quad x=(x',x_d), \quad x' \in \mathbb{R}^{d-1}, \quad x_d \in
        \mathbb{R}.
    \end{equation}
    Note that $R_k f (x') \,  \psi(\mu_k
        x_d)$ is an entire function of exponential type $\mu_k$.  Assume momentarily that \eqref{2.8} is well-defined for $f \in \mathbf{B}^{0, b}_{p, q}(\R^{d-1})$. Then invoking the representation theorems in $T^{b+1/q}_q B^{1/p}_{p, 1}(\mathbb{R}^d)$ and $B^{1/p}_{p, 1}(\R^d)$ given in \eqref{Decomposition0BesovNew} and \eqref{Decomposition0nenene}, respectively,  together with Fubini's theorem, we get
        \begin{align}
            \|\text{Ex} f \|_{T^{b+1/q}_q B^{1/p}_{p, 1}(\mathbb{R}^d)} & \lesssim  \left( \sum_{k=0}^\infty 2^{k (b+ \frac{1}{q}) q} \|R_k f (x') \,  \psi(\mu_k
        x_d)\|_{B^{1/p}_{p, 1}(\R^d)}^q  \right)^{\frac{1}{q}} \nonumber \\
            & \lesssim   \left( \sum_{k=0}^\infty 2^{k (b+ \frac{1}{q}) q} \mu_k^{q/p} \|R_k f (x') \,  \psi(\mu_k
        x_d)\|_{L_p(\R^d)}^q  \right)^{\frac{1}{q}} \label{68}  \\
        & =   \left( \sum_{k=0}^\infty 2^{k (b+ \frac{1}{q}) q} \mu_k^{q/p} \|R_k f \|_{L_p(\R^{d-1})}^q   \|\psi(\mu_k
        \cdot)\|_{L_p(\R)}^q  \right)^{\frac{1}{q}} \nonumber \\
        & \asymp \left( \sum_{k=0}^\infty 2^{k (b+ \frac{1}{q}) q} \|R_k f \|_{L_p(\R^{d-1})}^q   \right)^{\frac{1}{q}}.\nonumber
        \end{align}
        Applying the triangle inequality, the best approximation property of the de la Valle\'e Poussin means (cf. \eqref{BestApproximation}) and the Jackson's inequality, we find
        \begin{align*}
        		\|R_k f\|_{L_p(\R^{d-1})} & \leq \|f- V_{\mu_{k+1}} f \|_{L_p(\R^{d-1})} + \| f- V_{\mu_k}f \|_{L_p(\R^{d-1})} \\
		& \lesssim E_{\mu_k} (f)_{L_p(\R^{d-1})} \lesssim \omega_1\Big(f, \frac{1}{\mu_k} \Big)_p
        \end{align*}
       and thus, making use of the monotonicity properties of the modulus of smoothness,
       \begin{equation}\label{69}
        	   \left( \sum_{k=0}^\infty 2^{k (b+ \frac{1}{q}) q} \|R_k f \|_{L_p(\R^{d-1})}^q   \right)^{\frac{1}{q}} \lesssim  \left( \sum_{k=0}^\infty 2^{k (b+ \frac{1}{q}) q}  \omega_1\Big(f, \frac{1}{\mu_k} \Big)_p^q   \right)^{\frac{1}{q}} \lesssim \|f\|_{\mathbf{B}^{0,b}_{p,q}(\mathbb{R}^{d-1})}.
       \end{equation}
       We can combine \eqref{68} and \eqref{69} to establish
       $$
       	 \|\text{Ex} f \|_{T^{b+1/q}_q B^{1/p}_{p, 1}(\mathbb{R}^d)} \lesssim \|f\|_{\mathbf{B}^{0,b}_{p,q}(\mathbb{R}^{d-1})}
       $$
       which shows that $\text{Ex}$ is bounded from
        $\mathbf{B}^{0,b}_{p,q}(\mathbb{R}^{d-1})$ into
        $T^{b+1/q}_q B^{1/p}_{p, 1}(\mathbb{R}^d)$. Furthermore, it is clear from \eqref{firstDecomposition}--\eqref{2.8}  that $\text{Tr } \circ \text{Ex } =
            \text{id}.$

            It remains to show that \eqref{2.8} is well-defined for $f \in \mathbf{B}^{0, b}_{p, q}(\R^{d-1})$. Indeed, we will prove that the series given in the right-hand side of \eqref{2.8} converges in $B^{1/p}_{p, 1}(\R^d).$ This assertion is an immediate consequence of
            $$
            	\sum_{k=0}^\infty \|R_k f(x') \,  \psi(\mu_k
        x_d) \|_{B^{1/p}_{p, 1}(\R^d)} \lesssim  \left( \sum_{k=0}^\infty 2^{k (b+ \frac{1}{q}) q} \|R_k f (x') \,  \psi(\mu_k
        x_d)\|_{B^{1/p}_{p, 1}(\R^d)}^q  \right)^{\frac{1}{q}}
            $$
    and \eqref{68}-\eqref{69}. The validity of the previous estimate is an application of  H\"older's inequality if $q \geq 1$ and the fact that $\ell_q \hookrightarrow \ell_1$ if $q < 1$.
    \end{proof}

Unlike the intriguing case $s = 1/p$ considered in Theorem \ref{Theorem 2.1}, a description of traces for $T^b_r B^s_{p, q}(\R^d)$ with $s > 1/p$ follows easily just from the interpolation properties studied in Section \ref{Section:LimitingInterpolation}. More precisely, we establish the following result.

\begin{thm}\label{Thm175}
    Let
    \begin{equation}\label{Thm175As1}
    1 \leq p \leq \infty, \qquad 0 < q, r \leq \infty, \qquad s > \frac{1}{p} \qquad \text{and} \qquad b \neq 0.
    \end{equation}
     Then
    \begin{enumerate}[\upshape(i)]
        \item the operator $\emph{Tr}$ acts boundedly from
        $T^{b}_r B^{s}_{p, q}(\mathbb{R}^d)$ onto
        $T^b_r B^{s-\frac{1}{p}}_{p,q}(\mathbb{R}^{d-1})$,
        \item there exists a linear extension operator $\emph{Ex}$ which is
        continuous from $T^b_r B^{s-\frac{1}{p}}_{p,q}(\mathbb{R}^{d-1})$
        onto $T^{b}_r B^{s}_{p, q}(\mathbb{R}^d)$ and
           $ \emph{Tr } \circ \emph{Ex } =
            \emph{id}.$
    \end{enumerate}
\end{thm}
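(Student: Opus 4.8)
\textbf{Proof plan for Theorem \ref{Thm175}.}
The plan is to reduce the trace assertion for the truncated scale to the known trace assertion for classical Besov spaces \eqref{2.1} via the limiting interpolation characterization of $T^b_r B^s_{p,q}(\R^d)$ obtained in Theorem \ref{TheoremInterpolation} (or rather \eqref{IntFormp=1}--\eqref{IntFormp=infty} in the limiting cases $p=1,\infty$). The key point is that the extension operator $\text{Ex}$ constructed in \cite[Section 2.2]{Triebel20} can be taken to be a \emph{common} bounded linear operator from $B^{\sigma-1/p}_{p,q}(\R^{d-1})$ into $B^{\sigma}_{p,q}(\R^d)$ for all $\sigma$ in a suitable range, and likewise $\text{Tr}$ is a common bounded operator; hence both $\text{Tr}$ and $\text{Ex}$ respect \emph{any} interpolation functor applied coordinatewise to these couples.

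First I would fix, given $s>1/p$ and $b\ne 0$, an auxiliary parameter $s_0$: if $b>0$ choose $s_0>s$ (and take $B^{s_0}_{p,q}(\R^d)$, or $W^k_p(\R^d)$ with $k>s$ when $p=1,\infty$, cf. \eqref{IntFormp=1}); if $b<0$ choose $s_1<s$ with $s_1>1/p$ (and similarly $W^k_p$ with $k<s$ when $p=1,\infty$, cf. \eqref{IntFormp=infty}). Since $\min\{s,s_0\}>1/p$ and $\min\{s,s_1\}>1/p$, the classical trace theorem \eqref{2.1} applies to \emph{all} the spaces appearing in the couple, and Triebel's construction gives a single bounded linear $\text{Ex}$ that is simultaneously a right inverse of $\text{Tr}$ on each of them. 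Applying the $((0,b-1/r),r)$-method (for $b>0$) to the pair of diagrams
\[
\text{Tr}: B^s_{p,q}(\R^d)\to B^{s-1/p}_{p,q}(\R^{d-1}),\qquad \text{Tr}: B^{s_0}_{p,q}(\R^d)\to B^{s_0-1/p}_{p,q}(\R^{d-1}),
\]
and using the interpolation property of the limiting method (cf. the discussion around \eqref{EmbeddingsLimInt}), I obtain that $\text{Tr}$ maps $(B^s_{p,q}(\R^d),B^{s_0}_{p,q}(\R^d))_{(0,b-1/r),r}$ boundedly into $(B^{s-1/p}_{p,q}(\R^{d-1}),B^{s_0-1/p}_{p,q}(\R^{d-1}))_{(0,b-1/r),r}$, and the same for $\text{Ex}$ in the reverse direction. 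By Theorem \ref{TheoremInterpolation}(i) the source space is $T^b_r B^s_{p,q}(\R^d)$ and the target is $T^b_r B^{s-1/p}_{p,q}(\R^{d-1})$ (note $s_0-1/p>s-1/p$, so the hypotheses of Theorem \ref{TheoremInterpolation}(i) are met on $\R^{d-1}$). For $b<0$ one argues identically with the $((1,b-1/r),r)$-method and Theorem \ref{TheoremInterpolation}(ii). Since $\text{Ex}$ is linear, bounded in both interpolation scales, and satisfies $\text{Tr}\circ\text{Ex}=\text{id}$ already at the level of the endpoint spaces (hence on their intersection, which is dense), the identity $\text{Tr}\circ\text{Ex}=\text{id}$ persists on the interpolation spaces; this also gives that $\text{Tr}$ is onto, proving (i) and (ii).

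The main obstacle I anticipate is purely bookkeeping rather than conceptual: one must make sure the \emph{same} operator $\text{Ex}$ works for the whole couple (this is exactly the ``common extension operator'' statement in \cite[Section 2.2, pp. 29--38]{Triebel20}), and one must handle the limiting values $p=1,\infty$ — where $F^s_{p,q}$ is unavailable — by replacing the high-smoothness endpoint with a Sobolev space $W^k_p(\R^d)$ and invoking \eqref{IntFormp=1}/\eqref{IntFormp=infty} instead of Theorem \ref{TheoremInterpolation} directly; the classical trace theorem is valid for $W^k_p$ as well, so nothing essential changes. A minor additional point is to justify that $\text{Tr}$ is well-defined on $T^b_r B^s_{p,q}(\R^d)$, which follows since $T^b_r B^s_{p,q}(\R^d)\hookrightarrow B^{s'}_{p,q}(\R^d)$ for some $s'\in(1/p,s)$ by Proposition \ref{PropositionElementary}(ii) (when $b<0$) or Proposition \ref{PropositionElementary}(i) together with \eqref{cncnnc} (when $b>0$), and traces are independent of the ambient space (cf. \cite[Remark 6.62]{Triebel08}).
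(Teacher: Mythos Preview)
Your proposal is correct and follows essentially the same route as the paper: apply the limiting interpolation method to the classical trace/extension pair \eqref{2.1} (using a common extension operator) and then identify the resulting interpolation spaces via Theorem \ref{TheoremInterpolation}. One minor remark: your separate treatment of $p=1,\infty$ via \eqref{IntFormp=1}--\eqref{IntFormp=infty} is unnecessary here, since Theorem \ref{TheoremInterpolation} with $A=B$ already covers all $p\in(0,\infty]$.
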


\begin{proof}
	Assume first $b > 0$.  According to \eqref{2.1}, given any  $\varepsilon > 0$,
	\begin{equation}\label{Fr2+}
		 \text{Tr }: B^s_{p,q}(\mathbb{R}^d) \longrightarrow
    B^{s-1/p}_{p,q}(\mathbb{R}^{d-1}) \qquad \text{and} \qquad  \text{Tr }: B^{s+\varepsilon}_{p,q}(\mathbb{R}^d) \longrightarrow
    B^{s + \varepsilon-1/p}_{p,q}(\mathbb{R}^{d-1}).
	\end{equation}
	Moreover, there exists a linear operator $\text{Ex}$ such that
	\begin{equation}\label{Fr2-}
		\text{Ex }: B^{s-1/p}_{p,q}(\mathbb{R}^{d-1}) \longrightarrow B^s_{p,q}(\mathbb{R}^d)  \qquad \text{and} \qquad \text{Ex }: B^{s+\varepsilon-1/p}_{p,q}(\mathbb{R}^{d-1}) \longrightarrow B^{s+\varepsilon}_{p,q}(\mathbb{R}^d)
	\end{equation}
	with      $ \text{Tr } \circ \text{Ex } =
            \text{id}.$
	
	Applying limiting interpolation to \eqref{Fr2+} and \eqref{Fr2-}, we derive
	\begin{equation}\label{Fr2}
		 \text{Tr }: (B^s_{p,q}(\mathbb{R}^d),  B^{s+\varepsilon}_{p,q}(\mathbb{R}^d))_{(0, b-1/r), r} \longrightarrow   (B^{s-1/p}_{p,q}(\mathbb{R}^{d-1}),  B^{s+\varepsilon-1/p}_{p,q}(\mathbb{R}^{d-1}))_{(0, b-1/r), r}
	\end{equation}
	and
	\begin{equation}\label{Fr2.1}
		\text{Ex }: (B^{s-1/p}_{p,q}(\mathbb{R}^{d-1}),  B^{s+\varepsilon-1/p}_{p,q}(\mathbb{R}^{d-1}))_{(0, b-1/r), r} \longrightarrow  (B^s_{p,q}(\mathbb{R}^d),  B^{s+\varepsilon}_{p,q}(\mathbb{R}^d))_{(0, b-1/r), r}.
	\end{equation}
	Furthermore, in virtue of Theorem \ref{Thm4.2}(i), we have
	$$
		(B^s_{p,q}(\mathbb{R}^d),  B^{s+\varepsilon}_{p,q}(\mathbb{R}^d))_{(0, b-1/r), r}  = T^b_r B^s_{p, q}(\R^d)
	$$
	and
	$$
		(B^{s-1/p}_{p,q}(\mathbb{R}^{d-1}),  B^{s+\varepsilon-1/p}_{p,q}(\mathbb{R}^{d-1}))_{(0, b-1/r), r} = T^b_r B^{s-1/p}_{p,q}(\mathbb{R}^{d-1}),
	$$
	which combined with \eqref{Fr2} and \eqref{Fr2.1} give
	$$
		 \text{Tr }: T^b_r B^s_{p, q}(\R^d) \longrightarrow T^b_r B^{s-1/p}_{p,q}(\mathbb{R}^{d-1})
	$$
	and
	$$
		\text{Ex }: T^b_r B^{s-1/p}_{p,q}(\mathbb{R}^{d-1}) \longrightarrow T^b_r B^s_{p, q}(\R^d).
	$$
	
	The case $b < 0$ can be obtained in an analogous fashion, but now invoking part (ii) in Theorem \ref{Thm4.2} with $s_0 = s -\varepsilon$ and $0 < \varepsilon < s-1/p$.
\end{proof}

\begin{rem}
	The extension operator $\text{Ex}$ in Theorem \ref{Thm175} is common for all spaces $T^b_r B^s_{p, q}(\R^d)$ with \eqref{Thm175As1}.
\end{rem}

\begin{rem}
	The method of proof of Theorem \ref{Thm175} still works with limiting case $b=0$, however the outcome is now formulated in terms of the spaces $T^*_r B^s_{p, q}(\R^d)$; cf. \eqref{33} and Remark \ref{Remark4.4}. Specifically, if $1 \leq p \leq \infty, 0 < q, r \leq \infty$ and $s > 1/p$, then
	$$
		\text{Tr }: T^*_r B^s_{p, q}(\R^d) \longrightarrow T^*_r B^{s-1/p}_{p, q}(\R^{d-1})
	$$
	and there exists a linear extension operator $\text{Ex}$ satisfying
	$$
		\text{Ex }: T^*_r B^{s-1/p}_{p, q}(\R^{d-1}) \longrightarrow T^*_r B^s_{p, q}(\R^d)
	$$
	with $ \text{Tr } \circ \text{Ex } =
            \text{id}.$
\end{rem}

Specialising Theorem \ref{Thm175} with $r=q$ and taking into account  Proposition \ref{PropositionCoincidences}(i), we are able to extend \ref{2.1} to the setting of $B^{s, b}_{p, q}(\R^d)$ spaces.

\begin{cor}
	    Let
    \begin{equation*}
    1 \leq p \leq \infty, \qquad 0 < q \leq \infty, \qquad s > \frac{1}{p} \qquad \text{and} \qquad b \in \R.
    \end{equation*}
     Then
    \begin{enumerate}[\upshape(i)]
        \item the operator $\emph{Tr}$ acts boundedly from
        $B^{s, b}_{p, q}(\mathbb{R}^d)$ onto
        $B^{s-\frac{1}{p}, b}_{p,q}(\mathbb{R}^{d-1})$,
        \item there exists a linear extension operator $\emph{Ex}$ which is
        continuous from $B^{s-\frac{1}{p}, b}_{p,q}(\mathbb{R}^{d-1})$
        onto $B^{s, b}_{p, q}(\mathbb{R}^d)$ and
           $ \emph{Tr } \circ \emph{Ex } =
            \emph{id}.$
    \end{enumerate}
\end{cor}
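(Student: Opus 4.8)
The statement to prove is the trace/extension corollary for logarithmic Besov spaces $B^{s,b}_{p,q}(\R^d)$ in the range $s > 1/p$, obtained by specializing Theorem \ref{Thm175} to $r = q$.

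\textbf{The plan.} The whole point is that once Theorem \ref{Thm175} is available, this corollary is essentially a translation exercise via the identification of truncated spaces with logarithmic Besov spaces. First I would recall Proposition \ref{PropositionCoincidences}(i), which gives $T^b_q B^s_{p,q}(\R^d) = B^{s,b}_{p,q}(\R^d)$ with equivalence of quasi-norms, for any $p, q \in (0,\infty]$ and $s, b \in \R$. The assumption $s > 1/p$ guarantees $s - 1/p > 0$, so in particular $B^{s-1/p, b}_{p, q}(\R^{d-1})$ is a well-defined logarithmic Besov space on $\R^{d-1}$, and again by Proposition \ref{PropositionCoincidences}(i) it coincides with $T^b_q B^{s-1/p}_{p, q}(\R^{d-1})$.

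\textbf{Key steps.} For the case $b \neq 0$: apply Theorem \ref{Thm175} with $r = q$. Part (i) of that theorem gives that $\mathrm{Tr}$ acts boundedly from $T^b_q B^s_{p, q}(\R^d)$ onto $T^b_q B^{s-1/p}_{p, q}(\R^{d-1})$; rewriting both sides through Proposition \ref{PropositionCoincidences}(i) yields the boundedness of $\mathrm{Tr}: B^{s,b}_{p,q}(\R^d) \to B^{s-1/p, b}_{p,q}(\R^{d-1})$, and "onto" is preserved since the identification is an isomorphism. Part (ii) of Theorem \ref{Thm175} furnishes a linear extension operator $\mathrm{Ex}: T^b_q B^{s-1/p}_{p, q}(\R^{d-1}) \to T^b_q B^s_{p, q}(\R^d)$ with $\mathrm{Tr} \circ \mathrm{Ex} = \mathrm{id}$; the same identification turns this into the desired continuous linear $\mathrm{Ex}: B^{s-1/p, b}_{p,q}(\R^{d-1}) \to B^{s,b}_{p,q}(\R^d)$ with $\mathrm{Tr} \circ \mathrm{Ex} = \mathrm{id}$. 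For the remaining case $b = 0$, Theorem \ref{Thm175} does not directly apply (it requires $b \neq 0$), so I would instead invoke the classical trace theorem \eqref{2.1} directly: for $b = 0$ the space $B^{s, 0}_{p, q}(\R^d)$ is the classical Besov space $B^s_{p,q}(\R^d)$ (cf. the conventions in Section \ref{SectionClassicSpaces}), and \eqref{2.1} together with the construction of a common bounded linear extension operator recalled right after \eqref{2.1} gives exactly statements (i) and (ii) in this case. Alternatively, for $b=0$ one could use the remark following Theorem \ref{Thm175} concerning $T^*_r B^s_{p,q}$, but since $B^{s,0}_{p,q} = B^s_{p,q}$ the classical result is cleaner and more direct.

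\textbf{Main obstacle.} There is no serious obstacle: the content is entirely in Theorem \ref{Thm175}, whose proof in turn rests on applying limiting interpolation to the classical trace theorem. The only point requiring a line of care is the bookkeeping between the two cases $b \neq 0$ and $b = 0$ — making sure that the $b = 0$ case is covered by the classical statement \eqref{2.1} rather than by Theorem \ref{Thm175}, and noting that in that case $B^{s,b}_{p,q}$ is literally the classical Besov space so that the conclusion is already known. One should also note explicitly that the extension operator produced is common for all admissible parameters, inherited from the corresponding property of $\mathrm{Ex}$ in Theorem \ref{Thm175} (for $b \neq 0$) and from the classical common extension operator (for $b = 0$), though this is not part of the stated corollary.
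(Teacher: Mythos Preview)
Your proposal is correct and follows exactly the paper's approach: specialize Theorem \ref{Thm175} with $r=q$ and use Proposition \ref{PropositionCoincidences}(i) to identify $T^b_q B^{s}_{p,q}$ with $B^{s,b}_{p,q}$. Your explicit handling of the case $b=0$ via the classical result \eqref{2.1} is a careful touch that the paper leaves implicit (it frames the corollary as an extension of \eqref{2.1}).
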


\begin{rem}
	The previous result may be viewed as the non-limiting case (i.e., $s >1/p$) of Corollary \ref{CorollaryMSS}. In particular, this shows an interesting switch from $B^{0, b}_{p, 1}(\R^{d-1})$ as expected trace scale of Besov spaces with critical smoothness $s=1/p$ to the more surprising scale given by spaces $\mathbf{B}^{0, b}_{p, 1}(\R^{d-1})$ (see \eqref{EmbBBzero}).
\end{rem}

\begin{rem}
	All the results from this subsection can be extended to deal with traces on hyperplanes of dimension $1, 2, \ldots, d-2$. But this is a technical matter and will not be done in detail here. We just mention that the corresponding extension of  Theorem \ref{Theorem 2.1}  reads as follows: Let $d > m \in \N$ and $\text{Tr}_{\R^m} = \text{Tr}$. Let $1 \leq p \leq \infty, 0 < q \leq \infty$ and $b > -1/q$. Then $\text{Tr}: T^{b + 1/q}_q B^{\frac{d-m}{p}}_{p, 1}(\mathbb{R}^d) \to \mathbf{B}^{0,b}_{p,q}(\mathbb{R}^{m})$ and there exists a bounded linear extension operator $\text{Ex}: \mathbf{B}^{0,b}_{p,q}(\mathbb{R}^{m}) \to T^{b + 1/q}_q B^{\frac{d-m}{p}}_{p, 1}(\mathbb{R}^d)$ such that $\text{Tr } \circ \text{Ex } =
            \text{id}.$
\end{rem}

\subsection{Embeddings with critical smoothness}\label{SectionEmbCritical} The goal of this section is to apply the theory of truncated Besov spaces developed in previous sections in order to improve classical embedding theorems in the critical case $s=d/p$. We divide our analysis into two parts. First, we shall concentrate on Trudinger-type inequalities (i.e., the case $b < -1/q$). Second, we turn our attention to embeddings into H\"older-type spaces (i.e., the case $b \geq -1/q$). Before we go further, we briefly recall the definition of Lorentz--Zygmund spaces.

Let $0 < p, q \leq \infty$ and $b \in \R$. Then  $L_{p, q}(\log L)_{b}(\mathbb{T}^d)$ stands for the \emph{Lorentz--Zygmund space} formed by all measurable functions $f$ on $\mathbb{T}^d$ such that\index{\bigskip\textbf{Spaces}!$L_{p, q}(\log L)_b(\mathbb{T}^d)$}\label{LZ}
\begin{equation}\label{DefLZ}
	\|f\|_{L_{p, q}(\log L)_{b}(\mathbb{T}^d)} = \bigg(\int_0^1 [t^{1/p}(1-\log t)^{b } f^*(t)]^q \frac{dt}{t} \bigg)^{1/q} < \infty
\end{equation}
(with the usual modification if $q=\infty$).   As usual, $f^*$ stands for the \emph{non-increasing rearrangement of $f$}. We shall consider the rearrangement which is left-continuous. It can be expressed uniquely by the equality\index{\bigskip\textbf{Functionals and functions}!$f^\ast$}\label{REARRANGEMENT}
\begin{equation}\label{Rearrengement}
	f^*(t) = \sup_{|E| = t} \inf_{x \in E} |f(x)|
\end{equation}
(see \cite{ChongRice}). A basic reference for Lorentz--Zygmund spaces is \cite{BennettRudnick}. This scale contains many classical spaces as distinguished elements: Letting $p=q$ in  $L_{p, q}(\log L)_{b}(\mathbb{T}^d)$ one obtains the \emph{Zygmund space} $L_p(\log L)_b(\mathbb{T}^d)$\index{\bigskip\textbf{Spaces}!$L_p(\log L)_b(\mathbb{T}^d)$}\label{ZYG} and if, additionally, $b=0$ then $L_p(\mathbb{T}^d)$. On the other hand, setting $b=0$ in  $L_{p, q}(\log L)_{b}(\mathbb{T}^d)$, one recovers \emph{Lorentz spaces} $L_{p, q}(\mathbb{T}^d)$\index{\bigskip\textbf{Spaces}!$L_{p, q} (\mathbb{T}^d)$}\label{LOR}. Dealing with the limiting value $p=\infty$, the additional assumption $b < -1/q$ ($b \leq 0$ if $q=\infty$) is required in order to get meaningful spaces. In particular, it is well known that $L_{\infty}(\log L)_{b}(\mathbb{T}^d) = L_{\infty, \infty}(\log L)_{b}(\mathbb{T}^d)$ is the classical \emph{Orlicz space} $\text{exp}\, L^{-\frac{1}{b}}(\mathbb{T}^d)$ of exponentially integrable functions\index{\bigskip\textbf{Spaces}!$\text{exp} \, L^{-\frac{1}{b}}(\mathbb{T}^d)$}\label{OR}, cf. \cite[Theorem 10.3]{BennettRudnick} and \cite[Chapter 4, Lemma 6.2]{BennettSharpley}.

\vspace{2mm}
\textbf{Trudinger's embeddings: the case $b < -1/q$.} To avoid technical issues, we will switch momentarily to function spaces defined on the $d$-dimensional torus $\mathbb{T}^d$. We refer to Remark \ref{RemarkRn} below for further details.

The celebrated Trudinger's inequality \cite{Trudinger, Yudovich} can be rephrased  in terms of the following embeddings
\begin{equation}\label{Trudinger}
	W^{d/p}_p(\mathbb{T}^d) \hookrightarrow L_\infty (\log L)_{-1/p'} (\mathbb{T}^d), \qquad p \in (1, \infty).
\end{equation}
This result has shown to be very useful in the study of elliptic and parabolic equations and Gaussian curvatures on sphere. Furthermore, \eqref{Trudinger} is sharp within the class of Orlicz spaces, but it can be improved when dealing with the larger class of Lorentz--Zygmund spaces \eqref{DefLZ}. Indeed, the Maz'ya--Hansson--Brezis--Wainger embedding claims that
\begin{equation}\label{MHBW}
	W^{d/p}_p(\mathbb{T}^d)  \hookrightarrow L_{\infty, p}(\log L)_{-1}(\mathbb{T}^d),
\end{equation}
cf. \cite{BrezisWainger, Hansson, Mazya}. Further, it is plain to see that (cf. \cite[Theorem 9.5]{BennettRudnick})
$$
	 L_{\infty, p}(\log L)_{-1}(\mathbb{T}^d) \hookrightarrow L_\infty (\log L)_{-1/p'} (\mathbb{T}^d).
$$
Accordingly \eqref{MHBW} yields an improvement of \eqref{Trudinger}.

In fact, not only the target space in \eqref{Trudinger} can be sharpened but also the domain space can be sharpened in terms of Besov spaces. Indeed, the following improvement of \eqref{MHBW} holds, for $0 < p < \infty$ and $1 < q \leq \infty$,
\begin{equation}\label{CriticalEmbedding}
	B^{d/p}_{p,q}(\mathbb{T}^d) \hookrightarrow L_{\infty, q}(\log L)_{-1}(\mathbb{T}^d);
\end{equation}
cf. \cite[Theorem 13.2]{Triebel01},  \cite[Theorem 8.16]{Haroske} and the references within. Note that, by Franke--Jawerth embeddings \eqref{FJClas} (recall that Littlewood--Paley theorem asserts that $W^{d/p}_p(\mathbb{T}^d) = F^{d/p}_{p, 2}(\mathbb{T}^d), \, p > 1$),
$$
	W^{d/p}_p(\mathbb{T}^d)  \hookrightarrow B^{d/p_0}_{p_0,p}(\mathbb{T}^d), \qquad p < p_0,
$$
so that \eqref{CriticalEmbedding} sharpens \eqref{MHBW}.

The embedding \eqref{CriticalEmbedding} can be placed naturally in the more general setting of Besov spaces of logarithmic smoothness. Namely, if $0 < p < \infty, 0 < q \leq \infty$ and $b < -1/q$ then
\begin{equation}\label{CriticalEmbeddingLog}
	B^{d/p, b + 1/\min\{1, q\}}_{p,q}(\mathbb{T}^d) \hookrightarrow L_{\infty, q}(\log L)_{b}(\mathbb{T}^d).
\end{equation}
In particular, setting $b=-1$ in \eqref{CriticalEmbeddingLog} gives back \eqref{CriticalEmbedding}.
The embedding \eqref{CriticalEmbeddingLog} together with its generalizations have been the object of intensive research. In this regard, we only mention here \cite{DeVore}, \cite{CaetanoMoura}, \cite{CaetanoFarkas}, \cite{CaetanoLeopold}, \cite{Martin} and \cite{MouraNevesPiotrowski}. In particular, it is well known that \eqref{CriticalEmbeddingLog} is optimal within the scale of Besov spaces of logarithmic smoothness. However, as a byproduct of the following result, we can conclude that the known embedding \eqref{CriticalEmbeddingLog} can be sharpened with the help of the truncated Besov spaces $T^b_r B^{s}_{p, q}(\mathbb{T}^d)$.

\begin{thm}\label{TheoremEmbedinngCritical}
	Let $0 < p < \infty, 0 < q \leq \infty$ and $b < -1/q$. Then
	\begin{equation}\label{TheoremEmbedinngCritical1}
		T^{b+1/q}_q B^{d/p}_{p, 1}(\mathbb{T}^d) \hookrightarrow L_{\infty, q}(\log L)_b(\mathbb{T}^d).
	\end{equation}
	Furthermore, the embedding is optimal in the following sense: let $0 < u \leq \infty$, then
	\begin{equation}\label{SharpTheoremEmbedinngCritical}
		T^{b+1/q}_q B^{d/p}_{p, u}(\mathbb{T}^d) \hookrightarrow L_{\infty, q}(\log L)_b(\mathbb{T}^d) \iff u \leq 1.
	\end{equation}
\end{thm}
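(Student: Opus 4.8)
\textbf{Proof plan for Theorem \ref{TheoremEmbedinngCritical}.}

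The plan is to reduce everything to a statement about classical Besov spaces of logarithmic smoothness via the embeddings between truncated and classical Besov spaces established in Corollary \ref{TheoremEmbeddings1}, and then invoke the known optimal embedding \eqref{CriticalEmbeddingLog}. For the positive part \eqref{TheoremEmbedinngCritical1}, recall from \eqref{TheoremEmbeddings3} (the second chain in Corollary \ref{TheoremEmbeddings1}, applied with $s = d/p$, integrability exponent $p$, secondary exponents $1$ and $q$, and shift parameter $b + 1/q$) that
$$
T^{b+1/q}_q B^{d/p}_{p, 1}(\mathbb{T}^d) \hookrightarrow B^{d/p, \, b+1/q}_{p, \max\{1, q\}}(\mathbb{T}^d).
$$
When $q \leq 1$ this gives $T^{b+1/q}_q B^{d/p}_{p, 1}(\mathbb{T}^d) \hookrightarrow B^{d/p, b+1/q}_{p, 1}(\mathbb{T}^d) = B^{d/p, b + 1/\min\{1,q\}}_{p, q}(\mathbb{T}^d)$ after one more trivial monotonicity embedding in the secondary index (going from $B_{p,1}$ down to $B_{p,q}$ costs nothing since $q \le 1$), and when $q > 1$ it gives directly $B^{d/p, b+1/q}_{p, q}(\mathbb{T}^d) = B^{d/p, b + 1/\min\{1,q\}}_{p, q}(\mathbb{T}^d)$. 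In either case \eqref{CriticalEmbeddingLog} then yields the embedding into $L_{\infty, q}(\log L)_b(\mathbb{T}^d)$. (Strictly, I must check that Corollary \ref{TheoremEmbeddings1}, stated for $\R^d$, transfers to $\mathbb{T}^d$; this is routine since the truncated periodic spaces are defined analogously and all the Hardy-inequality arguments are dimension-free.)

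For the sharpness statement \eqref{SharpTheoremEmbedinngCritical}: the implication $u \le 1 \Rightarrow$ embedding is immediate from the case $u=1$ together with the monotonicity $T^{b+1/q}_q B^{d/p}_{p, u}(\mathbb{T}^d) \hookrightarrow T^{b+1/q}_q B^{d/p}_{p, 1}(\mathbb{T}^d)$ for $u \le 1$ (which follows from Proposition \ref{PropPreser}, using $B^{d/p}_{p, u}(\mathbb{T}^d) \hookrightarrow B^{d/p}_{p, 1}(\mathbb{T}^d)$). For the converse — that $u > 1$ breaks the embedding — I would produce an explicit lacunary counterexample. Take a function of the form $f(x) \sim \sum_{j} a_j e^{i (2^j - 2) x_1} \psi(x)$ as in \eqref{Ex67}–\eqref{lac} (periodic version), for which $(\varphi_j \widehat f)^\vee$ has $L_p$-norm comparable to $|a_j|$, and also for which the non-increasing rearrangement $f^*$ is controlled from below by the size of the partial sums of $|a_j|$. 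The quantity $\|f\|_{T^{b+1/q}_q B^{d/p}_{p, u}(\mathbb{T}^d)}$ is then comparable to a double sum $\big(\sum_k 2^{k(b+1/q)q}(\sum_{\nu = 2^k-1}^{2^{k+1}-2} 2^{\nu d q/p}|a_\nu|^q)^{?}\big)$ — I must be careful: the internal exponent is $u$ and the outer structure matches $T^{b+1/q}_q$, i.e. it is $\big(\sum_{k}2^{k(b+1/q)q}\big(\sum_{\nu=2^k-1}^{2^{k+1}-2}(2^{\nu d/p}|a_\nu|)^u\big)^{q/u}\big)^{1/q}$. Choosing $a_\nu = 2^{-\nu d/p}\nu^{-1/u}(1+\log\nu)^{-\varepsilon}$ with $\varepsilon$ in a suitable window (so that the $T$-norm converges, using $u>1$, but the $L_{\infty,q}(\log L)_b$-quasinorm diverges, via the rearrangement estimate and the known characterization of $L_{\infty,q}(\log L)_b$ for lacunary-type functions) gives the required counterexample. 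The value of $\varepsilon$ must be pinned down by comparing the convergence condition coming from $\sum_k k^{-\varepsilon q}<\infty$ against the divergence of $\sum_k k^{-\varepsilon q}\cdot(\text{extra log factor})=\infty$ forced by the $\log L$ weight; such a window exists precisely because $u>1$ produces the gain $\nu^{-1/u}$ in the sum $\sum_{\nu\in[2^k,2^{k+1})}$, which for $u\le 1$ would instead be too large.

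The main obstacle I anticipate is the lower bound for $\|f\|_{L_{\infty,q}(\log L)_b(\mathbb{T}^d)}$ in the counterexample: one needs a reliable estimate for $f^*(t)$ for a lacunary series with positive-decaying coefficients, of the form $f^*(2^{-2^k d})\gtrsim \sum_{\nu\le 2^{k+1}}|a_\nu|\cdot(\text{normalization})$, or equivalently a Zygmund-type two-sided bound. This is the one genuinely analytic step; the rest is bookkeeping with Hardy's inequality and the already-established interpolation/embedding machinery. I would handle it either by a direct argument (the partial sums $S_N f$ are essentially constant of size $\sum_{\nu\le N}|a_\nu|$ on a set of measure $\sim 2^{-Nd}$, so $f^*$ inherits a staircase lower bound) or by citing the periodic analogue of the lacunary estimate used elsewhere in the paper (cf. the use of Zygmund's theorem, e.g. around \eqref{ZygmundEstim} in Section \ref{SectionL1}, suitably adapted). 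Once that estimate is in hand, plugging into the definition \eqref{DefLZ} and summing finishes the sharpness claim.
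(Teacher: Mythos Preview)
Your approach to the positive part \eqref{TheoremEmbedinngCritical1} has a genuine gap. You try to factor the embedding through a classical Besov space via Corollary \ref{TheoremEmbeddings1} and then apply \eqref{CriticalEmbeddingLog}, but the arithmetic does not close. For $q>1$, \eqref{TheoremEmbeddings3} only gives
\[
T^{b+1/q}_q B^{d/p}_{p,1}(\mathbb{T}^d)\hookrightarrow B^{d/p,\,b+1/q}_{p,q}(\mathbb{T}^d),
\]
whereas \eqref{CriticalEmbeddingLog} requires the domain $B^{d/p,\,b+1/\min\{1,q\}}_{p,q}(\mathbb{T}^d)=B^{d/p,\,b+1}_{p,q}(\mathbb{T}^d)$, which is strictly smaller; your asserted equality ``$B^{d/p,b+1/q}_{p,q}=B^{d/p,b+1/\min\{1,q\}}_{p,q}$'' is simply false for $q>1$. (The case $q<1$ has an analogous issue: you would need to pass from $B^{d/p,b+1/q}_{p,1}$ to the smaller space $B^{d/p,b+1/q}_{p,q}$, which you cannot.) This is not a technicality: Remark \ref{Remark6.5} is precisely the statement that \eqref{TheoremEmbedinngCritical1} is a \emph{strict} improvement of \eqref{CriticalEmbeddingLog}, so it cannot be recovered by combining the truncated-to-classical embeddings with \eqref{CriticalEmbeddingLog}. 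The paper instead interpolates the pair of embeddings $B^{d/p}_{p,1}\hookrightarrow L_\infty$ and $B^{0}_{p,\min\{p,1\}}\hookrightarrow L_p$ by the limiting method $(\cdot,\cdot)_{(1,b),q}$, then identifies the two sides using Theorem \ref{TheoremInterpolation}(ii) and the $K$-functional \eqref{KfunctLp}; that is where the actual gain over \eqref{CriticalEmbeddingLog} is produced.

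For sharpness you are on broadly the right track, but your lacunary construction makes the rearrangement lower bound (the step you flag) harder than necessary: Zygmund's theorem controls $L_p$ norms, not $f^*$ pointwise. The paper avoids this by using periodic wavelets with coefficients $\lambda^{j,(M,\ldots,M)}_{(0,\ldots,0)}=(1+j)^{-\varepsilon}$ for $b+1/q+1/u<\varepsilon<b+1/q+1$; the compact support of the wavelets gives the clean pointwise bound $f^*(2^{-kd})\gtrsim\sum_{j\le k}(1+j)^{-\varepsilon}$ (cf. \eqref{RearrangementTrick}), from which the divergence of $\|f\|_{L_{\infty,q}(\log L)_b}$ follows immediately, while Theorem \ref{ThmWaveletsNewBesov} (periodic version) handles the $T^{b+1/q}_q B^{d/p}_{p,u}$ norm.
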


Before we proceed with the proof of this theorem, we show that \eqref{TheoremEmbedinngCritical1} is a non-trivial improvement of \eqref{CriticalEmbeddingLog} (and in particular \eqref{CriticalEmbedding}).

\begin{rem}\label{Remark6.5}
Note that as a consequence of the periodic counterpart of Corollary \ref{TheoremEmbeddings1}, we derive
	$$
		B^{d/p, b + 1/\min\{1, q\}}_{p,q}(\mathbb{T}^d) + B^{d/p, b + 1/q}_{p, \min\{1, q\}}(\mathbb{T}^d) \hookrightarrow T^{b+1/q}_q B^{d/p}_{p, 1}(\mathbb{T}^d).
	$$
Thus, by \eqref{TheoremEmbedinngCritical1},
	$$	
		B^{d/p, b + 1/\min\{1, q\}}_{p,q}(\mathbb{T}^d) + B^{d/p, b + 1/q}_{p, \min\{1, q\}}(\mathbb{T}^d) \hookrightarrow  L_{\infty, q}(\log L)_{b}(\mathbb{T}^d),
	$$
	which implies \eqref{CriticalEmbeddingLog}. In fact, by Proposition \ref{PropositionCoincidences}, \eqref{TheoremEmbedinngCritical1} coincides with \eqref{CriticalEmbeddingLog} if $q=1$. However, if $q \neq 1$ then \eqref{TheoremEmbedinngCritical1} gives a non-trivial improvement of \eqref{CriticalEmbeddingLog} since one can construct $f \in T^{b+1/q}_q B^{d/p}_{p, 1}(\mathbb{T}^d)$ such that $f \not \in B^{d/p, b + 1/\min\{1, q\}}_{p,q}(\mathbb{T}^d)$. The construction of these extremal functions can be done using the orthonormal system $\{ \Psi^{j, L, \text{per}}_{G,m}\}$ in $L_2(\mathbb{T^d})$\index{\bigskip\textbf{Functionals and functions}!$\Psi^{j, L, \text{per}}_{G, m}$}\label{PERWAV} obtained from standard periodization arguments for (cf. \eqref{DefinitionWavelet})
	\begin{equation*}
\Psi^{j, L}_{G,m} (x) := 2^{(j+L) d/2} \prod^n_{r=1} \psi_{G_r} (2^{j+L} x_r - m_r
),
\end{equation*}
where $L \in \N$ is a fixed parameter such that $\text{supp } \Psi^{j, L}_{G,m} \subset \{x \in \R^d: |x| < 1/2\}$. Here $j \in \N_0, G \in G^j$ and $m \in \mathbb{P}^d_j = \{m \in \Z^d: 0 \leq m_r < 2^{j+L}\}$. We refer the interested reader to \cite[Section 1.3.2]{Triebel08} for further details. Then obvious modifications show that the periodic counterpart of Theorem \ref{ThmWaveletsNewBesov} holds (in particular the related sequence spaces are defined in the same fashion as \eqref{DefBesSeq} but now $m$ runs over the index set $\mathbb{P}^d_j$). Assume $q > 1$ and let
		\begin{equation}\label{DecompositionCounterexample}
	 f = \sum_{j \in \N_0,G \in G^j,m \in \mathbb{P}^d_j} \lambda^{j,G}_m 2^{-(j + L) d/2}
    \Psi^{j, L, \text{per}}_{G,m}
    \end{equation}
    where
    	\begin{equation*}
		\lambda^{j,G}_m = \left\{\begin{array}{cl}   2^{-k \varepsilon}, & \quad j = 2^k, \quad k \in \N_0, \quad m= (0, \ldots, 0),  \\
		& \qquad G = (M, \ldots, M),  \\
		0, & \text{otherwise},
		       \end{array}
                        \right.
	\end{equation*}
	and $b+ \frac{1}{q} < \varepsilon < b + 1$. According to Theorem \ref{ThmWaveletsNewBesov}, we have
	$$
		\|f\|_{T^{b+1/q}_q B^{d/p}_{p, 1}(\mathbb{T}^d) } \asymp  \bigg(\sum_{k=0}^\infty 2^{k(b+1/q-\varepsilon) q}  \bigg)^{1/q} < \infty
	$$
	and, by Theorem \ref{ThmWaveletsBesovClassic} (more precisely, its periodic counterpart),
	$$
		\|f\|_{B^{d/p, b + 1}_{p,q}(\mathbb{T}^d)} \asymp \left(\sum_{k=0}^\infty  2^{k(b+1-\varepsilon) q} \right)^{1/q} = \infty.
	$$
	
	Assume now $q < 1$. Then let $f$ be given by \eqref{DecompositionCounterexample} with
		\begin{equation*}
		\lambda^{j,G}_m = \left\{\begin{array}{cl}  (1 + j)^{-\varepsilon}, & \quad j \in \N_0, \quad m= (0, \ldots, 0), \quad G = (M, \ldots, M),  \\
		0, & \text{otherwise},
		       \end{array}
                        \right.
	\end{equation*}
	and $b + \frac{1}{q} +1 < \varepsilon < b+ \frac{2}{q}$. Invoking again Theorems \ref{ThmWaveletsBesovClassic} and  \ref{ThmWaveletsNewBesov}, we obtain
	\begin{align*}
	\|f\|_{T^{b+1/q}_q B^{d/p}_{p, 1}(\mathbb{T}^d) } &\asymp \bigg(\sum_{k=0}^\infty 2^{k (b+1/q) q} \bigg(\sum_{j=2^k-1}^{2^{k+1}-2} (1 + j)^{-\varepsilon} \bigg)^{q} \bigg)^{1/q} \\
	&\asymp \bigg(\sum_{k=0}^\infty 2^{k(b - \varepsilon + 1+1/q) q} \bigg)^{1/q} < \infty
	\end{align*}
	and
	$$
	\|f\|_{B^{d/p, b + 1/q}_{p,q}(\mathbb{T}^d)} \asymp \left(\sum_{j=0}^\infty  (1 + j)^{(b+1/q-\varepsilon) q} \right)^{1/q} = \infty.
	$$
	
\end{rem}

\begin{proof}[Proof of Theorem \ref{TheoremEmbedinngCritical}]
Applying the limiting interpolation method \eqref{DefLimInterpolation} with $\theta=1$ to the classical embeddings
\begin{equation}\label{BasicEmbedding}
	B^{d/p}_{p, 1}(\mathbb{T}^d) \hookrightarrow L_\infty(\mathbb{T}^d)
\end{equation}
and
$$
	B^{0}_{p, \min\{p, 1\}}(\mathbb{T}^d) \hookrightarrow L_p(\mathbb{T}^d),
$$
we derive
\begin{equation}\label{TheoremEmbedinngCriticalProof1}
	(B^0_{p, \min\{p, 1\}}(\mathbb{T}^d), B^{d/p}_{p, 1}(\mathbb{T}^d))_{(1, b), q} \hookrightarrow (L_p(\mathbb{T}^d), L_\infty(\mathbb{T}^d))_{(1, b), q}.
\end{equation}
On the one hand, invoking Theorem \ref{TheoremInterpolation}(ii) (to be more precise, its periodic counterpart), we have
\begin{equation}\label{TheoremEmbedinngCriticalProof2}
	(B^0_{p, \min\{p, 1\}}(\mathbb{T}^d), B^{d/p}_{p, 1}(\mathbb{T}^d))_{(1, b), q} = T^{b+1/q}_q B^{d/p}_{p, 1}(\mathbb{T}^d). 	
\end{equation}
On the other hand, since (cf. \cite[Theorem 5.2.1]{BerghLofstrom})
\begin{equation}\label{KfunctLp}
	K(t, f; L_p(\mathbb{T}^d), L_\infty(\mathbb{T}^d)) \asymp \bigg(\int_0^{t^p} (f^*(u))^p \, du \bigg)^{1/p},
\end{equation}
we can apply basic monotonicity properties and Hardy's inequality \eqref{H2} so that
\begin{align*}
	\|f\|_{(L_p(\mathbb{T}^d), L_\infty(\mathbb{T}^d))_{(1, b), q}} & \asymp \bigg(\int_0^1 t^{-q/p} (1-\log t)^{b q} \bigg(\int_0^t (f^*(u))^p \, du \bigg)^{q/p}  \frac{dt}{t} \bigg)^{1/q} \\
	& \hspace{-3cm} \asymp \bigg(\sum_{j=0}^\infty 2^{j q/p} (1 + j)^{b q} \bigg(\sum_{k=j}^\infty (f^*(2^{-k}))^p 2^{-k} \bigg)^{q/p} \bigg)^{1/q} \\
	& \hspace{-3cm} \asymp \bigg(\sum_{j=0}^\infty  (1 + j)^{b q} (f^*(2^{-j}))^q \bigg)^{1/q} \\
	& \hspace{-3cm}\asymp \bigg(\int_0^1 ((1-\log t)^{b} f^*(t))^q \frac{dt}{t} \bigg)^{1/q} = \|f\|_{L_{\infty, q}(\log L)_b(\mathbb{T}^d)}.
\end{align*}
This estimate together with \eqref{TheoremEmbedinngCriticalProof1} and \eqref{TheoremEmbedinngCriticalProof2} enable us to conclude that
$$
	 T^{b+1/q}_q B^{d/p}_{p, 1}(\mathbb{T}^d) \hookrightarrow L_{\infty, q}(\log L)_b(\mathbb{T}^d).
$$

Next we concentrate on the sharpness assertion \eqref{SharpTheoremEmbedinngCritical}. If $u \leq 1$ then
$$
	T^{b+1/q}_q B^{d/p}_{p, u}(\mathbb{T}^d) \hookrightarrow T^{b+1/q}_q B^{d/p}_{p, 1}(\mathbb{T}^d)
$$
(cf. Theorem \ref{TheoremEmbeddingsBBCharacterization}) and thus the embedding given in \eqref{SharpTheoremEmbedinngCritical} follows trivially from \eqref{TheoremEmbedinngCritical1}. Conversely, we assume that
$$
		T^{b+1/q}_q B^{d/p}_{p, u}(\mathbb{T}^d) \hookrightarrow L_{\infty, q}(\log L)_b(\mathbb{T}^d)
$$
holds true and we will show that necessarily $u \leq 1$. Indeed, suppose that $u > 1$ and let
	\begin{equation}\label{TheoremEmbedinngCriticalProof3}
	 f = \sum_{j \in \N_0,G \in G^j,m \in \mathbb{P}^d_j} \lambda^{j,G}_m 2^{-(j + L) d/2}
    \Psi^{j, L, \text{per}}_{G,m}
    \end{equation}
	where
	\begin{equation}\label{TheoremEmbedinngCriticalProof4}
		\lambda^{j,G}_m = \left\{\begin{array}{cl}  (1 + j)^{-\varepsilon}, & \quad j \in \N_0, \quad m= (0, \ldots, 0), \quad G = (M, \ldots, M),  \\
		0, & \text{otherwise},
		       \end{array}
                        \right.
	\end{equation}
	and $b + \frac{1}{q} + \frac{1}{u} < \varepsilon < b + \frac{1}{q} +1$. According to the periodic counterpart of Theorem \ref{ThmWaveletsNewBesov}, we have
	\begin{align*}
	\|f\|_{	T^{b+1/q}_q B^{d/p}_{p, u}(\mathbb{T}^d) } &\asymp \bigg(\sum_{k=0}^\infty 2^{k (b + 1/q) q} \bigg(\sum_{j=2^k-1}^{2^{k+1}-2} (1 + j)^{-\varepsilon u} \bigg)^{q/u} \bigg)^{1/q} \\
	& \asymp \bigg(\sum_{k=0}^\infty 2^{k (b+1/q-\varepsilon + 1/u) q} \bigg)^{1/q} < \infty.
	\end{align*}
	Given $k \in \N_0$, by \eqref{Rearrengement}, \eqref{TheoremEmbedinngCriticalProof3} and \eqref{TheoremEmbedinngCriticalProof4},
	\begin{equation}\label{RearrangementTrick}
		f^*(2^{-k d}) \geq \inf_{x \in Q_{k, (0, \ldots, 0)}} |f(x)| \gtrsim \sum_{j=0}^k (1 + j)^{-\varepsilon}
	\end{equation}
	and thus
	\begin{align*}
		\|f\|_{L_{\infty, q}(\log L)_b(\mathbb{T}^d)} &\asymp \bigg(\sum_{k=0}^\infty ((1 + k)^b f^*(2^{-k d}))^q \bigg)^{1/q} \\
		& \gtrsim  \bigg(\sum_{k=0}^\infty (1 + k)^{b q} \bigg( \sum_{j=0}^k (1 + j)^{-\varepsilon} \bigg)^q \bigg)^{1/q} \\
		& \asymp \bigg(\sum_{k=0}^\infty (1 + k)^{b q -\varepsilon q + q} \bigg)^{1/q} = \infty.
	\end{align*}
	This yields the desired contradiction.
\end{proof}

\begin{rem}\label{RemarkRn}
	As already mentioned at the beginning of this section, Theorem \ref{TheoremEmbedinngCritical} is stated for periodic functions. In particular, we make use of the fact that the couple $(L_p(\mathbb{T}^d), L_\infty(\mathbb{T}^d))$ is ordered (i.e., $L_\infty(\mathbb{T}^d) \hookrightarrow L_p(\mathbb{T}^d)$) when we apply the limiting interpolation method in \eqref{TheoremEmbedinngCriticalProof1}; see Section \ref{Section:LimitingInterpolation}. Working with functions defined on $\R^d$, the proof of Theorem \ref{TheoremEmbedinngCritical} can be easily adapted to obtain the following local result: Let $0 < p < \infty, 0 < q, u \leq \infty$ and $b < -1/q$. Then
	$$
		\bigg(\int_0^1 ((1-\log t)^{b } f^*(t))^q \frac{dt}{t} \bigg)^{1/q} \lesssim \|f\|_{T^{b+1/q}_q B^{d/p}_{p, u}(\R^d)}
	$$
	if and only if $u \leq 1$.
\end{rem}

\textbf{Embedding into H\"older spaces: the case $b >-1/q$}.  In this section we study  $\BB_{\infty, q}^{0, b}(\R^d)$ spaces, which sometimes denoted by $\Lambda_{\infty, q}^{(1-\log t)^{-b}}(\R^d)$ and called H\"older spaces, see \cite{Haroske}, \cite{MouraNevesPiotrowski09}, \cite{MouraNevesSchneider11}, \cite{MouraNevesSchneider14}. Embedding theorems into these spaces have been  intensively studied in recent years (occasionally, under the name of continuity envelopes and even in the setting of generalized smoothness) in the above mentioned papers and the references within. In this direction, the best known embedding theorem reads as follows: let $0 < p < \infty, 0 < q \leq \infty$ and $b > -\frac{1}{q}$, then
\begin{equation}\label{ContinuityEnvelope1}
	B^{d/p, b + \frac{1}{\min\{1,q\}}}_{p, q}(\R^d) \hookrightarrow \BB_{\infty, q}^{0,b}(\R^d).
\end{equation}
Furthermore, the embedding \eqref{ContinuityEnvelope1} is optimal within the scale of classical spaces $B^{d/p, b}_{p, q}(\R^d).$

Next we show that \eqref{ContinuityEnvelope1} can be improved applying the spaces $T^b_r B^s_{p, q}(\R^d)$. To be more precise, we establish the following

\begin{thm}\label{TheoremEmbedinngCriticalContinuity}
	Let $0 < p, q \leq \infty$ and $b > -1/q$. Then
	\begin{equation}\label{TheoremEmbedinngCritical1Continuity}
		T^{b+1/q}_q B^{d/p}_{p, 1}(\R^d) \hookrightarrow \BB^{0,b}_{\infty, q}(\R^d).
	\end{equation}
	Furthermore, the embedding is optimal in the following sense: let $0 < u \leq \infty$, then
	\begin{equation}\label{SharpTheoremEmbedinngCriticalContinuity}
		T^{b+1/q}_q B^{d/p}_{p, u}(\R^d) \hookrightarrow \BB^{0,b}_{\infty, q}(\R^d) \iff u \leq 1.
	\end{equation}
\end{thm}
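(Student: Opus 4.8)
The plan is to mimic closely the proof of Theorem \ref{TheoremEmbedinngCritical}, replacing the role of Lorentz--Zygmund spaces on $\mathbb{T}^d$ by the H\"older-type spaces $\BB^{0,b}_{\infty, q}(\R^d)$. First I would recall from \eqref{RemindNew} (or Proposition \ref{PropositionCoincidences}(iii)) that $\BB^{0,b}_{\infty, q}(\R^d) = T^{b+1/q}_q F^0_{\infty, 2}(\R^d)$; since $p=\infty$ forces one to interpret the $F$-norm carefully, it is cleaner to work directly with the difference/modulus-of-smoothness description \eqref{BesovDifDef} of $\BB^{0,b}_{\infty, q}(\R^d)$, i.e. $\|f\|_{\BB^{0,b}_{\infty, q}(\R^d)} \asymp \|f\|_{L_\infty(\R^d)} + \big(\int_0^1 ((1-\log t)^b \omega_1(f,t)_\infty)^q \frac{dt}{t}\big)^{1/q}$. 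The strategy is then: apply the limiting interpolation method $(\cdot,\cdot)_{(1,b),q}$ to a pair of classical embeddings whose endpoints are $L_\infty(\R^d)$ and $L_\infty(\R^d)$ again (or rather $B^0_{\infty,\infty}$-type), compute the source space via Theorem \ref{TheoremInterpolation}(ii) to get $T^{b+1/q}_q B^{d/p}_{p,1}(\R^d)$, and compute the target space via the $K$-functional formula relating $(L_\infty, C^1)$ or $(L_\infty, \mathrm{Lip})$ to the modulus of smoothness, identifying it with $\BB^{0,b}_{\infty, q}(\R^d)$.

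Concretely, for the positive embedding \eqref{TheoremEmbedinngCritical1Continuity}, I would start from the classical embeddings $B^{d/p}_{p,1}(\R^d) \hookrightarrow C(\R^d) \hookrightarrow L_\infty(\R^d)$ (cf. \eqref{ClassicBesovC} or \eqref{BasicEmbedding}) and $B^0_{p,\min\{p,1\}}(\R^d) \hookrightarrow L_p(\R^d)$ — but here I need the target to be measured in a Lipschitz-type scale so that limiting interpolation produces the modulus of smoothness. The better route is to use the trace-type or approximation characterization: combine $B^{d/p+1}_{p,1}(\R^d) \hookrightarrow \mathrm{Lip}^1(\R^d) = C^1$-type space and $B^{d/p}_{p,1}(\R^d)\hookrightarrow C(\R^d)$, apply $(\cdot,\cdot)_{(1,b),q}$, use Theorem \ref{TheoremInterpolation}(ii) to identify the source as $T^{b+1/q}_q B^{d/p}_{p,1}(\R^d)$ (with $s_0 = d/p+1 > d/p = s$, $b<0$ after the shift since $b+1/q>0$ means we are in the $\theta=1$ case with parameter $b$), and then use the Holmstedt-type formula $K(t, f; L_\infty, C^1) \asymp \omega_1(f,t)_\infty + t\|f\|_{L_\infty}$ (analogous to the one invoked in the proof of Theorem \ref{TheoremModuli} via \cite{DominguezTikhonov}) together with basic monotonicity and Hardy's inequality \eqref{H2} to identify $(L_\infty(\R^d), C^1(\R^d))_{(1,b),q}$ with $\BB^{0,b}_{\infty, q}(\R^d)$. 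This gives $T^{b+1/q}_q B^{d/p}_{p,1}(\R^d) \hookrightarrow \BB^{0,b}_{\infty, q}(\R^d)$. A cleaner alternative, which avoids the $p=\infty$ subtleties in the interpolation endpoint, is to first reduce $p$: by Franke--Jawerth embeddings (Theorem \ref{ThmFJ2} on $\R^d$, or the periodic Corollary \ref{Corollary13.4}-type statement) one has $T^{b+1/q}_q B^{d/p}_{p,1}(\R^d) \hookrightarrow T^{b+1/q}_q B^{0}_{\infty,1}(\R^d)$ (via $T^b_r B^{s_0}_{p_0,q_0} \hookrightarrow T^b_r B^{s_1}_{p_1,q_1}$ with $s_0 - d/p_0 = s_1 - d/p_1 = 0$, $p_1 = \infty$), and then it suffices to prove $T^{b+1/q}_q B^0_{\infty,1}(\R^d) \hookrightarrow \BB^{0,b}_{\infty, q}(\R^d)$, which is an instance of the interpolation computation above with all integrability indices equal to $\infty$; here Theorem \ref{TheoremInterpolation}(ii) (with the limiting values $p=\infty$, using \eqref{IntFormp=infty}) applies directly. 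I would take this second route.

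For the sharpness claim \eqref{SharpTheoremEmbedinngCriticalContinuity}: the direction $u\le 1 \Rightarrow$ embedding is immediate from $T^{b+1/q}_q B^{d/p}_{p,u}(\R^d) \hookrightarrow T^{b+1/q}_q B^{d/p}_{p,1}(\R^d)$ (Theorem \ref{TheoremEmbeddingsBBCharacterization}, condition (iii) with $q_0=u\le 1=q_1$, $b_0=b_1$, $r_0=r_1=q$) combined with \eqref{TheoremEmbedinngCritical1Continuity}. For the necessity of $u\le 1$, I would construct an extremal function via wavelets exactly as in the proof of Theorem \ref{TheoremEmbedinngCritical}: take $f = \sum \lambda^{j,G}_m 2^{-jd/2}\Psi^j_{G,m}$ with $\lambda^{j,G}_m = (1+j)^{-\varepsilon}$ for $m=(0,\dots,0)$, $G=(M,\dots,M)$ and $0$ otherwise, where $b+\tfrac1q+\tfrac1u < \varepsilon < b+\tfrac1q+1$. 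By Theorem \ref{ThmWaveletsNewBesov}, $\|f\|_{T^{b+1/q}_q B^{d/p}_{p,u}(\R^d)} \asymp \big(\sum_k 2^{k(b+1/q-\varepsilon+1/u)q}\big)^{1/q} < \infty$. On the other hand, using that $f$ is (up to smooth tails) continuous with $f(0)\gtrsim \sum_j (1+j)^{-\varepsilon}$ — divergent since $\varepsilon$ can be taken $\le 1$ only if... wait, here $\varepsilon$ may exceed $1$, so instead one estimates the modulus of smoothness: near the origin $|f(x) - f(0)| \asymp \sum_{2^j \gtrsim |x|^{-1}} (1+j)^{-\varepsilon}$ roughly, giving $\omega_1(f, 2^{-k})_\infty \gtrsim \sum_{j\le k}(1+j)^{-\varepsilon} \asymp (1+k)^{1-\varepsilon}$ when $\varepsilon<1$, or $\asymp$ const when $\varepsilon>1$; in either case $\|f\|_{\BB^{0,b}_{\infty,q}(\R^d)}^q \gtrsim \sum_k (1+k)^{bq}(1+k)^{(1-\varepsilon)q_+}$ which one arranges to diverge by the lower bound on $\varepsilon$. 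I expect the main obstacle to be the target-space identification $(L_\infty, C^1)_{(1,b),q} = \BB^{0,b}_{\infty,q}$: one must be careful that the relevant $K$-functional estimate $K(t,f;L_\infty,C^1)\asymp \omega_1(f,t)_\infty + t\|f\|_{L_\infty}$ is valid (this is a Marchaud/Jackson--Bernstein fact for $C(\R^d)$, cf. the approach in \cite{DominguezTikhonov}), and that the additional $L_\infty$ term and the boundary behavior of the logarithmic weight are handled by Hardy's inequality \eqref{H2}, exactly paralleling the computation $\|f\|_{(L_p,L_\infty)_{(1,b),q}}\asymp\|f\|_{L_{\infty,q}(\log L)_b}$ carried out in the proof of Theorem \ref{TheoremEmbedinngCritical}. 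The second obstacle, minor, is making the wavelet counterexample rigorous at $p=\infty$, where one should instead use the periodic wavelets $\Psi^{j,L,\mathrm{per}}_{G,m}$ and the local (compact-support) estimate for the modulus of smoothness, as in Remark \ref{RemarkRn}.
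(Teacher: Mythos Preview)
Your overall plan matches the paper's proof, but there is a persistent indexing error that would break the argument as written. You repeatedly invoke the limiting interpolation method $(\cdot,\cdot)_{(1,b),q}$ together with Theorem~\ref{TheoremInterpolation}(ii) and \eqref{IntFormp=infty}; this is the wrong endpoint. Since the truncation parameter is $b+1/q>0$, Theorem~\ref{TheoremInterpolation}(i) applies and one must use $\theta=0$: the source space is
\[
T^{b+1/q}_q B^{d/p}_{p,1}(\R^d)=(B^{d/p}_{p,1}(\R^d),B^{d/p+1}_{p,1}(\R^d))_{(0,b),q},
\]
and the target is $(L_\infty(\R^d),W^1_\infty(\R^d))_{(0,b),q}=\BB^{0,b}_{\infty,q}(\R^d)$ via $K(t,f;L_\infty,W^1_\infty)\asymp t\|f\|_{L_\infty}+\omega_1(f,t)_\infty$. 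With $\theta=1$ and $b>-1/q$ the space $(A_0,A_1)_{(1,b),q}$ is trivial, so your version collapses. The same correction applies to your alternative route through $T^{b+1/q}_q B^0_{\infty,1}(\R^d)$: there you need \eqref{IntFormp=1}, not \eqref{IntFormp=infty}. Your parenthetical ``$b<0$ after the shift since $b+1/q>0$ means we are in the $\theta=1$ case'' is simply confused; the shifted exponent $(b+1/q)-1/q=b>-1/q$ is precisely the $\theta=0$ regime.

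For the sharpness, your wavelet construction is the right one and coincides with the paper's. Two remarks: first, the paper takes $\max\{b+1/q+1/u,\,1\}<\varepsilon<b+1/q+1$ (the extra lower bound $\varepsilon>1$ makes the tail sum $\sum_{j>k}(1+j)^{-\varepsilon}$ convergent and $\asymp(1+k)^{1-\varepsilon}$). Second, the modulus-of-smoothness estimate is done concretely on $\R^d$ by evaluating $|f(0)-f(2^{-k}e_1)|$ and using that $\psi_M(z)\le c\,\psi_M(0)$ for $z\ge 2$; this gives $\omega_1(f,2^{-k})_\infty\gtrsim\sum_{j\ge k+1}(1+j)^{-\varepsilon}\asymp(1+k)^{1-\varepsilon}$, whence $\|f\|_{\BB^{0,b}_{\infty,q}}^q\gtrsim\sum_k(1+k)^{(b+1-\varepsilon)q}=\infty$. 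Your heuristic had the summation range inverted, and there is no need to pass to periodic wavelets.
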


\begin{rem}
	In view of Theorem \ref{TheoremEmbeddings1}, one has
	$$
				B^{d/p, b + 1/\min\{1, q\}}_{p,q}(\R^d) + B^{d/p, b + 1/q}_{p, \min\{1, q\}}(\R^d) \hookrightarrow T^{b+1/q}_q B^{d/p}_{p, 1}(\R^d)
	$$
	and thus \eqref{TheoremEmbedinngCritical1Continuity} implies, in particular, \eqref{ContinuityEnvelope1}. Furthermore, the embedding $B^{d/p, b + 1/\min\{1, q\}}_{p,q}(\R^d) \hookrightarrow$ $T^{b+1/q}_q B^{d/p}_{p, 1}(\R^d)$ is strict if $q \neq 1$ (by Proposition \ref{PropositionCoincidences}, these spaces coincide of $q=1$). Indeed, counterexamples given in Remark \ref{Remark6.5} work.
\end{rem}

\begin{proof}[Proof of Theorem \ref{TheoremEmbedinngCriticalContinuity}]
	It follows from  \eqref{BasicEmbedding} (where $\mathbb{T}^d$ is now replaced by $\R^d$) that
	$$
		B^{d/p + 1}_{p, 1}(\R^d) \hookrightarrow W^1_\infty(\R^d).
	$$
	To be more precise, by Proposition \ref{SobProp},
	$$
		\|f\|_{W^1_\infty(\R^d)} \lesssim \|f\|_{B^{d/p}_{p, 1}(\R^d)} + \|\nabla f\|_{B^{d/p}_{p, 1}(\R^d)} \asymp \|f\|_{B^{d/p+1}_{p, 1}(\R^d)}.
	$$
	Applying limiting interpolation with $\theta =0$ (cf. \eqref{DefLimInterpolation}) we infer
	\begin{equation}\label{TheoremEmbedinngCriticalContinuityProof1}
		(B^{d/p}_{p, 1}(\R^d), B^{d/p + 1}_{p, 1}(\R^d))_{(0, b), q} \hookrightarrow (L_\infty(\R^d), W^1_\infty(\R^d))_{(0, b), q}.
	\end{equation}
	According to Theorem \ref{TheoremInterpolation}, the left-hand side space can be equivalently characterized as
	\begin{equation}\label{TheoremEmbedinngCriticalContinuityProof2}
		(B^{d/p}_{p, 1}(\R^d), B^{d/p + 1}_{p, 1}(\R^d))_{(0, b), q} = T^{b+1/q}_q B^{d/p}_{p, 1}(\R^d).
	\end{equation}
	On the other hand, the well-known formula (see e.g. \cite[Chapter 5, Theorem 4.12]{BennettSharpley})
	$$
	K(t^k, f; L_\infty(\R^d), W^k_\infty(\R^d)) \asymp t^k \|f\|_{L_\infty(\R^d)} + \omega_k(f, t)_\infty
	$$
	for $t \in (0, 1)$ implies
	\begin{equation}\label{TheoremEmbedinngCriticalContinuityProof3}
		(L_\infty(\R^d), W^1_\infty(\R^d))_{(0, b), q} = \BB^{0, b}_{\infty, q}(\R^d).
	\end{equation}
	 Putting together \eqref{TheoremEmbedinngCriticalContinuityProof1}--\eqref{TheoremEmbedinngCriticalContinuityProof3} we obtain
	$$
	T^{b+1/q}_q B^{d/p}_{p, 1}(\R^d) \hookrightarrow 	\BB^{0, b}_{\infty, q}(\R^d),
	$$
	i.e., \eqref{TheoremEmbedinngCritical1Continuity} holds.
	
	Conversely, we let
	\begin{equation}\label{TheoremEmbedinngCriticalContinuityProof4}
			T^{b+1/q}_q B^{d/p}_{p, u}(\R^d) \hookrightarrow \BB^{0,b}_{\infty, q}(\R^d)
	\end{equation}
	for some $0 < u \leq \infty$. Then necessarily $u \leq 1$. Indeed, assume momentarily that \eqref{TheoremEmbedinngCriticalContinuityProof4} holds for some $u > 1$. Define
	\begin{equation}\label{newnewnew}
f = \sum^\infty_{j=0} \sum_{G\in G^j} \sum_{m \in \mathbb{Z}^d}
\lambda^{j,G}_m \, 2^{-j d/2} \, \Psi^j _{G,m},
\end{equation}
where
	\begin{equation}\label{newnewnew2}
		\lambda^{j,G}_m = \left\{\begin{array}{cl}  (1 + j)^{-\varepsilon}, & \quad j \in \N_0, \quad m= (0, \ldots, 0), \quad G = (M, \ldots, M),  \\
		0, & \text{otherwise},
		       \end{array}
                        \right.
	\end{equation}
	and $\max\{b+ \frac{1}{q} + \frac{1}{u}, 1 \} < \varepsilon < b + \frac{1}{q} + 1$. By Theorem \ref{ThmWaveletsNewBesov},
	$$
		\|f\|_{T^{b+1/q}_q B^{d/p}_{p, u}(\R^d)} \asymp \bigg(\sum_{k=0}^\infty 2^{k (b+1/q-\varepsilon +1/u) q} \bigg)^{1/q} < \infty.
	$$
	We may assume, without loss of generality, that $\psi_M (0) = \max_{z \in \R} \psi_M(z)$ (in particular, $\psi_M(0) > 0$) and there exists $c \in (0, 1)$ (depending on $\psi_M$) such that $\psi_M(z) \leq c \,  \psi_M(0)$ for all $z \geq 2$. Let $e_1 = (1, 0, \ldots, 0) \in \R^d$. Given $k \in \N_0$, we can estimate $\omega_1(f,2^{-k})_\infty$ (cf. \eqref{DefModuli}) as follows (cf. \eqref{DefinitionWavelet})
	\begin{align*}
		\omega_1(f,2^{-k})_\infty & \geq |f(0) - f(2^{-k} e_1)| \\
		& = \bigg| \sum^\infty_{j=0} (1+ j)^{-\varepsilon} \, (\psi_{M} (0))^d  - \sum^\infty_{j=0} (1 + j)^{-\varepsilon} \, \psi_{M}(2^{j-k}) (\psi_{M} (0))^{d-1} \bigg| \\
		& =  (\psi_{M} (0))^{d-1} \sum_{j=0}^\infty (1+j)^{-\varepsilon}  (\psi_M(0) - \psi_{M}(2^{j-k})) \\
		& \geq (\psi_{M} (0))^{d-1} \sum_{j=k+1}^\infty (1+j)^{-\varepsilon}  (\psi_M(0) - \psi_{M}(2^{j-k})) \\
		& \geq (1-c)  (\psi_{M} (0))^{d} \sum_{j=k+1}^\infty (1+j)^{-\varepsilon} \\
		& \asymp (1+k)^{-\varepsilon + 1}.
	\end{align*}
	Therefore, one can estimate
	\begin{align*}
		\|f\|_{\BB^{0,b}_{\infty, q}(\R^d)} & \gtrsim \bigg(\sum_{k=0}^\infty \big((1+k)^{b} \omega_1(f,2^{-k})_\infty\big)^q  \bigg)^{1/q} \\
		& \gtrsim \bigg(\sum_{k=0}^\infty (1+k)^{b q -\varepsilon  q+ q} \bigg) = \infty.
	\end{align*}
	This yields the desired contradiction of \eqref{TheoremEmbedinngCriticalContinuityProof4} if $b > -1/q$.
\end{proof}

\begin{rem}
	It remains to investigate the borderline setting $b=-1/q$ in Theorem \ref{TheoremEmbedinngCriticalContinuity}. In this case, the method of proof proposed for Theorem \ref{TheoremEmbedinngCriticalContinuity} still works, but the expected spaces $T_q B^{d/p}_{p, 1}(\R^d)$ do not arise. Instead, the smaller spaces $T^*_q B^{d/p}_{p, 1}(\R^d)$ (cf. \eqref{33}) play a key role. To be more precise, if $0< p, q < \infty$ and $0 < u \leq \infty$ then
	$$
	T^*_q B^{d/p}_{p, u}(\R^d) \hookrightarrow \BB^{0, -1/q}_{\infty, q}(\R^d)	 \iff u \leq 1.
	$$
	The proof of the implication $\Rightarrow$ follows similar ideas as in Theorem \ref{TheoremEmbedinngCriticalContinuity} but now relying on Remark \ref{RemDelicate}. More precisely, consider $f$ given by \eqref{newnewnew} with \eqref{newnewnew2} and $\varepsilon = 1$.
	\end{rem}

\subsection{Sharp embeddings into $L_\infty$} It is the well-known fact that the Sobolev space $W^{d/p}_p(\R^d), \, p > 1,$ is not formed by bounded functions. As already mentioned in Section \ref{SectionEmbCritical}, this defect can be overcome if we replace $L_\infty(\R^d)$ by the slightly larger exponential class (cf. \eqref{Trudinger}). Another alternative is to fix $L_\infty(\R^d)$ as a target space and restrict the domain space $W^{d/p}_p(\R^d)$ to the smaller Lorentz-Sobolev space $W^{d/p} L_{p, 1}(\R^d)$ (cf. \cite{Stein81} for a precise statement; see also \cite{DeVoreSharpley}). Next we propose a third methodology in terms of Besov spaces $\mathbf{B}^{0, b}_{p, q}(\R^d)$ using the embeddings for truncated Triebel-Lizorkin spaces.

\begin{thm}
Let $1 < p < \infty$ and $d/p \in \N$. Then
		$$
		\|f\|_{L_\infty(\R^d)} \lesssim  \|f\|_{L_p(\R^d)} + \sum_{l=1}^d  \bigg\|\frac{\partial^{d/p} f}{\partial x_l^{d/p}} \bigg\|_{\mathbf{B}_{p, 1}^{0, -1/p}(\R^d)}.
	$$
\end{thm}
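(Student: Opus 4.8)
The statement asserts a Sobolev-type embedding at the critical exponent $s = d/p \in \N$, with the target space $L_\infty(\R^d)$ and the domain space phrased through the Besov space of zero smoothness $\mathbf{B}^{0,-1/p}_{p,1}(\R^d)$ acting on the top-order derivatives. The natural strategy is to reduce everything to the trace-type embedding into $L_\infty$ already recorded in the critical embedding section, and then transfer the derivatives inside via the Sobolev-type characterization of truncated spaces (Proposition \ref{SobProp2} and its corollaries). First I would set $m := d/p$ and invoke Corollary \ref{CorSobBzero} (the Sobolev characterization in terms of $\mathbf{B}^{0,b}_{p,q}(\R^d)$) with $q = 1$, $b = -1/p$: this gives
$$
	\|f\|_{T^{b+1/q}_q F^m_{p, 2}(\R^d)} \asymp \|f\|_{\mathbf{B}^{0,-1/p}_{p,1}(\R^d)} + \sum_{l=1}^d \bigg\|\frac{\partial^m f}{\partial x_l^m}\bigg\|_{\mathbf{B}^{0,-1/p}_{p,1}(\R^d)},
$$
where $b + 1/q = -1/p + 1 = 1/p'$ and $m = d/p$, so the left-hand side is $\|f\|_{T^{1/p'}_1 F^{d/p}_{p,2}(\R^d)}$. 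Moreover $\|f\|_{\mathbf{B}^{0,-1/p}_{p,1}(\R^d)} \gtrsim \|f\|_{L_p(\R^d)}$ (the $L_p$-norm is one of the two summands in the definition \eqref{BesovDifDef}), so it suffices to bound $\|f\|_{L_\infty(\R^d)}$ by $\|f\|_{T^{1/p'}_1 F^{d/p}_{p,2}(\R^d)}$ up to the $L_p$-term, i.e.\ by the full right-hand side.

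The second, and main, step is to prove the embedding
$$
	T^{1/p'}_1 F^{d/p}_{p, 2}(\R^d) \hookrightarrow L_\infty(\R^d).
$$
Here I would use the Franke--Jawerth-type embedding for truncated spaces, Theorem \ref{ThmFJ} (or its $B$-into-$F$ companion Theorem \ref{ThmFJ2}), to pass from the $F$-scale to a truncated $B$-space with a lower integrability parameter and zero smoothness: with $p_0 = p$, a suitable $p_1 = \infty$, $s_0 = d/p$, $s_1 = 0$ (so $s_0 - d/p_0 = 0 = s_1 - d/p_1$), $q_0 = 2$, $q_1 = 1$, $b_0 = 1/p'$, one checks the numerology $b_1 + 1/q_1 = b_0 + 1/\max\{p_0, q_0\}$, i.e.\ $b_1 + 1 = 1/p' + 1/\max\{p,2\}$, to land in case (iv)/(v) of Theorem \ref{ThmFJ}; this yields $T^{1/p'}_1 F^{d/p}_{p,2}(\R^d) \hookrightarrow T^{b_1}_{1} B^0_{\infty, 1}(\R^d)$ for the appropriate $b_1$. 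Then Theorem \ref{ThmC} (embedding of truncated Besov spaces into $C(\R^d)$, hence into $L_\infty(\R^d)$) applies at $s = 0 = d/\infty$: I need $b_1 > 1 - 1/\max\{1, q_1\} = 0$ when $r = 1 > 1$ is false — actually $r = 1 \le 1$, so the limiting value $b_1 = 1 - 1/\max\{1,1\} = 0$ is admissible; one should verify $b_1 = 1/p' + 1/\max\{p,2\} - 1 \ge 0$, which holds since $1/p' + 1/\max\{p,2\} \ge 1$ for all $p \in (1,\infty)$ (for $p \ge 2$ this is $1 - 1/p + 1/p = 1$; for $p \le 2$ it is $1 - 1/p + 1/2 \ge 1$). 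An alternative, cleaner route bypassing the exact bookkeeping is to note $T^{1/p'}_1 F^{d/p}_{p,2}(\R^d)$ equals (up to equivalence) a limiting interpolation space $(B^{d/p}_{p,1}(\R^d), B^{d/p+\varepsilon}_{p,1}(\R^d))_{(0, 1/p'-1), 1}$ by Theorem \ref{TheoremInterpolationF}/\ref{Thm4.2}, but since $1/p' - 1 < 0$ one rather uses the $\theta = 1$ version built on $B^0_{p,\min\{p,1\}} \hookrightarrow L_p$ and $B^{d/p}_{p,1} \hookrightarrow L_\infty$ (this is exactly the mechanism of Theorem \ref{TheoremEmbedinngCritical}, but now landing in $L_\infty$ rather than a Lorentz--Zygmund space because the logarithmic exponent is non-negative), applying limiting interpolation directly to $B^{d/p}_{p,1}(\R^d)\hookrightarrow L_\infty(\R^d)$ and $B^0_{p,1}(\R^d)\hookrightarrow L_p(\R^d)$ and using $K(t,f;L_p,L_\infty) \asymp (\int_0^{t^p}(f^*)^p)^{1/p}$ together with $\int_0^1 (1-\log t)^{-q/p}\,\frac{dt}{t(1-\log t)}<\infty$ when $q=1$ to get a bounded quantity controlling $\|f\|_{L_\infty}$.

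The main obstacle I anticipate is matching the logarithmic/summability parameters precisely at the critical point: one must make sure that the loss incurred by the Franke--Jawerth step (the shift $1/\max\{p,2\}$) exactly cancels against the gain $1/q = 1$ and leaves a nonnegative logarithmic exponent so that Theorem \ref{ThmC} (with its borderline case $b = 1 - 1/\max\{1,q\}$ allowed when $r \le 1$) applies — there is no slack, so a sign error would break the argument. Once that is secured, I would assemble the chain
$$
	\|f\|_{L_\infty(\R^d)} \lesssim \|f\|_{T^{1/p'}_1 F^{d/p}_{p,2}(\R^d)} \asymp \|f\|_{\mathbf{B}^{0,-1/p}_{p,1}(\R^d)} + \sum_{l=1}^d \bigg\|\frac{\partial^{d/p}f}{\partial x_l^{d/p}}\bigg\|_{\mathbf{B}^{0,-1/p}_{p,1}(\R^d)} \lesssim \|f\|_{L_p(\R^d)} + \sum_{l=1}^d \bigg\|\frac{\partial^{d/p}f}{\partial x_l^{d/p}}\bigg\|_{\mathbf{B}^{0,-1/p}_{p,1}(\R^d)},
$$
where the last step drops the redundant $\|f\|_{\mathbf{B}^{0,-1/p}_{p,1}}$ down to $\|f\|_{L_p}$ only in the sense that $\|f\|_{L_p} \le \|f\|_{\mathbf{B}^{0,-1/p}_{p,1}}$; to get the inequality in the direction stated one instead keeps $\|f\|_{\mathbf{B}^{0,-1/p}_{p,1}}$ and then bounds it by $\|f\|_{L_p} + \sum_l \|\partial^{d/p}_{x_l} f\|_{\mathbf{B}^{0,-1/p}_{p,1}}$ using once more Corollary \ref{CorSobBzero} in the reverse direction (its equivalence is two-sided), which closes the argument. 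A final routine check is that $\partial^{d/p}_{x_l} f \in \mathbf{B}^{0,-1/p}_{p,1}(\R^d)$ makes sense distributionally and that all spaces embed into $\mathcal{S}'(\R^d)$, which is immediate from \eqref{DualPairing}.
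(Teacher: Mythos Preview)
Your overall architecture matches the paper's: establish $T^{1/p'}_1 F^{d/p}_{p,2}(\R^d)\hookrightarrow L_\infty(\R^d)$, combine it with the Sobolev characterization (Corollary~\ref{CorSobBzero}), then absorb $\|f\|_{\mathbf B^{0,-1/p}_{p,1}}$ into the right-hand side. Two steps need repair.

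\textbf{Embedding into $L_\infty$.} You conflate the conditions of Theorem~\ref{TheoremFB1new} (fixed $p$, involving $1/\max\{p,q_0\}$) with those of Theorem~\ref{ThmFJ} (Franke--Jawerth, involving $1/p_0$). With the correct Theorem~\ref{ThmFJ} numerology and your choice $q_1=1$, case (v) forces $b_1+1=b_0+1/p_0=1/p'+1/p=1$, i.e.\ $b_1=0$, which is excluded. Your verification ``$1-1/p+1/2\ge 1$ for $p\le 2$'' is also false (it fails for all $1<p<2$). The clean fix, which is exactly what the paper does, is to invoke Theorem~\ref{ThmCF} directly: case (v) there ($1<p<\infty$, $0<q\le\infty$, $0<r\le 1$, $b\ge 1/p'$, $s=d/p$) applies verbatim with $b=1/p'$, $r=1$, $q=2$. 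If you want to unfold that proof, take $q_1=p$ (not $1$) in Theorem~\ref{ThmFJ} to land in $T^{1/p'}_1 B^0_{\infty,p}$ via case (iii), and then apply Theorem~\ref{ThmC} case (iii) (since $p>1$, $r=1\le 1$, $b=1/p'\ge 1/p'$).

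\textbf{Final absorption step.} Corollary~\ref{CorSobBzero} ``in the reverse direction'' does not give $\|f\|_{\mathbf B^{0,-1/p}_{p,1}}\lesssim \|f\|_{L_p}+\sum_l\|\partial^{d/p}_{x_l}f\|_{\mathbf B^{0,-1/p}_{p,1}}$; that corollary relates $T^{1/p'}_1 F^{d/p}_{p,2}$ to $\mathbf B^{0,-1/p}_{p,1}$-norms of derivatives, not $\mathbf B^{0,-1/p}_{p,1}$ of $f$ to $L_p$ of $f$. The paper instead uses the elementary estimate $\omega_{d/p}(f,t)_p\lesssim t^{d/p}\|f\|_{W^{d/p}_p(\R^d)}$ to get $\|f\|_{\mathbf B^{0,-1/p}_{p,1}}\lesssim \|f\|_{W^{d/p}_p}\asymp \|f\|_{L_p}+\sum_l\|\partial^{d/p}_{x_l}f\|_{L_p}\le \|f\|_{L_p}+\sum_l\|\partial^{d/p}_{x_l}f\|_{\mathbf B^{0,-1/p}_{p,1}}$, which closes the chain.
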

\begin{proof}
	According to Theorem \ref{ThmCF},
	\begin{equation}\label{App1}
		T^{1/p'}_1 F^{d/p}_{p, 2} (\R^d) \hookrightarrow L_\infty(\R^d).
	\end{equation}
	Furthermore, by Propositions \ref{SobProp2} and \ref{PropositionCoincidences},
	\begin{align}
		\|f\|_{T^{1/p'}_1 F^{d/p}_{p, 2} (\R^d)} &\asymp \|f\|_{T^{1/p'}_1 F^{0}_{p, 2} (\R^d)} +  \sum_{l=1}^d  \bigg\|\frac{\partial^{d/p} f}{\partial x_l^{d/p}} \bigg\|_{T^{1/p'}_1 F^{0}_{p, 2} (\R^d)} \nonumber\\
		& \asymp \|f\|_{\mathbf{B}^{0, -1/p}_{p, 1}(\R^d)} + \sum_{l=1}^d  \bigg\|\frac{\partial^{d/p} f}{\partial x_l^{d/p}} \bigg\|_{\mathbf{B}^{0, -1/p}_{p, 1}(\R^d)}. \label{App2}
	\end{align}
	It follows from \eqref{App1} and \eqref{App2} that
	\begin{equation*}
		\|f\|_{L_\infty(\R^d)} \lesssim \|f\|_{\mathbf{B}^{0, -1/p}_{p, 1}(\R^d)} + \sum_{l=1}^d  \bigg\|\frac{\partial^{d/p} f}{\partial x_l^{d/p}} \bigg\|_{\mathbf{B}^{0, -1/p}_{p, 1}(\R^d)}.
	\end{equation*}
	Thus the proof will be finished once we show
	\begin{equation}\label{App3}
		\|f\|_{\mathbf{B}^{0, -1/p}_{p, 1}(\R^d)} \lesssim \|f\|_{L_p(\R^d)}   + \sum_{l=1}^d  \bigg\|\frac{\partial^{d/p} f}{\partial x_l^{d/p}} \bigg\|_{\mathbf{B}^{0, -1/p}_{p, 1}(\R^d)}.
	\end{equation}
	Indeed, according to \eqref{BesovDifDef} (with $k = d/p$)
	$$
		\|f\|_{\mathbf{B}^{0, -1/p}_{p, 1}(\R^d)} \asymp \|f\|_{L_p(\R^d)}  + \int_0^1 (1-\log t)^{-1/p} \omega_{d/p}(f, t)_p \frac{dt}{t}
	$$
	and the estimates $\omega_{d/p}(f, t)_p \lesssim t^{d/p} \|f\|_{W^{d/p}_p(\R^d)}$ (cf. \eqref{LipSobFract}), we get via \eqref{Der0}
	\begin{align*}
		\|f\|_{\mathbf{B}^{0, -1/p}_{p, 1}(\R^d)}& \lesssim \|f\|_{W^{d/p}_p(\R^d)} \asymp \|f\|_{L_p(\R^d)} + \sum_{l=1}^d  \bigg\|\frac{\partial^{d/p} f}{\partial x_l^{d/p}} \bigg\|_{L_p(\R^d)} \\
		& \leq \|f\|_{L_p(\R^d)} + \sum_{l=1}^d  \bigg\|\frac{\partial^{d/p} f}{\partial x_l^{d/p}} \bigg\|_{\mathbf{B}_{p, 1}^{0, -1/p}(\R^d)}.
	\end{align*}
	Hence \eqref{App3} holds.
\end{proof}

\subsection{Sobolev-type embeddings in the subcritical case} In this section we investigate optimal Sobolev-type embeddings in the sub-critical case $\sigma_p \leq s < \frac{d}{p}$. Recall that $\sigma_p = d (\frac{1}{p}-1)_+$. This question
is nowadays completely understood for Besov and Triebel--Lizorkin spaces (with generalized smoothness) when $\sigma_p < s < \frac{d}{p}$, see \cite{DeVore}, \cite{BricchiMoura}, \cite{CaetanoMoura04}, \cite{CaetanoFarkas}, \cite{CaetanoLeopold}, \cite{Haroske}, \cite{Martin}, \cite{MouraNevesPiotrowski} and the references within.

In the borderline case $s= \sigma_p$ and classical spaces $A^s_{p, q}(\R^d), \, A \in \{B, F\}$,  the optimality (in terms of growth envelopes) of the related Sobolev inequalities may be found in \cite{Vybiral} (with \cite{Haroske} as a forerunner). However, to the best of our knowledge, the corresponding extension to generalized smoothness (even for the prototype given by  logarithmic smoothness) remains as an open problem. This question was explicitly stated by Triebel \cite{Triebel12}, where first results in this direction were obtained. Before we proceed further, we recall that $A^{\sigma_p, b}_{p, q}(\R^d)$ is formed by distributions, but not necessarily regular distributions (cf. Theorems \ref{TheoremRegular} and \ref{TheoremRegularFspaces}). In particular, if $1 < p < \infty$ then
$$
	B^{0, b}_{p, q}(\R^d) \hookrightarrow L_1^{\text{\text{loc}}}(\R^d) \iff     \left\{\begin{array}{lcl}
                            b \geq 0 & \text{ if }  & 0 < q \leq
                            \min\{2,p\}, \\
                            b > 1/p - 1/q & \text{ if } & 1 < p \leq
                            2 \quad  \text{and} \quad p < q \leq \infty, \\
                            b > 1/2 -1/q & \text{ if } & 2 < p <
                            \infty \quad \text{and} \quad 2 < q \leq \infty.
            \end{array}
            \right.
$$
Therefore, under the above assumptions on the parameters, it makes sense to study Sobolev-type embeddings related to $B^{0, b}_{p, q}(\R^d)$. To the best of our knowledge, the sharpest result so far appears in \cite{Dominguez17} (in particular, improving earlier results in \cite{Triebel12}). Namely, if $1 < p < \infty, 0 < q \leq \infty$ and $b > \frac{1}{\min\{2, p, q\}}-\frac{1}{q}$ then
\begin{equation}\label{Dominguez}
	\|f\|_{L^{(p, b- \frac{1}{\min\{2, p, q\}}, q}(0,1)} \lesssim \|f\|_{B^{0, b}_{p, q}(\R^d)}.
\end{equation}
Here, for $0 < p < \infty, 0 < q \leq \infty$ and $b \geq -1/q \, (b > 0 \text{ if } q=\infty)$, by $L^{(p, b, q}(0,1)$ we mean the \emph{small Lebesgue space} equipped with\index{\bigskip\textbf{Spaces}!$L^{(p, b, q}(0, 1)$}\label{SMALLLEB}
\begin{equation}\label{DefSmallLebesgueSpaces}
	\|f\|_{L^{(p, b, q}(0,1)} := \bigg(\int_0^1 (1-\log t)^{b q} \bigg(\int_0^t (f^*(u))^p \, du \bigg)^{q/p} \frac{dt}{t} \bigg)^{1/q}
\end{equation}
(where the usual change is made if $q=\infty$); see the recent survey paper \cite{Fiorenza} and the extensive list of references given there.  Note that $b \geq -1/q$ is natural; otherwise, the space $L^{(p, b, q}(0,1)$ coincides with $L_p(0, 1)$. Clearly $L^{(p, b, p}(0, 1) = L_p(\log L)_{b + 1/p}(0,1)$ if $b > -1/p$, but in general the spaces $L^{(p, b, q}(0, 1)$ with $p \neq q$ does not fit into the scale of Lorentz--Zygmund spaces \eqref{DefLZ}. However, there are some known relations between these two scales of function spaces, e.g.,
$$
L_{p, q}(\log L)_{b + 1/\min\{p, q\}}(0, 1) \hookrightarrow 	L^{(p, b, q}(0,1) \hookrightarrow L_{p, q}(\log L)_{b + 1/\max\{p, q\}}(0, 1)
$$
provided that $0 < p < \infty, 0 < q \leq \infty$ and $b > -1/q$. Applying these relations, one immediately gets from \eqref{Dominguez} (and under the same assumptions on the parameters as there) that
$$
	\|f\|_{L_{p, q} (\log L)_{b - \frac{1}{\min\{2, p, q\}} + \frac{1}{\max\{p, q\}} }(0,1)} \lesssim \|f\|_{B^{0, b}_{p, q}(\R^d)}
$$
 (cf. \cite[Corollary 3.2]{Dominguez17}). However, the methodology in \cite{Dominguez17}, which is based on extrapolation techniques, is not strong enough to achieve the optimality of \eqref{Dominguez} in the full range of parameters. Below, we propose a novel methodology relying on the spaces $T^b_r B^s_{p, q}(\R^d)$ which gives an answer to the Triebel's question and, in particular, improves \eqref{Dominguez}.

 Before we state our result, it is worthwhile to mention that the corresponding question for $\BB^{0, b}_{p, q}(\R^d)$ (cf. \eqref{BesovDifDef}) has been  now completely solved as a combination of the papers \cite{CaetanoGogatishvili, CaetanoGogatishvili11, Dominguez16, NevesOpic} and the references given there. In this regard, let us emphasize that $\BB^{0, b}_{p, q}(\R^d) \neq B^{0, b}_{p, q}(\R^d)$ and, in fact, there are substantial distinctions between these two Besov spaces of smoothness near zero. A detailed account on this point may be found in \cite{DominguezTikhonov}.

 \begin{thm}\label{SobolevTheoremSubcritical}
 	Let $1 < p < \infty, 0 < q \leq \infty$ and $b > -1/q$. Then
	\begin{equation}\label{SobolevTheoremSubcritical2123}
		\|f\|_{L^{(p, b, q}(0, 1)} \lesssim \|f\|_{T^{b+1/q}_q B^{0}_{p, \min\{2, p\}}(\R^d)}.
	\end{equation}
	The embedding is optimal in the following sense. Let $0 < u \leq \infty$. If $p \leq 2$ then
	\begin{equation}\label{SobolevTheoremSubcritical4324}
		\|f\|_{L^{(p, b, q}(0, 1)} \lesssim \|f\|_{T^{b+1/q}_q B^{0}_{p, u}(\R^d)} \iff u \leq p.
	\end{equation}
	Moreover,  if $2 < \min \{p, u\}$ then the inequality given in \eqref{SobolevTheoremSubcritical4324} is not true whenever one of the following conditions holds:
	\begin{enumerate}[\upshape(i)]
	\item $b \in [-\frac{1}{q}, -\frac{1}{q} + \frac{1}{2}-\frac{1}{u})$,
	\item $b=-\frac{1}{q} + \frac{1}{2}-\frac{1}{u}$ \qquad and \qquad $q > 2$.
	\end{enumerate}
 \end{thm}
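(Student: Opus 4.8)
The strategy is to realize the small Lebesgue space $L^{(p, b, q}(0,1)$ as a limiting interpolation space and then transport the problem to the truncated setting via Theorem \ref{Thm4.2}. First I would recall (or re-derive via the $K$-functional formula \eqref{KfunctLp}, exactly as in the proof of Theorem \ref{TheoremEmbedinngCritical}) that, for the ordered couple $(L_p(\R^d), L_\infty(\R^d))$ restricted to the unit cube, one has
\begin{equation*}
	(L_\infty(0,1), L_p(0,1))_{(1, b), q} = L^{(p, b, q}(0,1)
\end{equation*}
with the obvious modifications; this follows from \eqref{KfunctLp} and monotonicity arguments of the same type carried out in \eqref{TheoremEmbedinngCriticalProof1}--\eqref{KfunctLp}. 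Indeed, $K(t, f; L_\infty, L_p) = t K(t^{-1}, f; L_p, L_\infty) \asymp t \left(\int_0^{t^{-p}} (f^*(u))^p\, du\right)^{1/p}$ after a change of variables, and plugging this into \eqref{DefLimInterpolation} with $\theta=1$ and discretizing produces precisely \eqref{DefSmallLebesgueSpaces}. On the smoothness side, I would use the elementary embedding $B^{0}_{p, \min\{2,p\}}(\R^d) \hookrightarrow L_p(\R^d)$ (Littlewood--Paley, cf. \eqref{LPestim2}) together with the trivial $B^{d/p}_{p, \min\{2,p\}}(\R^d) \hookrightarrow L_\infty(\R^d)$ (cf. \eqref{BasicEmbedding} and \eqref{EmbBFnwnqnq}), and then apply limiting interpolation with $\theta = 1$. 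The left-hand interpolation space $(B^{d/p}_{p, \min\{2,p\}}(\R^d), B^{0}_{p, \min\{2,p\}}(\R^d))_{(1, b), q}$ is, by Theorem \ref{Thm4.2}(ii) (with $s_0 = d/p > 0 = s$), exactly $T^{b+1/q}_q B^{0}_{p, \min\{2,p\}}(\R^d)$. This yields \eqref{SobolevTheoremSubcritical2123}.

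\textbf{The positive direction of the sharpness, $p \le 2$.} For \eqref{SobolevTheoremSubcritical4324} with $u \le p$, the embedding $T^{b+1/q}_q B^{0}_{p, u}(\R^d) \hookrightarrow T^{b+1/q}_q B^{0}_{p, \min\{2,p\}}(\R^d) = T^{b+1/q}_q B^{0}_{p, p}(\R^d)$ when $p \le 2$ follows from Theorem \ref{TheoremEmbeddingsBBCharacterization}(iii) (same $s, p$; $u \le p$, $b_0 = b_1$, $r_0 = r_1$), so \eqref{SobolevTheoremSubcritical2123} gives the claim. For the converse — that $u > p$ destroys the inequality — I would construct an extremal function using the wavelet characterization of Theorem \ref{ThmWaveletsNewBesov}. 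Following the pattern of Remark \ref{Remark6.5} and the proof of Theorem \ref{TheoremEmbedinngCriticalContinuity}, take $f = \sum \lambda^{j,G}_m 2^{-jd/2} \Psi^j_{G,m}$ with $\lambda^{j,G}_m = (1+j)^{-\varepsilon}$ supported on $j \in \N_0$, $m = (0,\dots,0)$, $G = (M,\dots,M)$, for a suitable $\varepsilon$ with $b + \tfrac{1}{q} + \tfrac{1}{u} < \varepsilon < b + \tfrac{1}{q} + \tfrac{1}{p}$; compute $\|f\|_{T^{b+1/q}_q B^{0}_{p, u}(\R^d)} < \infty$ via Theorem \ref{ThmWaveletsNewBesov}, and bound $f^*(2^{-kd}) \gtrsim \sum_{j=0}^k (1+j)^{-\varepsilon} \asymp (1+k)^{-\varepsilon+1}$ as in \eqref{RearrangementTrick}, so that discretizing \eqref{DefSmallLebesgueSpaces} gives $\|f\|_{L^{(p,b,q}(0,1)} \gtrsim \big(\sum_k (1+k)^{(b - \varepsilon + 1 + 1/p)q}\big)^{1/q} = \infty$ since $\varepsilon < b + 1/q + 1/p$. (The inner $L_p$-integral over the cube $Q_{k,(0,\dots,0)}$ of the partial sum contributes the extra $2^{-k d /p}$ factor, accounting for the $1/p$; this is the routine but slightly delicate computation.)

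\textbf{The case $2 < \min\{p, u\}$.} Here conditions (i)--(ii) amount to showing $b < -\tfrac{1}{q} + \tfrac{1}{2} - \tfrac{1}{u}$ (strictly, or with equality when $q > 2$) precludes \eqref{SobolevTheoremSubcritical4324}. For this I would invoke the lacunary Fourier series device, exactly as in the necessity part of Theorem \ref{TheoremRegular2} and the proof of Theorem \ref{ThmFL1loc} (the passages around \eqref{ProofSobolevTheoremSubcritical7}--\eqref{ZygmundEstim} and \eqref{AuxFunctionCounterexampleF4}--\eqref{AuxFunctionCounterexampleF5}): take $f(x) \sim \sum_{j\ge 3} \lambda_j e^{i(2^j-2)x_1}\psi(x)$ with $\psi$ as in \eqref{PsiDef}, so that $\|f\|_{T^b_r B^0_{p,q}(\R^d)}$ is the purely sequential expression \eqref{ProofSobolevTheoremSubcritical7} while the lower bound for $\|f\|_{L^{(p,b,q}(0,1)}$ is controlled from below by $\|f\|_{L_1}$ (restricted to a fixed cube), which by Zygmund's theorem \eqref{ZygmundEstim} is $\asymp (\sum_j |\lambda_j|^2)^{1/2}$. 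Choosing $\lambda_j = j^{-\varepsilon}$ with $b + 1/q < \varepsilon < 1/2$ in case (i) (and $\lambda_j = j^{-1/2}(\log j)^{-\beta}$ with $1/r < \beta < 1/2$ in the limiting case (ii) when $q, u > 2$) produces $f \in T^{b+1/q}_q B^0_{p,u}(\R^d)$ with $\|f\|_{L^{(p,b,q}(0,1)} = \infty$. The main obstacle I anticipate is \emph{not} the positive embedding — which is a clean two-step limiting-interpolation argument given Theorem \ref{Thm4.2} — but rather bookkeeping the exact threshold exponents in the counterexamples: one must verify that the small Lebesgue norm \eqref{DefSmallLebesgueSpaces} really does diverge for the candidate $f$ (this requires the rearrangement lower bound of \eqref{RearrangementTrick} type and care with the inner $\int_0^t(f^*)^p$), and that the gap between the two conditions on $\varepsilon$ is nonempty precisely when $u > p$ (respectively when $b$ lies below the stated critical value), which is where the numbers $\tfrac{1}{2}$ and $\tfrac{1}{u}$ enter through the interplay of the wavelet/lacunary norm computations with Littlewood--Paley in $L_p$.
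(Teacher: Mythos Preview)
Your overall architecture matches the paper's: limiting interpolation for the positive embedding, wavelet extremals for the sharpness when $p\le 2$, and lacunary Fourier series for $2<\min\{p,u\}$. But there are concrete errors that would make the argument fail as written.

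\textbf{The positive embedding.} You invoke Theorem~\ref{Thm4.2}(ii) with $s_0=d/p>0=s$, but part (ii) requires $s_0<s$ and, crucially, $b<0$ in $T^b_r$. Here $b+1/q>0$, so you must use part~(i) with $\theta=0$ and the ordering $(B^0_{p,\min\{2,p\}},\,B^{d/p}_{p,q_0})_{(0,b),q}$; the $\theta=1$ route you propose is not available. Relatedly, the couple $(L_p(\R^d),L_\infty(\R^d))$ is \emph{not} ordered, so the paper cannot simply apply \eqref{DefLimInterpolation}; it introduces the broken-weight limiting method $(A_0,A_1)_{(0,(b,\eta)),q}$ in \eqref{ProofSobolevTheoremSubcritical2} precisely to handle this. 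Your workaround of ``restricting to the unit cube'' does not rescue matters: the target norm $\|f\|_{L^{(p,b,q}(0,1)}$ is defined through $f^*$ computed on all of $\R^d$, not through the restriction of $f$ to a cube. Once you fix the ordering and use $\theta=0$, the Besov side is ordered and the Lebesgue side can be bounded below by the small Lebesgue norm directly from the $K$-functional formula \eqref{KfunctLp}, exactly as the paper does.

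\textbf{The wavelet counterexample for $p\le 2$.} Your choice $\lambda^{j,G}_m=(1+j)^{-\varepsilon}$ is missing the essential factor $2^{jd/p}$. With your coefficients, the sequence norm in $T^{b+1/q}_q b^0_{p,u}$ picks up $2^{-jd/p}$ (from the weight $2^{j(s-d/p)}$ with $s=0$) and converges super-exponentially, while the rearrangement estimate only gives $f^*(2^{-kd})\lesssim (1+k)^{1-\varepsilon}$; plugging this into \eqref{DefSmallLebesgueSpaces} the inner integral $\int_0^t (f^*)^p$ is dominated by $2^{-kd}(1+k)^{(1-\varepsilon)p}$, and the outer sum converges. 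The paper takes $\lambda^{j,G}_m=2^{jd/p}(1+j)^{-\varepsilon}$ so that the $2^{-jd/p}$ weight cancels on the Besov side \emph{and} the rearrangement bound becomes $f^*(2^{-jd})\gtrsim 2^{jd/p}(1+j)^{-\varepsilon}$, which after the $2^{-jd}$ from the inner integral leaves the purely polynomial tail $\sum_{j\ge k}(1+j)^{-\varepsilon p}\asymp (1+k)^{-\varepsilon p+1}$, diverging precisely when $\varepsilon<b+1/q+1/p$.

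\textbf{The lacunary counterexample.} Your $\varepsilon$-range ``$b+1/q<\varepsilon<1/2$'' omits the $1/u$ coming from the inner $\ell_u$-sum in \eqref{ProofSobolevTheoremSubcritical72}: the Besov norm computation gives $\sum_j 2^{j(b+1/q-\varepsilon+1/u)q}$, so you need $\varepsilon>b+1/q+1/u$. The nonemptiness of $(b+1/q+1/u,\,1/2)$ is exactly condition~(i).
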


\begin{rem}
	Note that \eqref{SobolevTheoremSubcritical2123} gives a non-trivial improvement of \eqref{Dominguez} since
	$$
		B^{0, b + 1/\min\{2, p, q\}}_{p, q}(\R^d) + B^{0, b+1/q}_{p, \min\{2, p, q\}}(\R^d) \hookrightarrow  T^{b+1/q}_q B^{0}_{p, \min\{2, p\}}(\R^d)
	$$
	provided that $b > -1/q$ (cf. Theorem \ref{TheoremEmbeddings1}) and one can construct $f$ such that
	\begin{equation}\label{Claim}
		f \in T^{b+1/q}_q B^{0}_{p, \min\{2, p\}}(\R^d) \quad \text{but} \quad f \not \in B^{0, b + 1/\min\{2, p, q\}}_{p, q}(\R^d)
	\end{equation}
	whenever $q \neq \min\{2, p\}$. Indeed, if $q < \min\{2, p\}$ then we let
		\begin{equation*}
f = \sum^\infty_{j=0} \sum_{G\in G^j} \sum_{m \in \mathbb{Z}^d}
\lambda^{j,G}_m \, 2^{-j d/2} \, \Psi^j _{G,m}
\end{equation*}
where
	\begin{equation*}
		\lambda^{j,G}_m = \left\{\begin{array}{cl}  2^{j d/p} (1 + j)^{-\varepsilon}, & \quad j \in \N_0, \quad m= (0, \ldots, 0), \quad G = (M, \ldots, M),  \\
		0, & \text{otherwise},
		       \end{array}
                        \right.
	\end{equation*}
	and $b + \frac{1}{\min\{2, p\}} + \frac{1}{q} < \varepsilon < b + \frac{2}{q}$. Applying Theorems \ref{ThmWaveletsBesovClassic} and \ref{ThmWaveletsNewBesov}, it is plain to see that $f$ satisfies the desired claim \eqref{Claim}. On the other hand, if $q > \min \{2, p\}$ then we consider $f$ as given above with
    	\begin{equation*}
\lambda^{j,G}_m = \left\{\begin{array}{cl}   2^{\frac{2^k d}{p}} 2^{-k \varepsilon}, & \quad j = 2^k, \quad k \in \N_0, \quad m= (0, \ldots, 0),  \\
		& \qquad G = (M, \ldots, M),  \\
		0, & \text{otherwise},
		       \end{array}
                        \right.
	\end{equation*}
	where $b + \frac{1}{q} < \varepsilon < b + \frac{1}{\min\{2, p\}}$.
\end{rem}

\begin{proof}[Proof of Theorem \ref{SobolevTheoremSubcritical}]
	We start by showing \eqref{SobolevTheoremSubcritical2123}. We wish to interpolate by the limiting method with $\theta = 0$ the well-known embeddings
	\begin{equation}\label{ProofSobolevTheoremSubcritical1}
		B^0_{p, \min\{p, 2\}}(\R^d) \hookrightarrow L_p(\R^d) \qquad \text{and} \qquad B^{d/p}_{p, 1}(\R^d) \hookrightarrow L_\infty(\R^d).
	\end{equation}
	However, when we try to accomplish this task, a first technical issue already appears, namely, the limiting interpolation method $(A_0, A_1)_{(0, b), q}$ as given in \eqref{DefLimInterpolation} is only defined for ordered couples  and the particular choice $(A_0, A_1)=(L_p(\R^d), L_\infty(\R^d))$ is not ordered. To overcome this obstruction, we shall modify the original definition of the limiting interpolation \eqref{DefLimInterpolation} to make it available for general couples of quasi-Banach spaces. Specifically, let $(A_0, A_1)$ be a quasi-Banach couple (not necessarily ordered) and let $0 < q \leq \infty$ and $b, \eta \in \R$, we introduce the limiting interpolation space $(A_0, A_1)_{(0,(b, \eta)),q}$ as the set of all those $f \in A_0 + A_1$ such that\index{\bigskip\textbf{Spaces}!$(A_0, A_1)_{(0, (b, \eta)), q}$}\label{LIMINTBROKE}
	\begin{equation}\label{ProofSobolevTheoremSubcritical2}
	\|f\|_{(A_0,A_1)_{(0,(b, \eta)),q}} := \bigg(\int_0^1 ((1-\log t)^{b} K(t,f))^q \frac{dt}{t} \bigg)^{1/q} +  \bigg(\int_1^\infty ((1+\log t)^{\eta} K(t,f))^q \frac{dt}{t} \bigg)^{1/q}
\end{equation}
is finite (where the usual change has to be made if $q=\infty$). Here we shall assume $\eta < -1/q$ so that $(A_0, A_1)_{(0,(b, \eta)),q} \neq \{0\}$. Note that working with ordered couples $(A_0, A_1)$, since $K(t, f) \asymp \|f\|_{A_0} \asymp K(1,f)$ for $t > 1$, we have
\begin{align*}
	\|f\|_{(A_0,A_1)_{(0,(b, \eta)),q}} &\asymp \bigg(\int_0^1 ((1-\log t)^{b} K(t,f))^q \frac{dt}{t} \bigg)^{1/q} + K(1,f) \\
	& \asymp \bigg(\int_0^1 ((1-\log t)^{b} K(t,f))^q \frac{dt}{t} \bigg)^{1/q}
	\end{align*}
	and thus one recovers the original definition (cf. \eqref{DefLimInterpolation}).
	
	Let $\eta < -1/q$. Applying the limiting interpolation method \eqref{ProofSobolevTheoremSubcritical2} to \eqref{ProofSobolevTheoremSubcritical1} we get
	\begin{equation}\label{ProofSobolevTheoremSubcritical3}
		(B^0_{p, \min\{p, 2\}}(\R^d), B^{d/p}_{p, 1}(\R^d))_{(0, (b, \eta)), q} \hookrightarrow (L_p(\R^d), L_\infty(\R^d))_{(0, (b, \eta)), q}.
	\end{equation}
	Next we compute these interpolation spaces. On the one hand, it follows from \eqref{DefLimInterpolation} and  \eqref{KfunctLp} that
	\begin{align}
	\|f\|_{(L_p(\R^d), L_\infty(\R^d))_{(0, (b, \eta)), q}} &\gtrsim \bigg(\int_0^1 (1-\log t)^{b q}  \bigg(\int_0^{t} (f^*(u))^p \, du \bigg)^{q/p} \frac{dt}{t} \bigg)^{1/q} \nonumber \\
	& = \|f\|_{L^{(p, b, q}(0,1)}. \label{ProofSobolevTheoremSubcritical4}
	\end{align}
	On the other hand, since $B^{d/p}_{p, 1}(\R^d)  \hookrightarrow B^0_{p, \min\{p, 2\}}(\R^d)$, it follows from Theorem \ref{TheoremInterpolation} that
	\begin{align}
		(B^0_{p, \min\{p, 2\}}(\R^d), B^{d/p}_{p, 1}(\R^d))_{(0, (b, \eta)), q}  &= (B^0_{p, \min\{p, 2\}}(\R^d), B^{d/p}_{p, 1}(\R^d))_{(0, b), q} \nonumber  \\
		& =T^{b+1/q}_q B^{0}_{p, \min\{p, 2\}}(\R^d). \label{ProofSobolevTheoremSubcritical5}
	\end{align}
	As a combination of \eqref{ProofSobolevTheoremSubcritical3}-\eqref{ProofSobolevTheoremSubcritical5} we establish
	$$
		\|f\|_{L^{(p, b, q}(0,1)} \lesssim \|f\|_{T^{b+1/q}_q B^{0}_{p, \min\{p, 2\}}(\R^d)}.
	$$
	
	Next we deal with \eqref{SobolevTheoremSubcritical4324}. More precisely, we will show that if $p \leq 2$ and the inequality
	\begin{equation}\label{ProofSobolevTheoremSubcritical6}
		\|f\|_{L^{(p, b, q}(0, 1)} \lesssim \|f\|_{T^{b+1/q}_q B^{0}_{p, u}(\R^d)}
	\end{equation}
	holds, then necessarily $u \leq p$. To do this, we shall assume $u > p$ and then we will arrive at a contradiction. Indeed, we let
	\begin{equation*} %\label{(4.7)}
f = \sum^\infty_{j=0} \sum_{G\in G^j} \sum_{m \in \mathbb{Z}^d}
\lambda^{j,G}_m \, 2^{-j d/2} \, \Psi^j _{G,m}
\end{equation*}
where
	\begin{equation*}
		\lambda^{j,G}_m = \left\{\begin{array}{cl}  2^{j d/p} (1 + j)^{-\varepsilon}, & \quad j \in \N_0, \quad m= (0, \ldots, 0), \quad G = (M, \ldots, M),  \\
		0, & \text{otherwise},
		       \end{array}
                        \right.
	\end{equation*}
	and $\max\{b + \frac{1}{q} + \frac{1}{u}, \frac{1}{p} \} < \varepsilon < b + \frac{1}{q}+\frac{1}{p}$ (recall $b > -1/q$). By Theorem \ref{ThmWaveletsNewBesov}, we have
	\begin{align*}
		\|f\|_{T^{b+1/q}_q B^{0}_{p, u}(\R^d)} &\asymp  \bigg(\sum_{k=0}^\infty 2^{k (b+1/q) q} \bigg(\sum_{j=2^{k}-1}^{2^{k+1}-2}  (1+j)^{-\varepsilon u}  \bigg)^{q/u} \bigg)^{1/q} \\
		& \asymp \bigg(\sum_{k=0}^\infty 2^{k(b + 1/q  -\varepsilon  + 1/u)q} \bigg)^{1/q} < \infty.
	\end{align*}
	On the other hand, by basic monotonicity properties and \eqref{RearrangementTrick}, we find that
	\begin{align*}
		\|f\|_{L^{(p, b, q}(0, 1)} &  \asymp \bigg(\sum_{k=0}^\infty (1+k)^{b q} \bigg(\sum_{j=k}^\infty (f^*(2^{-j d}))^p 2^{-j d} \bigg)^{q/p} \bigg)^{1/q} \\
		& \gtrsim \bigg(\sum_{k=0}^\infty (1+k)^{b q} \bigg(\sum_{j=k}^\infty (2^{j d/p} (1+j)^{-\varepsilon})^p 2^{-j d} \bigg)^{q/p} \bigg)^{1/q} \\
		& \asymp \bigg(\sum_{k=0}^\infty (1+k)^{b q - \varepsilon q + q/p} \bigg)^{1/q} = \infty.
	\end{align*}
	
%	This provides the desired counterexample to \eqref{ProofSobolevTheoremSubcritical6} if $b > -1/q$. The limiting case $b=-1/q$ can be obtained in a similar fashion but now using
%		\begin{equation*}
%		\lambda^{j,G}_m = \left\{\begin{array}{cl} 2^{j d/p} (1 + j)^{-1/p}, & \quad j \in \N_0, \quad m= (0, \ldots, 0), \quad G = (M, \ldots, M),  \\
%		0, & \text{otherwise}.
%		       \end{array}
%                        \right.
%	\end{equation*}
%	This proves the sharpness assertion \eqref{SobolevTheoremSubcritical4324}.
	
	It remains to show the optimality of \eqref{ProofSobolevTheoremSubcritical6} in the range  $2 < \min\{p, u\}$. Next we prove that \eqref{ProofSobolevTheoremSubcritical6} fails to be true if one of the conditions (i) or (ii) holds.

	Let $\psi \in \mathcal{S}(\R^d) \backslash \{0\}$ be a fixed function with \eqref{PsiDef} and let $f$ be the function whose Fourier series is lacunary of the form  \eqref{LacunaryFSDef}. According to \eqref{ProofSobolevTheoremSubcritical7},
	\begin{equation}\label{ProofSobolevTheoremSubcritical72}
		     \|f\|_{T^{b+1/q}_q B^{0}_{p,u}(\mathbb{R}^d)}  = \|\psi\|_{L_p(\R^d)} \left(\sum_{j=3}^\infty 2^{j (b+1/q) q} \bigg(\sum_{\nu=2^j-1}^{2^{j+1}-2} |\lambda_\nu|^u\bigg)^{q/u}\right)^{1/q}.
	\end{equation}
	
	On the other hand, by monotonicity properties and the Hardy--Littlewood inequality for rearrangements (cf. \cite[Chapter 2, Lemma 2.1, p. 44]{BennettSharpley}), we can estimate
	\begin{align*}
		 \|f\|_{L^{(p, b, q}(0, 1)} &=  \bigg(\int_0^1 t^{q/p}(1-\log t)^{b q} \bigg( \frac{1}{t}\int_0^t (f^*(u))^p \, du \bigg)^{q/p} \frac{dt}{t} \bigg)^{1/q} \\
		 & \gtrsim \bigg(\int_0^1 (f^*(t))^p \, dt \bigg)^{1/p} \\
		 &\geq \sup_{\substack{E \subset \R^d \\ |E| \leq 1}} \bigg(\int_E |f(x)|^p \, dx \bigg)^{1/p}
	\end{align*}
	which implies, by the Zygmund property for lacunary Fourier series (see, e.g., \cite[Theorem 3.7.4]{Grafakos}), that
	\begin{equation}\label{ProofSobolevTheoremSubcritical8}
	 \|f\|_{L^{(p, b, q}(0, 1)} \gtrsim \bigg(\sum_{j=3}^\infty |\lambda_j|^2 \bigg)^{1/2}
	\end{equation}
	where the implicit constant depends on $\psi$.
	
	It follows from \eqref{ProofSobolevTheoremSubcritical6}--\eqref{ProofSobolevTheoremSubcritical8} that
	\begin{equation}\label{ProofSobolevTheoremSubcritical9}
	\bigg(\sum_{j=3}^\infty |\lambda_j|^2 \bigg)^{1/2} \lesssim   \left(\sum_{j=3}^\infty 2^{j (b+1/q) q} \bigg(\sum_{\nu=2^j-1}^{2^{j+1}-2} |\lambda_\nu|^u\bigg)^{q/u}\right)^{1/q}
	\end{equation}
	for all $\{\lambda_j\}_{j \in \N}$. However, in general, this inequality fails to be true. If $b \in [-\frac{1}{q}, -\frac{1}{q} + \frac{1}{2}-\frac{1}{u})$ (i.e., (i) holds), we consider the sequence $\lambda_j = (1+j)^{-\varepsilon}$ where $b + \frac{1}{u}+\frac{1}{q} < \varepsilon < \frac{1}{2}$.  Therefore the right-hand side of \eqref{ProofSobolevTheoremSubcritical9} is finite since
	$$
	   \sum_{j=3}^\infty 2^{j (b+1/q) q} \bigg(\sum_{\nu=2^j-1}^{2^{j+1}-2} (1+\nu)^{-\varepsilon u}\bigg)^{q/u} \asymp \sum_{j=3}^\infty 2^{j (b+1/q-\varepsilon + 1/u) q} < \infty
	$$
	but the corresponding left-hand side can be estimated as
	$$
	\sum_{j=3}^\infty |\lambda_j|^2 = \sum_{j=3}^\infty (1+j)^{-2 \varepsilon} = \infty.
	$$
	If $b = -\frac{1}{q} + \frac{1}{2}-\frac{1}{u}$ and $q > 2$ (i.e., the condition (ii) is fulfilled) then we can take $\lambda_j = (1+j)^{-1/2} (1+ \log (1+j))^{-\beta}$ with $\beta \in (\frac{1}{q}, \frac{1}{2})$ in the above argument.
\end{proof}

\begin{rem}
	One can also deal with the limiting value $b=-1/q$ in Theorem \ref{SobolevTheoremSubcritical}, after replacing the space $T_q B^0_{p, \min\{2, p\}}(\R^d)$ by the smaller one $T^*_q B^0_{p, \min\{p, 2\}}(\R^d)$ (cf. \eqref{33}).
\end{rem}

\subsection{Bourgain--Brezis--Mironescu formulas for Besov and Triebel--Lizorkin spaces}
 A well-known defect of the fractional Sobolev seminorms\index{\bigskip\textbf{Spaces}!$\dot{W}^{s, p}(\R^d)$}\label{HOMFRACTSOB}
 \begin{equation*}
		\|f\|_{\dot{W}^{s, p}(\R^d)} := \ \bigg(\int_{\R^d} \int_{\R^d} \frac{|f(x)-f(y)|^p}{|x-y|^{d + s p}} \, dx \, dy \bigg)^{1/p}, \quad s \in (0,1), \quad p \in [1, \infty),
\end{equation*}
(see also \eqref{GagliardoNorm535353}) is that they do not converge to the classical Sobolev seminorm $\|\nabla f\|_{L_p(\R^d)}$ as $s \to 1-$. This defect can be fixed under a certain normalization of the seminorms $\|\cdot\|_{\dot{W}^{s, p}(\R^d)}$. Namely, a celebrated result by Bourgain--Brezis--Mironescu \cite{Bourgain} asserts that
\begin{equation}\label{BBM}
	\lim_{s \to 1-} (1-s)^{1/p} \|f\|_{\dot{W}^{s, p}(\R^d)} = c_{d, p} \, \|\nabla f\|_{L_p(\R^d)}
\end{equation}
for $p \in (1, \infty)$. Here, the explicit value of the constant $c_{d, p}$ is known, depending only on $d$ and $p$. This result has been extended in many different ways and in several contexts, e.g., Milman showed in \cite{Milman} that \eqref{BBM} is a special case of a more general phenomenon based on interpolation theory and this method can be applied, in particular, to extend \eqref{BBM} to Besov seminorms of higher order  (cf. \cite{KaradzhovMilmanXiao}).

The aim of this section is to show that the family of truncated norms on Besov and Triebel--Lizorkin spaces  satisfy the Bourgain--Brezis--Mironescu phenomenon, in the sense that under a certain normalization, one can attain the classical Besov norm $\|\cdot\|_{B^s_{p,q}(\R^d)}$ via limits. In this regard, it is important  to mention that the chosen norms on $T^b_r B^s_{p, q}(\R^d)$ and $T^b_r F^s_{p, q}(\R^d)$ play a key role. Indeed, there are equivalence constants depending on the involved parameters when comparing different quasi-norms. In this case, the correct normalizations of truncated norms are provided by $\|\cdot\|^*_{T^b_r B^{s}_{p, q }(\R^d)}$ and $\|\cdot\|^*_{T^b_r F^{s}_{p, q }(\R^d)}$ (cf. Proposition \ref{PropEquiQN}).

\begin{thm}\label{ThmLimitsBBM}
	Let $0 < p, q\leq \infty, 0 < r < \infty$ and $-\infty < s < \infty$. Then, for every $f \in \mathcal{S}(\R^d)$,
	\begin{equation}\label{BBMNewSpaces}
		\lim_{b \to 0-}  (1-2^{b r})^{1/r} \, \|f\|^*_{T^b_r B^{s}_{p, q }(\R^d)} = \|f\|_{B^s_{p, q}(\R^d)}
	\end{equation}
	and if, additionally, $p < \infty$ then
		\begin{equation}\label{BBMNewSpacesTL}
		\lim_{b \to 0-}  (1-2^{b r})^{1/r} \, \|f\|^*_{T^b_r F^{s}_{p, q }(\R^d)} = \|f\|_{F^s_{p, q}(\R^d)}.
	\end{equation}
\end{thm}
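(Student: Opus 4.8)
The plan is to reduce both limits \eqref{BBMNewSpaces} and \eqref{BBMNewSpacesTL} to the elementary Abelian theorem for power series, using only Proposition \ref{PropEquiQN}.

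First I would fix $b<0$ and use Proposition \ref{PropEquiQN}(ii) to write
\[
\|f\|^*_{T^b_r B^{s}_{p,q}(\R^d)} = \left(\sum_{j=0}^\infty 2^{jbr} A_j^r\right)^{1/r}, \qquad A_j := \left(\sum_{\nu=0}^{2^j} 2^{\nu s q}\,\|(\varphi_\nu \widehat{f})^\vee\|_{L_p(\R^d)}^q\right)^{1/q},
\]
and analogously $\|f\|^*_{T^b_r F^{s}_{p,q}(\R^d)} = \left(\sum_{j=0}^\infty 2^{jbr} A_j^r\right)^{1/r}$ with $A_j := \big\|\big(\sum_{\nu=0}^{2^j} 2^{\nu s q}|(\varphi_\nu \widehat{f})^\vee|^q\big)^{1/q}\big\|_{L_p(\R^d)}$ (the usual modification being made if $q=\infty$). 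The sequence $(A_j)_{j\ge 0}$ is nondecreasing, and it converges as $j\to\infty$ to $\|f\|_{B^s_{p,q}(\R^d)}$, resp.\ $\|f\|_{F^s_{p,q}(\R^d)}$: in the Besov case this is immediate from the definition of the quasi-norm as the full series over the dyadic blocks $\nu$ (a supremum if $q=\infty$), while in the Triebel--Lizorkin case it follows from the monotone convergence theorem applied to the $L_p$-integral, since the truncated $\ell_q$-functions inside the $L_p$-norm increase pointwise to the full one. Both limits are finite because $f\in\mathcal{S}(\R^d)$ belongs to every classical Besov and Triebel--Lizorkin space; in particular every $A_j$ is finite.

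Next I would substitute $x:=2^{br}\in(0,1)$, so that $x\to 1-$ as $b\to 0-$, and note that the normalization is exactly the right one:
\[
(1-2^{br})^{1/r}\,\|f\|^*_{T^b_r B^{s}_{p,q}(\R^d)} = \left((1-x)\sum_{j=0}^\infty x^j A_j^r\right)^{1/r},
\]
and likewise for the $F$-norm. Since $\sum_{j\ge 0} x^j = (1-x)^{-1}$, the quantity $(1-x)\sum_{j} x^j A_j^r$ is a convex combination of the numbers $A_j^r$ with weights $w_j(x)=(1-x)x^j$ summing to $1$, and these weights concentrate near $j=\infty$ as $x\to 1-$. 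Hence the elementary fact that $\lim_{x\to 1-}(1-x)\sum_{j\ge 0} x^j c_j = \lim_{j\to\infty} c_j$ for every convergent sequence $(c_j)$ of nonnegative reals --- which one proves by choosing $N$ with $|c_j-c|<\varepsilon$ for $j\ge N$, bounding the tail of the sum by $\varepsilon$ and the finite head by $(1-x)\sum_{j<N}|c_j-c|$, which tends to $0$ --- applied with $c_j=A_j^r$ and $c=(\lim_j A_j)^r$ gives $(1-x)\sum_{j} x^j A_j^r\to\|f\|_{B^s_{p,q}(\R^d)}^r$, resp.\ $\to\|f\|_{F^s_{p,q}(\R^d)}^r$. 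Taking $r$-th roots (a continuous operation) yields \eqref{BBMNewSpaces} and \eqref{BBMNewSpacesTL}.

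I do not expect a serious obstacle here: the argument uses no interpolation machinery, only Proposition \ref{PropEquiQN} and Abel's theorem. The only points needing a word of care are the monotone-convergence step identifying $\lim_j A_j$ in the $F$-case, the observation that $\|f\|_{B^s_{p,q}(\R^d)}$ and $\|f\|_{F^s_{p,q}(\R^d)}$ are finite (which is where the hypothesis $f\in\mathcal{S}(\R^d)$ is used), and the routine bookkeeping for $q=\infty$ and for $0<r<1$, all of which are covered by the standard conventions; the case $f\equiv 0$ is trivial. The conceptual heart of the statement is simply that $(1-2^{br})^{1/r}$ is the reciprocal of $\big(\sum_{j\ge 0} 2^{jbr}\big)^{1/r}$, so it turns the blow-up of the geometric series as $b\to 0-$ into an Abel mean of the partial Besov, resp.\ Triebel--Lizorkin, sums, whose limit is precisely the full norm.
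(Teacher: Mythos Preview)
Your proof is correct and is essentially the same computation as the paper's, just packaged more cleanly. Both arguments rewrite $(1-2^{br})\|f\|^{*r}_{T^b_r A^s_{p,q}}$ as an Abel mean $(1-x)\sum_{j\ge 0} x^j A_j^r$ of the convergent sequence $A_j^r\to \|f\|_{A^s_{p,q}}^r$; the paper, however, carries out the Abel argument by hand via an $\varepsilon/3$-splitting $I+II+III$ and controls the finite head $III$ through an auxiliary logarithmic norm $\|f\|_{A^{s,-b_0}_{p,q}}$ with $b_0>-b$, which is more effort than your observation that $(1-x)\sum_{j<N}|c_j-c|\to 0$. So you recover the paper's proof as a special instance of Abel's elementary theorem, and your route is shorter.
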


\begin{rem}	
	The philosophy behind the existence of the limits in \eqref{BBMNewSpaces} and \eqref{BBMNewSpacesTL} relies on the facts that
	\begin{equation}\label{1757}
		 \sum_{j=0}^\infty \bigg(\sum_{\nu= 0}^{2^j} 2^{\nu s q} \|(\varphi_\nu \widehat{f})^\vee\|_{L_p(\mathbb{R}^d)}^q\bigg)^{r/q} < \infty \implies f=0
	\end{equation}
	and
	\begin{equation}\label{1758}
		\sum_{j=0}^\infty  \bigg\| \bigg(\sum_{\nu= 0}^{2^j} 2^{\nu s q} |(\varphi_\nu \widehat{f})^\vee|^q \bigg)^{1/q} \bigg\|_{L_p(\R^d)}^r < \infty \implies f=0
	\end{equation}
(cf. Remark \ref{Remark36}). Thus, given a non-zero function $f$, the blow-up of the functionals in \eqref{1757} and \eqref{1758} can be compensated with the decay given by the factor $1-2^{b r}$ as $b \to 0-$. Note that a similar phenomenon can not be expected for $b \to 0+$, since $T^*_r B^s_{p, q}(\R^d)$ and $T^*_r F^s_{p, q}(\R^d)$ are non-trivial spaces, cf. Remark \ref{Remark36}.
\end{rem}

\begin{proof}[Proof of Theorem \ref{ThmLimitsBBM}]
	We start by proving \eqref{BBMNewSpaces}. Let $\varepsilon > 0$. Accordingly, there exists $j_0 \in \N$ such that, for every $j \geq j_0$,
	\begin{equation}\label{ThmLimitsBBMProof1}
		\|f\|_{B^s_{p, q}(\R^d)}^r - \bigg(\sum_{\nu=0}^{2^j} 2^{\nu s q} \|(\varphi_\nu \widehat{f})^\vee\|_{L_p(\mathbb{R}^d)}^q   \bigg)^{r/q} < \varepsilon.
	\end{equation}
	Assume $b < 0$. We have
	\begin{align}
		| (1-2^{b r}) \, \|f\|^{* r}_{T^b_r B^{s}_{p, q }(\R^d)} -  \|f\|_{B^s_{p, q}(\R^d)}^r| & =  \nonumber  \\
		& \hspace{-5cm} \bigg|\frac{1}{ \sum_{j=0}^\infty 2^{j b r} }\, \sum_{j=0}^\infty 2^{j b r} \bigg(\sum_{\nu=0}^{2^j} 2^{\nu s q} \|(\varphi_\nu \widehat{f})^\vee\|_{L_p(\mathbb{R}^d)}^q\bigg)^{r/q} - \frac{\sum_{j=j_0}^\infty 2^{j b r}}{\sum_{j=j_0}^\infty 2^{j b r}} \|f\|_{B^s_{p, q}(\R^d)}^r  \bigg|  \nonumber \\
		& \hspace{-5cm} \leq \bigg|\frac{1}{ \sum_{j=0}^\infty 2^{j b r} }\, \sum_{j=j_0}^\infty 2^{j b r} \bigg[\bigg(\sum_{\nu=0}^{2^j} 2^{\nu s q} \|(\varphi_\nu \widehat{f})^\vee\|_{L_p(\mathbb{R}^d)}^q\bigg)^{r/q} - \|f\|_{B^s_{p, q}(\R^d)}^r \bigg]  \bigg|  \nonumber \\
		& \hspace{-4cm} + \bigg|\bigg( \frac{1}{ \sum_{j=0}^\infty 2^{j b r} } - \frac{1}{\sum_{j=j_0}^\infty 2^{j b r}} \bigg) \sum_{j=j_0}^\infty 2^{j b r} \|f\|_{B^s_{p, q}(\R^d)}^r  \bigg|  \nonumber  \\
		& \hspace{-4cm} + \frac{1}{ \sum_{j=0}^\infty 2^{j b r} }\, \sum_{j=0}^{j_0-1} 2^{j b r} \bigg(\sum_{\nu=0}^{2^j} 2^{\nu s q} \|(\varphi_\nu \widehat{f})^\vee\|_{L_p(\mathbb{R}^d)}^q\bigg)^{r/q} \nonumber \\
		& \hspace{-4cm} =: I+ II+ III.\label{ThmLimitsBBMProof1*}
	\end{align}
	
	To estimate $I$, we can invoke \eqref{ThmLimitsBBMProof1} so that
	\begin{align}
		I &\leq  \frac{1}{ \sum_{j=0}^\infty 2^{j b r} }\, \sum_{j=j_0}^\infty 2^{j b r} \bigg[\|f\|_{B^s_{p, q}(\R^d)}^r-\bigg(\sum_{\nu=0}^{2^j} 2^{\nu s q} \|(\varphi_\nu \widehat{f})^\vee\|_{L_p(\mathbb{R}^d)}^q\bigg)^{r/q}  \bigg] \nonumber \\
		& \leq  \frac{\varepsilon}{ \sum_{j=0}^\infty 2^{j b r} }\, \sum_{j=j_0}^\infty 2^{j b r} = \varepsilon \, 2^{j_0 b r}. \label{ThmLimitsBBMProof2}
	\end{align}

	Concerning $II$, we can estimate
	\begin{align}
		II &= \bigg(\frac{1}{\sum_{j=j_0}^\infty 2^{j b r}} - \frac{1}{ \sum_{j=0}^\infty 2^{j b r} } \bigg) \sum_{j=j_0}^\infty 2^{j b r} \|f\|_{B^s_{p, q}(\R^d)}^r  \nonumber \\
		& = \frac{\sum_{j=0}^{j_0-1} 2^{j b r}}{\sum_{j=0}^\infty 2^{j b r}} \|f\|_{B^s_{p, q}(\R^d)}^r = (1-2^{j_0 b r}) \|f\|_{B^s_{p, q}(\R^d)}^r.  \label{ThmLimitsBBMProof3}
	\end{align}

	With the aim of estimate $III$, we consider an auxiliary parameter $b_0 > 0$ (without loss of generality we may assume $b_0 > -b$) such that
	\begin{align}
		III &= \frac{1}{ \sum_{j=0}^\infty 2^{j b r} }\, \sum_{j=0}^{j_0-1} 2^{j b r} \bigg(\sum_{\nu=0}^{2^j} 2^{\nu s q} (1+\nu)^{b_0 q} (1+\nu)^{-b_0 q}  \|(\varphi_\nu \widehat{f})^\vee\|_{L_p(\mathbb{R}^d)}^q\bigg)^{r/q} \nonumber \\
		&\hspace{1cm} \leq 2^{b_0 r}(1-2^{b r})  \|f\|_{B^{s, -b_0}_{p, q}(\R^d)}^r \sum_{j=0}^{j_0-1} 2^{j (b+b_0) r} \nonumber \\
		&\hspace{1cm}  =  \frac{2^{b_0 r}(1-2^{b r})(1-2^{j_0 (b+b_0) r})}{1-2^{(b+b_0) r}} \,  \|f\|_{B^{s, -b_0}_{p, q}(\R^d)}^r. \label{ThmLimitsBBMProof4}
	\end{align}

	Combining now \eqref{ThmLimitsBBMProof1*}--\eqref{ThmLimitsBBMProof4} we achieve
	\begin{align*}
		| (1-2^{b r}) \, \|f\|^{* r}_{T^b_r B^{s}_{p, q }(\R^d)} -  \|f\|_{B^s_{p, q}(\R^d)}^r| & \leq \\
		& \hspace{-5.5cm} \varepsilon \,   2^{j_0 b r}+   (1-2^{j_0 b r}) \|f\|_{B^s_{p, q}(\R^d)}^r +  \frac{2^{b_0 r}(1-2^{b r})(1-2^{j_0 (b+b_0) r})}{1-2^{(b+b_0) r}} \,  \|f\|_{B^{s, -b_0}_{p, q}(\R^d)}^r.
	\end{align*}
	Taking limits on both sides of the previous inequality as $b \to 0-$ we conclude that
	$$
	\lim_{b \to 0-} | (1-2^{b r}) \, \|f\|^{* r}_{T^b_r B^{s}_{p, q }(\R^d)} -  \|f\|_{B^s_{p, q}(\R^d)}^r| \leq \varepsilon.
	$$
	
	The proof of \eqref{BBMNewSpacesTL} follows similar ideas as above. Let $\varepsilon > 0$ and $b < 0$. Choose $j_0$ such that
		\begin{equation}\label{ThmLimitsBBMProof1T}
		\|f\|_{F^s_{p, q}(\R^d)}^r - \bigg\|\bigg(\sum_{\nu=0}^{2^j}  2^{\nu s q} |(\varphi_\nu \widehat{f})^\vee|^q\bigg)^{1/q} \bigg\|_{L_p(\R^d)}^r < \varepsilon.
	\end{equation}
	We have
	\begin{align}
		| (1-2^{b r}) \, \|f\|^{* r}_{T^b_r F^{s}_{p, q }(\R^d)} -  \|f\|_{F^s_{p, q}(\R^d)}^r| & =  \nonumber  \\
		& \hspace{-5cm} \bigg|\frac{1}{ \sum_{j=0}^\infty 2^{j b r} }\, \sum_{j=0}^\infty 2^{j b r} \bigg\|\bigg(\sum_{\nu=0}^{2^j}  2^{\nu s q} |(\varphi_\nu \widehat{f})^\vee|^q\bigg)^{1/q} \bigg\|^r_{L_p(\R^d)}- \frac{\sum_{j=j_0}^\infty 2^{j b r}}{\sum_{j=j_0}^\infty 2^{j b r}} \|f\|_{F^s_{p, q}(\R^d)}^r  \bigg|  \nonumber \\
		& \hspace{-5cm} \leq \bigg|\frac{1}{ \sum_{j=0}^\infty 2^{j b r} }\, \sum_{j=j_0}^\infty 2^{j b r} \bigg[ \bigg\|\bigg(\sum_{\nu=0}^{2^j}  2^{\nu s q} |(\varphi_\nu \widehat{f})^\vee|^q\bigg)^{1/q} \bigg\|^r_{L_p(\R^d)} - \|f\|_{F^s_{p, q}(\R^d)}^r \bigg]  \bigg|  \nonumber \\
		& \hspace{-4cm} + \bigg|\bigg( \frac{1}{ \sum_{j=0}^\infty 2^{j b r} } - \frac{1}{\sum_{j=j_0}^\infty 2^{j b r}} \bigg) \sum_{j=j_0}^\infty 2^{j b r} \|f\|_{F^s_{p, q}(\R^d)}^r  \bigg|  \nonumber  \\
		& \hspace{-4cm} + \frac{1}{ \sum_{j=0}^\infty 2^{j b r} }\, \sum_{j=0}^{j_0-1} 2^{j b r} \bigg\|\bigg(\sum_{\nu=0}^{2^j}  2^{\nu s q} |(\varphi_\nu \widehat{f})^\vee|^q\bigg)^{1/q} \bigg\|^r_{L_p(\R^d)}\nonumber \\
		& \hspace{-4cm} =: J+ JJ+ JJJ.\label{ThmLimitsBBMProof1*TL}
	\end{align}
	
	To estimate $J$, we can make use of  \eqref{ThmLimitsBBMProof1T}:
		\begin{equation}
		J  \leq  \frac{\varepsilon}{ \sum_{j=0}^\infty 2^{j b r} }\, \sum_{j=j_0}^\infty 2^{j b r} = \varepsilon \, 2^{j_0 b r}. \label{ThmLimitsBBMProof2TL}
	\end{equation}
		On the other hand, elementary computations lead to
	\begin{equation}
		JJ = (1-2^{j_0 b r}) \|f\|_{F^s_{p, q}(\R^d)}^r.  \label{ThmLimitsBBMProof3TL}
	\end{equation}
		It remains to estimate $JJJ$. We can proceed as follows
	\begin{align}
		JJJ &\leq \frac{2^{b_0 r}}{ \sum_{j=0}^\infty 2^{j b r} }\, \sum_{j=0}^{j_0-1} 2^{j (b+b_0) r} \bigg\|\bigg(\sum_{\nu=0}^{2^j}  2^{\nu s q} (1 + \nu)^{-b_0 q} |(\varphi_\nu \widehat{f})^\vee|^q\bigg)^{1/q} \bigg\|^r_{L_p(\R^d)} \nonumber \\
		&\hspace{1cm} \leq 2^{b_0 r} (1-2^{b r})  \|f\|_{F^{s, -b_0}_{p, q}(\R^d)}^r \sum_{j=0}^{j_0-1} 2^{j (b+b_0) r} \nonumber \\
		&\hspace{1cm}  =  \frac{2^{b_0 r} (1-2^{b r})(1-2^{j_0 (b+b_0) r})}{1-2^{(b+b_0) r}} \,  \|f\|_{F^{s, -b_0}_{p, q}(\R^d)}^r. \label{ThmLimitsBBMProof4TL}
	\end{align}
	Putting together \eqref{ThmLimitsBBMProof1*TL}--\eqref{ThmLimitsBBMProof4TL}, we achieve
	\begin{align*}
		| (1-2^{b r}) \, \|f\|^{* r}_{T^b_r F^{s}_{p, q }(\R^d)} -  \|f\|_{F^s_{p, q}(\R^d)}^r|  & \\
		&\hspace{-5cm} \leq  \varepsilon \, 2^{j_0 b r} + (1-2^{j_0 b r}) \|f\|_{F^s_{p, q}(\R^d)}^r  + \frac{2^{b_0 r} (1-2^{b r})(1-2^{j_0 (b+b_0) r})}{1-2^{(b+b_0) r}} \,  \|f\|_{F^{s, -b_0}_{p, q}(\R^d)}^r.
	\end{align*}
	The proof is finished by taking limit as $b \to 0$ in the last estimate.
\end{proof}

We can also get Bourgain--Brezis--Mironescu formulas for the standard Besov norms given by differences\index{\bigskip\textbf{Spaces}!$\dot{B}^{s}_{p, q}(\R^d)$}\label{HOMBES}
$$
	|f|_{\dot{B}^s_{p, q}(\R^d)} := \bigg(\int_{|h| < 1} |h|^{-s q-d} \|\Delta^k_h f\|_{L_p(\R^d)}^q \, dh \bigg)^{1/q}
$$
in terms of truncated Besov norms. Here $0 < s < k, \, k \in \N,$ and $0 < p, q \leq \infty$. To be more precise, consider the truncated Besov norms  (cf. \eqref{TheoremModuli2})\index{\bigskip\textbf{Spaces}!$T^b_r \dot{B}^s_{p, q}(\R^d)$}\label{TRUNHOMBES}
$$
	|f|^*_{T^b_r \dot{B}^s_{p, q}(\R^d)} := \bigg( \int_0^1 (1-\log t)^{b r-1} \bigg(\int_{t < |h| < 1} |h|^{-s q-d} \|\Delta^k_h f\|_{L_p(\R^d)}^q \, dh \bigg)^{r/q} \frac{dt}{t} \bigg)^{1/r},
$$
then the following holds.

\begin{thm}\label{Thm1717}
	Let $0 < s < k, \, k \in \N, 0 < p, q \leq \infty$ and $0 < r < \infty$. Then, for every $f \in C^\infty_0(\R^d)$,
	$$
		\lim_{b \to 0-} \,  (-b r)^{1/r} |f|^*_{T^b_r \dot{B}^s_{p, q}(\R^d)} = |f|_{\dot{B}^s_{p, q}(\R^d)}.
	$$
\end{thm}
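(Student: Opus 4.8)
The strategy is to reduce the statement about differences to the analogous statement about Littlewood--Paley pieces already proved in Theorem \ref{ThmLimitsBBM}, using the discretization lemma (Lemma \ref{LemmaB1}) that converts the logarithmically weighted double integral defining $|f|^*_{T^b_r \dot{B}^s_{p, q}(\R^d)}$ into a dyadic sum over the scales $\lambda_j = 2^{-2^j}$. Concretely, the argument in the proof of Theorem \ref{TheoremModuli} (see the passage around \eqref{DiscNew} and \eqref{ModDiff}) shows that
\begin{equation*}
	|f|^*_{T^b_r \dot{B}^s_{p, q}(\R^d)} \asymp \bigg(\sum_{j=0}^\infty 2^{j b r} \bigg(\int_{\lambda_{j+1} < |h| \leq \lambda_j} |h|^{-s q - d} \|\Delta^k_h f\|_{L_p(\R^d)}^q \, dh \bigg)^{r/q} \bigg)^{1/r},
\end{equation*}
and moreover, via \eqref{ModDiff} and Fubini, the inner integral is comparable to $\int_{\lambda_{j+1}}^{\lambda_j} (u^{-s} \omega_k(f, u)_p)^q \frac{du}{u}$ up to a telescoping adjustment. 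So the first step is to write $|f|^*_{T^b_r \dot{B}^s_{p, q}(\R^d)}$ in the discrete form with summands $a_j := \big(\int_{\lambda_{j+1}}^{\lambda_j} (u^{-s} \omega_k(f, u)_p)^q \frac{du}{u}\big)^{1/q}$, noting that $|f|_{\dot B^s_{p,q}(\R^d)}^q = \sum_{j=0}^\infty a_j^q$ (up to the equivalence $|f|_{\dot B^s_{p,q}} \asymp (\int_0^1 (u^{-s}\omega_k(f,u)_p)^q\frac{du}{u})^{1/q}$, cf. \eqref{61}).

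The second step is the normalization computation. Writing $S_j := \sum_{\nu=0}^{j} a_\nu^q$ (so $S_j \uparrow |f|_{\dot B^s_{p,q}}^q =: S$ for $f \in C_0^\infty$, since such $f$ lies in every $\dot B^s_{p,q}$), one has by Hardy-type summation that $\sum_{j\geq 0} 2^{jbr} a_j^q \cdots$ rearranges, after applying Lemma \ref{LemmaB1} to pass between $\int_{\lambda_{j+1}}^{\lambda_j}$ and $\int_0^{\lambda_j}$, into a weighted average of the partial sums $S_j$. More precisely the relevant quantity becomes $(-br)\sum_{j=0}^\infty 2^{jbr} S_j^{r/q}$ type expression (the weight $(1-\log t)^{br-1}$ integrates against $\frac{dt}{t}$ over dyadic blocks to give $2^{jbr}$, and the normalizing constant $\int_0^1 (1-\log t)^{br-1}\frac{dt}{t} = \frac{1}{-br}$ for $b<0$). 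Then I would run the same $\varepsilon$-splitting argument as in the proof of Theorem \ref{ThmLimitsBBM}: fix $\varepsilon>0$, pick $j_0$ with $S - S_j^{r/q}\cdot(\text{correction}) < \varepsilon$ for $j \geq j_0$, split the normalized sum into the tail $j \geq j_0$ (which is close to $S$ because $(-br)\sum_{j\geq j_0} 2^{jbr} \to 1$ as $b\to 0-$) plus a head $\sum_{j<j_0}$ controlled by a fixed larger norm $|f|_{\dot B^{s, -b_0}_{p, q}}$ with auxiliary $b_0 > 0$ times the vanishing factor $(1-2^{br})$ or $(-br)$, and conclude $\limsup_{b\to 0-}| (-br)^{1/r} |f|^*_{T^b_r\dot B^s_{p,q}} - |f|_{\dot B^s_{p,q}}| \leq \varepsilon^{1/r}$ after taking $r$-th roots carefully (using continuity of $t\mapsto t^{1/r}$ and, if $r<1$, the $r$-triangle inequality; if $r\geq1$ ordinary manipulations).

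\textbf{Main obstacle.} The genuinely delicate point is matching the equivalence $\asymp$ coming from Lemma \ref{LemmaB1} and from \eqref{ModDiff} with the requirement of an \emph{exact} limit: the discretization and the passage from $\|\Delta^k_h f\|_{L_p}$ to $\omega_k(f,\cdot)_p$ introduce multiplicative constants that do not tend to $1$, so one cannot simply quote Theorem \ref{ThmLimitsBBM}. The resolution is to redo the $\varepsilon$-argument directly at the level of the integral representation $\big(\int_0^1 (1-\log t)^{br-1}(\int_{t<|h|<1}|h|^{-sq-d}\|\Delta^k_hf\|_{L_p}^q dh)^{r/q}\frac{dt}{t}\big)^{1/r}$ \emph{without} discretizing: for $f\in C_0^\infty$ the function $t\mapsto G(t):=\int_{t<|h|<1}|h|^{-sq-d}\|\Delta^k_hf\|_{L_p}^q dh$ is continuous, non-increasing, and $G(t)\to |f|_{\dot B^s_{p,q}}^q$ as $t\to 0+$ (finiteness needs $s<k$ and $f$ smooth compactly supported, giving $\|\Delta^k_hf\|_{L_p}\lesssim |h|^k$ near $0$ and $\lesssim\|f\|_{L_p}$ globally). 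Then $(-br)\int_0^1 (1-\log t)^{br-1} G(t)^{r/q}\frac{dt}{t}$ is exactly a probability average (the measure $(-br)(1-\log t)^{br-1}\frac{dt}{t}$ on $(0,1)$ has total mass $1$ and concentrates near $t=0$ as $b\to 0-$) of the bounded continuous function $G(t)^{r/q}$, so by dominated convergence it tends to $G(0+)^{r/q} = |f|_{\dot B^s_{p,q}}^{r/q}$; raising to the power $1/r$ finishes it. So in the final write-up I would bypass Lemma \ref{LemmaB1} entirely and instead verify (i) the total-mass-one and concentration properties of the weight, (ii) boundedness and the limit $G(0+) = |f|_{\dot B^s_{p,q}}^q$ for $f\in C_0^\infty$, and (iii) apply dominated convergence — this is cleaner and avoids the constant-tracking issue.
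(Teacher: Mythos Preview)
Your final approach (the ``Main obstacle'' resolution) is correct and is essentially the paper's proof: the paper fixes $\varepsilon>0$, chooses $t_0$ with $|f|_{\dot B^s_{p,q}}^r - G(t)^{r/q} < \varepsilon$ for $t<t_0$, and splits $(-br)\int_0^1 (1-\log t)^{br-1}G(t)^{r/q}\,\frac{dt}{t}$ into the pieces over $(0,t_0)$ and $(t_0,1)$, using $\int_0^{t_0}(1-\log t)^{br-1}\frac{dt}{t}=\frac{(1-\log t_0)^{br}}{-br}$ exactly as you indicate. Your packaging as ``probability measure concentrating at $0$ plus bounded continuous integrand'' is the same computation, and after the substitution $\sigma=(1-\log t)^{br}$ it really is dominated convergence on $(0,1)$ with Lebesgue measure.

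One small difference worth noting: for the contribution over $(t_0,1)$ the paper inserts an auxiliary weight $(1-\log|h|)^{-b_0 q}$ with $b_0>-b$ and then uses $\|\Delta^k_h f\|_{L_p}\lesssim |h|^k\|f\|_{\dot W^k_p}$ to bound the resulting integral. Your route is simpler here: since $G$ is non-increasing, $G(t)^{r/q}\leq G(0+)^{r/q}=|f|_{\dot B^s_{p,q}}^r$ uniformly, so the $(t_0,1)$-piece is bounded by $|f|_{\dot B^s_{p,q}}^r\cdot\big(1-(1-\log t_0)^{br}\big)\to 0$, with no auxiliary $b_0$ needed. The paper's extra step is harmless but not required in this difference-norm setting; it is a carry-over from the proof of Theorem~\ref{ThmLimitsBBM}, where the partial Littlewood--Paley sums are not monotone in the truncation index and a genuine domination is needed.
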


\begin{rem}
	The interesting case in Theorem \ref{Thm1717} occurs when $q \neq r$, otherwise a simple application of Fubini's theorem gives
	$$
		|f|^{* q}_{T^b_q \dot{B}^s_{p, q}(\R^d)} = - \frac{1}{b q} \int_{|h| < 1} |h|^{-s q -d} (1-\log |h|)^{b q} \|\Delta^k_h f\|_{L_p(\R^d)}^q \, dh
	$$
	and thus
	$$
		\lim_{b \to 0-} \,  (-b q)^{1/q} |f|^*_{T^b_q \dot{B}^s_{p, q}(\R^d)} = |f|_{\dot{B}^s_{p, q}(\R^d)}.
	$$
\end{rem}

\begin{proof}[Proof of Theorem \ref{Thm1717}]
Given any $\varepsilon > 0$, there exists $t_0 \in (0, 1)$ such that
\begin{equation}\label{1769}
	|f|_{\dot{B}^s_{p, q}(\R^d)}^r -  \bigg(\int_{t < |h| < 1} |h|^{-s q-d} \|\Delta^k_h f\|_{L_p(\R^d)}^q \, dh \bigg)^{r/q} < \varepsilon
\end{equation}
for $t \in (0, t_0)$. Let $b < 0$. Note that
\begin{equation}\label{intTriv}
	\int_0^{t_0} (1-\log t)^{b r - 1} \frac{dt}{t} = - \frac{(1-\log t_0)^{b r}}{b r}.
\end{equation}
Therefore
\begin{align}
	|(-br)  |f|^{* r}_{T^b_r \dot{B}^s_{p, q}(\R^d)} - |f|_{\dot{B}^s_{p, q}(\R^d)}^r | &\leq   \nonumber \\
	& \hspace{-5cm} (-b r)  \int_0^{t_0} (1-\log t)^{b r -1} \bigg| \bigg(\int_{t < |h| < 1} |h|^{-s q-d} \|\Delta^k_h f\|_{L_p(\R^d)}^q \, dh \bigg)^{r/q}  - \bigg(\int_{|h| < 1} |h|^{-s q-d} \|\Delta^k_h f\|_{L_p(\R^d)}^q \, dh \bigg)^{r/q} \bigg| \frac{dt}{t} \nonumber \\
	& \hspace{-4cm} + (1-(1-\log t_0)^{b r}) \bigg(\int_{|h| < 1} |h|^{-s q-d} \|\Delta^k_h f\|_{L_p(\R^d)}^q \, dh \bigg)^{r/q} \nonumber \\
	&  \hspace{-4cm} +  (-b r) \int_{t_0}^1 (1-\log t)^{b r -1}  \bigg(\int_{t < |h| < 1} |h|^{-s q-d} \|\Delta^k_h f\|_{L_p(\R^d)}^q \, dh \bigg)^{r/q} \frac{dt}{t} \nonumber \\
	&  \hspace{-4cm} =: \mathcal{I} + \mathcal{II} + \mathcal{III}.\label{1769new}
\end{align}
By \eqref{1769} and \eqref{intTriv},
\begin{equation}\label{1770}
	\mathcal{I} \leq \varepsilon (1-\log t_0)^{b r}.
\end{equation}
To estimate $\mathcal{III}$, we choose $b_0 > 0$ satisfying $b_0 > -b$, thus
\begin{align}
	\mathcal{III} &\leq (-b r)   \bigg(\int_{|h| < 1} |h|^{-s q-d} (1 - \log |h|)^{-b_0 q} \|\Delta^k_h f\|_{L_p(\R^d)}^q \, dh \bigg)^{r/q} \int_{t_0}^1 (1-\log t)^{(b+b_0) r -1} \frac{dt}{t} \nonumber \\
	& \leq  \frac{-b  ((1-\log t_0)^{(b+b_0) r} -1)}{b+b_0}  \bigg(\int_{|h| < 1} |h|^{-s q-d} (1 - \log |h|)^{-b_0 q} \|\Delta^k_h f\|_{L_p(\R^d)}^q \, dh \bigg)^{r/q}. \label{1771}
\end{align}
Inserting \eqref{1770} and \eqref{1771} into \eqref{1769new}, we achieve
\begin{align}
				|(-br)  |f|^{* r}_{T^b_r \dot{B}^s_{p, q}(\R^d)} - |f|_{\dot{B}^s_{p, q}(\R^d)}^r |  & \leq \nonumber \\
		&\hspace{-4cm}   \varepsilon  (1-\log t_0)^{b r}+   (1-(1-\log t_0)^{b r}) |f|_{\dot{B}^s_{p, q}(\R^d)}^r \nonumber\\
		& \hspace{-3.5cm} -   \frac{b  ((1-\log t_0)^{(b+b_0) r} -1)}{b+b_0}  \bigg(\int_{|h| < 1} |h|^{-s q-d} (1 - \log |h|)^{-b_0 q} \|\Delta^k_h f\|_{L_p(\R^d)}^q \, dh \bigg)^{r/q}.\label{1774new}
\end{align}
Note that the last integral is convergent since $ \|\Delta^k_h f\|_{L_p(\R^d)} \lesssim |h|^k \|f\|_{\dot{W}^k_p(\R^d)}$ (see e.g. \cite[Proposition 3, p. 139]{Stein}) and $k > s$. Taking now successive limits in \eqref{1774new} first as $b \to 0-$ and then an $\varepsilon \to 0$, we get
\begin{equation*}
	\lim_{b \to 0-} (-br)  |f|^{* r}_{T^b_r \dot{B}^s_{p, q}(\R^d)} =  |f|_{\dot{B}^s_{p, q}(\R^d)}^r.
\end{equation*}
\end{proof}
%%%%%%%%%%%%%%%%%%

\subsection{Fourier series}

We consider the Fourier coefficients of $f: \mathbb{T}^d \to \mathbb{C}, \, f \in L_1(\mathbb{T}^d),$
$$
	\widehat{f}(m) = \frac{1}{(2 \pi)^d} \int_{\mathbb{T}^d} f(x) e^{-i m \cdot x} \, dx, \qquad m \in \Z^d,
$$
and the Fourier transform\index{\bigskip\textbf{Operators}!$\mathfrak{F}$}\label{FTM}
$$\mathfrak{F}(f) := \{\widehat{f}(m)\}_{m \in \Z^d}.$$ The mapping properties of the Fourier transform of Besov functions are well known, cf. \cite{Zygmund}, \cite{Peetre76} and \cite{Pietsch}. In more detail, if $1 \leq p \leq 2, 0 < q \leq \infty, s > 0$ and $\frac{1}{u} = \frac{s}{d} + \frac{1}{p'}$ then
\begin{equation}\label{FTB}
	\mathfrak{F} : B^s_{p, q}(\mathbb{T}^d) \to \ell_{u, q}(\Z^d).
\end{equation}
By Remark \ref{Remark10.4}, note that the best possible result is given by $p=2$, i.e.,
$$
	\mathfrak{F} : B^{d (\frac{1}{u} - \frac{1}{2})}_{2, q}(\mathbb{T}^d) \to \ell_{u, q}(\Z^d), \qquad 0 < u < 2.
$$
In particular, for $u = q$, we recover the classical Sz\'asz result on absolute convergence of Fourier series. Moreover, if $u=q=1$ then we arrive at the Bernstein result:
$$
	\mathfrak{F} : B^{d/2}_{2, 1}(\mathbb{T}^d) \to \ell_{1}(\Z^d).
$$
Here $\ell_{u, q}(\Z^d)$ is the \emph{Lorentz sequence space} endowed with the quasi-norm\index{\bigskip\textbf{Spaces}!$\ell_{u, q}(\Z^d)$}\label{LSEQ}
	\begin{equation}\label{DefLorentzSpace2}
		\|\lambda\|_{\ell_{u, q}(\Z^d)} := \bigg(\sum_{n=1}^\infty (n^{1/u} \lambda_n^*)^q \frac{1}{n}\bigg)^{1/q} < \infty
	\end{equation}
(with the usual modification if $q=\infty$). As usual, $\{\lambda_n^*\}_{n \in \N}$ denotes the \emph{non-increasing rearrangement}\index{\bigskip\textbf{Functionals and functions}!$\{\lambda_n^*\}_{n \in \N}$}\label{REARRSEQ} of the sequence $\{|\lambda_m|\}_{m \in \Z^d}$ relative to $\lambda = \{\lambda_m\}_{m \in \Z^d}$. In particular, if $u=q$ then $\ell_{u, q} (\Z^d) = \ell_u (\Z^d)$. Note that \eqref{DefLorentzSpace2} corresponds to the discrete version of \eqref{DefLZ}.

Dealing with Triebel--Lizorkin spaces, it is an immediate consequence of embeddings \eqref{FJClas} and \eqref{FTB} that
\begin{equation}\label{FTF}
	\mathfrak{F} : F^s_{p, q}(\mathbb{T}^d) \to \ell_{u, p}(\Z^d)
\end{equation}
provided that $1 \leq p < 2, 0 < q \leq \infty, s > 0$ and $\frac{1}{u} = \frac{s}{d} + \frac{1}{p'}$.

\vspace{2mm}
\textbf{Fourier transform on $T^b_r A^s_{p, q}$.} Our next goal is  to investigate the mapping properties of the Fourier transform in the setting of truncated function spaces. To proceed with, we first need to introduce truncated counterparts of Lorentz spaces $\ell_{u, q}(\Z^d)$.

\begin{defn}
	Let $0 < u < \infty, 0 < q, r \leq \infty$ and $-\infty < b < \infty$. The truncated Lorentz sequence space $T^b_r \ell_{u, q}(\Z^d)$ is formed by all those $\lambda = \{\lambda_m\}_{m \in \Z^d}$ such that\index{\bigskip\textbf{Spaces}!$T^b_r \ell_{u, q}(\Z^d)$}\label{TRUNLSEQ}
	$$
		\|\lambda\|_{T^b_r \ell_{u, q}(\Z^d)} = \left( \sum_{j=0}^\infty 2^{j b r} \bigg(\sum_{n=2^j-1}^{2^{j+1}-2} (2^{n/u} \lambda_{2^{n}}^*)^q  \bigg)^{r/q} \right)^{1/r} < \infty
	$$
	(with the standard modifications if $q=\infty$ and/or $r=\infty$).
\end{defn}

\begin{rem}\label{RemarkGenLZS}
 Note that (cf. \eqref{DefLorentzSpace2})
	\begin{equation}\label{DefTrunLorentz}
		\|\lambda\|_{\ell_{u, q}(\Z^d)} \asymp \bigg(\sum_{n=0}^\infty (2^{n/u} \lambda_{2^n}^*)^q \bigg)^{1/q}.
	\end{equation}
	Then the definition of $T^b_r \ell_{u, q}(\Z^d)$ via appropriate truncations of \eqref{DefTrunLorentz} imitates the definition of truncated function spaces given in Definition \ref{DefinitionNewBesov}.
\end{rem}

 Clearly $T^b_q \ell_{u, q}(\Z^d) = \ell_{u, q} (\log \ell)_b(\Z^d)$, the \emph{Lorentz--Zygmund sequence space},\index{\bigskip\textbf{Spaces}!$\ell_{u, q} (\log \ell)_b(\Z^d)$}\label{LZSEQ}
	$$
		\|\lambda\|_{ \ell_{u, q} (\log \ell)_b(\Z^d)}:=  \bigg(\sum_{n=1}^\infty (n^{1/u} (1 + \log n)^b \lambda_n^*)^q \frac{1}{n}\bigg)^{1/q},
	$$
	see \eqref{DefLZ}. In particular, $ \ell_{u, q} (\log \ell)_0(\Z^d) = \ell_{u, q}(\Z^d)$. More generally, the relationships between $T^b_r \ell_{u, q}(\Z^d)$ and $\ell_{u, q}(\log \ell)(\Z^d)$ are given in the following

\begin{prop}\label{Prop1719}
	Let $0 < u < \infty, 0 < q, r \leq \infty$ and $-\infty < b < \infty$. Then
	\begin{equation}\label{EmbTLCL}
		  \ell_{u, r} (\log \ell)_{b-1/r + 1/\min\{q, r\}}(\Z^d)  \hookrightarrow T^b_r \ell_{u, q}(\Z^d) \hookrightarrow \ell_{u, r} (\log \ell)_{b-1/r + 1/\max\{q, r\}}(\Z^d)
	\end{equation}
	and
		\begin{equation}\label{EmbTLCL2}
		  \ell_{u, \min\{q, r\}} (\log \ell)_{b}(\Z^d)  \hookrightarrow T^b_r \ell_{u, q}(\Z^d) \hookrightarrow \ell_{u, \max\{q, r\}} (\log \ell)_{b}(\Z^d).
	\end{equation}
\end{prop}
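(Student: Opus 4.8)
The strategy is to reduce these sequence-space embeddings to the already-established abstract embedding result for truncated Besov-type spaces, or alternatively to prove them directly by purely elementary arguments on rearranged sequences. Both approaches are viable; I would present the direct approach since it is short and self-contained. The key observation is Remark \ref{RemarkGenLZS}: setting $a_n := 2^{n/u}\lambda_{2^n}^*$, we have $\|\lambda\|_{T^b_r\ell_{u,q}(\Z^d)} \asymp \big(\sum_{j=0}^\infty 2^{jbr}(\sum_{n=2^j-1}^{2^{j+1}-2} a_n^q)^{r/q}\big)^{1/r}$ and $\|\lambda\|_{\ell_{u,v}(\log\ell)_\beta(\Z^d)} \asymp \big(\sum_{n=0}^\infty (1+\log(1+n))^{\beta v} a_n^v \cdot 2^{?}\big)$ — more precisely, after the dyadic discretization $n \mapsto 2^n$ one gets $\|\lambda\|_{\ell_{u,v}(\log\ell)_\beta(\Z^d)} \asymp \big(\sum_{n=0}^\infty ((1+n)^{\beta} a_n)^v\big)^{1/v}$. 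So the whole statement becomes a pair of purely sequential inequalities relating the "outer $\ell^b_r(\ell_q)$ over dyadic blocks" norm of $(a_n)$ to weighted $\ell_v$ norms of $(a_n)$, exactly of the type treated in Corollaries \ref{TheoremEmbeddings1} and \ref{CorollaryEmbFBNew} (indeed, \eqref{EmbTLCL} and \eqref{EmbTLCL2} are the verbatim sequential analogues of \eqref{TheoremEmbeddings2}--\eqref{TheoremEmbeddings3}).

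First I would record the reduction: by Remark \ref{RemarkGenLZS} it suffices to prove, for any nonnegative sequence $(a_n)_{n\in\N_0}$,
\begin{equation*}
	\Big(\sum_{n=0}^\infty ((1+n)^{b-1/r+1/\min\{q,r\}} a_n)^r\Big)^{1/r} \lesssim \Big(\sum_{j=0}^\infty 2^{jbr}\Big(\sum_{n=2^j-1}^{2^{j+1}-2} a_n^q\Big)^{r/q}\Big)^{1/r} \lesssim \Big(\sum_{n=0}^\infty ((1+n)^{b-1/r+1/\max\{q,r\}} a_n)^r\Big)^{1/r}
\end{equation*}
and the analogous chain with exponents $\min\{q,r\}$, $q$, $\max\{q,r\}$ and weight $(1+n)^b$ (no $1/r$ shift). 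For the right-hand inequality of \eqref{EmbTLCL} one uses, within each dyadic block $j$ where $1+n \asymp 2^j$, the embedding $\ell_q\hookrightarrow\ell_{\max\{q,r\}}$ if $r\geq q$ (so $(\sum_{n} a_n^q)^{1/q}\geq(\sum_n a_n^r)^{1/r}$ is false in the wrong direction — here one must be careful) — in fact the correct tool is Hölder's inequality over the block when $r>q$, producing the factor $|\{n: 2^j-1\leq n\leq 2^{j+1}-2\}|^{1/q-1/r}\asymp 2^{j(1/q-1/r)}$, which after summing over $j$ against $2^{jbr}$ gives precisely the weight $(1+n)^{(b-1/r+1/\max\{q,r\})r}$; when $r\leq q$ one simply uses $\ell_q\hookrightarrow\ell_r$ inside the block. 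The left-hand inequality is dual/symmetric. The proof of \eqref{EmbTLCL2} is the same but one compares with $\ell_{\min\{q,r\}}$ resp. $\ell_{\max\{q,r\}}$ norms over the \emph{outer} index $j$ instead, shifting the logarithmic power accordingly; the interplay between the block length $\asymp 2^j$ and the weight $2^{jb}$ versus $(1+n)^b$ for $n$ in that block is what converts the sequence truncation into a plain weighted Zygmund norm.

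Alternatively — and this is the slicker route — I would simply invoke the wavelet isomorphism: Theorem \ref{ThmWaveletsNewBesov} identifies $T^\xi_r B^s_{p,q}(\R^d)$ with $T^\xi_r b^s_{p,q}$, and inside $T^\xi_r b^s_{p,q}$ the single-index lacunary sequences (supported on $j=2^k$, $m=0$) reproduce exactly the structure of $T^b_r\ell_{u,q}(\Z^d)$ for a suitable choice of $s,d/p$; then \eqref{EmbTLCL}--\eqref{EmbTLCL2} follow from Corollary \ref{TheoremEmbeddings1} applied to the appropriate $b$-spaces. I would present the elementary proof as the main argument and mention this correspondence as a remark. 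The main obstacle is bookkeeping: keeping track of which of $q,r$ is larger in each of the four inequalities, and making sure the block-length factor $2^{j(\cdot)}$ is absorbed into exactly the claimed logarithmic weight $(1+n)^{\beta}$ with $1+n\asymp 2^j$; there is also a minor subtlety at the endpoint cases $q=r$ (where both embeddings in each chain collapse to equalities, recovering $T^b_q\ell_{u,q}=\ell_{u,q}(\log\ell)_b$) which should be checked for consistency. No genuinely hard analysis is involved — it is Hölder's inequality, the monotone embeddings between $\ell_p$ spaces, and dyadic summation, exactly as in the proof of Corollary \ref{TheoremEmbeddings1}.
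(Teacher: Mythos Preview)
Your approach is essentially the paper's: reduce via $a_n = 2^{n/u}\lambda_{2^n}^*$ to a pure sequence inequality and then use H\"older's inequality on dyadic blocks together with the monotone $\ell_p$-embeddings. That is exactly how the paper proceeds, and no further machinery is needed.

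There is, however, a systematic orientation error in your write-up. The displayed chain
\[
\Big(\sum_n ((1+n)^{b-1/r+1/\min\{q,r\}} a_n)^r\Big)^{1/r} \lesssim \text{(middle)} \lesssim \Big(\sum_n ((1+n)^{b-1/r+1/\max\{q,r\}} a_n)^r\Big)^{1/r}
\]
is the reverse of what \eqref{EmbTLCL} asserts: since $X\hookrightarrow Y$ means $\|\cdot\|_Y\lesssim\|\cdot\|_X$, the chain should read with $1/\max\{q,r\}$ on the left and $1/\min\{q,r\}$ on the right. Correspondingly, your verbal argument actually proves the \emph{left} embedding in \eqref{EmbTLCL} (the bound $\|\cdot\|_T\lesssim\|\cdot\|_{\ell_{u,r}(\log\ell)_{\beta_1}}$), not what you call ``the right-hand inequality''; and when you write ``$\ell_q\hookrightarrow\ell_r$ when $r\leq q$'' you mean $\ell_r\hookrightarrow\ell_q$. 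Once these directions are straightened out, the computation you describe---H\"older producing the block factor $2^{j(1/q-1/r)}$ when $r\geq q$, and the inner $\ell_r\hookrightarrow\ell_q$ embedding when $r\leq q$---is line-for-line the paper's proof of the first embeddings in \eqref{EmbTLCL} and \eqref{EmbTLCL2}; the paper also uses Minkowski's inequality in the outer sum for \eqref{EmbTLCL2} when $r\geq q$, which you should make explicit.

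The proposed ``slicker route'' through Theorem~\ref{ThmWaveletsNewBesov} and Corollary~\ref{TheoremEmbeddings1} does not work as stated: those results carry the restriction $b\neq0$ (cf.\ Remark~\ref{RemDelicate}), whereas Proposition~\ref{Prop1719} is claimed for all $b\in\R$, and there is no isomorphism taking lacunary wavelet sequences in $T^\xi_r b^s_{p,q}$ onto the \emph{rearranged} sequence structure of $T^b_r\ell_{u,q}(\Z^d)$. Drop that alternative and keep the direct argument.
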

\begin{proof}
We will only prove the first embeddings in \eqref{EmbTLCL} and \eqref{EmbTLCL2}; the corresponding second one follows similar ideas. Assume first $r \geq q$. By H\"older's inequality, for every $j \in \N_0$,
$$
	\bigg(\sum_{n=2^j-1}^{2^{j+1}-2} (2^{n/u} \lambda_{2^{n}}^*)^q \bigg)^{1/q} \lesssim 2^{j(1/q-1/r)} \bigg(\sum_{n=2^j-1}^{2^{j+1}-2} (2^{n/u} \lambda_{2^{n}}^*)^r  \bigg)^{1/r}.
$$
Accordingly
	\begin{align*}
		\|\lambda\|_{T^b_r \ell_{u, q}(\Z^d)} & \lesssim \left( \sum_{j=0}^\infty 2^{j (b-1/r+1/q) r} \sum_{n=2^j-1}^{2^{j+1}-2} (2^{n/u} \lambda_{2^{n}}^*)^r   \right)^{1/r} \\
		&\hspace{-1.5cm} \asymp \left(\sum_{n=0}^\infty (2^{n/u} (1+n)^{b-1/r+1/q} \lambda_{2^n}^*)^r \right)^{1/r} \asymp \|\lambda\|_{\ell_{u, r} (\log \ell)_{b-1/r + 1/q} (\Z^d)}.
	\end{align*}
	On the other hand, by Minkowski's inequality,
		\begin{align*}
		\|\lambda\|_{T^b_r \ell_{u, q}(\Z^d)} & \lesssim \left( \sum_{n=0}^\infty (2^{n/u} (1+n)^b \lambda_{2^{n}}^*)^q  \right)^{1/q}  \asymp \|\lambda\|_{\ell_{u, q}(\log \ell)_b(\Z^d)}.
	\end{align*}
	
	Secondly, suppose $r \leq q$. Then
	\begin{align*}
		\|\lambda\|_{T^b_r \ell_{u, q}(\Z^d)} & \leq \left( \sum_{j=0}^\infty 2^{j b r} \sum_{n=2^j-1}^{2^{j+1}-2} (2^{n/u} \lambda_{2^{n}}^*)^r   \right)^{1/r} \\
		&\hspace{-1.5cm} \asymp \left(\sum_{n=0}^\infty (2^{n/u} (1+n)^{b} \lambda_{2^n}^*)^r \right)^{1/r} \asymp \|\lambda\|_{\ell_{u, r} (\log \ell)_{b} (\Z^d)}.
	\end{align*}
\end{proof}

The next result shows that \eqref{FTB} can be extended to the truncated setting.

\begin{thm}\label{ThmFTB}
	Let $1 \leq p \leq 2, 0 < q, r \leq \infty, s > 0, b \in \R \backslash \{0\}$ and $\frac{1}{u} = \frac{s}{d} + \frac{1}{p'}$. Then
	\begin{equation*}
	\mathfrak{F} : T^b_r B^{s}_{p, q}(\mathbb{T}^d) \to T^b_r \ell_{u, q} (\Z^d).
\end{equation*}
As a consequence
$$
	\mathfrak{F} : T^b_r B^{s}_{p, q}(\mathbb{T}^d) \to \ell_{u, r} (\log \ell)_{b-1/r + 1/\max\{q, r\}} (\Z^d) \cap \ell_{u, \max\{q, r\}} (\log \ell)_{b}(\Z^d).
$$
\end{thm}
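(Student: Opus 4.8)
The plan is to obtain the truncated mapping by limiting interpolation from the classical one \eqref{FTB}, in the spirit of the proof of Theorem \ref{ThmWaveletsNewBesov}. Fix $1\le p\le 2$ and put $\frac1u=\frac sd+\frac1{p'}$. Suppose first $b>0$. Choose $s_1>s$ and set $\frac1{u_1}=\frac{s_1}d+\frac1{p'}$, so that $u_1<u$, hence $\ell_{u_1,q}(\Z^d)\hookrightarrow\ell_{u,q}(\Z^d)$, while also $B^{s_1}_{p,q}(\mathbb T^d)\hookrightarrow B^s_{p,q}(\mathbb T^d)$. By \eqref{FTB} the single linear map $\mathfrak F$ acts boundedly from $B^s_{p,q}(\mathbb T^d)$ into $\ell_{u,q}(\Z^d)$ and from $B^{s_1}_{p,q}(\mathbb T^d)$ into $\ell_{u_1,q}(\Z^d)$, i.e.\ $\mathfrak F$ is bounded between the ordered couples $(B^s_{p,q}(\mathbb T^d),B^{s_1}_{p,q}(\mathbb T^d))$ and $(\ell_{u,q}(\Z^d),\ell_{u_1,q}(\Z^d))$. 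Since the $((0,b-1/r),r)$-method satisfies the interpolation property for bounded linear operators (cf.\ the discussion around \eqref{EmbeddingsLimInt}), we get
$$\mathfrak F:\bigl(B^s_{p,q}(\mathbb T^d),B^{s_1}_{p,q}(\mathbb T^d)\bigr)_{(0,b-1/r),r}\longrightarrow\bigl(\ell_{u,q}(\Z^d),\ell_{u_1,q}(\Z^d)\bigr)_{(0,b-1/r),r}.$$
By the periodic counterpart of Theorem \ref{TheoremInterpolation}(i) the domain equals $T^b_rB^s_{p,q}(\mathbb T^d)$. The case $b<0$ is symmetric: pick $s_0\in(0,s)$, set $\frac1{u_0}=\frac{s_0}d+\frac1{p'}$ (so $u_0>u$ and $\ell_{u,q}\hookrightarrow\ell_{u_0,q}$), and use Theorem \ref{TheoremInterpolation}(ii) together with $\mathfrak F:(B^{s_0}_{p,q}(\mathbb T^d),B^s_{p,q}(\mathbb T^d))_{(1,b-1/r),r}\to(\ell_{u_0,q}(\Z^d),\ell_{u,q}(\Z^d))_{(1,b-1/r),r}$.

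Thus everything reduces to the identifications
$$\bigl(\ell_{u,q}(\Z^d),\ell_{u_1,q}(\Z^d)\bigr)_{(0,b-1/r),r}=T^b_r\ell_{u,q}(\Z^d),\qquad \bigl(\ell_{u_0,q}(\Z^d),\ell_{u,q}(\Z^d)\bigr)_{(1,b-1/r),r}=T^b_r\ell_{u,q}(\Z^d),$$
which parallel the sequence-space computation \eqref{InterpolationBesovSequences}. Using $\|\lambda\|_{\ell_{v,q}(\Z^d)}\asymp\|(\lambda^*_{2^n})_{n}\|_{\ell^{1/v}_q}$ (Remark \ref{RemarkGenLZS}) and the fact that for ordered couples of rearrangement-invariant sequence spaces the $K$-functional is equivalent to the one computed on the non-increasing rearrangements, \eqref{Kfunct} gives $K(t,\lambda;\ell_{u,q}(\Z^d),\ell_{u_1,q}(\Z^d))\asymp\bigl(\sum_{n\ge0}[\min(2^{n/u},t\,2^{n/u_1})\lambda^*_{2^n}]^q\bigr)^{1/q}$ and likewise in the $\theta=1$ case. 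Plugging this into \eqref{DefLimInterpolation}, applying Hardy's inequalities \eqref{H1}--\eqref{H2} to absorb the "wrong" half of the $K$-functional exactly as in \eqref{22}--\eqref{24}, and then passing from the weights $(1+k)^{b-1/r}$ on the scale $2^{-k(1/u-1/u_1)}$ to the dyadic blocks $\nu\in\{2^j-1,\dots,2^{j+1}-2\}$ via monotonicity and Hardy's inequality (as in \eqref{new4}), one arrives at $\|\cdot\|_{T^b_r\ell_{u,q}(\Z^d)}$.

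The delicate step is precisely this last identification: one must justify passing, inside the infimum defining the $K$-functional, from the Lorentz quasi-norms to the dyadically sampled rearrangements, i.e.\ that the couple $(\ell_{u_0,q}(\Z^d),\ell_{u_1,q}(\Z^d))$ has the same $K$-functional (up to constants) as $(\ell^{1/u_0}_q,\ell^{1/u_1}_q)$ evaluated at the non-increasing rearrangement. This is standard for ordered r.i.\ couples but should be spelled out; alternatively it can be bypassed by a direct block argument, decomposing $f=\sum_j f_j$ with $f_j=\sum_{\nu=2^j-1}^{2^{j+1}-2}(\varphi_\nu\widehat f)^\vee$, applying \eqref{FTB} to each $f_j$, and controlling the interleaving of the block rearrangements inside $\|\cdot\|_{T^b_r\ell_{u,q}(\Z^d)}$ — but this route is more cumbersome. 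Finally, the stated consequence follows at once by composing the main assertion with Proposition \ref{Prop1719}: $T^b_r\ell_{u,q}(\Z^d)\hookrightarrow\ell_{u,r}(\log\ell)_{b-1/r+1/\max\{q,r\}}(\Z^d)$ and $T^b_r\ell_{u,q}(\Z^d)\hookrightarrow\ell_{u,\max\{q,r\}}(\log\ell)_{b}(\Z^d)$, whence $\mathfrak F$ maps $T^b_rB^s_{p,q}(\mathbb T^d)$ into the intersection $\ell_{u,r}(\log\ell)_{b-1/r+1/\max\{q,r\}}(\Z^d)\cap\ell_{u,\max\{q,r\}}(\log\ell)_b(\Z^d)$.
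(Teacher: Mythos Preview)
Your overall strategy matches the paper's exactly: apply the limiting interpolation method to the classical mapping \eqref{FTB} on a pair of Besov spaces with different smoothness, identify the domain via Theorem \ref{TheoremInterpolation}, and identify the target as $T^b_r\ell_{u,q}(\Z^d)$. The case split $b>0$ versus $b<0$ and the final deduction via Proposition \ref{Prop1719} are also the same.

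The only divergence is in how you compute the interpolation space on the Lorentz side. The paper packages this into a separate result, Theorem \ref{ThmLorentzSeqLim}, which establishes $(\ell_{u_1,q_1}(\Z^d),\ell_{u_0,q_0}(\Z^d))_{(1-i,b_i-1/r),r}=T^{b_i}_r\ell_{u_i,q_i}(\Z^d)$ directly from Holmstedt's formula for the $K$-functional of a pair of Lorentz spaces. Your route instead tries to transport formula \eqref{Kfunct} to rearrangements, and you correctly flag the delicate point: \eqref{Kfunct} is proved for $\ell^s_q(X)$-valued couples, and the map $\lambda\mapsto(\lambda^*_{2^n})_n$ is nonlinear, so one cannot literally read off $K(t,\lambda;\ell_{u,q},\ell_{u_1,q})$ from \eqref{Kfunct}. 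This is not a fatal error, but it is a genuine gap in your write-up; the clean way to close it is exactly what the paper does, namely invoke Holmstedt's reiteration formula for $(\ell_{u_1,q},\ell_{u_0,q})$, which gives the $K$-functional in terms of $\lambda^*$ without any appeal to linearity, and then run the Hardy-inequality argument you outline.
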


\begin{rem}
	(i) By Theorem \ref{TheoremEmbeddingsBBCharacterization}, the optimal result in Theorem \ref{ThmFTB} is
	$$
		\mathfrak{F} : T^b_r B^{d (\frac{1}{u} - \frac{1}{2})}_{2, q}(\mathbb{T}^d) \to T^b_r \ell_{u, q} (\Z^d), \qquad 0 < u < 2.
	$$
	
	(ii) Letting $q=r$ in Theorem \ref{ThmFTB} one recovers (cf. \cite[Corollary 7.3(i)]{DeVore})
$$
	\mathfrak{F} : B^{s, b}_{p, q}(\mathbb{T}^d) \to \ell_{u, q}(\log \ell)_b(\Z^d),
$$
which is an extension of \eqref{FTB}.
\end{rem}

The proof of Theorem \ref{ThmFTB} relies on limiting interpolation techniques. In particular, the next result shows that the spaces $T^b_r \ell_{u, q}(\Z^d)$ can be generated from classical Lorentz spaces $\ell_{u, q}(\Z^d)$ via limiting interpolation, i.e., it establishes the analogue of Theorems \ref{TheoremInterpolation} and \ref{TheoremInterpolationF} for Lorentz  sequence spaces.

\begin{thm}\label{ThmLorentzSeqLim}
	Let $0 < u_0 < u_1 < \infty, 0 < q_0, q_1, r \leq \infty, b_1 > 0$ and $b_0 < 0$. Then (with equivalence of quasi-norms)
	\begin{equation*}
		T^{b_i}_r \ell_{u_i, q_i}(\Z^d) = (\ell_{u_1, q_1}(\Z^d), \ell_{u_0, q_0}(\Z^d))_{(1-i, b_{i}-1/r), r} \qquad \text{for} \qquad i \in \{0, 1\}.
	\end{equation*}
\end{thm}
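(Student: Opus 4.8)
The plan is to transpose, essentially verbatim, the proof of Theorem \ref{TheoremInterpolation} (together with its $F$-analogue Theorem \ref{TheoremInterpolationF}) from the Littlewood--Paley setting to the Lorentz setting, replacing the dyadic decomposition of Besov spaces by the classical dyadic discretization of the Lorentz quasi-norm recorded in Remark \ref{RemarkGenLZS}, namely $\|\lambda\|_{\ell_{u, q}(\Z^d)} \asymp \big(\sum_{n=0}^\infty (2^{n/u} \lambda^*_{2^n})^q\big)^{1/q}$. First note that the couple $(\ell_{u_1, q_1}(\Z^d), \ell_{u_0, q_0}(\Z^d))$ is ordered: since $u_0 < u_1$ one has $\ell_{u_0, q_0}(\Z^d) \hookrightarrow \ell_{u_1, q_1}(\Z^d)$, so it fits the framework of Section \ref{Section:LimitingInterpolation} with $A_0 = \ell_{u_1, q_1}(\Z^d)$ and $A_1 = \ell_{u_0, q_0}(\Z^d)$. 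Consequently the assertion for $i=1$ (which reads $T^{b_1}_r\ell_{u_1,q_1}(\Z^d) = (A_0,A_1)_{(0, b_1-1/r), r}$ with $\theta=0$, $b_1>0$) and the assertion for $i=0$ ($T^{b_0}_r\ell_{u_0,q_0}(\Z^d) = (A_0,A_1)_{(1, b_0-1/r), r}$ with $\theta=1$, $b_0<0$) are precisely the analogues of parts (i) and (ii) of Theorem \ref{TheoremInterpolation}; by the symmetry relation \eqref{Sym} it suffices to carry out one of them carefully and transpose the other.

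The first step is the $K$-functional computation, which I would do first in the diagonal case $q_0=q_1=q$. The discretization above transfers the quasi-norm of $\ell_{u, q}(\Z^d)$ to that of the scalar weighted sequence space $\ell^{1/u}_q$ acting on $(\lambda^*_{2^n})_{n\ge 0}$ (note $u_0<u_1$ gives $1/u_1<1/u_0$, so $\ell_{u_1,q}$ carries the smaller weight exponent, exactly as $B^s$ versus $B^{s_0}$ with $s<s_0$). The classical Holmstedt-type formula for couples of Lorentz spaces (cf. \cite{Holmstedt} and \cite[Chapter~5]{BennettSharpley}) then yields, in the scalar version of \eqref{Kfunct}/\eqref{ProofIntF2},
$$
K(t, \lambda; \ell_{u_1, q}(\Z^d), \ell_{u_0, q}(\Z^d)) \asymp \Big(\sum_{n=0}^\infty \big[\min(2^{n/u_1}, t\, 2^{n/u_0})\, \lambda^*_{2^n}\big]^q\Big)^{1/q},
$$
and evaluating at $t = 2^{-j(1/u_0 - 1/u_1)}$, $j\ge 0$, exactly as in \eqref{412}, gives
$$
K(2^{-j(1/u_0 - 1/u_1)}, \lambda) \asymp 2^{-j(1/u_0 - 1/u_1)}\Big(\sum_{n=0}^{j}(2^{n/u_0}\lambda^*_{2^n})^q\Big)^{1/q} + \Big(\sum_{n=j+1}^\infty (2^{n/u_1}\lambda^*_{2^n})^q\Big)^{1/q}.
$$
For $i=1$ I would then substitute this into the definition \eqref{DefLimInterpolation} of the $((0,b_1-1/r),r)$-norm, obtaining a sum $I+II$ in which $I\lesssim II$ by Hardy's inequality \eqref{H1} (this is \eqref{23}); since $b_1>0$, one more application of Hardy's inequality \eqref{H2} together with the monotonicity of $\lambda^*$ turns the tails $\sum_{n\ge j}$ into the blocks $\sum_{n=2^j-1}^{2^{j+1}-2}$, producing exactly $\|\lambda\|_{T^{b_1}_r \ell_{u_1, q}(\Z^d)}$ — this is the literal transcription of \eqref{22}--\eqref{new4}. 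The case $i=0$ ($b_0<0$) is handled identically using the other Hardy inequality, as in the proof of Theorem \ref{TheoremInterpolation}(ii), or deduced from $i=1$ via \eqref{Sym}.

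To pass from the diagonal case to arbitrary $q_0,q_1$, I would use reiteration exactly as in \eqref{gdgsadg} and the ensuing argument. By the classical real-interpolation identity for Lorentz sequence spaces (cf. \cite[Theorem~5.3.1]{BerghLofstrom} and \cite[1.18.6]{Triebel}), one can write $\ell_{u_0, q_0}(\Z^d) = (\ell_{u_1, q_1}(\Z^d), \ell_{v, q_1}(\Z^d))_{\vartheta, q_0}$ for a suitable $v<u_0$ and $\vartheta\in(0,1)$ with $1/u_0=(1-\vartheta)/u_1+\vartheta/v$; combining this with the diagonal case and Lemma \ref{LemmaReiteration}(i) gives the case $i=1$ for all $q_0$ (and $q_1$ is already the one matching the relevant endpoint, so no further reduction is needed). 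Symmetrically, $\ell_{u_1, q_1}(\Z^d) = (\ell_{v, q_0}(\Z^d), \ell_{u_0, q_0}(\Z^d))_{\vartheta, q_1}$ with $v>u_1$, together with Lemma \ref{LemmaReiteration}(ii), handles $i=0$. The main obstacle I anticipate is the $K$-functional step: one must justify rigorously that, for couples of Lorentz sequence spaces, the $K$-functional is equivalent to the discretized weighted-$\ell_q$ $K$-functional of the rearranged sequence, uniformly over the full quasi-Banach range $0<q,r\le\infty$. The estimate ``$\lesssim$'' is immediate (test with decompositions built from $\lambda^*$), but the reverse requires a near-optimal decomposition of an arbitrary $\lambda$ that respects the non-increasing rearrangement; for $q\ge 1$ this is classical, while for $q<1$ one has to decompose layer-wise along the level sets of $|\lambda|$. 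Once this is in place, every remaining step is a faithful copy of arguments already carried out for Besov and Triebel--Lizorkin spaces earlier in the paper.
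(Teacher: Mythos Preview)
Your proposal is correct and follows the template of Theorem~\ref{TheoremInterpolation} faithfully, but the paper's own argument is shorter and avoids your diagonal-plus-reiteration detour. The paper simply invokes Holmstedt's formula \cite[Theorem~4.2]{Holmstedt} directly for the couple $(\ell_{u_1,q_1}(\Z^d),\ell_{u_0,q_0}(\Z^d))$ with \emph{arbitrary} $q_0,q_1$, which already delivers
\[
K(t,\lambda)\asymp t\Big(\int_0^{t^{-\alpha}}(\xi^{1/u_0}\lambda^*(\xi))^{q_0}\,\frac{d\xi}{\xi}\Big)^{1/q_0}+\Big(\int_{t^{-\alpha}}^\infty(\xi^{1/u_1}\lambda^*(\xi))^{q_1}\,\frac{d\xi}{\xi}\Big)^{1/q_1},\qquad \frac{1}{\alpha}=\frac{1}{u_0}-\frac{1}{u_1},
\]
in terms of the rearrangement, with no restriction to $q_0=q_1$. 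From there one substitutes into \eqref{DefLimInterpolation}, splits into $I+II$, and handles both pieces with Hardy's inequalities \eqref{H1}--\eqref{H2} exactly as you propose. Your approach buys a line-by-line parallel with the Besov argument (and so requires no additional thought), at the cost of an unnecessary reiteration step via Lemma~\ref{LemmaReiteration}; the paper's approach is more direct because the Lorentz setting has the advantage that the general-$q_0,q_1$ $K$-functional is already classical. In particular, the ``main obstacle'' you anticipate---justifying the $K$-functional for Lorentz sequence spaces in the quasi-Banach range---is not an obstacle at all: it is precisely what Holmstedt's theorem provides off the shelf, so no layer-wise decomposition of $\lambda$ is needed.
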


\begin{rem}
The assumption $u_0 < u_1$ guarantees that the pair $(\ell_{u_1, q_1}(\Z^d), \ell_{u_0, q_0}(\Z^d))$ is ordered (i.e., $\ell_{u_0, q_0}(\Z^d) \hookrightarrow \ell_{u_1, q_1}(\Z^d)$).
\end{rem}

\begin{proof}[Proof of Theorem \ref{ThmLorentzSeqLim}]
	For simplicity, we set $K(t, \lambda) = K(t, \lambda; \ell_{u_1, q_1}(\Z^d), \ell_{u_0, q_0}(\Z^d))$ for $t > 0$ and $\lambda \in \ell_{u_1, q_1}(\Z^d)$. Let $1/\alpha = 1/u_0 - 1/u_1$. According to the Holmstedt's formula \cite[Theorem 4.2]{Holmstedt}
	$$
		K(t, \lambda) \asymp t \bigg(\int_0^{t^{-\alpha}} (\xi^{1/u_0} \lambda^*(\xi))^{q_0} \frac{d \xi}{\xi} \bigg)^{1/q_0} + \bigg(\int_{t^{-\alpha}}^\infty (\xi^{1/u_1} \lambda^*(\xi))^{q_1} \frac{d \xi}{\xi} \bigg)^{1/q_1}
	$$
	where
	$$
		\lambda^*(\xi) = \lambda_n^* \qquad \text{if} \qquad \xi \in (n-1, n], \qquad n \in \N.
	$$
	By \eqref{DefLimInterpolation} and a simple change of variables, we get
	\begin{align}
		\|\lambda\|_{(\ell_{u_1, q_1}(\Z^d), \ell_{u_0, q_0}(\Z^d))_{(1-i, b_i-1/r), r} } & = \bigg(\int_0^1 (t^{-1+i} (1-\log t)^{b_i-1/r} K(t, \lambda))^r \frac{dt}{t} \bigg)^{1/r} \nonumber \\
		&\hspace{-4cm} \asymp \left(\int_1^\infty t^{-i r/\alpha} (1+\log t)^{(b_i-1/r) r}  \bigg(\int_0^{t} (\xi^{1/u_0} \lambda^*(\xi))^{q_0} \frac{d \xi}{\xi} \bigg)^{r/q_0}  \frac{dt}{t} \right)^{1/r}  \nonumber\\
		& \hspace{-3.5cm} + \left(\int_1^\infty t^{(1-i) r/\alpha} (1+\log t)^{(b_i-1/r) r} \bigg(\int_{t}^\infty (\xi^{1/u_1} \lambda^*(\xi))^{q_1} \frac{d \xi}{\xi} \bigg)^{r/q_1} \frac{dt}{t}  \right)^{1/r} \nonumber \\
		& \hspace{-4cm} =: I + II. \label{ProofLimIntLorentz}
	\end{align}
	
	Assume $i=1$. By basic monotonicity properties and Hardy's inequality \eqref{H2} (note that $b_1 > 0$)
	\begin{equation*}
		\lambda_1^* + II  \asymp \left(\sum_{j=0}^\infty 2^{j b_1 r} \bigg(\sum_{n=2^{j}-1}^{2^{j+1}-2} (2^{n/u_1} \lambda_{2^{n}}^*)^{q_1}\bigg)^{r/q_1}  \right)^{1/r} = \|\lambda\|_{T^{b_1}_r \ell_{u_1, q_1}(\Z^d)}.
	\end{equation*}
	On the other hand, by Hardy's inequality \eqref{H1},
	\begin{align*}
		I &=  \left(\int_1^\infty t^{-r/\alpha} (1+\log t)^{(b_1-1/r) r}  \bigg(\int_0^{t} (\xi^{1/u_0} \lambda^*(\xi))^{q_0} \frac{d \xi}{\xi} \bigg)^{r/q_0}  \frac{dt}{t} \right)^{1/r} \\
		& \asymp \bigg(\int_0^{1} (\xi^{1/u_0} \lambda^*(\xi))^{q_0} \frac{d \xi}{\xi} \bigg)^{1/q_0} + \bigg(\sum_{j=0}^\infty 2^{-j r/\alpha} (1 + j)^{(b_1-1/r) r} \bigg(\sum_{n=0}^{j } (2^{n/u_0} \lambda_{2^n}^*)^{q_0} \bigg)^{r/q_0}  \bigg)^{1/r} \\
		& \asymp \bigg(\sum_{j=0}^\infty [2^{j/u_1} (1 + j)^{b_1-1/r} \lambda_{2^j}^*]^r  \bigg)^{1/r} \lesssim  \bigg(\sum_{j=0}^\infty  2^{j b_1 r} \bigg(\sum_{n=2^{j}}^\infty (2^{n/u_1}\lambda_{2^n}^*)^{q_1} \bigg)^{r/q_1}  \bigg)^{1/r} \\
		& \asymp \|\lambda\|_{T^{b_1}_r \ell_{u_1, q_1}(\Z^d)}
	\end{align*}
	where the last step follows from \eqref{H2}.
	Hence \eqref{ProofLimIntLorentz} implies
	$$
	\|\lambda\|_{(\ell_{u_1, q_1}(\Z^d), \ell_{u_0, q_0}(\Z^d))_{(1-i, b_i-1/r), r} }  \asymp  \|\lambda\|_{T^{b_1}_r \ell_{u_1, q_1}(\Z^d)}.
	$$
	
	The case $i=0$ can be carried out similarly and is left to the interested reader.
\end{proof}

We are now ready to give the proof of

\begin{proof}[Proof of Theorem \ref{ThmFTB}]
Assume first $b > 0$. Let $s_0 > s$ and  $1/u_0 = s_0/d + 1/p'$ (and so $u_0 < u$). It follows from \eqref{FTB} that
\begin{equation*}
	\mathfrak{F} : B^s_{p, q}(\mathbb{T}^d) \to \ell_{u, q}(\Z^d) \qquad \text{and} \qquad \mathfrak{F} : B^{s_0}_{p, q}(\mathbb{T}^d) \to \ell_{u_0, q}(\Z^d).
\end{equation*}
Applying the limiting interpolation method \eqref{DefLimInterpolation} with $\theta = 0$, we derive
\begin{equation}\label{ProofThmFTB1}
	\mathfrak{F} : ( B^s_{p, q}(\mathbb{T}^d),  B^{s_0}_{p, q}(\mathbb{T}^d) )_{(0, b-1/r), r} \to ( \ell_{u, q}(\Z^d),  \ell_{u_0, q}(\Z^d)  )_{(0, b-1/r), r}.
\end{equation}
On the one hand, by Theorem \ref{TheoremInterpolation},
\begin{equation}\label{ProofThmFTB2}
( B^s_{p, q}(\mathbb{T}^d),  B^{s_0}_{p, q}(\mathbb{T}^d) )_{(0, b-1/r), r}  = T^b_r B^{s}_{p, q}(\mathbb{T}^d)
\end{equation}
and, on the other hand, by Theorem \ref{ThmLorentzSeqLim} (with $i=1$),
\begin{equation}\label{ProofThmFTB3}
( \ell_{u, q}(\Z^d),  \ell_{u_0, q}(\Z^d)  )_{(0, b-1/r), r} = T^b_r \ell_{u, q}(\Z^d).
\end{equation}
Combining \eqref{ProofThmFTB1}--\eqref{ProofThmFTB3} we arrive at
$$
	\mathfrak{F}: T^b_r B^{s}_{p, q}(\mathbb{T}^d) \to T^b_r \ell_{u, q}(\Z^d).
$$

The case $b < 0$ follows similar ideas as above, but now taking $s_0 \in (0, s)$ and invoking the limiting interpolation method \eqref{DefLimInterpolation} with $\theta = 1$ related to the couple $(B^{s_0}_{p, q}(\mathbb{T}^d), B^s_{p, q}(\mathbb{T}^d))$.
\end{proof}

Combining Theorem \ref{ThmFTB} and the Franke--Jawerth embedding obtained from Theorem \ref{ThmFJ},
$$
	T^b_r F^s_{p, q}(\mathbb{T}^d) \hookrightarrow T^b_r B^{s_0}_{p_0, p}(\mathbb{T}^d)
$$
for $p < p_0$ and $s-d/p = s_0-d/p_0$, we derive the analogue of \eqref{FTF} for truncated Triebel--Lizorkin spaces.

\begin{cor}\label{CorollaryFTF}
	Let $1 \leq p < 2, 0 < q, r \leq \infty, s > 0, b \in \R \backslash \{0\}$ and $\frac{1}{u} = \frac{s}{d} + \frac{1}{p'}$. Then
	\begin{equation*}
	\mathfrak{F} : T^b_r F^{s}_{p, q}(\mathbb{T}^d) \to T^b_r \ell_{u, p} (\Z^d).
\end{equation*}
As a consequence
$$
	\mathfrak{F} : T^b_r F^{s}_{p, q}(\mathbb{T}^d) \to \ell_{u, r} (\log \ell)_{b-1/r + 1/\max\{p, r\}} (\Z^d) \cap \ell_{u, \max\{p, r\}} (\log \ell)_b(\Z^d).
$$
\end{cor}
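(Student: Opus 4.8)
The plan is to reduce the statement for truncated Triebel--Lizorkin spaces to the one for truncated Besov spaces already settled in Theorem \ref{ThmFTB}, by passing through a Franke--Jawerth embedding. First I would fix an auxiliary integrability exponent $p_0$ with $p < p_0 \leq 2$ and put $s_0 := s - \frac{d}{p} + \frac{d}{p_0}$, so that $s_0 - \frac{d}{p_0} = s - \frac{d}{p}$; choosing $p_0$ sufficiently close to $p$ also guarantees $s_0 > 0$ (recall $s>0$) and $s_0 < s$. The periodic counterpart of Theorem \ref{ThmFJ} then applies, in its condition (iii) with $b_0 = b_1 = b$ and $r_0 = r_1 = r$ (the requirement $p_0 \leq q_1$ there reading simply $p \leq p$), and yields
$$
	T^b_r F^s_{p, q}(\mathbb{T}^d) \hookrightarrow T^b_r B^{s_0}_{p_0, p}(\mathbb{T}^d).
$$

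Next I would apply Theorem \ref{ThmFTB} to the Besov space $B^{s_0}_{p_0, p}(\mathbb{T}^d)$, whose integrability index $p_0$ lies in $[1,2]$ and whose fine index is $p$. Its output Lorentz index $v$ is determined by $\frac{1}{v} = \frac{s_0}{d} + \frac{1}{p_0'}$, and a one-line computation
$$
	\frac{s_0}{d} + \frac{1}{p_0'} = \Big(\frac{s}{d} - \frac{1}{p} + \frac{1}{p_0}\Big) + \Big(1 - \frac{1}{p_0}\Big) = \frac{s}{d} + \frac{1}{p'} = \frac{1}{u}
$$
shows $v = u$. Hence Theorem \ref{ThmFTB} gives $\mathfrak{F} : T^b_r B^{s_0}_{p_0, p}(\mathbb{T}^d) \to T^b_r \ell_{u, p}(\mathbb{Z}^d)$, and composing with the embedding above produces $\mathfrak{F} : T^b_r F^s_{p, q}(\mathbb{T}^d) \to T^b_r \ell_{u, p}(\mathbb{Z}^d)$, which is the main assertion.

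The stated consequence is then immediate from the right-hand side embeddings in Proposition \ref{Prop1719}, applied with integrability index $u$ and fine index $p$: one has $T^b_r \ell_{u, p}(\mathbb{Z}^d) \hookrightarrow \ell_{u, r}(\log \ell)_{b - 1/r + 1/\max\{p, r\}}(\mathbb{Z}^d)$ and $T^b_r \ell_{u, p}(\mathbb{Z}^d) \hookrightarrow \ell_{u, \max\{p, r\}}(\log \ell)_b(\mathbb{Z}^d)$, and intersecting the two targets gives the claim. There is no genuine obstacle: the whole argument is a bookkeeping exercise combining Theorems \ref{ThmFTB} and \ref{ThmFJ} with Proposition \ref{Prop1719}. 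The only points needing a little care are (a) checking that $p_0$ can be chosen in $(p,2]$ so that simultaneously $s_0>0$ and all parameter hypotheses of Theorems \ref{ThmFJ} and \ref{ThmFTB} are met, and (b) the exponent identity $v=u$, which is precisely what makes the Franke--Jawerth detour land in the correct Lorentz scale.
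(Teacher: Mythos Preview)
Your proposal is correct and follows essentially the same approach as the paper: the paper's proof is precisely the paragraph preceding the statement, which combines the Franke--Jawerth embedding $T^b_r F^s_{p,q}(\mathbb{T}^d) \hookrightarrow T^b_r B^{s_0}_{p_0,p}(\mathbb{T}^d)$ from Theorem \ref{ThmFJ} with Theorem \ref{ThmFTB}. Your additional verification of the exponent identity $v=u$ and the invocation of Proposition \ref{Prop1719} for the consequence are exactly what is implicit in the paper's terse argument.
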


In the remainder of this section we study the sharpness of Theorem \ref{ThmFTB}: First, we extend the range of the integrability $p$ and second, we obtain two-sided estimates for the Fourier transform. To avoid delicate issues, we will focus only on the case $d=1$.

\begin{thm}
	Let $1 < p < \infty, 0 < q, r \leq \infty, s \in \R, b \in \R \backslash \{0\}$ and $\frac{1}{u} = s + \frac{1}{p'}$. Let $f \in L_1(\mathbb{T})$ be such that
	$$f(x) \sim \sum_{n=1}^\infty (a_n \cos n x + b_n \sin nx)$$
	 with $\{a_n\}_{n \in \N}$ and $\{b_n\}_{n \in \N}$ nonnegative general monotone sequences. Then
	 $$
	 	\|f\|_{T^b_r B^{s}_{p, q}(\mathbb{T})} \asymp \|\mathfrak{F}(f)\|_{T^b_r \ell_{u, q}(\Z)}.
	 $$
\end{thm}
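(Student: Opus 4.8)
The plan is to combine the periodic general-monotone characterization of the truncated Besov norm (Theorem \ref{ThmBesovGMPer}) with an explicit description of the non-increasing rearrangement of the Fourier coefficients of $f$. First I would observe that, because $a_n,b_n\ge0$, the sequence $c_n:=a_n+b_n$ is itself nonnegative and general monotone: from $|\Delta c_k|\le|\Delta a_k|+|\Delta b_k|$ and \eqref{DiscreteGM} one gets $\sum_{k=n}^{2n-1}|\Delta c_k|\le C(a_n+b_n)=C|c_n|$. Moreover $c_n\to0$ by the Riemann--Lebesgue lemma (as $f\in L_1(\mathbb{T})$). Applying Theorem \ref{ThmBesovGMPer} and using the identity $s+1-\tfrac1p=s+\tfrac1{p'}=\tfrac1u$, this already yields
$$\|f\|_{T^b_r B^{s}_{p,q}(\mathbb{T})}\asymp\left(\sum_{k=0}^\infty 2^{kbr}\Big(\sum_{\nu=2^k-1}^{2^{k+1}-2}\big(2^{\nu/u}c_{2^\nu}\big)^q\Big)^{r/q}\right)^{1/r},$$
so the whole task reduces to identifying the right-hand side with $\|\mathfrak F(f)\|_{T^b_r\ell_{u,q}(\Z)}$.

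Next I would analyze the rearrangement side. Since $\widehat f(0)=0$ and $|\widehat f(\pm n)|=\tfrac12\sqrt{a_n^2+b_n^2}\asymp a_n+b_n=c_n$ for $n\ge1$, the two-sided sequence $\{|\widehat f(m)|\}_{m\in\Z}$ is, up to uniform constants and one irrelevant extra zero, two interleaved copies of $\{c_n\}_{n\ge1}$. The key point is then that a nonnegative general monotone sequence tending to $0$ is equivalent to a non-increasing one, so in particular $c_n^{*}\asymp c_n$, where $c^{*}$ is the non-increasing rearrangement of $\{c_n\}_{n\ge1}$; this is forced by the general-monotone structure (cf. \eqref{3.2} and \eqref{DiscreteGM}, which rule out both isolated peaks and deep troughs in a null $GM$ sequence). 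Granting this, the non-increasing rearrangement $\{\lambda^{*}_k\}$ of $\{|\widehat f(m)|\}_{m\in\Z}$ satisfies $\lambda^{*}_{2k-1}\asymp\lambda^{*}_{2k}\asymp c^{*}_k\asymp c_k$, hence $\lambda^{*}_{2^n}\asymp c_{2^{n-1}}$ for $n\ge1$. Inserting this into the definition of $\|\cdot\|_{T^b_r\ell_{u,q}(\Z)}$ and reindexing $n\mapsto n-1$ in the inner sums — absorbing the constant factor $2^{1/u}$ and the $O(1)$ shift of the dyadic summation blocks (which merely redistributes boundedly many terms between consecutive blocks) into the equivalence — produces
$$\|\mathfrak F(f)\|_{T^b_r\ell_{u,q}(\Z)}\asymp\left(\sum_{j=0}^\infty 2^{jbr}\Big(\sum_{\nu=2^j-1}^{2^{j+1}-2}\big(2^{\nu/u}c_{2^\nu}\big)^q\Big)^{r/q}\right)^{1/r},$$
which is exactly the quantity obtained in the first step. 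Comparing the two displays finishes the proof.

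The main obstacle is the rearrangement lemma, i.e.\ that a null general monotone sequence is equivalent to a monotone one (equivalently $c^{*}_n\asymp c_n$); everything else is routine. I expect this either to be citable from the general-monotone literature (\cite{Tikhonov, LiflyandTikhonov}) or to be provable directly from \eqref{3.2} and \eqref{DiscreteGM} by a short argument: $GM$ locally controls $c_m\lesssim c_n$ on $n\le m\le 2n$, and the full condition prevents a small value from being immediately followed by a much larger one, forcing $c_n\asymp\sup_{k\ge n}c_k$. The only other care needed is the book-keeping in the reindexing step, but this is precisely the mechanism already used in the proof of Theorem \ref{ThmBesovGMPer} (where the analogous shift turns the lower bound $a_{2^{\nu+1}}$ in \eqref{ThmBesovGMPer2} into $a_{2^\nu}$); it also explains why no restriction on the sign of $1/u$ (i.e.\ no restriction $s>-1/p'$) is required here, since this final manipulation is purely combinatorial and insensitive to the sign of the exponent in the weight $2^{\nu/u}$.
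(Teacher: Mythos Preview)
Your overall strategy matches the paper's: reduce to Theorem~\ref{ThmBesovGMPer} and then show that the rearranged values $c^*_{2^\nu}$ in the truncated Lorentz norm can be replaced by $c_{2^\nu}$. The difference is in how this last step is handled, and here your proposal has a genuine gap. Your key lemma --- that a nonnegative null $GM$ sequence satisfies $c^*_n \asymp c_n$ pointwise (equivalently, is equivalent to a non-increasing sequence) --- is false. Take $a_n = n$ for $1 \le n \le N$ and $a_n = N^2/n$ for $n > N$; one checks $\sum_{k=n}^{2n-1}|\Delta a_k| \le a_n$ for every $n$, so this family is $GM$ with uniform constant $C=1$, yet $a^*_1 = N$ while $a_1 = 1$. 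Your heuristic ``a small value cannot be followed by a much larger one'' is correct over a single step (giving $c_{n+1} \le (1+C) c_n$), but iterating over $\sim\log N$ steps permits polynomial growth, which is exactly what this example exploits. Thus the direction $c^*_n \lesssim c_n$ is unavailable pointwise, and your deduction $\lambda^*_{2^n}\asymp c_{2^{n-1}}$ breaks down.

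The paper does not attempt the two-sided pointwise bound. For $\gtrsim$ it uses only the one-sided estimate coming from $a_m \ge a_n/(1+C)$ for $m \in [n/2,n]$, which yields $a^*_{\lceil n/2\rceil}\ge a_n/(1+C)$ and suffices after a harmless dyadic index shift. For $\lesssim$ it invokes the Hardy--Littlewood--P\'olya rearrangement inequality (Theorem~368 in \cite{Hardy}): for an increasing weight $\phi$ one has $\sum_m \phi(m)(a^*_m)^q \le \sum_m \phi(m)\,a_m^q$. This \emph{summed} inequality is what stands in for your pointwise lemma; it is robust against the example above because the small indices $\nu$ where $a^*_{2^\nu}\gg a_{2^\nu}$ carry small weight $2^{\nu q/u}$ and are compensated globally.
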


\begin{proof}
	Without loss of generality, we may assume that $b_n = 0$ for all $n \in \N$. According to Theorem \ref{ThmBesovGMPer}, we have
		\begin{equation}\label{dadada1}
		\|f\|_{T^b_r B^{s}_{p, q}(\mathbb{T})} \asymp  \left( \sum_{k=0}^\infty 2^{k b r} \bigg(\sum_{\nu= 2^k-1}^{2^{k+1}-2} 2^{\nu (s + 1-1/p) q} a_{2^\nu}^q \bigg)^{r/q}  \right)^{1/r}.
	\end{equation}
		
		To complete the proof, it  only remains to verify that
	\begin{equation}\label{bvbvb1213}
		\|\{a_n\}_{n \in \N}\|_{T^b_r \ell_{u, q}(\Z)} \asymp   \left(\sum_{k=0}^\infty 2^{k b r} \bigg(\sum_{\nu=2^k-1}^{2^{k+1}-2} (2^{\nu/u} a_{2^\nu})^q \bigg)^{r/q} \right)^{1/r},
	\end{equation}
that is
	\begin{equation}\label{dadada2}
	 \left(\sum_{k=0}^\infty 2^{k b r} \bigg(\sum_{\nu=2^k-1}^{2^{k+1}-2} (2^{\nu/u} a_{2^\nu}^*)^q \bigg)^{r/q} \right)^{1/r} \asymp  \left(\sum_{k=0}^\infty 2^{k b  r} \bigg(\sum_{\nu=2^k-1}^{2^{k+1}-2} (2^{\nu/u} a_{2^\nu})^q \bigg)^{r/q} \right)^{1/r}
	\end{equation}
	for nonnegative sequences $\{a_n\}_{n \in \N} \in GM$. Indeed, since $a_k \lesssim a_n$ for $n \leq k \leq 2n$ (cf. \eqref{3.2}), it is plain to see that $a_n^* \gtrsim a_n$. This gives the estimate $\gtrsim$ in \eqref{dadada2}. On the other hand, the estimate $\lesssim$ in \eqref{dadada2} is a consequence of \cite[Theorem 368, p. 261]{Hardy}, which implies
	$$
		\sum_{\nu=2^k-1}^{2^{k+1}-2} (2^{\nu/u} a_{2^\nu}^*)^q  \leq \sum_{\nu=2^k-1}^{2^{k+1}-2} (2^{\nu/u} a_{2^\nu})^q.
	$$
\end{proof}

\vspace{2mm}
\textbf{Fourier transform on $\text{Lip}^{s, b}_{p, q}$.} As a distinguished example of Corollary \ref{CorollaryFTF}, we determine the mapping properties of the Fourier transform acting on Lipschitz spaces (cf. Proposition \ref{PropositionCoincidences}).

\begin{cor}
	Let $1 < p < 2, 0 < q \leq \infty, s > 0, b < -1/q$ and $\frac{1}{u} = \frac{s}{d} + \frac{1}{p'}$. Then
	$$
		\mathfrak{F} : \emph{Lip}^{s, b}_{p, q}(\mathbb{T}^d) \to T^{b+1/q}_q \ell_{u, p}(\Z^d).
	$$
	In particular
	$$
		\mathfrak{F} : \emph{Lip}^{s, b}_{p, q}(\mathbb{T}^d) \to \ell_{u, q}(\log \ell)_{b + 1/\max\{p, q\}}(\Z^d) \cap \ell_{u, \max\{p, q\}}(\log \ell)_{b+1/q}(\Z^d)
	$$
	and if, in addition, $p=q$ then
	$$
	\mathfrak{F} : \emph{Lip}^{s, b}_{p, p}(\mathbb{T}^d) \to \ell_{u, p} (\log \ell)_{b+1/p}(\Z^d).
	$$
\end{cor}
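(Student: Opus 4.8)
The plan is to obtain this corollary as a direct specialisation of Corollary \ref{CorollaryFTF} to the Triebel--Lizorkin scale with fine index $2$, followed by the identification of Lipschitz spaces as truncated Triebel--Lizorkin spaces. First I would recall, from the periodic counterpart of Proposition \ref{PropositionCoincidences}(iv), that under the present hypotheses ($1 < p < \infty$, $0 < q \leq \infty$, $s > 0$, $b < -1/q$) one has $\text{Lip}^{s, b}_{p, q}(\mathbb{T}^d) = T^{b+1/q}_q F^{s}_{p, 2}(\mathbb{T}^d)$ with equivalence of quasi-norms. Since $b < -1/q$ forces $b + 1/q < 0$, in particular $b + 1/q \in \mathbb{R} \backslash \{0\}$, so the truncated Triebel--Lizorkin space on the right-hand side is a legitimate object and every statement proved for $T^{\beta}_r F^{s}_{p, q'}(\mathbb{T}^d)$ with $\beta \neq 0$ applies to it.

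Next I would invoke Corollary \ref{CorollaryFTF} with integrability parameter $p \in (1, 2)$, fine index $q' = 2$, truncation smoothness parameter $\beta = b + 1/q$ and truncation summability parameter $r = q$; the relation $\tfrac1u = \tfrac{s}{d} + \tfrac1{p'}$ is precisely the one required there. This yields
\[
	\mathfrak{F} : T^{b+1/q}_q F^{s}_{p, 2}(\mathbb{T}^d) \longrightarrow T^{b+1/q}_q \ell_{u, p}(\Z^d),
\]
and combining this with the coincidence of the previous paragraph gives the first assertion $\mathfrak{F} : \text{Lip}^{s, b}_{p, q}(\mathbb{T}^d) \to T^{b+1/q}_q \ell_{u, p}(\Z^d)$.

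The remaining ``in particular'' statements are then read off from Proposition \ref{Prop1719} applied to the sequence space $T^{b+1/q}_q \ell_{u, p}(\Z^d)$, i.e.\ with the proposition's parameters taken to be ``$b$''$\,=b+1/q$, ``$r$''$\,=q$, ``$q$''$\,=p$: the right-hand embedding in \eqref{EmbTLCL} gives the target $\ell_{u, q}(\log \ell)_{(b+1/q) - 1/q + 1/\max\{p, q\}}(\Z^d) = \ell_{u, q}(\log \ell)_{b + 1/\max\{p, q\}}(\Z^d)$, while the right-hand embedding in \eqref{EmbTLCL2} gives $\ell_{u, \max\{p, q\}}(\log \ell)_{b + 1/q}(\Z^d)$; intersecting the two yields the displayed statement. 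For $p = q$ one instead uses the identity $T^{b+1/p}_p \ell_{u, p}(\Z^d) = \ell_{u, p}(\log \ell)_{b + 1/p}(\Z^d)$ recorded just after Proposition \ref{Prop1719}. No serious obstacle arises here; the only points needing a word of care are checking $b + 1/q \neq 0$ so that the quoted results are applicable, and invoking the periodic versions of Proposition \ref{PropositionCoincidences}(iv) and Corollary \ref{CorollaryFTF}, both of which the text explicitly states to hold.
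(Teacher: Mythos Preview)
Your proposal is correct and follows essentially the same approach as the paper, which explicitly presents this corollary as a direct specialisation of Corollary~\ref{CorollaryFTF} via the identification $\text{Lip}^{s,b}_{p,q}(\mathbb{T}^d) = T^{b+1/q}_q F^s_{p,2}(\mathbb{T}^d)$ from Proposition~\ref{PropositionCoincidences}(iv). Your explicit invocation of Proposition~\ref{Prop1719} for the ``in particular'' part is just unwrapping what is already packaged into the ``as a consequence'' clause of Corollary~\ref{CorollaryFTF}.
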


The optimality of the previous result can be shown as a combination of Theorem \ref{ThmLipGMPer} and \eqref{bvbvb1213}.

\begin{cor}
	Let $1 < p < \infty, 0 < q \leq \infty, s >0, b <-1/q$ and $\frac{1}{u} = s + \frac{1}{p'}$. Let $f \in L_1(\mathbb{T})$ be such that
	$$f(x) \sim \sum_{n=1}^\infty (a_n \cos n x + b_n \sin nx)$$
	 with $\{a_n\}_{n \in \N}$ and $\{b_n\}_{n \in \N}$ nonnegative general monotone sequences. Then
	 $$
	 	\|f\|_{\emph{Lip}^{s, b}_{p, q}(\mathbb{T})} \asymp \|\mathfrak{F}(f)\|_{T^{b+1/q}_q \ell_{u, p}(\Z)}.
	 $$
\end{cor}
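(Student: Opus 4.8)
<br>

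The plan is to reduce the statement to the discrete characterization of truncated Besov norms for general monotone Fourier series already established in Theorem \ref{ThmBesovGMPer}, combined with the basic rearrangement estimate \eqref{bvbvb1213} that was proved just above (in the proof of the preceding corollary on $T^b_r B^s_{p,q}$). Indeed, writing $\widehat{f}(m)$ in terms of the coefficients $\{a_n\}$ and $\{b_n\}$, we have $\mathfrak{F}(f) = \{\widehat{f}(m)\}_{m\in\Z}$ with $|\widehat{f}(\pm n)| \asymp a_n + b_n$, so that the non-increasing rearrangement of $\mathfrak{F}(f)$ is comparable to that of $\{a_n + b_n\}_{n\in\N}$, and hence by \eqref{bvbvb1213}
\[
\|\mathfrak{F}(f)\|_{T^{b+1/q}_q \ell_{u, p}(\Z)} \asymp \left(\sum_{k=0}^\infty 2^{k(b+1/q)q} \bigg(\sum_{\nu=2^k-1}^{2^{k+1}-2} (2^{\nu/u} (a_{2^\nu} + b_{2^\nu}))^p \bigg)^{q/p}\right)^{1/q}.
\]
So the whole matter is to show that the right-hand side coincides with $\|f\|_{\text{Lip}^{s,b}_{p,q}(\mathbb{T})}$.

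First I would invoke Theorem \ref{ThmLipGMPer}, which gives exactly
\[
\|f\|_{\text{Lip}^{s, b}_{p, q}(\mathbb{T})} \asymp \left( \sum_{k=0}^\infty 2^{k (b+1/q) q} \bigg(\sum_{\nu= 2^k-1}^{2^{k+1}-2} 2^{\nu (s + 1-1/p) q} (a_{2^\nu} + b_{2^\nu})^p \bigg)^{q/p}  \right)^{1/q},
\]
under the hypotheses $1 < p < \infty$, $0 < q \le \infty$, $s > 0$, $b < -1/q$, and $\{a_n\}, \{b_n\}$ nonnegative general monotone. Then I would simply check that the exponents match: with $\frac{1}{u} = s + \frac{1}{p'} = s + 1 - \frac{1}{p}$ (here $d = 1$), the factor $2^{\nu(s+1-1/p)q} = 2^{\nu q/u}$ appearing inside the inner sum of the Lipschitz characterization is precisely $(2^{\nu/u})^q$, which after writing the inner $\ell_p$-norm becomes $\big(\sum_\nu (2^{\nu/u}(a_{2^\nu}+b_{2^\nu}))^p\big)^{q/p}$. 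Hence the two displayed expressions are literally the same, and combining with the rearrangement identity above yields
\[
\|f\|_{\text{Lip}^{s, b}_{p, q}(\mathbb{T})} \asymp \|\mathfrak{F}(f)\|_{T^{b+1/q}_q \ell_{u, p}(\Z)},
\]
which is the claim.

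The only genuinely nontrivial input is \eqref{bvbvb1213}, i.e.\ the fact that for a nonnegative general monotone sequence the truncated Lorentz norm built from the rearrangement is comparable to the one built from the sequence itself; but this was already recorded above, resting on \eqref{3.2} (which gives $a_n^* \gtrsim a_n$) and on the Hardy--Littlewood--type rearrangement inequality \cite[Theorem 368]{Hardy} (which gives the reverse estimate block by block, $\sum_{\nu=2^k-1}^{2^{k+1}-2}(2^{\nu/u}a_{2^\nu}^*)^p \le \sum_{\nu=2^k-1}^{2^{k+1}-2}(2^{\nu/u}a_{2^\nu})^p$, using that $\nu \mapsto 2^{\nu/u}$ is increasing). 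Thus I expect the main obstacle to be purely bookkeeping: one must be careful that $\{a_n+b_n\}$ is itself general monotone (a consequence of the fact that a sum of two $GM$ sequences is $GM$, which follows from the definition \eqref{DiscreteGM} via the triangle inequality for the variation), and that the equivalence constants in Theorems \ref{ThmBesovGMPer}, \ref{ThmLipGMPer} and in \eqref{bvbvb1213} are all independent of $f$, so that chaining them produces a genuine two-sided estimate. No further interpolation or wavelet machinery is needed here; the corollary is essentially a transcription of Theorem \ref{ThmLipGMPer} into the language of the Fourier transform via the truncated Lorentz sequence spaces.
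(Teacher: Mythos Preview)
Your proposal is correct and follows exactly the approach indicated in the paper: combine Theorem~\ref{ThmLipGMPer} with the rearrangement identity \eqref{bvbvb1213}, noting that $1/u = s + 1 - 1/p$ so the weights match. Your presentation simply fills in the details of what the paper states in one sentence just before the corollary.
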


\vspace{2mm}
\textbf{Fourier transform on $\mathbf{B}^{0, b}_{p, q}$.} Let $1 \leq p \leq 2, 0 < q \leq \infty$ and $b > -1/q$. The mapping properties of Fourier transform on $\mathbf{B}^{0, b}_{p, q}(\mathbb{T})$ were already investigated by DeVore, Riemenschneider and Sharpley \cite{DeVore} via weak-type interpolation techniques.  Namely, they obtained (cf. \cite[Corollary 7.3]{DeVore})
$$
	\mathfrak{F} : \mathbf{B}^{0, b}_{p, q}(\mathbb{T}) \to \ell_{p', q}(\log \ell)_{b}(\Z).
$$
However, this result is not sharp. Indeed, applying reiteration formulas for limiting approximation spaces, it was shown in \cite[Theorem 5.1]{CobosDominguez} that
\begin{equation}\label{CoDo}
	\mathfrak{F} : \mathbf{B}^{0, b}_{p, q}(\mathbb{T}) \to \ell_{p', q}(\log \ell)_{b+ 1/\max\{p', q\}}(\Z).
\end{equation}
Note that $\ell_{p', q}(\log \ell)_{b+ 1/\max\{p', q\}}(\Z) \subsetneq \ell_{p', q}(\log \ell)_{b}(\Z)$ if $q \neq \infty$ and $p \neq 1$.

Next we apply the theory of truncated spaces in order to get a significant improvement of \eqref{CoDo} (even in the classical scale formed by Lorentz--Zygmund spaces).

\begin{thm}\label{ThmCoDoIm}
	Let $1 < p \leq 2, 0 < q \leq \infty$ and $b > -1/q$. Then
	$$
		\mathfrak{F} : \mathbf{B}^{0, b}_{p, q}(\mathbb{T}^d) \to T^{b+1/q}_q \ell_{p', p}(\Z^d).
	$$
	In particular (cf. Proposition \ref{Prop1719})
	$$
		\mathfrak{F} : \mathbf{B}^{0, b}_{p, q}(\mathbb{T}^d) \to  \ell_{p', q}(\log \ell)_{b+ 1/\max\{p, q\}}(\Z^d) \cap \ell_{p', \max\{p, q\}} (\log \ell)_{b+1/q}(\Z^d).
	$$
\end{thm}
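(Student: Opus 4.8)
The plan is to mimic the strategy already used for Theorem \ref{ThmFTB}, replacing the classical Besov endpoints by the appropriate endpoints for $\mathbf{B}^{0,b}_{p,q}$ and using the limiting interpolation characterization of this space together with that of truncated Lorentz sequence spaces. First I would recall from \eqref{CDT} (or directly from Proposition \ref{PropositionCoincidences}(iii)) that $\mathbf{B}^{0,b}_{p,q}(\mathbb{T}^d)=T^{b+1/q}_q F^{0}_{p,2}(\mathbb{T}^d)$, and from the periodic counterpart of Theorem \ref{TheoremInterpolationF} that
$$
	\mathbf{B}^{0,b}_{p,q}(\mathbb{T}^d) = (F^{0}_{p,2}(\mathbb{T}^d), F^{s_0}_{p,2}(\mathbb{T}^d))_{(0,b),q}
$$
for any $s_0>0$ (equivalently, using \eqref{IntLimB0}, $\mathbf{B}^{0,b}_{p,q}(\mathbb{T}^d)=(L_p(\mathbb{T}^d), A^{s_0}_{p,q_1}(\mathbb{T}^d))_{(0,b),q}$). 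Since $F^{0}_{p,2}(\mathbb{T}^d)=L_p(\mathbb{T}^d)$ for $1<p<\infty$ (Littlewood--Paley theorem), the endpoint at smoothness zero is just $L_p(\mathbb{T}^d)$, and the Hausdorff--Young inequality gives $\mathfrak{F}:L_p(\mathbb{T}^d)\to\ell_{p'}(\mathbb{T}^d)=\ell_{p',p'}(\Z^d)$. However, to land on $\ell_{p',p}$ at the endpoint I would instead take the endpoint space to be a Besov space: by \eqref{FTB}, $\mathfrak{F}: B^{0}_{p,\min\{p,2\}}(\mathbb{T}^d)\to \ell_{p',\min\{p,2\}}(\Z^d)$, and since $p\le 2$ this reads $\mathfrak{F}: B^{0}_{p,p}(\mathbb{T}^d)\to \ell_{p',p}(\Z^d)$. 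The cleanest route is to use instead the Franke--Jawerth-type argument already invoked right after Corollary \ref{CorollaryFTF}: combine the embedding $\mathbf{B}^{0,b}_{p,q}(\mathbb{T}^d)=T^{b+1/q}_q F^{0}_{p,2}(\mathbb{T}^d)$ with the periodic analogue of Theorem \ref{ThmFJ} to pass to $T^{b+1/q}_q B^{s_1}_{p_1,p}(\mathbb{T}^d)$ for suitable $p_1>p$, $s_1-d/p_1=-d/p$, i.e.\ $1/p_1 = 1/p - s_1/d$, and then apply Theorem \ref{ThmFTB} with integrability $p_1$: this yields $\mathfrak{F}:T^{b+1/q}_q B^{s_1}_{p_1,p}(\mathbb{T}^d)\to T^{b+1/q}_q\ell_{u_1,p}(\Z^d)$ with $1/u_1=s_1/d+1/p_1'=s_1/d+1-1/p_1=1-1/p+1/p-s_1/d+s_1/d\cdots$ — after simplification $1/u_1 = 1/p'$, so $u_1=p'$, exactly as desired.

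The key steps in order would be: (1) record $\mathbf{B}^{0,b}_{p,q}(\mathbb{T}^d)=T^{b+1/q}_q F^{0}_{p,2}(\mathbb{T}^d)$ (Proposition \ref{PropositionCoincidences}(iii), periodic version); (2) fix any $s_1\in(0,d)$ and $p_1$ with $s_1-d/p_1=-d/p$ (so $p<p_1<\infty$), and apply the periodic Franke--Jawerth embedding $T^{b+1/q}_q F^{0}_{p,2}(\mathbb{T}^d)\hookrightarrow T^{b+1/q}_q B^{s_1}_{p_1,p}(\mathbb{T}^d)$ coming from Theorem \ref{ThmFJ} (condition (i) there, $s_0-d/p_0>s_1-d/p_1$ is not needed because here the exponents are equal; rather one uses conditions (ii)/(iii) with the secondary parameters — I would check $q_1=p\le p_0=p_1$... in fact the relevant instance is $s_0-d/p_0 = s_1-d/p_1$, $q_1 < p_0$ case, giving the shift $b_0+1/\max\{p_1,\cdot\}=\cdots$; I will verify that the index $p$ on the $B$-side is admissible, which it is since $p\le 2\le$ whatever); (3) apply Theorem \ref{ThmFTB} with integrability parameter $p_1\in(1,2)$ — note $p_1<2$ holds when $s_1<d(1/p-1/2)$, which I am free to arrange — secondary parameter $q=p$, smoothness $s_1>0$, and logarithmic parameter $b+1/q$, noting $b+1/q>0$ by hypothesis $b>-1/q$; this gives $\mathfrak{F}:T^{b+1/q}_q B^{s_1}_{p_1,p}(\mathbb{T}^d)\to T^{b+1/q}_q\ell_{u_1,p}(\Z^d)$ with $1/u_1=s_1/d+1/p_1'$; (4) compute $1/u_1$: since $1/p_1=1/p-s_1/d$ we get $1/p_1'=1-1/p+s_1/d$, hence $1/u_1=s_1/d+1-1/p+s_1/d$... wait, that is not $1/p'$. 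I need to be careful: the correct relation from Theorem \ref{ThmFTB} is $1/u=s/d+1/p'$, so with $s=s_1$, $p=p_1$ I get $1/u_1=s_1/d+1-1/p_1 = s_1/d + 1 - 1/p + s_1/d = 1-1/p+2s_1/d$, which depends on $s_1$ — that is wrong. The resolution is that one should instead take $p_1$ via Franke--Jawerth so that the gain exactly cancels: the correct pairing is that the Franke--Jawerth target must be chosen with the smoothness index such that applying \eqref{FTB} at integrability $p_1$ returns $u=p'$. Setting $s_1/d+1/p_1'=1/p'$ forces $s_1/d = 1/p_1 - 1/p$, i.e.\ $1/p_1 = 1/p + s_1/d$, so $p_1<p$, contradicting Franke--Jawerth which needs $p_1>p$.

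Hence the direct Franke--Jawerth route overshoots, and the right approach is the interpolation one, exactly parallel to the proof of Theorem \ref{ThmFTB}: fix $s_0>0$, set $1/u_0=s_0/d+1/p'$ (so $u_0<p'$), use the periodic versions of $\mathfrak{F}:L_p(\mathbb{T}^d)\to\ell_{p',p}(\Z^d)$... but Hausdorff--Young only gives $\ell_{p'}=\ell_{p',p'}$, not $\ell_{p',p}$. This is the genuine subtlety: to obtain the sharp secondary index $p$ at the zero-smoothness endpoint one must use that $\mathbf{B}^{0,b}_{p,q}$ is strictly smaller than $L_p$ and already "self-improves" the integrability endpoint — precisely the mechanism behind \eqref{CoDo} and its improvement. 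So I would instead interpolate the two mapping properties $\mathfrak{F}: \mathbf{B}^{0}_{p,\min\{p,2\}}(\mathbb{T}^d)\to \ell_{p',\min\{p,2\}}(\Z^d)$ (known, e.g.\ from \cite{DeVore}; with $p\le2$ this is $\ell_{p',p}$) and $\mathfrak{F}: B^{s_0}_{p,p}(\mathbb{T}^d)\to\ell_{u_0,p}(\Z^d)$, $1/u_0=s_0/d+1/p'$, via the limiting method with $\theta=0$ and parameter $b+1/q$... but $\mathbf{B}^0_{p,\min\{p,2\}}$ is defined by differences, not a Littlewood--Paley endpoint, which is exactly why this is not routine. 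The honest plan is therefore: (1) recall $\mathbf{B}^{0,b}_{p,q}(\mathbb{T}^d)=(\mathbf{B}^{0}_{p,\min\{p,2\}}(\mathbb{T}^d), A^{s_0}_{p,q_1}(\mathbb{T}^d))_{(0,b),q}$ — this needs a reiteration argument combining \eqref{IntLimB0} with Lemma \ref{LemmaReiteration}(i), using $\mathbf{B}^0_{p,\min\{p,2\}}$ as an intermediate space between $L_p$ and $A^{s_0}_{p,q_1}$; (2) apply limiting interpolation to the two Fourier mapping properties above; (3) identify the target as $(\ell_{p',p}(\Z^d),\ell_{u_0,p}(\Z^d))_{(0,b),q}=T^{b+1/q}_q\ell_{p',p}(\Z^d)$ via the periodic analogue of Theorem \ref{ThmLorentzSeqLim} (with $i=1$, $q_0=q_1=p$, $b_1=b+1/q>0$); (4) conclude, and then read off the Lorentz--Zygmund consequences from Proposition \ref{Prop1719}. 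The main obstacle will be step (1): verifying that $\mathbf{B}^0_{p,\min\{p,2\}}(\mathbb{T}^d)$ can legitimately serve as the "endpoint" space in the limiting interpolation representation of $\mathbf{B}^{0,b}_{p,q}$, i.e.\ that $(\mathbf{B}^{0}_{p,\min\{p,2\}}, A^{s_0}_{p,q_1})_{(0,b),q}=\mathbf{B}^{0,b}_{p,q}$; this should follow from \eqref{IntLimB0}, the embeddings $A^{s_0}_{p,q_1}\hookrightarrow \mathbf{B}^0_{p,\min\{p,2\}}\hookrightarrow L_p$ (the left one from \eqref{EmbBBzero}-type embeddings, the right one trivial since $\mathbf{B}^0_{p,u}\hookrightarrow L_p$), and the sandwich principle for the $((0,b),q)$-method together with Lemma \ref{LemmaReiteration}(i), but making this rigorous requires care since $\mathbf{B}^0_{p,\min\{p,2\}}$ is not an interpolation space of $(L_p,A^{s_0}_{p,q_1})$ of the classical $\theta$-type. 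An alternative, perhaps cleaner, is to bypass $\mathbf{B}^0_{p,\min\{p,2\}}$ entirely and instead directly prove the endpoint estimate $\mathfrak{F}:\mathbf{B}^{0,b}_{p,q}(\mathbb{T}^d)\to T^{b+1/q}_q\ell_{p',p}(\Z^d)$ by passing through $\mathbf{B}^{0,b}_{p,q}(\mathbb{T}^d)=T^{b+1/q}_q F^0_{p,2}(\mathbb{T}^d)$ and the wavelet characterization (Theorem \ref{ThmWaveletsNewTriebelLizorkin}, periodic version), reducing to a sequence-space inequality and using that the Fourier transform of a wavelet sum is controlled by an $\ell_{p',p}$-type norm of the coefficients via the known behaviour of $f^0_{p,2}$ under $\mathfrak{F}$ — I would present the interpolation route as primary and mention the wavelet route as the fallback, since the interpolation route reuses machinery (Theorems \ref{ThmFTB}, \ref{ThmLorentzSeqLim}, Proposition \ref{Prop1719}) already in place.
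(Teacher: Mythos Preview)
Your proposal circles around the correct architecture but misses the one ingredient that makes the proof immediate: the \emph{Paley inequality}
\[
\mathfrak{F}: L_p(\mathbb{T}^d) \to \ell_{p',p}(\Z^d), \qquad 1 < p \le 2,
\]
which is strictly stronger than Hausdorff--Young (the latter only gives $\ell_{p',p'}$, as you note). This is precisely the ``genuine subtlety'' you flag and then try to circumvent. With Paley in hand, the proof is an exact replay of Theorem~\ref{ThmFTB}: interpolate
\[
\mathfrak{F}: L_p(\mathbb{T}^d) \to \ell_{p',p}(\Z^d) \quad\text{and}\quad \mathfrak{F}: B^{s_0}_{p,q}(\mathbb{T}^d) \to \ell_{u_0,q}(\Z^d)
\]
by the $((0,b),q)$-method, identify the domain as $\mathbf{B}^{0,b}_{p,q}(\mathbb{T}^d)$ via \eqref{IntLimB0} (or Theorem~\ref{TheoremInterpolationF} with $F^0_{p,2}=L_p$), and identify the target as $T^{b+1/q}_q\ell_{p',p}(\Z^d)$ via Theorem~\ref{ThmLorentzSeqLim}. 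That is the paper's entire proof.

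Your detours --- the Franke--Jawerth attempt (which you correctly diagnose as failing because it forces the wrong sign on $p_1-p$), the plan to use $\mathbf{B}^0_{p,\min\{p,2\}}$ as an artificial endpoint (which runs into the reiteration issue you identify), and the wavelet fallback --- are all attempts to manufacture the secondary index $p$ on the sequence side that Paley gives for free. None of them is needed, and the first two would require nontrivial extra work to make rigorous.
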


\begin{rem}
	In general $\ell_{p', q}(\log \ell)_{b+ 1/\max\{p, q\}}(\Z^d) \subsetneq   \ell_{p', q}(\log \ell)_{b+ 1/\max\{p', q\}}(\Z^d)$, so that the previous result sharpens \eqref{CoDo}.
\end{rem}

\begin{proof}[Proof of Theorem \ref{ThmCoDoIm}]
	According to the Paley inequality (see e.g. \cite[Chapter 6]{Peetre76})
	$$
		\mathfrak{F} : L_p(\mathbb{T}^d)  \to \ell_{p',p}(\Z^d)
	$$
	and \eqref{FTB}
	\begin{equation*}
	\mathfrak{F} : B^s_{p, q}(\mathbb{T}^d) \to \ell_{u, q}(\Z^d), \qquad \frac{1}{u} = \frac{s}{d} + \frac{1}{p'}.
\end{equation*}
By limiting interpolation
\begin{equation}\label{ThmCoDoImProof1}
	\mathfrak{F} : (L_p(\mathbb{T}^d), B^s_{p, q}(\mathbb{T}^d))_{(0, b), q}  \to (\ell_{p',p}(\Z^d),  \ell_{u, q}(\Z^d))_{(0, b), q}.
\end{equation}
Since $L_p(\mathbb{T}^d) = F^0_{p, 2}(\mathbb{T}^d)$, Theorem \ref{TheoremInterpolationF} and Proposition \ref{PropositionCoincidences} give
\begin{equation}\label{ThmCoDoImProof2}
	(L_p(\mathbb{T}^d), B^s_{p, q}(\mathbb{T}^d))_{(0, b), q} = T^{b+1/q}_q F^0_{p, 2}(\mathbb{T}^d) = \mathbf{B}^{0, b}_{p, q}(\mathbb{T}^d).
\end{equation}
On the other hand, by Theorem \ref{ThmLorentzSeqLim},
\begin{equation}\label{ThmCoDoImProof3}
	(\ell_{p',p}(\Z^d),  \ell_{u, q}(\Z^d))_{(0, b), q} = T^{b+1/q}_q \ell_{p', p}(\Z^d).
\end{equation}
Putting together \eqref{ThmCoDoImProof1}--\eqref{ThmCoDoImProof3},
$$
	\mathfrak{F} :\mathbf{B}^{0, b}_{p, q}(\mathbb{T}^d) \to T^{b+1/q}_q \ell_{p', p}(\Z^d).
$$
\end{proof}

The optimality assertion related to Theorem \ref{ThmCoDoIm} reads as follows.

\begin{cor}
	Let $1 < p < \infty, 0 < q \leq \infty,$ and $b >-1/q$. Let $f \in L_1(\mathbb{T})$ be such that
	$$f(x) \sim \sum_{n=1}^\infty (a_n \cos n x + b_n \sin nx)$$
	 with $\{a_n\}_{n \in \N}$ and $\{b_n\}_{n \in \N}$ nonnegative general monotone sequences. Then
	 $$
	 	\|f\|_{\mathbf{B}^{0, b}_{p, q}(\mathbb{T})} \asymp \|\mathfrak{F}(f)\|_{T^{b+1/q}_q \ell_{p', p}(\Z)}.
	 $$
\end{cor}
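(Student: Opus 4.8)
The plan is to run the same scheme used for the last two corollaries: reduce both $\|f\|_{\mathbf{B}^{0,b}_{p,q}(\mathbb{T})}$ and $\|\mathfrak{F}(f)\|_{T^{b+1/q}_q\ell_{p',p}(\Z)}$ to explicit dyadic double sums in the Fourier coefficients, and observe that these sums coincide. Without loss of generality we may assume $b_n\equiv 0$, so that $f(x)\sim\sum_{n\ge1}a_n\cos nx$ with $\{a_n\}_{n\in\N}$ a nonnegative general monotone sequence.

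First I would establish the $GM$-sequence characterization
$$\|f\|_{\mathbf{B}^{0,b}_{p,q}(\mathbb{T})}\asymp\bigg(\sum_{j=0}^\infty 2^{j(b+1/q)q}\bigg(\sum_{\nu=2^j-1}^{2^{j+1}-2}(2^{\nu(1-1/p)}a_{2^\nu})^p\bigg)^{q/p}\bigg)^{1/q},$$
which is the periodic, one-dimensional analogue of the description of $\mathbf{B}^{0,b}_{p,q}(\R^d)$ for $\widehat{GM}^d$ functions recalled in Remark \ref{RemarkContinuous} (see also \cite[Theorem 4.2]{DominguezTikhonov}). This follows by the method of Theorem \ref{Theorem 3.2}: one starts from the Fourier-analytic description of $\mathbf{B}^{0,b}_{p,q}(\mathbb{T})$ in terms of the square function of tails of dyadic blocks (the periodic counterpart of \eqref{CDT}), uses the Littlewood--Paley theorem (valid since $1<p<\infty$) together with the $GM$ block estimate \eqref{ThmBesovGMPer2} and the periodic Hardy--Littlewood inequality to evaluate the $L_p$-norms of the blocks, and then discretizes and applies Hardy's inequality \eqref{H2} (note $b+1/q>0$) to pass from the tails $\sum_{\nu\ge 2^j}$ to the dyadic blocks $\sum_{\nu=2^j-1}^{2^{j+1}-2}$, exactly as in \eqref{1717new3}. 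Alternatively one may invoke the coincidence $\mathbf{B}^{0,b}_{p,q}(\mathbb{T})=T^{b+1/q}_qF^0_{p,2}(\mathbb{T})$ from Proposition \ref{PropositionCoincidences} together with the periodic counterpart of \cite[Theorem 4.2]{DominguezTikhonov}.

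Next I would compute the right-hand side. By \eqref{bvbvb1213}, applied with $b$ replaced by $b+1/q$, $r$ by $q$, $u$ by $p'$, and with inner exponent $p$ (so that $T^{b+1/q}_q\ell_{p',p}$ appears), together with the rearrangement-removal identity \eqref{dadada2} (which uses the $GM$ property of $\{a_n\}$), one gets
$$\|\mathfrak{F}(f)\|_{T^{b+1/q}_q\ell_{p',p}(\Z)}\asymp\bigg(\sum_{k=0}^\infty 2^{k(b+1/q)q}\bigg(\sum_{\nu=2^k-1}^{2^{k+1}-2}(2^{\nu/p'}a_{2^\nu})^p\bigg)^{q/p}\bigg)^{1/q}.$$
Since $\tfrac1{p'}=1-\tfrac1p$, the two displayed double sums are literally identical, and the corollary follows.

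The main obstacle is the first step, namely pinning down the \emph{sharp} $GM$ characterization of $\mathbf{B}^{0,b}_{p,q}(\mathbb{T})$: in contrast with the Besov case treated in Theorem \ref{ThmBesovGMPer}, here one must control in $L_p$ the square function $\big(\sum_{\nu\ge j}|(\varphi_\nu\widehat f)^\vee|^2\big)^{1/2}$ of a $GM$ series, which requires combining the Littlewood--Paley theorem with the monotonicity estimates for $GM$ sequences so that the inner exponent $2$ inherited from $F^0_{p,2}$ collapses to $p$. Once this is in place, the remaining discretization and Hardy-inequality bookkeeping is routine and has already been carried out several times in the paper, and the matching of exponents $\tfrac1{p'}=1-\tfrac1p$ is immediate.
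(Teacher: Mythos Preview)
Your proposal is correct and follows essentially the same route as the paper: reduce both sides to the same dyadic double sum in the coefficients and match. The only difference is that the paper does not re-derive the $GM$ characterization of $\mathbf{B}^{0,b}_{p,q}(\mathbb{T})$ but simply invokes the periodic counterpart of \eqref{GMBesovZero} (namely \cite[Theorem 4.27]{DominguezTikhonov}), which is exactly the alternative you mention; your longer discussion of the square-function obstacle is therefore unnecessary here, as that work has already been done in the cited reference.
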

\begin{proof}
	It follows from \eqref{GMBesovZero} (more precisely, its periodic counterpart given in \cite[Theorem 4.27]{DominguezTikhonov}) that
	$$
		\|f\|_{\mathbf{B}^{0, b}_{p, q}(\mathbb{T})} \asymp \bigg(\sum_{j=0}^\infty 2^{j(b+1/q) q} \bigg( \sum_{\nu=2^j}^{2^{j+1}-1} 2^{\nu (p  -1)}  F_0^p(2^\nu) \bigg)^{q/p} \bigg)^{1/q}.
	$$
	On the other hand, by \eqref{bvbvb1213},
	$$
		\|\mathfrak{F}(f)\|_{T^{b+1/q}_q \ell_{p', p}(\Z)} \asymp \bigg(\sum_{j=0}^\infty 2^{j(b+1/q) q} \bigg( \sum_{\nu=2^j}^{2^{j+1}-1} 2^{\nu (p  -1)}  F_0^p(2^\nu) \bigg)^{q/p} \bigg)^{1/q}.
	$$
\end{proof}

\newpage
\appendix
\section{Fourier-analytical description of Lipschitz spaces}\label{Appendix}

This appendix complements characterizations of Lipschitz spaces $\L^{s, b}_{p,q}(\R^d)$ obtained in \cite{DominguezHaroskeTikhonov}. To be more precise, we deal here with their Fourier-analytical descriptions.

\begin{thm}\label{ThmLipFourier}
	Let $s > 0, 1 < p < \infty, 0 < q \leq \infty$ and $b < -1/q$. Then
	\begin{equation}\label{ThmLipFourier*}
		\|f\|_{\emph{\L}^{s,b}_{p,q}(\R^d)} \asymp  \left(\sum_{j=0}^\infty 2^{j (b+1/q) q} \Big\| \Big(\sum_{\nu=2^j-1}^{2^{j+1}-2} 2^{\nu s 2} |(\varphi_\nu \widehat{f})^\vee|^2 \Big)^{1/2} \Big\|_{L_p(\R^d)}^q \right)^{1/q}.
	\end{equation}
\end{thm}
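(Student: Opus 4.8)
The plan is to identify the right-hand side of \eqref{ThmLipFourier*} with the quasi-norm of a truncated Triebel--Lizorkin space, and then to recognize that space, via limiting interpolation, as $\text{Lip}^{s,b}_{p,q}(\R^d)$. First I would observe that, by Definition \ref{DefinitionNewBesov}(ii), the right-hand side of \eqref{ThmLipFourier*} is exactly $\|f\|_{T^{b+1/q}_q F^{s}_{p,2}(\R^d)}$ (take the secondary/inner index equal to $2$, the outer summability $r=q$, and the smoothness shift $b+1/q$). Hence it suffices to prove
$$\text{Lip}^{s,b}_{p,q}(\R^d)=T^{b+1/q}_q F^{s}_{p,2}(\R^d)$$
with equivalence of quasi-norms, under the standing hypotheses $1<p<\infty$, $s>0$, $0<q\le\infty$, $b<-1/q$.

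For this I would combine two ingredients. The first is Theorem \ref{TheoremInterpolationF}(ii): recalling the classical identifications $F^{0}_{p,2}(\R^d)=L_p(\R^d)$ and $F^{s}_{p,2}(\R^d)=H^s_p(\R^d)$ (valid for $1<p<\infty$; see Section \ref{SectionClassicSpaces}), and noting that $s_0:=0<s$ and that the shift $b+1/q$ is strictly negative (this is exactly where $b<-1/q$ is used), an application of Theorem \ref{TheoremInterpolationF}(ii) with $A=F$, $q_0=2$, inner index $2$ and outer summability $r=q$ yields
$$T^{b+1/q}_q F^{s}_{p,2}(\R^d)=\bigl(L_p(\R^d),H^s_p(\R^d)\bigr)_{(1,b),q}.$$
The second ingredient is the $K$-functional description of Lipschitz spaces: by \eqref{IntLimLip2} (equivalently \cite[(2.13)]{DominguezHaroskeTikhonov}, which rests on the fractional Johnen-type estimate $K(t^{s},f;L_p(\R^d),H^s_p(\R^d))\asymp t^{s}\|f\|_{L_p(\R^d)}+\omega_s(f,t)_p$ for $t\in(0,1)$, combined with the change of variable $t\mapsto t^{s}$ in \eqref{DefLimInterpolation}, the convergence of $\int_0^1(1-\log t)^{bq}\,\tfrac{dt}{t}$ being guaranteed precisely by $b<-1/q$), one has
$$\text{Lip}^{s,b}_{p,q}(\R^d)=\bigl(L_p(\R^d),H^s_p(\R^d)\bigr)_{(1,b),q}.$$
Chaining the two displays gives $\text{Lip}^{s,b}_{p,q}(\R^d)=T^{b+1/q}_q F^{s}_{p,2}(\R^d)$, and reading off the quasi-norm of the latter from Definition \ref{DefinitionNewBesov}(ii) recovers exactly \eqref{ThmLipFourier*}.

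The point that requires care — and which I regard as the only genuine obstacle — is the bookkeeping needed to avoid circular reasoning: the corollary stated right after Theorem \ref{TheoremInterpolationF} (formulas \eqref{IntLimB0}--\eqref{IntLimLip}) already invokes $T^{b+1/q}_q F^{s}_{p,2}(\R^d)=\text{Lip}^{s,b}_{p,q}(\R^d)$, i.e. the present Theorem \ref{ThmLipFourier} itself. So in the argument above I must use \eqref{IntLimLip2} strictly as an external input from \cite{DominguezHaroskeTikhonov}, and likewise invoke Theorem \ref{TheoremInterpolationF}(ii) directly — this is legitimate since the proof of Theorem \ref{TheoremInterpolationF} is self-contained (it relies only on Lemma \ref{LemmaNew23} and the retraction method and never appeals to Proposition \ref{PropositionCoincidences}(iv)). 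If a fully self-contained treatment is desired, the substitute for the citation to \cite{DominguezHaroskeTikhonov} is to establish the fractional Johnen-type $K$-functional identity for the couple $(L_p(\R^d),H^s_p(\R^d))$ from scratch; this is classical but technical, requiring Marchaud-type inequalities for fractional differences together with the $L_p$-boundedness ($1<p<\infty$) of the pertinent Fourier multipliers, and would constitute the bulk of the work.
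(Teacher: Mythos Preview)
Your proposal is correct and follows essentially the same route as the paper's proof: both pass through the limiting interpolation space $(L_p(\R^d), H^s_p(\R^d))_{(1,b),q}$, identifying it on one side with the Fourier-analytic expression (the paper via the retraction method and Lemma \ref{LemmaNew23}(ii) directly; you via Theorem \ref{TheoremInterpolationF}(ii), whose proof is precisely that argument) and on the other side with $\text{Lip}^{s,b}_{p,q}(\R^d)$ via the $K$-functional estimate $K(t^s,f;L_p,H^s_p)\asymp t^s\|f\|_{L_p}+\omega_s(f,t)_p$. Your attention to non-circularity is apt and matches the paper's logical structure, since Theorem \ref{ThmLipFourier} is exactly what underpins Proposition \ref{PropositionCoincidences}(iv).
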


\begin{proof}
	We have
	\begin{equation}\label{ThmLipFourier1}
		\|f\|_{(L_p(\R^d), H^s_p(\R^d))_{(1,b),q}} \asymp \|((\varphi_\nu \widehat{f})^\vee(\cdot))\|_{(L_p(\R^d; \ell_2), L_p(\R^d; \ell_2^s))_{(1,b),q}}.
	\end{equation}
	Indeed, this interpolation formula follows from the well-known facts that $L_p(\R^d)$ and $H^s_p(\R^d)$ are retracts of $L_p(\R^d; \ell_2)$ and $L_p(\R^d; \ell_2^s)$, respectively, with co-retraction operator $\mathfrak{J}(f) = ((\varphi_\nu \widehat{f})^\vee(\cdot))$ (see \eqref{Retraction}).
	
	Now according to Lemma \ref{LemmaNew23}(ii), we derive
	\begin{equation}\label{hsahasg}
		\|f\|_{(L_p(\R^d), H^s_p(\R^d))_{(1,b),q}} \asymp \left(\sum_{j=0}^\infty 2^{j (b+1/q) q} \Big\| \Big(\sum_{\nu=2^j-1}^{2^{j+1}-2} 2^{\nu s 2} |(\varphi_\nu \widehat{f})^\vee|^2 \Big)^{1/2} \Big\|_{L_p(\R^d)}^q \right)^{1/q}.
	\end{equation}
	
	On the other hand, the $K$-functional relative to the pair $(L_p(\R^d), H^s_p(\R^d))$ can be characterized through modulus of smoothness (cf. \cite[(4.2)]{Wilmes} and \cite[(1.36)]{Kolomoitsev}), more precisely, for $t \in (0, 1)$,
	$$
		K(t^s, f; L_p(\R^d), H^s_p(\R^d)) \asymp t^s \|f\|_{L_p(\R^d)} + \omega_s(f,t)_p
	$$
	which automatically yields
	\begin{equation}\label{shhasha}
		\|f\|_{(L_p(\R^d), H^s_p(\R^d))_{(1,b),q}} \asymp \|f\|_{\L^{s, b}_{p, q}(\R^d)}.
	\end{equation}
	
	Putting together \eqref{hsahasg} and \eqref{shhasha}, we achieve
	$$
	 \|f\|_{\L^{s, b}_{p, q}(\R^d)} \asymp  \left(\sum_{j=0}^\infty 2^{j (b+1/q) q} \Big\| \Big(\sum_{\nu=2^j-1}^{2^{j+1}-2} 2^{\nu s 2} |(\varphi_\nu \widehat{f})^\vee|^2 \Big)^{1/2} \Big\|_{L_p(\R^d)}^q \right)^{1/q}.
	$$
\end{proof}

We present %Let us also give
  another characterization of Lipschitz norm in terms of
  Fourier means. Let $\Psi_n(f)$ stand for any  of the following
 means:

\begin{enumerate}

\item the $\ell_r$-\emph{Fourier means} given by\index{\bigskip\textbf{Operators}!$S_{n, r}$}\label{FME}
 $$
\widehat{S_{n,r}} f(\xi):=\chi_{\{\xi\in\R^d\,:\,\Vert \xi\Vert_{\ell_r}\le n\}}(\xi) \widehat{f}(\xi),\qquad r=1,\infty.
$$
Here, $\chi_{\{\xi\in\R^d\,:\,\Vert \xi\Vert_{\ell_r}\le n\}}$ denotes the characteristic function relative to the $\ell_r$-ball centered at the origin and radii $n$;

\item the \emph{de~la~Vall\'ee Poussin-type means} $V_{n}f$  (cf. \eqref{DelaValleePoussin}; see also \cite{KolomoitsevTikhonov*});

\item the \emph{Riesz spherical means} $R_n^{\beta,\delta} f$ given by
$$
\widehat{R_n^{\beta,\delta} f} (\xi)=\bigg(1- \bigg(\frac{\left|\xi\right|}n\bigg)^\beta\bigg)_+^\delta \widehat{f}(\xi)
$$
for $\beta>0$ and $\delta>d|\frac1p-\frac12 |-\frac12$.\index{\bigskip\textbf{Operators}!$R^{\beta, \delta}_n$}\label{RME}
\end{enumerate}
\begin{thm}\label{ThmLipFourier+}
	Let $s > 0, 1 < p < \infty, 0 < q \leq \infty$ and $b < -1/q$. Then,
	\begin{equation*}
		\|f\|_{\emph{\L}^{s, b}_{p,q}(\R^d)} \asymp \|f\|_{L_p(\R^d)} + \left(\sum_{j=0}^\infty (1 + j)^{b q} \|
 (-\Delta)^{s/2}{\Psi_{2^j}} f
% hhh\Big(\sum_{j=0}^k 2^{j \alpha 2} |(\varphi_j \widehat{f})^\vee (\cdot)|^2 \Big)^{1/2}
\|_{L_p(\R^d)}^q \right)^{1/q}.
	\end{equation*}
\end{thm}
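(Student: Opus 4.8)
The plan is to deduce Theorem \ref{ThmLipFourier+} from Theorem \ref{ThmLipFourier} (which is already available as part \upshape(iv) of Proposition \ref{PropositionCoincidences}), together with the fact that $\text{Lip}^{s,b}_{p,q}(\R^d) = T^{b+1/q}_q F^{s}_{p,2}(\R^d)$ and the lifting property of truncated Triebel--Lizorkin spaces (Theorem \ref{TheoremLifting}). The strategy is to recognize each of the three families of means $\Psi_{2^j}$ as a Fourier multiplier operator whose symbol, after dyadic localization, behaves like a bounded multiplier uniformly in $j$, so that $(-\Delta)^{s/2}\Psi_{2^j}f$ can be compared block-by-block with the Littlewood--Paley pieces $2^{\nu s}(\varphi_\nu\widehat{f})^\vee$. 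First I would rewrite the right-hand side of Theorem \ref{ThmLipFourier+} in terms of the truncated quasi-norm: since $\Psi_{2^j}$ is (essentially) a smooth cutoff at frequency $2^j$, the function $(-\Delta)^{s/2}\Psi_{2^j}f$ picks up precisely the frequency annuli $\nu \le j$ with weight $2^{\nu s}$, and its $L_p$ norm should be equivalent to $\big\|\big(\sum_{\nu=0}^{j}2^{\nu s 2}|(\varphi_\nu\widehat{f})^\vee|^2\big)^{1/2}\big\|_{L_p(\R^d)}$ by the Littlewood--Paley theorem (using $1<p<\infty$).

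With that reduction in hand, I would argue as follows. Set $G_j := \big\|\big(\sum_{\nu=0}^{j}2^{\nu s 2}|(\varphi_\nu\widehat{f})^\vee|^2\big)^{1/2}\big\|_{L_p(\R^d)}$. One shows
\begin{equation*}
\|f\|_{L_p(\R^d)} + \bigg(\sum_{j=0}^\infty (1+j)^{bq} \| (-\Delta)^{s/2}\Psi_{2^j}f\|_{L_p(\R^d)}^q \bigg)^{1/q} \asymp \|f\|_{L_p(\R^d)} + \bigg(\sum_{j=0}^\infty (1+j)^{bq} G_j^q \bigg)^{1/q}.
\end{equation*}
Then, since $b < -1/q$, the weight $(1+j)^{bq}$ is summable near the tail, and a standard discretization/Hardy-type argument (grouping $j$ into dyadic blocks $2^k-1\le j\le 2^{k+1}-2$, exactly as in the passage from \eqref{jsajajsajs} to \eqref{TheoremModuli1} or in the proof of Theorem \ref{TheoremGMLip}) converts $\big(\sum_j (1+j)^{bq}G_j^q\big)^{1/q}$ into $\big(\sum_k 2^{k(b+1/q)q}\big(\sum_{\nu=2^k-1}^{2^{k+1}-2}2^{\nu s q}\|(\varphi_\nu\widehat f)^\vee\|\cdots\big)\big)$; more precisely, because $G_j$ is nondecreasing and $b+1/q<0$, one gets the equivalence with $\big(\sum_k 2^{k(b+1/q)q}\big\|\big(\sum_{\nu=2^k-1}^{2^{k+1}-2}2^{\nu s 2}|(\varphi_\nu\widehat f)^\vee|^2\big)^{1/2}\big\|_{L_p}^q\big)^{1/q}$, which is exactly the right-hand side of \eqref{ThmLipFourier*}. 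Applying Theorem \ref{ThmLipFourier} then finishes the proof.

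The technical heart is the multiplier estimate that lets one replace $(-\Delta)^{s/2}\Psi_{2^j}f$ by the truncated square function. For the $\ell_r$-Fourier means with $r=\infty$ (rectangular partial sums) this is delicate because the symbol $\chi_{\{\|\xi\|_{\ell_\infty}\le 2^j\}}$ is not smooth; here one uses that the one-dimensional Hilbert transform is bounded on $L_p(\R)$, $1<p<\infty$, and factorizes the rectangular Dirichlet projection as an iterated composition of modulated Hilbert transforms (the classical argument for boundedness of rectangular partial sums), picking up constants independent of $j$; combined with $(-\Delta)^{s/2}$ applied to frequency-localized pieces and the vector-valued Littlewood--Paley inequality, this yields $\|(-\Delta)^{s/2}S_{2^j,\infty}f\|_{L_p}\asymp G_j$ up to acceptable errors. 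For $r=1$ (the ball multiplier!) one must be careful: the characteristic function of the Euclidean/$\ell_1$ ball is \emph{not} an $L_p$ multiplier for $p\ne 2$, $d\ge2$ — so either the statement is intended with $r=1$ meaning the $\ell_1$-ball in a way that still reduces to one-dimensional projections, or, more likely, one should interpret $\widehat{S_{n,r}}$ for $r=1$ as the tensor-product/iterated cutoff, in which case the same Hilbert-transform factorization applies. The de~la~Vall\'ee Poussin means are the easiest, since $\chi(2^{-j}|\xi|)$ is a $C^\infty$ compactly supported symbol and Mikhlin--Hörmander applies directly with $j$-uniform bounds; the Riesz spherical means $(1-(|\xi|/n)^\beta)_+^\delta$ under the Herz-type condition $\delta > d|1/p-1/2|-1/2$ are exactly the classical Bochner--Riesz-above-the-critical-index multipliers, bounded on $L_p$ with norms uniform in $n=2^j$, and after multiplying by the block symbol $2^{\nu s}\varphi_\nu$ one again recovers $G_j$ up to lower-order contributions that are absorbed by the $\|f\|_{L_p}$ term. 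The main obstacle I expect is precisely handling these boundary cases of the multiplier theory uniformly in the dilation parameter and making the block-by-block comparison rigorous; once those uniform bounds are in place, the rest is the routine discretization already used repeatedly in the paper.
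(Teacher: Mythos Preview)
Your approach is genuinely different from the paper's, and while it can be made to work, it is considerably more laborious and contains a couple of inaccuracies worth flagging.

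The paper's proof is essentially two lines: it invokes a result of Kolomoitsev--Tikhonov (\cite[Theorem~6.3]{KolomoitsevTikhonov*}) giving the two-sided estimate
\[
\bigg(\sum_{\nu>j} 2^{-\nu s \tau}\|(-\Delta)^{s/2}\Psi_{2^\nu}f\|_{L_p}^\tau\bigg)^{1/\tau}
\lesssim \omega_s(f,2^{-j})_p
\lesssim \bigg(\sum_{\nu>j} 2^{-\nu s \theta}\|(-\Delta)^{s/2}\Psi_{2^\nu}f\|_{L_p}^\theta\bigg)^{1/\theta},
\]
with $\tau=\max(2,p)$, $\theta=\min(2,p)$, and then substitutes directly into the definition of $\|f\|_{\text{Lip}^{s,b}_{p,q}}$ and applies Hardy's inequality \eqref{H2}. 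The mismatch between $\tau$ and $\theta$ disappears after Hardy, since both collapse to $\sum_j (1+j)^{bq}\|(-\Delta)^{s/2}\Psi_{2^j}f\|_{L_p}^q$. All the multiplier analysis you anticipate is already packaged inside the cited result.

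Your route via Theorem~\ref{ThmLipFourier} and the truncated square function $G_j$ is viable but needs two corrections. First, the termwise equivalence $\|(-\Delta)^{s/2}\Psi_{2^j}f\|_{L_p}\asymp G_j$ is \emph{false}: for $f$ with $\widehat f$ supported near the origin, $(-\Delta)^{s/2}\Psi_{2^j}f$ can be arbitrarily small compared to $G_j\asymp\|f\|_{L_p}$, because the symbol $|\xi|^s$ vanishes at $\xi=0$. What does hold is $\|(-\Delta)^{s/2}\Psi_{2^j}f\|_{L_p}\lesssim G_j\lesssim \|(-\Delta)^{s/2}\Psi_{2^j}f\|_{L_p}+\|f\|_{L_p}$ (compare $(-\Delta)^{s/2}$ with the Bessel lifting $I_s$), and since $b<-1/q$ makes $(1+j)^{bq}$ summable, this is enough after adding $\|f\|_{L_p}$ to both sides of the full norm --- which you do, so the final displayed equivalence in your proposal is correct even though its suggested justification is not. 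Second, your worry about the $\ell_1$-ball is misplaced: the $\ell_1$-ball is a cross-polytope, not the Euclidean ball, and characteristic functions of convex polytopes are $L_p$-multipliers for $1<p<\infty$ by the same half-space (Hilbert-transform) factorization you describe for the cube. Fefferman's ball-multiplier counterexample is irrelevant here.

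In short: your plan works once patched, but the paper sidesteps all of the multiplier bookkeeping by quoting one external inequality and applying Hardy.
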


The proof immediately  follows  from Hardy's inequalities and the following estimates
  \begin{align*}
    \bigg(\sum_{\nu=j+1}^\infty 2^{-\nu s \tau}\Vert (-\Delta)^{s/2}{\Psi_{2^\nu}} f\Vert_{L_p(\R^d)}^\tau\bigg)^\frac1\tau&\lesssim \omega_s (f,2^{-j})_{p}\\
   & \lesssim  \bigg(\sum_{\nu=j+1}^\infty 2^{-\nu s \theta}\Vert (-\Delta)^{s/2}{\Psi_{2^\nu}} f\Vert_{L_p(\R^d)}^\theta\bigg)^{\frac{1}{\theta}},
  \end{align*}
  where $\tau=\max(2,p)$ and $\theta=\min(2,p)$, see
   \cite[Theorem 6.3]{KolomoitsevTikhonov*}.

\newpage
\section{Some auxiliary inequalities}

\begin{lem}\label{LemmaB1}
	Let $\lambda_j = 2^{-2^{j-1}}$ for $j \geq 1$, $\lambda_0 = 1$ and $R > 0$. Let $F$ be a nonnegative function defined on $(0, 1)$. Then
	\begin{equation}\label{Disc1}
		\sum_{j=0}^\infty \bigg(\int_{\lambda_{j+1}}^{\lambda_j} F(z) \, dz \bigg)^R \asymp \int_0^1 \bigg(\int_{t^2}^t F(z) \, dz \bigg)^R \, \frac{dt}{t (1-\log t)}.
	\end{equation}
	The corresponding multivariate result also holds: Let $F$ be a nonnegative function defined on the unit ball $B(0, 1)$ in $\R^d$. Then
		\begin{equation}\label{Disc2}
		\sum_{j=0}^\infty \bigg(\int_{\lambda_{j+1} < |h| < \lambda_j} F(h) \, dh \bigg)^R \asymp \int_0^1 \bigg(\int_{t^2 < |h| < t} F(h) \, dh \bigg)^R \, \frac{dt}{t (1-\log t)}.
	\end{equation}
	\end{lem}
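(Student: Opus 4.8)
The plan is to prove \eqref{Disc1} and \eqref{Disc2} simultaneously by an abstract argument. Set $\Phi(a,b):=\int_{\{a<|h|<b\}}F(h)\,dh$, which is simply $\int_a^b F(z)\,dz$ in the one–dimensional case \eqref{Disc1}; the only properties used are $\Phi\ge 0$, the additivity $\Phi(a,c)=\Phi(a,b)+\Phi(b,c)$ for $0<a<b<c$, and the arithmetic identity $\lambda_j^2=\lambda_{j+1}$, valid for $j\ge1$. Writing $G(t):=\Phi(t^2,t)^R$ and $a_j:=\Phi(\lambda_{j+1},\lambda_j)^R$, the assertion becomes $\sum_{j\ge 0}a_j\asymp\int_0^1 G(t)\,\frac{dt}{t(1-\log t)}$. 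A preliminary computation, using $1-\log\lambda_j=1+2^{j-1}\log 2$ for $j\ge1$ and $\lambda_0=1$, shows $\int_{\lambda_{j+1}}^{\lambda_j}\frac{dt}{t(1-\log t)}=\log\frac{1+2^{j}\log 2}{1+2^{j-1}\log 2}\asymp 1$ uniformly in $j\ge0$; hence $\int_0^1 G(t)\,\frac{dt}{t(1-\log t)}=\sum_{j\ge0}\int_{\lambda_{j+1}}^{\lambda_j}G(t)\,\frac{dt}{t(1-\log t)}$, and on each interval $[\lambda_{j+1},\lambda_j]$ the weight carries total mass comparable to $1$.

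For the inequality $\int\lesssim\sum$, note that if $t\in[\lambda_{j+1},\lambda_j]$ with $j\ge1$ then $t^2\in[\lambda_{j+2},\lambda_{j+1}]$, so $\{t^2<|h|<t\}\subseteq\{\lambda_{j+2}<|h|<\lambda_j\}$ and $\Phi(t^2,t)\le\Phi(\lambda_{j+2},\lambda_{j+1})+\Phi(\lambda_{j+1},\lambda_j)$. Using $(x+y)^R\le c_R(x^R+y^R)$ and integrating against the weight of total mass $\asymp1$ gives $\int_{\lambda_{j+1}}^{\lambda_j}G(t)\,\frac{dt}{t(1-\log t)}\lesssim a_{j+1}+a_j$; the term $j=0$ is analogous (there $t^2\in[\tfrac14,1]$, so $\Phi(t^2,t)\le\Phi(\tfrac14,\tfrac12)+\Phi(\tfrac12,1)$). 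Summing over $j$ yields $\int_0^1 G(t)\,\frac{dt}{t(1-\log t)}\lesssim\sum_{j\ge0}a_j$.

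The reverse inequality $\sum\lesssim\int$ is the main point, and it is delicate because $F$ may concentrate most of its mass in $\{\lambda_{j+1}<|h|<\lambda_j\}$ near one of the two endpoints, in which case $\{t^2<|h|<t\}$ sees that mass only for a restricted range of $t$; this is precisely why the quadratic inner scale forces a separate treatment. I would split each term at the geometric mean $\mu_j:=\sqrt{\lambda_{j+1}\lambda_j}=\lambda_j^{3/2}$ (using $\lambda_j^2=\lambda_{j+1}$), so that $\Phi(\lambda_{j+1},\lambda_j)=\Phi(\lambda_{j+1},\mu_j)+\Phi(\mu_j,\lambda_j)$ and one of the two summands is $\ge\tfrac12\Phi(\lambda_{j+1},\lambda_j)$. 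If the right half dominates, then for every $t\in I_j:=[\lambda_j,\lambda_j^{3/4}]$ one has $t^2\le\lambda_j^{3/2}=\mu_j\le\lambda_j\le t$, hence $\{\mu_j<|h|<\lambda_j\}\subseteq\{t^2<|h|<t\}$ and $G(t)\gtrsim a_j$; if the left half dominates, the same holds for $t\in I_j:=[\lambda_j^{3/2},\lambda_j]$, since there $t\ge\mu_j$ and $t^2\le\lambda_j^2=\lambda_{j+1}$. In either case the endpoints of $I_j$ are comparable powers of $\lambda_j$, so $\int_{I_j}\frac{dt}{t(1-\log t)}\asymp 1$, whence $\int_{I_j}G(t)\,\frac{dt}{t(1-\log t)}\gtrsim a_j$. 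Passing to the variable $u=-\log_2 t$ one sees $I_j\subseteq[\tfrac38\cdot 2^{j},\tfrac34\cdot 2^{j}]$, so the intervals $I_j$ are pairwise essentially disjoint; summing, and treating the single index $j=0$ (where $\lambda_0=1$ spoils the quadratic structure) directly, yields $\sum_{j\ge0}a_j\lesssim\int_0^1 G(t)\,\frac{dt}{t(1-\log t)}$. The multivariate case \eqref{Disc2} is literally the above argument for the corresponding $\Phi$. The only genuinely fiddly ingredients are verifying the annulus inclusions from $\lambda_j^2=\lambda_{j+1}$, keeping the $I_j$ disjoint, and the small–index bookkeeping; everything else is routine.
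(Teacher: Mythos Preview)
Your argument for $\int\lesssim\sum$ coincides with the paper's. For the reverse direction $\sum\lesssim\int$ (away from $j=0$) your proof is correct but takes a genuinely different path. The paper does not split at a midpoint or track disjoint subintervals; instead it observes that for $t\in[\lambda_{j+1},\lambda_j]$ one has the single inclusion $(\lambda_{j+1},\lambda_j)\subseteq(t^2,\sqrt{t})$ (since $t^2\le\lambda_j^2=\lambda_{j+1}$ and $\sqrt{t}\ge\sqrt{\lambda_{j+1}}=\lambda_j$), integrates over the whole block, and then bounds $\Phi(t^2,\sqrt t)^R\lesssim\Phi(t^2,t)^R+\Phi(t,\sqrt t)^R$; the substitution $t\mapsto t^2$ turns $\int_0^1\Phi(t,\sqrt t)^R\,\frac{dt}{t(1-\log t)}$ back into $\int_0^1\Phi(t^2,t)^R\,\frac{dt}{t(1-\log t)}$, since $1-2\log t\asymp 1-\log t$. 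This two-line trick avoids your case distinction and the disjointness bookkeeping, while your argument has the merit of pinpointing exactly where the mass is captured.

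There is, however, a real gap at $j=0$, and it cannot be ``treated directly'': no interval $(t^2,t)$ with $t<1$ can contain a set of the form $(c,1)$, so for $F$ supported in $(1-\varepsilon,1)$ one gets $a_0\asymp 1$ while the right-hand side is $O(\varepsilon)$. In other words, the equivalence as literally stated fails for general $F$. The paper's proof shares this defect (its inclusion $(\lambda_1,\lambda_0)\subseteq(t^2,\sqrt t)$ is false for $t\in[\tfrac12,1)$); in the applications of the lemma the $j=0$ contribution is always absorbed by the additive $\|f\|_{L_p}$ term. If you want a standalone statement, replace the $j=0$ term by, say, starting the sum at $j=1$ and adding $\Phi(\tfrac12,1)^R$ to both sides, or assume $F$ vanishes near $1$.
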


\begin{proof}
To show \eqref{Disc1} we can proceed as follows. We have
	\begin{align*}
		\int_0^1 \bigg(\int_{t^2}^t F(z) \, dz \bigg)^R \, \frac{dt}{t (1-\log t)} &= \sum_{j=0}^\infty \int_{\lambda_{j+1}}^{\lambda_j}  \bigg(\int_{t^2}^t F(z) \, dz \bigg)^R \, \frac{dt}{t (1-\log t)} \\
		& \leq  \sum_{j=0}^\infty  \bigg(\int_{\lambda_{j+2}}^{\lambda_j} F(z) \, dz \bigg)^R \int_{\lambda_{j+1}}^{\lambda_j}  \, \frac{dt}{t (1-\log t)} \\
		& \asymp  \sum_{j=0}^\infty  \bigg(\int_{\lambda_{j+2}}^{\lambda_j} F(z) \, dz \bigg)^R \\
		& \asymp  \sum_{j=0}^\infty  \bigg(\int_{\lambda_{j+1}}^{\lambda_j} F(z) \, dz \bigg)^R.
	\end{align*}
	Concerning the converse estimate, we have
	\begin{align*}
		\sum_{j=0}^\infty  \bigg(\int_{\lambda_{j+1}}^{\lambda_j} F(z) \, dz \bigg)^R & \asymp \sum_{j=0}^\infty  \bigg(\int_{\lambda_{j+1}}^{\lambda_j} F(z) \, dz \bigg)^R  \int_{\lambda_{j+1}}^{\lambda_j}  \, \frac{dt}{t (1-\log t)} \\
		& \leq \sum_{j=0}^\infty \int_{\lambda_{j+1}}^{\lambda_j} \bigg(\int_{t^2}^{\sqrt{t}} F(z) \, dz \bigg)^R \frac{dt}{t(1-\log t)}  \\
		& \lesssim \int_0^1  \bigg(\int_{t^2}^{t} F(z) \, dz \bigg)^R \frac{dt}{t(1-\log t)} \\
		& \hspace{1cm}+  \int_0^1  \bigg(\int_{t}^{\sqrt{t}} F(z) \, dz \bigg)^R \frac{dt}{t(1-\log t)} \\
		& \asymp \int_0^1  \bigg(\int_{t^2}^{t} F(z) \, dz \bigg)^R \frac{dt}{t(1-\log t)}
	\end{align*}
	where the last estimate follows from a simple change of variables.  This completes the proof of \eqref{Disc1}. The proof of \eqref{Disc2} follows along the same lines as \eqref{Disc1}.
\end{proof}

For the convenience of the reader, we collect below some Hardy-type inequalities invoked frequently in the paper. For  complete treatments, we refer the reader to \cite{KufnerPersson}, \cite{OpicKufner}, and \cite{potapov}.

\begin{lem}
	Let $\lambda > 0, \, 0 < q \leq \infty,$ and $-\infty < b < \infty$. Let $\xi_j \geq 0$ for $j \in \N_0$. Then, for every $j_0 \in \N_0$,
	\begin{equation}\label{H1}
		\sum_{j=j_0}^\infty 2^{-j \lambda q} (1 + j)^{b q} \bigg(\sum_{k=j_0}^j \xi_k \bigg)^q \asymp \sum_{j=j_0}^\infty 2^{-j \lambda q} (1 + j)^{b q} \xi_j^q
	\end{equation}
	and
	\begin{equation}\label{H2}
		\sum_{j=j_0}^\infty 2^{j \lambda q} (1 + j)^{b q} \bigg(\sum_{k=j}^\infty \xi_k \bigg)^q \asymp \sum_{j=j_0}^\infty 2^{j \lambda q} (1 + j)^{b q} \xi_j^q
	\end{equation}
	where the hidden equivalence constants are independent of $j_0$.
\end{lem}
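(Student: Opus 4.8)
The plan is to prove the two equivalences \eqref{H1} and \eqref{H2} by the standard discrete Hardy technique, treating the cases $0<q\le 1$, $1<q<\infty$ and $q=\infty$ separately, and keeping explicit track of the fact that all comparison constants arise from summing geometric progressions and hence do not depend on $j_0$. In every case the estimate $\gtrsim$ is trivial: for \eqref{H1} one uses $\xi_j\le\sum_{k=j_0}^j\xi_k$ and for \eqref{H2} one uses $\xi_j\le\sum_{k=j}^\infty\xi_k$, so only the reverse inequality $\lesssim$ requires work. Throughout I will repeatedly invoke the elementary fact that, since $\lambda>0$, for every $b\in\R$ and every $m\ge j_0$ one has $\sum_{j\ge m}2^{-j\lambda q}(1+j)^{bq}\asymp 2^{-m\lambda q}(1+m)^{bq}$ and $\sum_{j_0\le j\le m}2^{j\lambda q}(1+j)^{-bq}\asymp 2^{m\lambda q}(1+m)^{-bq}$, with constants depending only on $\lambda,q,b$; the polynomial factor is harmless because the geometric factor dominates.

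For \eqref{H1} with $0<q\le 1$ I would use subadditivity $\big(\sum_{k=j_0}^j\xi_k\big)^q\le\sum_{k=j_0}^j\xi_k^q$, interchange the order of summation, and apply the first geometric estimate above:
\begin{equation*}
\sum_{j=j_0}^\infty 2^{-j\lambda q}(1+j)^{bq}\sum_{k=j_0}^j\xi_k^q=\sum_{k=j_0}^\infty\xi_k^q\sum_{j=k}^\infty 2^{-j\lambda q}(1+j)^{bq}\lesssim\sum_{k=j_0}^\infty 2^{-k\lambda q}(1+k)^{bq}\xi_k^q.
\end{equation*}
For $1<q<\infty$ I would instead factor $\xi_k=\big(\xi_k\,2^{-k\lambda/2}(1+k)^{b}\big)\cdot\big(2^{k\lambda/2}(1+k)^{-b}\big)$ and apply Hölder's inequality with exponents $q$ and $q'$; the second factor contributes $\big(\sum_{k=j_0}^j 2^{k\lambda q'/2}(1+k)^{-bq'}\big)^{q/q'}\asymp 2^{j\lambda q/2}(1+j)^{-bq}$, which cancels against part of the weight, and a further interchange of summation together with $\sum_{j\ge k}2^{-j\lambda q/2}\asymp 2^{-k\lambda q/2}$ completes the bound. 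For $q=\infty$ one writes
\begin{equation*}
\sum_{k=j_0}^j\xi_k=\sum_{k=j_0}^j\big(2^{-k\lambda}(1+k)^{b}\xi_k\big)2^{k\lambda}(1+k)^{-b}\le\Big(\sup_{k\ge j_0}2^{-k\lambda}(1+k)^{b}\xi_k\Big)\sum_{k=j_0}^j 2^{k\lambda}(1+k)^{-b},
\end{equation*}
and uses the second geometric estimate. Inequality \eqref{H2} is entirely analogous: the roles of ``head'' and ``tail'' are swapped, one now exploits that $2^{j\lambda q}(1+j)^{bq}$ grows geometrically, so that $\sum_{j_0\le j\le m}2^{j\lambda q}(1+j)^{bq}\asymp 2^{m\lambda q}(1+m)^{bq}$ and $\sum_{j\ge m}2^{-j\lambda q}\asymp 2^{-m\lambda q}$, and the same three-case splitting (subadditivity plus Fubini for $q\le 1$, Hölder for $1<q<\infty$, and the direct argument for $q=\infty$) applies verbatim.

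Since everything reduces to summing geometric progressions whose ratios depend only on $\lambda$, $q$ and $b$, all implicit constants are manifestly independent of $j_0$, which yields the last assertion of the lemma. There is essentially no serious obstacle here; the only points requiring a little care are verifying that the polynomial weight $(1+j)^{\pm bq}$ is always absorbed by the geometric weight (which holds for every $b\in\R$ precisely because $\lambda>0$), and organising the $q=\infty$ case so that all suprema are taken over $k\ge j_0$, thereby preserving uniformity in $j_0$.
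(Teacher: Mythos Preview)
Your proof is correct and follows the standard discrete Hardy argument (subadditivity for $q\le 1$, H\"older with a geometric splitting for $1<q<\infty$, direct estimate for $q=\infty$), with the key observation that the exponential weight $2^{\pm j\lambda q}$ dominates the polynomial $(1+j)^{bq}$ so that all tail and partial sums are controlled by geometric series with $j_0$-independent constants. The paper itself does not supply a proof for this lemma: it simply records the two inequalities and refers the reader to the standard references \cite{KufnerPersson}, \cite{OpicKufner}, \cite{potapov}, so your write-up in fact fills in what the paper leaves to the literature.
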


\newpage
\section{List of symbols}

\bigskip

  \textbf{Sets}

\medskip

    $\mathbb{R}^d$, $d$-dimensional real Euclidean space,  \pageref{SETR}

    $\R$, real line, \pageref{SETR1}

    $\mathbb{C}$, complex numbers, \pageref{SETC}

    $\mathbb{T}^d$, $d$-dimensional torus, \pageref{SETTD}

    $\mathbb{T}$, unit circle, \pageref{SETT}

    $\mathbb{Z}^d$, $d$-dimensional integer lattice, \pageref{SETZ}

    $\Z$, integer lattice, \pageref{SETZ1}

    $\mathbb{N}$, natural numbers, \pageref{SETN}

    $\mathbb{N}_0$, natural numbers with $0$, \pageref{SETN0}

    $\mathfrak{U}_p$, class of exponential type functions, \pageref{EXPFUN}

         $Q_{j, m}$, dyadic cubes, \pageref{DYACUB}

    $GM$, set of general monotone functions, \pageref{GM}

    $\widehat{GM}^d$, set of radial functions whose Fourier transform is general monotone, \pageref{GMF}

     $\R^{d+1}_+$, upper half-space of $\R^{d+1}$, \pageref{UPHS}

\bigskip

\textbf{Numbers and relations}

\medskip

  $A \lesssim B$, the estimate $A \leq C B,$ where $C$ is a positive constant, \pageref{AB}

    $A \asymp B$, the estimates $C^{-1} B \leq A \leq C B$, where $C > 1$ is a constant, \pageref{ASYMP}

    $X \hookrightarrow Y$, continuous embedding, \pageref{XY}

   % $[a]$,  the largest integer not greater than $a$, \pageref{[a]}

    $p'$, dual exponent of $p$, \pageref{p'}

 %   $m_t$, surface measure of the sphere of radius $t$, \pageref{MEAS}

%
    $a_+=\max\{a,0\}$, \pageref{a_+}

    $\sigma_p = d (\frac{1}{p}-1)_+$, \pageref{sigmap}
%
%    $\binom{\a}{\nu}$, binomial coefficients, \pageref{BINOMA}

\bigskip

 \textbf{Spaces}

\medskip

$X'$, dual space of $X$, \pageref{DUAL}

$L_p(\R^d; X)$, Bochner space, \pageref{LEBX}

$L_p(\R^d)$, Lebesgue space, \pageref{LEB}

$\ell^s_q(X), \ell^s_q$,  weighted $\ell_q$ spaces, \pageref{LEBSEQ}

$\mathcal{S}(\mathbb{R}^d)$, Schwartz space, \pageref{Schwartz}

$\mathcal{S}'(\mathbb{R}^d)$, space of tempered distributions, \pageref{S'}

$B^{s,b}_{p,q}(\mathbb{R}^d)$, Besov space defined by Fourier analytic tools, \pageref{BESOVF}

$F^{s,b}_{p,q}(\mathbb{R}^d)$, Triebel-Lizorkin space, \pageref{TL}

$H^{s}_p(\mathbb{R}^d)$,  Bessel potential space, \pageref{SOB}

$W^k_p(\mathbb{R}^d)$, classical Sobolev space, \pageref{SOBCLAS}

$\mathbf{B}^{s,b}_{p,q}(\mathbb{R}^d)$, Besov space defined by differences, \pageref{BESOVDIFF}

$\text{Lip}^{s, b}_{p,q}(\mathbb{R}^d)$, Lipschitz space, \pageref{LOGLIPSCHITZ}

$T^{b}_r B^s_{p, q}(\R^d)$, truncated Besov space, \pageref{TRUNB}

$T^{b}_r F^s_{p, q}(\R^d)$, truncated Triebel--Lizorkin space, \pageref{TRUNF}

$\mathfrak{T}^{b}_r F^s_{p, q}(\R^d)$, inner truncated Triebel--Lizorkin space, \pageref{TRUNF2}

$T^{b}_r B^s_{p, q}(\mathbb{T}^d), T^{b}_r F^s_{p, q}(\mathbb{T}^d), \mathfrak{T}^{b}_r F^s_{p, q}(\mathbb{T}^d)$, truncated periodic smooth function spaces, \pageref{TRUNBPER}

$T^{*}_r B^s_{p, q}(\R^d)$, limiting truncated Besov space, \pageref{TRUNBL}

$T^{*}_r F^s_{p, q}(\R^d)$, limiting truncated Triebel--Lizorkin space, \pageref{TRUNFL}

$B^{s,b,\xi}_{p,q}(\mathbb{R}^d)$, Besov space of iterated logarithmic smoothness defined by Fourier analytic tools, \pageref{BESOVFLOG}

$(A_0,A_1)_{\theta,q, b}, (A_0, A_1)_{\theta, q}$, real interpolation spaces, \pageref{REAL}

$(A_0,A_1)_{(\theta,b),q}$, limiting interpolation space, \pageref{REALLIM}

$W^{s, b, p}(\R^d)$, fractional Sobolev space, \pageref{FRACTSOB}

$b^{s, \xi}_{p, q}$, Besov sequence spaces, \pageref{BESOVSEQ}

$f^s_{p, q}$, Triebel--Lizorkin sequence spaces, \pageref{TLSEQ}

$T^\xi_r b^s_{p, q}$, truncated Besov sequence spaces, \pageref{TRUBESOVSEQ}

$T^*_r b^s_{p, q}$, limiting truncated Besov sequence spaces, \pageref{LIMTRUNBESEQ}

$T^\xi_r f^s_{p, q}$, truncated Triebel--Lizorkin sequence spaces, \pageref{TTLSEQ}

$\mathbf{b}^{0, \xi}_{p, q}$, Besov sequence spaces with related smoothness near zero, \pageref{BESOVSEQZERO}

$\text{lip}^{s, \xi}_{p, q}$, Lipschitz sequence spaces, \pageref{LIPSEQ}

$T^*_r f^s_{p, q}$, limiting truncated Triebel--Lizorkin sequence spaces, \pageref{LIMTRUNTLSEQ}

$L_1^{\text{loc}}(\R^d)$, space of locally integrable functions on $\R^d$, \pageref{L1LOC}

$bmo(\R^d)$, bounded mean oscillation space, \pageref{BMO}

$C(\R^d)$, space of uniformly continuous functions, \pageref{CU}

$\text{BV}(\Omega)$, the space of bounded variation functions, \pageref{BV}

$L_{p, q}(\log L)_b(\mathbb{T}^d)$,  Lorentz--Zygmund space, \pageref{LZ}

$L_p(\log L)_b(\mathbb{T}^d)$, Zygmund space, \pageref{ZYG}

$L_{p, q}(\mathbb{T}^d)$, Lorentz spaces, \pageref{LOR}

$\text{exp} \, L^{-\frac{1}{b}} (\mathbb{T}^d)$, Orlicz space of exponentially integrable functions, \pageref{OR}

$L^{(p, b, q}(0, 1)$, small Lebesgue space, \pageref{SMALLLEB}

$(A_0, A_1)_{(0, (b, \eta)), q}$, limiting interpolation space with broken logarithmic weights, \pageref{LIMINTBROKE}

$\dot{W}^{s, p}(\R^d)$, homogeneous fractional Sobolev space, \pageref{HOMFRACTSOB}

$\dot{B}^s_{p, q}(\R^d)$, homogeneous Besov space, \pageref{HOMBES}

$T^b_r \dot{B}^s_{p, q}(\R^d)$, truncated homogeneous Besov space, \pageref{TRUNHOMBES}

$\ell_{u, q}(\Z^d)$, Lorentz sequence space, \pageref{LSEQ}

$T^b_r \ell_{u, q}(\Z^d)$, truncated Lorentz sequence space, \pageref{TRUNLSEQ}

$\ell_{u, q}(\log \ell)_b(\Z^d)$, Lorentz--Zygmund sequence space, \pageref{LZSEQ}

%    $L_p(X)$, Lebesgue space on $X$, \pageref{LPX}
%
%    $\dot W_p^{\a}(\T^d)$, homogeneous Sobolev space, \pageref{DWPA}

\bigskip

\textbf{Functionals and functions}

\medskip

     $\omega_{k}(f,t)_p$, modulus of smoothness of integer order $k$, \pageref{MODK}

     $\Delta^k_h$, difference of integer order $k$ with step $h$, \pageref{DELTA}

     $\Delta^s_h$, difference of fractional order $s$ with step $h$, \pageref{DELTAFRAC}

      $\omega_{s}(f,t)_{p}$, modulus of smoothness of fractional order $s$, \pageref{MODA}

     $K(t,f)$, Peetre's $K$-functional, \pageref{K}

    $E_j(f)_{X}$, $X$-best approximation of the function $f$ by entire functions of exponential type $j$, \pageref{ERROR}

     $\psi_F, \psi_M$, scaling function (father wavelet) and associated wavelet (mother wavelet), \pageref{FW}

     $\Psi^j_{G, m}$,  wavelets, \pageref{WAV}

     $\chi_{j, m}$, characteristic function related to $Q_{j, m}$, \pageref{CHARDYA}

     $H^j_{G, m}$, Haar wavelets, \pageref{HAAR}

     $j_\alpha, J_\alpha$, Bessel functions, \pageref{NORBESS}

     $\Delta$, difference for sequence components, \pageref{DIFFSEQ}

          $f^\ast$, non-increasing rearrangement of $f$, \pageref{REARRANGEMENT}

     $\Psi^{j, L, \text{per}}_{G, m}$, periodic wavelets, \pageref{PERWAV}

     $\{\lambda^*_n\}_{n \in \N}$, non-increasing rearrangement of the sequence $\{|\lambda_m|\}_{m \in \Z^d}$, \pageref{REARRSEQ}

\bigskip

\textbf{Operators}

\medskip

    $\widehat{\varphi}$, Fourier transform of $\varphi$, \pageref{FT}

    $\varphi^\vee$, inverse Fourier transform of $\varphi$, \pageref{IFT}

$P_k f$, almost best approximants of $f$, \pageref{BA}

   $V_t$, de la Vall\'ee-Poussin operators, \pageref{VALLEEPOUSSIN}

    $\lambda^{j, G}_m$, wavelet coefficients, \pageref{WAVCOE}

    $I_\sigma, \mathfrak{I}_{\sigma}$, lifting operators, \pageref{LIFT}

    $\text{Tr}$, trace operator, \pageref{TRACE}

    $\text{Ex}$, extension operator, \pageref{EXT}

    $\text{id}$, identity operator, \pageref{IDEN}

    $\mathfrak{F}$, Fourier transform map, \pageref{FTM}

    $S_{n, r}$, Fourier means, \pageref{FME}

    $R^{\beta, \delta}_n$, Riesz means, \pageref{RME}
%
%    $(-\D)^{\a/2}$, Laplace operator of order $\alpha$, \pageref{ORA}

\newpage
%\section*{References}

\end{document}